\documentclass[11pt,reqno]{amsart}

\usepackage[a4paper,top=3.2cm,bottom=3.2cm,left=2.7cm,right=2.7cm]{geometry}

\usepackage{amssymb}
\usepackage{mathtools}
\usepackage{bm}

\usepackage[final,protrusion=true,expansion=true]{microtype}

\usepackage{graphicx}
\usepackage[all]{xy}
\usepackage{tikz}
\usetikzlibrary{arrows.meta, positioning, calc}

\usepackage[dvipsnames]{xcolor}

\usepackage[colorlinks=true,
            citecolor=red,
            linkcolor=blue,
            urlcolor=red,
            bookmarksnumbered=true,
            bookmarksopen=true]{hyperref}

\numberwithin{equation}{section}

\newcommand{\bb}{\bm{b}}
\newcommand{\Mm}{{\bf{M}}}
\newcommand{\Nn}{{\bf{N}}}

\newcommand{\Pp}{{\bf{P}}}

\newcommand{\Dd}{{\mathfrak{D}}}

\newcommand{\exc}{\mathrm{exc}}

\newcommand{\Cc}{\mathfrak{C}}

\newcommand{\Qq}{\mathbb{Q}}

\newcommand{\Rr}{\mathbb{R}}

\newcommand{\vol}{\operatorname{vol}}
\newcommand{\nvol}{\widehat{\operatorname{vol}}}

\newcommand{\Center}{\operatorname{center}}

\newcommand{\Exc}{\operatorname{Exc}}

\newcommand{\Bir}{\operatorname{Bir}}

\newcommand{\Aut}{\operatorname{Aut}}

\newcommand{\gilct}{\operatorname{gilct}}

\newcommand{\rk}{\operatorname{rank}}

\newcommand{\ninv}{\operatorname{ninv}}
\newcommand{\inv}{\operatorname{inv}}

\newcommand{\aif}{{\operatorname{aif}}}

\newcommand{\tmld}{{\operatorname{tmld}}}

\newcommand{\Weil}{\operatorname{Weil}}
\newcommand{\lct}{\operatorname{lct}}
\newcommand{\ilct}{\operatorname{ilct}}
\newcommand{\eb}{\operatorname{eb}}
\newcommand{\aux}{\operatorname{aux}}
\newcommand{\iglct}{\operatorname{iglct}}

\newcommand{\pet}{\operatorname{pet}}
\newcommand{\gt}{\operatorname{big}}
\newcommand{\Supp}{\operatorname{Supp}}

\newcommand{\Diff}{\operatorname{Diff}}

\newcommand{\mult}{\operatorname{mult}}
\newcommand{\Rct}{\operatorname{Rct}}

\newcommand{\Aa}{{\mathfrak{A}}}

\newcommand{\Bb}{{\mathfrak{B}}}
\newcommand{\Ff}{\mathcal{F}}

\newcommand{\Ii}{\Gamma}

\newcommand{\bNef}{\mathrm{bNef}}
\newcommand{\Sing}{\mathrm{Sing}}

\newtheorem{thm}{Theorem}[section]
\newtheorem{conj}[thm]{Conjecture}
\newtheorem{cor}[thm]{Corollary}
\newtheorem{lem}[thm]{Lemma}
\newtheorem{prop}[thm]{Proposition}
\newtheorem{prodef}[thm]{Proposition-Definition}
\newtheorem{claim}[thm]{Claim}

\newtheorem{alphthm}{Theorem}

\newtheorem{alphconj}[alphthm]{Conjecture}

\theoremstyle{definition}
\newtheorem{defn}[thm]{Definition}
\newtheorem{ques}[thm]{Question}
\newtheorem{rem}[thm]{Remark}
\newtheorem{deflem}[thm]{Definition-Lemma}
\newtheorem{setup}[thm]{Set-up}
\newtheorem{ex}[thm]{Example}
\newtheorem{nota}[thm]{Notation}

\begin{document}

\title{Birational boundedness of stable families}
\author{Paolo Cascini, Jihao Liu, Calum Spicer, and Roberto Svaldi}

\subjclass[2020]{14E30, 14D06, 14J45, 37F75}
\keywords{Boundedness, stable families, algebraically integrable foliations, interpolated log canonical threshold, ascending chain condition}
\date{\today}

\begin{abstract}
We prove that normal projective stable families of maximal variation, of fixed dimension, and with bounded adjoint volume are birationally bounded. This is a consequence of a substantially stronger statement, formulated a priori independently of stable families: algebraically integrable foliations of fixed dimension and bounded adjoint volume are log birationally bounded. In this way, the birational geometry of foliations provides a systematic framework for approaching classical boundedness problems for fibrations.

A key input is our proof of M\textsuperscript{c}Kernan's ACC conjecture for interpolated log canonical thresholds of algebraically integrable foliations. This may be viewed as the foliated analogue of Shokurov's ACC conjecture for log canonical thresholds, proved in the classical setting by Hacon--M\textsuperscript{c}Kernan--Xu.

As applications, we establish two boundedness criteria for Fano algebraically integrable adjoint foliated structures: Birkar's criterion for exceptional Fanos, and Jiang's criterion for Fanos for which both Tian's $\alpha$-invariant and the anti-canonical volume are bounded away from zero.

We also obtain several results on the birational geometry of algebraically integrable adjoint foliated structures, including lower bounds for adjoint volumes, boundedness of automorphism groups, and ACC theorems for pseudo-effective thresholds, $\mathbb{R}$-complementary thresholds, and the Fano spectrum.
\end{abstract}
\address{Department of Mathematics, Imperial College London, 180 Queen’s
Gate, London SW7 2AZ, UK}
\email{p.cascini@imperial.ac.uk}

\address{Department of Mathematics, Peking University, No. 5 Yiheyuan Road, Haidian District, Beijing 100871, China}
\address{Beijing International Center for Mathematical Research, Peking University, No. 5 Yiheyuan Road, Haidian District, Beijing 100871, China}
\email{liujihao@math.pku.edu.cn}

\address{Department of Mathematics, King’s College London, Strand, London WC2R 2LS, UK}
\email{calum.spicer@kcl.ac.uk}

\address{Dipartimento di Matematica ``F. Enriques'', Universit\`a degli Studi di Milano, Via Saldini 50, 20133 Milano (MI), Italy}
\email{roberto.svaldi@unimi.it}

\maketitle

\tableofcontents

\section{Introduction}\label{sec: introduction}

We work over the field of complex numbers $\mathbb C$.

\subsection*{Background}
Birational geometry seeks to classify projective varieties up to birational equivalence, and the Minimal Model Program provides a systematic framework for this study. 
A recurring theme is that many classification problems become accessible once one can establish boundedness results and moduli theories in terms of natural numerical invariants. 
From this viewpoint, stable varieties (varieties of general type), Fano varieties, Calabi--Yau varieties, and fibered varieties form four fundamental building blocks. 
For stable and Fano varieties, boundedness (cf.~\cite{HMX18,Bir21a}) and the corresponding moduli theory (cf.~\cite{Kol23,Xu25a}) are by now rather well-developed, whereas the picture remains substantially more open for Calabi--Yau and fibered varieties despite significant recent progress (cf.~\cite{Bir23b,BFMT25,Eng+25,FHS25}). 

In this paper, we study boundedness questions for fibered varieties, with emphasis on stable families, and relate them to the birational geometry of algebraically integrable foliations. Stable families are the basic objects in KSBA moduli theory: they provide a framework for compactifying moduli of canonically polarized varieties and encode how the fibers vary in moduli.

Historically, boundedness questions for stable families already arose in the slope inequalities of Xiao and Cornalba--Harris \cite{Xia87,CH88}. Refinements and higher-dimensional analogues have been investigated in a number of recent works; see, for instance, \cite{CTV23,CPT25} and the references therein. 

From the perspective of moduli for stable varieties and stable pairs, stable families in the KSBA sense are the natural way to define moduli functors; the associated moduli spaces appear in the ``moduli of parametrizations'' constructions (cf.~\cite{AB19,AB21,Inc20,AB22,AB23}). The moduli space of stable families over a fixed base, e.g.\ a rational curve, is closely related to the moduli of stable quasimaps to moduli spaces (cf.~\cite{DLI24,HH25b,ISZ25,IZ25,Hat26}).
Finally, any stable family carries the algebraically integrable foliation induced by its fibers, so the birational geometry of stable families naturally interacts with boundedness and moduli problems for foliations, cf.~\cite{SS23,SSV25}.

\subsection{Boundedness of stable families}
Let $f\colon X \to T$ be a stable family. After a finite base change on $T$, one may factor $f$ through a stable family of maximal variation \cite[Corollary~6.20]{KP17}. More precisely, there exist a stable family $g\colon Y\to S$ of maximal variation and a morphism $\pi\colon T\to S$ (not necessarily finite) such that $X$ is the pullback of $Y$ along $\pi$, i.e.\ we have a cartesian diagram
\[
\xymatrix{
Y \ar[d]^g_{\text{max.\ variation}} & & X=Y\times_S T \ar[ll]_{\psi} \ar[d]^f \\
S & & T \ar[ll]_{\pi}^{\text{not necessarily finite}}
}
\]
where $\psi$ is the projection.

In addition, if we assume that $Y$ is normal, then $K_{Y/S}$ is big and nef \cite[Proposition~2.15]{PX17}; moreover, under the above identification, $\psi^\ast K_{Y/S}=K_{X/T}$.
This suggests that boundedness questions for stable families should be approached first in the maximal-variation case, and that one might try to mimic the boundedness and moduli theory for stable varieties by working with the relative canonical divisor $K_{X/T}$ in place of $K_X$.
However, several basic obstructions show that this naive strategy cannot work in general:
\begin{enumerate}
    \item (Failure of abundance) $K_{X/T}$ can be big and nef but not semi-ample, cf.~\cite[Example~5.5]{ACSS21} and \cite[Theorem~IV.2.2]{McQ08}. 
    Hence, canonical models may not exist within the category of projective varieties, in contrast to what happens for the canonical class, cf.~\cite[Corollary~1.1.1]{BCHM10}. 
    \item (Failure of effective birationality) Effective birationality of big relative pluricanonical systems 
    $\vert mK_{X/T}\vert$ 
    fails already when $\dim X=2$ \cite[Theorem~1.3]{Lü25}, unlike for $\vert mK_X\vert$, cf.~\cite[Theorem~1.3(3)]{HMX14} (see also \cite{HM06,Tak06,Tsu07}).
    \item (Failure of boundedness) 
    Even assuming that
    $K_{X/T}$ 
    is ample and fixing $\vol(K_{X/T})$, 
    fibrations $X \to T$ with total space $X$ of dimension $2$ satisfying these assumptions can be unbounded, see \cite[Example~4.1]{Pas24}; 
    this is the opposite of what happens for stable varieties of fixed dimension $d$ and canonical volume, cf.~\cite[Theorem~1.1]{HMX18}.
\end{enumerate}
These examples show that we should not expect to achieve a satisfactory boundedness statement for stable families by simply substituting $K_X$ with $K_{X/T}$.

A natural question is whether one can nevertheless formulate and prove a birational boundedness result for stable families as fibered varieties using only the canonical data of the family, without fixing any auxiliary polarization on the fibers or on the base. 
Such a result would be particularly well-suited for moduli-theoretic applications.

To bypass these issues and obtain such an intrinsic result, we interpolate between the relative and absolute canonical classes: namely, for $t\in(0,1)$ we consider the $\mathbb R$-divisor
\[K_t:=tK_{X/T}+(1-t)K_X.\]
These $\mathbb R$-divisors are defined intrinsically: they depend only on the original data $K_{X/T}$ and $K_X$ and require no auxiliary polarization. Moreover, when $T$ is a point we have $K_t=K_X$ for all $t$, so we recover the classical setting. Thus the class $K_t$ provides a workable substitute for $K_{X/T}$ in effective birationality and boundedness arguments. Our first main result is the following birational boundedness theorem for stable families. 

\begin{alphthm}[Birational boundedness of stable families]\label{thm: stable family birational boundedness}
    Let $d$ be a positive integer. Then there exists a number $t=t(d)\in(0,1)$ depending only on $d$ with the following property.

    Assume that $f\colon X\to T$ is a stable family of maximal variation such that $X$ is normal and projective, and $\dim X=d$. Then:
    \begin{enumerate}
        \item $K_{t}:=tK_{X/T}+(1-t)K_X$ is big.
        \item Assume that $\vol(K_t)\leq C$ for some constant $C$. Then $X$ is birationally bounded.
    \end{enumerate}
\end{alphthm}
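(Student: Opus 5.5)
The plan is to deduce Theorem~\ref{thm: stable family birational boundedness} from the foliated statement announced in the abstract: algebraically integrable foliations of fixed dimension with bounded adjoint volume are log birationally bounded. Let $\Ff$ be the foliation on $X$ induced by $f$. Since $f$ is a stable family, its fibers are reduced and slc, hence $f$ has no multiple fibers in codimension one. I expect this to give $K_\Ff = K_{X/T}$, possibly up to a correction supported on the non-equidimensional locus, which can be removed after an equidimensional modification in the sense of ACSS. Consequently
\[K_t = tK_\Ff+(1-t)K_X,\]
which is the canonical divisor of the adjoint foliated structure $(X,\Ff,t)$. The singularities of stable fibers should then show that $(X,\Ff,t)$ is lc. The point is that $(X,\Ff)$ is lc because the fibers are slc and the base is smooth after normalization, and $X$ itself is lc by inversion of adjunction for stable families.

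For part (1), $K_{X/T}=K_\Ff$ is big and nef by maximal variation \cite[Proposition~2.15]{PX17}. We need $K_X$ to be not too negative relative to $K_\Ff$. Write
\[K_t = K_X + t(K_\Ff-K_X).\]
Alternatively, $K_t=K_\Ff+(1-t)(K_X-K_\Ff)$, where $K_X-K_\Ff=f^*K_T$ up to the ramification correction. If $K_T$ is pseudo-effective, $K_t$ is clearly big. The danger is $T$ uniruled, e.g.\ $T=\mathbb P^1$. Here I would use the foliated effective results: for lc algebraically integrable adjoint foliated structures of dimension $d$, there is $t_0(d)<1$ such that $K_\Ff$ big forces $tK_\Ff+(1-t)K_X$ big for $t\ge t_0$. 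This should follow from the ACC for pseudo-effective thresholds advertised in the abstract. Consider the threshold $s$ at which $sK_\Ff+(1-s)K_X$ stops being pseudo-effective. It lies in an ACC set, depending on $d$ and coefficients in $\{0,1\}$, that accumulates only from below at values $<1$. Hence it is bounded away from $1$, and any $t(d)$ strictly above that bound works.

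For part (2), with $t=t(d)$ fixed and $\vol(K_t)\le C$, apply the log birational boundedness theorem for algebraically integrable adjoint foliated structures $(X,\Ff,t)$ of dimension $d$ with bounded adjoint volume. It yields a bounded family of pairs birational to $X$, so $X$ is birationally bounded.

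The main obstacle is part (1): choosing $t$ uniformly. This needs the ACC input, ultimately the ACC for interpolated lc thresholds, to exclude a sequence of families whose pseudo-effective thresholds tend to $1$. A secondary technical point is verifying the identification $K_\Ff=K_{X/T}$ and the lc property, after passing to a normalization of the base or a base change if $T$ is not normal.
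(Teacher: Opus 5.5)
Your overall route is the paper's: part (1) comes from the ACC for pseudo-effective thresholds (the paper uses its consequence, Theorem~\ref{thm: big gap general}, with $\Bb=(X,\Ff)(1)$), and part (2) comes from Theorem~\ref{thm: birationally bounded main} with $\Gamma=\{t(d)\}$.

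\textbf{The gap.} Both steps rest on singularity hypotheses that you assert directly on $X$, and these fail in general. Theorem~\ref{thm: pet main} needs $X$ lc and $\Ff$ lc, and Theorem~\ref{thm: birationally bounded main} needs $(X,\Ff,t)$ lc. But $T$ is arbitrary, and normalizing $T$ does not make it smooth. Since $K_X=K_{X/T}+f^*K_T$ with $K_{X/T}$ $\mathbb Q$-Cartier, $K_X$ need not even be $\mathbb Q$-Cartier. Then $(X,\Ff,t)$ is not an adjoint foliated structure at all, and $X$ in any case inherits the singularities of $T$.

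\textbf{The fix.} Base change to a resolution $h_T\colon T'\to T$, with $f'\colon X'=X\times_TT'\to T'$ and $h\colon X'\to X$.
\begin{enumerate}
\item $f'$ is BP stable and $h^*K_{X/T}=K_{X'/T'}=K_{\Ff'}$, so also $K_\Ff=K_{X/T}$ and $h^*K_\Ff=K_{\Ff'}$.
\item $\Ff'$ satisfies Property $(*)$ and $f'$ is equidimensional, so $\Ff'$ is lc by \cite[Proposition~3.7]{ACSS21}.
\item $K_{X'}=K_{X'/T'}+f'^*K_{T'}$ is $\mathbb Q$-Cartier because $T'$ is smooth, and Property $(*)$ gives that $X'$ is lc.
\end{enumerate}
Your argument then runs on $(X',\Ff',t)$, which is lc for every $t\in[0,1]$ because the lc condition is affine in $t$. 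To descend to $X$:
\begin{itemize}
\item $K_t=h_*K'_t$ is big;
\item $\vol(K'_t)\le\vol(K_t)\le C$, since sections push forward along the birational morphism $h$;
\item $X$ is birational to $X'$, which lies in a log birationally bounded family.
\end{itemize}

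\textbf{Smaller points.} A stable family is flat with reduced fibers, so no equidimensional modification and no correction term enter into $K_\Ff=K_{X/T}$.

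Your ACC step is also phrased backwards: an ACC set cannot accumulate from below. What you actually need is the following.
\begin{itemize}
\item $\Gamma\cap[0,1)$ has supremum strictly less than $1$; otherwise it would contain a strictly increasing sequence.
\item $\pet(X';\Ff')<1$, because $K_{\Ff'}$ is big and bigness is an open condition.
\end{itemize}
Choose $t(d)$ strictly between that supremum and $1$. Then $K'_{t(d)}$ is a positive combination of the pseudo-effective divisor at the threshold and the big divisor $K_{\Ff'}$, hence big.
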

See \cite[Definition~6.16]{KP17} for a precise definition of stable families of maximal variation and Definition~\ref{defn: bounded pairs} for a precise definition of birational boundedness. Many boundedness and moduli results for fibered varieties are formulated in the setting of polarized fibrations, where one fixes, as part of the input, ample classes on the fibers and/or on the base; see, for instance, \cite{Bir22,Bir23a,Jia23,Bej+24,Bir24,BDCS24,Fil24,HH25a,HH25b,ISZ25,IZ25,Jia25a,Jia25b,JJZ25,Zhu25}. In line with the question above, we seek an intrinsic birational boundedness statement for stable families as fibered varieties that depends only on canonical data and involves no auxiliary choice of polarization.

Theorem~\ref{thm: stable family birational boundedness} gives an affirmative answer: it is stated solely in terms of $K_{X/T}$ and $K_X$ via the interpolated divisor $K_t$, and it does not require fixing any additional polarization. Once $d$ (and hence $t=t(d)$) is fixed, the birational boundedness conclusion in~(2) depends only on the constant $C$ in the inequality $\vol(K_t)\leq C$. When $T$ is a point, Theorem~\ref{thm: stable family birational boundedness} recovers the birational boundedness theorem of Hacon--M\textsuperscript{c}Kernan--Xu (\cite[Theorem~3.1]{HMX13}, \cite[Theorem~1.3]{HMX14}). Moreover, our method not only shows that stable families are birationally bounded as fibered varieties, but also controls the fibration structure $f\colon X\to T$ in the foliated sense; see Theorem~\ref{thm: birationally bounded main} below.

\subsection{Boundedness of algebraically integrable foliations of general type}
The proof of Theorem~\ref{thm: stable family birational boundedness} is not confined to morphisms $f\colon X\to T$. Guided by the failures above, we pass to the broader category of algebraically integrable foliations and make systematic use of the birational geometry of foliations and adjoint foliated structures developed in recent years (cf.~\cite{PS19,ACSS21,CHLX23,SS23,Cas+24,LW24,Cas+25a,Cas+25b,CS25c,LX25,LMX25}).

The key point is that insisting on keeping the fibration structure forces one to confront the pathologies above. In particular, finite generation may fail, and consequently one loses access to effective birationality and boundedness statements formulated purely in terms of the relative canonical divisor $K_{X/T}$.

We therefore pass to the induced algebraically integrable foliation and work in the foliated setting, which is better behaved under birational modifications. This trade-off is justified: the foliated framework allows us to circumvent the three failures above. For instance, canonical models exist for algebraically integrable adjoint foliated structures of general type \cite{Cas+25a}, and effective birationality and boundedness are known for adjoint foliated surfaces (cf.~\cite{SS23,SSV25}).

Within this foliated framework, the birational boundedness statement in Theorem~\ref{thm: stable family birational boundedness} is a corollary of the following stronger theorem.

\begin{alphthm}[Birational boundedness of algebraically integrable foliations]\label{thm: birationally bounded main}
Let $d$ be a positive integer, 
$\Ii\subset (0,1)$ 
a set satisfying the descending chain condition (DCC), and
$C$
a positive real number.
Then the following set of projective algebraically integrable foliated pairs
\begin{align*}
    \left \{
(X,\Ff)
\ \middle \vert \
\dim X=d, 
\exists\ t\in\Ii\
\text{s.t. $(X,\Ff,t)$ is lc and } 
0<\vol\!\left(K_X+\frac{t}{1-t}K_{\Ff}\right)\le C
   \right \}
\end{align*}
forms a log birationally bounded family.
\end{alphthm}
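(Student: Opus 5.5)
The plan is to follow the strategy of Hacon--M\textsuperscript{c}Kernan--Xu for birational boundedness of lc pairs of general type, transplanted to algebraically integrable adjoint foliated structures. The idea is to produce a uniform integer $m$ for which $|m(K_X+\frac{t}{1-t}K_{\Ff})|$ defines a birational map, after passing to suitable models. The bound $\vol\le C$ then bounds the degree of the image, and hence yields log birational boundedness.

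The first reduction uses the DCC set $\Ii$. Replacing $K_X+\frac{t}{1-t}K_\Ff$ by $(1-t)K_X+tK_\Ff$ only rescales the volume by a factor bounded above and below, since $\Ii\subset(0,1)$ is DCC and so bounded away from $0$; it stays away from $1$ once we also argue through the ACC for interpolated lct's. We then pass to a foliated log resolution, or to a ``property $(*)$'' modification $(X',\Ff',B')$ with $f'\colon X'\to Z'$ equidimensional, as developed for algebraically integrable foliations (cf.~\cite{ACSS21,Cas+24}). On this model we use the known MMP and the existence of canonical models for algebraically integrable adjoint foliated structures of general type \cite{Cas+25a}. This lets us work on a model where the adjoint divisor $(1-t)(K_{X}+B)+t(K_{\Ff}+B^{\Ff})$ is ample and the volume is unchanged. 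The lc condition on $(X,\Ff,t)$ guarantees that the boundaries lie in a DCC set.

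The core step is \emph{effective birationality}, proved by the usual induction on dimension. Given two general points $x,y$, the volume lower bound lets us create isolated non-klt centers, using lc places of the adjoint structure. We then cut down to a minimal center $V$ and apply adjunction for adjoint foliated structures to $V$, which is itself algebraically integrable with induced coefficients in a DCC set. Induction then gives a lower bound on $\vol$ of the restriction, and this bound lets us lift sections via Kawamata--Viehweg type vanishing. That vanishing is applied to the underlying klt pair: one writes $(1-t)K_X+tK_\Ff$ as $K_X+$ (something big) using $K_\Ff = K_{X/Z}-R$ plus the bigness of the moduli part. The needed uniform lower bound on volumes, the analogue of \cite[Theorem~1.3]{HMX14}, will come from the ACC for interpolated log canonical thresholds advertised in the abstract. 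The argument mirrors HMX. If the volumes accumulated to $0$, or the adjunction coefficients failed DCC, we would obtain an increasing sequence of interpolated lct's, contradicting that ACC. In practice the ACC and the effective birationality must be proved together, in a joint induction on $d$.

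Once $|m\,((1-t)K_X+tK_\Ff)|$ is birational with $m=m(d,\Ii)$, we take $W$ to be the closure of its image. Its degree is at most $m^d\vol\le m^dC'$, and the strict transforms of $\Supp B$ together with the foliation's leaves have bounded degree in the same embedding. The foliation on $W$ is induced by a family $W\dashrightarrow Z$ of bounded degree, via the Chow variety of its leaves. This gives log birational boundedness of $(X,\Ff)$. I expect the main obstacle to be the effective birationality step: specifically, lifting sections from the minimal center, where foliated adjunction introduces non-trivial moduli b-divisors, and vanishing must be arranged with only $\Rr$-coefficients controlled through the ACC for interpolated lct's.
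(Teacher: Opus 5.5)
There is a genuine gap at the core step. Your effective birationality argument is the Hacon--M\textsuperscript{c}Kernan--Xu induction: create isolated non-klt centres at general points, cut down to a minimal centre $V$, restrict to $V$ by adjunction, and lift sections. But $V$ in general has codimension bigger than one. Adjunction to such a centre is a subadjunction statement resting on a canonical bundle formula, and that is not known for adjoint foliated structures. The only adjunction available is along divisors, as in Section~4, for a component of $\lfloor B\rfloor$. For the same reason the joint HMX-style induction of the ACC with effective birationality cannot be run. Your derivation of a volume lower bound from the ACC is also circular: in \cite{HMX14} that bound comes out of effective birationality, not the other way round. In addition, the moduli part is only nef, so it gives no bigness for your vanishing step.

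The paper reverses the logic. It proves the local and global ACC first, without effective birationality. Global ACC in dimension $d-1$ gives local ACC in dimension $d$ by divisorial adjunction. Global ACC in dimension $d$ is then deduced from the local ACC and from BAB, applied at a parameter bounded away from $1$, via a relative two-ray game (Lemma~\ref{lem: relative two-ray}). The ACC yields the gap Theorem~\ref{thm: big gap general}. This lets one push the coefficients into $\frac1p\mathbb N$ and lower $t$ to $t_p\le\frac{p-1}{p}$ while keeping bigness. One then extracts the finitely many divisors of small total discrepancy; the local ACC (Theorem~\ref{thm: acc lct general version}) keeps the result lc. This gives an $\epsilon$-lc structure of general type. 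Its good minimal model $\Bb_Y$ lives on an $\epsilon$-lc variety $Y$, with $NpK_{\Bb_Y}$ nef, big and integral and $NpK_{\Bb_Y}-K_Y$ big. So Birkar's \cite[Theorem~4.2]{Bir23a} applies directly. Vanishing enters only through the identity $mK_{\Bb}+L=K_X+\bigl(B+\Mm_X+L+(m-1)K_{\Bb(\frac{mt}{m-1})}\bigr)$ (Lemma~\ref{lem: birational to +pe birational for afs}). The bracket is big because raising the parameter preserves bigness \cite[Theorem~5.1]{Cas+25a}. No adjunction to lc centres is needed.

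Two further steps do not work as written.

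First, no uniform $m$ can make $|m((1-t)K_X+tK_{\Ff})|$ birational when $\Ii$ accumulates at $1$. Indeed $\vol\bigl(m((1-t)K_X+tK_{\Ff})\bigr)=m^d(1-t)^d\vol\bigl(K_X+\frac{t}{1-t}K_{\Ff}\bigr)\le m^d(1-t)^dC<1$ for $t$ close to $1$. This already happens for the trivial foliation on a fixed surface of general type: the factor $(1-t)^d$ is not bounded below. The remedy is to replace $t$ by $\frac1l\lfloor(l-1)t\rfloor$. The tools for this are the ACC for pseudo-effective thresholds (Theorem~\ref{thm: big gap general}), to keep bigness, and Lemma~\ref{lem: volume monotonicity}, to keep the volume bound. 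The ACC for lc thresholds does not do this.

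Second, bounding the image $W$ of the pluricanonical map bounds only the ambient variety. A fixed variety carries unbounded algebraically integrable foliations, for example pencils of degree-$k$ curves on $\mathbb P^2$. Nothing in your argument bounds the leaves, or bounds $K_{\Ff_W}\cdot H^{d-1}$ from both sides, which is what the boundedness criterion \cite[Proposition~3.36]{Cas+25a} requires. The classical deduction in \cite{HMX13} uses Kawamata--Viehweg vanishing, which fails for foliations. In Theorem~\ref{thm: precise birational boundedness} the paper proceeds as follows:
\begin{itemize}
\item it runs a $K_{\Bb_W}$-MMP and then a $K_{\Bb_Z(0)}$-MMP over the bounded variety $Y$;
\item it bounds the output $X'$ using \cite[Lemma~6.6]{BH22} and relative BAB \cite[Theorem~2.3]{Bir24};
\item it bounds $(K_{\Bb'}+3dA')\cdot H_2^{d-1}$ by nefness and volume comparison;
\item it uses $t\ge\frac1p$ to get $A-K_{\Bb'(s)}$ pseudo-effective for every $s\in[0,1]$.
\end{itemize}
Only then is $(X',\Ff')$ shown to be bounded.
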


We refer the reader to 
Definition~\ref{defn: sing of afs} 
for the definition of singularities for \emph{adjoint foliated structures} of the form $(X,\Ff,t)$, 
and 
Definition~\ref{defn: lbb} for the definition of log birational boundedness for foliations. 
When $t$ is fixed (as in Theorem~\ref{thm: stable family birational boundedness}~(1)), Theorem~\ref{thm: birationally bounded main} immediately implies Theorem~\ref{thm: stable family birational boundedness}~(2) by taking $\Ff$ to be the foliation induced by $f\colon X\to T$, since $K_{X/T}=K_{\Ff}$ when $f$ is a stable family. More generally, any fibration induces an algebraically integrable foliation. Thus Theorem~\ref{thm: birationally bounded main} applies in particular to foliations arising from fibrations, while being formulated for a much broader class of algebraically integrable foliations. Even for foliations induced by fibrations, Theorem~\ref{thm: birationally bounded main} is stronger than Theorem~\ref{thm: stable family birational boundedness}~(2): if $X$ is log canonical, requiring $(X,\Ff,t)$ to be log canonical is strictly weaker than requiring that $f$ is (locally) stable.

We refer the reader to Theorems~\ref{thm: birational boundedness} and~\ref{thm: precise birational boundedness} for more general versions of Theorem~\ref{thm: birationally bounded main}. In particular, a boundary-polarized version of Theorem~\ref{thm: birationally bounded main} also holds, which allows us to study moduli of Fano or polarized Calabi--Yau adjoint foliated structures and the corresponding fibration structures. See \cite{KX20,Asc+23,BL24} and references therein for the corresponding classical theory. 

Finally, although Theorem~\ref{thm: birationally bounded main} is formulated for algebraically integrable foliations, the strategy is not conceptually restricted to algebraic integrability.
Extending it to general foliations would require a satisfactory resolution and adjunction theory in arbitrary dimension.
We refer to \cite{SS23,Pas24,SSV25} for progress in settings where such tools have been established, particularly when rank $=1$, and to Subsection~\ref{subsec: variation} for a more detailed discussion.

\subsection{M\textsuperscript{c}Kernan's ACC conjecture for interpolated lc thresholds}

We now explain the uniform parameter $t$ in Theorem~\ref{thm: stable family birational boundedness}~(1). From the foliated viewpoint, it is a special case of M\textsuperscript{c}Kernan's ascending chain condition (ACC) conjecture for interpolated log canonical thresholds. This conjecture is the natural foliated analogue of Shokurov's ACC conjecture for log canonical thresholds \cite{Sho92}, proved in the classical setting by Hacon--M\textsuperscript{c}Kernan--Xu \cite[Theorem~1.1]{HMX14}.

\begin{alphconj}[M\textsuperscript{c}Kernan's ACC conjecture for interpolated lc thresholds, {\cite[28:22]{McK22}}]\label{conj: acc interpolated}
Let $d$ be a positive integer. Then there exists an ACC set $\Ii\subset [0,1]$ depending only on $d$ such that for any $\mathbb Q$-Gorenstein foliation $\Ff$ on a $\mathbb Q$-Gorenstein variety $X$, the interpolated log canonical (lc) threshold
\[
\lct(X;\Ff):=\sup\{t\geq 0\mid (X,\Ff,t)\text{ is log canonical}\}
\]
belongs to $\Ii$.
\end{alphconj}
Here we say that $\Ff$ (resp. $X$) is $\mathbb Q$-Gorenstein if $K_{\Ff}$ (resp. $K_X$) is $\mathbb Q$-Cartier. In this paper, we prove M\textsuperscript{c}Kernan's conjecture for algebraically integrable foliations, which is the case relevant to our applications to fibered varieties since the foliation induced by a fibration is algebraically integrable. Theorem~\ref{thm: main ACC} and the accompanying global ACC theorem (Theorem~\ref{thm: global ACC main}) form the main technical engine of the paper. They yield an ACC statement for pseudo-effective thresholds (cf.~Theorem~\ref{thm: pet gap main}), produce the uniform parameter $t$ in Theorem~\ref{thm: stable family birational boundedness}~(1), and are used throughout to deduce effective birationality and, ultimately, birational boundedness.

\begin{alphthm}[ACC for interpolated lc thresholds]\label{thm: main ACC}
Let $d$ be a positive integer. Then the set of interpolated lc thresholds 
\begin{align*}
    \left \{
\lct(X;\Ff)
\ \middle \vert \ 
\dim X=d, 
\text{ $X$ and $\Ff$ are $\mathbb Q$-Gorenstein, $\Ff$ is algebraically integrable} 
    \right \}
\end{align*}
satisfies the ACC.
\end{alphthm}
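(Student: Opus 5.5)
We follow the Hacon--M\textsuperscript{c}Kernan--Xu strategy: a local ACC statement is reduced to a global ACC statement (Theorem~\ref{thm: global ACC main}) on a lower-dimensional lc structure, and the global statement is proved by induction on dimension.

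\textbf{Step 1: extract the threshold.} Suppose $t_i=\lct(X_i;\Ff_i)$ is a strictly increasing sequence. If $t_i=1$ for infinitely many $i$ there is nothing to prove, so assume $t_i<1$. By definition, $(X_i,\Ff_i,t_i)$ is lc but not klt near some point. We would pass to an algebraically integrable ACSS-type (toroidal) modification $h_i\colon Y_i\to X_i$ of $(X_i,\Ff_i)$, as available in the algebraically integrable setting (cf.~\cite{ACSS21,Cas+25a}). On $Y_i$ the adjoint foliated structure $(Y_i,\Ff_{Y_i},t_i)$ is crepant to $(X_i,\Ff_i,t_i)$ and carries a divisor $E_i$ of discrepancy exactly $-1$ for the interpolated structure. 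Since $K_t=tK_{\Ff}+(1-t)K_X$, the discrepancy of $E_i$ is an affine function of $t$:
\[
a(E_i,X_i,\Ff_i,t)=t\,a_{\Ff}(E_i)+(1-t)\,a_X(E_i).
\]
Here $a_{\Ff}(E_i)\in\{-\epsilon(E_i),\dots\}$ and $a_X(E_i)$ lie in a fixed discrete set, because $K_X$ and $K_{\Ff}$ are only $\mathbb Q$-Cartier of unbounded index. This is why a direct computation fails and we must globalize.

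\textbf{Step 2: globalize.} We would extract $E_i$ by a $(K_{Y_i}+\dots)$-MMP over $X_i$ (running an MMP for adjoint foliated structures, available by \cite{Cas+25a,Cas+25b}). This produces $E_i$ as the unique exceptional divisor over $X_i$, dominating a centre $Z_i$. Adjunction to $E_i$ then gives an adjoint foliated structure on $E_i$ (or on a general fibre of $E_i\to Z_i$) of dimension $\le d-1$. It has the form $(E_i,\Ff_{E_i},B_i,t_i)$, which is numerically trivial over $Z_i$ and lc. Its boundary coefficients lie in a set generated from $\{t_i\}$ by the foliated adjunction formula, taking into account whether $E_i$ is $\Ff$-invariant. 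This set is DCC as long as $\{t_i\}$ is DCC.

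\textbf{Step 3: apply global ACC.} The global ACC theorem in dimension $\le d-1$ (Theorem~\ref{thm: global ACC main}) says that the coefficients of numerically trivial lc structures with DCC data lie in a finite set. Applying it to Step 2, the relevant $t_i$-dependent coefficients take finitely many values. We then recover $t_i$ from them: the coefficients are non-constant affine functions of $t_i$, because lc-ness is strict for $t>t_i$. This contradicts strict monotonicity. The global ACC theorem is in turn proved together with Theorem~\ref{thm: main ACC} by the usual induction: local ACC in dimension $d$ combined with global ACC in dimension $d-1$ gives global ACC in dimension $d$. The global step uses boundedness of the relevant Fano-type adjoint foliated structures via effective birationality and the DLT/MMP machinery.

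\textbf{Main obstacle.} The hard step is Step 2. One has to extract a single lc place for the \emph{interpolated} structure while keeping control of both the foliation and the ambient variety, including the invariant versus non-invariant dichotomy of $E_i$. One also has to verify a foliated adjunction in which the different has DCC coefficients depending on $t_i$ in a controlled affine way. We would first try to reduce to the case where $\Ff_i$ is induced by an equidimensional toroidal fibration $Y_i\to T_i$. Then the interpolated lc condition splits into an lc condition on the base (via the canonical bundle formula) and one on $Y_i$. Classical HMX ACC can then be applied on $T_i$ with coefficients depending on $t_i$.
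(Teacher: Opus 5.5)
Your Steps 1--2 follow the outline of the paper's proof of Theorem~\ref{thm: global to local}: extract a single lc place $E_i$ by a divisorial contraction with $\rho(Y_i/X_i)=1$, adjoin to $E_i$, and apply the relative global ACC in lower dimension. However, Step 3 does not pin down $t_i$.

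It is false that the coefficients on $E_i$ are non-constant in $t$. With $B=0$ and $\Mm=\bm{0}$, Proposition~\ref{prop: explicit formula} gives the normalized coefficient $\frac{r-1}{r}$, for every $t$, at each $D\subset E_i$ with $a_D=0$. The failure of lc for $t>t_i$ concerns discrepancies over $X_i$, not the different on $E_i$. Two ingredients are needed instead:
\begin{enumerate}
\item the dichotomy of Theorem~\ref{thm: dcc precise adjunction}: either $t_i$ lies in a finite set, or $s\mapsto B_{E_i}(s)$ is constant over the generic point of the centre;
\item a global ACC statement for the parameter itself, Theorem~\ref{thm: global acc general}(3): $t$ lies in a finite set unless $K_{\Bb(s)}|_F\equiv 0$ for all $s$.
\end{enumerate}
The second alternative in (2) is then excluded because $\rho(Y_i/X_i)=1$ and $E_i$ is not an lc place below the threshold, so the extracted structure is anti-ample over $X_i$ for smaller parameters.

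Moreover, the extraction needs $X_i$ to be $\mathbb Q$-factorial klt, whereas the theorem allows lc $X_i$. The reduction (Lemma~\ref{lem: acc reduce to klt}) passes through a qdlt modification, which introduces boundaries. This is why the induction must be run for the relative statements with boundary and nef part (Theorems~\ref{thm: acc lct general version} and~\ref{thm: global acc general}), not for bare $(X,\Ff,t)$.

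The more serious gap is the global ACC in dimension $d$. Your induction needs it in every dimension below $d$, and it is where most of the work lies. The route you indicate, boundedness via effective birationality as in \cite{HMX14}, is unavailable for two reasons:
\begin{itemize}
\item HMX's effective birationality uses subadjunction and hence a canonical bundle formula, which is not known for adjoint foliated structures. This also rules out the fallback in your ``main obstacle'' paragraph.
\item Here effective birationality (Theorem~\ref{thm: afs eb}) is itself deduced from the ACC through Theorem~\ref{thm: big gap general}, so using it would be circular.
\end{itemize}
The missing idea is to derive the global ACC from BAB, following \cite{MP04}. Since Bertini fails for foliations, only a relative statement is available. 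The paper proceeds as follows:
\begin{enumerate}
\item It reduces to $\rho(X/U)=1$ with $X$ of Fano type over $U$ (Theorem~\ref{thm: picard one imply general global acc}).
\item It treats bounded fibres by intersection numbers (Proposition~\ref{prop: when fiber bounded}).
\item It treats the non-klt case by adjunction together with the converse finiteness result, Theorem~\ref{thm: finite inversion of adjunction to finite 2}.
\item In the klt case it replaces $t$ by $\min\{t,1-\tau\}$, with $\tau$ supplied by the local ACC in dimension $d$. It then uses \cite{Bir21a} to force the total log discrepancies of the fibres to tend to $0$, and extracts a divisor whose coefficient tends to $1$.
\item It runs a $(-E_i)$-MMP to a Mori fibre space and reaches a contradiction via the relative two-ray game (Lemma~\ref{lem: relative two-ray}), the gap Lemma~\ref{lem: one lc place one close to one}, and induction on the relative dimension.
\end{enumerate}
Without some such argument, your proposal does not establish the theorem.
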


Theorem~\ref{thm: main ACC} is also of independent interest for the birational study of fibrations. If $X$ is a $\mathbb Q$-factorial klt variety equipped with a fibration $f\colon X\to T$ and $\Ff$ is the induced foliation, then $\lct(X;\Ff)\in (0,1]$ is a new birational invariant measuring quantitatively how close the foliation is to being induced (up to quotient) by a locally stable family. For instance, when the base $T$ is smooth, one has $\lct(X;\Ff)=1$ whenever $\Ff$ satisfies Property $(*)$ in the sense of \cite[Definition~3.5]{ACSS21}, and Property $(*)$ is realized by quotients of locally stable families over a smooth base \cite[Proposition~3.5]{FS25}. 

It is worth noting that several known conditions on fibrations can be rephrased in terms of interpolated lc thresholds. For example, in the study of irrationality of degenerations of Fano varieties, \cite[Theorem~1.3(3)]{BQ24} proposes a technical condition on the log canonicity of $(X,tF)$, where $F$ is a reduced fiber of a fibration $f\colon X\to Z$ to a curve $Z$, with $t$ bounded away from zero. From the viewpoint of interpolated lc thresholds, this condition is simply the requirement that $\lct(X;\Ff)$ is bounded away from zero.

The interpolated lc threshold also has applications to algebraic geometry in positive characteristic. In a recent preprint, Hiroi shows that, for $1$-foliations over a field of characteristic $p>0$ (which correspond to algebraically integrable foliations in characteristic $0$), the interpolated lc threshold is closely related to the singularities of the quotient space $X/\Ff$: the adjoint foliated structure $(X,\Ff,(p-1)/p)$ is lc (resp.\ klt) if and only if $X/\Ff$ is lc (resp.\ klt) \cite[Theorem~3.2.5]{Hir26}.

We remark that M\textsuperscript{c}Kernan proved Conjecture~\ref{conj: acc interpolated} in dimension~$2$ in his talk at Shokurov's 70\,$+$\,2nd birthday conference in 2022 \cite{McK22}. The third and fourth authors showed in \cite[Lemma~2.19]{SS23} that if $X$ is a smooth surface and $(X,\Ff,t)$ is lc for some $t>\frac{5}{6}$, then $\Ff$ is lc. We refer the reader to Theorem~\ref{thm: acc lct general version} for a more general version of Theorem~\ref{thm: main ACC}, formulated for adjoint foliated structures with a boundary and a nef part.

\subsection{Boundedness of Fano algebraically integrable foliations}
Although our main theorems focus on stable families and algebraically integrable foliations of general type, the ACC technology we develop also has important applications in the Fano setting. In \cite{Cas+25a}, a BAB-type criterion for Fano algebraically integrable adjoint foliated structures was established as a foliated analogue of Birkar's BAB theorem \cite[Theorem~1.1]{Bir21a}. In the context of K-stability and K-moduli for Fano varieties, one often invokes a different boundedness criterion due to Jiang \cite[Theorem~1.6]{Jia20}, which controls boundedness in terms of Tian's $\alpha$-invariant together with volume; see \cite{Che20,LLX20} for refinements and alternative proofs. In this paper, we prove an analogue of Jiang's criterion for algebraically integrable adjoint foliated structures.

\begin{alphthm}[Jiang's boundedness criterion]\label{thm: jiang afs main}
Let $d$ be a positive integer and $\epsilon$ a positive real number. Then the set of projective varieties $X$ such that
\begin{enumerate}
    \item $\Aa:=(X,\Ff,t)$ is a Fano algebraically integrable adjoint foliated structure,
    \item $\dim X=d$,
    \item $\alpha(\Aa)^d\cdot\vol(-K_{\Aa})\geq\epsilon$, and
    \item $t\leq 1-\epsilon$
\end{enumerate}
forms a bounded family. 
Moreover, the set of $(X,\Ff)$ such that (1-4) hold and
\begin{enumerate}
    \item[(5)] $t\geq\epsilon$
\end{enumerate}
forms a bounded family.
\end{alphthm}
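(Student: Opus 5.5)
The plan is to reduce to the classical criterion of Jiang \cite[Theorem~1.6]{Jia20} and to the BAB-type criterion for adjoint foliated structures of \cite{Cas+25a}. Throughout, $K_{\Aa}=tK_{\Ff}+(1-t)K_X$ (together with the boundary, if any), and $\alpha(\Aa)$ is the supremum of those $s$ such that $(X,\Ff,t;sD)$ is lc for every effective $D\sim_{\mathbb R}-K_{\Aa}$.

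\textbf{Step 1: pass to a klt Fano structure on $X$.} Since $t\le 1-\epsilon$, the coefficient $1-t\ge\epsilon$ is bounded below. For algebraically integrable adjoint foliated structures, lc-ness of $(X,\Ff,t)$ should imply that $X$ is potentially klt after passing to a suitable model; I expect the paper's adjunction and comparison machinery to give this. Concretely, I would take a $\mathbb Q$-factorial ACSS/foliated log resolution model and write
\[
K_{\Aa}=(1-t)\bigl(K_X+\tfrac{t}{1-t}K_{\Ff}\bigr),
\]
so that $-K_{\Aa}$ ample makes $X$ of Fano type in the relative-to-foliation sense. The goal is to produce a boundary $B\ge 0$ such that:
\begin{itemize}
\item $(X,B)$ is $\epsilon'$-lc;
\item $-(K_X+B)$ is ample, or at least big and nef;
\item $\alpha(X,B)^d\vol(-(K_X+B))\ge\epsilon'$, with $\epsilon'$ depending only on $d,\epsilon$.
\end{itemize}

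\textbf{Step 2: transfer the $\alpha$-invariant.} For any $0\le D\sim_{\mathbb R}-K_{\Aa}$, the structure $(X,\Ff,t;\alpha D)$ is lc for $\alpha<\alpha(\Aa)$. Using the comparison between lc-ness of adjoint foliated structures and ordinary pairs (the ``tK_F+(1-t)K_X'' discrepancy formula), lc-ness of the adjoint structure gives that $(X,\Delta_t+\frac{\alpha}{1-t}D)$ is lc for a suitable boundary $\Delta_t$. This boundary comes from the foliated canonical bundle formula/adjunction: for algebraically integrable $\Ff$ we have $K_{\Ff}\sim K_X+\Delta$-type relations on a property-$(*)$ modification. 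Since $1-t\ge\epsilon$, we get an $\alpha$-invariant bound for $(X,B)$ of the form $\alpha(X,B)\gtrsim\alpha(\Aa)$, with a constant depending on $\epsilon$.

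\textbf{Step 3: compare volumes.} I would bound $\vol(-(K_X+B))\ge c(\epsilon)\vol(-K_{\Aa})$. Jiang's theorem then bounds $X$, which gives the first claim. I expect this step to be the main obstacle: $K_{\Ff}$ and $K_X$ are not directly comparable, so Steps 2 and 3 must be carried out on a modification where $K_{\Ff}=K_{X/Z}+$(correction) with the correction controlled. The ACC statement, Theorem~\ref{thm: main ACC} in its general boundary form, is what I would use to keep the coefficients of $\Delta_t$ in a finite or DCC set, and hence to get uniform $\epsilon'$.

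\textbf{Step 4: bound the foliation.} For the second claim, once $X$ belongs to a bounded family, I would take $t\ge\epsilon$, so that $tK_{\Ff}$ is controlled by $-K_{\Aa}$ and $K_X$. This gives that $K_{\Ff}\cdot H^{d-1}$ is bounded for a bounded very ample $H$. Algebraically integrable foliations with bounded degree on a bounded family form a bounded family, since the leaves are parametrized by a Chow variety of bounded degree. That yields boundedness of $(X,\Ff)$.
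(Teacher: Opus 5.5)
\textbf{There is a genuine gap in Steps~1--3.} They contain no working mechanism, and the one you propose fails.

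\emph{The canonical bundle formula you invoke does not exist in the needed form.} There is no foliated canonical bundle formula $K_{\Ff}\sim_{\mathbb R}K_X+\Delta$ with $\Delta\geq 0$ controlled; the paper explicitly notes that a canonical bundle formula for adjoint foliated structures is not available. Already for $\Ff$ induced by a fibration $f\colon X\to Z$ one has $K_{\Ff}=K_X-f^*K_Z-R$. So writing $-K_{\Aa}=-(K_X+B)$ forces $B\sim_{\mathbb R}-t(f^*K_Z+R)$, which has no sign and uncontrolled singularities.

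\emph{The natural auxiliary pair is not log Fano.} The only intrinsic link between the ambient pair and $\Aa$ is that $s\mapsto K_{(X,\Ff,B)(s)}$ is affine. For $0<s<t$, expressing $K_X+B=K_{(X,\Ff,B)(0)}$ through the values at $s$ and $t$ is an extrapolation:
\[
K_X+B+\tfrac{t}{t-s}\bigl(-K_{(X,\Ff,B)(s)}\bigr)=\tfrac{s}{t-s}\bigl(-K_{(X,\Ff,B)(t)}\bigr).
\]
So the natural auxiliary pair has \emph{big and nef} log canonical class, and Jiang's Fano criterion \cite{Jia20} is the wrong tool.

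\emph{Step~1 assumes the conclusion.} It asks for $(X,B)$ to be $\epsilon'$-lc, which is the entire content of the theorem: given that, BAB \cite{Bir21a} finishes and Jiang is superfluous. ACC for thresholds and DCC coefficients give no lower bound on log discrepancies.

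\textbf{What the paper does instead.} It bounds $\tmld(\Aa)$ from below and then applies foliated BAB \cite[Theorem~2.4.1]{Cas+25a}. That result also bounds $(X,\Ff)$ when $t\geq\epsilon$, so your Step~4 is subsumed. The argument runs as follows.
\begin{enumerate}
\item If $\Aa$ is canonical, it is $\epsilon$-lc because $1-t\geq\epsilon$.
\item Otherwise, let $\Bb(s):=(X,\Ff,B)(s)$, with primes denoting its transforms on models $X'$, $X''$. Extract the divisor $E$ computing $\tmld(\Aa)$, which is possible as $t<1$. Then $E$ enters the boundary $B'$ with coefficient $1-\tmld(\Aa)/(t\epsilon_{\Ff}(E)+1-t)$.
\item For $n\gg0$, run a $(-K_{\Bb'(t-1/n)})$-MMP that is $K_{\Bb'(t)}$-trivial, making $-K_{\Bb''(t-1/n)}$ semi-ample.
\item Pick a general $D\sim_{\mathbb R}tn(-K_{\Bb''(t-1/n)})$ via Lemma~\ref{lem: alpha bound}. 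By the identity above with $s=t-1/n$, $K_{X''}+B''+D\sim_{\mathbb R}(tn-1)(-K_{\Bb''(t)})$.
\item Lemma~\ref{lem: lc preserved under inequality} gives $\alpha(X'',B''+D;-K_{\Bb''(t)})\geq\frac12\alpha(\Aa)$. The factor $tn-1$ cancels in $\alpha^d\cdot\vol$.
\item The local-volume estimate \cite[Lemma~2.13]{XZ24} holds for any big and nef polarization, not only $-K$. Together with \cite[Theorem~6.13]{LLX20}, it shows $(X'',B'')$ is $\frac{\epsilon}{(2d)^d}$-lc.
\item Since the MMP is $(K_{X'}+B')$-positive, this bound passes back to $E$, giving $\tmld(\Aa)\geq\frac{\epsilon}{(2d)^d}(1-t)\geq\frac{\epsilon^2}{(2d)^d}$.
\end{enumerate}
The idea your proposal is missing is this local-volume argument applied to a log-general-type auxiliary pair, in place of Jiang's Fano criterion.
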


Here we say that $\Aa$ is Fano if $-K_{\Aa}:=-tK_{\Ff}-(1-t)K_X$ is ample, and $\alpha(\Aa)$ denotes Tian's $\alpha$-invariant of the adjoint foliated structure $\Aa$ (see Definition~\ref{defn: alpha}). We refer the reader to Theorem~\ref{thm: jiang principle general} for a more general version of Theorem~\ref{thm: jiang afs main}. For varieties, Jiang's criterion implies that K-semistable Fano varieties with anti-canonical volume bounded away from $0$ form a bounded family, since K-semistable Fano varieties of dimension $d$ satisfy $\alpha\geq\frac{1}{d+1}$ (cf.~\cite[Theorem~3.5]{FO18}). For foliations, a differential-geometric notion of ``K-stability'' has not yet been formulated; nevertheless, Theorem~\ref{thm: jiang afs main} suggests that an algebro-geometric stability condition should be captured by a $\delta$-invariant for foliations, analogous to the $\delta$-invariant \cite{FO18,BJ20} defined by Fujita--Odaka and Blum--Jonsson, and should be expressible in terms of adjoint foliated structures. Mascharak and Vassiliadis informed us of forthcoming work of theirs in this direction \cite{MV26}. Papazachariou \cite{Pap26} also informed us that, by K-stability methods, he obtains a boundedness result for Fano adjoint foliated structures in the spirit of Theorem~\ref{thm: jiang afs main}. We do not pursue these directions in this paper.

Another fundamental boundedness statement in the Fano setting is Birkar's criterion for exceptional Fanos, i.e.\ those with Tian's $\alpha$-invariant strictly greater than $1$ \cite[Theorem~1.11]{Bir19}. Note that exceptional Fano varieties are always K-stable \cite{Tia87,OS12}. We prove the following foliated analogue.

\begin{alphthm}[Birkar's boundedness criterion for exceptional Fanos]\label{thm: exceptional afs main}
Let $d$ be a positive integer and $\Ii\subset [0,1]$ a DCC set. Then the set of foliated pairs $(X,\Ff)$ such that
\begin{enumerate}
    \item $\Aa:=(X,\Ff,t)$ is a Fano algebraically integrable adjoint foliated structure,
    \item $\dim X=d$ and $t\in\Ii\backslash\{0\}$, and
    \item $\alpha(\Aa)>1$
\end{enumerate}
forms a bounded family.
\end{alphthm}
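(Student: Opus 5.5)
The strategy follows Birkar's argument for exceptional Fano varieties. We first establish boundedness of complements and then turn $\alpha>1$ into a uniform klt bound. The Fano algebraically integrable adjoint foliated structure $\Aa=(X,\Ff,t)$ carries the canonical class $-K_{\Aa}=-tK_{\Ff}-(1-t)K_X$, and $\alpha(\Aa)>1$ means: for every $0\le D\sim_{\mathbb R}-K_{\Aa}$, the structure $(X,\Ff+D,t)$, with $D$ added to both the foliated and the variety parts as in Definition~\ref{defn: alpha}, is klt. In particular $(X,\Ff,t)$ is klt, hence $X$ is of Fano type in a suitable sense.

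\textbf{Step 1 (bounded complements).} Fix $t\in\Ii\setminus\{0\}$. Using the existence of $n$-complements for Fano algebraically integrable adjoint foliated structures (the foliated analogue of Birkar's theorem, with $n=n(d,\Ii)$ obtained via the DCC/ACC machinery, specifically Theorem~\ref{thm: acc lct general version} and the global ACC Theorem~\ref{thm: global ACC main}), I will produce $0\le B^+\sim_{\mathbb Q}-K_{\Aa}$ with $n(K_{\Aa}+B^+)\sim 0$ and $(X,\Ff+B^+,t)$ lc. Since $\alpha(\Aa)>1$, this complement is in fact klt.

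\textbf{Step 2 (uniform $\epsilon$-klt).} This is the key step. I will show that $(X,\Ff,t)$, and hence the underlying variety $X$, is $\epsilon$-lc for some $\epsilon=\epsilon(d,\Ii)>0$.

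I argue by contradiction. Suppose there is a sequence with minimal log discrepancies $a_i\to 0$ at divisors $E_i$. Extract $E_i$ and run an MMP to construct a divisor $D_i\sim_{\mathbb R}-K_{\Aa_i}$ whose lc threshold is close to $1$. Using $n$-complements on the extraction, the lc threshold of the complement can be made to lie in a set that is finite, or at least ACC. Then ACC for interpolated lc thresholds (Theorem~\ref{thm: main ACC} in its general form, Theorem~\ref{thm: acc lct general version}) forces $\alpha(\Aa_i)\le 1$ for $i\gg0$, which is a contradiction.

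More concretely, the quantity $\alpha(\Aa)$ is the infimum of interpolated lc thresholds of elements of the complement system $|{-nK_{\Aa}}|/n$. For bounded $n$, these thresholds lie in an ACC set whose accumulation points are controlled. So $\alpha>1$ implies $\alpha\ge 1+\delta$ for some uniform $\delta>0$. Hence $(X,\Ff+(1+\delta)B^+,t)$ is lc for every complement $B^+$, which yields the $\epsilon$-lc bound by taking $B^+$ through a divisor with small log discrepancy.

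\textbf{Step 3 (boundedness).} From $\epsilon$-lc singularities and $t\ge t_0:=\min\Ii\setminus\{0\}>0$, I will obtain that $X$ is an $\epsilon'$-lc Fano-type variety: write $-K_X=\frac{t}{1-t}K_{\Ff}+\frac{1}{1-t}(-K_{\Aa})$ and use the foliated structure via the foliated BAB criterion of \cite{Cas+25a}. That criterion then gives boundedness of $X$. Boundedness of $(X,\Ff)$ follows because $\Ff$ is determined by a bounded family of fibrations. This uses $t\ge t_0$ to bound $K_{\Ff}$ against the ample $-K_{\Aa}$, via the bounded complement $n(K_{\Aa}+B^+)\sim0$.

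I expect Step 2, the uniform gap $\alpha\ge1+\delta$, to be the main obstacle. There I would first try to reduce it to the global ACC Theorem~\ref{thm: global ACC main}, applied to the klt Calabi--Yau structures $(X,\Ff+\lambda B^+,t)$ with $\lambda$ equal to the lc threshold.
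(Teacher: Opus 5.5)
\textbf{There is a genuine gap at the foundation of your argument.} Step~1 assumes that Fano algebraically integrable adjoint foliated structures have bounded $n$-complements with $n=n(d,\Ii)$. This is not available. It is exactly what the paper leaves open as Conjecture~\ref{conj: boundedness of complements}, and the local and global ACC theorems (Theorems~\ref{thm: acc lct general version} and~\ref{thm: global ACC main}) do not produce it. Classically, boundedness of complements rests on effective birationality for Fanos and on lifting sections from lc centers via Kawamata--Viehweg vanishing, and the latter fails in general for foliations. Everything in Step~2 that uses ``$n$-complements on the extraction'' falls with it.

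Step~2 has further problems of its own:
\begin{itemize}
\item $\alpha(\Aa)$ is an infimum over the whole $\mathbb R$-linear system $|{-K_{\Aa}}|_{\mathbb R}$, not over $|{-nK_{\Aa}}|/n$.
\item ACC excludes accumulation from below, not from above: $\{1+1/k\}_{k\geq 1}$ is an ACC set. So no ACC statement can turn $\alpha>1$ into a uniform $\alpha\geq 1+\delta$.
\item Such a gap is not needed anyway. The only content of $\alpha(\Aa)>1$ that matters is that $\Aa$ is exceptional, i.e.\ every $\mathbb R$-complement is klt.
\end{itemize}
The real difficulty is to manufacture a complement that has a prescribed divisor $E$ of small total log discrepancy as an lc place. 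Your phrase ``taking $B^+$ through a divisor with small log discrepancy'' skips over exactly this step.

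\textbf{You also do not handle $t\to 1$.} If $\Ff\neq T_X$, an $\Ff$-invariant prime divisor on $X$ has total log discrepancy $1-t$ with respect to $(X,\Ff,t)$, so $\tmld(X,\Ff,t)\leq 1-t$. Along a sequence with $t_i\to 1$ the minimal log discrepancies therefore tend to $0$ automatically. The coefficients in your Step~3 formula for $-K_X$ blow up, and the foliated BAB of \cite{Cas+25a} does not apply.

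\textbf{The missing tool for both issues} is the ACC for $\mathbb R$-complementary thresholds, Theorem~\ref{thm: rct general}. It is deduced from the local and global ACC via the MMP, with no boundedness of complements. The paper uses it twice in proving Theorem~\ref{thm: exceptional bdd}, with $\tau=\tau_{\Rct}(d,\Ii\cup\{1\})$:
\begin{enumerate}
\item After a small $\mathbb Q$-factorialization, Lemma~\ref{lem: gap exc threshold} lets one replace $t$ by $t'=\min\{t,1-\tau\}$ while keeping the structure exceptional.
\item Suppose some $\mathbb R$-complement $\Cc$ of the structure with parameter $t'$ is not $\tau$-lc. One extracts the divisor $E$ computing $\tmld(\Cc)$ and gives $E$ coefficient $1$. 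Theorem~\ref{thm: rct general} shows the result is still $\mathbb R$-complementary. Pushing forward gives a non-klt complement, contradicting exceptionality.
\end{enumerate}
Hence a complement is $\tau$-lc, with $t'$ bounded away from $0$ (since $t\in\Ii\setminus\{0\}$ and $\Ii$ is DCC) and from $1$. Then \cite[Theorem~2.4.1]{Cas+25a} bounds $(X,\Ff)$.
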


We refer the reader to Theorem~\ref{thm: exceptional bdd} for a more general version of Theorem~\ref{thm: exceptional afs main}.

\subsection*{Roadmap}
For the reader's convenience, Figure~\ref{fig:roadmap} summarizes the main logical dependencies among the results used in the proofs, with references to the relevant sections.

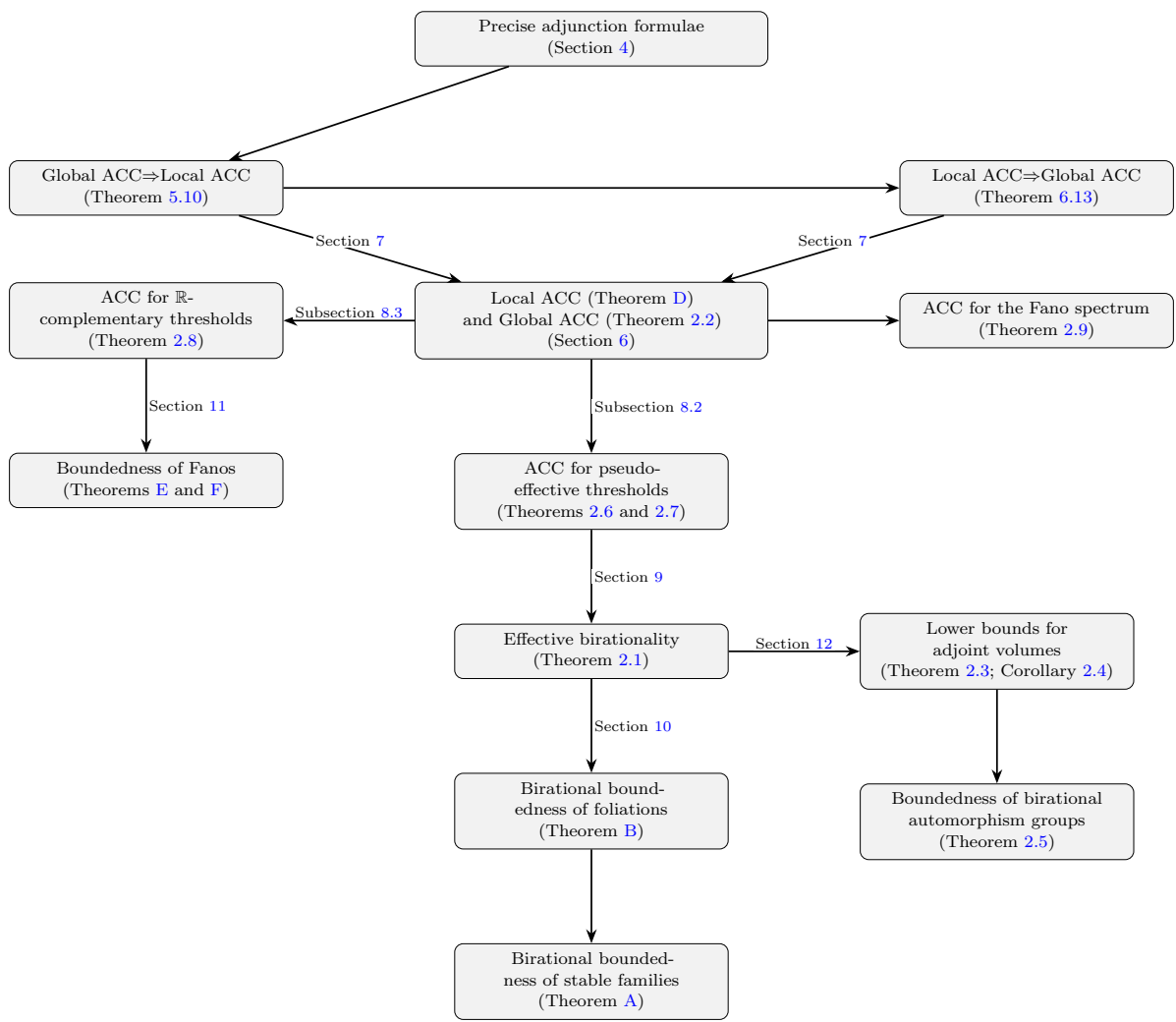
\begin{figure}[!htbp]
\centering
\resizebox{\textwidth}{!}{%
\begin{tikzpicture}[
  node distance=1.55cm and 2.15cm,
  box/.style={
    draw,
    rounded corners,
    align=center,
    font=\scriptsize,
    text width=4.2cm,
    inner sep=4pt,
    fill=gray!10
  },
  boxwide/.style={box, text width=5.5cm},
  boxsmall/.style={box, text width=3.35cm},
  arrow/.style={-Stealth, thick},
  lab/.style={font=\tiny, fill=white, inner sep=1pt}
]

\node[boxwide] (adj) {Precise adjunction formulae\\(Section~\ref{sec: precise adj})};

\node[box, below left=of adj] (g2l)
  {Global ACC$\Rightarrow$Local ACC\\
  (Theorem~\ref{thm: global to local})};

\node[box, below right=of adj] (l2g)
  {Local ACC$\Rightarrow$Global ACC\\
   (Theorem~\ref{thm: acc to global acc})};

\coordinate (mid23) at ($(g2l)!0.5!(l2g)$);
\node[boxwide, below=of mid23] (acc)
  {Local ACC (Theorem~\ref{thm: main ACC})\\
   and Global ACC (Theorem~\ref{thm: global ACC main})\\
   (Section~\ref{sec: local to global})};

\node[box, below=of acc] (pet)
  {ACC for pseudo-effective thresholds\\
   (Theorems~\ref{thm: pet main} and~\ref{thm: pet gap main})};

\node[box, below=of pet] (eb)
  {Effective birationality\\
   (Theorem~\ref{thm: afs main})};

\node[box, below=of eb] (bbfol)
  {Birational boundedness of foliations\\
   (Theorem~\ref{thm: birationally bounded main})};

\node[box, below=of bbfol] (bbfam)
  {Birational boundedness of stable families\\
   (Theorem~\ref{thm: stable family birational boundedness})};

\node[box, left=of acc] (rct)
  {ACC for $\mathbb{R}$-complementary thresholds\\
   (Theorem~\ref{thm: r-complementary main})};

\node[box, below=of rct] (fano)
  {Boundedness of Fanos\\
   (Theorems~\ref{thm: jiang afs main} and~\ref{thm: exceptional afs main})};

\node[box, right=of acc] (spec)
  {ACC for the Fano spectrum\\
   (Theorem~\ref{thm: fano spectrum main})};

\node[box, right=of eb] (vol)
  {Lower bounds for adjoint volumes\\
   (Theorem~\ref{thm: volume lower bound};
    Corollary~\ref{cor: low bound volume})};

\node[box, below=of vol] (aut)
  {Boundedness of birational automorphism groups\\
   (Theorem~\ref{thm: bir auto group afs main})};

\draw[arrow] (adj) -- (g2l);
\draw[arrow] (g2l) -- (l2g);

\draw[arrow] (g2l) -- node[midway, lab, above]{Section~\ref{sec: proof of acc and global acc}} (acc);
\draw[arrow] (l2g) -- node[midway, lab, above]{Section~\ref{sec: proof of acc and global acc}} (acc);

\draw[arrow] (acc) -- node[midway, lab, right]{Subsection~\ref{subsec: acc pet}} (pet);
\draw[arrow] (pet) -- node[midway, lab, right]{Section~\ref{sec: eb}} (eb);
\draw[arrow] (eb) -- node[midway, lab, right]{Section~\ref{sec: birational boundedness}} (bbfol);
\draw[arrow] (bbfol) -- (bbfam);

\draw[arrow] (eb) -- node[midway, lab, above]{Section~\ref{sec: volume lower bound}} (vol);
\draw[arrow] (vol) -- (aut);

\draw[arrow] (acc) -- node[midway, lab, above]{Subsection~\ref{subsec: acc rct}} (rct);
\draw[arrow] (rct) -- node[midway, lab, right]{Section~\ref{sec: application}} (fano);

\draw[arrow] (acc) -- (spec);

\end{tikzpicture}%
}
\caption{Roadmap of the main implications.}
\label{fig:roadmap}
\end{figure}

\subsection*{Acknowledgements} The authors would like to thank Hamid Abban, Caucher Birkar, Christopher D. Hacon, Masafumi Hattori, Zhengyu Hu, Giovanni Inchiostro, Junpeng Jiao, Roktim Mascharak, James M\textsuperscript{c}Kernan, Fanjun Meng, Wenhao Ou, Zsolt Patakfalvi, Jorge Vit\'orio Pereira, Luca Tasin, Stefania Vassiliadis, Lingyao Xie, Chenyang Xu, Zheng Xu, and Junyan Zhao for useful discussions. Key ideas of the paper originated when the authors attended the CIRM workshop \emph{Foliations, birational geometry and applications} during the period 3--7 February 2025, and the authors would like to thank the local organizers for their hospitality.

This work is supported by the National Key R\&D Program of China \#\allowbreak 2024YFA1014400. 
PC is partially funded by a Simons Collaboration Grant.
JL would like to thank Professor Gang Tian for constant support and encouragement. 
CS is partially funded by EPSRC. 
RS was partially supported by ``Programma per giovani ricercatori Rita Levi Montalcini'' of MUR and by PSR 2022 -- Linea 4 of the University of Milan. 
He is a member of the GNSAGA group of INDAM.

\section{Applications of the main results and sketch of the proofs}\label{sec: sketch of proof}

The main results that we have just presented have several additional consequences for the birational geometry of adjoint foliated structures. 
In this section, we collect those applications of the main theorems introduced above that play an important role in the paper, together with some results of independent interest. 
We will also sketch a proof of the ACC for interpolated lc thresholds (Theorem~\ref{thm: main ACC}). 
Moreover, we discuss how this result is used to deduce effective birationality (Theorem~\ref{thm: afs main}) and hence birational boundedness (Theorems~\ref{thm: stable family birational boundedness} and~\ref{thm: birationally bounded main})
for adjoint foliated structures.

\subsection*{Effective birationality} First, we prove the following effective birationality theorem, which can be viewed as an analogue of \cite[Theorem~1.3(3)]{HMX14} and is crucial for us to prove the birational boundedness theorem (Theorem~\ref{thm: birationally bounded main}).

\begin{thm}[Effective birationality]\label{thm: afs main}
Let $d$ be a positive integer and $\Ii\subset [0,1)$ a DCC set. Then there exist positive integers $m_0$ and $n_0$ depending only on $d$ and $\Ii$ with the following property. 

Assume that $(X,\Ff,t)$ is a projective lc algebraically integrable adjoint foliated structure of general type such that $\dim X=d$ and $t\in\Ii$. Then for any integers $m\geq m_0$ and $n\geq\min\left\{\left\lfloor\frac{mt}{1-t}\right\rfloor,mn_0\right\}$, the linear system
    $$|mK_X+nK_{\Ff}+L|$$
    defines a birational map for any pseudo-effective integral divisor $L$.
\end{thm}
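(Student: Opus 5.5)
The plan follows the HMX strategy for effective birationality, adapted to adjoint foliated structures. The role of the ACC results is to make the natural parameter $t$ effectively rational and bounded away from $1$. First I reduce to a good model. By \cite{Cas+25a}, after passing to a foliated log resolution and running an MMP, I may assume $(X,\Ff,t)$ is $\mathbb Q$-factorial, lc, with $X$ klt and $\Ff$ having at worst simple-toroidal/Property $(*)$ singularities. Then $K_{\Aa}=tK_{\Ff}+(1-t)K_X$ is big, and the linear system $|mK_X+nK_{\Ff}+L|$ is only altered by effective exceptional divisors, so birationality of the map is preserved.

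\textbf{Step 1: perturb $t$ into a finite rational set.} By the ACC for pseudo-effective thresholds (Theorems~\ref{thm: pet main} and~\ref{thm: pet gap main}) and the ACC for lc thresholds (Theorem~\ref{thm: main ACC} and its general version with boundary), there is $\epsilon=\epsilon(d,\Ii)>0$ such that bigness and lc-ness are stable under perturbation. That is, for every $s\in[t-\epsilon,t]\cap[0,1)$ (and $s\le 1-\epsilon$), $(X,\Ff,s)$ is lc and $sK_{\Ff}+(1-s)K_X$ is big. Since $\Ii$ is DCC in $[0,1)$, its supremum is $<1$ or approached only from below; I will handle this with the gap theorem. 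Hence I can replace $t$ by a rational $s$ lying in a fixed finite set $S\subset\mathbb Q\cap[0,1)$ with bounded denominators. The condition $n\ge\min\{\lfloor mt/(1-t)\rfloor, mn_0\}$ then says $n/m$ dominates some ratio $s/(1-s)$ with $s\in S$, up to a uniform margin. I will write
\[
mK_X+nK_{\Ff}+L=\tfrac{m}{1-s}\bigl(sK_{\Ff}+(1-s)K_X\bigr)+\bigl(n-\tfrac{ms}{1-s}\bigr)K_{\Ff}+L .
\]
The middle term is not pseudo-effective in general, so I will instead choose $s$ so that the decomposition uses two big adjoint classes $K_{\Aa_{s_1}},K_{\Aa_{s_2}}$ with $s_1\le n/(m+n)\le s_2$ (or $s_2$ replaced by a small perturbation). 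This writes the divisor as a nonnegative combination of big adjoint divisors plus the pseudo-effective $L$.

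\textbf{Step 2: effective birationality for a fixed rational $s$.} I adapt \cite[Theorem~1.3]{HMX14} to a fixed $s=p/q$. The key input is a uniform lower bound $\vol(K_{\Aa_s})\ge v(d,S)$, obtained from the ACC package as in the DCC-of-volumes argument. With it I construct, for very general points $x,y$, lc centres of $(X,\Delta)$ with $\Delta\sim_{\mathbb R}\lambda K_{\Aa_s}$, and cut down the centres by induction on dimension. The restriction to a centre $V$ is handled by the precise adjunction formulae (Section~\ref{sec: precise adj}): $(K_{\Aa_s}+\Delta)|_V$ becomes an adjoint foliated structure on $V$ whose coefficients lie in a DCC set controlled by $d$ and $S$. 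By induction its volume is bounded below, which lets me produce an isolated non-klt centre at $x$ with $\lambda$ bounded. Nadel vanishing for the klt variety $X$ (after writing $mK_X+nK_{\Ff}+L-K_X-\Delta$ as big + nef/pseff; here I use $K_X+\Delta'$ to absorb the $L$ term via a multiplier ideal) then separates $x$ and $y$ once $m\ge m_0$.

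\textbf{Main obstacle.} I expect the hardest step to be the adjunction-to-centres step in Step 2. The restricted structure on $V$ mixes the induced foliation on $V$ with the ambient boundary, and one needs its coefficients (and $s$-values) to stay in a DCC set so the induction closes. This is exactly where the global ACC (Theorem~\ref{thm: global ACC main}) is needed rather than only the local one. I would first try to reduce to $\Ff$ induced by an equidimensional toroidal morphism $X\to Z$, where the adjunction is explicit. A secondary difficulty is the non-effectivity of $K_{\Ff}$ when $n/m$ is large, and the $\min\{\cdot,mn_0\}$ cap is designed precisely to avoid it. For $n\ge mn_0$ I argue with $s$ close to $\sup S<1$, using that $K_{\Aa_s}$ is big uniformly for $s$ in the closed interval given by the pet-gap theorem.
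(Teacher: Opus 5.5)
Your Step~1 is essentially what the paper does. Theorem~\ref{thm: big gap general} and Lemma~\ref{lem: lc preserved under inequality} let one replace $t$ by $t_p=\frac1p\lfloor (p-1)t\rfloor\in\frac1p\mathbb N\cap[0,\frac{p-1}{p}]$ on a foliated log resolution, keeping lc-ness and bigness. The bound $t_p\le\frac{p-1}{p}$ is where the cap $mn_0$ (with $n_0=p$) comes from. Your worry about the $K_{\Ff}$-term is unfounded: the $K_{\Ff}$-side is pseudo-effective by \cite[Theorem~5.1]{Cas+25a}, so the adjoint class stays big for every parameter in $[t_p,1)$.

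The genuine gap is Step~2. The HMX scheme needs two inputs: (i) a uniform lower bound for $\vol(K_{\Aa_s})$ in dimension $d$, and (ii) a way to cut down the lc centres $V$ through very general points while bounding $\vol(K_{\Aa_s}|_V)$ from below by induction. You give no argument for (i). In the paper this bound (Theorem~\ref{thm: low bound volume}) is deduced \emph{from} effective birationality, so invoking it is circular. For (ii), the precise adjunction of Section~\ref{sec: precise adj} only handles adjunction to a component of $\lfloor B\rfloor$, i.e.\ a divisorial lc place of the foliated structure itself. Your $V$ are lc centres of an auxiliary pair $(X,\Delta)$, usually of higher codimension, and nothing relates $K_{\Ff}|_V$ to an induced structure on $V$ with DCC coefficients. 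That requires subadjunction, hence a canonical bundle formula, which is not known for adjoint foliated structures. The paper names exactly this as the obstruction to running \cite[Section~7]{HMX14}, and passing to a toroidal model does not supply it.

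The missing idea is to bypass lc centres and reduce to Birkar's effective birationality for $\epsilon$-lc varieties. On the resolution, $\Bb_p$ is klt (since $t_p<1$ and $\lfloor B'_p\rfloor=0$), so by \cite[Lemma~2.13]{CLW26} only finitely many $E$ satisfy $a(E,\Bb_p)\le(-1+\tau')(t_p\epsilon_{\Ff}(E)+1-t_p)$. Extract them with coefficient $1-\tau'$, where $\tau'=1/N$ is small compared with the constants of Theorems~\ref{thm: acc lct general version} and~\ref{thm: big gap general} for $\frac1p\mathbb N$; the result is $\frac{\tau'}{p}$-lc with big canonical class. A good minimal model $\Bb_Y$ exists by \cite[Theorem~2.1.1]{Cas+25a}. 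Its ambient variety $Y$ is $\frac{\tau'}{p}$-lc by \cite[Proposition~9.1]{Cas+25a}, $NpK_{\Bb_Y}$ is integral, big and nef, and $NpK_{\Bb_Y}-K_Y$ is big. Hence \cite[Theorem~4.2]{Bir23a} makes $|mNpK_{\Bb_Y}|$ birational for $m\ge m_1(d,\Ii)$. This is transported back via potential birationality (\cite[Lemma~2.3.4]{HMX13}, Lemmas~\ref{lem: potentially birational for afs} and~\ref{lem: birational to +pe birational for afs}). It is then extended to all admissible $(m,n)$ by Lemma~\ref{lem: elementary perturbation eb} and descended to $X$ by \cite[III.~5.10 Lemma]{Nak04}. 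No volume bound and no adjunction to centres is needed.
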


We refer the reader to Theorem~\ref{thm: afs eb} for a more general version of Theorem~\ref{thm: afs main}. As we mentioned above, one should not expect an effective birationality result for pluricanonical systems of the form $|mK_{\Ff}|$, due to a famous example of Lü \cite[Theorem~1.3]{Lü25}.

\subsection*{Global ACC} We prove a global ACC statement for algebraically integrable adjoint foliated structures. This can be seen as an analogue of the global ACC in \cite[Theorem~1.5]{HMX14}, which plays an important role in the study of polarized Calabi--Yau varieties and Fano varieties.

\begin{thm}[Global ACC]\label{thm: global ACC main}
    Let $d$ be a positive integer. Then there exists an ACC set $\Ii=\Ii_{\gilct}\subset [0,1]$ depending only on $d$ with the following property. Assume that $(X,\Ff,t)$ is a projective lc algebraically integrable adjoint foliated structure of dimension $d$ such that
    $$tK_{\Ff}+(1-t)K_X\equiv 0.$$
    Then either $t\in\Ii$, or $K_X\equiv K_{\Ff}\equiv 0$.
\end{thm}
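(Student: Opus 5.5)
The plan is to follow the strategy of \cite[Theorem~1.5]{HMX14}: prove the global ACC jointly with the local ACC (Theorem~\ref{thm: main ACC}) by induction on dimension, as indicated in the roadmap. Assume by contradiction that there is a sequence $(X_i,\Ff_i,t_i)$ of projective lc algebraically integrable adjoint foliated structures of dimension $d$ with $t_iK_{\Ff_i}+(1-t_i)K_{X_i}\equiv 0$, not all of $K_{X_i},K_{\Ff_i}$ numerically trivial, and $t_i$ strictly increasing. If $t_i=0$ or $K_{\Ff_i}\equiv 0$, the relation forces $K_{X_i}\equiv 0$ (for $t_i<1$), so we may assume $0<t_i<1$ and $K_{\Ff_i}\not\equiv 0$. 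Then $K_{X_i}\equiv -\frac{t_i}{1-t_i}K_{\Ff_i}$, and the two classes have opposite signs.

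\emph{Step 1: reduction to a Mori fibre space.} Replace $X_i$ by a $\mathbb Q$-factorial ACSS/lc modification, which is crepant for the adjoint structure. Since $K_{\Ff_i}\not\equiv 0$, one of $K_{X_i}$ or $K_{\Ff_i}$ is not pseudo-effective. I will run the foliated MMP for algebraically integrable foliations (\cite{Cas+24,Cas+25a}) on $K_{\Ff_i}$, or the classical MMP on $K_{X_i}$. Because the adjoint class is $\equiv 0$, every step is crepant for $(X_i,\Ff_i,t_i)$, so lc-ness and the relation are preserved. This produces a Mori fibre space (classical or foliated) $X_i\to Z_i$ on which both $K_{X_i}$ and $K_{\Ff_i}$ are nontrivial relatively.

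\emph{Step 2: restriction to a lower-dimensional structure.} If $\dim Z_i>0$, restrict to a general fibre. The foliation restricts to an algebraically integrable foliation (possibly with boundary), the adjoint class is still $\equiv0$, and the restriction is nontrivial. Induction on $d$ then gives a contradiction, using the more general version with boundary and nef part, as in Theorem~\ref{thm: acc lct general version}. If $Z_i$ is a point, then $X_i$ has Picard number one and $K_{\Ff_i}$ is ample or antiample. If some lc center, or some boundary coefficient coming from the precise adjunction of Section~\ref{sec: precise adj}, is nontrivial, adjunction to an lc center (or a general leaf, or the divisorial part) yields a lower-dimensional lc structure with coefficients in a DCC set depending on $t_i$, again with adjoint class $\equiv0$. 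Here the precise adjunction formulae are essential: they must show that the coefficients are controlled by $t_i$ in a way that preserves strict monotonicity. Induction then gives the contradiction.

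\emph{Step 3: the main obstacle.} The remaining case is $\rho(X_i)=1$, with $(X_i,\Ff_i,t_i)$ klt-like and having no useful lc centers. Following HMX, I will increase $t$ slightly toward $t_\infty=\lim t_i$ and use the local ACC together with a boundedness statement in the spirit of \cite{Cas+25a} (BAB for Fano adjoint foliated structures, or the effective birationality of Theorem~\ref{thm: afs main} in lower dimension). The aim is to show that for $t'\in(t_i,t_\infty)$ the structure is not lc. Comparing with a limiting structure in a bounded family then forces $t_i$ to stabilize. I expect this step to be the hardest: the ACC for lc thresholds in dimension $d$ and the global ACC in dimension $d-1$ must be interlocked, as Theorems~\ref{thm: global to local} and~\ref{thm: acc to global acc} indicate. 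The delicate point is controlling the loss of $\mathbb Q$-Cartierness and of the foliation structure under the deformations needed to create an lc center.
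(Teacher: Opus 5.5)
Your outline has the right overall shape: induction on dimension, carried out jointly with the local ACC. But both load-bearing steps fail as written, and they fail exactly where the paper has to depart from \cite{HMX14}.

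\textbf{Step~2.} You restrict to a general fibre $F$ of the Mori fibre space $X_i\to Z_i$ and induct on $\dim X$. For foliations there is no Bertini-type adjunction to general fibres: $K_{\Ff}|_F$ picks up a tangency contribution, and the induced structure on $F$ need not be lc. A local example already shows this. Take the smooth foliation $\{y-x^2=c\}$ on a surface fibred by $y$. Each fibre is tangent to a leaf at one point. That point is invariant for the induced rank-zero foliation on the fibre and acquires coefficient $t$. So the restriction is not lc for $t>\frac{1}{2}$, although $(X,\Ff,t)$ is lc.

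This is why the paper proves the \emph{relative} Theorem~\ref{thm: global acc general}. Its hypotheses and conclusions concern only the general fibre of $X\to U$, the Mori fibre space is kept relative (Theorem~\ref{thm: picard one imply general global acc}), and the induction is on the relative dimension $d_F$ rather than on $\dim X$.

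Likewise, adjunction to an lc centre of codimension $\geq 2$ would need a canonical bundle formula, which is unknown for adjoint foliated structures. The paper instead does the following (Lemma~\ref{lem: when exist lc center}). It extracts the lc place $E$. It then runs a $(-E)$-MMP to a new Mori fibre space on which $E$ is a horizontal boundary component with coefficient one. Only then does it use divisorial adjunction. This is combined with Theorem~\ref{thm: finite inversion of adjunction to finite 2}: finiteness after adjunction forces finiteness before. It also uses the dichotomy of Theorem~\ref{thm: dcc precise adjunction}: either $t$ lies in a finite set, or the normalized coefficients after adjunction do not depend on $t$, in which case part~(3) of the lower-dimensional statement applies. ``DCC is preserved'' alone is not enough.

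Also, only $tK_{\Ff}+(1-t)K_X$ is assumed $\mathbb R$-Cartier, so your first reduction and the final conclusion $K_X\equiv K_{\Ff}\equiv 0$ need an argument. The paper obtains $K_{X'}+\Exc(h)\equiv 0$ on a qdlt modification from the relative theorem, and then gets $K_X\sim_{\mathbb R}0$ from \cite{Gon11}.

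\textbf{Step~3.} This is where the theorem is actually proved, and your sketch supplies no mechanism.
\begin{itemize}
\item The approach of \cite{HMX14} uses effective birationality and log birational boundedness in dimension $d$, proved via subadjunction; this is not available for adjoint foliated structures.
\item In this paper, effective birationality (Theorem~\ref{thm: afs main}) is deduced \emph{from} the global ACC through Theorem~\ref{thm: big gap general}. Invoking it here is circular, and its lower-dimensional case is not what is needed.
\item The BAB theorem of \cite{Cas+25a} does not apply directly either: $t_i$ may tend to $1$, and there is no uniform $\epsilon$-lc bound.
\item Nothing in your sketch explains why boundedness would make the structure at $t'\in(t_i,t_\infty)$ fail to be lc.
\end{itemize}

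The missing idea is a relative version of the McKernan--Prokhorov argument \cite{MP04}. Replace $t_i$ by $t_i'=\min\{t_i,1-\tau\}$, where $\tau$ is supplied by the local ACC, the gap Lemma~\ref{lem: one lc place one close to one}, and induction on $d_F$. Then split into two cases.
\begin{itemize}
\item If the total minimal log discrepancies of $(X_i,\Ff_i,t_i')$ over the generic point of $U_i$ stay bounded below, then \cite{Bir21a} bounds the general fibres, and Proposition~\ref{prop: when fiber bounded} concludes by an intersection computation.
\item Otherwise, extract the divisor $E_i$ computing them; its coefficient $e_i$ tends to $1$. Run a $(-E_i)$-MMP to a Mori fibre space on which $E_i$ is horizontal. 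This reduces to the case of a boundary coefficient tending to $1$, which the relative two-ray game (Lemma~\ref{lem: relative two-ray}) and the gap lemma rule out, giving the contradiction.
\end{itemize}
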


We refer the reader to Theorem~\ref{thm: global acc general} for a more general version of Theorem~\ref{thm: global ACC main}.

\subsection*{Lower bound on the volume} As an application of effective birationality (Theorem~\ref{thm: afs main}), we obtain a lower bound on the volume of algebraically integrable adjoint foliated structures. This can be seen as an analogue of \cite[Theorem~1.3(2)]{HMX14}.

\begin{thm}\label{thm: volume lower bound}
    Let $d$ be a positive integer and $\Ii\subset [0,1]$ a DCC set. Then there exists a positive real number $\epsilon$ depending only on $d$ and $\Ii$ with the following property. 
    
    Assume that $(X,\Ff,t)$ is a projective lc algebraically integrable adjoint foliated structure of general type such that $\dim X=d$ and $t\in\Ii$. Then
    $$\vol(tK_{\Ff}+(1-t)K_X)\geq (1-t)^d\epsilon.$$
\end{thm}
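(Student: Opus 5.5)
The plan is to deduce the bound from effective birationality (Theorem~\ref{thm: afs main}), following the route from \cite[Theorem~1.3(3)]{HMX14} to \cite[Theorem~1.3(2)]{HMX14}. Set $K_{\Aa}:=tK_{\Ff}+(1-t)K_X$. If $t=1$ the right-hand side is $0$ and there is nothing to prove, so we may assume $t\in\Ii\setminus\{1\}$. We may also replace $\Ii$ by $\Ii\cap[0,1)$, which is still DCC, so that Theorem~\ref{thm: afs main} applies. Let $m_0,n_0$ be the integers it provides, depending only on $d$ and $\Ii$.

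The first step is to choose, for a given $t$, integers $m\geq m_0$ and $n$ satisfying the hypothesis of Theorem~\ref{thm: afs main}, in such a way that $mK_X+nK_{\Ff}$ is bounded above by a controlled multiple of $K_{\Aa}$. Take $m=m_0$ and $n=\lfloor m_0t/(1-t)\rfloor$. Since $n$ is at least the minimum appearing in the theorem, the hypothesis holds. We then write
\[
m_0K_X+nK_{\Ff}=\frac{m_0}{1-t}K_{\Aa}-\Big(\frac{m_0t}{1-t}-n\Big)K_{\Ff},
\]
where the coefficient $\delta:=\frac{m_0t}{1-t}-n$ lies in $[0,1)$. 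This leftover term $-\delta K_{\Ff}$ is the problem: $K_{\Ff}$ need not be pseudo-effective, so we cannot simply discard it.

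The second step absorbs the leftover into a multiple of $K_{\Aa}$. We have
\[
-\delta K_{\Ff}=\frac{\delta(1-t)}{t}K_X-\frac{\delta}{t}K_{\Aa}.
\]
So it suffices to find an integer $k$, depending only on $d$ and $\Ii$, such that $kK_{\Aa}-K_X$ and $kK_{\Aa}-K_{\Ff}$ are pseudo-effective, with the constants kept uniform. Here I would use the pseudo-effective threshold results (Theorems~\ref{thm: pet main} and~\ref{thm: pet gap main}). Because the set of pseudo-effective thresholds satisfies ACC, there is a gap below $t$ uniform for $t\in\Ii$: $(X,\Ff,s)$ remains of general type for $s\in[t-\eta,t]$ with $\eta>0$ uniform. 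Consequently $K_{\Aa}$ dominates $\eta'K_{\Ff}$, or $\eta'K_X$, modulo pseudo-effective classes.

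A cleaner alternative is to choose $L$ in Theorem~\ref{thm: afs main} to cancel the fractional part. Take instead $n=\lceil m_0t/(1-t)\rceil$, so the leftover becomes $+\delta K_{\Ff}$ with $\delta\in[0,1)$, and control $\delta K_{\Ff}$ through the general-type property at a nearby parameter $s$. Either way, the outcome is that $|NK_X+N'K_{\Ff}|$ defines a birational map, with $NK_X+N'K_{\Ff}\leq\frac{M}{1-t}K_{\Aa}$ modulo pseudo-effective classes and $M$ depending only on $d$ and $\Ii$.

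The third step is the standard volume estimate. If $|D|$ defines a birational map on a $d$-dimensional variety with $D$ integral, then $\vol(D)\geq1$. Granting the domination from the second step,
\[
1\leq\vol\Big(\frac{M}{1-t}K_{\Aa}\Big)=\frac{M^d}{(1-t)^d}\vol(K_{\Aa}),
\]
which gives the claim with $\epsilon=M^{-d}$.

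I expect the main obstacle to be the uniform domination in the second step. Concretely, we must show that $\frac{M}{1-t}K_{\Aa}-(NK_X+N'K_{\Ff})$ is pseudo-effective with $M$ uniform, even as $t\to1$ where $\frac{t}{1-t}$ blows up, and also as $t\to0$. Near $t=0$ the $\lfloor\cdot\rfloor$ choice gives $n=0$, so no $K_{\Ff}$ appears. Near $t=1$, Theorem~\ref{thm: afs main} permits $n=mn_0$; I would take $N=m_0$ and $N'=m_0n_0$, and use the ACC of pseudo-effective thresholds to show that $(X,\Ff,s)$ is still of general type for $s=\frac{n_0}{1+n_0}$ whenever $t$ is close to $1$. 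This yields the domination with the factor $\frac{1}{1-t}$, and it is the case I would work out in detail first.
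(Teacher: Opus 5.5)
The gap is your second step, which you leave as ``granting the domination'', and the tool you propose for it controls the wrong side. Put $K_{\Aa(s)}:=sK_{\Ff}+(1-s)K_X$.

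The gap for pseudo-effective thresholds (Theorem~\ref{thm: pet gap main}) gives bigness of $K_{\Aa(s)}$ only for $s$ slightly \emph{below} $t$. Via $K_{\Aa}=\frac{1-t}{1-s}K_{\Aa(s)}+\frac{t-s}{1-s}K_{\Ff}$, this bounds $K_{\Ff}$ only \emph{from above} by a multiple of $K_{\Aa}$.

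With your floor choice, the class you must show pseudo-effective is $\frac{M}{1-t}K_{\Aa}-(m_0K_X+nK_{\Ff})=\frac{M-m_0}{1-t}K_{\Aa}+\delta K_{\Ff}$. With $N'=m_0n_0$ near $t=1$, it is $\frac{M-m_0}{1-t}K_{\Aa}+\bigl(\frac{m_0t}{1-t}-m_0n_0\bigr)K_{\Ff}$. In both cases this is a positive combination of $K_{\Aa}$ and $K_{\Ff}$, i.e.\ a positive multiple of $K_{\Aa(s)}$ for some $s\in(t,1]$. Information at parameters below $t$, such as bigness of $K_{\Aa(n_0/(1+n_0))}$, does not give this.

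Your reformulation asking for $kK_{\Aa}-K_X$ pseudo-effective with $k$ independent of $t$ is in fact false. Take $X=C\times D$ with $C,D$ curves of genus $\geq 2$, and $\Ff$ the foliation by the fibres of $X\to D$. Then $kK_{\Aa}-K_X=(k-1)p_1^*K_C+(k(1-t)-1)p_2^*K_D$, which is negative on the covering curves $\{c\}\times D$ once $k(1-t)<1$.

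The missing ingredient is \cite[Theorem~5.1]{Cas+25a}, which the paper uses throughout (e.g.\ in Lemma~\ref{lem: volume monotonicity}). It says that if $(X,\Ff,t)$ is lc and $K_{\Aa}$ is pseudo-effective, then $K_{\Aa(s)}$ is pseudo-effective for all $s\geq t$; in particular $K_{\Ff}$ is. With it, your first choice closes at once: $\frac{m_0}{1-t}K_{\Aa}-(m_0K_X+nK_{\Ff})=\delta K_{\Ff}$ is pseudo-effective, so $\vol(K_{\Aa})\geq(1-t)^dm_0^{-d}$.

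Your ceiling variant can also be completed using only the gap below $t$. For $s=(1-\tau)t$ one gets $\delta K_{\Ff}\leq\frac{1-s}{t-s}K_{\Aa}\leq\frac{1}{\tau t}K_{\Aa}$ modulo pseudo-effective classes, and $t\geq\min(\Ii\setminus\{0\})$ unless $t=0$. But you leave this at one sentence and plan instead around $n=mn_0$, which does not work as described.

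You also need to handle the fact that $K_X$ and $K_{\Ff}$ need not be $\mathbb R$-Cartier individually, so your volume comparisons on $X$ are not justified as written. The paper passes to a foliated log resolution; this creates the boundary $\Exc(h)$, which is why it proves the general version, Theorem~\ref{thm: low bound volume}.

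The paper then avoids rounding altogether. It truncates $t$ and the coefficients to $\frac1p\mathbb N$: Theorem~\ref{thm: big gap general} keeps general type, and Lemma~\ref{lem: volume monotonicity} transfers the normalized volume bound. Then $t$ lies in a finite set and $\frac{m_1t}{1-t}\in\mathbb N$ for a uniform $m_1$, so $|\frac{m_1}{1-t}K_{\Bb}|$ is itself birational by Theorem~\ref{thm: afs eb}.
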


We refer the reader to Theorem~\ref{thm: low bound volume} for a more general version of Theorem~\ref{thm: volume lower bound}. We also remark that, when $d=2$ and $t$ is fixed and sufficiently close to $1$, Theorem~\ref{thm: volume lower bound} was proved in \cite[Theorem~1.5]{SS23} without assuming algebraic integrability.

Theorem~\ref{thm: volume lower bound} is related to a question of 
Pereira 
(cf.\ also Hacon--Langer \cite[Question~4]{HL21} and \cite[Section~3]{Cas21}) which asks whether $\vol(\Ff):=\vol(K_{\Ff})$ has a positive lower bound when $\Ff$ is (log) canonical, of general type, and of fixed dimension. This question has been studied in \cite{PS19,Che21,HL21,LT22,SS23,HJLL24,CPT25,Fan25,ST26}, but remains open in general even for algebraically integrable foliations in dimension $2$. We mention in particular the recent work of Spicer--Tasin \cite{ST26}, which establishes a precise adjunction-type formula $K_X+\Gamma\sim K_{\Ff}+D$ for log canonical rank-one foliations $\Ff$ on log homogeneous (in particular, toric) varieties, and uses it to derive several results on the birational geometry of such foliations, including effective lower bounds for $\vol(\Ff)$ in the toric setting. As a corollary of Theorem~\ref{thm: volume lower bound}, we obtain the following result on the lower bound of $\vol(\Ff)$.

\begin{cor}\label{cor: low bound volume}
Let $d$ be a positive integer. Then there exists a positive real number $\epsilon$ depending only on $d$ with the following property. 

Assume that $\Ff$
is an lc algebraically integrable foliation of general type on a normal projective variety $X$ of dimension $d$, such that $X$ is lc and $-K_X$ is pseudo-effective (for instance, when $X$ is Calabi--Yau or of Fano type). Then 
$$\vol(\Ff)\geq\epsilon.$$
\end{cor}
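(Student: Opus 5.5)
The plan is to deduce the corollary from Theorem~\ref{thm: volume lower bound}, applied with a single fixed value of $t$. We first need some $t_0\in(0,1)$, depending only on $d$, such that $(X,\Ff,t)$ is lc for every $t\in[0,t_0]$. Note that $\lct(X;\Ff)=1$ here: $X$ is lc and $\Ff$ is lc. Moreover, the discrepancy of an adjoint foliated structure is affine in $t$, namely
\[
a(E,X,\Ff,t)=t\,a(E,\Ff)+(1-t)\,a(E,X).
\]
So lc-ness at $t=0$ and at $t=1$ gives lc-ness for every $t\in[0,1]$. Hence we may take any fixed $t_0\in(0,1)$, say $t_0=\tfrac12$, and set $\Ii=\{t_0\}$, which is trivially DCC.

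Next we check that $(X,\Ff,t_0)$ is of general type. We have
\[
K_{(X,\Ff,t_0)}=t_0K_{\Ff}+(1-t_0)K_X=t_0\Bigl(K_{\Ff}+\tfrac{1-t_0}{t_0}K_X\Bigr).
\]
This divisor is big, since $K_{\Ff}$ is big, and since the remaining term is not an issue: what matters is comparing volumes with $K_{\Ff}$. Because $-K_X$ is pseudo-effective, we can write $K_{\Ff}=K_{(X,\Ff,t_0)}/t_0+\tfrac{1-t_0}{t_0}(-K_X)$. Therefore $K_{\Ff}-\tfrac1{t_0}K_{(X,\Ff,t_0)}$ is pseudo-effective.

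For the inequality itself we use monotonicity of volume. If $A$ is big and $P$ is pseudo-effective, then $\vol(A+P)\ge\vol(A)$. This holds because $A+P$ is a limit of $A+P_k$ with $P_k$ effective $\mathbb{R}$-divisors, together with continuity of volume on the big cone. We also need $K_{(X,\Ff,t_0)}$ to be big, which is where some care is required. The definition of general type for an adjoint foliated structure requires this bigness, and it is not automatic from bigness of $K_\Ff$ when $K_X$ is negative.

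To get it, choose $t_0$ close to $1$. Since the big cone is open, $K_\Ff+sK_X$ is big for $s$ small, but the threshold depends on $X$, so this is not uniform. A better route is to use the freedom in $t$ and apply Theorem~\ref{thm: volume lower bound} with $\Ii=\{1-1/k\mid k\ge2\}$, which is DCC. For a given $X$, pick $k$ large enough that $K_{(X,\Ff,t)}$ is big with $t=1-1/k$. Then
\[
\vol(K_\Ff)\ge t^{-d}\vol(K_{(X,\Ff,t)})\ge t^{-d}(1-t)^d\epsilon.
\]
This bound degenerates as $k\to\infty$, so a uniform bound needs a different idea.

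The main obstacle is therefore uniformity near $t=1$. To handle it, I would instead use the effective birationality of Theorem~\ref{thm: afs main} directly, since the lower bound on volume in Theorem~\ref{thm: volume lower bound} ultimately comes from there. I expect its general version (Theorem~\ref{thm: low bound volume}) to allow $t\in\Ii\subset[0,1]$ including $t=1$, with a bound of the form $\vol\ge\epsilon$ that does not degenerate, given that $-K_X$ is pseudo-effective. Concretely: take $m\ge m_0$ and $n\ge mn_0$, and let $L=-mK_X$, which is pseudo-effective. Then $|nK_\Ff|=|mK_X+nK_\Ff+L|$ defines a birational map, provided $L$ is Weil-integral. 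If it is not, replace $L$ by a suitable Cartier multiple; this uses an index bound or a rounding argument, and that is the technical point.

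Once $|n_1K_\Ff|$ is birational for some $n_1$ depending only on $d$, the standard argument applies. A birational map given by $|n_1K_\Ff|$ gives $\vol(n_1K_\Ff)\ge1$, so $\vol(K_\Ff)\ge n_1^{-d}=:\epsilon$. The auxiliary $t$ is chosen in a fixed DCC set, and $(X,\Ff,t)$ is lc and of general type because $K_X\preceq K_\Ff$ up to a pseudo-effective class.
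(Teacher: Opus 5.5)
Your final argument is correct, but it is not the paper's route. The paper gets the uniformity you found missing in your fourth paragraph from the gap theorem. Theorem~\ref{thm: big gap general}, applied to $(X,\Ff)(1)$ with coefficients in $\{0,1\}$ (where $(X,\Ff)(s)$ is lc for all $s$), gives $t=1-\tau$, with $\tau$ depending only on $d$, such that $tK_\Ff+(1-t)K_X$ is big. Theorem~\ref{thm: volume lower bound} at this single $t$, together with $\vol(tK_\Ff)\ge\vol(tK_\Ff+(1-t)K_X)$, then gives $\epsilon=((1-t)/t)^d\epsilon_0$.

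You instead absorb $K_X$ into the twist $L=-mK_X$. This is legitimate because the condition $n\ge mn_0$ does not involve $t$, and $m_0,n_0$ depend only on $d$ and the DCC set $\{1-1/k\mid k\ge 2\}$. So it does not matter that the $k$ making $(X,\Ff,1-1/k)$ of general type depends on $X$. Taking $m=m_0$ and $n=m_0n_0$, you get that $|m_0n_0K_\Ff|$ is birational, hence $\vol(K_\Ff)\ge(m_0n_0)^{-d}$. More directly still, Theorem~\ref{thm: eb when t=1} is stated for exactly your situation ($X$ lc, $\Ff$ lc algebraically integrable of general type), so you can drop the auxiliary $t$ altogether.

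What each route buys:
\begin{itemize}
\item Your route gives a stronger statement: uniform effective birationality of $|nK_\Ff|$ itself when $-K_X$ is pseudo-effective, which the paper notes (via L\"u's example) is not available in general. The gap theorem is not avoided, only hidden inside the proof of effective birationality, where it produces $t_p<1$.
\item The paper's route is a formal deduction from the volume bound and needs effective birationality only with $L=0$.
\end{itemize}

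Several surrounding claims should be corrected, although none of them affects the argument you end with.
\begin{itemize}
\item The ``technical point'' about integrality is vacuous. $K_X$ is an integral Weil divisor, so $-mK_X$ is integral; it is also $\mathbb{Q}$-Cartier (since $X$ is lc) and pseudo-effective.
\item Your closing justification of general type, that $K_\Ff-K_X$ is pseudo-effective, does not imply that $tK_\Ff+(1-t)K_X$ is big. The right reason is openness of the big cone for $t=1-1/k$ with $k\gg0$ depending on $X$, as in your fourth paragraph.
\item Your second paragraph's claim that bigness of $K_\Ff$ alone makes $K_{(X,\Ff,t_0)}$ big is false, as you noticed.
\item Your expectation that Theorem~\ref{thm: low bound volume} gives a non-degenerate bound at $t=1$ is also false: its bound $(1-t)^d\epsilon$ is vacuous there. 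That is precisely why the paper passes through a uniform $t<1$.
\end{itemize}
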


\subsection*{Boundedness of birational automorphism groups} We obtain the following result on the boundedness of birational automorphism groups for algebraically integrable foliations of general type. This can be viewed as an analogue of \cite[Theorem~1.1]{HMX13}. 

\begin{thm}\label{thm: bir auto group afs main}
    Let $d$ be a positive integer. Then there exist a positive integer $c$ and a positive real number $\lambda$, depending only on $d$, with the following property.

 Assume that $\Ff$ is an lc algebraically integrable adjoint foliated structure of general type on a normal projective variety $X$ of dimension $d$, such that $X$ is klt. Then
    $$\#\Bir(X,\Ff)\leq c\vol\left(K_X+\lambda K_{\Ff}\right).$$
\end{thm}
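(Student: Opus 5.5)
We follow the strategy of Hacon--M\textsuperscript{c}Kernan--Xu \cite[Theorem~1.1]{HMX13}, replacing the canonical divisor by the adjoint divisor $K_X+\lambda K_{\Ff}$. First we fix the constants. By Theorem~\ref{thm: main ACC}, interpolated lc thresholds of algebraically integrable foliations in dimension $d$ satisfy the ACC. Since $X$ is klt and $\Ff$ is lc, $(X,\Ff,t)$ is lc for every $t\in[0,1]$, by linearity of discrepancies in $t$. We choose $t_0\in(0,1)$ small and rational and set $\lambda:=\frac{t_0}{1-t_0}$, so that $K_X+\lambda K_{\Ff}=\frac{1}{1-t_0}K_{(X,\Ff,t_0)}$. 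Here $K_{\Ff}$ is big. Using the ACC for pseudo-effective thresholds (Theorems~\ref{thm: pet main} and~\ref{thm: pet gap main}) with $\Ii=\{t_0\}$, I expect that for $t_0$ below the uniform threshold, $(X,\Ff,t_0)$ is of general type. More robustly, $tK_{\Ff}+(1-t)K_X$ is big once $t$ is close to $1$; if one instead needs $t_0$ near $1$, we simply choose $\lambda$ large but still depending only on $d$. We fix this $t_0$, which gives a single-element DCC set $\Ii=\{t_0\}$.

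Next we pass to a model on which $\Bir(X,\Ff)$ acts by automorphisms. Every birational automorphism preserving $\Ff$ preserves the adjoint canonical ring $R(K_X+\lambda K_{\Ff})$, which is finitely generated for algebraically integrable adjoint foliated structures of general type by \cite{Cas+25a}. Hence $\Bir(X,\Ff)$ acts faithfully by automorphisms on the lc model $(X_c,\Ff_c,t_0)$ with $K_{\Aa_c}$ ample, and the volume is unchanged. In particular the group is finite, as an automorphism group of a polarized variety preserving a finitely generated structure; alternatively, it is a discrete subgroup of a linear algebraic group acting on a big and nef class.

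Let $G=\Bir(X,\Ff)$ and form the quotient $Y=X_c/G$ with the induced foliation $\Ff_Y$. By Hurwitz-type formulas for foliations, $K_{X_c}=\pi^*(K_Y+B_Y)$ with $B_Y$ having standard coefficients $1-\frac1n$. For the foliation, $K_{\Ff_c}=\pi^*(K_{\Ff_Y}+B_{\Ff})$, where $B_{\Ff}$ has standard coefficients along $\Ff_Y$-invariant ramification and none along non-invariant ramification, or the reverse depending on convention. Then $(Y,\Ff_Y,B_Y,B_{\Ff},t_0)$ is an lc algebraically integrable adjoint foliated structure of general type whose boundary coefficients lie in a fixed DCC set, and
$$\vol(K_{\Aa_c})=|G|\cdot\vol(K_{\Aa_Y}).$$
Applying the general form of the volume lower bound, Theorem~\ref{thm: low bound volume} with boundaries in a DCC set, gives $\vol(K_{\Aa_Y})\ge(1-t_0)^d\epsilon$. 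Therefore
$$|G|\le\frac{\vol((1-t_0)(K_X+\lambda K_{\Ff}))}{(1-t_0)^d\epsilon}=\frac{\vol(K_X+\lambda K_{\Ff})}{\epsilon},$$
and we may take $c=\lceil 1/\epsilon\rceil$.

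The main obstacle is the quotient step. We need the foliated Hurwitz formula, and we need the boundary on $Y$, split between the variety part and the foliated part, to fit the ``more general version'' (Theorem~\ref{thm: low bound volume}). That theorem must allow DCC boundary coefficients, with lc-ness preserved under finite quotients. I would first check that lc-ness of adjoint foliated structures is preserved by finite Galois quotients via pullback of discrepancies. Then I would verify that Theorem~\ref{thm: low bound volume} admits hyperstandard coefficients, which it should, since its proof runs through Theorem~\ref{thm: afs eb}, where DCC boundaries are standard.
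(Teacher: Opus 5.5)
Your outline is the paper's. You fix $t_0<1$ depending only on $d$ with $(X,\Ff,t_0)$ of general type, pass to a model on which $\Bir(X,\Ff)$ acts biregularly, take the quotient, and apply Theorem~\ref{thm: low bound volume}.

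Two smaller points first. Your initial guess of a \emph{small} $t_0$ is wrong: if $X$ is Fano, $tK_{\Ff}+(1-t)K_X$ is not even pseudo-effective for small $t$. The right choice is your fallback, namely Theorem~\ref{thm: big gap general} applied to $(X,\Ff)(1)$. This gives $t_0\in(1-\tau,1)$ and a large $\lambda$, which is exactly what the paper does.

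The quotient step you flag as the main obstacle is actually fine. By \cite[Lemma~3.4]{Dru21} the foliation picks up ramification only along non-invariant branch divisors. So in normalized form the quotient is $(X'',\Ff'',\sum_i(1-\frac{1}{r_i})D_i)(t_0)$. It is lc by the crepant comparison of discrepancies, and its coefficients lie in the DCC set of standard numbers together with $t_0$, so Theorem~\ref{thm: low bound volume} applies. The paper writes the crepant formula without this boundary; yours is the correct version.

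The genuine gap is your claim that every birational automorphism preserving $\Ff$ preserves $R(K_X+\lambda K_{\Ff})$, and hence acts by automorphisms on the lc model. Under the hypotheses, $(X,\Ff,t_0)$ is only klt, and $H^0(X,m(K_X+\lambda K_{\Ff}))$ is not a birational invariant of $(X,\Ff)$. A section $s$ may have poles, as a section on a resolution, along exceptional valuations $E$ with $a(E,(X,\Ff,t_0))<0$. An element $g\in\Bir(X,\Ff)$ may map a prime divisor $D\subset X$ to such a valuation, and then $g^*s$ has a pole along $D$.

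This cannot be repaired later in your argument, because the inequality itself fails. Take $\Ff=T_X$: it is algebraically integrable via $X\to\mathrm{pt}$, lc since $X$ is klt, and of general type since $K_{\Ff}=K_X$. Let $X$ be a rational surface with quotient singularities and ample $K_X$; such surfaces were constructed by Koll\'ar and by Hwang--Keum. Then $\Bir(X,\Ff)$ is the Cremona group, which is infinite, while $c\vol(K_X+\lambda K_{\Ff})$ is finite.

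So you need a hypothesis making the adjoint canonical ring a birational invariant of $(X,\Ff)$. Examples are $X$ and $\Ff$ canonical, or general type understood birationally with volumes compared on a model where pluri-adjoint sections are birationally invariant. With such a hypothesis, $G$ acts on $R$, hence faithfully on $\operatorname{Proj}R$, and your remaining steps go through.

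For the same reason, your finiteness argument does not work. The automorphism group of a polarized variety is a linear algebraic group and can be positive-dimensional, so finiteness must come from a genuine theorem under such a birational hypothesis.

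The paper's proof makes the same leap at this point. It asserts that every $g\in\Bir(X,\Ff)$ lies in $\Aut(X',\Ff')$ for a good minimal model $(X',\Ff')(t)$, and cites \cite[Theorem~1]{PS02} for finiteness.
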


We remark that when $d=2$, $X$ is smooth, and $\Ff$ is canonical, Theorem~\ref{thm: bir auto group afs main} was proved in \cite[Theorem~1.6]{SS23} without assuming algebraic integrability. Note that by \cite[Theorem~F]{BPRT22}, the cardinality $\#\Bir(X,\Ff)$ of the group
$\Bir(X,\Ff)$
of birational transformations of $X$ that preserve $\Ff$ is always finite. 

\subsection*{Other ACC-type results} The ACC for lc thresholds and the global ACC have several variants. We obtain analogous statements in the algebraically integrable foliated setting. First, we have the ACC for pseudo-effective thresholds, a variant of \cite[Theorem~1.6]{DC16}.

\begin{thm}[ACC for pseudo-effective thresholds]\label{thm: pet main}
    Let $d$ be a positive integer. Then there exists an ACC set $\Ii\subset [0,1]$ depending only on $d$ such that, for any lc algebraically integrable foliation $\Ff$ on an lc variety $X$ of dimension $d$, if $K_{\Ff}$ is pseudo-effective, then the pseudo-effective threshold
    $$\pet(X;\Ff):=\inf\{t\geq 0\mid tK_{\Ff}+(1-t)K_X\text{ is pseudo-effective}\}$$
    belongs to $\Ii$.
\end{thm}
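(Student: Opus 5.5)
The plan is to reduce Theorem~\ref{thm: pet main} to the global ACC (Theorem~\ref{thm: global ACC main}) by running an MMP. Let $\tau:=\pet(X;\Ff)$. Since $K_{\Ff}$ is pseudo-effective, $\tau\le 1$. If $\tau=0$ there is nothing to prove, so assume $\tau\in(0,1]$.

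The first step is to pass to a good model. Take a foliated log resolution, or better an ACSS/$\mathbb{Q}$-factorial dlt modification of $(X,\Ff)$. This is available for algebraically integrable foliations: since $(X,\Ff)$ and $X$ are both lc, the adjoint foliated structure $(X,\Ff,s)$ is lc for every $s\in[0,1]$. The modification $h\colon X'\to X$ is chosen so that $K_{\Ff'}=h^*K_{\Ff}$ and $K_{X'}+E=h^*K_X$, where $E$ is reduced exceptional. Pseudo-effectivity of $tK_{\Ff}+(1-t)K_X$ is a birational condition once one keeps track of these boundary terms. I therefore plan to work with the general version allowing a boundary, with coefficients in $\{0,1\}$ (a finite, hence DCC, set), and take that as the real statement to prove.

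Next, run an MMP for $\tau K_{\Ff'}+(1-\tau)(K_{X'}+E)$, or more precisely for $K$-structures with parameter $\tau-\delta$ slightly smaller than $\tau$. Such MMPs exist for algebraically integrable adjoint foliated structures by \cite{Cas+25a,Cas+24}. For $s<\tau$ the divisor $sK_{\Ff'}+(1-s)(K_{X'}+E)$ is not pseudo-effective, so the MMP with scaling terminates with a Mori fibre space $Y\to Z$. On a general fibre, $\tau$-structure is numerically trivial: it is pseudo-effective restricted to a general fibre, but negative for all $s<\tau$, so by the usual argument one may choose the MMP to be $\tau$-trivial, taking $\delta$ small via finiteness of the steps in the scaling procedure. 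Restricting to a general fibre $G$ of $Y\to Z$ and applying foliated adjunction for the restriction gives a projective lc algebraically integrable adjoint foliated structure $(G,\Ff_G,\tau)$, with boundary coefficients in a DCC set, such that $\tau K_{\Ff_G}+(1-\tau)(K_G+E_G)\equiv 0$.

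Then apply the general form of the global ACC (Theorem~\ref{thm: global acc general}). Either $\tau$ lies in an ACC set depending only on $d$, or $K_G+E_G\equiv K_{\Ff_G}\equiv 0$. The latter contradicts the fact that the structure with parameter $s<\tau$ is anti-ample on $G$, since that structure would also be $\equiv 0$. Taking the union over $\dim G\le d$ gives the ACC set.

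The main obstacle is the middle step: producing, from the mere failure of pseudo-effectivity just below $\tau$, a Mori fibre space on which the $\tau$-structure is exactly numerically trivial, rather than one only for a slightly smaller parameter. I would first try the standard trick from \cite{DC16}. Argue by contradiction with a strictly increasing sequence $\tau_i$, and for each $i$ run the MMP for the $\tau_i-\delta_i$ structure with scaling of an ample divisor. Then use the boundedness of lengths of extremal rays together with the finiteness of models in a neighbourhood to ensure the final contraction is trivial for the $\tau_i$ structure. If this fails, fall back on applying global ACC to the structure with a varying parameter plus a small ample scaling term, and pass to the limit.
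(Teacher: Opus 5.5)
\textbf{There is a genuine gap in the middle step, which you flag yourself, and two further points are missing.} The overall architecture is sound: pass to a $\mathbb Q$-factorial qdlt model, run an MMP just below the threshold to a Mori fibre space, then apply the relative global ACC.

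\emph{The threshold need not be attained on the Mori fibre space.} On $Y\to Z$, produced by the MMP at parameter $\tau-\delta$, you know only two things. The $(\tau-\delta)$-structure is anti-ample$/Z$. The $\tau$-structure is nef$/Z$, because $\rho(Y/Z)=1$ and pushforwards of pseudo-effective classes are pseudo-effective. So the class is $\equiv_Z0$ at some $\eta\in(\tau-\delta,\tau]$, but nothing forces $\eta=\tau$. The MMP is only $(\tau-\delta)$-negative, so the structures with parameter in $(\tau-\delta,\tau)$ are not known to be negative on the fibres; ``negative for all $s<\tau$'' is a non sequitur. The classical device that makes the final scaling number equal the threshold scales by the divisor whose threshold is being computed, and that divisor must be nef or ample. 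Here that direction is essentially $K_{\Ff}-K_X$, which is neither. Finiteness-of-models arguments need bigness, which fails here. Your fallback, adding an ample term whose coefficient tends to $0$, leaves the DCC setting, so Theorem~\ref{thm: global acc general} no longer applies.

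\emph{Log canonicity on $Y$ is not checked.} You never verify that the structure on $Y$ is lc at parameter $\eta$ (or $\tau$). Since the MMP is only $(\tau-\delta)$-negative, discrepancies at other parameters can drop. Controlling this requires the local ACC, Theorem~\ref{thm: acc lct general version}, which your plan never uses.

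\emph{Restriction to a general fibre is not legitimate.} Asserting $\tau K_{\Ff_G}+(1-\tau)(K_G+E_G)\equiv0$ on a general fibre $G$ fails because in general $K_{\Ff}|_G\neq K_{\Ff|_G}$. There is no Bertini-type adjunction to general fibres for foliations, which is exactly why the paper states the global ACC in the relative form of Theorem~\ref{thm: global acc general}. That theorem also does not return an ACC set depending only on $d$: it needs the parameter in a DCC set and outputs a finite set, so you must argue by contradiction with a sequence.

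A smaller point: after the qdlt modification, $h^*K_{\Ff}=K_{\Ff'}+\Exc(h)^{\ninv}$, not $K_{\Ff'}$. Working with $(X',\Ff',\Exc(h))(s)$ fixes this, since it is crepant to $(X,\Ff)(s)$ for every $s$.

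\textbf{How to repair it.} Drop the requirement $\eta=\tau$.
Suppose $\tau_i=\pet(X_i;\Ff_i)$ is strictly increasing. Put $\Ii=\{0,1\}\cup\{\tau_j\}$ and $\tau'=\tau_{\ilct}(d,\Ii)$. Choose $\delta_i$ with $\tau_i-\delta_i>\tau_{i-1}$ and $\delta_i\le\tau'\tau_i$.
Run the MMP at parameter $\tau_i-\delta_i$ to a Mori fibre space $Y_i\to Z_i$.
Theorem~\ref{thm: acc lct general version} then makes the $\tau_i$-structure on $Y_i$ lc, and Lemma~\ref{lem: lc preserved under inequality} makes the $\eta_i$-structure lc.
The $\eta_i$ are strictly increasing, so part (3) of Theorem~\ref{thm: global acc general}, applied directly over $Z_i$, puts them in a finite set. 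The other alternative contradicts anti-ampleness at $\tau_i-\delta_i$. This is a contradiction.

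This is essentially the paper's mechanism, packaged differently. The paper deduces Theorem~\ref{thm: pet main} from the perturbative Theorem~\ref{thm: pet general}. Its proof runs the same MMP below threshold to a Mori fibre space and uses the local ACC for log canonicity. Instead of applying global ACC to the $\eta_i$, it compares with the structure at the limit parameter $t=\lim t_i$ via Proposition~\ref{prop: global acc in bb}. That forces triviality over $Z_i$ on all of $[\eta_i,1]$, hence, by polynomiality in $s$, also at the parameter below threshold, which is the contradiction.
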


We refer the reader to Theorem~\ref{thm: pet general} for a more general version of Theorem~\ref{thm: pet main}. A special case of Theorem~\ref{thm: pet main} is the following gap theorem, which immediately implies Theorem~\ref{thm: stable family birational boundedness}~(1) by applying it to the case when $\Ff$ is the foliation induced by $f\colon X\to T$.

\begin{thm}\label{thm: pet gap main}
Let $d$ be a positive integer and $\Ii\subset [0,1]$ a DCC set. Then there exists a positive real number $\tau>0$ depending only on $d$ and $\Ii$ with the following property. 

Assume that $(X,\Ff,t)$ is a projective lc algebraically integrable adjoint foliated structure of general type such that $t\in\Ii$. Assume that either $t<1$ or $X$ is lc. 
Then
$$\pet(X;\Ff):=\inf\{s\in [0,1]\mid sK_{\Ff}+(1-s)K_X\text{ is big}\}\leq (1-\tau)t.$$
\end{thm}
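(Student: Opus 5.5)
Throughout, write $K_s:=sK_{\Ff}+(1-s)K_X$; it is affine in $s$. The plan is to deduce the statement from the ACC for pseudo-effective thresholds (Theorem~\ref{thm: pet main}, in its general form Theorem~\ref{thm: pet general}) by contradiction.

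Suppose there is a sequence $(X_i,\Ff_i,t_i)$ satisfying the hypotheses, with $t_i\in\Ii$, $\dim X_i=d$, and
\[
s_i:=\pet(X_i;\Ff_i)>(1-\tfrac1i)t_i .
\]
Since $K_{t_i}$ is big and bigness is an open condition along the segment $s\mapsto K_s$, we get $s_i<t_i$. Moreover $K_{s_i}$ is pseudo-effective but not big, since it lies on the boundary of the big cone along this segment. Passing to a subsequence, we may assume that $t_i$ is non-decreasing (as $\Ii$ is DCC) and that $t_i\to t_\infty$. The aim is to show that the $s_i$ then contradict the ACC property.

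The key step is to make $(X_i,\Ff_i,s_i)$ a legitimate input for Theorem~\ref{thm: pet general}. This needs two things.

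\begin{itemize}
\item[(a)] \emph{Lc singularities at $s_i$.} The lc locus in $s$ should be an interval. When $X$ is lc this is the interval $[0,\lct(X;\Ff)]$. When $t<1$ we can only conclude that $(X,\Ff,s)$ is lc for $s\in[0,t]$ after passing to a suitable model: I would take an ACSS/$\mathbb{Q}$-factorial modification on which $X$ is klt up to the discrepancies controlled by $t$. This is exactly why the hypothesis ``$t<1$ or $X$ lc'' appears.
\item[(b)] \emph{Pseudo-effectivity of $K_{\Ff}$.} This follows because $K_{t_i}$ is big, so $K_X$ or $K_{\Ff}$ is big. If $K_{\Ff}$ is not pseudo-effective, then $\pet$ is governed by $K_X$ alone, and I would handle that case using the general version of the theorem, which allows thresholds measured on the segment.
\end{itemize}

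Given (a) and (b), the thresholds $s_i$ lie in an ACC set $\Gamma'$ depending only on $d$.

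It remains to extract the contradiction. Since $s_i<t_i\le t_\infty$ and $s_i>(1-\frac1i)t_i$, we have $s_i\to t_\infty$. If $t_i$ is eventually constant, equal to $t$, then $s_i\uparrow t$ with $s_i<t$, so after a subsequence the $s_i$ are strictly increasing. This contradicts ACC. If $t_i$ strictly increases to $t_\infty$, then $s_i\to t_\infty$ from below, because $s_i<t_i<t_\infty$; again a strictly increasing subsequence exists, contradicting ACC. Hence some $\tau>0$ works. The degenerate case $t=0$ is immediate, since then $K_X$ itself is big and, by openness, $\pet<0$ in the extended sense.

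The main obstacle I expect is step (a) together with the precise form of Theorem~\ref{thm: pet general}, in particular whether its ACC set is insensitive to $\Ii$. If the general version only yields ACC when the coefficients come from a DCC set, I would instead run the argument through the global ACC, Theorem~\ref{thm: global ACC main}. The plan there is to take a $K_{s_i}$-MMP (which exists for algebraically integrable adjoint foliated structures with $s_i$ fixed) to reach a Mori fibre space on which $K_{s_i}\equiv0$ on the general fibre. Applying the global ACC to the general fibre then forces $s_i$ into an ACC set unless $K_X\equiv K_{\Ff}\equiv0$ on the fibre. That exceptional case contradicts bigness of $K_{t_i}$, since $K_{t_i}$ restricted to the fibre would be numerically trivial, whereas a big divisor cannot be numerically trivial on the fibre of a contraction.
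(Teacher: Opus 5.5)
The contradiction skeleton and your final step (that $s_i\to t_\infty$ from below forces a strictly increasing subsequence) are fine. The step carrying all the weight, ``given (a) and (b), the $s_i$ lie in an ACC set depending only on $d$'', is not justified, and this is a genuine gap.

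Theorem~\ref{thm: pet main} requires $\Ff$ itself to be lc, i.e.\ $(X,\Ff,1)$ lc. When $t<1$ nothing in the hypotheses gives you this. Your step (a) only yields lc-ness of $(X,\Ff,s)$ for $s\in[0,t]$. For $t<1$ this is just Lemma~\ref{lem: lc preserved under inequality}, with no auxiliary model needed; for $t=1$ and $X$ lc it is convexity of discrepancies in $s$. The case $\lct(X;\Ff)<1$ is precisely what the theorem must cover, so the ACC set of Theorem~\ref{thm: pet main} is simply not available.

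Step (b) is also argued incorrectly. Bigness of $tK_{\Ff}+(1-t)K_X$ does not make $K_X$ or $K_{\Ff}$ big; pseudo-effectivity of $K_{\Ff}$ comes from \cite[Theorem~5.1]{Cas+25a}.

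Your fallback does not repair the gap either. Since $K_{s_i}$ is pseudo-effective, a $K_{s_i}$-MMP never ends in a Mori fibre space. You would have to run a $K_s$-MMP for some $s<s_i$. The parameter $s'$ at which the canonical class becomes numerically trivial over the base is then only known to lie in $(s,s_i]$. Moreover, lc-ness of the output at $s'$ is not automatic, because the MMP is only $K_s$-negative. Controlling this is exactly where the paper needs the local ACC, Theorem~\ref{thm: acc lct general version}, inside the proof of Theorem~\ref{thm: pet general}.

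The missing idea is to use the form of the pseudo-effective ACC that only requires lc-ness at parameters below the given one. This is Theorem~\ref{thm: pet general}, packaged as Theorem~\ref{thm: big gap general}. The paper sets $\tau=\tau_{\gt}(d,\Ii)$ and notes that $(X,\Ff,s)$ is lc for every $s\in[(1-\tau)t,t]$. It then applies Theorem~\ref{thm: big gap general} to $\Bb=(X,\Ff)(t)\in\Ii$ and $\Bb'=(X,\Ff)((1-\tau)t)$, which gives directly that $(1-\tau)tK_{\Ff}+(1-(1-\tau)t)K_X$ is big.

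If you want to keep your ACC phrasing, Theorem~\ref{thm: pet general} does imply ACC for $\pet(X;\Ff)$ over lc $(X,\Ff,t)$ of general type with $t\in\Ii$. The ACC set then depends on $d$ and $\Ii$, not only on $d$, which is harmless for your contradiction. In your situation the derivation is to apply Theorem~\ref{thm: pet general} with $\Bb_i=(X_i,\Ff_i)(t_i)$ and $\Bb_i'=(X_i,\Ff_i)(s)$ for $s<s_i$ close to $s_i$. At that point the ACC formulation is superfluous and you have reproduced the paper's argument.
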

Theorem~\ref{thm: pet gap main} can be viewed as an analogue of \cite[Lemma~7.3]{HMX14} and is also an immediate consequence of Theorem~\ref{thm: pet main}. Combining Theorems~\ref{thm: afs main} and~\ref{thm: pet gap main}, we shall obtain the desired effective birationality result for general type foliations on lc varieties. See Theorem~\ref{thm: eb when t=1} for a detailed statement. We refer the reader to \cite[Theorems~1.4, 4.1]{SS23} and \cite[Theorems~1.2, 1.3]{Vas25} for similar statements in dimension $2$ without assuming that $\Ff$ is algebraically integrable. 

Next, we have the ACC for $\mathbb R$-complementary thresholds, an analogue of \cite[Theorem~5.12]{HLS24} and \cite[Theorem~21]{Sho20}, which is crucial for us to prove Birkar's criterion for the boundedness of exceptional Fanos (Theorem~\ref{thm: exceptional afs main}).

\begin{thm}[ACC for $\mathbb R$-complementary thresholds]\label{thm: r-complementary main}
    Let $d$ be a positive integer. Then there exists an ACC set $\Ii\subset [0,1]$ depending only on $d$ such that, for any $\mathbb Q$-Gorenstein algebraically integrable foliation $\Ff$ on a Fano type variety $X$ of dimension $d$, the $\mathbb R$-complementary threshold
    $$\Rct(X;\Ff):=\sup\{t\geq 0\mid (X,\Ff,t)\text{ is }\mathbb R\text{-complementary}\}$$
    belongs to $\Ii$. Here we say that $(X,\Ff,t)$ \emph{is $\mathbb R$-complementary} if $(X,\Ff,B,t)$ is lc and
    $$tK_{\Ff}+(1-t)K_X+B\sim_{\mathbb R}0$$
    for some $\mathbb R$-divisor $B\geq 0$.
\end{thm}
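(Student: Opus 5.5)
We will argue by contradiction, in the spirit of the classical proof of ACC for $\mathbb R$-complementary thresholds \cite[Theorem~5.12]{HLS24}, by reducing to the Global ACC (Theorem~\ref{thm: global ACC main}, in its general form with boundary and nef part, Theorem~\ref{thm: global acc general}) together with the ACC for interpolated lc thresholds (Theorem~\ref{thm: acc lct general version}). Suppose there is a sequence of $\mathbb Q$-Gorenstein algebraically integrable foliations $\Ff_i$ on Fano type varieties $X_i$ of dimension $d$ with $t_i:=\Rct(X_i;\Ff_i)$ strictly increasing. Since being $\mathbb R$-complementary is a closed condition in $t$ (limits of complements exist on a Fano type variety by compactness of the relevant polytope of boundaries), we may fix for each $i$ a boundary $B_i\geq 0$ with $(X_i,\Ff_i,B_i,t_i)$ lc and $t_iK_{\Ff_i}+(1-t_i)K_{X_i}+B_i\sim_{\mathbb R}0$.

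The main step is to show that at the threshold we can produce a structure with \emph{controlled coefficients}. The plan is to run an MMP: since $X_i$ is of Fano type, every divisor is a Mori dream object and we may run a $-(t'K_{\Ff_i}+(1-t')K_{X_i})$-MMP for $t'$ slightly larger than $t_i$. The failure of $\mathbb R$-complementarity at every $t'>t_i$ should come from one of two sources: either (a) $-(t'K_{\Ff_i}+(1-t')K_{X_i})$ stops being pseudo-effective, or (b) after running the MMP to a model $Y_i$ on which this divisor is semiample (Fano type, so it is), the structure $(Y_i,\Ff_{Y_i},t')$ fails to be lc, i.e.\ the interpolated lct on $Y_i$ equals $t_i$. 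In case (b), $t_i$ is an interpolated lc threshold of a $\mathbb Q$-Gorenstein algebraically integrable foliation after passing to a $\mathbb Q$-factorialization (possible on Fano type models), so Theorem~\ref{thm: main ACC} gives a contradiction. In case (a), we pass to a Mori fiber space $Y_i\to Z_i$ for the divisor at $t_i$; on a general fibre $F_i$ we get $t_iK_{\Ff_{F_i}}+(1-t_i)K_{F_i}+B_{F_i}\equiv 0$ with $B_{F_i}$ forced to be zero along the contracted direction (its coefficients are governed by the fact that the threshold is exactly achieved), and the restricted structure is lc by adjunction for general fibres of algebraically integrable foliations. Theorem~\ref{thm: global ACC main} then gives $t_i$ in an ACC set unless $K_{F_i}\equiv K_{\Ff_{F_i}}\equiv 0$, which is impossible since $F_i$ is of Fano type, hence $K_{F_i}$ is not numerically trivial.

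The main obstacle is case (a)/(b) bookkeeping: one must make sure the complement $B_i$ does not absorb the positivity, i.e.\ that after the MMP the only boundary left on the relevant fibre (or near the non-lc centre) has coefficients in a fixed DCC set (ideally zero), so that the general versions of the global and local ACC apply. I would try first to choose $B_i$ general in the $\mathbb R$-linear system of $-(t_iK_{\Ff_i}+(1-t_i)K_{X_i})$ on the ample model, so that it is a general member avoiding all centres and, on the fibre of a Mori fibre contraction, is numerically trivial hence zero; if needed, use the nef-part versions of the ACC theorems to keep $B_i$ as a nef part rather than a boundary.
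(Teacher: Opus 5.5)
Your overall architecture matches the paper's proof of Theorem~\ref{thm: rct general}. You run an MMP for $-(t'K_{\Ff}+(1-t')K_X)$ with $t'$ just above the threshold and split into a Mori fibre space case and a semi-ample model case. Case (b) is essentially right, but ``lc on the semi-ample model implies $\mathbb R$-complementary'' is not a Bertini argument. It is \cite[Theorem~3.28]{Cas+25a}, valid because $t'<1$, followed by pulling the complement back via negativity of the MMP.

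Case (a), however, fails as written. There is no adjunction to a general fibre of $Y_i\to Z_i$ for foliations. In general $K_{\Ff}|_{F_i}$ differs from $K_{\Ff_{F_i}}$ by an effective divisor coming from tangencies between leaves and $F_i$, and log canonicity does not pass to the restricted structure. This failure of Bertini-type statements is exactly why the paper formulates a \emph{relative} global ACC, so Theorem~\ref{thm: global ACC main} cannot be applied on $F_i$. The repair is to apply Theorem~\ref{thm: global acc general}~(3) directly to $(Y_i,\Ff_{Y_i},0,\bm{0})(t_i)/Z_i$ with the DCC set $\{t_i\}$. Its alternative, ``$K_{\Bb(s)}|_F\equiv 0$ for all $s$'', contradicts ampleness of $K_{t'}$ over $Z_i$. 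The boundary bookkeeping you worry about is a non-issue. Complements push forward crepantly along the MMP, and dropping an effective boundary preserves lc, so $(Y_i,\Ff_{Y_i},t)$ is lc for $t\le t_i$. You also never need the threshold to be attained, since only closed conditions at $t_i$ are used.

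There is a second gap. A Mori fibre space for one fixed $t'>t_i$ only yields some $t''\in[t_i,t')$ with $K_{t''}\equiv_{Z_i}0$, and lc at $t''$ is unknown. Your phrase ``Mori fibre space for the divisor at $t_i$'' hides this, since $-K_{t_i}$ is pseudo-effective and its MMP has no Mori fibre space. There are two ways to close it:
\begin{itemize}
\item Use that $X_i$ is a Mori dream space to fix one $Y_i\to Z_i$ serving a sequence $t'\downarrow t_i$. Then $K_{t_i}$ is nef over $Z_i$ while $-K_{t_i}$ is pseudo-effective, and $\rho(Y_i/Z_i)=1$ gives $K_{t_i}\equiv_{Z_i}0$.
\item Take $t'_i<t_{i+1}$ and use the local ACC, Theorem~\ref{thm: acc lct general version}, with $t_i\ge (1-\tau)t''_i$ for $i\gg 0$, to get lc at $t''_i$. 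This is precisely Proposition~\ref{prop: global acc in bb}, i.e.\ the paper's Case~1.
\end{itemize}
Likewise in (b), the claim $\lct(Y_i;\Ff_{Y_i})=t_i$ needs finiteness of models. It is simpler to take $t'_i<t_{i+1}$, so that $\lct(Y_i;\Ff_{Y_i})\in[t_i,t'_i)$ is strictly increasing, contradicting Theorem~\ref{thm: main ACC}.

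With these repairs your argument is a legitimate threshold-by-threshold route. It applies Theorem~\ref{thm: main ACC} and Theorem~\ref{thm: global acc general}~(3) at the threshold itself. The paper instead proves a gap statement, Theorem~\ref{thm: rct general}, for $\Bb\ge\Bb'\ge\Bb^{1-\tau}$ with DCC boundary and nef part, using the local ACC in both cases to transfer lc from the lower parameter to the higher one. That stronger form is needed later, in Lemma~\ref{lem: gap exc threshold} and Theorem~\ref{thm: exceptional bdd}, where boundaries including an extracted coefficient-one divisor appear; your version only covers $B=0$.
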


We refer the reader to Theorem~\ref{thm: rct general} for a more general version of Theorem~\ref{thm: r-complementary main}. 

Finally, we have an ACC statement for the Fano spectrum, an analogue of \cite[Corollary~1.10]{HMX14}. 

\begin{thm}\label{thm: fano spectrum main}
    Let $d$ be a positive integer and $\Ii\subset [0,1]$ a DCC set. Then there exists an ACC set $\Ii'$ depending only on $d$ with the following property. Assume that
\begin{enumerate}
    \item $\Aa:=(X,\Ff,t)$ is an lc Fano algebraically integrable adjoint foliated structure,
    \item $\dim X=d$ and $t\in\Ii$,
    \item $H$ is a Cartier divisor on $X$, and
    \item $-K_{\Aa}\sim_{\mathbb R}rH$.
\end{enumerate}
Then $r\in\Ii'$.
\end{thm}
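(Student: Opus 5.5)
We follow the classical argument of \cite[Corollary~1.10]{HMX14} and reduce the statement to the Global ACC (Theorem~\ref{thm: global ACC main}), in its more general form with a boundary (Theorem~\ref{thm: global acc general}). Argue by contradiction. Suppose there is a sequence $\Aa_i=(X_i,\Ff_i,t_i)$ satisfying (1)--(4) with $-K_{\Aa_i}\sim_{\mathbb R} r_iH_i$ and $r_i$ strictly increasing. Since $-K_{\Aa_i}$ is ample, $H_i$ is an ample Cartier divisor, so $r_i>0$.

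\textbf{Step 1: bound $r_i$.} We need an upper bound $r_i\le R(d)$. This should be the foliated analogue of the statement that the Fano index is at most $d+1$. Use a rational curve $C$ with $-K_{\Aa_i}\cdot C\le 2d$, given by a cone/bend-and-break type theorem for lc algebraically integrable adjoint foliated structures. Then $r_i\le r_iH_i\cdot C\le 2d$. If such a cone theorem is not available in this generality, argue instead with the ACC of Step~3 directly, after passing to a subsequence: the chosen divisors will then still have coefficients in a DCC set. So we may assume $r_i\nearrow r_\infty<\infty$.

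\textbf{Step 2: build log Calabi--Yau structures.} Since $H_i$ is ample Cartier, fix a large integer $m=m(d)$ such that $|mH_i|$ is basepoint free. This uses effective basepoint freeness for lc Fano-type structures, or alternatively Theorem~\ref{thm: afs main}-type arguments giving effective nonvanishing. Choose a general member $G_i\in|mH_i|$. Then
$$tK_{\Ff_i}+(1-t)K_{X_i}+\frac{r_i}{m}G_i\equiv 0 .$$
Generality of $G_i$ keeps the structure $(X_i,\Ff_i,\frac{r_i}{m}G_i,t_i)$ lc, provided $r_i/m\le 1$; this holds once $m\ge R(d)$. When $G_i$ is not $\Ff_i$-invariant, its coefficient enters the foliated part with weight determined by $t_i$, in the usual way for adjoint foliated structures. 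The coefficients are $r_i/m$, which form a set whose closure lies in a DCC set, and $t_i\in\Ii$ is DCC.

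\textbf{Step 3: apply Global ACC.} Theorem~\ref{thm: global acc general} gives a finite set of coefficients for numerically trivial lc structures with DCC data. Since $G_i\ne 0$, the exceptional alternative $K_X\equiv K_{\Ff}\equiv 0$ does not occur. Hence the coefficients $r_i/m$ lie in a finite set, which contradicts strict monotonicity. The resulting ACC set $\Ii'$ a priori depends on $\Ii$ as well as $d$; this is harmless, since the statement fixes $\Ii$ first. Alternatively, one can absorb the dependence into a union over all $t$ via Theorem~\ref{thm: global ACC main}.

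\textbf{Main obstacle.} Step~1, together with the effective choice of $m$ in Step~2, is where I expect the difficulty. The ample Cartier $H_i$ needs a uniform multiple with a section, and the structure must stay lc after adding that section. The fallback is to avoid effective freeness by taking $G_i\sim_{\mathbb R} r_iH_i$ general with small coefficients. This requires the boundary coefficients to be expressed through $r_i$ times a DCC set, which is exactly what the general global ACC with a nef part is designed to handle. If $H_i$ is only nef in the relevant sense, the same argument applies, putting $r_iH_i$ into the nef part.
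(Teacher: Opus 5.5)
There is a genuine gap in Step~2. You need a uniform $m=m(d)$ with $|mH_i|$ basepoint free, and nothing you cite provides it. $\Aa_i$ is only lc, so $X_i$ need not be of Fano type. What you control is $-K_{\Aa_i}=-(t_iK_{\Ff_i}+(1-t_i)K_{X_i})$, not the positivity of $aH_i-(K_{X_i}+\Delta)$ for some lc pair $(X_i,\Delta)$, and the latter is the input that Kawamata--Koll\'ar type effective freeness requires. Theorem~\ref{thm: afs main} concerns structures of general type and gives no sections of $mH_i$ on a Fano. You also assert without proof that adding $\frac{r_i}{m}G_i$ keeps the adjoint foliated structure lc; that is a Bertini-type statement for foliations and needs an argument. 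Your fallback does not repair this. Any multiple of $H_i$ you can actually make free depends on $X_i$, so writing $r_iH_i$ as an $\mathbb R$-combination of general members gives coefficients that do not lie in a DCC set independent of $i$. Theorem~\ref{thm: global acc general}~(1) then says nothing.

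The missing idea is that $r_iH_i$ never has to become an effective boundary. Since $H_i$ is an ample, hence nef, Cartier divisor, $\overline{H_i}$ is a nef $\bb$-Cartier $\bb$-divisor. So $\Aa_i':=(\Aa_i,r_i\overline{H_i})$ is an adjoint foliated structure with nef part $r_i\overline{H_i}$. Because $\overline{H_i}$ descends to $X_i$, no discrepancy changes, so $\Aa_i'$ is lc, and $K_{\Aa_i'}=K_{\Aa_i}+r_iH_i\sim_{\mathbb R}0$. If the $r_i$ were strictly increasing, $\Ii\cup\{r_i\}_{i\geq 1}$ would be DCC. Theorem~\ref{thm: global acc general}~(2) with $U$ a point then puts $r_i$ in a finite set because $\overline{H_i}\not\equiv\bm{0}$, a contradiction. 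This is exactly the paper's proof, via Theorem~\ref{thm: fano spectrum general}. It needs no choice of $m$ and no Bertini argument. Step~1 is inessential: at most, your bound $r_i\leq 2d$ lets you rewrite $r_i\overline{H_i}=\frac{r_i}{2d}\cdot 2d\overline{H_i}$ if you want nef-part coefficients in $[0,1]$. Your closing sentence points at this, but it is not a special case for when $H_i$ is ``only nef''; it is the whole argument.
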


We refer the reader to Theorem~\ref{thm: fano spectrum general} for a more general version of Theorem~\ref{thm: fano spectrum main}.

\subsection*{Sketch of the proof of the main theorems} We first sketch the proof of Theorem~\ref{thm: main ACC}, the ACC for interpolated lc thresholds for algebraically integrable foliations. 

A natural first step is to apply adjunction to lc places and reduce to a global ACC statement (Theorem~\ref{thm: global ACC main}). This requires a precise adjunction formula. We remark that, in the case of varieties, this part is substantially easier, since one can cut the ambient variety by general hyperplane sections and reduce to a surface problem (cf.~\cite[Section~16]{Kol+92}). Another key point is that it is not sufficient to prove only that ``DCC coefficients adjoint to DCC coefficients'' (cf.~Theorem~\ref{thm: dcc adjunct to dcc}). One also needs to prove that ``if the coefficients after adjunction form a finite set, then the coefficients before adjunction already form a finite set'' (cf.~Theorem~\ref{thm: finite inversion of adjunction to finite 2}). The latter statement is technical but necessary, and an analogous argument appears in \cite[Lemma~4.10]{BZ16} in the proof of the ACC for generalized lc thresholds. We refer to Section~\ref{sec: precise adj} for more details on the precise adjunction formulae.

Once the precise adjunction formulae are available, we would like to reduce the ACC for interpolated lc thresholds to a global ACC statement in lower dimensions, following the strategy of \cite[Section~5]{HMX14}. However, unlike the case of varieties, we can only reduce the problem to the \emph{relative global ACC} (Theorem~\ref{thm: global acc general}). The reason is that we cannot apply adjunction to a general fiber, since Bertini-type theorems fail for foliations. 

From this point on, the arguments of \cite{HMX14} no longer apply directly. In particular, the inductive approach in \cite{HMX14} uses effective birationality (Theorem~\ref{thm: afs main}) in an essential way. In the proof of effective birationality in \cite[Section~7]{HMX14}, a subadjunction formula is used, which in turn relies on the canonical bundle formula. The canonical bundle formula, however, is not known in general for adjoint foliated structures. 

At this stage we need a different approach. The guiding idea is simple: we deduce the ACC from BAB. In the classical setting of varieties, this strategy was suggested by M\textsuperscript{c}Kernan--Prokhorov \cite{MP04} but was not carried out, since the ACC for log canonical thresholds \cite{HMX14} predates BAB \cite{Bir21a} and enters essentially into its proof. In our setting there is no circularity, because BAB for algebraically integrable adjoint foliated structures is already available \cite{Cas+25a}. Interestingly, an idea originating in work of M\textsuperscript{c}Kernan--Prokhorov \cite{MP04} can be combined with our BAB input to settle a conjecture of M\textsuperscript{c}Kernan.

There are, of course, substantial technical difficulties. We highlight two of them. First, because we work with a \emph{relative} global ACC statement, the two-ray game in \cite{MP04} has to be replaced by a relative version (cf.~Lemma~\ref{lem: relative two-ray}), and one must also control vertical divisors. We handle the latter by induction on the relative dimension of the fibration (the integer $d'$ in the proof of Theorem~\ref{thm: acc to global acc}). Second, the parameter $t$ can be arbitrarily close to $1$, so the BAB theorem of \cite{Cas+25a} does not apply directly. We circumvent this by working with a BAB statement at parameter $1-\tau$, where $\tau>0$ is a uniform constant determined by the ACC for interpolated lc thresholds in the same dimension (available by induction on dimension). Iterating the ACC for interpolated lc thresholds together with the lower-dimensional global ACC then yields the desired conclusion.

Once the ACC and the global ACC are in place, we can deduce the remaining statements recorded in this section. Theorems~\ref{thm: pet main}, \ref{thm: pet gap main}, and \ref{thm: r-complementary main} follow from the ACC for interpolated lc thresholds together with the global ACC, while Theorem~\ref{thm: fano spectrum main} follows from a generalized global ACC applied to the adjoint foliated structures $\left(\Aa,r\overline{H}\right)$. For effective birationality (Theorem~\ref{thm: afs main}), we combine Theorem~\ref{thm: pet gap main} with the finiteness of non-terminal places for klt algebraically integrable adjoint foliated structures, which reduces the problem to the $\epsilon$-lc case for some $\epsilon>0$; we then apply Birkar's effective birationality theorem \cite[Theorem~4.2]{Bir23a}. The volume lower bound (Theorem~\ref{thm: volume lower bound} and Corollary~\ref{cor: low bound volume}) and the boundedness of birational automorphism groups (Theorem~\ref{thm: bir auto group afs main}) are direct consequences of effective birationality.

We now turn to birational boundedness, namely Theorem~\ref{thm: birationally bounded main}. The argument again starts from effective birationality of pluricanonical systems, but the deduction is more delicate: in the classical setting one uses Kawamata--Viehweg vanishing \cite[Lemma~3.2]{HMX13}, which fails in general for foliations. We instead follow the strategy of \cite[Proof of Theorem~6.1]{SS23}. Effective birationality yields a birational model $W$ of $X$ and a birational morphism $f\colon W\to X$ such that we may write
$$f^*|mK_{\Aa}|=|A_W|+R_W$$
where $A_W$ is base-point-free and induces a birational morphism to a bounded variety $Y$ (using the volume bound), and $R_W$ is the fixed part of $f^*|mK_{\Aa}|$. We then show that $W$ carries an adjoint foliated structure $\Aa_W$ with good properties (Lemma~\ref{lem: special model for eb}). In particular, $\Aa$ is log birationally bounded if and only if $\Aa_W$ is. From this point on, we adapt Steps~3--6 of \cite[Proof of Theorem~6.1]{SS23}: first we run a $K_{\Aa_W}$-MMP$/Y$ to obtain a model $\Aa_Z/Y$ with ambient variety $Z$, and then run a $(K_Z+\text{(boundary)})$-MMP$/Y$ to obtain a model $X'$. We show that $X'$ is bounded using the relative BAB \cite{Bir24}. Finally, boundedness of intersection numbers together with the nefness of $K_{\Aa_Z}$ yields boundedness of $\Aa'$, where $\Aa'$ is the image of $\Aa_Z$ on $X'$. This implies the log birational boundedness of $\Aa_W$, and hence of $\Aa$. See Theorem~\ref{thm: precise birational boundedness} for details.

Since birational boundedness and the property of being a stable family are preserved under base change, Theorem~\ref{thm: stable family birational boundedness} follows immediately---after a suitable base change---from Theorem~\ref{thm: birationally bounded main} and the ACC for pseudo-effective thresholds (Theorem~\ref{thm: pet gap main}).

Finally, we prove the two boundedness criteria for Fano algebraically integrable foliations. To prove Birkar's criterion on the boundedness of exceptional Fanos (Theorem~\ref{thm: exceptional afs main}), we first use the ACC for $\mathbb R$-complementary thresholds (Theorem~\ref{thm: r-complementary main}) to reduce to the case where $t$ is bounded away from $1$, and then apply it again to reduce to the case where $\Aa$ is $\epsilon$-lc for some fixed $\epsilon>0$. In this situation, the BAB theorem \cite[Theorem~B]{Cas+25a} applies. To prove Jiang's criterion (Theorem~\ref{thm: jiang afs main}), we may assume that $\tmld(\Aa)$ approaches $0$; otherwise we are done by BAB \cite[Theorem~B]{Cas+25a}. We then extract a divisor $E$ by a birational morphism $f\colon Y\rightarrow X$ that computes $\tmld(\Aa)$. The key idea is to construct a birational model of $(Y,(1-e)E)$ on which $E$ is not contracted and the pair is $\epsilon'$-lc for some fixed $\epsilon'>0$, while $e\to 0$. To do so, we need to estimate the lower bound of the local volumes of the ambient variety in terms of $\alpha^d\cdot\vol$, similarly to the approach in \cite{LLX20}. This leads to a contradiction as we deduce that $e>\epsilon'$, which is incompatible with $e\to 0$.

\subsection*{Structure of the paper}
In Section~\ref{sec: preliminaries} we recall preliminary material and fix notation. In particular, we introduce \emph{normalized foliated structures}, which allow us to keep track of coefficients uniformly throughout the paper. In Section~\ref{sec: precise adj} we establish precise adjunction formulae for algebraically integrable adjoint foliated structures; in particular, we show that the DCC property of coefficient sets is preserved under adjunction (Theorem~\ref{thm: dcc adjunct to dcc}). In Section~\ref{sec: global to local} we state more general versions of the local ACC and the global ACC theorems (Theorems~\ref{thm: acc lct general version} and~\ref{thm: global acc general}), and we show that the global ACC in dimension $d-1$ implies the local ACC in dimension $d$. In Section~\ref{sec: local to global}, we show that the global ACC in dimension $d-1$, together with the local ACC in dimension $d$, implies the global ACC in dimension $d$. In Section~\ref{sec: proof of acc and global acc} we prove the global ACC (Theorem~\ref{thm: global ACC main}) and the local ACC (Theorem~\ref{thm: main ACC}). In Section~\ref{sec: acc type results} we prove several further ACC-type results, namely the ACC for pseudo-effective thresholds (Theorems~\ref{thm: pet main} and \ref{thm: pet gap main}), the ACC for $\mathbb R$-complementary thresholds (Theorem~\ref{thm: r-complementary main}), and the ACC for the Fano spectrum (Theorem~\ref{thm: fano spectrum main}). In Section~\ref{sec: eb} we prove effective birationality for adjoint pluricanonical systems (Theorem~\ref{thm: afs main}). In Section~\ref{sec: birational boundedness} we prove birational boundedness for algebraically integrable foliations of general type (Theorem~\ref{thm: birationally bounded main}), which immediately implies the birational boundedness of stable families of maximal variation (Theorem~\ref{thm: stable family birational boundedness}). In Section~\ref{sec: application} we prove Birkar's boundedness criterion for exceptional Fanos (Theorem~\ref{thm: exceptional afs main}) and Jiang's boundedness criterion (Theorem~\ref{thm: jiang afs main}). In Section~\ref{sec: volume lower bound} we prove the lower bound on the volume (Theorem~\ref{thm: volume lower bound}, Corollary~\ref{cor: low bound volume}) which implies the finiteness of the birational automorphism group (Theorem~\ref{thm: bir auto group afs main}). Finally, in Section~\ref{sec: discussion} we discuss variations of our main theorems and formulate several open questions.

\section{Preliminaries}\label{sec: preliminaries}

We adopt the standard notation and definitions for the MMP from \cite{Sho92,KM98,BCHM10} and use them freely. We also adopt the same notation and definitions as in \cite{Cas+24,Cas+25a} for adjoint foliated structures.

\subsection{Basic notation}

\begin{nota}
Let $X\rightarrow U$ be a projective morphism between normal quasi-projective varieties.
We use the following standard terminology.

A \emph{contraction}$/U$ $f: X\rightarrow Y$ is a projective morphism$/U$ such that $f_\ast \mathcal{O}_X=\mathcal{O}_Y$.
Since $X$ and $U$ are normal, the fibers of $f$ are connected.

For any birational map$/U$ $h\colon X\dashrightarrow X'$, we denote by $\Exc(h)$ the reduced divisor supported on the codimension one part of the exceptional locus of $h$.

$X$ is called \emph{of Fano type}$/U$ if there exists a klt pair $(X,\Delta)$ such that $-(K_X+\Delta)$ is ample$/U$ for some $\Delta\geq 0$. 
$X$ is called \emph{potentially klt} if $X$ is of Fano type$/X$.
\end{nota}

\begin{defn}[$\bb$-divisors]
We use the notation as in \cite[Definition~2.4]{HL23a}. Let $X$ be a normal quasi-projective variety. A $\bb$-divisor $\Mm$ on $X$ is a (possibly infinite) $\mathbb R$-linear combination of divisorial valuations $\nu_P$ over $X$
\[\Mm=\sum_Pr_P\nu_P\]
such that for any birational map $\phi\colon X\dashrightarrow Y$, 
\[\{P\mid \Center_{Y}P\text{ is a divisor}, r_P\neq 0\}\]
is a finite set. By definition, $\Mm$ is also a $\bb$-divisor on $Y$. We say that $\Mm$ is a $\mathbb Q$-$\bb$-divisor if $r_P\in\mathbb Q$ for any prime divisor $P$ over $X$. For any birational map $\phi\colon X\dashrightarrow Y$, we denote by
\[\Mm_{Y}:=\sum_{P\mid \Center_{Y}P\text{ is a divisor}}r_P\left(\Center_{Y}P\right).\]
We say that $\Mm$ \emph{descends to} $X$ if $\Mm_{X}$ is $\mathbb R$-Cartier, and for any projective birational morphism $h\colon X'\rightarrow X$, we have $\Mm_{X'}=h^*\Mm_X$. For any $\mathbb R$-Cartier $\mathbb R$-divisor $D$ on $X$, we denote by $\overline{D}$ the $\bb$-divisor $\Mm$ such that $\Mm$ descends to $X$ and $\Mm_X=D$. We denote by $\bm{0}:=\overline{0}$. We say that $\Mm$ is \emph{$\bb$-Cartier} if there exists a birational map $\phi\colon X\dashrightarrow Y$ such that $\Mm$ descends to $Y$ and $\Mm_Y$ is Cartier. For any projective morphism $\pi: X\rightarrow U$, we say that $\Mm$ is \emph{nef$/U$} (resp. $\Mm\equiv_{U}\bm{0}$) if there exists a birational map$/U$ $\phi\colon X\dashrightarrow Y$ such that $\Mm$ descends to $Y$ and $\Mm_Y$ is nef$/U$ (resp. $\Mm_Y\equiv_{U}0$), and if $U$ is a closed point, then we say that $\Mm$ is \emph{nef} (resp. $\Mm\equiv\bm{0}$). We denote by $\bNef(X/U)$ the set of nef$/U$ $\bb$-divisors on $X$. We denote by $\bNef(X):=\bNef(X/\{pt\})$. We refer the reader to \cite[Remark~4.2(2), Definition~4.7]{BZ16} for restriction of $\bb$-divisors to the general fiber of a contraction or to the normalization of a prime divisor, which we will often use in this paper.
\end{defn}

\begin{nota}
Let $\Ii\subset\mathbb R$ be a set. 
We denote by $\overline{\Ii}$ the closure of $\Ii$ in $\mathbb R$. 
For any real number $a$, we define $a\Ii:=\{a\gamma\mid \gamma\in\Ii\}$. For any non-negative integer $m$ and $\bm{v}=(v_1,\dots,v_m)\in\mathbb R^m$, we write $\bm{v}\in\Ii$ if $v_i\in\Ii$ for each $i$.

Let $X$ be a normal quasi-projective variety. 
For any $\mathbb R$-divisor $D$ on $X$, we write $D\in\Ii$ if $D=\sum d_iD_i$, where $D_i$ are the irreducible components of $D$ and $d_i\in\Ii$ for each $i$. 
For any projective morphism $X\rightarrow U$, we denote by $\bNef(X/U,\Ii)$ the set of $\bb$-divisors $\Mm$ on $X$ such that $\Mm=\sum \mu_j\Mm_j$, where each $\mu_j\in\Ii$ and each $\Mm_j$ is nef$/U$ $\bb$-Cartier.
\end{nota}

\begin{nota}\label{nota: special notation}
Let $X$ be a normal variety, $P$ a prime divisor on $X$, $E$ a prime divisor over $X$, and $D$ an $\mathbb R$-divisor on $X$. We denote by $\mult_PD$ the coefficient of $P$ along $D$. If $D$ is $\mathbb R$-Cartier then, given any projective birational morphism $h\colon X'\rightarrow X$ such that $E$ is on $X'$, we denote by $\mult_ED:=\mult_Eh^*D$. We define
    \[\Weil_{\mathbb R}(X)_{P=0}:=\left\{\sum_{finite} a_iD_i\middle| D_i\text{ are distinct prime divisors on } X, a_i\in\mathbb R, D_i\neq P\right\}.\]
\end{nota}

\subsection{Adjoint foliated structures}

We begin by recalling some basic notions about algebraically integrable foliations.

\begin{defn}\label{defn: foliation}
Let $X$ be a normal quasi-projective variety. 
A \emph{foliation} $\Ff$ on $X$ is a coherent sheaf $T_{\Ff}\subset T_X$ such that $T_{\Ff}$ is saturated in $T_X$ and $T_{\Ff}$ is closed under the Lie bracket. 
The \emph{rank} of $\Ff$ is the rank of $T_\Ff$ as a sheaf and is denoted by $\rk\Ff$. 
The \emph{canonical divisor} of $\Ff$ is a divisor $K_\Ff$ such that $\mathcal{O}_X(-K_{\Ff})\cong\det(T_\Ff)$. 
We say that $\Ff$ is \emph{$\mathbb Q$-Gorenstein} if $K_{\Ff}$ is $\mathbb Q$-Cartier.

A subvariety $S\subset X$ is called \emph{$\Ff$-invariant} if for any open subset $U\subset X$ and any section $\partial\in H^0(U,T_\Ff)$, we have $\partial(\mathcal{I}_{S\cap U})\subset \mathcal{I}_{S\cap U}$, where $\mathcal{I}_{S\cap U}$ denotes the ideal sheaf of $S\cap U$ in $U$. 

Given any dominant map 
$h\colon Y\dashrightarrow X$, we denote by $h^{-1}\Ff$ the \emph{pullback} of $\Ff$ on $Y$ (cf.~\cite[\S~3.2]{Dru21}). We say that $\Ff$ is \emph{algebraically integrable} if there exists a dominant map $f\colon X\dashrightarrow Z$ such that $\Ff=f^{-1}\Ff_Z$, where $\Ff_Z$ is the trivial foliation (i.e.\ $T_{\Ff_Z}=0$), and we say that $\Ff$ is \emph{induced by} $f$. 

Fix a birational map
$g\colon X\dashrightarrow X'$, and define $g_\ast \Ff\coloneqq (g^{-1})^{-1}\Ff$. 
For any prime divisor $P$ over $X$ such that $P':=\Center_{X'}P$ is a divisor, we write $\epsilon_{\Ff}(P):=0$ (resp. $\epsilon_{\Ff}(P):=1$) if $P'$ is (resp. is not) $g_*\Ff$-invariant. 
For any $\mathbb R$-divisor $D=\sum_{i=1}^na_iD_i$ on $X$ where each $D_i$ is a prime divisor, we write $D_{\Ff}^{\ninv}:=\sum_{i=1}^n\epsilon_{\Ff}(D_i)a_iD_i$ and $D_{\Ff}^{\inv}:=D-D_{\Ff}^{\ninv}$. 
If $\Ff$ is clear in the context then we may write
$D^{\ninv}:=D_{\Ff}^{\ninv}$ and $D^{\inv}:=D_{\Ff}^{\inv}$.

A \emph{foliated pair} is a pair $(X,\Ff)$ consisting of a normal variety $X$ and a foliation $\Ff$ on $X$.
\end{defn}

\begin{defn}
Given a projective foliated pair $(X,\Ff)$, we denote by $\Aut(X,\Ff)$ (resp. $\Bir(X,\Ff)$) the group of automorphisms $\phi\colon X\to X$ (resp. birational automorphisms $\mu\colon X\dashrightarrow X$) such that $\phi_\ast\Ff=\Ff$ (resp. $\mu_\ast\Ff=\Ff$).
\end{defn}

We now introduce the notion of adjoint foliated structure that was already introduced and discussed in 
\cite{Cas+24,Cas+25a}.

\begin{defn}\label{defn: afs}
An \emph{adjoint foliated structure}, denoted by
\[
\Aa/U=(X,\Ff,B,\Mm,t)/U,
\]
consists of a normal quasi-projective variety $X$ equipped with a projective morphism $X\to U$, a foliation $\Ff$ on $X$, an $\Rr$-divisor $B\geq 0$ on $X$, a nef$/U$ $\bb$-divisor $\Mm$ on $X$, and a real number $t\in [0,1]$, such that the $\Rr$-divisor
\[
K_{\Aa}:=tK_{\Ff}+(1-t)K_X+B+\Mm_X
\]
is $\Rr$-Cartier. The \emph{dimension} $\dim\Aa$ of $\Aa$ is defined to be $\dim X$.

If we allow $B$ to have negative coefficients, then we add the prefix ``sub-''. If $B$ is a $\Qq$-divisor, $\Mm$ is a $\Qq$-$\bb$-divisor, and $t\in\mathbb Q$, then we add the prefix ``$\Qq$-''. If we do not require $K_{\Aa}$ to be $\Rr$-Cartier, then we add the prefix ``quasi-''.
\end{defn}

\begin{nota}
Let $\Aa/U=(X,\Ff,B,\Mm,t)/U$ be a quasi-sub-adjoint foliated structure.

We call $X,\Ff,B,\Mm,t$ the \emph{ambient variety}, \emph{foliation part}, \emph{boundary part}, \emph{nef part} (or \emph{moduli part}), and \emph{parameter} of $\Aa$, respectively. $K_{\Aa}$ is called the \emph{canonical $\mathbb R$-divisor} of $\Aa$. The $\mathbb R$-divisor $B+\Mm_X$ is called the \emph{generalized boundary} of $\Aa$. 
We adopt the following abbreviations:
\begin{align*}
  \Aa=(X,\Ff,B,\Mm) &\quad\text{if } t=1,\ \text{and then }\Aa/U\text{ is called a \emph{generalized foliated quadruple}},\\
  \Aa=(X,B,\Mm) &\quad\text{if } \Ff=T_X\ \text{or } t=0,\ \text{and then }\Aa/U\text{ is called a \emph{generalized pair}},\\
  \Aa=(X,\Ff,B) &\quad\text{if } \Mm=\bm{0}\ \text{and } t=1,\ \text{and then }\Aa/U\text{ is called a \emph{foliated triple}},\\
  \Aa=(X,B) &\quad\text{if }\Aa\text{ is a generalized pair and }\Mm=\bm{0},\ \text{and then }\Aa/U\text{ is called a \emph{pair}}.
\end{align*}

If $B=0$, or if $\Mm=\bm{0}$, or if $U$ is not important, then we may drop $B,\Mm,U$ respectively. If $U=\{pt\}$ then we also drop $U$ and say that $\Aa$ is \emph{projective}. 

For any $\Rr$-divisor $D$ on $X$ and nef$/U$ $\bb$-divisor $\Nn$ on $X$, we denote by $(\Aa,D,\Nn):=(X,\Ff,B+D,\Mm+\Nn,t)$. If $D=0$ or $\Nn=\bm{0}$ then we may drop $D,\Nn$ respectively. 

We say that $\Aa$ is \emph{algebraically integrable} if $\Ff$ is, and that $\Aa$ is \emph{$\mathbb Q$-factorial} if $X$ is.

Let $\Aa_i/U=(X,\Ff,B_i,\Mm_i,t_i)/U$, $i=1, \dots, k$ be quasi-sub-adjoint foliated structures. 
For all $1 \leq i \leq k$, let $a_i\in\mathbb R_{\geq 0}$ be real numbers such that 
$\sum_{i=1}^k a_it_i\leq 1$. 
We write
\begin{align*}
\sum_{i=1}^k a_i\Aa_i:=\left(X,\Ff,\sum_{i=1}^k a_iB_i,\sum_{i=1}^k a_i\Mm_i,\sum_{i=1}^k a_it_i\right).
\end{align*}
Let us note that if $\sum_{i=1}^k a_i=1$, then $K_{\sum_{i=1}^k a_i\Aa_i}=\sum_{i=1}^k a_iK_{\Aa_i}$.
\end{nota}

We now recall the different types of singularities for adjoint foliated structures, in analogy with the case of log pairs.

\begin{defn}\label{defn: sing of afs}
Let $\Aa/U=(X,\Ff,B,\Mm,t)/U$ be a quasi-sub-adjoint foliated structure.

For any birational map$/U$ $\phi\colon X\dashrightarrow X'$, we define $\phi_*\Aa:=(X',\phi_*\Ff,\phi_*B,\Mm,t)$ and say that $\phi_*\Aa$ is the \emph{image} of $\Aa$ on $X'$. 

Let us now assume that $K_{\Aa}$ is $\mathbb R$-Cartier. For any projective birational morphism $h\colon X'\rightarrow X$, we denote by
$h^*\Aa:=(X',\Ff',B',\Mm,t)$ where $\Ff':=h^{-1}\Ff$ and $B'$ is the unique $\Rr$-divisor such that $K_{h^*\Aa}=h^*K_{\Aa}$. 

For any prime divisor $E$ on $X'$, we denote by
$a(E,\Aa):=-\mult_EB'$
the \emph{discrepancy} of $E$ with respect to $\Aa$. 
The \emph{total minimal log discrepancy} $\tmld(\Aa)$ of $\Aa$ is
\[\tmld(\Aa):=\inf\{a(E,\Aa)+t\epsilon_{\Ff}(E)+(1-t)\mid E\text{ is over }X\}.\]
For any non-negative real number $\epsilon$, we say that $\Aa$ is \emph{$\epsilon$-lc} (resp. \emph{$\epsilon$-klt}) if 
\begin{align*}
    \tmld(\Aa)\geq\epsilon \qquad 
    \text{(resp. $ \tmld(\Aa)>\epsilon$)}.
\end{align*}
We say that $\Aa$ is \emph{lc} (resp. \emph{klt}) if $\Aa$ is $0$-lc (resp. $0$-klt). 

An \emph{lc place} of $\Aa$ is a prime divisor $E$ over $X$ such that $a(E,\Aa)=-t\epsilon_{\Ff}(E)-(1-t)$. An \emph{lc center} of $\Aa$ is the image of an \emph{lc place} of $\Aa$ on $X$. We say that $\Aa$ is \emph{plt} if $\Aa$ is lc and any lc place of $\Aa$ is a divisor on $X$.
\end{defn}

We also introduce the notion of the $\alpha$-invariant for projective adjoint foliated structures.

\begin{defn}\label{defn: alpha}
    Let $\Aa=(X,\Ff,B,\Mm,t)$ be a projective adjoint foliated structure and let $L$ be an $\mathbb R$-divisor on $X$. We define \emph{Tian's $\alpha$-invariant}
    $\alpha(\Aa;L)$ of $L$ with respect to $\Aa$ as
    \[\alpha(\Aa;L):=\sup\{s\geq 0\mid (\Aa,sD)\text{ is lc for any }0\leq D\sim_{\mathbb R}L\}.\]

If $|-K_{\Aa}|_{\mathbb R}\neq\emptyset$, then we define 
\emph{Tian's $\alpha$-invariant} 
$\alpha(\Aa)$ of $\Aa$ as $\alpha(\Aa):=\alpha(\Aa;-K_{\Aa})$. For any pair $(X,\Delta)$ and any point $x \in X$, we also define the \emph{local Tian's $\alpha$-invariant} 
  $\alpha(X\ni x,\Delta;L)$ of $L$ at $x \in X$
  as
    \[\alpha(X\ni x,\Delta;L):=\sup\{s\geq 0\mid (X,\Delta+sD)\text{ is lc near }x\text{ for any }0\leq D\sim_{\mathbb R}L\}.\]
\end{defn}

\begin{defn}
Let $\Aa/U$ be an adjoint foliated structure. 

We say that $\Aa/U$ is \emph{of general type} if $K_{\Aa}$ is big$/U$. We say that $\Aa/U$ is \emph{$\mathbb R$-complementary} if there exists $D\geq 0$ such that $(\Aa,D)/U$ is lc and $K_{\Aa}+D\sim_{\mathbb R,U}0$. If $U$ is a point and $-K_{\Aa}$ is ample, then we say that $\Aa$ is \emph{Fano}.
\end{defn}

\subsection{Normalized foliated structures}\label{subsec: normalized foliated structure}

In this section we introduce normalized foliated structures. They provide a convenient repackaging of adjoint foliated structures, in which the invariant part of the boundary is rescaled by the parameter.

\begin{defn}
A \emph{normalized foliated structure}, denoted by $$\Bb/U=(X,\Ff,B,\Mm)(t)/U,$$ 
consists of a normal quasi-projective variety $X$ equipped with a projective morphism $X\to U$, a foliation $\Ff$ on $X$, an $\mathbb R$-divisor $B\geq 0$ on $X$, a nef$/U$ $\bb$-divisor $\Mm$ on $X$, and a real number $t\in[0,1]$, such that
\[
\Aa/U\coloneqq (X,\Ff,B^{\ninv}+(1-t)B^{\inv},\Mm,t)/U
\]
is an adjoint foliated structure.

We shall say that the adjoint foliated structure $\Aa/U$ defined above is the \emph{adjoint foliated structure associated to} $\Bb/U$, and we denote this relation by
\[
\Aa=\log\Bb.
\]

The \emph{dimension} $\dim\Bb$ of $\Bb$ is defined to be $\dim X$.

If we allow $B$ to have negative coefficients, then we add the prefix ``sub-''. If we do not require $K_{\log\Bb}$ to be $\Rr$-Cartier, then we add the prefix ``quasi-''.
\end{defn}

\begin{rem}
A (quasi-)(sub-)normalized foliated structure $\Bb/U=(X,\Ff,B,\Mm)(t)/U$ uniquely determines the associated (quasi-)(sub-)adjoint foliated structure
\[
\log\Bb\coloneqq (X,\Ff,B^{\ninv}+(1-t)B^{\inv},\Mm,t)/U.
\]
Conversely, if $t<1$ and $\Aa/U=(X,\Ff,\Delta,\Mm,t)/U$ is a (quasi-)(sub-)adjoint foliated structure, then there exists a unique (quasi-)(sub-)normalized foliated structure $\Bb/U$, of the form $\Bb/U=(X,\Ff,B,\Mm)(t)/U$, with
\[
B=\Delta^{\ninv}+\frac{1}{1-t}\Delta^{\inv},
\]
such that $\Aa=\log\Bb$. We denote this inverse correspondence by $\Bb=\exp(\Aa)$.
\end{rem}

\begin{nota}
Let $\Bb/U=(X,\Ff,B,\Mm)(t)$ be a quasi-sub-normalized foliated structure, and set $\Aa:=\log\Bb$.
We refer to $X,\Ff,B,\Mm,t$ as the \emph{ambient variety}, \emph{foliation part}, \emph{boundary part}, \emph{nef part} (or \emph{moduli part}), and \emph{parameter} of $\Bb$, respectively.
We write $K_{\Bb}\coloneqq K_{\Aa}$ and call it the \emph{canonical $\mathbb R$-divisor} of $\Bb$.

If $B=0$, or $\Mm=\bm{0}$, or the base $U$ is clear from the context, then we may drop $B$, $\Mm$, or $U$, respectively.
If $U=\{pt\}$, then we also drop $U$ and say that $\Bb$ is \emph{projective}.

We say that $\Bb/U$ is \emph{of general type} if $(\log\Bb)/U$ is of general type.
For any $\mathbb R$-divisor $D$ on $X$ and nef$/U$ $\bb$-divisor $\Nn$ on $X$, we set
\[
(\Bb,D,\Nn)\coloneqq (X,\Ff,B+D,\Mm+\Nn)(t).
\]
If $D=0$ or $\Nn=\bm{0}$, then we may drop $D$ or $\Nn$, respectively.

We say that $\Bb$ is \emph{algebraically integrable} (resp. \emph{$\mathbb Q$-factorial}) if $\log\Bb$ is algebraically integrable (resp. $\mathbb Q$-factorial), i.e.\ if $\Ff$ is (resp. $X$ is).

A \emph{foliated log resolution} of $\Bb$ is a foliated log resolution of $(X,\Ff,\Supp B,\Mm)$ (cf.~\cite[Definition~6.2.3]{CHLX23}).

For any set $\Ii\subset [0,+\infty)$, we write $\Bb/U\in\Ii$ if $B,t\in\Ii$ and $\Mm=\sum\mu_j\Mm_j$, where each $\mu_j\in\Ii$ and each $\Mm_j$ is nef$/U$ $\bb$-Cartier.
\end{nota}

We now define how normalized foliated structures behave under birational maps.

\begin{defn}
Let $\Bb/U=(X,\Ff,B,\Mm)(t)$ be a quasi-sub-normalized foliated structure. 

For any birational map$/U$ $\phi\colon X\dashrightarrow X'$, we define $\phi_*\Bb:=(X',\phi_*\Ff,\phi_*B,\Mm)(t)$ and we say that $\phi_*\Bb$ is the image of $\Bb$ on $X'$. 

Assume that $\Bb/U=(X,\Ff,B,\Mm)(t)$ is a sub-normalized foliated structure. For any projective birational morphism $h\colon X'\rightarrow X$ and any sub-normalized foliated structure $\Bb'/U$, we write $\Bb'=h^*\Bb$ if $\Bb'=(X',\Ff',B',\Mm)(t)$ and $K_{\Bb'}=h^*K_{\Bb}$. 

Let $\Bb''=(X'',\Ff'',B'',\Mm'')(t)/U$ be a sub-normalized foliated structure and assume that there exists a birational map$/U$ $\phi\colon X\dashrightarrow X''$ such that $\Ff''=\phi_*\Ff$. 
We say that $\Bb$ and $\Bb''$ are \emph{crepant} if there exist projective birational morphisms $p\colon W\rightarrow X$ and $q\colon W\rightarrow X''$ and a sub-normalized foliated structure $\Bb_W$ such that 
\begin{align*}
p^*\Bb=\Bb_W=q^*\Bb''.    
\end{align*}
\end{defn}

\begin{rem}
With the notation of the above definition, we highlight that the sub-normalized foliated structure $\Bb'= h^\ast \Bb$ is uniquely determined when $t<1$.
That may not be the case, though, when $t=1$:
indeed, writing $\Bb' = (X',\Ff',B',\Mm)(1)$, then for any choice of an $\mathbb R$-divisor $H$ such that $H=H^{\inv}$, also $\Bb'' \coloneqq (X',\Ff',B'+H,\Mm)(1)$ will satisfy $K_{\Bb''}=h^\ast K_{\Bb}$, since 
\[
h^\ast K_{\Bb}=K_{\Bb'}=K_{X'}+B'^{\ninv}+\Mm=K_{X'}+(B'+H)^{\ninv}+\Mm= K_{\Bb''}.
\]
\end{rem}

\begin{defn}\label{defn: sing of nfs}
Let $\Bb/U=(X,\Ff,B,\Mm)(t)$ be a sub-normalized foliated structure. 

For any prime divisor $E$ over $X$, we define 
\begin{align*}
a(E,\Bb):=a(E,\log\Bb)
\quad \text{and} \quad
\tmld(\Bb):=\tmld(\log\Bb).
\end{align*}
We say that $\Bb$ is $\epsilon$-lc (resp. $\epsilon$-klt, lc, klt, plt) if $\log\Bb$ is $\epsilon$-lc (resp. $\epsilon$-klt, lc, klt, plt). An lc place (resp. lc center) of $\Bb$ is an lc place (resp. lc center) of $\log\Bb$. We say that $\Bb$ is \emph{qdlt} if $\Bb$ is lc, $(X,B,\Mm)$ is qdlt (cf.~\cite[Definition~7.1.1]{CHLX23}), and any lc center of $\Bb$ is an lc center of $(X,B,\Mm)$. 
\end{defn}

Having defined log canonical and qdlt singularities for normalized foliated structures, we can also discuss the existence of qdlt modifications.

\begin{defn}
Let $\Bb/U=(X,\Ff,B,\Mm)(t)$ be a sub-normalized foliated structure. 

When $t<1$, we define a $\mathbb Q$-factorial qdlt modification of $\Bb$ simply as a $\mathbb Q$-factorial ACSS modification $h\colon X'\rightarrow X$ of $\log\Bb$, cf.~\cite[Definition~3.7]{Cas+24}.
We say that $\Bb':=(h^{-1}_*\Bb,\Exc(h))$ is a $\mathbb Q$-factorial qdlt model of $\Bb$. 

When $t=1$, a $\mathbb Q$-factorial ACSS modification of $\Bb$ is a $\mathbb Q$-factorial ACSS modification $h\colon X'\rightarrow X$ of $\log\Bb$ associated with a contraction $f: X'\rightarrow Z$ and a divisor $G$ as in \cite[Definition~2.18]{Cas+24}, such that $G\geq\Exc(h)+h^{-1}_*B^{\inv}$, and we say that $\Bb':=(h^{-1}_*\Bb,\Exc(h))$ is a $\mathbb Q$-factorial ACSS model (resp. $\mathbb Q$-factorial qdlt model) of $\Bb$. 
\end{defn}

We now define structures associated to Fano normalized foliated structures.

\begin{defn}
Let $\Bb/U$ be an lc normalized foliated structure. An \emph{$\mathbb R$-complement} of $\Bb/U$ is an lc normalized foliated structure $(\Bb,D)/U$ such that $D\geq 0$ and $K_{\Bb}+D\sim_{\mathbb R,U}0$. We say that $\Bb/U$ is \emph{$\mathbb R$-complementary} if $\Bb/U$ has an $\mathbb R$-complement. 

Let $\Bb$ be a projective normalized foliated structure. We denote by $\alpha(\Bb):=\alpha(\log\Bb)$. We say that $\Bb$ is \emph{Fano} if $-K_{\Bb}$ is ample. We say that $\Bb$ is \emph{exceptional} if $|-K_{\Bb}|_{\mathbb R}\neq\emptyset$ and $(\Bb,D)$ is klt for any $0\leq D\sim_{\mathbb R}-K_{\Bb}$.
\end{defn}

We will repeatedly use two basic operations on normalized foliated structures: rescaling by $s\in[0,1]$, denoted by an exponentiation $\Bb^s$, and addition $\Bb+\Bb'$ when the sum of the corresponding parameters satisfies $t+t'\leq 1$; cf.~Subsection~\ref{subsec: log canonicity under order}.

\begin{defn}
    Let 
    $\Bb/U=(X,\Ff,B,\Mm)(t)/U$, 
    $\Bb'/U=(X,\Ff,B',\Mm')(t')/U$ 
    be quasi-sub-normalized foliated structures. 

    For any real number $s\in [0,1]$, we define
        \begin{align*}
        \Bb^s:=(X,\Ff,sB,s\Mm)(st).    
        \end{align*}
    
    If 
    $t+t'\leq 1$, 
    we define the sum 
    $\Bb+\Bb'$ 
    as
        \begin{align*}
        \Bb+\Bb':=(X,\Ff,B+B',\Mm+\Mm')(t+t').    
        \end{align*}
\end{defn}

One key reason to define normalized foliated structures is that they admit a partial ordering, which we denote by $\geq$. 
This will be particularly important when $t=1$.

\begin{defn}\label{defn: order of nfs}
    Let $\Bb/U=(X,\Ff,B,\Mm)(t)/U$ and $\Bb'/U=(X,\Ff,B',\Mm')(t')/U$ be two quasi-sub-normalized foliated structures. We write $\Bb/U\geq\Bb'/U$
    (equivalently, $\Bb'/U\leq\Bb/U$) 
    when $B\geq B',t\geq t'$, and $\Mm-\Mm'$ is nef$/U$.
\end{defn}

The following lemma shows how the structure of normalized foliated structures behaves when applying the operations $\bullet^s$ and $+$.

\begin{lem}\label{lem: geometric average of nfs}
Let $\Bb/U=(X,\Ff,B,\Mm)(t)/U$ and $\Bb'/U=(X,\Ff,B',\Mm')(t')/U$ be two quasi-sub-normalized foliated structures and let $s\in [0,1]$ be a real number. Let $\Aa=\log\Bb$ and $\Aa'=\log\Bb'$. Set $t_s := st+(1-s)t'$ and denote
\[(X,\Ff,\Gamma_s,\Nn_s,t_s):= s\Aa+(1-s)\Aa'\] and \[(X,\Ff,B_s,\Mm_s)(t_s):=\Bb^s+\Bb'^{1-s}\]
(note that it follows from the definition that the parameter in each of these structures is equal to $t_s$).
Similarly, let us denote \[(X,\Ff,\Delta_s,\Mm_s,t_s):=\log(\Bb^s+\Bb'^{1-s})\]
for any $s\in [0,1]$. Then the following hold. 
\begin{enumerate}
    \item For any $s\in [0,1]$, we have 
\[\Nn_s=\Mm_s=s\Mm+(1-s)\Mm'.\]
\item For any $s\in [0,1]$, we have 
\[\Delta_s-\Gamma_s=s(1-s)(t-t')(B^{\inv}-B'^{\inv}).\]
In particular,
$\Gamma_s^{\ninv}=\Delta_s^{\ninv}=(sB+(1-s)B')^{\ninv}$.
\item Assume that $\Bb/U\geq\Bb'/U$ or $\Bb'/U\geq\Bb/U$. Then $\Delta_s\geq\Gamma_s$ for any $s\in [0,1]$. Moreover, $\Delta_s=\Gamma_s$ if and only if $s=0$ or $1$, or $t=t'$, or $B^{\inv}=B'^{\inv}$.
\item Assume that $t\neq1$ (resp. $t'\neq1$). Then for any $s\in (0,1]$ (resp. $s \in [0,1)$),  $\exp(s\Aa+(1-s)\Aa')$ is well defined. Moreover:
\begin{enumerate}
    \item If $\Bb/U\geq\Bb'/U$ or $\Bb'/U\geq\Bb/U$, then 
\[(\Bb^s+\Bb'^{1-s})/U\geq\exp(s\Aa+(1-s)\Aa')/U,\]
and equality holds when $s=0$ or $1$, or $t=t'$, or $B^{\inv}=B'^{\inv}$. 
\item If $\Bb/U\geq\Bb'/U$, then for any $1>s\geq r>0$, 
\[\exp\left(s\Aa+(1-s)\Aa'\right)/U\geq\exp\left(r\Aa+(1-r)\Aa'\right)/U.\]
Moreover, $\Bb/U\geq \exp(s\Aa+(1-s)\Aa')/U\geq\Bb'/U$.
\end{enumerate}
\end{enumerate}
\end{lem}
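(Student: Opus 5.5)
The whole lemma is a direct computation from the definitions of $\log$, $\exp$, the operations $\bullet^s$ and $+$, and the order $\geq$ of Definition~\ref{defn: order of nfs}; no deeper input is needed. I will first write down the three structures explicitly. By definition
\[
\log\Bb=(X,\Ff,B^{\ninv}+(1-t)B^{\inv},\Mm,t),\qquad \log\Bb'=(X,\Ff,B'^{\ninv}+(1-t')B'^{\inv},\Mm',t').
\]
Hence $\Gamma_s=s\bigl(B^{\ninv}+(1-t)B^{\inv}\bigr)+(1-s)\bigl(B'^{\ninv}+(1-t')B'^{\inv}\bigr)$ and $\Nn_s=s\Mm+(1-s)\Mm'$. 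On the other hand $\Bb^s+\Bb'^{1-s}=(X,\Ff,sB+(1-s)B',s\Mm+(1-s)\Mm')(t_s)$. This already gives (1), since neither $\log$ nor $\bullet^s$ nor $+$ alters the nef part beyond the obvious linear combination. Applying $\log$ to the last structure, and using that $\bullet^{\ninv}$ and $\bullet^{\inv}$ are linear, gives $\Delta_s=(sB+(1-s)B')^{\ninv}+(1-t_s)(sB^{\inv}+(1-s)B'^{\inv})$.

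For (2), the non-invariant parts of $\Delta_s$ and $\Gamma_s$ visibly agree. For the invariant parts, I will compute the coefficient of $B^{\inv}$ in $\Delta_s-\Gamma_s$ as $s\bigl((1-t_s)-(1-t)\bigr)=s(t-t_s)=s(1-s)(t-t')$, and symmetrically the coefficient of $B'^{\inv}$ as $(1-s)(t'-t_s)=-s(1-s)(t-t')$. This gives the formula. Part (3) then follows because $\Bb\geq\Bb'$ forces $t-t'\geq0$ and $B^{\inv}-B'^{\inv}\geq0$, while $\Bb'\geq\Bb$ forces both to be $\leq0$; in either case the product is $\geq0$. Equality holds exactly when one of the factors $s$, $1-s$, $t-t'$, $B^{\inv}-B'^{\inv}$ vanishes.

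For (4), I will observe that $t_s\leq st+(1-s)<1$ when $s>0$ and $t<1$, so $\exp$ is defined. Then
\[
\exp(s\Aa+(1-s)\Aa')=\bigl(X,\Ff,\Gamma_s^{\ninv}+\tfrac{1}{1-t_s}\Gamma_s^{\inv},\Nn_s\bigr)(t_s).
\]
Comparing with $\Bb^s+\Bb'^{1-s}$, whose invariant boundary is $\Delta_s^{\inv}/(1-t_s)$, the two structures share the same parameter, nef part and non-invariant boundary. Part (a) is therefore just (3) divided by $1-t_s$.

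For (b), the key step is to rewrite the invariant boundary $\frac{1}{1-t_s}\Gamma_s^{\inv}$ as a convex combination $\lambda(s)B^{\inv}+(1-\lambda(s))B'^{\inv}$, where
\[
\lambda(s)=\frac{s(1-t)}{s(1-t)+(1-s)(1-t')}.
\]
This $\lambda(s)$ lies in $[0,1]$ and is nondecreasing in $s$. The remaining data are handled as follows. The non-invariant boundary $(sB+(1-s)B')^{\ninv}$ and the parameter $t_s$ are nondecreasing in $s$ because $B\geq B'$ and $t\geq t'$. The difference of nef parts for $s\geq r$ is $(s-r)(\Mm-\Mm')$, which is nef$/U$. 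Together these give the monotonicity. The sandwich $\Bb\geq\exp(\cdot)\geq\Bb'$ follows from the same coordinatewise comparison, since every entry is a convex combination of the corresponding entries of $\Bb$ and $\Bb'$.

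The only mildly delicate point, and where I expect to need care, is the case $t'=1$ (or $t=1$) in (b): there one cannot evaluate at the endpoints via $\exp$. I will instead compare directly using $\lambda(s)\in[0,1]$ rather than passing to a limit.
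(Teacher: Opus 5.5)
Your proposal is correct and follows the paper's proof essentially verbatim: both write out $s\Aa+(1-s)\Aa'$, $\Bb^s+\Bb'^{1-s}$ and $\log(\Bb^s+\Bb'^{1-s})$ explicitly and read off (1)--(4) coefficient by coefficient. For the invariant part in (4b), your weight $\lambda(s)$ is just a repackaging of the paper's difference formula, since $\lambda(s)-\lambda(r)=\frac{(s-r)(1-t)(1-t')}{(1-t_s)(1-t_r)}$; your explicit convex-combination argument for $\Bb\geq\exp(s\Aa+(1-s)\Aa')\geq\Bb'$ fills in a step the paper leaves implicit, and your endpoint worry is vacuous, because $\Bb\geq\Bb'$ with $t'=1$ forces $t=1$, in which case (4) has no content.
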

\begin{proof}
By definition,
\begin{align*}
\Aa = (X, \mathcal F, B^{\ninv} +(1-t)B^{\inv}, \Mm, t)
\ \text{and} \
\Aa' = (X, \mathcal F, B'^{\ninv}+(1-t')B'^\inv, \Mm', t').
\end{align*}
Thus,
\begin{align*}
&s\Aa+(1-s)\Aa' =  \\ &(X,\Ff,s(B^{\ninv}+(1-t)B^{\inv})+(1-s)(B'^{\ninv}+(1-t')B'^{\inv}),s\Mm+(1-s)\Mm',t_s)= \\
&(X, \mathcal F, (sB+(1-s)B')^{\ninv}+(s(1-t)B+(1-s)(1-t')B')^{\inv}, s\Mm+(1-s)\Mm', t_s).
\end{align*}
On the other hand, we have
\begin{align*}
\Bb^s+\Bb'^{1-s} = (X, \mathcal F, sB+(1-s)B', s\Mm+(1-s)\Mm', t_s).    
\end{align*}
Hence, 
\begin{align*}
&\Aa_s:= \log (\Bb^s+\Bb'^{1-s}) = \\
&(X, \mathcal F, (sB+(1-s)B')^{\ninv}+(1-t_s)(sB+(1-s)B')^{\inv}, s\Mm+(1-s)\Mm', t_s).
\end{align*}
From this we can conclude that:
\begin{itemize}
    \item 
(1) follows immediately from the above expressions.
    \item 
For (2), the above expressions imply that 
\[\Gamma_s = (sB+(1-s)B')^{\ninv}+(s(1-t)B+(1-s)(1-t')B')^{\inv}\]
and 
\[\Delta_s = (sB+(1-s)B')^{\ninv}+(1-t_s)(sB+(1-s)B')^{\inv},\]
from which in turn the claimed equality follows at once.
    \item 
Since $\Bb/U\geq\Bb'/U$ or $\Bb'/U\geq\Bb/U$, $(t-t')(B^{\inv}-B'^{\inv})\geq 0$. Hence, we can conclude that (3) follows from (2). 
    \item 
Finally, (4.a) follows from (3) since $t_s<1$ implies that $\exp(s\Aa+(1-s)\Aa')$ is well-defined. 
To prove (4.b), let us note that when $\Bb/U\geq\Bb'/U$, then $B\geq B'$; hence,
$\Gamma_s^{\ninv}-\Gamma_r^{\ninv}=(s-r)(B^{\ninv}-B^{\inv})\geq 0$,
and
\begin{align*}
    \frac{1}{1-t_s}\Gamma_s^{\inv}-\frac{1}{1-t_r}\Gamma_r^{\inv}=\frac{(s-r)(1-t')(1-t)}{(1-t_s)(1-t_r)}(B^{\inv}-B'^{\inv})\geq 0,
\end{align*}
which implies that 
$\exp(s\Aa+(1-s)\Aa')/U\geq\exp(r\Aa+(1-r)\Aa')/U$.
\end{itemize}
\end{proof}

\begin{deflem}\label{deflem: nfs geometric average}
Let $\Bb/U=(X,\Ff,B,\Mm)(t)/U$ and $\Bb'/U=(X,\Ff,B',\Mm')(t')/U$ be two quasi-sub-normalized foliated structures and let $s\in [0,1]$ be a real number. We define
\begin{align*}
\Bb^s\cdot\Bb'^{1-s} \coloneqq
\begin{cases}
\Bb
&\text{if $s=1$},\\
\Bb'
&\text{if $s=0$},\\
\exp(s\log\Bb+(1-s)\log\Bb')
&\text{if $t\neq1$ or $t'\neq1$, and $s\in (0,1)$},\\
\Bb^s+\Bb'^{1-s}
&\text{if $t=t'=1$}.
\end{cases}
\end{align*}

Lemma~\ref{lem: geometric average of nfs} shows that $\Bb^s\cdot\Bb'^{1-s}$ is well-defined, and that
\[K_{\Bb^s\cdot\Bb'^{1-s}}=sK_{\Bb}+(1-s)K_{\Bb'}.\]
Thus, if $\Bb,\Bb'$ are (sub-)normalized foliated structures, then the same holds for $\Bb^s\cdot\Bb'^{1-s}$.
\end{deflem}

\begin{rem}
Let 
$\Bb/U$ and $\Bb'/U$ 
be normalized foliated structures with ambient variety $X$. By the above definition, if 
$\Bb/U\geq\Bb'/U$ or $\Bb'/U\geq\Bb/U$, 
then
\begin{align*}
(\Bb^s+\Bb'^{1-s})/U\geq (\Bb^s\cdot\Bb'^{1-s})/U,
\end{align*}
and equality holds if and only if either $s=0, 1$, or $t=t'$, or $B^{\inv}=B'^{\inv}$.
If $\Bb/U\geq\Bb'/U$, then
  \[\Bb/U\geq(\Bb^s+\Bb'^{1-s})/U\geq\Bb^s\cdot\Bb'^{1-s}/U\geq\Bb'/U.\]
Moreover, the function $s\mapsto \Bb^s\cdot\Bb'^{1-s}/U$ is increasing with respect to the ordering $\geq$.

When $s \in (0, 1)$ it is usually unclear whether $\Bb^s+\Bb'^{1-s}$ is in turn a normalized foliated structure: indeed, it may not be the case in general that $K_{\Bb^s+\Bb'^{1-s}}$ is $\mathbb R$-Cartier.
\end{rem}

\begin{lem}\label{lem: construct smaller bb}
    Let $\Bb/U$ and $\Bb'/U$ be two normalized foliated structures such that $\Bb/U\geq\Bb'/U\geq\Bb^{s}/U$ for some $s>0$. Then for any $r\in [0,s)$, there exists a normalized foliated structure $\Bb_r$ such that $\Bb_r/U\geq\Bb^r/U$ and $\Bb'=\Bb^{a_r}\cdot\Bb_r^{1-a_r}$ for some $a_r\in (0,1)$. 
\end{lem}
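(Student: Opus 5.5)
The approach is to \emph{solve} the equation $\Bb'=\Bb^{a}\cdot\Bb_r^{1-a}$ for $\Bb_r$ by linear algebra on canonical divisors, for a well-chosen $a$. Write $\Bb=(X,\Ff,B,\Mm)(t)$ and $\Bb'=(X,\Ff,B',\Mm')(t')$. The hypotheses $\Bb\geq\Bb'\geq\Bb^s$ mean:
\[
B\geq B'\geq sB,\qquad t\geq t'\geq st,\qquad \Mm-\Mm' \text{ and } \Mm'-s\Mm \text{ are nef}/U.
\]
We may assume $s\le 1$; for $\Bb^s$ to be defined this is needed anyway. Fix $r\in[0,s)$ and set
\[
a:=\frac{s-r}{1-r}\in(0,1],
\]
which is chosen so that $a+(1-a)r=s$. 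If $a=1$, i.e.\ $s=1$, then $\Bb'=\Bb$ and any choice such as $\Bb_r=\Bb$ works after a trivial adjustment. So assume $a<1$.

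\textbf{Case $t'<1$ (hence the average is taken via $\exp$).} Set $\Aa=\log\Bb=(X,\Ff,\Delta,\Mm,t)$ and $\Aa'=\log\Bb'=(X,\Ff,\Delta',\Mm',t')$. Define
\[
\Aa_r:=\Bigl(X,\Ff,\tfrac{\Delta'-a\Delta}{1-a},\tfrac{\Mm'-a\Mm}{1-a},t_r\Bigr),\qquad t_r:=\tfrac{t'-at}{1-a}.
\]
Then $K_{\Aa_r}=(K_{\Aa'}-aK_{\Aa})/(1-a)$ is $\mathbb R$-Cartier, and $a\Aa+(1-a)\Aa_r=\Aa'$ formally. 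We check the following.

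\begin{enumerate}
\item The parameter satisfies $t_r-rt=(t'-st)/(1-a)\geq 0$. Also $t_r<1$, because $1-t_r=\bigl(1-t'-a(1-t)\bigr)/(1-a)\geq 1-t>0$ if $t<1$, and $1-t_r=(1-t')/(1-a)>0$ if $t=1$.
\item The nef part satisfies $\Mm_r-r\Mm=(\Mm'-s\Mm)/(1-a)$, which is nef$/U$.
\item Put $\Bb_r:=\exp(\Aa_r)$. On non-invariant components its boundary is $(B'-aB)^{\ninv}/(1-a)$, and this is $\geq rB^{\ninv}$ by $B'\geq sB$.
\item On invariant components the boundary of $\Bb_r$ is
\[
\frac{(1-t')B'^{\inv}-a(1-t)B^{\inv}}{1-t'-a(1-t)}.
\]
Subtracting $rB^{\inv}$ and multiplying by the positive denominator gives
\[
(1-t')(B'-rB)^{\inv}-(s-r)(1-t)B^{\inv}\;\geq\;(s-r)\bigl[(1-t')-(1-t)\bigr]B^{\inv}\;=\;(s-r)(t-t')B^{\inv}\;\geq\; 0,
\]
using $B'\geq sB$ and $t\geq t'$.
\end{enumerate}

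This inequality is the one genuinely delicate point: the rescaling of invariant coefficients by $1/(1-t)$ is where the order $\geq$ is not linear, and the plan relies on this computation working out.

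Together, these checks show that $\Bb_r\geq\Bb^r\geq 0$, so $\Bb_r$ is a normalized foliated structure. Since $a\in(0,1)$ and $t_r<1$, Definition-Lemma~\ref{deflem: nfs geometric average} gives
\[
\Bb^a\cdot\Bb_r^{1-a}=\exp(a\Aa+(1-a)\Aa_r)=\exp(\Aa')=\Bb'.
\]

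\textbf{Case $t'=1$.} Then $t=1$, so we instead define
\[
\Bb_r:=\Bigl(X,\Ff,\tfrac{B'-aB}{1-a},\tfrac{\Mm'-a\Mm}{1-a}\Bigr)(1).
\]
For $t=1$ the canonical divisor $K_X+B^{\ninv}+\Mm_X$ is linear in the data, so $K_{\Bb_r}=(K_{\Bb'}-aK_{\Bb})/(1-a)$ is $\mathbb R$-Cartier. The inequality $\Bb_r\geq\Bb^r$ follows exactly as in the second and third checks above; the invariant part is now linear as well. Finally, $\Bb^a+\Bb_r^{1-a}=\Bb'$ by direct substitution, which is the $t=t'=1$ clause of Definition-Lemma~\ref{deflem: nfs geometric average}.
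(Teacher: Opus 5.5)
Your proposal is correct and essentially reproduces the paper's proof. The paper uses the same $a_r=\frac{s-r}{1-r}$ and the same $\Bb_r$: the linear formula when $t=t'$, and the $\exp$-formula with invariant coefficients $\frac{(1-t')B'^{\inv}-a_r(1-t)B^{\inv}}{(1-t')-a_r(1-t)}$ when $t>t'$. It splits cases by $t=t'$ versus $t>t'$ instead of your $t'<1$ versus $t'=1$, and it leaves implicit the checks you carry out, notably the invariant-part inequality. One harmless slip: for $t=1$ the canonical divisor is $K_{\Ff}+B^{\ninv}+\Mm_X$, not $K_X+B^{\ninv}+\Mm_X$.
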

\begin{proof}
We may assume that $s<1$, otherwise $\Bb=\Bb'$ and we may take $\Bb_r=\Bb$ for any $r\in [0,1)$. Write $\Bb=(X,\Ff,B,\Mm)(t)/U$ and $\Bb'=(X,\Ff,B',\Mm')(t')/U$. We take $a_r:=\frac{s-r}{1-r}$. 
If $t=t'$, then it suffices to define
    \begin{align*}
        \Bb_r:=\left(X,\Ff,\frac{1-r}{1-s}B'-\frac{s-r}{1-s}B,\frac{1-r}{1-s}\Mm'-\frac{s-r}{1-s}\Mm\right)(t).
    \end{align*}
If $t>t'$, then it suffices to define
    \begin{align*}
    \Bb_r:=\left(X,\Ff,B_r,\frac{1-r}{1-s}\Mm'-\frac{s-r}{1-s}\Mm\right)\left(\frac{(1-r)t'-(s-r)t}{1-s}\right),
    \end{align*}
where
    \begin{align*}
    B_r:=\left(\frac{1-r}{1-s}B'-\frac{s-r}{1-s}B\right)^{\ninv}+\frac{1}{(1-t')-a_r(1-t)}((1-t')B'^{\inv}-a_r(1-t)B^{\inv}).
    \end{align*}
\end{proof}

\subsection{Bounded families}\label{subsec: bounded}

\begin{defn}[{cf.~\cite[Definition~3.5.1]{HMX14},~\cite[2.19]{Bir19}}]\label{defn: bounded pairs}
We say that a set 
$\mathcal{P}$ 
of projective varieties is 
\emph{birationally bounded} 
if there exists a projective morphism 
$f\colon Z\rightarrow T$, 
where 
$T$ 
is of finite type, such that for every 
$X\in\mathcal{P}$, 
there exists a closed point 
$t\in T$
such that
$Z_t:=f^{-1}(t)$
is irreducible, together with
a birational map 
$\phi_t \colon Z_t\dashrightarrow X$.

We say that a set 
$\mathcal{P}$ 
of pairs is \emph{log birationally bounded} 
(resp. \emph{bounded}) 
if there is a pair 
$(Z,C)$ 
such that 
$C$ 
is reduced, 
and a projective morphism 
$f \colon Z\rightarrow T$ 
where 
$T$ 
is of finite type, such that for any pair
$(X,B)\in\mathcal{P}$, 
there exists a closed point 
$t\in T$
such that 
$Z_t:=f^{-1}(t)$
is irreducible, together with
a birational map (resp. isomorphism) 
$\phi_t \colon Z_t\dashrightarrow X$ 
such that 
\begin{align*}
\Supp C_t\neq Z_t
\quad 
\text{and}
\quad  
(\phi^{-1}_t)_\ast \Supp B
\cup
\Exc(\phi_t)
\subset
\Supp C_t\ \left(\text{resp. }\phi_t(C_t)=B\right),
\end{align*}
where $C_t:=f^{-1}(t)\cap C$. 
\end{defn}

\begin{defn}[{\cite[Definition~3.31]{Cas+25a}}]\label{defn: bounded foliated pairs}
Let $\mathcal{P}$ be a set of projective foliated pairs $(X,\Ff)$. We say that $\mathcal{P}$ is \emph{bounded} if there exist finitely many flat projective morphisms 
$f_i\colon X_i\to Z_i$, 
$i=1,\dots,N$ 
of normal varieties with normal fibers and foliations $\mathcal G_i$ of rank $r_i$ on $X_i$ satisfying the following properties:
 \begin{enumerate}
    \item For any closed point $z\in Z_i$
    with $X_{i,z}:=f_i^{-1}(z)$, we have $T_{X_i/Z_i}|_{X_{i,z}}\simeq T_{X_{i,z}}$.
    \item $T_{\mathcal G_i}\subset T_{X_i/Z_i}$, and for any closed point 
    $z\in Z_i$, we have that $T_{\mathcal G_i}|_{X_{i,z}}\subset T_{X_{i,z}}$ defines a foliation ${\Ff_{i,z}}$ of rank $r_i$ on $X_{i,z}$.
    \item For any $(X,\mathcal F)\in \mathcal P$, there exist $i\in\{1,\dots,N\}$, a closed point $z\in Z_i$, and an isomorphism $\phi_z\colon X\to X_{i,z}$, such that $(\phi_z)_\ast\mathcal F\simeq \mathcal F_{i,z}$.
    \end{enumerate}
    In particular, $\{X\mid X\in\mathcal{P}\}$ is bounded.
\end{defn}

\begin{defn}\label{defn: bounded nfs}
Let $\mathcal{P}$ be a set of projective quasi-normalized foliated structures. We say that $\mathcal{P}$ is \emph{bounded} if there exist a bounded set of projective foliated pairs $\mathcal{Q}$ and a positive integer $r$ satisfying the following. For any $\Bb:=(X,\Ff,B,\Mm)(t)\in\mathcal{P}$, there exists a very ample divisor $H$ on $X$, such that:
\begin{enumerate}
    \item $(X,\Ff)\in\mathcal{Q}$.
    \item $H^{\dim X}\leq r$.
    \item $-r\leq (K_X+\Supp B+\Mm_X)\cdot H^{\dim X-1}\leq r$.
\end{enumerate}
Note that this aligns with Definition~\ref{defn: bounded pairs} (cf.~\cite[Lemma~2.20]{Bir19}) and the definition of bounded generalized pairs in the literature (cf.~\cite[Definition~1.1(5)]{Bir21b}).
\end{defn}

\begin{defn}\label{defn: lbb}
Let $\mathcal{P}$ be a set of projective quasi-normalized foliated structures. We say that $\mathcal{P}$ is \emph{log birationally bounded} if there exists a bounded set of projective quasi-normalized foliated structures $\mathcal{Q}$, such that for any $\Bb=(X,\Ff,B,\Mm)(t)\in\mathcal{P}$ with ambient variety $X$, there exists $\Bb'=(X',\Ff',B',\Mm')(t)\in\mathcal{Q}$ with irreducible ambient variety $X'$ together with a birational map $\phi\colon X\dashrightarrow X'$, such that 
\begin{enumerate}
    \item $\Supp B'\supset\phi_*\Supp B\cup\Exc(\phi^{-1})$,   
    \item $\phi_*\Ff=\Ff'$, and
    \item $\Mm=\Mm'$ as $\bb$-divisors. In particular, this implies that there exists a resolution of indeterminacies $p\colon W\rightarrow X$ and $q\colon W\rightarrow X'$ of $\phi$ such that $\Mm,\Mm'$ descend to $W$ and $\Mm_W=\Mm'_W$.
\end{enumerate}
\end{defn}

\begin{defn}[{cf.~\cite[Definition~3.5.3]{HMX14}}]
    Let $X$ be a normal projective variety and $D$ a $\mathbb Q$-Cartier $\mathbb Q$-divisor on $X$. We say that $D$ is \emph{potentially birational} if for any general closed points $x,y\in X$, possibly switching $x$ and $y$, we may find $0\leq\Delta\sim_{\mathbb Q}(1-\epsilon)D$ for some $0<\epsilon<1$, such that $(X,\Delta)$ is not klt at $y$, $(X,\Delta)$ is lc at $x$, and $x$ is an lc center of $(X,\Delta)$.
\end{defn}

\section{Precise adjunction formulae}\label{sec: precise adj}

The goal of this section is to prove the following two theorems on precise adjunction formulae. The first theorem shows that, for algebraically integrable adjoint foliated structures with DCC coefficients, adjunction preserves the DCC property.

\begin{thm}\label{thm: dcc adjunct to dcc}
Let $\Ii\subset [0,+\infty)$ be a DCC set. Then there exists a DCC set $\Ii'\subset [0,1]$ depending only on $\Ii$ satisfying the following. Let
$$\Bb/U:=(X,\Ff,B,\Mm)(t)/U$$
be an lc algebraically integrable normalized foliated structure such that $B,t\in\Ii$ and $\Mm=\sum\mu_j\Mm_j$, where each $\mu_j\in\Ii$ and each $\Mm_j$ is nef$/U$ $\bb$-Cartier. Let $\widetilde S$ be an irreducible component of $\lfloor B\rfloor$ and $S$ the normalization of $\widetilde S$. Let $\Aa:=\log\Bb$ and let $\Aa_S/U$ be the lc algebraically integrable adjoint foliated structure induced by adjunction (cf.~\cite[Theorem~1.8]{Cas+24})
$$K_{\Aa_S}:=K_{\Aa}|_S.$$
Then we may write
$$\Aa_S=(X,\Ff_S,B_S^{\ninv}+(1-t)B_S^{\inv},\Mm^S,t),$$ 
where $\Ff_S:=\Ff|_S$, $B_S\in\Ii'$, and $\Mm^S:=\sum\mu_j\Mm_j|_S$. In particular, if $t<1$, then
$$\Bb_S/U:=\exp(\Aa_S)/U=\left(S,\Ff_S,B_S,\Mm^S\right)(t)/U\in\Ii'\cup\Ii$$
and $\Bb_S$ is the normalized foliated structure induced by adjunction
$$K_{\Bb_S}:=K_{\Bb}|_S.$$
\end{thm}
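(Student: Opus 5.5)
The plan is to reduce to a codimension-two computation on $S$ and to treat invariant and non-invariant $S$ separately, following the adjunction of \cite[Theorem~1.8]{Cas+24}.

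Since the coefficient of a prime divisor $P\subset S$ in the different depends only on the structure near the generic point of $P$, I will localize at the generic point of a codimension-two point $\eta=\eta_P$ of $X$ lying on $S$. There I replace $X$ by a $\mathbb Q$-factorial ACSS (qdlt) model $h\colon X'\to X$ of $\Bb$ near $\eta$, as in \cite{Cas+24}. On this model $\log\Bb'=h^*\log\Bb$ is crepant, $S'=h^{-1}_*S\to S$ is birational, and the different on $S$ is the pushforward of the different on $S'$. The key point is that over $\eta$ the foliation is induced by an equidimensional contraction $f\colon X'\to Z$ and $(X',B'^{\ninv}+G)$ is a toroidal-type qdlt pair. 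So the computation becomes the standard one for a surface germ with a curve through it, done simultaneously for the pair $(X,B+\Mm)$ and the foliated triple $(X,\Ff,B^{\ninv}+\Mm)$. The two terms are then combined with weights $1-t$ and $t$.

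\textbf{Case $S$ non-invariant} ($\epsilon_{\Ff}(S)=1$). For the variety part, the classical precise adjunction \cite[Section~16]{Kol+92} and \cite[Lemma~4.3]{BZ16} gives coefficients of the form
\[
1-\frac{1}{n}+\sum_i\frac{m_ib_i}{n}+\sum_j\frac{l_j\mu_j}{n},
\]
with $n,m_i,l_j\in\mathbb Z_{\ge0}$. For the foliation part, when $P$ is $\Ff_S$-invariant the foliated adjunction produces $\epsilon$-weighted analogues: the contribution of $P$ to $K_{\Ff_S}$ versus $K_{\Ff}|_S$ is $1-\frac1n$ or $0$ depending on invariance, again with the $\frac{m_ib_i}{n}$ terms but only from non-invariant $B_i$ and from $\Mm$. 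Summing with weights $t$ and $1-t$ and then normalizing (dividing the $\Ff_S$-invariant part by $1-t$) gives a coefficient of $B_S$ of the form
\[
1-\frac{1}{n}+\sum\frac{m_i b_i+l_j\mu_j+k\,c}{n},
\]
where $c\in\{1,t\}$ records the inv/ninv interplay. This is the point where the normalization $B=B^{\ninv}+\frac{1}{1-t}B^{\inv}$ is used: the factor $(1-t)$ attached to invariant components exactly cancels. The set of all such numbers lying in $[0,1]$ is DCC whenever $\Ii$ is, by the standard argument (the $n$ must be bounded or the terms tend to $1$ from below).

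\textbf{Case $S$ invariant.} Here $\Aa_S$ has $t$-part $0$ on the foliation side, so $\Ff_S$ is the foliation restricted appropriately. The same local analysis on the qdlt model gives coefficients in the analogous hyperstandard-type set.

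Finally I define $\Ii'$ as the closure under these formulae intersected with $[0,1]$, and I use the lc condition to guarantee that coefficients are at most $1$. The statement about $\exp(\Aa_S)$ when $t<1$ is then formal from the uniqueness in the remark on $\exp$, together with $K_{\Bb_S}=K_{\Aa_S}$.

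The main obstacle is the local computation at $\eta$ when $\Ff$ is not well-behaved on $X$ itself. The contraction $f$ exists only on the ACSS model, and I must check that invariant components of $B$ (with coefficient scaled by $1-t$) enter the different only through $(1-t)$-multiples, so that normalizing produces genuine DCC data rather than quotients by $1-t$, which could accumulate. I would handle this by comparing $K_{X'}+G$ and $K_{\Ff'}+G^{\ninv}$ via $K_{\Ff'}\sim K_{X'/Z}-R$ on the toroidal model. That reduces the foliated different to a pair-type different of $(X',G)$ restricted to $S'$, plus the ramification $R$, whose contribution has standard coefficients.
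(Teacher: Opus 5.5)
Your reduction to a $\mathbb Q$-factorial qdlt model and your splitting into a foliated part with weight $t$ and a pair part with weight $1-t$ match the paper. There is, however, a genuine gap at exactly the point you call the key cancellation.

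Write $K_{\log\Bb}=t(K_{\Ff}+B^{\ninv}+\Mm_X)+(1-t)(K_X+B+\Mm_X)$ and fix a prime divisor $D\subset S$. Let $r$ be the Cartier index of $K_X+S$ at $\eta_D$. Let $b_i$ and $c_k$ be the coefficients of the non-invariant components $B_i$ and the invariant components $C_k$ of $B-S$. Let $p_i,q_k,\lambda_j$ be $r$ times the multiplicities along $D$ of $B_i|_S$, $C_k|_S$ and $\Mm_{j,X}|_S-\Mm^S_{j,S}$. Then
\[
\mult_D\Diff_S(\log\Bb)=\frac1r\Bigl(t\bigl(a+\epsilon_{\Ff_S}(D)(r-1)\bigr)+(1-t)(r-1)+\sum_ip_ib_i+\sum_j\lambda_j\mu_j+(1-t)\sum_kq_kc_k\Bigr),
\]
where $a\ge0$ is an integer measuring the singularity of $\Ff$ along $D$. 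Your ``$1-\frac1n$ or $0$'' for the foliated contribution omits $a$.

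When $D$ is $\Ff_S$-invariant you divide by $1-t$, and your cancellation does happen. The remaining terms $\frac{t}{1-t}a$ and $\frac{1}{1-t}(\sum_ip_ib_i+\sum_j\lambda_j\mu_j)$ increase with $t$, so with the bound by $1$ they are harmless. The ``quotients by $1-t$'' you worry about are therefore not the issue.

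The issue is a non-$\Ff_S$-invariant $D$. There nothing is divided, so the $(1-t)$ on invariant components survives. The term $(1-t)\sum_kq_kc_k$ decreases as $t\to1$. Nothing in your argument stops these coefficients from strictly decreasing along an increasing sequence $t\in\Ii$. Passing to an ACSS model does not remove this term. Take $\Ff$ induced by $xy$ on $\mathbb A^2$, $S=\{x=0\}$, $C=\{y=0\}$ and $B=S+cC$. The origin is a non-$\Ff_S$-invariant point of $S$ with coefficient $t+(1-t)c$. This is not of the form you wrote, and it is DCC only because $c\le1$.

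What is missing is an argument controlling this term, and it splits according to $a$.

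If $a=0$, you must show $q_k=0$, i.e.\ that $S$ is the only $\Ff$-invariant divisor through $D$.
\begin{itemize}
\item For $S$ invariant, this follows from $a=0\iff\eta_D\notin\Sing\Ff$ (checked on the index-one cover), together with the fact that a non-$\Ff_S$-invariant $D$ lying on two distinct invariant divisors is contained in $\Sing\Ff$.
\item For $S$ non-invariant, it needs algebraic integrability and log canonicity. The relevant statement is that restrictions of $\Ff$-invariant divisors to a non-invariant $S$ have only $\Ff_S$-invariant components when $(X,\Ff,S)$ is lc. This fails without lc, e.g.\ for the projection of $\mathbb P^4$ from a plane.
\end{itemize}

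If $a\ge1$, log canonicity of $\log\Bb$ implies that the structure at parameter $0$, namely the generalized pair $(X,B,\Mm)$, is lc. Hence $\sum_kq_kc_k\le1\le a$. Then $ta+(1-t)\sum_kq_kc_k$ is non-decreasing in $t$ and in each $c_k$, and this monotonicity is what gives DCC. Using lc only to bound the coefficients by $1$, as you propose, does not provide it.
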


The second theorem is more technical. Roughly speaking, it says that for algebraically integrable adjoint foliated structures with DCC coefficients, if the adjoint foliated structure induced on $S$ by adjunction has coefficients in a finite set, then the original adjoint foliated structure also has coefficients in a finite set ``near $S$''. For the nef part, one has to make precise what it means for the coefficients to be finite ``near $S$''. The precise statement is as follows.

\begin{thm}\label{thm: finite inversion of adjunction to finite 2}
 Let $\Ii\subset [0,+\infty)$ be a DCC set and $\Ii_0\subset [0,+\infty)$ a discrete set. Then there exists a finite set $\Ii_0'$ depending only on $\Ii$ and $\Ii_0$ satisfying the following. Assume that:
\begin{enumerate}
\item $\Bb/U:=(X,\Ff,B,\Mm)(t)/U$ is a $\mathbb Q$-factorial lc algebraically integrable normalized foliated structure,
\item $B\in\Ii$, $t\in\Ii\backslash\{1\}$, and $\Mm=\sum \mu_j\Mm_j$, where $\mu_j\in\Ii$ and $\Mm_j$ is nef$/U$ $\bb$-Cartier.
\item $D\in\Ii$ is an $\Rr$-divisor on $X$ and $\Nn=\sum\nu_k\Nn_k$, where each $\Nn_k$ is nef$/U$ $\bb$-Cartier and each $\nu_k\in\Ii$.
\item $S$ is a normal prime divisor on $X$ such that $S$ is an lc place of 
$$\Bb':=(\Bb,D,\Nn),$$
$S$ is not a component of $D$, and $(X,S)$ is plt.
\item $\Bb_S/U=\left(S,\Ff_S,\Delta_S,\Pp^S\right)(t)/U$ is the lc algebraically integrable normalized foliated structure induced by adjunction
$$K_{\Bb_S}:=K_{\Bb'}|_S,$$
where $\Pp^S=\sum \mu_j\Mm_j^S+\sum\nu_k\Nn_k^S$, $\Mm_j^S :=\Mm_j|_S$, and $\Nn_k^S :=\Nn_k|_S$ for any $j,k$.
\item $F$ is a general fiber of $g: S\rightarrow Z$ where $S\xrightarrow{g} Z\xrightarrow{\tau} U$ is the Stein factorization of the induced morphism $S\rightarrow Z$.
\item $\Delta_S|_F\in\Ii_0$.
\item For each $k$, either $\nu_k\in\Ii_0$, or $\Nn_{k}^S\big|_F\equiv\bm{0}$.
\item For any irreducible component $L$ of $D$, $L|_S\neq0$ over the generic point of $Z$.
\end{enumerate}
Then 
$$D\in\Ii_0'\quad \text{and} \quad \{\nu_k\mid (\Nn_{k,X}|_S)|_F\not\equiv 0\}\subset\Ii_0'.$$
\end{thm}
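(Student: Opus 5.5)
The plan is to localize everything over the generic point of $Z$ and argue by contradiction with a DCC/monotonicity argument, in the spirit of \cite[Lemma~4.10]{BZ16}. By (6) and (9), every irreducible component $L$ of $D$ meets $F$, and every nef part $\Nn_k$ with $(\Nn_{k,X}|_S)|_F\not\equiv 0$ is also seen on $F$. So it suffices to control, for each prime divisor $V$ of $F$, the coefficient of $\Delta_S|_F$ along $V$ in terms of the data of $(\Bb,D,\Nn)$ near the generic point of $V$. Shrinking $U$ and cutting out a neighbourhood of the generic point of a codimension-two point $\eta_V\in X$, we work with a two-dimensional (in the transverse directions) germ.

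\textbf{Step 1: local shape of the adjunction coefficients.} First I would go through the proof of Theorem~\ref{thm: dcc adjunct to dcc}, i.e.\ the precise adjunction formula of Section~\ref{sec: precise adj}, and extract an explicit expression for each coefficient. The expected shape is as follows. Since $(X,S)$ is plt and $X$ is $\mathbb Q$-factorial, near $\eta_V$ the variety $X$ has a cyclic quotient singularity of some index $m\geq 1$ along $V$. The coefficient of $\Delta_S$ along $V$ is then of the form
\[
\delta_V=\frac{m-1+\sum_i n_ib_i+\sum_k c_k\nu_k+\sum_j e_j\mu_j}{m}
\]
in the non-invariant case, with analogous formulae (involving $1-t$) in the $\Ff_S$-invariant case. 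Here:
\begin{itemize}
\item $n_i\in\mathbb Z_{\geq 0}$ are intersection multiplicities;
\item $b_i$ runs over coefficients of $B+D$;
\item $c_k,e_j\in\mathbb Z_{\geq0}$ measure the failure of $\Nn_k,\Mm_j$ to descend, or their local intersection with $S$.
\end{itemize}
The key features are that $\delta_V$ is strictly increasing in each coefficient of $D$ along a component through $\eta_V$, with a coefficient $n_i\geq1$; and similarly strictly increasing in $\nu_k$ whenever $c_k\ge1$.

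\textbf{Step 2: finiteness of the target values and of $m$.} Since $\Bb_S$ is lc, $\delta_V\le 1$. By (7), $\delta_V\in\Ii_0\cap[0,1]$, which is finite because $\Ii_0$ is discrete. If $\delta_V<1$ then $m\le (1-\delta_V)^{-1}$ is bounded. If $\delta_V=1$, then the numerator equals $m$ while $\sum n_ib_i+\dots=1$. Since all the positive $b_i,\nu_k,\mu_j$ lie in the DCC set $\Ii$, only finitely many configurations are possible; in particular each term is bounded below by $\min(\Ii\setminus\{0\})$, which bounds the number of nonzero terms. In either case the numerator is a sum of boundedly many DCC terms lying in a finite set.

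\textbf{Step 3: the contradiction argument.} Suppose the conclusion fails. Then there is a sequence of data as in the theorem and components $L_l$ of $D_l$ whose coefficients $d_l$ are pairwise distinct. By DCC we may pass to a subsequence with $d_l$ strictly increasing. Picking $V_l\subset L_l|_S\cap F_l$ and applying Step 1, we pass to a further subsequence where:
\begin{itemize}
\item $m$, the multiplicities $n_i,c_k,e_j$, and the number of terms are constant (possible by Step 2);
\item all other DCC terms are non-decreasing.
\end{itemize}
Then $\delta_{V_l}$ is strictly increasing, which contradicts its lying in a finite set. The same argument applies to $\nu_k$ whenever $c_k\geq1$ at some $V$.

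\textbf{Main obstacle: the nef parts.} The hard step is the remaining case, where $(\Nn_{k,X}|_S)|_F\not\equiv0$ but $\Nn_k$ contributes nothing to any $\delta_V$ (it descends near $F$). In that case (8) already gives $\nu_k\in\Ii_0$, so it remains to bound $\nu_k$ from above. The plan here is as follows.
\begin{enumerate}
\item Intersect with a suitable movable curve $C\subset F$, a complete intersection of very ample divisors on $F$.
\item Use that $K_{\Bb_S}|_F$ is determined by the data on $F$, which are already known to lie in a finite set: $\Delta_S|_F$ has finitely many coefficients, while the other $\nu_{k'}$, $\mu_j$ are DCC and nonnegative against $C$.
\item Combine this with the nonvanishing of $\Nn_{k,X}|_F\cdot C$, which is a positive rational number with denominator bounded by Cartier index considerations.
\end{enumerate}
I expect this bound to be delicate, since $F$ is not fixed. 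If it fails, the fallback is to redo Step 3 with $\nu_k$ playing the role of $d_l$, using the non-descent multiplicity $c_k$ supplied by a suitable model on which $\Nn_k$ descends.
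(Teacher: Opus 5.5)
Steps~1--3 use the same mechanism as the paper. Coefficients of $\Delta_S$ are taken along prime divisors $P\subset S$ horizontal over $Z$, computed by Proposition~\ref{prop: explicit formula}, each term is shown to lie in a finite set, and (9) supplies a horizontal $P$ with $p_{i,P}\neq 0$ or $q_{l,P}\neq 0$. However, your ``expected shape'' is wrong in a way that breaks Step~3. For $P$ not $\Ff_S$-invariant the coefficient is
\[
\frac{1}{r_P}\Bigl(r_P-1+t\,a_P+\sum_i p_{i,P}(b_i+d_i)+(1-t)\sum_l q_{l,P}(c_l+e_l)+\sum_j\lambda_{j,P}\mu_j+\sum_k\xi_{k,P}\nu_k\Bigr).
\]
So $\Ff$-invariant components of $B+D$ through $P$ enter with the factor $1-t$, which goes \emph{down} on a subsequence with $t$ non-decreasing. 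Making ``all other DCC terms non-decreasing'' therefore does not force the coefficient up: a priori $t\,a_P+(1-t)\sum_l q_{l,P}(c_l+e_l)$ could stay constant while $t$ and $e_l$ both increase.

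The missing input is the content of Theorem~\ref{thm: dcc precise adjunction}(2).
\begin{itemize}
\item If $a_P=0$, no $\Ff$-invariant divisor other than $S$ contains $P$ (Lemmas~\ref{lem: CS25 3.13(4) general case}(3)(4) and~\ref{lem: restriction ii}). Hence $q_{l,P}\neq 0$ forces $a_P\geq 1$.
\item Log canonicity at parameter $0$ gives $\sum_l q_{l,P}(c_l+e_l)\leq 1\leq a_P$.
\end{itemize}
Only then is $t\,a_P+(1-t)\sum_l q_{l,P}(c_l+e_l)$ non-decreasing in $t$ and strictly increasing in each $e_l$ (since $t<1$), and your contradiction argument goes through.

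The more serious problem is your ``main obstacle''. You are right that $\nu_k\in\Ii_0$ is not finiteness when $\Ii_0$ is only discrete. But if $\xi_{k,P}=0$ for every horizontal $P$, then $\Nn^S_{k,S}|_F=(\Nn_{k,X}|_S)|_F\not\equiv 0$, so $\Nn^S_k|_F\not\equiv\bm{0}$ and (8) gives $\nu_k\in\Ii_0$. The argument has to stop there, because no upper bound on $\nu_k$ follows from the hypotheses.
\begin{itemize}
\item Nothing constrains $K_{\Bb_S}|_F$ numerically (there is no $K_{\Bb_S}|_F\equiv 0$), so intersecting with a movable curve gives nothing.
\item Your fallback fails too. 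Such a $\nu_k$ appears in no coefficient of $\Delta_S$ along divisors meeting $F$, and divisors on higher models are exceptional over $S$, where (7) says nothing.
\end{itemize}
For infinite discrete $\Ii_0$ the conclusion is in fact false. Take $X=\mathbb P^2$, $\Ff=T_X$, $S$ a line, $B=S$, $t=0$, $D=0$, $\Nn=n\overline{H}$ with $H$ a line, $U$ a point, and $\Ii=\Ii_0=\mathbb N\cup\{0\}$. Every hypothesis holds with $\Delta_S=0$ and $F=S$, and $(H|_S)|_F\not\equiv 0$, yet $\nu=n$ is arbitrary. The paper's proof concludes finiteness directly from $\nu_k\in\Ii_0$, i.e.\ it effectively uses $\Ii_0$ finite. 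That is what holds in its application, Lemma~\ref{lem: when exist lc place}, where $\Ii_0$ is the finite set coming from the global ACC. The correct repair is to take $\Ii_0$ finite, not to look for a bound.
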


\subsection{Adjunction and singularities}

\begin{lem}\label{lem: CS25 3.13(4) general case}
Let $(X,S)$ be a plt pair where $S$ is a prime divisor, $\Ff$ a foliation on $X$, and $D$ a prime divisor on $S$ with generic point $\eta_D$. Let $r$ be the Cartier index of $K_X+S$ near $\eta_D$ and let $\Ff_S:=\Ff|_S$. Then there exists a non-negative integer $a$ such that, after possibly shrinking $X$ to a neighborhood of $\eta_D$, the following hold.
\begin{enumerate}
    \item For any prime divisor $P$ on $X$, $rD$ is Cartier, and we have
    $$(K_X+S)|_S=K_S+\frac{r-1}{r}D.$$
    \item We may write
    $$(K_{\Ff}+\epsilon_{\Ff}(S)S)|_S=K_{\Ff_S}+\frac{a+(r-1)\epsilon_{\Ff_S}(D)}{r}D.$$
    \item If $\epsilon_{\Ff}(S)=0$, then $a=0$ if and only if $\eta_D\not\in\Sing(\Ff)$.
    \item Let $V\neq S$ be an $\mathcal F$-invariant prime divisor such that $D \subset V\cap S$. If $\epsilon_{\mathcal F}(S) = 0$ and $\epsilon_{\mathcal F_S}(D) = 1$, then $\eta_D \in \Sing ~ \mathcal F$.
\end{enumerate}
\end{lem}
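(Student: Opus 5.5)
The plan is to work locally at the codimension-two point $\eta_D\in X$ and reduce everything to the case where $X$ is smooth, using the index-one cover of $K_X+S$. Since $(X,S)$ is plt, after shrinking to a neighbourhood of $\eta_D$ the germ $(X\ni\eta_D,S)$ is, after a general hyperplane cut or by working over the residue field of $\eta_D$, a surface-type plt germ. It is therefore a cyclic quotient singularity of index $r$, with $S$ the image of a smooth coordinate hyperplane. The local class group at $\eta_D$ is then $\mathbb Z/r$, which gives that $rD$ and $rP$ are Cartier for every prime divisor $P$. The identity $(K_X+S)|_S=K_S+\frac{r-1}{r}D$ in (1) is the classical plt adjunction (cf.\ \cite[Section~16]{Kol+92}).

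\textbf{Step 1: passing to the cover.} Let $\pi\colon Y\to X$ be the index-one cover associated to $K_X+S$ (equivalently to $S$) near $\eta_D$. It is cyclic of degree $r$ and étale in codimension one, and $Y$ is smooth near $\pi^{-1}(\eta_D)$. The preimage $S_Y$ is smooth, and $\psi:=\pi|_{S_Y}\colon S_Y\to S$ is cyclic of degree $r$, totally ramified along $D$, so $\psi^*D=rD_Y$. Since $\pi$ is quasi-étale, $\Ff_Y:=\pi^{-1}\Ff$ satisfies $K_{\Ff_Y}=\pi^*K_{\Ff}$, and $\epsilon_{\Ff_Y}(S_Y)=\epsilon_\Ff(S)$.

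\textbf{Step 2: adjunction on the smooth cover, then descent.} On the smooth germ $Y$ I use the foliated adjunction for a smooth divisor (cf.\ the closed-point version in \cite{Cas+24}/CS25, Lemma~3.13):
\[(K_{\Ff_Y}+\epsilon(S_Y)S_Y)|_{S_Y}=K_{\Ff_{S_Y}}+a\,D_Y,\]
where $a\ge 0$ is an integer. In the invariant case, $a=0$ exactly when $\Ff_Y$ is regular at the generic point of $D_Y$. For the descent I compare foliated canonical classes under the totally ramified cyclic cover $\psi$. The ramification formula for foliations should read
\[K_{\Ff_{S_Y}}=\psi^*\Big(K_{\Ff_S}+\tfrac{(r-1)\epsilon_{\Ff_S}(D)}{r}D\Big).\]
The reason is that ramification along a non-invariant divisor contributes like the usual Hurwitz term, while along an invariant divisor it contributes nothing. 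Pulling back the left-hand side of (2) by $\psi$ and dividing the coefficient of $D_Y$ by $r$ gives exactly the formula in (2).

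\textbf{Step 3: parts (3) and (4).} For (3), when $S$ is invariant, $a=0$ if and only if $\Ff_Y$ is regular at $\pi^{-1}(\eta_D)$. This transfers to $\eta_D$ because quasi-étale cyclic covers preserve (non-)singularity of the foliation at that point; with the convention that $\Sing(\Ff)$ includes the non-regular points, this uses that a regular foliation on a quotient pulls back to a regular one and conversely by invariance. For (4), suppose $\eta_D\notin\Sing\Ff$. Then near $\eta_D$ the foliation is locally a smooth product, and every invariant divisor is the pullback of a divisor on the local leaf space. Hence $S\cap V$, and so $D$, is a union of leaves of $\Ff$ intersected with $S$. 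This means $D$ is $\Ff_S$-invariant, contradicting $\epsilon_{\Ff_S}(D)=1$.

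\textbf{Expected obstacle.} The main difficulty I anticipate is justifying the foliated ramification formula for $\psi$ and the smooth-case adjunction at a non-closed point $\eta_D$. I would handle this by base-changing to the algebraic closure of the residue field, or by taking general hyperplane sections transverse to the foliation's singular behaviour at $\eta_D$. The latter requires care because Bertini fails for foliations, so I would prefer the cover argument carried out scheme-theoretically at $\eta_D$.
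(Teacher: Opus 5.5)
Your proposal follows the paper's proof essentially step for step: the index-one cover of $K_X+S$ from \cite[16.6 Proposition]{Kol+92}, the integer $a$ coming from adjunction on the smooth cover \cite[Proposition-Definition~3.7]{CS25a}, descent to $S$ via the foliated Riemann--Hurwitz formula \cite[Lemma~3.4]{Dru21}, and transfer of the singular locus through the quasi-\'etale cover. For that last step the paper cites \cite[Corollary~5.14]{Dru21} rather than relying on a bare invariance argument. Your local-product argument for (4) is the analytic counterpart of the paper's lifting of a vector field $\delta\in T_{\Ff_S}$ through the surjection $T_{\Ff}|_S\to T_{\Ff_S}$; like the paper, you should run it on the smooth cover $Y$ after that transfer, because on the quotient-singular $X$ a foliation with $\eta_D\notin\Sing(\Ff)$ is not literally a smooth product.
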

\begin{proof}
Since $(X,S)$ is plt, $X$ is klt near $\eta_D$ and $S$ is normal, so we may assume that $X$ is $\mathbb Q$-factorial by shrinking $X$ to a neighborhood of $\eta_D$.

(1) follows from \cite[16.6 Proposition]{Kol+92}. Moreover, by  \cite[16.6 Proposition]{Kol+92}, $X$ has a cyclic quotient singularity at $\eta_D$ and $K_X+S$ generates the local class group at $\eta_D$. Let $q\colon Y\rightarrow X$ be the index one cover associated to $K_X+S$ and let $T:=q^{-1}(S)$. By \cite[16.6 Proposition]{Kol+92}, $T$ is irreducible and normal, and the ramification index of $q|_T: T\rightarrow S$ at $E:=q^{-1}(D)$ is exactly $r$. 

Set $\Ff_Y:=q^{-1}\Ff$ and $\Ff_T:=\Ff_Y|_T$, then $K_{\Ff_Y}$ and $T$ are both Cartier. By \cite[Proposition-Definition~3.7]{CS25a}, we may write
$$(K_{\Ff_T}+\epsilon_{\Ff_Y}(T)T)|_T=K_{\Ff_T}+aE$$
where $a\geq 0$ is an integer.

We show that $a$ satisfies our requirements. Write
$$(K_{\Ff}+\epsilon_{\Ff}(S)S)|_S=K_{\Ff_S}+cD$$
near the generic point of $D$. Since $r$ is the ramification index of $q|_T$ at $E$, by the Riemann-Hurwitz formula (cf.~\cite[Lemma~3.4]{Dru21} and \cite[Proposition~2.2]{CS25b}), we have
$$a=rc-\epsilon_{\Ff_S}(D)(r-1)$$
or, equivalently,
$$c=\frac{a+\epsilon_{\Ff_S}(D)(r-1)}{r}.$$
This implies (2).

We now prove (3) and (4). By \cite[Corollary~5.14]{Dru21}, $\eta_D \in \Sing~\Ff$ if and only if $\eta_E \in \Sing~\Ff_Y$, where $\eta_E$ is the generic point of $E$. Thus possibly replacing $(X,S),D$ and $\Ff$ by $(Y,T),E$ and $\Ff_Y$, we may assume that $(X,S)$ is log smooth. 

We prove (3). It suffices to show that $a \neq 0$ if and only if $\eta_D \in \Sing(\Ff)$. By \cite[Proposition~3.14(1)]{CS25a} we see that if $\eta_D\in\Sing(\Ff)$, then $a \neq 0$. To show the converse direction, let $\phi\colon \Omega_X^k \to \mathcal O_X$ be the Pfaff field defining $\mathcal F$. By the construction of the different, cf.~\cite[Lemma~3.5, Remark~3.6]{CS25a}, since $S$ is normal, $a$ is the largest integer such that the image of $\phi|_S\colon \Omega^r_X|_S \to \mathcal O_S$ is contained in $\mathcal O_S(-aD)$.
Since $D\subset S$, if $a \neq 0$, then the image of $\phi$ is contained in the ideal of $D$ in $X$, i.e.\ $\eta_D \in \Sing(\mathcal F)$.

We are left to prove (4). By shrinking $X$ to a neighborhood of $\eta_D$, we may freely assume that $T_{\mathcal F}$ is locally free (cf.~\cite[Corollary~1.4]{Har80}) and generated by vector fields $\partial_1, \dots, \partial_m$ where $m = \rk \mathcal F$.
Since $D$ is the intersection of two different $\mathcal F$-invariant divisors, $D$ is also $\mathcal F$-invariant. Since $D$ is not $\mathcal F_S$-invariant, there exists a local section $\delta \in T_{\mathcal F_S}$ such that $\delta(I_{D/S}) \not\subset I_{D/S}$, where $I_{D/S}$ is the ideal of $D$ in $S$. If $D$ is not contained in $\Sing(\mathcal F)$, then the natural morphism $T_{\mathcal F}|_S \to T_{\mathcal F_S}$ is surjective, and we may let $\partial$ be a lift of $\delta$. Since $\delta(I_{D/S}) \not\subset I_{D/S}$,
it follows that $\partial(I_{D/X}) \not\subset I_{D/X}$, i.e.\ $D$ is not $\mathcal F$-invariant, a contradiction. Thus, $D$ is contained in $\Sing(\Ff)$, as claimed. 
\end{proof}

\begin{lem}\label{lem: invariant adjunction to non-invariant is invariant}
    Let $X$ be a $\mathbb Q$-factorial normal quasi-projective variety, $\Ff$ an algebraically integrable foliation on $X$, $S$ a normal prime non-$\Ff$-invariant divisor, $\Ff_S:=\Ff|_S$, and $D$ an $\Ff$-invariant prime divisor. Suppose that $(X,\mathcal F,S)$ is lc. Then any irreducible component of $D|_S$ is $\Ff_S$-invariant.
\end{lem}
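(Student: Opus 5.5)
The plan is to reduce to a local computation at the generic point of an irreducible component $C$ of $D|_S$, and to use the structure of lc algebraically integrable foliations near a non-invariant lc centre. Since $X$ is $\mathbb Q$-factorial, $D$ is $\mathbb Q$-Cartier and $D|_S$ is well defined; fix a component $C$ with generic point $\eta_C$, and shrink $X$ around $\eta_C$. The claim is that $C$ is $\Ff_S$-invariant. The leaves of $\Ff_S$ are the intersections of $S$ with leaves of $\Ff$, because $S$ is non-invariant and $\Ff_S=\Ff|_S$ is the restricted foliation. This points to the core argument: $D$ is a union of leaves of $\Ff$, so $D\cap S$ should be a union of leaves of $\Ff_S$.

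To make this precise I would pass to a good model. Since $(X,\Ff,S)$ is lc and $\Ff$ is algebraically integrable, take a $\mathbb Q$-factorial ACSS modification $h\colon X'\to X$ of $(X,\Ff,S)$ as in \cite{Cas+24}. It comes with an equidimensional contraction $f\colon X'\to Z$ inducing $\Ff':=h^{-1}\Ff$. On $X'$, an invariant divisor is vertical over $Z$, so it is a union of fibres of $f$ over a divisor of $Z$. Let $S'$ be the strict transform of $S$, which is horizontal over $Z$, and let $D'$ be the strict transform of $D$. Then $D'$ is vertical, so $D'=f^{-1}(P)$ set-theoretically over a prime divisor $P\subset Z$ near the relevant points. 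Every component of $D'|_{S'}$ is then contained in $(f|_{S'})^{-1}(P)$. Since $f|_{S'}$ is equidimensional (after restricting to the normalization), each such component is a divisor on $S'$ that is vertical for $f|_{S'}\colon S'\to Z$, and hence invariant for $\Ff_{S'}$, the foliation induced by $f|_{S'}$. This uses that $\Ff_{S'}=\Ff'|_{S'}$ is induced by the Stein factorization of $f|_{S'}$, as in the adjunction of \cite{Cas+24}.

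Next I would descend to $S$. The morphism $h|_{S'}\colon S'\to S$ is birational onto the normalization of $S$, which is $S$ itself. The component $C$ of $D|_S$ is the image of some component $C'$ of $h^*D|_{S'}$ that is not $h|_{S'}$-exceptional. Now $h^*D=D'+(\text{exceptional})$, and the $h$-exceptional divisors are themselves $\Ff'$-invariant or not, so I have to check that $C'$ can be chosen inside $D'\cap S'$ or inside an invariant exceptional divisor. For non-invariant exceptional divisors $E$, the strategy is to show they cannot dominate $C$. The reason is that they lie over the non-klt-type locus of $(X,\Ff,S)$ in codimension $\ge 2$ of $S$, or else, by lc-ness of $(X,\Ff,S)$, their intersection with $S'$ is vertical anyway. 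Invariance is preserved under birational pushforward of foliations, so $C=h(C')$ is then $\Ff_S$-invariant.

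I expect this last step to be the main obstacle: ruling out that $C$ is dominated only by $S'\cap E$ for a non-invariant exceptional $E$ over a codimension-two locus of $X$ contained in $D$. If that route proves awkward, the fallback is a direct local argument at $\eta_C$. $(X,S)$ is plt near $\eta_C$, since the lc condition with $S$ non-invariant forces this generically along $S$ by \cite{Cas+24}, so Lemma~\ref{lem: CS25 3.13(4) general case} applies. If $C$ were not $\Ff_S$-invariant, then $\epsilon_{\Ff_S}(C)=1$. In the index-one cover, a lift of a vector field of $\Ff_S$ transverse to $C$ to a section of $T_\Ff$ would move $D$, contradicting invariance of $D$, unless $\eta_C\in\Sing\Ff$. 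The singular case should contradict lc-ness through the different: a positive $a$ in Lemma~\ref{lem: CS25 3.13(4) general case}(2) combined with $\epsilon_{\Ff_S}(C)=1$ gives a coefficient $>1$ in the adjunction to $S$, which violates lc.
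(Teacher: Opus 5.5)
There is a genuine gap, and it is the step you flagged yourself. You take a $\mathbb Q$-factorial ACSS modification of $(X,\Ff,S)$ rather than of $\Ff$, so the exceptional divisors of $h$ are lc places of $(X,\Ff,S)$. Such lc places can have center exactly $C$. Already for $\Ff$ induced by $(x,y)\mapsto x$ on $\mathbb A^2$, with $S=\{y=0\}$ and $D=\{x=0\}$, the exceptional curve $E$ of the blow-up of the origin has $a(E,\Ff)=1$ and $a(E,(X,\Ff,S))=0=-\epsilon_{\Ff}(E)$. So your model gives no control of $h$ over $\eta_C$. You are therefore forced to exclude a non-invariant (hence horizontal over $Z$) exceptional $E$ with $\Center_XE=C$ whose trace on $S'$ dominates $C$.

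The reason you offer does not do this. The case to exclude is precisely the one where the center has codimension one in $S$. For such $E$, lc-ness of $(X,\Ff,S)$ only gives $a(E,\Ff)=-1+\mult_Eh^*S>-1$. That is not a contradiction, and it says nothing about $E\cap S'$ being vertical.

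The fallback does not close the gap either.
\begin{itemize}
\item The claim that $(X,S)$ is plt near $\eta_C$ is not justified, since $X$ is only assumed $\mathbb Q$-factorial. So Lemma~\ref{lem: CS25 3.13(4) general case} is not available.
\item That lemma relates $a>0$ to $\eta_C\in\Sing\Ff$ only when $S$ is invariant, whereas here $S$ is non-invariant.
\item Even granting all this, if $\epsilon_{\Ff_S}(C)=1$ the coefficient of $C$ in the different is $\frac{a+r-1}{r}$, which equals $1$ when $a=1$. That is compatible with lc, so you only obtain $a\le 1$, and the case $a=1$ remains open.
\end{itemize}

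The missing idea is to modify only along lc places of $\Ff$.
\begin{itemize}
\item Since $(X,\Ff,S)$ is lc, $\Ff$ is lc. Take a $\mathbb Q$-factorial ACSS modification $h\colon X'\to X$ of $\Ff$ itself \cite[Theorem~2.5.1]{CHLX23}, with contraction $f\colon X'\to Z$.
\item Every $h$-exceptional $E$ satisfies $a(E,\Ff)=-\epsilon_{\Ff}(E)$. If $\Center_XE\subset S$, then $a(E,(X,\Ff,S))=a(E,\Ff)-\mult_Eh^*S<-\epsilon_{\Ff}(E)$, contradicting lc.
\item $\Exc(h)$ is divisorial because $X$ is $\mathbb Q$-factorial, and $C$ has codimension two in $X$. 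Hence no exceptional center contains $C$, and $h$ is an isomorphism over $\eta_C$.
\item The strict transform $C'$ then lies in $D'\cap S'$, and $D'$ is vertical over $Z$. So $C'\subset\Supp (f|_{S'})^*D_Z$ for some prime divisor $D_Z\subset Z$.
\item Therefore $C'$ is invariant for $\Ff'|_{S'}$, which is induced by $f|_{S'}$ \cite[Proposition~3.2]{ACSS21}, and pushing forward gives the claim.
\end{itemize}

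This is the paper's proof. Your verticality step is the same, but the isomorphism over $\eta_C$ is what makes the descent immediate. It also shows a posteriori that every divisor over $X$ centered at $C$ is invariant, so the configuration you were worried about never occurs.
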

\begin{proof}
Let $h\colon X'\rightarrow X$ be a $\mathbb Q$-factorial ACSS modification of $\Ff$ \cite[Theorem~2.5.1]{CHLX23} associated with the contraction $f: X'\rightarrow Z$. Let $\Ff':=h^{-1}\Ff$, $S':=h^{-1}_*S$, $D':=\Supp h^*D$, $h_S:=h|_{S'}: S'\rightarrow S$, and $\Ff_{S'}:=\Ff'|_{S'}$. Then $\Ff_{S'}$ is the foliation induced by $f|_{S'}: S'\rightarrow Z$ \cite[Proposition~3.2, Case II]{ACSS21}. For any $h$-exceptional prime divisor $E$ on $X'$, $E$ is an lc center of $\Ff$, hence $\Center_XE\not\subset S$.

Let $C$ be an irreducible component of $D\cap S$. Possibly shrinking $X$ to the generic point of $C$, we may assume that $h^{-1}$ is an isomorphism along $C$ and, in particular, $D'$ is $\Ff'$-invariant. Thus $D'\subset \Supp f^*D_Z$ for some prime divisor $D_Z\subset Z$. Let $C':=(h_S^{-1})_*C$. Then we have 
$$C'\subset D'\cap S'\subset \Supp f^*D_Z\cap S'=\Supp f|_{S'}^*D_Z,$$
hence $C'$ is $\Ff_{S'}$-invariant. Thus $C$ is $\Ff_S$-invariant, as claimed. 
\end{proof}

\begin{lem}\label{lem: restriction ii}
Let $(X,S)$ be a $\mathbb Q$-factorial plt pair and $\Ff$ an algebraically integrable foliation on $X$. Assume that $S$ is a prime non-$\Ff$-invariant divisor, $\Ff_S:=\Ff|_S$, and $D$ a non-$\Ff_S$-invariant prime divisor on $S$ with generic point $\eta_D$. Assume that
$$(K_{\Ff}+S)|_S=K_{\Ff_S}+\frac{r-1}{r}D$$
near the generic point of $D$, where $r$ is the Cartier index of $K_X+S$ near $\eta_D$. 

Then for any $\Ff$-invariant divisor $T$ on $X$, $\eta_D\not\in T\cap S$.
\end{lem}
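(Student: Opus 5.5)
The plan is to argue by contradiction and pass to the index one cover, as in the proof of Lemma~\ref{lem: CS25 3.13(4) general case}. There the hypothesis on the coefficient becomes the vanishing of an integer $a$. From $a=0$ we will get that the lifted foliation is smooth and transverse to the lift of $S$. Invariance of a divisor through $D$ would then force $D$ to be $\Ff_S$-invariant, which contradicts the hypothesis.

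First, suppose that $T$ is an $\Ff$-invariant prime divisor with $\eta_D\in T\cap S$. Since $T\neq S$ ($S$ is not invariant) and $D$ has codimension two in $X$, $D$ is an irreducible component of $T\cap S$. We shrink $X$ around $\eta_D$. Since $(X,S)$ is plt, $X$ has a cyclic quotient singularity there and $K_X+S$ generates the local class group.

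Let $q\colon Y\to X$ be the index one cover of $K_X+S$, with $T_Y:=q^{-1}(S)$, $E:=q^{-1}(D)$ and $\Ff_Y:=q^{-1}\Ff$. As in the proof of Lemma~\ref{lem: CS25 3.13(4) general case}, $(Y,T_Y)$ is log smooth near $\eta_E$, and $q|_{T_Y}$ ramifies to order $r$ along $E$. Since $\epsilon_\Ff(S)=1$, we write $(K_{\Ff_Y}+T_Y)|_{T_Y}=K_{\Ff_{T_Y}}+aE$. Since $\epsilon_{\Ff_S}(D)=1$, Riemann--Hurwitz gives that the coefficient of $D$ is $\frac{a+r-1}{r}$. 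The hypothesis that this coefficient equals $\frac{r-1}{r}$ gives $a=0$.

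Second, we interpret $a=0$ through the Pfaff field description of the different (\cite[Lemma~3.5, Remark~3.6]{CS25a}), exactly as in the proof of part (3) of Lemma~\ref{lem: CS25 3.13(4) general case}. Let $\phi\colon\Omega^m_Y\to\mathcal O_Y$ be the Pfaff field of $\Ff_Y$. Then $a$ is the vanishing order along $E$ of the induced map on $T_Y$, built from $\phi$ composed with the restriction to $T_Y$. If $a=0$, this map is surjective at $\eta_E$. Hence $\phi$ itself is surjective at $\eta_E$, so $\eta_E\notin\Sing(\Ff_Y)$, and moreover $\Ff_Y$ is transverse to $T_Y$ at $\eta_E$. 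I expect this step to be the main obstacle: one must check that non-vanishing of the restricted Pfaff field, in the non-invariant case, really yields transversality and not merely smoothness. The first thing to try is a local computation in coordinates adapted to $T_Y=\{x_1=0\}$.

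Third, we conclude with a local product structure. Near $\eta_E$, $\Ff_Y$ is a smooth foliation transverse to the smooth divisor $T_Y$. So we may choose local analytic coordinates in which $\Ff_Y$ is given by the fibers of a submersion $p$, with $p|_{T_Y}$ still a submersion inducing $\Ff_{T_Y}$. The divisor $V:=q^{-1}(T)$ is $\Ff_Y$-invariant, since invariance is preserved by the finite pullback, and it contains $E$. Near the smooth point $\eta_E$ of $\Ff_Y$ it is therefore locally of the form $p^{-1}(V_0)$. Then $V\cap T_Y=(p|_{T_Y})^{-1}(V_0)$ is $\Ff_{T_Y}$-invariant, so $E$ is $\Ff_{T_Y}$-invariant.

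Alternatively, $(Y,\Ff_Y,T_Y)$ is lc near $\eta_E$, because the foliation is smooth and transverse to $T_Y$ there. We can then apply Lemma~\ref{lem: invariant adjunction to non-invariant is invariant} on $Y$, which is smooth and hence $\mathbb Q$-factorial near $\eta_E$, and where $\Ff_Y$ is algebraically integrable; it gives the same conclusion.

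Finally, invariance descends along the finite cover $q|_{T_Y}$ (cf.~\cite[Corollary~5.14]{Dru21}), so $D$ is $\Ff_S$-invariant. This contradicts the assumption that $D$ is not $\Ff_S$-invariant, and therefore $\eta_D\notin T\cap S$ for every $\Ff$-invariant divisor $T$.
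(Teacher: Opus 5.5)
Your proof is correct. Its first step is also the paper's: passing to the index one cover of $K_X+S$, where the hypothesis becomes $a=0$ in the notation of Lemma~\ref{lem: CS25 3.13(4) general case}. After that the two routes diverge.

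\textbf{The paper's route.} It never looks at the Pfaff field. It takes a $\mathbb Q$-factorial ACSS modification $h\colon X'\to X$ of $(X,\Ff,S)$ and compares coefficients along $D$ in $\bigl(K_{\Ff'}+S'+\sum e_iE_i\bigr)|_{S'}=h|_{S'}^*K_{\Ff_S}$. This shows that the exceptional divisors with $e_i>0$ avoid the generic point of the strict transform $D'$, so $(X',\Ff',S')$ is lc there. It then applies Lemma~\ref{lem: invariant adjunction to non-invariant is invariant}, which is proved via ACSS models on which invariant divisors are vertical. The conclusion is that no $\Ff'$-invariant divisor contains $D'$; in particular neither $h^{-1}_*T$ nor any $E_i$ does. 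So the paper's proof is essentially your ``alternative'' route. The difference is that log canonicity near $D'$ comes from the modification and a coefficient comparison, not from smoothness and transversality.

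\textbf{Your main route.} It replaces this machinery by a local computation, and the step you flag as the obstacle is harmless. For the non-invariant divisor $T_Y=\{h=0\}$, the map defining the different is induced by $\alpha\mapsto\phi(dh\wedge\tilde\alpha)$. Suppose such a value is a unit at $\eta_E$, and expand it in a local frame $v_1,\dots,v_m$ of $T_{\Ff_Y}$; this frame exists because $T_{\Ff_Y}$ is locally free near the codimension two point $\eta_E$ of the smooth $Y$. Since $(dh\wedge\tilde\alpha)(v_1,\dots,v_m)=\sum_i\pm v_i(h)\,\tilde\alpha(v_1,\dots,\widehat{v_i},\dots,v_m)$, two things follow at once. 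First, $v_1\wedge\dots\wedge v_m$ does not vanish at $\eta_E$, so $\Ff_Y$ is smooth there. Second, some $v_i(h)$ is a unit, so $\Ff_Y$ is transverse to $T_Y$.

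\textbf{Comparison.} Your approach is more elementary and more general. Nothing in it uses algebraic integrability or the existence of ACSS modifications: the cover, Riemann--Hurwitz, Frobenius and descent of invariance all hold in general. So it proves the lemma for arbitrary foliations. The paper's approach is more economical within the paper: it reuses the MMP tools and Lemma~\ref{lem: invariant adjunction to non-invariant is invariant} already in place, and avoids explicit Pfaff-field computations.

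A minor citation point: \cite[Corollary~5.14]{Dru21} is about the singular locus. Descent of invariance along $q|_{T_Y}$ is instead the elementary fact that a prime divisor is invariant if and only if its reduced preimage under a finite cover is invariant for the pulled-back foliation.
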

\begin{proof}
Possibly taking a cover over the generic point of $\eta_D$, we may assume that $r=1$. Let $h\colon X'\rightarrow X$ be a $\mathbb Q$-factorial ACSS modification of $(X,\Ff,S)$ \cite[Theorem~2.5.1]{CHLX23} and write
    $$K_{\Ff'}+S'+\sum e_iE_i=h^*(K_{\Ff}+S)$$
    where the sum runs over the prime $h$-exceptional divisors, $\Ff':=h^{-1}\Ff$, and $S':=h^{-1}_*S$. Let $\Ff_{S'}:=\Ff'|_{S'}$. Then we have 
    $$\left(K_{\Ff'}+S'+\sum e_iE_i\right)\bigg|_{S'}=h|_{S'}^*(K_{\Ff}+S)=h|_{S'}^*K_{\Ff_S}.$$
By comparing the coefficients of $D$ and $(h|_{S'})^{-1}_*D$, we have that $E_i|_{S'}=0$ over the generic point of $D$ for any $i$ such that $e_i>0$. Therefore, $(X',\Ff',S')=(X',\Ff',S'+\sum e_iE_i)$ is lc near the generic point of $D'$ for any codimension $1$ subvariety $D'\subset S'$ such that $h(D')=D$. By Lemma~\ref{lem: invariant adjunction to non-invariant is invariant}, for any $\Ff'$-invariant divisor $T'$, $T'|_{S'}$ does not contain $D'$. In particular, $E_i|_{S'}$ does not contain $D'$ for any $D'$ and any $i$. Moreover, for any $\Ff$-invariant divisor $T$ on $X$, $h^{-1}_*T$ does not contain $D'$, hence $h^*T$ does not contain $D'$, so $T$ does not contain $D$. Thus $\eta_D\not\in T\cap S$.
\end{proof}

The following example shows that the condition ``$(X,\mathcal F, S)$ is lc'' in Lemma~\ref{lem: invariant adjunction to non-invariant is invariant} is necessary. It remains interesting to ask whether Lemma~\ref{lem: invariant adjunction to non-invariant is invariant} holds for non-algebraically integrable foliations.
    
\begin{ex}
Let $\Ff$ be the foliation on $\mathbb P^4$ given by the projection $\pi\colon \mathbb P^4 \dashrightarrow \mathbb P^1$ from a linear subspace $L\subset\mathbb P^4$ of dimension $2$. 
Let $S\subset\mathbb P^4$ be a general divisor containing $L$, and let $D$ be a general hyperplane containing $L$. Note that $S$ is not $\mathcal F$-invariant and $D$ is $\mathcal F$-invariant.
We claim that $L \subset S$ is not $\mathcal F_S$-invariant. Let $b\colon \widetilde{\mathbb P^4} \to \mathbb P^4$ be the blow-up at $L$, $\widetilde{S}$ the strict transform of $S$ on $\widetilde{\mathbb P^4}$, and set $\widetilde{L} = \widetilde{S} \cap b^{-1}(L)$.
Since $\mathcal F_{\widetilde{S}}$ is induced by the projection $\widetilde{S} \to \mathbb P^1$, $\widetilde{L}$ is not $\mathcal F_{\widetilde{S}}$-invariant. Since $\widetilde{L} \to L$ is finite, $L$ is not $\mathcal F_S$-invariant.

We conclude the example by observing that $L$ is an irreducible component of $D \cap S$.
\end{ex}

\subsection{Definition of the different and precise adjunction}

\begin{lem}\label{lem: bnef adjunction effectivity}
    Let $X\rightarrow U$ be a projective morphism between normal quasi-projective varieties, $\widetilde{S}$ a prime divisor on $X$, and $\nu: S\rightarrow\widetilde{S}$ the normalization of $\widetilde{S}$. Let $\Mm$ be a nef$/X$ $\bb$-divisor on $X$ such that $\Mm_X$ is $\Rr$-Cartier, and let $\Mm^S:=\Mm|_S$. Then
    $$\Mm^S_S+D_S=\Mm_X|_S$$
    for some $D_S\geq 0$.
\end{lem}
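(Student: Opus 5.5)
We would compute everything on a sufficiently high model and compare it with the model $X$. Choose a log resolution $h\colon X'\to X$ with the following properties: $\Mm$ descends to $X'$, the strict transform $S'$ of $\widetilde S$ is smooth, and $h|_{S'}\colon S'\to S$ factors through the normalization, so that $h_S\colon S'\to S$ is birational. With this choice $\Mm^S$ descends to $S'$ and $\Mm^S_{S'}=\Mm_{X'}|_{S'}$; this is the definition of the restriction of $\bb$-divisors in \cite[Definition~4.7]{BZ16}. The claim then reduces to comparing $h^*\Mm_X$ with $\Mm_{X'}$ on $X'$ and restricting the comparison to $S'$.

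First, we use the negativity lemma. Since $\Mm$ is nef$/X$ and descends to $X'$, the divisor $\Mm_{X'}$ is nef over $X$. Also $h_*\Mm_{X'}=\Mm_X$. So $E:=h^*\Mm_X-\Mm_{X'}$ is $h$-exceptional and $-E$ is $h$-nef. By the negativity lemma, $E\geq 0$.

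Second, we restrict to $S'$. Because $S'$ is not $h$-exceptional, $E$ contains no component equal to $S'$, so $E|_{S'}\geq 0$ is well defined. This gives
\[(h^*\Mm_X)|_{S'}=\Mm_{X'}|_{S'}+E|_{S'}=\Mm^S_{S'}+E|_{S'},\]
and the left-hand side equals $h_S^*(\Mm_X|_S)$.

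Third, we push forward along $h_S$. Set $D_S:=h_{S*}(E|_{S'})\geq 0$. Then $\Mm_X|_S=\Mm^S_S+D_S$, because $h_{S*}h_S^*$ is the identity on $\mathbb R$-Cartier divisors and $h_{S*}\Mm^S_{S'}=\Mm^S_S$.

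The step needing most care is well-definedness, not the inequality itself. Two things must be checked. One is that $\Mm^S$ as defined via \cite{BZ16} does not depend on the chosen model; this follows by passing to a common higher model where $\Mm$ still descends. The other is that $E|_{S'}$ makes sense when $X'$ is only $\mathbb Q$-factorial near $S'$; this is handled by taking $X'$ smooth. Finally, the negativity lemma needs no $\mathbb Q$-factoriality of $X$, since $\Mm_X$ is assumed $\Rr$-Cartier.
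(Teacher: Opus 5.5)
Your proposal is correct and follows the paper's proof essentially step for step. Like the paper, you pass to a log resolution $h\colon X'\to X$ of $(X,\widetilde S)$ on which $\Mm$ descends, and use the negativity lemma to write $\Mm_{X'}=h^*\Mm_X-E$ with $E\geq 0$ exceptional over $X$. You then restrict to $S'$ and push forward along $h_S$, obtaining $D_S=(h_S)_*(E|_{S'})\geq 0$.
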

\begin{proof}
    Let $h\colon X'\rightarrow X$ be a log resolution of $(X,\widetilde S)$ such that $\Mm$ descends to $X'$. Let $S':=h^{-1}\widetilde{S}$ and let $h_S: S'\rightarrow S$ be the induced birational morphism. Then $\Mm^S=\overline{\Mm_{X'}|_{S'}}$ and $(h_S)_*\Mm^S_{S'}=\Mm^S_S$. By the negativity lemma,
    $$\Mm_{X'}=h^*\Mm_X-E$$
    for some $E\geq 0$ that is exceptional$/X$, hence
    \begin{align*}
    \Mm_X|_S&=(h_*h^*\Mm_X)|_S=(h_S)_*(h^*\Mm_X|_{S'})=(h_S)_*(\Mm_{X'}|_{S'}+E|_{S'})=(h_S)_*\Mm^S_{S'}+(h_S)_*E|_{S'}\\
    &=\Mm^S_S+(h_S)_*E|_{S'}\geq \Mm^S_S.
\end{align*}
The lemma follows.
\end{proof}

\begin{prodef}[{cf.~\cite[Proposition-Definition~3.7]{CS25a}}]\label{prodef: CS25 3.7 r gpair}
Assume that $X$ is a normal quasi-projective variety, $\Ff$ a foliation on $X$, and $\widetilde{S}$ a prime divisor on $X$ with normalization $\nu: S\rightarrow\widetilde S$. Assume that there exists a closed subset $Z\subset X$ such that $X\backslash Z$ is $\mathbb Q$-factorial, and $\dim (\widetilde S\cap Z)\leq\dim\widetilde S-2$. Then there exists a canonically defined function $$\Diff_{S}(\Ff,\cdot,\cdot,\cdot): \Weil_{\mathbb R}(X)_{\widetilde{S}=0}\times\bNef(X/X)\times [0,1]\rightarrow\Weil_{\mathbb R}(S)$$
defined in the following way (cf.~Notation~\ref{nota: special notation} for the definition of $\Weil_{\mathbb R}(X)_{\widetilde{S}=0}$).

For any $B\in\Weil_{\mathbb R}(X)_{\widetilde{S}=0}$, $\Mm\in\bNef(X/X)$, and $t\in [0,1]$, we define:
\begin{itemize}
    \item $\Diff_S(\Ff,B,\Mm,1):=\Diff(\Ff,B)+(\Mm_X|_S-\Mm^S_S)$ is the unique $\mathbb R$-divisor such that
    $$(K_{\Ff}+\epsilon_{\Ff}(\widetilde S)\widetilde S+B+\Mm_X)|_S=K_{\Ff_S}+\Diff_S(\Ff,B,\Mm,1)+\Mm^S_S$$
    where $\Mm^S:=\Mm|_S$, and $\Ff_S$ and $\Diff(\Ff,B)$ are the canonically defined foliation and different as in \cite[Proposition-Definition~3.7]{CS25a}. Note that \cite[Proposition-Definition~3.7]{CS25a} defines $\Diff(\Ff,B)$ only when $B\geq 0$ but $\Diff(\Ff,\cdot)$ linearly extends to a function over $\Weil_{\mathbb R}(X)_{\widetilde{S}=0}$.
    \item $\Diff_S(\Ff,B,\Mm,0)$ is the unique $\mathbb R$-divisor such that
        $$(K_{X}+\widetilde S+B+\Mm_X)|_S=K_{S}+\Diff_S(\Ff,B,\Mm,0)+\Mm^S_S$$
        by the usual adjunction formula for generalized pairs.
        \item $\Diff_S(\Ff,B,\Mm,t):=t\Diff_S(\Ff,B,\Mm,1)+(1-t)\Diff_S(\Ff,B,\Mm,0)$.
\end{itemize}
The following properties hold:
\begin{enumerate}
    \item If $B\geq 0$, then $\Diff_S(\Ff,B,\Mm,t)\geq 0$.
    \item $\Diff_S(\Ff,B,\Mm,\cdot)$ is a $\mathbb Q$-affine function.
    \item $\Diff_S(\Ff,B,\Mm,t)=\Diff_S(\Ff,0,0,t)+B|_S+\left(\Mm_X|_S-\Mm^S_S\right)$.
    \item $\Diff_S(\Ff,\cdot,\cdot,t)$ is a $\mathbb Q$-affine function for any $t\in\mathbb Q$.
    \item We have
    $$\Diff_S(\Ff,B^{\ninv}+(1-t)B^{\inv},\Mm,t)=t\Diff_S(\Ff,B^{\ninv},\Mm,1)+(1-t)\Diff_S\left(\Ff,B,\Mm,0\right).$$
\end{enumerate}
For any adjoint foliated structure
$$\Aa/U=\left(X,\Ff,B+\left(t\epsilon_{\Ff}\left(\widetilde{S}\right)+(1-t)\right)\widetilde{S},\Mm,t\right)\Big/U$$
such that $\widetilde{S}$ is not an irreducible component of $B$, we denote by 
$$\Diff_S(\Aa):=\Diff_S(\Ff,B,\Mm,t)$$
and say that
$$\Aa_S/U:=(S,\Ff_S:=\Ff|_S,\Diff_S(\Aa),\Mm^S:=\Mm|_S,t)/U$$
is the adjoint foliated structure induced by adjunction
$$K_{\Aa_S}:=K_{\Aa}|_S.$$
For any normalized foliated structure
$$\Bb/U=(X,\Ff,B+\widetilde{S},\Mm)(t)/U$$
such that $t<1$ and $\widetilde{S}$ is not an irreducible component of $B$, we denote by
$$\Bb_S:=\exp(\Aa_S)$$
where $\Aa_S/U$ is the adjoint foliated structure induced by adjunction $K_{\Aa_S}:=K_{\log\Bb}|_S$, and say that $\Bb_S/U$ is the normalized 
foliated structure induced by adjunction
$$K_{\Bb_S}:=K_{\Bb}|_S.$$
\end{prodef}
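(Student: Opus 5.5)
The plan is to check that the three defining bullets make sense and then verify (1)--(5) one at a time. The basic tools are the linearity of the foliated different from \cite[Proposition-Definition~3.7]{CS25a}, the usual generalized-pair adjunction, and Lemma~\ref{lem: bnef adjunction effectivity}.

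\emph{Well-definedness.} Since $\dim(\widetilde S\cap Z)\le\dim\widetilde S-2$, every divisor on $S$ is determined by its restriction to $S\setminus\nu^{-1}(Z)$. We may therefore restrict to the $\mathbb Q$-factorial open set $X\setminus Z$. There $\widetilde S$, $B$ and $K_X+\widetilde S$ are $\mathbb R$-Cartier, so the restrictions are defined. The divisors $\Diff(\Ff,B)$ and $\Diff_S(\Ff,B,\Mm,0)$ are then determined in codimension one on $S$. The correction term $\Mm_X|_S-\Mm^S_S$ makes sense because $\Mm\in\bNef(X/X)$, and $\Mm_X$ is $\mathbb R$-Cartier on $X\setminus Z$.

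For $B\ge0$ we take the $\Diff(\Ff,B)$ of \cite{CS25a}. For arbitrary $B$ we write $B=B_1-B_2$ with $B_i\ge0$ and set $\Diff(\Ff,B):=\Diff(\Ff,B_1)-\Diff(\Ff,B_2)$. This is independent of the decomposition because, in \cite{CS25a}, $\Diff(\Ff,B)=\Diff(\Ff,0)+B|_S$ for $B\ge0$.

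\emph{Properties.} Property (3) follows from the same linearity. The case $t=0$ is classical: $\Diff_S(\Ff,B,\Mm,0)=\Diff_S(\Ff,0,0,0)+B|_S+(\Mm_X|_S-\Mm^S_S)$. We then interpolate linearly in $t$, which also gives (2) and (4).

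Property (1) comes from three inputs:
\begin{itemize}
\item $\Diff(\Ff,0)\ge0$, by \cite{CS25a} or by Lemma~\ref{lem: CS25 3.13(4) general case}(2);
\item $\Diff_S(\Ff,0,0,0)\ge0$, by classical adjunction;
\item $\Mm_X|_S-\Mm^S_S\ge0$, by Lemma~\ref{lem: bnef adjunction effectivity}.
\end{itemize}
Combining these with $B|_S\ge0$ and the convex combination over $t\in[0,1]$ gives (1).

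Property (5) is a direct computation from (3). The left side equals
\[
t\,\Diff_S(\Ff,0,0,1)+(1-t)\,\Diff_S(\Ff,0,0,0)+\bigl(B^{\ninv}+(1-t)B^{\inv}\bigr)|_S+(\Mm_X|_S-\Mm^S_S).
\]
The right side expands to
\[
t\,\Diff_S(\Ff,0,0,1)+tB^{\ninv}|_S+(1-t)\,\Diff_S(\Ff,0,0,0)+(1-t)B|_S+(\Mm_X|_S-\Mm^S_S).
\]
These agree because $tB^{\ninv}+(1-t)(B^{\ninv}+B^{\inv})=B^{\ninv}+(1-t)B^{\inv}$.

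\emph{Adjunction formula.} Finally, $K_{\Aa}|_S=K_{\Aa_S}$ follows by taking $t$ times the $t=1$ identity plus $(1-t)$ times the $t=0$ identity. The coefficient of $\widetilde S$ in $\Aa$ is exactly $t\epsilon_{\Ff}(\widetilde S)+(1-t)$, which matches both identities. The only delicate point is the restriction of non-$\mathbb Q$-Cartier divisors, and the codimension-two hypothesis on $Z$ disposes of it.
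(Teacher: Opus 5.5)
Your argument follows the paper's proof. You work on the $\mathbb Q$-factorial locus $X\setminus Z$ and get (3) from the formula $\Diff(\Ff,B)=\Diff(\Ff,0)+B|_S$ of \cite{CS25a} together with classical generalized-pair adjunction. You get (1) from the effectivity of the two differents and Lemma~\ref{lem: bnef adjunction effectivity}, and deduce (2), (4), (5) from (3). The paper treats the $\mathbb Q$-factorial case first and then transports it via $\Weil_{\mathbb R}(S^0)\cong\Weil_{\mathbb R}(S)$; restricting to $X\setminus Z$ first, as you do, amounts to the same thing.

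There is one concrete error, in your extension to non-effective $B$. You set $\Diff(\Ff,B):=\Diff(\Ff,B_1)-\Diff(\Ff,B_2)$, and by the very formula you cite this equals $(B_1-B_2)|_S=B|_S$, because the constant term $\Diff(\Ff,0)$ cancels. So your extension is linear rather than affine. Taking $B_2=0$, it does not agree with the \cite{CS25a} different on effective $B$ whenever $\Diff(\Ff,0)\neq 0$, so your piecewise definition is inconsistent. For non-effective $B$ it also violates both the defining identity in the first bullet and property (3).

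The correct extension is $\Diff(\Ff,B):=\Diff(\Ff,0)+B|_S$, equivalently $\Diff(\Ff,B_1)-\Diff(\Ff,B_2)+\Diff(\Ff,0)$. Your later computations of (3) and (5) already implicitly use this form, and with it everything goes through.

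A smaller point: Lemma~\ref{lem: CS25 3.13(4) general case} assumes $(X,S)$ plt, which is not a hypothesis here. So for $\Diff(\Ff,0)\geq 0$ only your citation of \cite{CS25a} is valid, not the alternative via that lemma.
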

\begin{proof}
We first assume that $X$ is $\mathbb Q$-factorial.

(1) 
It follows from \cite[Proposition-Definition~3.7]{CS25a} that $\Diff(\Ff,B)\geq 0$ if $B\geq 0$. By Lemma~\ref{lem: bnef adjunction effectivity}, $\Mm_X|_S-\Mm^S_S\geq 0$, hence $\Diff_S(\Ff,B,\Mm,1)\geq 0$. By the usual adjunction formula for generalized pairs (cf.~\cite[Definition~4.7]{BZ16}), $\Diff_S(\Ff,B,\Mm,0)\geq 0$, so $\Diff_S(\Ff,B,\Mm,t)\geq 0$.

(2) By definition.

(3) By \cite[Proposition-Definition~3.7]{CS25a}, $\Diff(\Ff,B)=\Diff(\Ff,0)+B|_S$, so $$\Diff_S(\Ff,B,\Mm,1)=\Diff_S(\Ff,0,0,1)+B|_S+\left(\Mm_X|_S-\Mm^S_S\right).$$ By the usual adjunction formula for generalized pairs, $\Diff_S(\Ff,B,\Mm,0)=\Diff_S(\Ff,0,0,0)+B|_S+\left(\Mm_X|_S-\Mm^S_S\right)$. This implies (3).

(4)(5) follow from (3).

Now for the general case, let $X^0:=X\backslash Z$, $\widetilde S^0:=X^0\cap\widetilde{S}$, and $S^0:=\nu^{-1}(\widetilde{S})$. By the $\mathbb Q$-factorial case, there exists a canonically defined function
$$\Diff_{S^0}(\Ff|_{X^0},\cdot,\cdot,\cdot):\Weil_{\mathbb R}(X^0)_{\widetilde{S}^0}\times\bNef(X^0/X^0)\times [0,1]\rightarrow\Weil_{\mathbb R}(S^0)$$
Let $i: \Weil_{\mathbb R}(S^0)\rightarrow \Weil_{\mathbb R}(S)$ be the natural isomorphism. Then we may define
$$\Diff_{S}(\Ff,B,\Mm,t):=i_*\Diff_{S^0}(\Ff|_{X^0},B|_{X^0},\Mm|_{X^0},t).$$
All the properties (1-5) follow by construction.
\end{proof}

For the reader's convenience and for the simplicity of the statements, we will consider the following set-up in the rest of this section:

\begin{setup}\label{setup: pa no m}
$X$, $S$, $\Ff$, $B_i$, $C_k$, $\Mm_j$, $\Ff_S$, $\Mm^S_j$, $B(\cdot)$, $B(\cdot,\cdot)$, $\Mm(\cdot)$, $\Aa(\cdot,\cdot)$, $K(\cdot,\cdot)$, $B_S(\cdot,\cdot)$, $\Aa_S(\cdot,\cdot)$, $K_S(\cdot,\cdot)$ are as follows:
\begin{enumerate}
\item $(X,S)$ is $\mathbb Q$-factorial plt and $S$ is a prime divisor.
\item $\Ff$ is a foliation on $X$.
\item $S,B_1,\dots,B_l,C_1,\dots,C_m$ are distinct prime divisors on $X$.
\item $B_1,\dots,B_l$ are not $\Ff$-invariant and $C_1,\dots,C_m$ are $\Ff$-invariant.
\item $\Mm_1,\dots,\Mm_n$ are nef$/X$ $\bb$-Cartier $\bb$-divisors on $X$.
\item $\Ff_S:=\Ff|_S$ and $\Mm^S_j:=\Mm_j|_S$ for any $j$.
\item For any prime divisor $D$ on $S$, we denote by $r_D$ the local Cartier index of $X$ near the generic point of $D$,
$$p_{i,D}:=r_D\mult_{D}(B_i|_S),q_{k,D}:=r_D\mult_{D}(C_k|_S),\lambda_{j,D}:=r_D\mult_D\left(\Mm_{j,X}|_S-\Mm^S_{j,S}\right),$$
and by Lemma~\ref{lem: CS25 3.13(4) general case}, we may let $a_D$ be the non-negative integer such that
$$(K_{\Ff}+\epsilon_{\Ff}(S)S)|_S=K_{\Ff_S}+\frac{a_D+\epsilon_{\Ff_S}(D)(r_D-1)}{r_D}D$$
near the generic point of $D$.
\item For any $\bm{v}:=(b_1,\dots,b_l,c_1,\dots,c_m,\mu_1,\dots,\mu_n)\in\mathbb R^{l+m+n}$ and any $t\in [0,1]$:
\begin{enumerate}
\item $B(\bm{v}):=S+\sum_{i=1}^lb_iB_i+\sum_{k=1}^mc_kC_k$.
\item 
$$B(\bm{v},t):=B(\bm{v})^{\ninv}+(1-t)B(\bm{v})^{\inv}=(t\epsilon_{\Ff}(S)+(1-t))S+\sum_{i=1}^lb_iB_i+(1-t)\sum_{k=1}^mc_kC_k.$$ 
\item $\Mm(\bm{v}):=\sum_{j=1}^n\mu_j\Mm_j$.
\item $\Aa(\bm{v},t):=(X,\Ff,B(\bm{v},t),\Mm(\bm{v}),t)$ and $K(\bm{v},t):=K_{\Aa(\bm{v},t)}$.
\item $B_S(\bm{v},t):=\Diff_S(\Aa(\bm{v},t))$.
\item $\Aa_S(\bm{v},t):=\Aa(\bm{v},t)|_S$ and $K_S(\bm{v},t):=K_{\Aa_S(\bm{v},t)}$.
\end{enumerate}
\end{enumerate}
\end{setup}

\begin{prop}\label{prop: explicit formula}
   Notation and conditions as in Set-up~\ref{setup: pa no m}. Let $D$ be a prime divisor on $S$. Let $r:=r_D$, $\epsilon:=\epsilon_{\Ff_S}(D)$, $p_i:=p_{i,D},q_k:=q_{k,D},\lambda_j:=\lambda_{j,D}$, and $a:=a_D$. Then for any $\bm{v}=(b_1,\dots,b_l,c_1,\dots,c_m,\mu_1,\dots,\mu_n)\in\mathbb R^{l+m+n}$ and $t\in [0,1]$, we have
   $$\mult_DB_S(\bm{v},t)=\frac{1}{r}\left(t(a+\epsilon(r-1))+\sum_{i=1}^lp_ib_i+(1-t)\left(r-1+\sum_{k=1}^mq_kc_k\right)+\sum_{j=1}^n\lambda_j\mu_j\right).$$
\end{prop}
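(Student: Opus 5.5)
The formula is local at the generic point $\eta_D$ of $D$, and $\Diff_S(\cdot)$ is affine in its arguments. The plan is to compute the coefficient of $D$ separately at $t=1$ and at $t=0$, and then combine the two. Everything is done after shrinking $X$ to a neighborhood of $\eta_D$, where Lemma~\ref{lem: CS25 3.13(4) general case} applies with $r=r_D$.

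By Set-up~\ref{setup: pa no m}(8), we have $B(\bm{v},t)=B(\bm{v})^{\ninv}+(1-t)B(\bm{v})^{\inv}$ with $S$ removed and its coefficient put in the form required by Proposition-Definition~\ref{prodef: CS25 3.7 r gpair}. Writing $B':=\sum b_iB_i+\sum c_kC_k$, property (5) of Proposition-Definition~\ref{prodef: CS25 3.7 r gpair} gives
\[
B_S(\bm{v},t)=t\Diff_S\Big(\Ff,\sum b_iB_i,\Mm(\bm{v}),1\Big)+(1-t)\Diff_S\big(\Ff,B',\Mm(\bm{v}),0\big).
\]
We compute the two pieces separately.

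\textbf{The $t=1$ piece.} By property (3), the $t=1$ term equals
\[
\Diff_S(\Ff,0,0,1)+\sum b_iB_i|_S+\sum\mu_j\big(\Mm_{j,X}|_S-\Mm^S_{j,S}\big).
\]
By Set-up~\ref{setup: pa no m}(7), which relies on Lemma~\ref{lem: CS25 3.13(4) general case}(2), the multiplicity of $\Diff_S(\Ff,0,0,1)$ along $D$ is $\frac{a+\epsilon(r-1)}{r}$. The multiplicity of $B_i|_S$ along $D$ is $p_i/r$, and that of $\Mm_{j,X}|_S-\Mm^S_{j,S}$ is $\lambda_j/r$.

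\textbf{The $t=0$ piece.} Likewise, the $t=0$ term equals
\[
\Diff_S(\Ff,0,0,0)+\sum b_iB_i|_S+\sum c_kC_k|_S+\sum\mu_j\big(\Mm_{j,X}|_S-\Mm^S_{j,S}\big).
\]
By Lemma~\ref{lem: CS25 3.13(4) general case}(1), the classical different $\Diff_S(\Ff,0,0,0)$ has coefficient $\frac{r-1}{r}$ along $D$. The $C_k$ contribute $q_k/r$ each.

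\textbf{Combining.} Take $t$ times the first piece plus $(1-t)$ times the second. The $b_i$ and $\mu_j$ terms then appear with total weight $t+(1-t)=1$, while the $c_k$ terms and the $\frac{r-1}{r}$ term appear with weight $(1-t)$. This gives exactly the asserted formula.

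\textbf{Main obstacle.} The main point to check is the consistency of the definitions. First, at $t=1$ only the non-invariant part $\sum b_iB_i$ enters the foliated different, since the invariant $C_k$ carry coefficient $(1-t)c_k$, which vanishes at $t=1$. This is what property (5) encodes, and it must be applied rather than property (3) directly to $B(\bm{v},t)$. Second, $p_i,q_k,\lambda_j$ are defined with the factor $r_D$ so that $\mult_D(B_i|_S)=p_i/r$ even though $B_i$ is only $\mathbb Q$-Cartier of index dividing $r$ near $\eta_D$. Both facts follow directly from the definitions in Set-up~\ref{setup: pa no m} and from $\mathbb Q$-factoriality, so the proof reduces to this bookkeeping.
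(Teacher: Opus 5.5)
Your proposal is correct and is essentially the paper's proof: both read off the coefficient of $D$ from the affine decomposition of the different in Proposition-Definition~\ref{prodef: CS25 3.7 r gpair}, combined with Lemma~\ref{lem: CS25 3.13(4) general case}(1)--(2) near $\eta_D$. The only difference is that the paper applies property~(3) directly to $\sum b_iB_i+(1-t)\sum c_kC_k$ and then uses the definition $\Diff_S(\Ff,0,0,t)=t\Diff_S(\Ff,0,0,1)+(1-t)\Diff_S(\Ff,0,0,0)$, so your detour through property~(5) is unnecessary; your remark that (3) must not be applied directly to $B(\bm{v},t)$ is also inaccurate, since the invariant coefficients $(1-t)c_k$ are already built into that boundary.
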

\begin{proof} 
By Lemma~\ref{lem: CS25 3.13(4) general case}, near the generic point of $D$, we have
$$(K_{X}+S)|_S=K_S+\frac{r-1}{r}D$$
and
$$(K_{\Ff}+\epsilon_{\Ff}(S)S)|_S=K_{\Ff_S}+\frac{a+\epsilon(r-1)}{r}D$$
for some non-negative integer $a$. By Proposition-Definition~\ref{prodef: CS25 3.7 r gpair}~(3), we have
\begin{align*}
    B_S(\bm{v},t)&=\Diff_S(\Ff,0,0,t)+\sum_{i=1}^lb_iB_i|_S+(1-t)\sum_{k=1}^mc_kC_k|_S+\sum_{j=1}^n\mu_j\left(\Mm_{j,X}|_S-\Mm_{j,S}^S\right)\\
    &=\left(t\frac{a+\epsilon(r-1)}{r}+(1-t)\frac{r-1}{r}+\sum_{i=1}^lb_i\frac{p_i}{r}+(1-t)\sum_{k=1}^mc_k\frac{q_k}{r}+\sum_{j=1}^n\mu_j\frac{\lambda_j}{r}\right)D
\end{align*}
near the generic point of $D$. The proposition follows.
\end{proof}

\subsection{Adjunction for structures with DCC coefficients}

\begin{thm}\label{thm: dcc precise adjunction}
Let $\Ii\subset [0,+\infty)$ be a DCC set and $\Ii_0\subset [0,+\infty)$ a finite set. Then there exist a DCC set $\Ii'\subset [0,1]$ and a finite set $\Ii_0'$ depending only on $\Ii$ and $\Ii_0$ satisfying the following.

Notation and conditions as in Set-up~\ref{setup: pa no m}. Assume that $\bm{v},t\in\Ii$. Let $D$ be a prime divisor on $S$. Let $r:=r_D$, $\epsilon:=\epsilon_{\Ff_S}(D)$, $p_i:=p_{i,D},q_k:=q_{k,D},\lambda_j:=\lambda_{j,D}$, and $a:=a_D$. Assume that $\mult_DB_S(\bm{v},t)\leq t\epsilon+(1-t)$.
\begin{enumerate}
    \item Assume that $\epsilon=0$ and $t<1$. Then:
    \begin{enumerate}
        \item We have
            $\frac{1}{1-t}\mult_DB_S(\bm{v},t)\in\Ii'.$
        \item Assume that $\frac{1}{1-t}\mult_DB_S(\bm{v},t)\in\Ii_0$. Then for any $i,k,j$,
        $$ta,tp_ib_i,q_kc_k,t\lambda_j\mu_j\in\Ii_0'.$$
      In particular, either $t\in\Ii_0'$, or $s\mapsto\frac{1}{1-s}\mult_DB_S(\bm{v},s)$ is a constant function.
    \end{enumerate}
\item Assume that $\Ff$ is algebraically integrable, $\Aa(\bm{v},t)$ is lc, and $\epsilon=1$. Then:
  \begin{enumerate}
        \item We have
            $\mult_DB_S(\bm{v},t)\in\Ii'.$
        \item Assume that $\mult_DB_S(\bm{v},t)\in\Ii_0$.
        \begin{enumerate}
            \item If $a=0$, then $q_k=0$ and $p_ib_i,\lambda_j\mu_j\in\Ii_0'$.
            \item If $a>0$, then for any $i,j$,
            $$t(a-1),p_ib_i,\lambda_j\mu_j\in\Ii_0',$$
            and either $t=1$ or $q_kc_k\in\Ii_0'$ for any $k$.
        \end{enumerate}
In particular, either $t\in\Ii_0'$, or $s\mapsto\mult_DB_S(\bm{v},s)$ is a constant function.
    \end{enumerate}
\end{enumerate}
\end{thm}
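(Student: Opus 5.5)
The plan is to read everything off the explicit formula of Proposition~\ref{prop: explicit formula}. In both cases the quantity to control has the shape $\frac{r-1}{r}+\frac{x}{r}$, where $x$ is a finite sum of products. Each product is a non-negative integer ($a,p_i,q_k,\lambda_j$) times an element of a DCC set built from $\Ii$. The hypothesis $\mult_DB_S(\bm{v},t)\le t\epsilon+(1-t)$ bounds $x$ above. The DCC statements (a) then follow from the standard hyperstandard-type lemma. Namely, if $x$ ranges over a DCC subset of $[0,1]$, then so does $\{\frac{r-1+x}{r}\}$ with $r\in\mathbb N$. Also, bounded finite sums of elements of a DCC set (with integer multiplicities) form a DCC set, and only boundedly many terms can be nonzero because the nonzero elements of $\Ii$ have a positive minimum. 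The finiteness statements (b) come from the usual subsequence argument. Take an infinite sequence violating the claim and pass to a subsequence where every term is non-decreasing and $r$ is either constant or strictly increasing. Fixing the left-hand side in a finite set then forces every summand to be eventually constant.

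\emph{Case $\epsilon=0$, $t<1$.} Dividing the formula of Proposition~\ref{prop: explicit formula} by $1-t$ gives
\[
\frac{\mult_DB_S(\bm{v},t)}{1-t}=\frac{r-1}{r}+\frac1r\Big(\sum_k q_kc_k+\frac{t}{1-t}a+\frac1{1-t}\Big(\sum_ip_ib_i+\sum_j\lambda_j\mu_j\Big)\Big)\le 1,
\]
so $x\le1$. Since $t\in\Ii$ is DCC, the sets $\{\frac t{1-t}\}$ and $\{\frac1{1-t}\}$ are DCC, which gives (a).

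For (b), fix the left-hand side in $\Ii_0$. If $r\to\infty$ along the sequence, the value tends to $1$, so it equals $1$ eventually, and then $x=1$. If $r$ is constant, $x$ itself lies in a finite set. In either case $x$ is a DCC sum taking finitely many values, so each summand lies in a finite set. These summands are $q_kc_k$, $\frac{ta}{1-t}$, $\frac{p_ib_i}{1-t}$ and $\frac{\lambda_j\mu_j}{1-t}$. If $\frac{ta}{1-t}=c\ne0$, then $a\le c\frac{1-t}{t}$ is bounded (as $t\ge\min\Ii\setminus\{0\}$), and $t=\frac{c}{a+c}$ lies in a finite set. Multiplying the other summands by $1-t$ then puts $ta,tp_ib_i,t\lambda_j\mu_j$ in a finite set $\Ii_0'$. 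Finally, if $t\notin\Ii_0'$ then $a=p_ib_i=\lambda_j\mu_j=0$, and $s\mapsto\frac{1}{1-s}\mult_DB_S(\bm v,s)=\frac{r-1+\sum q_kc_k}{r}$ is constant.

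\emph{Case $\epsilon=1$.} Here
\[
\mult_DB_S=\frac{r-1}{r}+\frac1r\Big(ta+(1-t)\sum_kq_kc_k+\sum_ip_ib_i+\sum_j\lambda_j\mu_j\Big)\le1 .
\]
If $a=0$, I would show $q_k=0$. When $S$ is $\Ff$-invariant, Lemma~\ref{lem: CS25 3.13(4) general case}(3) gives $\eta_D\notin\Sing\Ff$, and then (4) shows that no invariant $C_k$ passes through $D$. When $S$ is not invariant, the coefficient of $D$ in $(K_\Ff+S)|_S$ is $\frac{r-1}r$, and Lemma~\ref{lem: restriction ii} excludes invariant divisors through $\eta_D$. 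After this the formula reduces to the classical generalized-pair computation, and (a) and (b) follow as above.

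If $a>0$, I rewrite $ta+(1-t)y=t(a-1)+\big(t+(1-t)y\big)$ with $y=\sum q_kc_k$. The key point, and the expected main obstacle, is to prove $y\le1$ using that $\Aa(\bm v,t)$ is lc and $\Ff$ is algebraically integrable. Given $y\le1$, the map $(t,y)\mapsto t+(1-t)y$ is non-decreasing in both variables, so it takes values in a DCC set, and (a) follows.

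To get $y\le1$, I would pass to a $\mathbb Q$-factorial ACSS modification over the generic point of $D$, as in the proofs of Lemmas~\ref{lem: invariant adjunction to non-invariant is invariant} and~\ref{lem: restriction ii}. There the invariant divisors through $D$ are pulled back from the base. Log canonicity of the foliated part at an lc place over $D$ bounds the total weight $(1-t)\sum_k\mult(C_k)$ by $1-t$. Put differently, $D$ lies in only one invariant "fiber-direction", and the $C_k$ through it contribute at most like one reduced invariant divisor.

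For (b) with $a>0$, the same subsequence argument gives $t(a-1)$, $p_ib_i$, $\lambda_j\mu_j$ and $t+(1-t)y$ in finite sets. If $t<1$, then $t$ lies in a finite set or $y$ does, and hence so do the $q_kc_k$. The "in particular" statement follows because if $t\notin\Ii_0'$ then $a=1$ and $y=1$. In that situation the map $s\mapsto\mult_DB_S(\bm v,s)$ is independent of $s$.
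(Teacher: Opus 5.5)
\textbf{Overall.} Most of your plan coincides with the paper's proof. Everything is read off Proposition~\ref{prop: explicit formula}. The case $\epsilon=1$, $a=0$ is handled exactly as in the paper, via Lemma~\ref{lem: CS25 3.13(4) general case}(3)(4) and Lemma~\ref{lem: restriction ii}. For $a>0$ the paper uses the same rewriting $ta+(1-t)y=t(a-1)+\bigl(1-(1-t)(1-y)\bigr)$ with $y=\sum_kq_kc_k$, followed by the same DCC and finiteness bookkeeping.

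\textbf{The gap.} The step $y\le 1$, which you single out as the main obstacle, is not established by what you wrote. The ACSS sketch is not an argument. It is unclear which lc place ``over $D$'' you mean; none with center $D$ need exist. Moreover, counting invariant ``fiber directions'' through $D$ only controls which $C_k$ contain $D$. It does not control $q_k=r\mult_D(C_k|_S)$, which also records the index $r$ and any tangency of $C_k$ with $S$ along $D$. For example, take $\Ff$ induced by $x(x-y^2)$, with $S=\{x=0\}$ and $C_1=\{x=y^2\}$. These are the two components of one reduced fiber, $D=\{x=y=0\}$ is not $\Ff_S$-invariant, and yet $q_1=2$.

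\textbf{How to close it.} No geometry is needed here. For $t<1$, the standing hypothesis $\mult_DB_S(\bm{v},t)\le 1$ reads, by Proposition~\ref{prop: explicit formula},
\[
ta+(1-t)y+\sum_ip_ib_i+\sum_j\lambda_j\mu_j\le 1 .
\]
Since $a\ge 1$, this gives $t+(1-t)y\le 1$, i.e.\ $y\le 1$. For $t=1$ no bound is needed, because $(1-t)y=0$.

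The paper obtains $y\le1$ differently, from log canonicity. Since $\Aa(\bm{v},t)$ is lc, $\Aa(\bm{v},0)=(X,S+\sum b_iB_i+\sum c_kC_k,\Mm(\bm{v}))$ is lc by \cite[Proposition~3.3]{Cas+24}. Classical adjunction then gives $\mult_DB_S(\bm{v},0)\le 1$, that is, $\frac1r\bigl(r-1+\sum p_ib_i+y+\sum\lambda_j\mu_j\bigr)\le 1$. With either justification in place, the rest of your argument for case (2) goes through.

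\textbf{A minor omission in (1)(b).} You only treated $\frac{ta}{1-t}\neq 0$. When $a=0<t$ you still need to get $tp_ib_i,\,t\lambda_j\mu_j\in\Ii_0'$ and the ``in particular'' clause. The missing step is short: $p_ib_i=e(1-t)$ with $e$ in a finite set lies in both a DCC set and an ACC set, hence in a finite set. The same holds for $\lambda_j\mu_j$, and when $e\neq 0$ this also puts $t$ in a finite set.
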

\begin{proof}
Write $\bm{v}=(b_1,\dots,b_l,c_1,\dots,c_m,\mu_1,\dots,\mu_n)$. By Proposition~\ref{prop: explicit formula},
$$\mult_DB_S(\bm{v},s)=\frac{1}{r}\left(s(a+\epsilon(r-1))+\sum_{i=1}^lp_ib_i+(1-s)\left(r-1+\sum_{k=1}^mq_kc_k\right)+\sum_{j=1}^n\lambda_j\mu_j\right)$$
for any $s\in [0,1]$.

(1) In this case,
$$1\geq \frac{1}{1-t}\mult_DB_S(\bm{v},t)=\frac{r-1+\frac{t}{1-t}a+\frac{1}{1-t}\left(\sum_{i=1}^lp_ib_i+\sum_{j=1}^n\lambda_j\mu_j\right)+\sum_{k=1}^mq_kc_k}{r}.$$
The rest are elementary computations.

(2) We first assume that $a=0$. In this case, by Lemma~\ref{lem: CS25 3.13(4) general case}~(3)(4) when $S$ is $\Ff$-invariant and by Lemma~\ref{lem: restriction ii} when $S$ is not $\Ff$-invariant, we see that $S$ is the only possible $\Ff$-invariant divisor which contains $D$, so $q_k=0$ for any $k$. Therefore,
$$1\geq\mult_DB_S(\bm{v},t)=\frac{1}{r}\left(r-1+\sum_{i=1}^lp_ib_i+\sum_{j=1}^n\lambda_j\mu_j\right)$$
and the rest are elementary computations. 

Now we assume that $a>0$. Then $a\geq 1$. In this case, since $\Aa(\bm{v},t)$ is lc, by \cite[Proposition~3.3]{Cas+24}, $\Aa(\bm{v},0)$ is lc, so
$$1\geq\mult_DB_S(\bm{v},0)=\frac{1}{r}\left(r-1+\sum_{i=1}^lp_ib_i+\sum_{k=1}^mq_kc_k+\sum_{j=1}^n\lambda_j\mu_j\right).$$
Therefore, $\sum_{k=1}^mq_kc_k\leq 1\leq a$, so
$$1\geq\mult_DB_S(\bm{v},t)=\frac{1}{r}\left(r+t(a-1)-(1-t)\left(1-\sum_{k=1}^mq_kc_k\right)+\sum_{i=1}^lp_ib_i+\sum_{j=1}^n\lambda_j\mu_j\right)$$
and the rest are elementary computations. 
\end{proof}

\begin{proof}[Proof of Theorem~\ref{thm: dcc adjunct to dcc}]
We may assume that $0,1\in\Ii$. By \cite[Theorem~6.0.1]{CHLX23}, we may assume that $t<1$. If $\Aa$ is $\mathbb Q$-factorial qdlt, then $\Aa_S$ is lc by \cite[Theorem~1.8]{Cas+24}, and the theorem follows from Theorem~\ref{thm: dcc precise adjunction}. For the general case, by \cite[Theorem~1.9]{Cas+24}, we may let $h\colon \Aa'\rightarrow\Aa$ be a $\mathbb Q$-factorial qdlt modification of $\Aa$, $S':=h^{-1}_*\widetilde{S}$, and $h_S: S'\rightarrow S$ the induced birational morphism. Let $\Aa_{S'}$ be the adjoint foliated structure induced by adjunction $K_{\Aa_{S'}}:=K_{\Aa'}|_{S'}$. Then we have
$$K_{\Aa_S}=(h_S)_*K_{\Aa_{S'}}.$$
The general case now follows from the $\mathbb Q$-factorial qdlt case.
\end{proof}

\begin{rem}
The proof of Theorems~\ref{thm: dcc precise adjunction} and~\ref{thm: dcc adjunct to dcc} indicates that, if $\Ii=\overline{\Ii}\subset\mathbb Q$, then we may choose $\Ii'$ so that $\Ii'=\overline{\Ii'}\subset\mathbb Q$. We do not need this fact in the paper, but it is useful for future applications.
\end{rem}

\begin{lem}\label{lem: non-trivial of b-divisors}
Let $\pi: X\rightarrow U$ be a contraction between normal quasi-projective varieties and $\Mm$ a nef$/U$ $\bb$-divisor on $X$. Let $F$ be a general fiber of $\pi$. Assume that $\Mm_X|_F\equiv 0$. Then $\Mm|_F\equiv \bm{0}$. 
\end{lem}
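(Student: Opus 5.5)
The plan is to reduce to a model where $\Mm$ descends, restrict to a general fiber, and then use the negativity lemma on that fiber. Choose a projective birational morphism $h\colon X'\rightarrow X$, which we may assume is a resolution, such that $\Mm$ descends to $X'$ and $\Mm_{X'}$ is nef$/U$. Let $F'$ be the corresponding general fiber of $\pi\circ h$. Since $F$ is general, $F'$ is the strict transform of $F$ and $h_F:=h|_{F'}\colon F'\rightarrow F$ is birational. By \cite[Remark~4.2(2)]{BZ16}, the restriction $\Mm|_F$ is the $\bb$-divisor represented by $\Mm_{X'}|_{F'}$ on $F'$. It therefore suffices to show that $\Mm_{X'}|_{F'}\equiv 0$.

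First apply the negativity lemma exactly as in the proof of Lemma~\ref{lem: bnef adjunction effectivity}. Since $\Mm_{X'}$ is nef$/X$ and $h_*\Mm_{X'}=\Mm_X$, we get
$$\Mm_{X'}=h^*\Mm_X-E$$
with $E\geq 0$ exceptional$/X$. The restriction of the $\mathbb R$-Cartier divisor $\Mm_X$ to $F$ makes sense because $F$ is a general fiber. Restricting to $F'$ then gives
$$\Mm_{X'}|_{F'}=h_F^*(\Mm_X|_F)-E|_{F'}\equiv -E|_{F'}.$$
Here $E|_{F'}\geq 0$ and it is exceptional over $F$: by generality of $F$, every component of $E$ that meets $F'$ restricts to a divisor on $F'$ whose image in $F$ has codimension at least $2$.

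On the other hand, $\Mm_{X'}|_{F'}$ is nef, because $\Mm_{X'}$ is nef$/U$ and $F'$ lies in a fiber over $U$. So $-E|_{F'}$ is nef and $h_F$-exceptional. Applying the negativity lemma to $h_F$ with the divisor $-E|_{F'}$ (which is $h_F$-nef and has $h_{F*}(-E|_{F'})=0$) gives $-E|_{F'}\leq 0$, which we already know, but we need equality. Instead, use that $-E|_{F'}$ is nef on $F'$ (globally, not just over $F$) and that $E|_{F'}$ is effective and exceptional. Take a general complete intersection curve $C$ on $F'$ obtained from the pullback of ample divisors on $F$ together with one ample divisor on $F'$. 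If $E|_{F'}\neq 0$, then by the negativity lemma (Hodge index on the exceptional locus) there is a curve contracted by $h_F$ on which $E|_{F'}$ is positive. This contradicts the nefness of $-E|_{F'}$, so $E|_{F'}=0$. Concretely, a nonzero effective exceptional divisor is never anti-nef over its image.

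Therefore $\Mm_{X'}|_{F'}\equiv h_F^*(\Mm_X|_F)\equiv 0$, and so $\Mm|_F\equiv\bm 0$. The main obstacle is the generality bookkeeping: we must check that restriction commutes with pullback on a general fiber, and that $E|_{F'}$ is genuinely $h_F$-exceptional. Both follow by choosing $F$ outside the images of the loci where $h$ or the components of $E$ behave badly over $U$.
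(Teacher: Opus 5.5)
Your reduction and the negativity step match the paper's proof, but your justification that $E|_{F'}=0$ is false. It is not true that a nonzero effective exceptional divisor is never anti-nef over its image. Take $h\colon \widetilde S\to S$, the blow-up of a smooth point of a surface, with exceptional curve $E$. Then $E^2=-1$, so $-E$ is $h$-ample, and there is no $h$-contracted curve on which $E$ is positive.

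The negativity lemma (negative definiteness of the exceptional intersection form) gives the opposite sign: a nonzero effective exceptional divisor is never $h$-\emph{nef}. So no test against $h_F$-contracted curves can force $E|_{F'}=0$, because $-E|_{F'}$ may well be $h_F$-ample. The complete intersection curve you mention, built from pullbacks of ample divisors on $F$, can also fail. When $\dim F\geq 3$, it has zero intersection with any component of $E|_{F'}$ that is contracted to a point.

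What you need is the global nefness you mention and then abandon. Since $\Mm_{X'}|_{F'}$ is nef on the projective variety $F'$, so is $-E|_{F'}$. Take an ample divisor $H$ on $F'$ itself, not pulled back from $F$. Then $-E|_{F'}\cdot H^{\dim F'-1}\geq 0$, while $E|_{F'}\cdot H^{\dim F'-1}>0$ whenever $E|_{F'}$ is a nonzero effective divisor. Hence $E|_{F'}=0$; this is the paper's one-line ``Thus $E=0$''.

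A smaller point: writing $\Mm_{X'}=h^*\Mm_X-E$ on $X'$ presupposes that $\Mm_X$ is $\mathbb R$-Cartier, which is not a hypothesis. The paper instead applies the negativity lemma directly to the birational morphism $F'\to F$. It uses that $\Mm_{X'}|_{F'}$ is nef over $F$ and pushes forward to $\Mm_X|_F$. Then only $\Mm_X|_F$ needs to be $\mathbb R$-Cartier, which is implicit in the hypothesis $\Mm_X|_F\equiv 0$.
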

\begin{proof}
We may assume that $\dim X>\dim U$. Let $f: Y\rightarrow X$ be a projective birational morphism such that $\Mm$ descends to $Y$. Since $F$ is a general fiber of $\pi$, there exists a general fiber $F_Y$ of $\pi\circ f: Y\rightarrow U$ 
such that $f(F_Y)=F$. Let $g: F_Y\rightarrow F$ be the induced birational morphism. We have
$$g_*(\Mm_Y|_{F_Y})=\Mm_X|_F\equiv 0.$$
Since $\Mm_Y|_{F_Y}$ is nef$/U$, $\Mm_Y|_{F_Y}$ is nef$/F$. By the negativity lemma, we have
$$\Mm_Y|_{F_Y}=g^*(\Mm_X|_F)-E\equiv -E$$
for some $E\geq 0$. Thus $E=0$, hence 
$$\Mm_Y|_{F_Y}=g^*(\Mm_X|_F)\equiv 0.$$ 
Since $\Mm|_F=\overline{\Mm_Y|_{F_Y}}$, it follows that $\Mm|_F\equiv\bm{0}$.
\end{proof}

\begin{proof}[Proof of Theorem~\ref{thm: finite inversion of adjunction to finite 2}]
Since $S$ is not a component of $D$ and is an lc place of $\Bb'$,
$S$ is a component of $\lfloor B\rfloor$.
We may write $$B=S+\sum b_{i}B_{i}+\sum c_{l}C_{l}$$
and
$$D^{\ninv}=\sum d_{i}B_{i},D^{\inv}=\sum e_{l}C_{l}$$
where $b_{i},c_{l},d_{i},e_{l}\geq 0$ and $S,B_{i},C_{l}$ are distinct prime divisors.

For any prime divisor $P$ on $S$, let $r_{P}$ be the local Cartier index of $X$ near the generic point of $P$,
$p_{i,P}:=r_{P}\mult_{P}(B_{i}|_{S})$, $q_{l,P}:=r_{P}\mult_{P}(C_{l}|_{S})$, $\lambda_{j,P}:=r_{P}\mult_{P}(\Mm_{j,X}|_{S}-\Mm_{j,S}^S)$, $\xi_{k,P}:=r_{P}\mult_{P}(\Nn_{k,X}|_{S}-\Nn_{k,S}^S)$, and $a_{P}$ the unique non-negative integer such that
$$(K_{\Ff}+\epsilon_{\Ff}(S)S)|_{S}=K_{\Ff_{S}}+\frac{a_{P}+\epsilon_{\Ff_{S}}(P)(r_{P}-1)}{r_{P}}P$$
near the generic point of $P$ and whose existence is guaranteed by Lemma~\ref{lem: CS25 3.13(4) general case}. Then by Proposition~\ref{prop: explicit formula}, we have
\begin{align*}
    &(tK_{\Ff}+(1-t)K_{X}+(B+D)^{\ninv}+(1-t)(B+D)^{\inv}+\Mm_{X}+\Nn_{X})|_{S}\\
    =&tK_{\Ff_{S}}+(1-t)K_{S}\\
    +&\sum_{P}\frac{1}{r_{P}}\Bigg(t(a_{P}+\epsilon_{\Ff_{S}}(P)(r_{P}-1))+\sum p_{i,P}(b_{i}+d_{i})\\
    +&(1-t)\left(r_{P}-1+\sum q_{l,P}(c_{l}+e_{l})\right)+\sum \lambda_{j,P}\mu_{j}+\sum\xi_{k,P}\nu_k\Bigg)P\\
    +&\sum \mu_{j}\Mm^S_{j,S}+\sum \nu_{k}\Nn^S_{k,S}.
\end{align*}
We have the following.
\begin{itemize}
\item By our assumption (9), if $d_i\neq0$ (resp. $e_j\neq0$), then $B_i|_S\neq0$ (resp. $C_j|_S\neq0$) over the generic point of $Z$, hence $p_{i,P}\neq0$ (resp. $q_{j,P}\neq0$) for some $P$ that is horizontal$/Z$.
\item By Theorem~\ref{thm: dcc precise adjunction}, the following set
$$\{a_P,p_{i,P}(b_i+d_i),q_{l,P}(c_l+e_l),\xi_{k,P}\nu_k\mid P\text{ is horizontal}/Z\}$$
is a subset of a finite set depending only on $\Ii$ and $\Ii_0$.
\item For any $k$ such that $(\Nn_{k,X}|_S)|_F\not\equiv 0$, we have
$$\sum_P \frac{\xi_{k,P}\nu_k}{r_P}P|_F+\nu_k\Nn_{k,S}^S|_F=\left(\sum_P \frac{\xi_{k,P}\nu_k}{r_P}P+\nu_k\Nn_{k,S}^S\right)\Bigg|_F=(\Nn_{k,X}|_S)|_F\not\equiv0,$$
so either $\xi_{k,P}\neq0$ for some $P$ that is horizontal$/Z$, or $\Nn_{k,S}^S|_F\not\equiv0$ for some $k$. In the latter case, $\Nn_{k}^S|_F\not\equiv\bm{0}$ by Lemma~\ref{lem: non-trivial of b-divisors}, so $\nu_k\in\Ii_0$.
\end{itemize}
These indicate that, for any $i$ (resp. $l,k$) such that $d_i\neq0$ (resp. $e_l\neq0$, $(\Nn_{k,X}|_S)|_F\not\equiv0$), $d_i$ (resp. $e_l,\nu_k$) belong to a finite set. The theorem follows.
\end{proof}

\section{Global ACC to local ACC}\label{sec: global to local}

We now prove the ACC for interpolated lc thresholds (Theorem~\ref{thm: main ACC}). We do so by induction, simultaneously with the global ACC (Theorem~\ref{thm: global ACC main}). For the induction, we consider the following more general versions of these two theorems.

\begin{thm}[ACC for interpolated lc thresholds, general version]\label{thm: acc lct general version}
Let $d$ be a positive integer and $\Ii\subset [0,+\infty)$ a DCC set. Then there exists a positive real number $\tau=\tau_{\ilct}(d,\Ii)\in (0,1)$ depending only on $d$ and $\Ii$ satisfying the following. 

Let $\Bb=(X,\Ff,B,\Mm)(t)/U$ and $\Bb'=(X,\Ff,B',\Mm')(t')/U$ be two algebraically integrable normalized foliated structures of dimension $d$. Assume that
\begin{enumerate}
\item $\Bb/U\in\Ii$,
\item $\Bb/U\geq\Bb'/U\geq\Bb^{1-\tau}/U$, and
\item $\Bb'$ is lc.
\end{enumerate}
Then $\Bb$ is lc.
\end{thm}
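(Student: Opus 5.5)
We argue by contradiction, simultaneously with the general global ACC in dimension $d-1$ (Theorem~\ref{thm: global acc general}), as in \cite[Section~5]{HMX14}. If no $\tau$ exists, we get a sequence of algebraically integrable normalized foliated structures $\Bb_i=(X_i,\Ff_i,B_i,\Mm_i)(t_i)\in\Ii$ and $\Bb_i'$ with $\Bb_i\geq\Bb_i'\geq\Bb_i^{1-\tau_i}$, $\tau_i\to 0$, $\Bb_i'$ lc, and $\Bb_i$ not lc. Since $\Bb_i'\geq\Bb_i^{1-\tau_i}$ and the order is compatible with the geometric average (Lemma~\ref{lem: geometric average of nfs} and Definition-Lemma~\ref{deflem: nfs geometric average}), we look at the largest $s_i$ for which $\Bb_i^{s_i}\cdot\Bb_i'^{1-s_i}$ is lc. Replacing $\Bb_i'$ by this structure, we may assume that $\Bb_i'$ is lc but not klt along some lc center $V_i$ that $\Bb_i$ makes worse, still with $\Bb_i\geq\Bb_i'\geq\Bb_i^{1-\tau_i}$. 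Lemma~\ref{lem: construct smaller bb} lets us write $\Bb_i'$ as an average of $\Bb_i$ and a structure $\Bb_{i,r}\geq\Bb_i^r$. This expresses the coefficients of $\Bb_i'$ (boundary, nef part and parameter) as explicit functions of data in $\Ii$ that converge, but are strictly smaller than the $\Ii$-data.

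Next we extract an lc place. By \cite[Theorem~1.9]{Cas+24} we take a $\mathbb Q$-factorial qdlt modification $h\colon X_i''\to X_i$ of $\Bb_i'$ that extracts a divisor $S_i$ over $V_i$ with $(X_i'',S_i)$ plt. We pull back $\Bb_i$ and $\Bb_i'$, putting $S_i$ in the boundary with coefficient $1$ for $\Bb_i'$. Taking $t_i<1$ is harmless: if $t_i=1$ for infinitely many $i$, we reduce via \cite[Theorem~6.0.1]{CHLX23}. We then apply adjunction to $S_i$ (Proposition-Definition~\ref{prodef: CS25 3.7 r gpair}). Since $-(K_{\Bb_i'})$ is trivial over $X_i$, we have $K_{\Bb'_{i,S_i}}\equiv 0$ over $V_i$. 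Restricting to a general fiber $F_i$ of the Stein factorization of $S_i\to V_i$, we get a relative (or, on fibers, global) numerically trivial lc structure of dimension $\le d-1$. By Theorem~\ref{thm: dcc adjunct to dcc} its coefficients lie in a DCC set $\Ii'$ determined by $\Ii$, up to the $\tau_i$-perturbation. Here we must work relatively over $V_i$ and cannot cut by hyperplanes, since Bertini fails for foliations. This is why the induction uses the relative form of the global ACC.

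By the global ACC in dimension $\le d-1$, the coefficients of $\Bb'_{i,S_i}|_{F_i}$ that are horizontal over $V_i$, together with the relevant nef parts and the parameter, lie in a finite set. Because they converge to the corresponding adjoint data of $\Bb_i$ from below, after passing to a subsequence they must stabilize. Theorem~\ref{thm: dcc precise adjunction}(b) and its inverse-adjunction form, Theorem~\ref{thm: finite inversion of adjunction to finite 2} (applied with $D$ the difference $\Bb_i-\Bb_i'$ near $S_i$), then show that the coefficients of $\Bb_i$ and $\Bb_i'$ meeting $S_i$ horizontally already lie in a finite set. The same holds for $t_i$, unless the adjunction is independent of the parameter. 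Since $\Bb_i'\ne\Bb_i$ along $S_i$ and the differences tend to $0$, a finite set forces $\Bb_i'=\Bb_i$ near $S_i$. This contradicts the choice of $S_i$ as a place where $\Bb_i$ is not lc.

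I expect the main obstacle to be this final finiteness step. The components of $B_i-B_i'$ that meet $S_i$ only vertically over $V_i$, and the nef parts that become trivial on $F_i$, are invisible to the fiberwise global ACC. Hypothesis (9) of Theorem~\ref{thm: finite inversion of adjunction to finite 2} is needed to exclude them. To arrange this, I would first choose $s_i$ and the extracted place so that the divisor $S_i$ dominates an lc center minimal along which $\Bb_i$ fails to be lc, and then shrink around the generic point of $V_i$. Any vertical discrepancy of $\Bb_i-\Bb_i'$ then disappears after localization.
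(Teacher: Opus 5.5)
You have the right overall shape (adjunction to an lc place, the relative global ACC in dimension $d-1$, precise adjunction), but the decisive step does not go through. There are two problems.

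\textbf{The hypotheses of the cited theorems fail.} After replacing $\Bb_i'$ by $\Bb_i^{s_i}\cdot\Bb_i'^{1-s_i}$, the nef part $s_i\Mm_i+(1-s_i)\Mm_i'$ is an arbitrary nef $\bb$-divisor. It is not of the form $\sum\mu_j\Mm_j$ with $\mu_j$ in a DCC set and $\Mm_j$ $\bb$-Cartier, and Lemma~\ref{lem: construct smaller bb} does not change this. So Theorems~\ref{thm: dcc adjunct to dcc}, \ref{thm: global acc general} and~\ref{thm: finite inversion of adjunction to finite 2} do not apply to this structure. Moreover, the nonzero coefficients of $B_i-B_i'$ tend to $0$, so they lie in no DCC set and cannot play the role of $D\in\Ii$ in Theorem~\ref{thm: finite inversion of adjunction to finite 2}. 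That theorem also needs $S$ to be an lc place of the larger structure with the same parameter, whereas $\Bb_i$ is not lc at $S_i$ and has a different parameter.

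\textbf{The qdlt modification gives no control over the fiber.} Extracting $S_i$ by a qdlt modification says nothing about what the general fiber $F_i$ of $S_i\to T_i$ detects. A component of $B_i$ containing $V_i$ can have strict transform meeting only other extracted divisors over the generic point of $V_i$, while still contributing to $\mult_{S_i}$. A nef part can drop along $S_i$ while being numerically trivial on $F_i$. Nothing guarantees that the canonical divisor of the restricted structure at other parameters is non-trivial on $F_i$. So neither hypothesis (9) of Theorem~\ref{thm: finite inversion of adjunction to finite 2} nor the hypothesis of part (3) of Theorem~\ref{thm: global acc general} is available, and you need the latter to make the parameter finite. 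Localizing at the generic point of $V_i$ does not help, since the offending components contain $V_i$. Hence ``a finite set forces $\Bb_i'=\Bb_i$ near $S_i$'' is unsupported.

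\textbf{How the paper avoids this.} First it reduces to $\Bb'=\Bb^{s}$ with $X$ $\mathbb Q$-factorial klt. This uses Lemma~\ref{lem: lc preserved under inequality} ($\Bb^{1-\tau}$ is lc once $\Bb'$ is, for $t'<1$; the case $t=t'=1$ is \cite{CHLX23}) and Lemmas~\ref{lem: acc reduce to klt} and~\ref{lem: reduce to one parameter acc}. It then studies $\Bb_i^{\mu_i}$ with $\mu_i$ the threshold. Lemma~\ref{lem: extract unique lc place} runs MMPs over $X_i$ to extract a \emph{single} lc place $E_i$ by a divisorial contraction with $\rho(Y_i/X_i)=1$.

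Rather than recovering coefficients by inversion of adjunction, the paper isolates the parameter. By \cite[Theorem~1.5]{BZ16} one may assume $(X_i,\Ff_i,\mu_iB_i,\mu_i\Mm_i)(0)$ is klt. Hence $t_i>0$, and only the parameter varies in $\Cc_i(s)=(X_i,\Ff_i,\mu_iB_i,\mu_i\Mm_i)(st_i)$. Because $\rho(Y_i/X_i)=1$, the structure induced on $E_i$ has canonical divisor anti-ample over $T_i$ for $s\in(0,\mu_i)$ and $\sim_{\mathbb R,T_i}0$ at $s=\mu_i$. Part (1) of the global ACC with Theorem~\ref{thm: dcc precise adjunction} makes the horizontal boundary independent of $s$; otherwise $\mu_it_i$ already lies in a finite set. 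Part (3) then puts $\mu_it_i$ in a finite set, contradicting strict monotonicity.

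Theorem~\ref{thm: finite inversion of adjunction to finite 2} plays no role in this step. It is used only in the global ACC (Lemma~\ref{lem: when exist lc place}), where relative ampleness again secures its hypothesis (9).
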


\begin{thm}[Global ACC]\label{thm: global acc general}
Let $d$ be a positive integer and $\Ii\subset [0,1]$ a DCC set. Then there exists a finite set $\Ii_0=\Ii_0(d,\Ii)$ depending only on $d$ and $\Ii$ satisfying the following. 

Assume that $\Bb/U=(X,\Ff,B,\Mm)(t)/U$ is an lc algebraically integrable normalized foliated structure of dimension $d$ such that the associated morphism $\pi: X\rightarrow U$ is a contraction. Let $F$ be a general fiber of $\pi$. Assume that
\begin{itemize}
\item $K_{\Bb}|_F\equiv0$, and
\item $B,t\in\Ii$ and $\Mm=\sum\mu_j\Mm_j$, where each $\mu_j\in\Ii$ and each $\Mm_j$ is nef$/U$ $\bb$-Cartier.
\end{itemize}
Then:
\begin{enumerate}
    \item $B^{\ninv}|_F\in\Ii_0$, and $B|_F\in\Ii_0$ if $t<1$.
    \item $\{\mu_j\mid \Mm_j|_{F}\not\equiv\bm{0}\}\subset\Ii_0$.
    \item Let $\Bb(s):=(X,\Ff,B,\Mm)(s)$ for any $s\in [0,1]$. Assume that $K_{\Bb(s)}$ is $\mathbb R$-Cartier for any $s\in [0,1]$. Then either $t\in\Ii_0$, or $K_{\Bb(s)}|_F\equiv 0$ for any $s\in [0,1]$.
\end{enumerate}
\end{thm}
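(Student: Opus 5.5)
Theorem~\ref{thm: global acc general} will not be proved in isolation. The plan is a simultaneous induction on $d$ together with Theorem~\ref{thm: acc lct general version}, following the roadmap. Write $\mathrm{G}_d$ for the global ACC in dimension $d$ and $\mathrm{L}_d$ for the local ACC in dimension $d$. The induction has two steps: first $\mathrm{G}_{d-1}\Rightarrow\mathrm{L}_d$ (Section~\ref{sec: global to local}), then $\mathrm{G}_{d-1}+\mathrm{L}_d\Rightarrow\mathrm{G}_d$ (Section~\ref{sec: local to global}). The base case $d=1$ is elementary: on a curve, either $\Ff$ is the trivial foliation or $\Ff=T_X$, and the condition $K_{\Bb}|_F\equiv 0$ becomes a degree equation with DCC coefficients. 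That equation has only finitely many solutions, by the standard argument that a DCC set with bounded sums yields finitely many values.

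\textbf{Step (a): reductions.} I will first reduce (2) and (3) to statements about coefficients on $F$. Restrict everything to a general fiber $F$, using the restriction of $\bb$-divisors from \cite{BZ16}. Since $\Mm_j|_F$ is nef, $\Mm_j|_F\not\equiv\bm 0$ means it contributes positively. For (3), the map $s\mapsto K_{\Bb(s)}|_F$ is affine in $s$. So if $K_{\Bb(t)}|_F\equiv 0$ and $K_{\Bb(s)}|_F\not\equiv 0$ for some $s$, then $t$ is determined as the unique root of a nonconstant affine function, and it remains to show this root lies in a finite set. The same strategy (contradiction via an infinite sequence with strictly monotone data) handles all three parts.

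\textbf{Step (b): arguing by contradiction.} Suppose there is a sequence $\Bb_i$ whose fiber coefficients, nef coefficients, or parameters $t_i$ are not in any finite set. Passing to a subsequence, they are strictly increasing, since DCC rules out strictly decreasing sequences. Extract a $\mathbb Q$-factorial qdlt (ACSS) model, which exists by \cite[Theorem~1.9]{Cas+24}. Then increase a non-finite coefficient, or $t$ together with $B^{\inv}$ in the normalized sense, towards its limit $\overline{\Bb}_i\geq\Bb_i$. By $\mathrm{L}_d$ with $\tau=\tau_{\ilct}(d,\Ii)$, the structure stays lc, because $\Bb_i\geq\Bb_i^{1-\tau}\cdot(\cdot)$ eventually. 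After this, $K_{\overline{\Bb}_i}|_F$ is pseudo-effective and not numerically trivial.

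Run a $K_{\overline\Bb_i}$-MMP$/U$ using the relative two-ray game (Lemma~\ref{lem: relative two-ray}). This should end with a Mori-type fiber structure on which some lc place $S$ of the limit structure survives and is horizontal. Adjunction to $S$ via Theorem~\ref{thm: dcc adjunct to dcc} gives a $(d-1)$-dimensional structure whose fibers are still numerically trivial. By $\mathrm{G}_{d-1}$, its coefficients lie in a finite set. Theorem~\ref{thm: finite inversion of adjunction to finite 2} then lifts this finiteness back to $D$ and $\Nn$ near $S$, which contradicts strict monotonicity. Vertical components are handled by an auxiliary induction on the relative dimension $d'$ of $X\to U$.

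\textbf{Main obstacle.} I expect the hardest point to be the case where no lc place is available, i.e.\ $\Bb_i$ is klt along $F$. Here the plan is to replace adjunction by boundedness. If $t_i\to 1$, first apply $\mathrm{L}_d$ to replace $t_i$ by $\min\{t_i,1-\tau\}$, which brings $t$ into the range covered by BAB. Then use the BAB theorem for algebraically integrable adjoint foliated structures \cite[Theorem~B]{Cas+25a} on the Fano fibers produced by the two-ray game, in the spirit of M\textsuperscript{c}Kernan--Prokhorov. Boundedness of these fibers makes the intersection numbers of the components of $B$ and of $\Mm_j$ with a bounded curve class lie in a finite set. The equation $K_{\Bb_i}|_F\equiv 0$ then becomes a linear equation with finitely many possible integer data and DCC unknowns, which forces the unknowns into a finite set.
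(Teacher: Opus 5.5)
You have the right overall architecture: simultaneous induction with the local ACC, adjunction plus Theorem~\ref{thm: finite inversion of adjunction to finite 2} when an lc place is available, intersection numbers against a bounded polarization when fibers are bounded, truncating $t$ at $1-\tau$, and induction on the relative dimension. The klt case, however, has a genuine gap. BAB, whether \cite[Theorem~B]{Cas+25a} or Birkar's, needs a uniform bound $\tmld\geq\epsilon>0$. Being klt gives no such bound, and replacing $t_i$ by $\min\{t_i,1-\tau\}$ does not create one. A sequence of klt $\Bb_i$ whose total log discrepancies along divisors dominating $U_i$ tend to $0$ can have unbounded fibers, and your plan has nothing for this case, which is the heart of the proof.

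The paper first reduces to $\rho(X/U)=1$ with $X$ of Fano type over $U$ (Theorem~\ref{thm: picard one imply general global acc}). It then considers the infimum $a_i$ of total log discrepancies of $(X_i,\Ff_i,0,\bm{0})(t_i')$ over divisors dominating $U_i$.
\begin{itemize}
\item If $a_i$ stays bounded away from $0$, then \cite[Proposition~9.1]{Cas+25a} and \cite[Theorem~1.1]{Bir21a} bound the fibers, and Proposition~\ref{prop: when fiber bounded} applies.
\item If $a_i\to 0$, one extracts a divisor $E_i$ computing $a_i$. Its coefficient $e_i$ tends to $1$ because $t_i'\leq 1-\tau$ keeps $t_i'\epsilon_{\Ff_i}(E_i)+(1-t_i')\geq\tau$; this is a second, essential use of truncating $t$. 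One runs a $(-E_i)$-MMP to a Mori fiber space $X_i''\to T_i$ on which $E_i''$ is horizontal. Then one raises $e_i$ (or $t$) until $E_i''$ is an lc place with $K\sim_{\mathbb R,T_i}0$.
\end{itemize}
The contradiction in the second case comes from two ingredients. One is the gap Lemma~\ref{lem: one lc place one close to one}, which is itself deduced from the non-klt case (Lemma~\ref{lem: when exist lc center}). The other is the separately treated case of a coefficient tending to $1$.

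Your Step (b) also does not work as written. Once you raise a coefficient so that $K_{\overline{\Bb}_i}|_F$ is pseudo-effective and not numerically trivial, three things go wrong:
\begin{itemize}
\item a $K_{\overline{\Bb}_i}$-MMP$/U$ cannot terminate with a Mori fiber space;
\item the structure induced on a divisor $S$ by adjunction has no reason to be numerically trivial on fibers, which $\mathrm{G}_{d-1}$ requires;
\item if the limit coefficient is $<1$, nothing produces an lc place $S$ at all.
\end{itemize}
The paper moves in the opposite direction. It perturbs by $-\epsilon D$, by $-\epsilon\Mm_j$, or by $t\mapsto t-\epsilon$, reaching Mori fiber spaces on which the relevant datum is horizontal or ample. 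On a fibration of relative Picard number one it then restores $K\sim_{\mathbb R,T}0$ by trading coefficients of divisors that are proportional over the base (for instance $B''_{i,1}$ against $E''_i$). Exceptional lc centers are handled by extracting the lc place and running a $(-E)$-MMP, with Theorem~\ref{thm: ACC precise} controlling the remaining coefficients. The relative two-ray game is used only to show that $B'_{i,1}$ is not contracted in the case $b_{i,1}\to 1$; it does not produce bounded Fano fibers.
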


In the rest of this section, we prove Theorem~\ref{thm: acc lct general version} assuming Theorem~\ref{thm: global acc general} in lower dimensions; that is, Theorem~\ref{thm: global acc general} in dimension $\leq d-1$ implies Theorem~\ref{thm: acc lct general version} in dimension $d$.

\subsection{Log canonicity under order}\label{subsec: log canonicity under order} Recall that we have defined an order $\geq$ between normalized foliated structures (Definition~\ref{defn: order of nfs}). In this subsection, we show that log canonicity of algebraically integrable normalized foliated structures is preserved under this order (Lemma~\ref{lem: lc preserved under inequality}).

\begin{lem}\label{lem: neg lem for generalized boundary}
    Let $X$ be a normal quasi-projective variety, $B\geq 0$ an $\mathbb R$-divisor on $X$, and $\Mm$ a nef$/X$ $\bb$-divisor on $X$, such that $B+\Mm_X$ is $\mathbb R$-Cartier. Let $h\colon X'\rightarrow X$ be a projective birational morphism and $B':=h^{-1}_*B$. Then
    $$h^*(B+\Mm_X)\geq B'+\Mm_{X'}.$$
\end{lem}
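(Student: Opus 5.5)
We will reduce the statement to the negativity lemma applied separately to $B$ and to $\Mm$, combined by linearity. The difficulty is that $B$ and $\Mm_X$ need not be $\mathbb R$-Cartier individually; only their sum is. So we cannot write $h^*B$ and $h^*\Mm_X$ separately. Instead, we pass to a higher model and compare pushforwards.

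First, choose a projective birational morphism $g\colon X''\rightarrow X'$ such that $\Mm$ descends to $X''$, and set $f:=h\circ g$. Since $g_*$ of $f^*(B+\Mm_X)$ is $h^*(B+\Mm_X)$, and $g_*$ of $g^{-1}_*B'+\Mm_{X''}$ is $B'+\Mm_{X'}$, it suffices to prove the inequality on $X''$:
$$E:=f^*(B+\Mm_X)-f^{-1}_*B-\Mm_{X''}\geq 0.$$
Pushforward preserves effectivity, so the inequality on $X'$ then follows.

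On $X''$ we argue as follows. The divisor $E$ is $f$-exceptional, because $f_*E=B+\Mm_X-B-\Mm_X=0$. Moreover,
$$-E\equiv_X \Mm_{X''}+f^{-1}_*B.$$
This is not directly nef$/X$, because of the $f^{-1}_*B$ term. To handle it, we write
$$E=f^*(B+\Mm_X)-f^{-1}_*B-\Mm_{X''}$$
and use the following device. Over $X$, the $\mathbb R$-Cartier divisor $B+\Mm_X$ is locally $\mathbb R$-linearly trivial after shrinking, since we may work locally over $X$. So we may assume $B+\Mm_X\sim_{\mathbb R}0$ locally, which gives
$$-E\sim_{\mathbb R,X} f^{-1}_*B+\Mm_{X''}.$$

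The key step is then the general negativity lemma in the following form. If $G$ is $f$-exceptional and $-G\sim_{\mathbb R,X} N+P$, with $N$ nef$/X$ and $P\geq 0$ having no exceptional components, then $G\geq 0$.

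To prove this form, write $G=G^+-G^-$ with $G^+,G^-\geq 0$ and no common components, and suppose $G^-\neq 0$. Pick a component $\Gamma$ of $G^-$ whose center on $X$ is maximal, and cut by general hyperplanes to find a curve $C$ in $\Gamma$, contracted over $X$, that is covered by a family with $G^+\cdot C\geq 0$. The standard covering-family argument of the negativity lemma (cf.\ \cite[Lemma~3.39]{KM98}) gives $G^-\cdot C<0$ for such curves.

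Alternatively, one can reduce to a surface by cutting with general hyperplanes of $X''$ and pulling back from $X$, and then use negative definiteness of the intersection form on exceptional curves. For general $C$ we also have $P\cdot C\geq 0$, since $C\not\subset\Supp P$. Since $N$ is nef$/X$, this gives $(N+P)\cdot C\geq 0$. Hence $G\cdot C=-(N+P)\cdot C\leq 0$. Combined with $G\cdot C=G^+\cdot C-G^-\cdot C>0$, this is a contradiction.

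We expect this last step to be the main obstacle: producing the curves $C$ on exceptional components of $G^-$ that avoid $\Supp P$ and satisfy $G^+\cdot C\geq 0$. This requires the standard hyperplane-section reduction to the case where $X$ is a surface germ.
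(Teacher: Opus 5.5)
Your proposal is correct and follows the paper's route. The paper also passes to a resolution on which $\Mm$ descends and observes that $E$ is exceptional over $X$ with $E\cdot C\le 0$ for general contracted curves $C$ on its components, since $f^{-1}_*B\ge 0$ has no exceptional components and $\Mm_{X''}$ is nef$/X$. It then concludes by citing the general negativity lemma \cite[Lemma~3.3]{Bir12} rather than reproving it as you do.

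Two small fixes to your sketch. Take $X''$ smooth so that $f^{-1}_*B$, and hence $E$, is $\mathbb R$-Cartier and the intersection numbers make sense. Also, negative definiteness on the cut-down surface only gives $G^-\cdot C<0$ for some component of $G^-$ with maximal center, not for every one as your first version claims; your second, surface-based version handles this correctly, and that is all the contradiction needs.
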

\begin{proof}
We may assume that $h$ is a resolution of $X$ and $\Mm$ descends to $X'$, and write
$$h^*(B+\Mm_X)=B'+\Mm_{X'}+E.$$
Then $E$ is exceptional$/X$. Since $B'\geq 0$, for any irreducible component $D$ of $E$ and any very general exceptional$/X$ irreducible curve $C\subset D$, we have $B'\cdot C\geq 0$. Since $\Mm_{X'}$ is nef$/X$, $\Mm_{X'}\cdot C\geq 0$. Thus $E\cdot C\leq 0$. By \cite[Lemma~3.3]{Bir12}, it follows that $E\geq 0$.
\end{proof}

\begin{lem}\label{lem: lc preserved under inequality}
Let $\Bb/U$ be an algebraically integrable normalized foliated structure and $\Bb'/U$ an algebraically integrable sub-normalized foliated structure, such that $\Bb/U\geq\Bb'/U$ and $\Bb/U$ is lc. Let $t$ and $t'$ be the parameters of $\Bb$ and $\Bb'$, respectively. Assume that either $t<1$ or $t=t'=1$. Then:
\begin{enumerate}
    \item $\Bb'$ is lc.
    \item Assume that $t'>0$ or $t=t'=0$. Then any lc place of $\Bb'$ is an lc place of $\Bb$.
\end{enumerate}
\end{lem}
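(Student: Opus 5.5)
The plan is to compare total log discrepancies on a common high model, splitting $\Bb\ge\Bb'$ into two steps: first lower the parameter, then lower the boundary and nef parts. Write $\Bb=(X,\Ff,B,\Mm)(t)$ and $\Bb'=(X,\Ff,B',\Mm')(t')$, with $B\ge B'$, $t\ge t'$ and $\Mm-\Mm'$ nef$/U$. For $s\in[0,1]$ set $\Bb(s):=(X,\Ff,B,\Mm)(s)$, and for a prime divisor $E$ over $X$ let
$$\phi_E(s):=a(E,\Bb(s))+s\epsilon_{\Ff}(E)+(1-s).$$
The first obstacle is that $K_{\Bb(s)}$ need not be $\mathbb R$-Cartier on $X$, since $K_{\Ff}$ and $K_X$ are not separately $\mathbb R$-Cartier.

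\textbf{Moving to a good model.} To remove this obstacle I would use algebraic integrability. If $t<1$, take a $\mathbb Q$-factorial qdlt (ACSS) modification $h\colon X''\to X$ of $\Bb$, which exists by \cite[Theorem~1.9]{Cas+24}. If $t=t'=1$, take the ACSS modification in the $t=1$ sense. On $X''$ every divisor is $\mathbb R$-Cartier, so the relevant structures are defined there. Since $h^*K_{\Bb}=K_{h^*\Bb}$, lc-ness and lc places of $\Bb$ can be read off on $X''$. To treat $\Bb'$ simultaneously, write $K_{\Bb}-K_{\Bb'}$ as an $\mathbb R$-Cartier divisor on $X$ and pull it back.

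\textbf{Step 1: lowering the parameter.} On a fixed model where $K_{\Ff}$, $K_X$, $B$ and $\Mm$ are all $\mathbb R$-Cartier, the discrepancy $a(E,\Bb(s))$ is affine in $s$. Indeed, the boundary of $\log\Bb(s)$ is $B^{\ninv}+(1-s)B^{\inv}$, which is affine in $s$, and so is $K_{\log\Bb(s)}$. Hence $\phi_E$ is affine in $s$. We have $\phi_E(t)\ge0$ because $\Bb$ is lc.

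If $t<1$, then \cite[Proposition~3.3]{Cas+24} (as used in the proof of Theorem~\ref{thm: dcc precise adjunction}) shows that lc-ness of $\Bb$ implies lc-ness of $\Bb(0)$, that is, of the generalized pair $(X,B,\Mm)$. Thus $\phi_E(0)\ge0$. By affineness, $\phi_E(s)\ge0$ for all $s\in[0,t]$, and in particular $\Bb(t')$ is lc. If $t=t'=1$ there is nothing to do in this step. This is exactly where the hypothesis ``$t<1$ or $t=t'$'' enters: when $t=1$, lc-ness of $\Bb$ gives no control at $s=0$.

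\textbf{Step 2: lowering $B$ and $\Mm$ at fixed parameter $t'$.} We have
$$K_{\Bb(t')}-K_{\Bb'}=(B-B')^{\ninv}+(1-t')(B-B')^{\inv}+(\Mm-\Mm')_X.$$
Its boundary part is effective since $B\ge B'$ and $t'\le1$, and $\Mm-\Mm'$ is nef$/U$, hence nef$/X$. By Lemma~\ref{lem: neg lem for generalized boundary}, its pullback to any higher model $X'$ dominates the strict transform of the boundary part plus $(\Mm-\Mm')_{X'}$, which is $\ge0$ over $X$ after descending. Therefore $a(E,\Bb')\ge a(E,\Bb(t'))$ for every $E$, and $\Bb'$ is lc, proving (1).

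For (2), let $E$ be an lc place of $\Bb'$. Then all the inequalities above are equalities: $\phi_E(t')=0$ and the Step~2 comparison is tight. If $t=t'$, then $E$ is an lc place of $\Bb$ directly. If $0<t'<t$, then $\phi_E$ is an affine function that is $\ge0$ on $[0,t]$ and vanishes at the interior point $t'$, so $\phi_E\equiv0$ and $\phi_E(t)=0$. The case $t=t'=0$ is immediate.

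\textbf{Main obstacle.} The delicate point is making Step~1 rigorous without $K_{\Ff}$ and $K_X$ being separately $\mathbb R$-Cartier. I would first try to perform both the interpolation and the comparison on the $\mathbb Q$-factorial ACSS model, where both are $\mathbb Q$-Cartier, and transport the conclusions back to $X$ via crepant pullback. The main risk in doing so is to check that the exceptional divisors of the ACSS model do not break the affineness or the crepancy comparisons.
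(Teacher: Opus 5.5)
Your equal-parameter comparison (Step~2 when $t=t'$, via Lemma~\ref{lem: neg lem for generalized boundary}) is exactly the paper's base case. Your argument for (2), an affine function that is $\geq 0$ on $[0,t]$ and vanishes at the interior point $t'$, is also how the paper proves (2). The genuine gap is in (1) when $t'<t<1$, and it is precisely the obstacle you flag but do not resolve.

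Steps~1 and~2 need $K_{\Bb(s)}=sK_{\Bb(1)}+(1-s)K_{\Bb(0)}$ to be $\mathbb R$-Cartier at $s=0$ and at $s=t'$. Only $K_{\Bb}$ and $K_{\Bb'}$ are assumed $\mathbb R$-Cartier. Since $\Bb$ is only lc, $X$ need not be $\mathbb Q$-factorial, nor admit a small $\mathbb Q$-factorialization. The paper invokes \cite[Proposition~3.3]{Cas+24} only where $K_{\Bb(0)}$ is $\mathbb R$-Cartier, and your argument is fine in that situation, but not in general.

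Moving to a $\mathbb Q$-factorial qdlt model $h\colon X''\to X$ of $\Bb$ does not fix this.
\begin{itemize}
\item There you can show that $\Bb''(s):=(X'',\Ff'',h^{-1}_*B+\Exc(h),\Mm)(s)$ is lc for $s\in[0,t]$. However, for $s\neq t$ this structure is not crepant to anything on $X$.
\item $h^*\Bb'$ carries a priori unknown coefficients along the $h$-exceptional divisors. Proving $\Bb''(t')\geq h^*\Bb'$ amounts to showing that $\Bb'$ is lc along those divisors, which is what you are trying to prove.
\item Pulling back $K_{\Bb}-K_{\Bb'}$ does not help. It contains $(t-t')(K_{\Ff}-K_X)$, which need not be $\mathbb R$-Cartier, so the pullback cannot be split into a parameter part and a boundary part.
\item Negativity over $X$ gives nothing either, since $K_{\Bb''(t')}\equiv_X\frac{t-t'}{t}K_{\Bb''(0)}$ has no sign.
\end{itemize}

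The missing idea is to interpolate along the line through $\log\Bb'$ and $\log\Bb$, continued past $\Bb$, rather than toward $s=0$. Set $\sigma=\frac{t-t'}{1-t'}$ and write $\log\Bb=\sigma\Aa''+(1-\sigma)\log\Bb'$, where $\Aa''=(X,\Ff,\Delta'',\Mm'')$ has parameter $1$.
\begin{itemize}
\item $K_{\Aa''}$ is $\mathbb R$-Cartier automatically.
\item The hypothesis $\Bb\geq\Bb'$ gives $\Delta''\geq 0$, $\Delta''^{\ninv}\geq\Delta^{\ninv}$, and $\Mm''$ nef$/U$.
\item Take a $\mathbb Q$-factorial ACSS modification $h\colon Y\to X$ of this generalized foliated quadruple \cite[Theorem~8.2.2]{CHLX23}. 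Every $h$-exceptional $E$ satisfies $a(E,\Aa'')\leq-\epsilon_{\Ff}(E)$.
\item Linearity gives $a(E,\log\Bb)=\sigma a(E,\Aa'')+(1-\sigma)a(E,\log\Bb')$. Combined with lc-ness of $\Bb$, this yields $a(E,\log\Bb')\geq -t'\epsilon_{\Ff}(E)-(1-t')$.
\end{itemize}
This is exactly the control on exceptional coefficients your approach lacks: it gives $K_{\Bb'_Y}\geq h^*K_{\Bb'}$ for $\Bb'_Y:=(h^{-1}_*\Bb',\Exc(h))$. On $Y$, the structure $(Y,\Ff_Y,B''_Y+G,\Mm'')(s)$ is lc for every $s$ by the ACSS property, and a case-by-case check shows it dominates $\Bb'_Y$ at $s=t'$. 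Your equal-parameter step then gives (1).

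Once (1) is established, your argument for (2) goes through verbatim on a $\mathbb Q$-factorial qdlt modification of $\Bb$, which is where the paper runs it.
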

\begin{proof}
We write $\Bb=(X,\Ff,B,\Mm)(t)$ and $\Bb'=(X,\Ff,B',\Mm')(t')$. 

We first prove the lemma when $t=t'$. In this case, $(X,\Ff,B,\Mm)(t)$ is lc, $(B-B')^{\ninv}+(1-t)(B-B')^{\inv}\geq 0$, $\Mm-\Mm'$ is nef$/U$, and $(B-B')^{\ninv}+(1-t)(B-B')^{\inv}+(\Mm_X-\Mm'_X)$ is $\mathbb R$-Cartier. By Lemma~\ref{lem: neg lem for generalized boundary},
$$a(D,\Bb)\leq a(D,\Bb')$$
for any prime divisor $D$ over $X$, and the lemma follows. Therefore, in the following, we may assume that $t>t'$. 

We first prove (1). We have $1>t>t'$. The proof is very similar to \cite[Proof of Proposition~3.3]{Cas+24}. We have $t=s\cdot 1+(1-s)\cdot t'$ where $s=\frac{t-t'}{1-t'}$. Write
$$\Aa:=\log\Bb:=(X,\Ff,\Delta,\Mm,t),\quad \Aa':=\log\Bb':=(X,\Ff,\Delta',\Mm',t').$$
Let $s:=\frac{t-t'}{1-t'}$. Then $t=s\cdot 1+(1-s)\cdot t'$. Let $\Delta''$ be the unique $\mathbb R$-divisor such that $s\Delta''+(1-s)\Delta'=\Delta$ and let $\Mm''$ be the unique $\bb$-divisor such that $s\Mm''+(1-s)\Mm'=\Mm$. Since $\Bb/U\geq\Bb'/U$,
$$\Mm''-\Mm=\frac{1-s}{s}(\Mm-\Mm')$$
is nef$/U$. Hence $\Mm''$ is nef$/U$ as well. Since
$$\Delta^{\ninv}+\frac{1}{1-t}\Delta^{\inv}\geq\Delta'^{\ninv}+\frac{1}{1-t'}\Delta'^{\inv},$$
we have 
$$\Delta''=\frac{1}{s}(\Delta-(1-s)\Delta')\geq 0\text{ and }\Delta''^{\ninv}\geq\Delta^{\ninv}.$$
We let $\Aa'':=(X,\Ff,\Delta'',\Mm'')$ and let $h\colon (Y,\Ff_Y,B_Y'',\Mm'';G)\rightarrow (X,\Ff,\Delta'',\Mm'')$ be a $\mathbb Q$-factorial proper ACSS modification of $(X,\Ff,\Delta'',\Mm'')$ (cf.~\cite[Theorem~8.2.2]{CHLX23}). We have
$$B_Y''=h^{-1}_*(\Delta''^{\ninv}\wedge\Supp \Delta''^{\ninv})+\Exc(h)^{\ninv},$$
and for any $h$-exceptional prime divisor $E$, we have
$$a(E,\Aa'')\leq -\epsilon_{\Ff}(E),$$
since $h$ is an ACSS modification of $\Aa''$ and only extracts nklt places of $\Aa''$. 
Since $\Aa$ is lc, we have
$$a(E,\Aa)\geq -t\epsilon_{\Ff}(E)-(1-t).$$
Since $\Aa=s\Aa''+(1-s)\Aa'$, we have
$$a(E,\Aa)=sa(E,\Aa'')+(1-s)a(E,\Aa').$$
Thus
$$a(E,\Aa')\geq\frac{-t\epsilon_{\Ff}(E)-(1-t)+s\epsilon_{\Ff}(E)}{1-s}=-t'\epsilon_{\Ff}(E)-(1-t')$$
and equality holds if and only if $a(E,\Aa'')=-\epsilon_{\Ff}(E)$ and $a(E,\Aa)=-t\epsilon_{\Ff}(E)-(1-t)$. Let 
$$\Bb_{Y}':=(Y,\Ff_Y,B_{Y}':=h^{-1}_*B'+\Exc(h),\Mm')(t'),\Bb_Y:=(Y,\Ff_Y,B_{Y}:=h^{-1}_*B+\Exc(h),\Mm)(t).$$
Then we have 
$$K_{\Bb_{Y}'}\geq h^*K_{\Bb'}\quad \text{ and }\quad K_{\Bb_Y}\geq h^*K_{\Bb}.$$
\begin{claim}\label{claim: by+g and by'}
$B_Y''+G\geq B_{Y}\geq B_Y'$.
\end{claim}
\begin{proof}
    For any prime divisor $D$ that is an irreducible component of $B_Y$, we distinguish four cases.
    \medskip

    \noindent\textbf{Case 1.} $D$ is an irreducible component of $\Exc(h)^{\inv}$. Then
    $$\mult_DB_{Y}=1=\mult_DG=\mult_D(B_Y''+G).$$
    
    \medskip

    \noindent\textbf{Case 2.} $D$ is an irreducible component of $\Exc(h)^{\ninv}$. Then
    $$\mult_DB_{Y}=1=\mult_DB_Y''=\mult_D(B_Y''+G).$$

    \medskip

    \noindent\textbf{Case 3.} $D$ is an irreducible component of $h^{-1}_*B$ that is $\Ff_Y$-invariant. Then
$$\mult_D(B_Y''+G)=\mult_DG=1\geq\mult_{h_*D}B=\mult_DB_{Y}.$$

    \medskip

    \noindent\textbf{Case 4.} $D$ is an irreducible component of $h^{-1}_*B$ that is not $\Ff_Y$-invariant. Then
    $$\mult_D(B_Y''+G)=\mult_DB_Y''=\min\{\mult_{h_*D}\Delta''^{\ninv},1\}\geq\mult_{h_*D}\Delta^{\ninv}=\mult_DB_{Y}.$$
    The claim follows.
\end{proof}
\noindent\textit{Proof of Lemma~\ref{lem: lc preserved under inequality} continued.} We let
$$\Bb_Y(s):=(Y,\Ff_Y,B_Y''+G,\Mm'')(s)$$
for any $s\in [0,1]$. Then $\Bb_Y(s)$ is $\mathbb Q$-factorial lc for any $s\in [0,1]$. By Claim~\ref{claim: by+g and by'}, $\Bb_Y(t')/U\geq\Bb_Y'/U$. By the $t=t'$ case, $\Bb_{Y}'$ is lc, hence $\Bb'$ is lc. This implies (1).

Now we prove (2). We let $g: V\rightarrow X$ be a $\mathbb Q$-factorial qdlt modification of $\Bb$ and let $\Bb_V:=g^*\Bb:=(V,\Ff_V,B_V,\Mm)(t)$. Then $B_V=g^{-1}_*B+\Exc(g)$. By (1), we have that $\Bb_V(s):=(V,\Ff_V,B_V,\Mm)(s)$ is lc for any $s\in [0,t]$. Since
$$\Bb_V(s)=\Bb_V^{\frac{s}{t}}\cdot\Bb_V(0)^{\frac{t-s}{t}},$$
for any $s\in (0,t]$, any lc place of $\Bb_V(s)$ is an lc place of $\Bb_V$. Since $B\geq B'$, $B_V\geq g^{-1}_*B'+\Exc(g)$, so any lc place of $\Bb'$ is an lc place of $\Bb_V(t')$ and hence an lc place of $\Bb_V$, hence an lc place of $\Bb$. (2) follows.
\end{proof}

\subsection{Reduction to the klt case}

In this subsection, we reduce Theorem~\ref{thm: acc lct general version} to the case when $X$ is $\mathbb Q$-factorial klt and $\Bb'=\Bb^s$ for some $s>0$.

\begin{lem}\label{lem: acc reduce to klt}
    Assume that Theorem~\ref{thm: acc lct general version} holds in dimension $d$ when $X$ is $\mathbb Q$-factorial klt. Then Theorem~\ref{thm: acc lct general version} holds in dimension $d$.
\end{lem}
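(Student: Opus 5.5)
We reduce the general case to the $\mathbb Q$-factorial klt case by passing to a suitable birational model on which the ambient variety is $\mathbb Q$-factorial klt, pulling back both $\Bb$ and $\Bb'$ there, and checking that the hypotheses (1)--(3) of Theorem~\ref{thm: acc lct general version} are preserved, possibly after enlarging the DCC set $\Ii$ to a DCC set depending only on $\Ii$ (for instance $\Ii\cup\{1\}$).

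Let $\tau:=\tau_{\ilct}(d,\Ii\cup\{1\})$ be the constant from the $\mathbb Q$-factorial klt case, and assume $\Bb,\Bb'$ satisfy (1)--(3).

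\textbf{Case $t=1$.} Then $t'\ge (1-\tau)$, and we may also use (after enlarging $\Ii$) the convention that $\Bb^{1-\tau}$ has parameter $(1-\tau)t$. Since $\Bb'$ is lc, its parameter-zero and parameter-one components behave well. By \cite[Proposition~3.3]{Cas+24}, lc is stable under decreasing $t$, so we may first treat $t'<1$.

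\textbf{Main construction.} Since $\Bb'$ is lc and $t'<1$ (the case $t'=1$ forces $\Bb=\Bb'$ up to the order, and Lemma~\ref{lem: lc preserved under inequality} is not needed there), take $h\colon Y\to X$ a $\mathbb Q$-factorial qdlt (ACSS) modification of $\Bb'$ via \cite[Theorem~1.9]{Cas+24}. On an ACSS model the ambient variety is $\mathbb Q$-factorial and $(Y, h^{-1}_*B'+\Exc(h),\Mm')$ is qdlt, hence $Y$ is $\mathbb Q$-factorial klt. Set
\[
\Bb_Y:=(Y,\Ff_Y,h^{-1}_*B+\Exc(h),\Mm)(t),
\]
and $\Bb'_Y:=h^*\Bb'=(Y,\Ff_Y,h^{-1}_*B'+\Exc(h),\Mm')(t')$. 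The equality $B'_Y=h^{-1}_*B'+\Exc(h)$ holds because $h$ extracts only lc places of $\Bb'$. Then $\Bb_Y\in\Ii\cup\{1\}$, $\Bb'_Y$ is lc, and $\Bb_Y\ge\Bb'_Y\ge\Bb_Y^{1-\tau}$. The last inequality needs care: on exceptional divisors the coefficient $1$ must dominate $(1-\tau)\cdot1$, which is fine, and on the nef parts the difference is unchanged as $\bb$-divisors. By the klt case, $\Bb_Y$ is lc.

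\textbf{Descending to $X$.} Next we show $K_{\Bb_Y}\ge h^*K_{\Bb}$, so that $a(E,\Bb)\ge a(E,\Bb_Y)$ for all $E$, giving that $\Bb$ is lc. This is the main obstacle: $K_{\Bb_Y}-h^*K_{\Bb}$ is exceptional, and we want it to be effective. Write it as
\[
\bigl(K_{\Bb_Y}-h^*K_{\Bb}\bigr) = \bigl(K_{\Bb'_Y}-h^*K_{\Bb'}\bigr) + \bigl((K_{\Bb_Y}-K_{\Bb'_Y}) - h^*(K_\Bb-K_{\Bb'})\bigr).
\]
The first term is $0$. For the second, $K_\Bb-K_{\Bb'}$ is a difference of boundary and nef parts, but the parameters differ, so it also involves $K_\Ff$ and $K_X$. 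To handle this, I would apply Lemma~\ref{lem: construct smaller bb} to write $\Bb'=\Bb^{a}\cdot\Bb_r^{1-a}$, reducing to comparing structures with equal parameter, where Lemma~\ref{lem: neg lem for generalized boundary} (the negativity lemma for generalized boundaries) yields effectivity.

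Alternatively, and more simply, I would argue directly via discrepancies, using that each $h$-exceptional $E$ is an lc place of $\Bb'$, and that the discrepancy function along $s\mapsto\Bb^s\cdot\Bb'^{1-s}$ is linear. Then $a(E,\Bb)$ is bounded below as soon as $\Bb_Y$ is lc on $Y$, using convexity as in the proof of Lemma~\ref{lem: lc preserved under inequality}. I expect this comparison to be the technical heart of the argument.
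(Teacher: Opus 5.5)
\textbf{The gap is in the descent step.} You take $h\colon Y\to X$ to be a qdlt/ACSS modification of $\Bb'$ and set $\Bb_Y=(h^{-1}_*\Bb,\Exc(h))$. The klt case then does give that $\Bb_Y$ is lc. To conclude that $\Bb$ is lc, however, you need $K_{\Bb_Y}\ge h^*K_{\Bb}$, that is, $a(E,\Bb)\ge -t\epsilon_{\Ff}(E)-(1-t)$ for every $h$-exceptional $E$. This is exactly lc-ness of $\Bb$ along the extracted divisors, and lc-ness of $\Bb_Y$ says nothing about it. Already for pairs, $(h^{-1}_*B,\Exc(h))$ on a log resolution is lc whatever the discrepancies of $(X,B)$ are.

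Neither of your suggested fixes closes this, and carried out carefully they give the \emph{opposite} inequality. Write $\Bb'=\Bb^{a}\cdot\Bb_r^{1-a}$ via Lemma~\ref{lem: construct smaller bb}, and put $\Bb_{r,Y}:=(h^{-1}_*\Bb_r,\Exc(h))$. Then $\Bb'_Y=\Bb_Y^{a}\cdot\Bb_{r,Y}^{1-a}$, since exceptional coefficients $1$ average to $1$. Combined with $K_{\Bb'_Y}=h^*K_{\Bb'}$ this gives
\[
K_{\Bb_Y}-h^*K_{\Bb}=-\frac{1-a}{a}\bigl(K_{\Bb_{r,Y}}-h^*K_{\Bb_r}\bigr)\le 0,
\]
because $\Bb_r\le\Bb'$ is lc by Lemma~\ref{lem: lc preserved under inequality}(1). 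Equivalently, along the segment through $\Bb_r,\Bb',\Bb$ the affine function $g=a(E,\cdot)+t_s\epsilon_{\Ff}(E)+(1-t_s)$ is $\ge 0$ at $\Bb_r$ and $=0$ at the interior point $\Bb'$. Hence it is $\le 0$ at $\Bb$: a single zero of an affine function cannot give you $g\ge 0$ at $\Bb$. Your negativity-lemma route also fails as stated. The parameters differ, so the difference involves $(t-t')(K_{\Ff}-K_X)$, which need not be $\mathbb R$-Cartier, and Lemma~\ref{lem: neg lem for generalized boundary} does not apply.

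\textbf{The missing idea is to modify a structure strictly below $\Bb'$, not $\Bb'$ itself.}
\begin{enumerate}
\item Shrink $\tau$ so that $\tau<\frac12$. Apply Lemma~\ref{lem: construct smaller bb} to get $\Bb\ge\Bb'\ge\Bb''\ge\Bb^{1/2}$ with $\Bb'=\Bb^{\mu}\cdot\Bb''^{1-\mu}$ and $\mu\in(0,1)$. By Lemma~\ref{lem: lc preserved under inequality}(1), $\Bb''$ is lc.
\item Let $h$ be a $\mathbb Q$-factorial qdlt modification of $\Bb''$, or an ACSS modification if $t''=1$.
\item Put $\Bb(s):=\Bb^s\cdot\Bb''^{1-s}$. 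For $s\in[0,\mu]$, $\Bb(s)$ is lc and $\ge\Bb''$. By Lemma~\ref{lem: lc preserved under inequality}(2), every $h$-exceptional divisor is an lc place of all these $\Bb(s)$. The parameter hypotheses hold since either $t=t'=t''\in\{0,1\}$ or $0<t''\le t'<1$.
\item So $g$ vanishes on $[0,\mu]$, hence identically. Therefore $(h^{-1}_*\Bb(s),\Exc(h))=h^*\Bb(s)$ for all $s\in[0,1]$, in particular $\Bb_Y(1)=h^*\Bb$. In the displayed identity this is the statement that the right-hand side vanishes.
\item Apply the klt case on $Y$ to $\Bb_Y(1)\ge\Bb_Y(\mu)\ge\Bb_Y(1)^{1-\tau}$, with $\Bb_Y(1)\in\Ii\cup\{1\}$. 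This gives $h^*\Bb$ lc, hence $\Bb$ lc.
\end{enumerate}

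This argument also treats $t=t'=1$ uniformly. Your remark that $t'=1$ ``forces $\Bb=\Bb'$'' is false: it forces only $t=1$, while $B,\Mm$ may still differ from $B',\Mm'$.
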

\begin{proof}
Pick $\tau$ so that Theorem~\ref{thm: acc lct general version} holds in dimension $d$ when $X$ is $\mathbb Q$-factorial klt. We may assume that $\tau<\frac{1}{2}$. By Lemma~\ref{lem: construct smaller bb}, there exists $\Bb''$ such that
$$\Bb/U\geq\Bb'/U\geq\Bb''/U\geq\Bb^{\frac{1}{2}}/U$$
and a real number $\mu\in (0,1)$ such that $\Bb'=\Bb^\mu\cdot\Bb''^{1-\mu}$. Let $t''$ be the parameter of $\Bb''$. Then either $t=t'=t''=1$, or $t=t'=t''=0$, or $0<t''\leq t'<1$. 

Let $\Bb(s):=\Bb^s\cdot\Bb''^{1-s}$ for any $s\in [0,1]$.
Let $h\colon Y\rightarrow X$ be a $\mathbb Q$-factorial qdlt modification (resp. $\mathbb Q$-factorial ACSS modification) of $\Bb''$ if $t''<1$ (resp. $t''=1$) and let $\Bb_Y(s):=(h^{-1}_*\Bb(s),\Exc(h))$ for any $s\in [0,1]$. Since $\Bb(\mu)$ is lc, by Definition-Lemma~\ref{deflem: nfs geometric average}, $\Bb(s)$ is lc for any $s\in [0,\mu]$. By Lemma~\ref{lem: lc preserved under inequality}, any lc place of $\Bb''=\Bb(0)$ is an lc place of $\Bb(s)$ for any $s\in [0,\mu]$, so $\Bb_Y(s)=h^*\Bb(s)$ for any $s\in [0,\mu]$. Since 
$$s\mapsto a(E,\Bb(s))$$
is an affine function for any prime divisor $E$ over $X$, we have $\Bb_Y(s)=h^*\Bb(s)$ for any $s\in [0,1]$. We have $\Bb_Y(1)/U\in\Ii\cup\{1\}$,
$\Bb_Y(1)/U\geq\Bb_Y(\mu)/U\geq\Bb_Y(1)^{1-\tau}/U$, and $\Bb_Y(\mu)$ is lc. By our assumption, $\Bb_Y(1)$ is lc, hence $\Bb(1)$ is lc. The lemma follows.
\end{proof}

\begin{lem}\label{lem: reduce to one parameter acc}
Fix $t_0, t'_0 \in [0, 1]$ such that $t'_0 = (1-\tau)t_0$.
    Assume that Theorem~\ref{thm: acc lct general version} holds in dimension $d$ under the following additional hypotheses:
    \begin{itemize}
        \item 
    $X$ is $\mathbb Q$-factorial klt, 
    \item $B'=(1-\tau)B$; \item $\Mm'=(1-\tau)\Mm$; and 
    \item the parameter of $\mathfrak B$ is $t_0$ and the parameter of $\mathfrak B'$ is $t'_0$. 
    \end{itemize}Then Theorem~\ref{thm: acc lct general version} holds in dimension $d$.
\end{lem}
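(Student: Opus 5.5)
We will deduce the general statement from the special case by two reductions: first to a $\mathbb Q$-factorial klt ambient variety, then to the case $\Bb'=\Bb^{1-\tau}$. By Lemma~\ref{lem: acc reduce to klt}, it suffices to prove Theorem~\ref{thm: acc lct general version} in dimension $d$ when $X$ is $\mathbb Q$-factorial klt. We keep the $\tau$ provided by the hypothesis of the present lemma. Let $\Bb=(X,\Ff,B,\Mm)(t)/U$ and $\Bb'=(X,\Ff,B',\Mm')(t')/U$ satisfy (1)--(3), with $X$ $\mathbb Q$-factorial klt. Note that
\[
\Bb^{1-\tau}=(X,\Ff,(1-\tau)B,(1-\tau)\Mm)((1-\tau)t)
\]
is a normalized foliated structure, since $X$ is $\mathbb Q$-factorial and so every relevant divisor is $\mathbb R$-Cartier. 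Moreover $\Bb\ge\Bb^{1-\tau}$, and $\Bb^{1-\tau}$ has the shape required by the special case, with $t_0=t$ and $t_0'=(1-\tau)t$. Hence it suffices to show that $\Bb^{1-\tau}$ is lc; the hypothesis then gives that $\Bb$ is lc.

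To show that $\Bb^{1-\tau}$ is lc, we apply Lemma~\ref{lem: lc preserved under inequality} to the pair $\Bb'\ge\Bb^{1-\tau}$, where $\Bb'$ is lc by (3). This is allowed whenever $t'<1$. It is also allowed when $t'=(1-\tau)t$, since then both parameters are equal; in particular this covers $t=0$, where $t'=0$.

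The remaining case, and the only real obstacle, is $t'=1$. Then $t=1$, while $(1-\tau)t=1-\tau<1$, so Lemma~\ref{lem: lc preserved under inequality} does not apply directly. We handle it by lowering the parameter of $\Bb'$ while keeping its boundary and nef part. Set $\Bb'(s):=(X,\Ff,B',\Mm')(s)$ for $s\in[0,1]$; each is a normalized foliated structure because $X$ is $\mathbb Q$-factorial. We have $\Bb'(1)=\Bb'$, which is lc. By \cite[Proposition~3.3]{Cas+24} (as used in the proof of Theorem~\ref{thm: dcc precise adjunction}), $\Bb'(0)$ is lc as well. For every prime divisor $E$ over $X$, the function $s\mapsto a(E,\Bb'(s))+s\epsilon_{\Ff}(E)+(1-s)$ is affine in $s$: the divisor $K_{\log\Bb'(s)}$ depends affinely on $s$, because the invariant part of the boundary has coefficient $(1-s)$. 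Since this function is nonnegative at $s=0$ and at $s=1$, it is nonnegative on $[0,1]$. This is exactly the convexity recorded in Definition-Lemma~\ref{deflem: nfs geometric average}, and it shows that $\Bb'(1-\tau)$ is lc.

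Finally, $B'\ge(1-\tau)B$ and $\Mm'-(1-\tau)\Mm$ is nef$/U$, both by (2). Hence $\Bb'(1-\tau)\ge\Bb^{1-\tau}$, and the two structures have the same parameter $1-\tau$. The equal-parameter case of Lemma~\ref{lem: lc preserved under inequality} now shows that $\Bb^{1-\tau}$ is lc. Applying the assumed special case to $\Bb\ge\Bb^{1-\tau}$, we conclude that $\Bb$ is lc, which proves the lemma.
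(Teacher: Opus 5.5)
Your handling of the case $t'<1$ (and of the equal-parameter case $t'=(1-\tau)t$) is correct and matches the paper. The case $t'=t=1$, however, has a genuine gap.

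\textbf{The failing step.} The claim ``$\Bb'(0)$ is lc by \cite[Proposition~3.3]{Cas+24}'' is false at parameter $1$. When $t'=1$, log canonicity of $\Bb'$ only concerns $\log\Bb'=(X,\Ff,B'^{\ninv},\Mm',1)$. The invariant part $B'^{\inv}$ is invisible there, and nothing in (1)--(3) bounds its coefficients, since $\Ii\subset[0,+\infty)$. Lowering the parameter to $s<1$ reintroduces $B'^{\inv}$ with coefficient $1-s$. This is why the paper only invokes \cite[Proposition~3.3]{Cas+24} and Lemma~\ref{lem: lc preserved under inequality} when $t<1$, or when both parameters equal $1$.

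\textbf{A counterexample.} Take $X=\mathbb P^1\times\mathbb P^1$, let $\Ff$ be induced by the first projection, let $C$ be a fibre, and set $\Bb=\Bb'=(X,\Ff,5C,\bm{0})(1)$. Then (1)--(3) hold and $\Bb'$ is lc. But $\Bb'(0)=(X,5C)$ is not lc. Worse, $\Bb^{1-\tau}$ itself is not lc. Its associated adjoint foliated structure has coefficient $5\tau(1-\tau)$ along the invariant divisor $C$, and lc requires this to be at most $\tau$. This fails for every $\tau<4/5$; for a given larger $\tau$, replace $5$ by any $c>1/(1-\tau)$. So when $t=1$ the hypothesis of the special case with $t_0=1$ cannot in general be verified, and no argument that routes through $\Bb^{1-\tau}$ can handle this case.

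\textbf{How to close it.} You need a separate argument for $t=t'=1$ that avoids the special case, which is what the paper does.
\begin{itemize}
\item By the equal-parameter case of Lemma~\ref{lem: lc preserved under inequality}, $(X,\Ff,(1-\tau)B,(1-\tau)\Mm)(1)$ is lc. In other words, $\lct(X,\Ff,0;\bm{0};B^{\ninv},\Mm)\geq 1-\tau$.
\item Since $B^{\ninv}$ and $\Mm$ have DCC coefficients, apply the ACC for lc thresholds of algebraically integrable generalized foliated quadruples \cite[Theorem~2.4.4]{CHLX23}. After shrinking $\tau$ (depending only on $d$ and $\Ii$), this threshold is $\geq 1$, so $\Bb$ is lc.
\item Shrinking $\tau$ does not break the special-case hypothesis. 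If $\Bb^{1-\tau_1}$ is lc for some $\tau_1<\tau$, then $\Bb^{1-\tau}$ is lc by Lemma~\ref{lem: lc preserved under inequality}, since the parameter $(1-\tau_1)t$ is less than $1$.
\end{itemize}
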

\begin{proof}
By Lemma~\ref{lem: acc reduce to klt}, we may assume that $X$ is $\mathbb Q$-factorial klt. Since $\Bb'/U\geq\Bb^{1-\tau}/U$ and $\Bb'/U$ is lc, by Lemma~\ref{lem: lc preserved under inequality}, $\Bb^{1-\tau}$ is lc when $t'<1$, and $(X,\Ff,(1-\tau)B,(1-\tau)\Mm)(1)$ is lc when $t'=t=1$.

If $t'=t=1$, then we consider
$$\mu:=\lct(X,\Ff,0;\bm{0};B^{\ninv},\Mm).$$
We have $\mu\geq 1-\tau$. Since $B^{\ninv}\in\Ii$ and $\Mm\in\bNef(X/X,\Ii)$, by \cite[Theorem~2.4.4]{CHLX23}, we may choose $\tau$ such that if $\mu\geq 1-\tau$, then $\mu\geq 1$. In particular, $(X,\Ff,B^{\ninv},\Mm)$ is lc, hence $\Bb$ is lc. Thus we may assume that $t'<1$, and the lemma follows from our assumption. 
\end{proof}

\begin{lem}\label{lem: klt bs}
Let $\Bb/U:=(X,\Ff,B,\Mm)(t)/U$ be an algebraically integrable normalized foliated structure such that $X$ is $\mathbb Q$-factorial klt and let 
$$\mu:=\sup\{s\in [0,1]\mid \Bb^s\text{ is lc}\}.$$
Assume that $\mu t<1$. Then $\mu>0$ and $\Bb^{s}$ is klt for any $s\in [0,\mu)$.
\end{lem}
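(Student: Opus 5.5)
First I prove $\mu>0$, by showing that $\Bb^s$ is lc for $s$ small. Write $\Bb^s=(X,\Ff,sB,s\Mm)(st)$. Its associated adjoint foliated structure is
$$\log\Bb^s=(X,\Ff,sB^{\ninv}+(1-st)sB^{\inv},s\Mm,st).$$
This is a convex combination $st\cdot\Aa_1+(1-st)\cdot\Aa_0$ up to a small boundary, where:
\begin{itemize}
\item $\Aa_0=(X,0)$ is klt, since $X$ is $\mathbb Q$-factorial klt;
\item $\Aa_1$ is a foliated structure with parameter $1$.
\end{itemize}
Concretely, I compare $\Bb^s$ with $\Bb(0)^{s}\cdot\Bb(1)^{\cdot}$-type averages, using the multiplicative structure of Definition-Lemma~\ref{deflem: nfs geometric average}.

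A cleaner route is to take a $\mathbb Q$-factorial ACSS modification $h\colon Y\to X$ of $(X,\Ff,\Supp B,\Mm)$ with parameter $1$ (cf.~\cite[Theorem~8.2.2]{CHLX23}), chosen so that $\Mm$ descends to $Y$. Then $(Y,\Ff_Y,h^{-1}_*\Supp B+\Exc(h),\Mm)(1)$ is lc. On $Y$, write $h^*K_{\Bb^s}$ and compute discrepancies. For each prime divisor $E$ over $X$, the map $s\mapsto a(E,\Bb^s)$ is affine on any segment where the parameter varies linearly. At $s=0$ we have $a(E,\Bb^0)=a(E,X,0)>-1$ uniformly, because $X$ is klt: there is a fixed $\delta>0$ with $a(E,X)\ge -1+\delta$ for all $E$. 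The foliated part contributes a bounded term: since $(X,\Ff,B,\Mm)(1)$ is lc on the ACSS model, the coefficients $\mult_E$ of $h^*(K_{\Ff}-K_X)$ and of $h^*(B+\Mm_X)$ are controlled over the finitely many relevant divisors on a log resolution. The lc condition for $\Bb^s$ is linear in $s$, and the only non-finite family of divisors $E$ is handled by the lc model at parameter $1$. Hence the condition holds strictly for small $s>0$, which gives $\mu>0$.

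Next I prove that $\Bb^s$ is klt for $s\in[0,\mu)$. Fix $s<\mu$ and choose $s<s'<\mu$ such that $\Bb^{s'}$ is lc; such $s'$ exists by the definition of $\mu$ as a supremum, together with Lemma~\ref{lem: lc preserved under inequality} applied to $\Bb^{s'}\ge\Bb^{s}$. This application is valid because $s't\le\mu t<1$. Then write
$$\Bb^s=(\Bb^{s'})^{a}\cdot(\Bb^0)^{1-a},\qquad a=s/s'\in[0,1).$$
To justify this, compute $\exp\bigl(a\log\Bb^{s'}+(1-a)\log\Bb^0\bigr)$; it agrees with $\Bb^s$ up to the inequality in Lemma~\ref{lem: geometric average of nfs}(4a). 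That inequality gives $\Bb^s\ge$ the geometric average, with equality exactly when the invariant parts agree or $t=0$. So I instead use the geometric average $\Bb_a:=(\Bb^{s'})^a\cdot(\Bb^0)^{1-a}$, which satisfies $K_{\Bb_a}=aK_{\Bb^{s'}}+(1-a)K_X$.

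For every $E$ over $X$,
$$a(E,\Bb_a)+\text{(normalizing term)}=a\bigl(a(E,\Bb^{s'})+\cdots\bigr)+(1-a)\bigl(a(E,X)+1\bigr)>0,$$
because the first summand is $\ge0$ and the second is $>0$ by the klt property of $X$. So $\Bb_a$ is klt.

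The main obstacle is comparing $\Bb^s$ with $\Bb_a$, since $\Bb^s$ can be strictly larger. To handle it, I choose $s'$ and $a$ differently: take $\Bb^s$ as a geometric average of $\Bb^{s'}$ with a structure $\Bb''\le\Bb^{s}$ of smaller parameter, built via Lemma~\ref{lem: construct smaller bb}. That lemma produces $\Bb_r\ge\Bb^r$ for small $r$ with $\Bb^s=(\Bb^{s'})^{a}\cdot\Bb_r^{1-a}$. I take $r=0$, so that $\Bb_0\ge\Bb^0=(X,0)$ has parameter $0$ and is a klt pair, provided its boundary stays small. Then klt-ness of $\Bb^s$ follows from the convexity of discrepancies along the geometric average, as above. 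The remaining point to check is that $\Bb_0$ is klt; this follows by shrinking $s'$ toward $s$, so that the coefficients of $\Bb_0$ tend to $0$.
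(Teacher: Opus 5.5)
Your route to the klt statement differs from the paper's and can be made to work, but the step you leave for last rests on a false claim. Carry out Lemma~\ref{lem: construct smaller bb} with $r=0$, or equivalently solve $\log\Bb^s=\sigma\log\Bb^{s'}+(1-\sigma)\log\Bb_0$ with $\sigma:=s/s'$. This gives
\[
\Bb^s=(\Bb^{s'})^{\sigma}\cdot\Bb_0^{1-\sigma},\qquad \Bb_0=(X,\Ff,ss't\,B^{\inv},\bm{0})(0),
\]
so $\Bb_0$ is the pair $(X,ss'tB^{\inv})$. As $s'\to s$ its coefficients tend to $s^2t\,B^{\inv}$, not to $0$. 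Shrinking $s'$ therefore does not help, and klt-ness of $\Bb_0$ cannot come from small coefficients.

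The missing input is the log canonicity of $\Bb^{s'}$ itself. Since $s't<1$ and $\Bb^{s'}\geq(X,\Ff,s'B^{\inv},\bm{0})(0)$, Lemma~\ref{lem: lc preserved under inequality} shows that the pair $(X,s'B^{\inv})$ is lc. Now $ss'tB^{\inv}=st\cdot s'B^{\inv}$ with $st\leq\mu t<1$. Affinity of discrepancies then gives $a(E,X,ss'tB^{\inv})+1\geq(1-st)\bigl(a(E,X)+1\bigr)$ for every $E$ over $X$, so $\Bb_0$ is klt because $X$ is. Your convexity identity
\begin{align*}
a(E,\Bb^s)+st\epsilon_{\Ff}(E)+(1-st)&=\sigma\bigl(a(E,\Bb^{s'})+s't\epsilon_{\Ff}(E)+1-s't\bigr)\\
&\quad+(1-\sigma)\bigl(a(E,X,ss'tB^{\inv})+1\bigr)
\end{align*}
then yields $\tmld(\Bb^s)\geq(1-s/s')(1-st)\inf_E\bigl(a(E,X)+1\bigr)>0$.

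With this repair the argument is complete. The paper argues differently: it takes an lc place $E$ of $\Bb^s$ and uses Lemma~\ref{lem: lc preserved under inequality}(2) to make $E$ an lc place of every $\Bb^r$, $r\in[s,\mu]$. It then notes that $r\mapsto a(E,\Bb^r)+rt\epsilon_{\Ff}(E)+(1-rt)$ is a quadratic polynomial vanishing on an interval, hence at $r=0$, which contradicts that $X$ is klt. Your version avoids part (2) of that lemma and gives an explicit lower bound on $\tmld(\Bb^s)$; the paper's is shorter.

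For $\mu>0$, the paper simply invokes openness of klt, and your sketch reaches the same point, but two things in it are off. The ACSS model plays no role. The condition is also quadratic, not linear, in $s$, because the invariant coefficient is $s(1-st)$. The clean reason is that on a foliated log resolution of $(X,\Ff,\Supp B,\Mm)$, log canonicity of $\Bb^s$ is governed by finitely many coefficients. These depend continuously on $s$ and are $<1$ at $s=0$.
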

\begin{proof}
Since $X$ is klt and klt is an open condition, it follows that $\mu>0$. Since $\mu t<1$, by Lemma~\ref{lem: lc preserved under inequality}, $\Bb^s$ is lc for any $s\in [0,\mu)$. If $\Bb^s$ is not klt 
for some $s\in [0,\mu)$, then $s>0$ and there exists an lc place $E$ of $\Bb^s$. By Lemma~\ref{lem: lc preserved under inequality}, $E$ is an lc place of $\Bb^r$ for any $r\in [s,\mu]$. Since $r\mapsto a(E,\Bb^r)$ is a quadratic function,
$E$ is an lc place of $\Bb^r$ for any $r\in [0,\mu]$. In particular, $E$ is an lc place of $\Bb^0$ and hence an lc place of $X$. This is not possible.
\end{proof}

\begin{lem}\label{lem: extract unique lc place}
Let $\Bb/U:=(X,\Ff,B,\Mm)(t)/U$ be an algebraically integrable normalized foliated structure such that $X$ is $\mathbb Q$-factorial klt and let 
$$\mu:=\sup\{s\in [0,1]\mid \Bb^s\text{ is lc}\}.$$
Assume that $\mu t<1$ and that any lc place of $\Bb^{\mu}$ is not on $X$. Then there exists a projective birational morphism $g: Y\rightarrow X$ satisfying the following.
\begin{enumerate}
    \item $g$ is a divisorial contraction of a prime divisor $E$ and $Y$ is $\mathbb Q$-factorial. In particular, $\rho(Y/X)=1$.
    \item $E$ is an lc place of $\Bb^{\mu}$.
    \item $\Bb_Y(s):=(g^{-1}_*\Bb^s,E)$ is plt for any $s\in [0,\mu)$. In particular, $E$ is normal.
\end{enumerate}
\end{lem}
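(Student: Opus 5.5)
We first extract all lc places of $\Bb^{\mu}$ and then use an MMP to keep exactly one of them.

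\textbf{Setup.} Since $X$ is $\mathbb Q$-factorial klt and $\mu t<1$, $\Bb^\mu$ is lc with parameter $\mu t<1$. Take a $\mathbb Q$-factorial qdlt modification $h\colon W\rightarrow X$ of $\Bb^\mu$. Every $h$-exceptional divisor is an lc place of $\Bb^\mu$. By hypothesis no lc place of $\Bb^\mu$ lies on $X$, so every lc place on $W$ is $h$-exceptional. Since $\mu$ is the supremum, $\Bb^\mu$ is not klt. Hence $\Exc(h)\neq 0$, and we fix one component $E$ of it.

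\textbf{Step 1: contract all exceptional divisors except $E$.} Write $\Bb_W(s):=(h^{-1}_*\Bb^s,\Exc(h))$. Over $X$ we have $K_{\Bb_W(\mu)}\equiv_X 0$, because $\Exc(h)$ consists of lc places. By Lemma~\ref{lem: klt bs}, for $s<\mu$ the structure $\Bb^s$ is klt, and $a(F,\Bb^s)$ is strictly larger than the lc value for every exceptional $F$. So, for $s<\mu$ close to $\mu$, the divisor $K_{\Bb_W(s)}-h^*K_{\Bb^s}$ is effective, $h$-exceptional, and supported on all of $\Exc(h)$.

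A cleaner route is to work with the klt pair $(W,\Delta_W)$, where $\Delta_W:=h^{-1}_*\Delta_X+(1-\delta)\Exc(h)+\delta' E$ and $\Delta_X$ is a small boundary making $X$ klt. We choose the coefficients so that $K_W+\Delta_W\equiv_X \sum_{F\neq E} c_F F$ with every $c_F>0$, and $E$ has the smallest relative discrepancy gap. By the negativity lemma, running a $(K_W+\Delta_W)$-MMP$/X$ with scaling (which exists since $W\to X$ is birational and $X$ is klt, via \cite{BCHM10}) contracts exactly the divisors $F\neq E$. It terminates with $g\colon Y\rightarrow X$ that extracts only $E$. Thus $Y$ is $\mathbb Q$-factorial and $\rho(Y/X)=1$, since $X$ is $\mathbb Q$-factorial and $Y\to X$ has exactly one exceptional divisor. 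This gives (1). Property (2) holds because $E$ was chosen as an lc place of $\Bb^\mu$.

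\textbf{Step 2: plt property.} Property (3) is the main obstacle. Since $\rho(Y/X)=1$ and $E$ is the unique exceptional divisor, we have $g^*K_{\Bb^s}=K_{g^{-1}_*\Bb^s}+e(s)E$ in the normalized sense, and $e(\mu)$ equals the lc coefficient. Hence $(g^{-1}_*\Bb^\mu,E)=g^*\Bb^\mu$ is lc, and so is $\Bb_Y(s)$ for $s\le\mu$ by Lemma~\ref{lem: lc preserved under inequality}.

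To show plt for $s<\mu$, suppose $F\neq E$ is an lc place of $\Bb_Y(s)$. By Lemma~\ref{lem: lc preserved under inequality}(2) with $\mu t>0$, $F$ is then also an lc place of $\Bb_Y(\mu)=g^*\Bb^\mu$. Since $a(F,\cdot)$ is quadratic in $s$ (as in Lemma~\ref{lem: klt bs}), it agrees with the lc value on an interval, which forces $F$ to be an lc place for all $r\in[s,\mu]$. Then, as in the argument of Lemma~\ref{lem: klt bs}, either $F$ is an lc place of $\Bb^s$ on $X$, contradicting kltness, or $\Center_Y F\subset E$ is an lc center of $(Y,E)$ coming from $s=0$. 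The latter must be excluded. For this we choose $E$ (equivalently, the MMP in Step 1) so that the lc centres of $\Bb_W(\mu)$ other than $E$ itself are not contained in $E$ after contraction. The standard trick is a tie-breaking perturbation: add $\epsilon g^*A$, or shrink coefficients so that $E$ becomes the unique lc place, following the classical proof that plt blow-ups exist (\cite[Section~1]{Bir12}-type arguments).

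Once $\Bb_Y(s)$ is plt, its lc place $E$ is normal by the standard consequence of plt for $\mathbb Q$-factorial qdlt structures (cf.~\cite{Cas+24}). Applying $s\to 0$, the pair $(Y,E)$ is plt, which also gives normality of $E$ via \cite[Proposition~5.51]{KM98}.
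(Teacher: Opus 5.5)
Your reduction at the start of Step~2 is correct and matches the end of the paper's proof. If $F\neq E$ is an lc place of $\Bb_Y(s)$ for some $s<\mu$, then Lemma~\ref{lem: lc preserved under inequality} and the quadratic dependence of $a(F,\Bb_Y(r))$ on $r$ make $F$ an lc place of $\Bb_Y(0)=(Y,E)$. So everything hinges on $(Y,E)$ being plt. That is exactly what you do not prove, and for an arbitrary component $E$ of $\Exc(h)$, which Step~1 allows, it is false.

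Here is a counterexample. Take $t=0$, $\Ff=T_X$, and $X$ a $D_4$ surface singularity whose minimal resolution has central curve $E_1$ and arms $F_1,F_2,E_2$. Let $B=B_1+B_2$, with strict transforms meeting $E_2$ transversally at two general points.
\begin{itemize}
\item Then $\mu=\frac12$. The curves $E_1,E_2$ appear with coefficient $2$ in the pullback of $B$, so they are lc places of $\Bb^{\mu}=(X,\frac12B)$, and no lc place lies on $X$.
\item Contracting $F_1,F_2$ on the minimal resolution gives a $\mathbb Q$-factorial qdlt model containing $E_2$.
\item If $E=E_2$, then $Y$ is obtained from the minimal resolution by contracting the $A_3$-chain $F_1,E_1,F_2$, and $E_2$ meets its middle curve $E_1$.
\item One computes that $E_1$ has log discrepancy $0$ with respect to $(Y,E_2)$. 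Its centre on $Y$ is disjoint from $g^{-1}_*B$, so $\Bb_Y(s)$ is not plt for any $s\in[0,\mu)$.
\end{itemize}
Your proposed remedies do not help. Here all lc places have the same centre on $X$, so you cannot arrange that the other lc centres are not contained in $E$. A tie-breaking perturbation makes the surviving divisor an lc place of the perturbed structure rather than of $\Bb^{\mu}$, which destroys (2).

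The missing idea is to let an MMP choose $E$ instead of fixing it in advance. The paper proceeds as follows.
\begin{enumerate}
\item It passes to a model $V$ on which every exceptional divisor $E_{i,V}$ is an lc place of $\Bb^{\mu}$ and $\Bb_V(s)$ is $\mathbb Q$-factorial qdlt for $0\le\mu-s\ll1$.
\item It runs a $K_{\Bb_V(s_0)}$-MMP$/X$, where $K_{\Bb_V(s_0)}\sim_{\mathbb R,X}\sum b(i,s_0)E_{i,V}$ with all $b(i,s_0)>0$. This MMP contracts every exceptional divisor and ends at $X$.
\item It takes $g$ to be the \emph{last} step. Since qdlt is preserved along the MMP, $\Bb_Y(s_0)$ is qdlt with $E$ as its only lc place, hence plt, so $(Y,E)$ is plt.
\end{enumerate}
In the example, starting from the model with exceptional curves $E_1,E_2$, this MMP contracts $E_2$ first, so $g$ extracts $E_1$, for which $(Y,E_1)$ is plt.

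There is also a smaller, fixable issue in Step~1. If $K_W+\Delta_W\equiv_X\sum_{F\neq E}c_FF$, the negativity lemma forces $\mult_E\Delta_W$ to equal the coefficient of $E$ in the crepant pullback of $K_X+\Delta_X$. That coefficient is negative whenever $E$ has positive discrepancy over $(X,\Delta_X)$, so $(W,\Delta_W)$ need not be a pair. This is easily repaired: $W$ is of Fano type over $X$ by Lemma~\ref{lem: fano type after extract divisor} applied to $\Bb^{s}$ with $s$ close to $\mu$, so you can run a $(\sum_{F\neq E}F)$-MMP$/X$. The plt step, however, cannot be repaired without a selection mechanism like the one above.
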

\begin{proof}
Let $E_0$ be an lc place of $\Bb^{\mu}$. Let $f: W\rightarrow X$ be a foliated log resolution of $\Bb$ such that $E_0$ is on $W$. Let $E_0,\dots,E_n$ be all the $f$-exceptional prime divisors and let $\Bb_W(s):=(f^{-1}_*\Bb^s,\Exc(f))$. Then we have
$$K_{\Bb_W(s)}=f^*K_{\Bb^s}+\sum_{i=0}^nb(i,s)E_i\sim_{\mathbb R,X}\sum_{i=0}^nb(i,s)E_i$$
for some real numbers $b(i,s)$. By Lemma~\ref{lem: klt bs}, $\Bb^s$ is klt for any $s\in [0,\mu)$, hence $b(i,s)>0$ for any $i$ and any $s\in [0,\mu)$. Since $\Bb^\mu$ is lc and $E_0$ is an lc place of $\Bb^{\mu}$,
$b(i,\mu)\geq 0$ and $b(0,\mu)=0$. 

We run a $K_{\Bb_W(\mu)}$-MMP$/X$ with scaling of an ample divisor. By \cite[Theorem~3.6]{Cas+25b}, this MMP terminates with a good minimal model $\Bb_V(\mu)/X$ of $\Bb_W(\mu)/X$ such that $K_{\Bb_V(\mu)}\sim_{\mathbb R,X}0$ and the divisors contracted by the induced birational map $\phi\colon W\dashrightarrow V$ are exactly the $E_i$ such that $b(i,\mu)>0$. In particular, $E_0$ is not contracted by $\phi$. Possibly reordering indices, we may assume that $E_i$ is not contracted by $\phi$ if and only if $i\leq m$ for some $m\leq n$. We let $E_{i,V}$ be the images of $E_i$ on $V$ for any $0\leq i\leq n$ and let $\pi: V\rightarrow X$ be the induced morphism. Let $\Bb_V(s):=\phi_*\Bb_W(s)$ for any $s\in [0,1]$. Then $\Bb_V(\mu)=\pi^*\Bb^{\mu}$ is $\mathbb Q$-factorial qdlt. 

Since $\phi$ is also a sequence of steps of a $K_{\Bb_W(s)}$-MMP$/U$ for any $0\leq \mu-s\ll 1$, $\Bb_V(s)$ is $\mathbb Q$-factorial qdlt for any $0\leq \mu-s\ll 1$. Now pick $s_0$ such that $0<\mu-s_0\ll 1$ and run a $K_{\Bb_V(s_0)}$-MMP$/X$. Note that
$$K_{\Bb_V(s_0)}\sim_{\mathbb R,X}\sum_{i=0}^mb(i,s_0)E_{i,V}.$$
Since $b(i,s_0)>0$ for any $i$, by \cite[Theorem~3.6]{Cas+25b} and since $X$ is $\mathbb Q$-factorial klt, this MMP terminates with $\Bb^{s_0}/X$ as a good minimal model of $\Bb_V(s_0)/X$, and the last step of this MMP is a divisorial contraction $g: Y\rightarrow X$. We let $\Bb_Y(s)$ be the pushforward of $\Bb_V(s)$ on $Y$ for any $s\in [0,1]$ and let $E$ be the unique prime $g$-exceptional divisor. Then we have $\Bb_Y(s)=(g^{-1}_*\Bb^s,E)$ for any $s\in [0,1]$.

We show that $g$ satisfies our requirements. 

(1) follows from our construction. 

Since $\pi$ only extracts lc places of $\Bb^{\mu}$, $g$ only extracts lc places of $\Bb^{\mu}$. This implies (2).

To prove (3), note that by our construction, $\Bb_Y(s_0)$ is $\mathbb Q$-factorial qdlt and $E$ is the only lc place of $\Bb_Y(s_0)$, so $\Bb_Y(s_0)$ is $\mathbb Q$-factorial plt. By the definition of qdlt \cite[Definition~3.1]{Cas+24}, $\Bb_Y(0)$ is plt. Since $E$ is an lc place of $\Bb_Y(0)$, $E$ is normal. For any $s\in [0,\mu)$, since $\Bb_Y(\mu)\geq\Bb_Y(s)$, $\Bb_Y(s)$ is lc. If $\Bb_Y(s)$ is not plt for some $s\in (0,\mu)$, then there exists an lc place $F\neq E$ of $\Bb_Y(s)$. By Lemma~\ref{lem: lc preserved under inequality}, $F$ is an lc place of $\Bb_Y(r)$ for any $r\in [s,\mu]$. Since $r\mapsto a(F,\Bb_Y(r))$ is a quadratic function, 
$F$ is an lc place of $\Bb_Y(r)$ for any $r\in [0,\mu]$. Thus $F$ is an lc place of $\Bb_Y(s_0)$, which is not possible. This implies (3).
\end{proof}

\begin{thm}\label{thm: global to local}
   Let $d\geq 2$ be an integer. Assume Theorem~\ref{thm: global acc general} holds in dimension $d-1$. Then Theorem~\ref{thm: acc lct general version} holds in dimension $d$.
\end{thm}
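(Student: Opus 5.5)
We argue by contradiction, in the style of \cite[Section~5]{HMX14}. Suppose no $\tau$ works. Then there is a sequence $\Bb_i/U_i=(X_i,\Ff_i,B_i,\Mm_i)(t_i)/U_i\in\Ii$ of dimension $d$, together with $\tau_i\to 0$ and $\Bb_i'$ satisfying (2) and (3), such that no $\Bb_i$ is lc. Lemma~\ref{lem: reduce to one parameter acc} lets us assume that $X_i$ is $\mathbb Q$-factorial klt and that $\Bb_i'=\Bb_i^{1-\tau_i}$. If $t_i=1$ for infinitely many $i$, the argument in that lemma, based on \cite[Theorem~2.4.4]{CHLX23}, already yields a contradiction, so we take $t_i<1$. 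We set $\mu_i:=\sup\{s\mid \Bb_i^s\text{ is lc}\}$. Then $1-\tau_i\le\mu_i<1$, so $\mu_i\to1$ and $\mu_it_i<1$.

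The next step is to produce an lc place that we can extract. If some lc place of $\Bb_i^{\mu_i}$ is a divisor $S_i$ on $X_i$, we keep $Y_i=X_i$ and $E_i=S_i$. In that case $S_i$ is a component of $B_i$ with $\mu_i\mult_{S_i}B_i=1$, so $\mult_{S_i}B_i=1/\mu_i\in\Ii$. This makes $1/\mu_i$ a DCC sequence decreasing to $1$, which is impossible, and disposes of this case. In the remaining case Lemma~\ref{lem: extract unique lc place} gives a divisorial contraction $g_i\colon Y_i\to X_i$ with normal exceptional divisor $E_i$ such that $E_i$ is an lc place of $\Bb_i^{\mu_i}$ and $(g^{-1}_{i*}\Bb_i^{s},E_i)$ is plt for $s<\mu_i$. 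We then run adjunction to $E_i$ for $\Bb_{Y_i}:=(g_{i*}^{-1}\Bb_i^{\mu_i},E_i)$, a crepant pullback of $\Bb_i^{\mu_i}$. This gives $K_{\Bb_{E_i}}=K_{\Bb_{Y_i}}|_{E_i}\equiv 0$ over $X_i$. Restricting $E_i\to g_i(E_i)$ to the Stein factorization and a general fiber $F_i$ produces an lc algebraically integrable structure of dimension $\le d-1$ with $K\equiv0$ on $F_i$. Its coefficients come from Theorem~\ref{thm: dcc adjunct to dcc}, applied to the structure with coefficients in the DCC set $\Ii\cdot\mu_i$, which sits inside a DCC set because $\mu_i\in[1/2,1]$. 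Hence they lie in a DCC set $\Ii'$.

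To conclude, we apply Theorem~\ref{thm: global acc general} in dimension $\le d-1$ to the adjoint structures on $E_i/g_i(E_i)$. This shows that their horizontal coefficients, the relevant nef-part coefficients, and the parameter (or its constancy) lie in a finite set $\Ii_0$. Theorem~\ref{thm: dcc precise adjunction}, or Theorem~\ref{thm: finite inversion of adjunction to finite 2} applied with $D=0$, $\Nn=\bm0$ and the $\mathbb Q$-factorial plt pair $(Y_i,E_i)$, then shows that the quantities entering $\mult_P B_{E_i}$ through Proposition~\ref{prop: explicit formula} take only finitely many values. Here the quantities are $\mu_ib$, $\mu_i\mu_j$, the parameter $\mu_it_i$, and the integers $p,q,r,a$. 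Running the same formula at the parameter $1$ instead of $\mu_i$ gives the structure induced by $(g^{-1}_{i*}\Bb_i,E_i)$. Its coefficients are affine in $\mu_i$, and for a horizontal $P$ they are bounded by the lc bound $t\epsilon+(1-t)$ when $\mu_i$ is replaced by a value in the finite set. Passing to a subsequence, we would like to conclude that $\mu_i$ is eventually constant, which contradicts $\mu_i\uparrow1$ with $\mu_i<1$. The alternative is that $K$ on $F_i$ stays numerically trivial for all parameters, which forces $E_i$ to be an lc place of $\Bb_i$ itself, a contradiction as well.

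The main obstacle is this final step: turning finiteness of the adjoint data into finiteness of $\mu_i$. The coefficients of $\Bb_i^{s}$ depend on $s$ quadratically through the invariant part, since $(1-st)sB^{\inv}$ appears. So we must check that the expression $\frac{1}{1-\mu_it_i}\mult_PB_{E_i}$ in case $\epsilon=0$, or $\mult_PB_{E_i}$ in case $\epsilon=1$, cannot stay in a finite set while $\mu_i\to1$ unless everything is independent of $\mu_i$. We also need to ensure that at least one horizontal divisor or nef part actually involves $\mu_i$. If none does, $K_{\Bb_{Y_i}(s)}|_{F_i}$ is independent of $s$, and conclusion (3) of Theorem~\ref{thm: global acc general} should give the contradiction. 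This case analysis, following the items of Theorem~\ref{thm: dcc precise adjunction}, is where we expect the real work to lie.
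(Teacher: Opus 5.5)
Your set-up follows the paper up to the adjunction step: the reduction via Lemma~\ref{lem: reduce to one parameter acc}, the thresholds $\mu_i$, ruling out lc places on $X_i$ by DCC, extraction by Lemma~\ref{lem: extract unique lc place}, and then Theorems~\ref{thm: dcc adjunct to dcc} and~\ref{thm: global acc general} on $E_i\to T_i$. The decisive last step is left open, and the way you sketch closing it does not work as stated.

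\textbf{Where the final step fails.}
\begin{itemize}
\item Theorem~\ref{thm: global acc general}~(3) on $E_i$ concerns the family whose boundary is frozen at the value obtained by adjunction at $\mu_i$, with only the parameter moving. It says nothing about the restrictions of $(g^{-1}_{i*}\Bb_i^{s},E_i)$ for $s\neq\mu_i$ until you prove that the horizontal adjunction coefficients do not depend on $s$. For your scaling family they a priori do (quadratically, as you observe).
\item In the alternative ``numerically trivial for all parameters'', the contradiction has to come from the fact that $K_{(g^{-1}_{i*}\Bb_i^{s},E_i)}$ is anti-ample over $X_i$ for $s<\mu_i$ (by Lemma~\ref{lem: klt bs} and $\rho(Y_i/X_i)=1$). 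You never invoke this. That $E_i$ is an lc place of $\Bb_i$ is not contradictory by itself, and $K|_{F_i}$ is never independent of $s$, since the parameter $st_i$ still varies.
\item You never secure $t_i>0$, without which ``$\mu_it_i$ lies in a finite set'' is no contradiction.
\item Theorem~\ref{thm: finite inversion of adjunction to finite 2} with $D=0$, $\Nn=\bm{0}$ has an empty conclusion.
\end{itemize}

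\textbf{The missing idea: freeze the boundary instead of scaling everything.} The paper uses $\Cc_i(s):=(X_i,\Ff_i,\mu_iB_i,\mu_i\Mm_i)(st_i)$.
\begin{itemize}
\item $\Cc_i(0)=(X_i,\mu_iB_i,\mu_i\Mm_i)$ is lc and $\mu_i$ is strictly increasing, so the ACC for generalized lc thresholds \cite[Theorem~1.5]{BZ16} makes $\Cc_i(0)$ klt after passing to a subsequence. This forces $t_i>0$, because $\Cc_i(\mu_i)=\Bb_i^{\mu_i}$ is not klt.
\item Log discrepancies of $\Cc_i(s)$ are affine in $s$, so $\Cc_i(s)$ is klt for $s<\mu_i$. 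Hence $K_{(g^{-1}_{i*}\Cc_i(s),E_i)}$ is anti-ample over $X_i$ for $s<\mu_i$, and its restriction to $E_i$ is $\sim_{\mathbb R,T_i}0$ at $s=\mu_i$.
\item Now $\bm{v}$ is fixed and only the parameter moves, which is exactly the situation of Theorem~\ref{thm: dcc precise adjunction}. Either $\mu_it_i$ lies in a finite set, or $s\mapsto B_{E_i}(s)$ is constant over the generic point of $\Center_{X_i}E_i$.
\item In the second case the adjunction family coincides on $F_i$ with the frozen one and is not numerically trivial for $s<\mu_i$. So Theorem~\ref{thm: global acc general}~(3) in dimension $d-1$ again puts $\mu_it_i$ in a finite set, contradicting that $\mu_it_i$ is strictly increasing.
\end{itemize}

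\textbf{Completing your own route.} The scaling family can also be made to work, but only through the case analysis you postpone.
\begin{itemize}
\item First dispose of $t_i=0$ by \cite[Theorem~1.5]{BZ16}.
\item Apply items (1)(b) and (2)(b) of Theorem~\ref{thm: dcc precise adjunction} with $\bm{v}=\mu_i\bm{v}_i$ and parameter $\mu_it_i$. Together with DCC of $\Ii$ and strict monotonicity of $\mu_i$, these force $a=0$ and every horizontal term $p_ib_i$, $q_kc_k$, $\lambda_j\mu_j$ to vanish.
\item Theorem~\ref{thm: global acc general}~(2) then kills the nef part on $F_i$.
\item The scaled and frozen families now agree on $F_i$, and the anti-ampleness above gives the contradiction.
\end{itemize}

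\textbf{Minor points.} The set $\{\mu_i\gamma\}$ is DCC because $\mu_i$ is increasing, not because $\mu_i\in[1/2,1]$. Discarding $t_i=1$ is unnecessary, since only $\mu_it_i<1$ is used. It is also not covered by the $t=t'=1$ case of Lemma~\ref{lem: reduce to one parameter acc}.
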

\begin{proof}
Suppose that Theorem~\ref{thm: acc lct general version} does not hold in dimension $d$. By Lemma~\ref{lem: reduce to one parameter acc}, there exist a strictly increasing sequence of real numbers $s_i$ such that $s_i\in (0,1)$ for each $i$, $\lim_{i\rightarrow+\infty}s_i=1$, and for each $i$, there exists an algebraically integrable normalized foliated structure 
$$\Bb_i/U_i:=(X_i,\Ff_i,B_i,\Mm_i)(t_i)/U_i$$
of dimension $d$, such that $X_i$ is $\mathbb Q$-factorial klt, $\Bb_i/U_i\in\Ii$, $\Bb_i^{s_i}$ is lc, but $\Bb_i$ is not lc. Since lc is a closed condition, possibly replacing $s_i$ with
$$\mu_i:=\sup\{s\mid \Bb_i^s\text{ is lc}\},$$
we may assume that $s_i=\mu_i<1$ for any $i$.

Since $\mu_i$ is strictly increasing and $B_i\in\Ii$, possibly passing to a subsequence, we have that $\lfloor\mu_iB_i\rfloor=0$ for any $i$. By Lemma~\ref{lem: extract unique lc place}, for each $i$, there exists a projective birational morphism $g_i: Y_i\rightarrow X_i$ satisfying the following.
\begin{enumerate}
    \item $g_i$ is a divisorial contraction of a prime divisor $E_i$ and $Y_i$ is $\mathbb Q$-factorial.
    \item $E_i$ is an lc place of $\Bb_i^{\mu_i}$.
    \item $\Bb_{Y_i}(s):=(g_{i,*}^{-1}\Bb_i^s,E_i)$ is $\mathbb Q$-factorial plt for any $s\in [0,\mu_i)$ and $E_i$ is normal.
\end{enumerate}
Let $W_i$ be the normalization of $\Center_{X_i}E_i$, $E_i\rightarrow W_i$ the induced projective morphism, 
$E_i\xrightarrow{\pi_i}T_i\rightarrow W_i$ the Stein factorization of $E_i\rightarrow W_i$, and $F_i$ a general fiber of $\pi_i$. Since $\Bb_{Y_i}(0)$ is plt, $(Y_i,E_i)$ is $\mathbb Q$-factorial plt, so $E_i$ is potentially klt. 

We let $\Cc_i:=(X_i,\Ff_i,\mu_iB_i,\mu_i\Mm_i)(0)$. Then $\Bb^{\mu_i}/U_i\geq\Cc_i/U_i$. By Lemma~\ref{lem: lc preserved under inequality}, $\Cc_i$ is lc. By \cite[Theorem~1.5]{BZ16} and since $X_i$ is $\mathbb Q$-factorial klt, possibly passing to a subsequence, we may assume that $\Cc_i$ is klt for any $i$. In particular, we may assume that $t_i>0$ for each $i$. We let 
$$\Cc_i(s):=(X_i,\Ff_i,\mu_iB_i,\mu_i\Mm_i)(st_i)$$
for any $s\in [0,1]$. Then $\Cc_i(0)=\Cc_i$ is klt and $\Cc_i(\mu_i)=\Bb_i^{\mu_i}$ is lc but not klt, so $\Cc_i(s)$ is klt for any $s\in [0,\mu_i)$. Let $\Cc_{Y_i}(s):=(g^{-1}_{i,*}\Cc_i(s),E_i)$ for any $s\in [0,1]$. Then $\Bb_{Y_i}(\mu_i)/U_i\geq\Cc_{Y_i}(s)/U_i$ for any $s\in [0,\mu_i]$. By Lemma~\ref{lem: lc preserved under inequality}, $\Cc_{Y_i}(s)$ is lc for any $s\in [0,\mu_i]$.

Since $\rho(Y_i/X_i)=1$, $K_{\Cc_{Y_i}(s)}$ is anti-ample$/X_i$ for any $s\in [0,\mu_i)$. We let $\Cc_{E_i}(s)$ be the normalized foliated structure induced by adjunction
$$K_{\Cc_{E_i}(s)}:=K_{\Cc_{Y_i}(s)}|_{E_i}$$
for any $s\in [0,1)$. Then $K_{\Cc_{E_i}(s)}$ is anti-ample$/T_i$ for any $s\in (0,\mu_i)$ and $K_{\Cc_{E_i}(\mu_i)}\sim_{\mathbb R,T_i}0$. We may write
$$\Cc_{E_i}(s)=(E_i,\Ff_{E_i},B_{E_i}(s),\mu_i\Mm_i|_{E_i},st_i)$$
for any $s\in [0,\mu_i]$. By Theorem~\ref{thm: dcc adjunct to dcc}, there exists a DCC set $\Ii'$ depending only on $\Ii$ and $\{\mu_i\}$, such that $B_{E_i}(\mu_i)\in\Ii'$. By Theorem~\ref{thm: global acc general}~(1) in dimension $\leq d-1$, there exists a finite set $\Ii_0$ depending only on $d$, $\Ii$, and $\{\mu_i\}$, such that $B_{E_i}(\mu_i)\in\Ii_0$ over the generic point of $W_i$. By Theorem~\ref{thm: dcc precise adjunction}, there exists a finite set $\Ii_0'$, such that either $\mu_it_i\in\Ii_0'$, or $s\mapsto B_{E_i}(s)$ is a constant function over the generic point of $W_i$. Since $t_i>0$ and $\mu_i$ is strictly increasing, possibly passing to a subsequence, we may assume that $s\mapsto B_{E_i}(s)$ is a constant function over the generic point of $W_i$. By Theorem~\ref{thm: global acc general}~(2) in dimension $\leq d-1$, $\mu_it_i$ belongs to a finite set. This is not possible.
\end{proof}

\subsection{A variation of local ACC} The ACC for interpolated lc thresholds, Theorem~\ref{thm: acc lct general version}, is in the spirit of \cite[Theorem~A]{HMX14} as well as the inverse of stability (cf.~\cite[Section~12]{Sho20}). On the other hand, there is no ``threshold'' in the statement of Theorem~\ref{thm: acc lct general version}. We provide the following version of the ACC for interpolated lc thresholds whose statement is more precise and closer to the original ACC for lc thresholds \cite[Theorem~1.1]{HMX14}. We also need this variation in the inductive proof of the global ACC.

\begin{thm}[ACC for interpolated lc thresholds, precise version]\label{thm: ACC precise}
     Let $d$ be a positive integer and $\Ii\subset [0,+\infty)$ a DCC set. Then there exists an ACC set $\Ii':=\lct_{\aif}(d,\Ii)\subset [0,+\infty)$ depending only on $d$ and $\Ii$ satisfying the following.

     Let $\Bb/U:=(X,\Ff,B,\Mm)(t)/U$ be an lc algebraically integrable normalized foliated structure of dimension $d$ such that $\Bb/U\in\Ii$. Let $D\in\Ii$ be an $\mathbb R$-divisor, $\Nn\in\bNef(X/U,\Ii)$ a $\bb$-divisor on $X$, and $\mu\in\Ii$. Let
     $$\Bb(s):=(X,\Ff,B+sD,\Mm+s\Nn)(t+s\mu)$$
and assume that $K_{\Bb(s)}$ is $\mathbb R$-Cartier for any $s$. Then
     $$\sup\{s\geq 0\mid \Bb(s)\text{ is lc}\}\in\Ii'.$$
\end{thm}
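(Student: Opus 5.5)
We argue by contradiction and reduce to Theorem~\ref{thm: acc lct general version} in dimension $d$. Suppose that no such ACC set exists. Then there is a sequence of data $\Bb_i/U_i=(X_i,\Ff_i,B_i,\Mm_i)(t_i)/U_i$, $D_i$, $\Nn_i$, $\mu_i$ as in the statement whose thresholds
$$s_i:=\sup\{s\geq 0\mid \Bb_i(s)\text{ is lc}\}$$
form a strictly increasing sequence.

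\textbf{Step 1: dispose of the boundary-parameter cases.} Since the parameter of $\Bb_i(s)$ must stay in $[0,1]$, we always have $s_i\leq (1-t_i)/\mu_i$ when $\mu_i>0$. Values of this form lie in an ACC set, because $1-t_i$ ranges over an ACC set and $1/\mu_i$ ranges over an ACC subset of $(0,+\infty)$. So, after passing to a subsequence, we may assume $s_i<(1-t_i)/\mu_i$ whenever $\mu_i>0$.

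We also want $s_i$ bounded, so that $s_\infty:=\lim s_i<+\infty$.
\begin{itemize}
\item If $D_i\neq 0$, then $s_i$ is bounded because the coefficients of $D_i$ are bounded below by $\min(\Ii\setminus\{0\})$ and lc forces coefficients $\leq 1$.
\item If $\mu_i>0$, then $s_i$ is bounded by the bound in Step~1.
\item If $D_i=0$ and $\mu_i=0$, only the nef part grows. Here I would invoke \cite[Theorem~2.4.4]{CHLX23} / \cite[Theorem~1.5]{BZ16}-type ACC for generalized lc thresholds, or add the value $+\infty$ to $\Ii'$ if the threshold is infinite. I expect this to be routine.
\end{itemize}

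\textbf{Step 2: compare with the limit structure.} Set $\Cc_i:=\Bb_i(s_\infty)=(X_i,\Ff_i,B_i+s_\infty D_i,\Mm_i+s_\infty\Nn_i)(t_i+s_\infty\mu_i)$. This is a legitimate algebraically integrable normalized foliated structure: its parameter is $\leq 1$ in the limit, and $K_{\Cc_i}$ is $\mathbb R$-Cartier by hypothesis. Writing its nef part as the formal sum of the $\Mm_{i,j}$ and $\Nn_{i,k}$, we get $\Cc_i\in\Ii'':=\Ii\cup s_\infty\Ii\cup(\Ii+s_\infty\Ii)$, which is a DCC set. Let $\tau=\tau_{\ilct}(d,\Ii'')$ be given by Theorem~\ref{thm: acc lct general version}.

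We check the chain $\Cc_i\geq\Bb_i(s_i)\geq\Cc_i^{1-\tau}$ for $i\gg 0$.
\begin{itemize}
\item The first inequality holds because $s_i<s_\infty$ and $D_i$, $\mu_i$ and the nef $\bb$-divisors $\Nn_{i,k}$ are all nonnegative.
\item For the second, we need $B_i+s_iD_i\geq(1-\tau)(B_i+s_\infty D_i)$, the same inequality for the parameters, and nefness of $\tau\Mm_i+(s_i-(1-\tau)s_\infty)\Nn_i$. All three hold as soon as $s_i\geq(1-\tau)s_\infty$, which is true for $i\gg0$ since $s_i\to s_\infty$.
\item Lc-ness of $\Bb_i(s_i)$ holds because lc is a closed condition in $s$.
\end{itemize}
Theorem~\ref{thm: acc lct general version} then shows that $\Cc_i=\Bb_i(s_\infty)$ is lc.

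\textbf{Step 3: conclude.} By Lemma~\ref{lem: lc preserved under inequality}, applied to $\Bb_i(s_\infty)\geq\Bb_i(s)$, every $\Bb_i(s)$ with $s\leq s_\infty$ is lc. Hence $s_i\geq s_\infty>s_i$, a contradiction.

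That lemma needs either parameter $<1$ or both parameters equal to $1$. If $t_i+s_\infty\mu_i=1$ but $t_i+s\mu_i<1$, this is exactly the boundary situation removed in Step~1, except in the limit. I expect this case to be the main obstacle. I would handle it first by shrinking to $s_\infty'$ slightly less than $s_\infty$ but still above $s_i$: redo Step~2 with the fixed structure $\Bb_i(s_\infty')$, whose parameter is $<1$. If the DCC bookkeeping for $s'_\infty$ fails, I would instead apply Lemma~\ref{lem: construct smaller bb} to interpolate.
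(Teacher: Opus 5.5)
Your Step~2 is the paper's argument when $\lim s_i<+\infty$. The unbounded case, however, is a genuine gap.

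\textbf{The unbounded case.} After your reductions, $s_i\to+\infty$ forces $\mu_i=0$, and if $t_i<1$ also $D_i=0$. So only the nef part grows, while the parameter $t_i\in(0,1)$ stays fixed.
\begin{itemize}
\item \cite[Theorem~2.4.4]{CHLX23} and \cite[Theorem~1.5]{BZ16} do not help here. They are ACC statements for $t=1$ and $t=0$ respectively.
\item The threshold of $\Nn_i$ with respect to an interpolated structure is not controlled by the two endpoint structures; the one with parameter $1$ need not even be lc.
\item So this case is not routine. It is an instance of the very ACC you are proving.
\end{itemize}

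The missing idea is to apply Theorem~\ref{thm: acc lct general version} once more, with a comparison structure that moves with $i$, namely $\Bb_i(s_i+1)$.
\begin{itemize}
\item Its nef coefficients $(s_i+1)\nu$ lie in a DCC set, because $s_i$ is increasing.
\item The chain $\Bb_i(s_i+1)\ge\Bb_i(s_i)\ge\Bb_i(s_i+1)^{1-\tau}$ holds for $i\gg0$. The only nontrivial condition is nefness of $\tau\Mm_i+(\tau(s_i+1)-1)\Nn_i$.
\item Hence $\Bb_i(s_i+1)$ is lc, contradicting the definition of $s_i$.
\end{itemize}
This is how the paper treats $\lim s_i=+\infty$.

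Relatedly, your claim that $D_i\neq 0$ bounds $s_i$ fails when $t_i=1$, $\mu_i=0$ and $D_i$ is $\Ff_i$-invariant. Invariant components then enter with coefficient $1-t_i=0$. That case is, however, covered by \cite{CHLX23}.

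\textbf{The bounded case.} Two remarks.
\begin{itemize}
\item The ``main obstacle'' of Step~3 does not arise. Once $\Bb_i(s_\infty)$ is lc with $s_\infty>s_i$, this directly contradicts $s_i$ being a supremum. Lemma~\ref{lem: lc preserved under inequality} is never needed, and parameter $1$ is allowed in Theorem~\ref{thm: acc lct general version}.
\item What does need proof is $t_i+s_\infty\mu_i\le 1$ for every large $i$, not merely ``in the limit''. Otherwise $\Bb_i(s_\infty)$ is not a normalized foliated structure at all.
\end{itemize}
The second point follows from your Step~1. The numbers $(1-t_i)/\mu_i$ lie in an ACC set and are $>s_i$, with $s_i\to s_\infty$. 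So they cannot converge to $s_\infty$ from below, and hence they are $\ge s_\infty$ for $i\gg0$. The paper passes over this point silently, so your Step~1 is a useful addition there.
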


We remark that when $\mu=0$ and $t=1$, Theorem~\ref{thm: ACC precise} was proven in \cite[Theorem~2.4.4]{CHLX23}, \cite[Theorem~1.1]{DLM23}.

\begin{lem}\label{lem: acc implies precise acc}
Assume Theorem~\ref{thm: acc lct general version} holds in dimension $d$. Then Theorem~\ref{thm: ACC precise} holds in dimension $d$.
\end{lem}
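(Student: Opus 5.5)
Argue by contradiction. Suppose there is a strictly increasing sequence of thresholds $s_i:=\sup\{s\geq 0\mid \Bb_i(s)\text{ is lc}\}$ with data $\Bb_i=(X_i,\Ff_i,B_i,\Mm_i)(t_i)$, $D_i$, $\Nn_i$, $\mu_i$ as in the statement, all with coefficients in $\Ii$. Let $s_\infty:=\lim s_i$. Since $\Bb_i(s)$ is defined only when $t_i+s\mu_i\leq 1$, we have $s_i\leq(1-t_i)/\mu_i$ whenever $\mu_i>0$. The thresholds are thus bounded above, possibly after discarding the trivial case $D_i=0$, $\Nn_i=\bm 0$, $\mu_i=0$ (where the sup is $+\infty$, which we may add to $\Ii'$), using the lc property of generalized pairs to bound $s$ by $\max$ of $1/\min(\Ii\setminus\{0\})$. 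Passing to a subsequence, we may assume $s_\infty<+\infty$ and $s_i>0$. Set $\Bb'_i:=\Bb_i(s_i)$, which is lc by closedness of the lc condition, and let $\widehat\Bb_i:=\Bb_i(s_\infty)$ (after truncating to parameter at most $1$ if needed, see below).

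The key step is to show $\widehat\Bb_i\geq\Bb_i'\geq\widehat\Bb_i^{\,1-\tau}$ for $i\gg 0$, where $\tau=\tau_{\ilct}(d,\Ii'')$ comes from Theorem~\ref{thm: acc lct general version} applied to the DCC set $\Ii'':=\Ii+s_\infty\Ii$ (finite sums of elements, which is DCC). The first inequality is immediate: boundary coefficients and parameter increase in $s$, and $\Mm+s_\infty\Nn-(\Mm+s_i\Nn)=(s_\infty-s_i)\Nn$ is nef$/U$. For the second, compare componentwise. We need $B_i+s_iD_i\geq(1-\tau)(B_i+s_\infty D_i)$, which holds since $s_i\geq(1-\tau)s_\infty$ for $i\gg 0$ and $B_i\geq 0$. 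The same argument handles the parameter and the nef part, where
\[
\Mm_i+s_i\Nn_i-(1-\tau)(\Mm_i+s_\infty\Nn_i)=\tau\Mm_i+(s_i-(1-\tau)s_\infty)\Nn_i
\]
is nef$/U$. We also have $\widehat\Bb_i\in\Ii''$, since its boundary coefficients are of the form $b+s_\infty d$ with $b,d\in\Ii$, its parameter is $t_i+s_\infty\mu_i$, and its nef part is $\sum\mu_j\Mm_j+\sum s_\infty\nu_k\Nn_k$. Theorem~\ref{thm: acc lct general version} then gives that $\widehat\Bb_i$ is lc. Hence $s_i\geq s_\infty$, contradicting strict increase. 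The remaining requirement is that $K_{\widehat\Bb_i}$ be $\mathbb R$-Cartier, which holds because $K_{\Bb_i(s)}$ is assumed $\mathbb R$-Cartier for every $s$ and is affine in $s$.

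The main obstacle is the constraint that the parameter stays in $[0,1]$: we need $t_i+s_\infty\mu_i\leq 1$. If $\mu_i=0$ along a subsequence there is no issue. Otherwise $t_i+s_i\mu_i\leq 1$ and $\mu_i\in\Ii$, so $t_i+s_\infty\mu_i$ could exceed $1$. In that case, use that $s_i\geq(1-\tau)s_\infty$ and that the quantities $1-t_i$ and $\mu_i$ range in DCC sets. If the threshold is attained at the boundary $s_i=(1-t_i)/\mu_i$, these values form an ACC set: $t_i\in\Ii$ is DCC, so $1-t_i$ is ACC, and dividing an ACC set of nonnegative numbers by a DCC set of positive numbers gives an ACC set. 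We may therefore include $\{(1-t)/\mu\}$ in $\Ii'$ and assume $s_i<(1-t_i)/\mu_i$. One then still has to rule out $t_i+s_\infty\mu_i>1$ for infinitely many $i$. I would handle this by replacing $s_\infty$ with $\min\{s_\infty,(1-t_i)/\mu_i\}$ and rerunning the comparison, again invoking the ACC of the boundary values to pass to a subsequence where this minimum exceeds every $s_i$. The case $t=1$ in the ordering (Lemma~\ref{lem: lc preserved under inequality}) also needs the convention that Theorem~\ref{thm: acc lct general version} allows parameter $1$, which it does.
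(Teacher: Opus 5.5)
The bounded case of your argument is the paper's: compare $\Bb_i(s_i)$ with $\Bb_i(s_\infty)$, whose data lie in a fixed DCC set, and apply Theorem~\ref{thm: acc lct general version} with $\tau_{\ilct}$ of that set. The gap is in how you dispose of $s_\infty=+\infty$.

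Your coefficient bound does work in three cases: when $\mu_i>0$; when $D_i^{\ninv}\neq 0$; and when $t_i<1$ and $D_i\neq 0$, since lc with $t<1$ forces every boundary coefficient to be $\le 1$. It fails when only the nef part moves. There is then no bound of the form $1/\min(\Ii\setminus\{0\})$, and the value $+\infty$ does not arise only when $\Nn=\bm{0}$. For example, take $X$ a smooth surface over $U=X$, with $t=0$, $B=D=0$, $\Mm=\bm{0}$, $\mu=0$, $\Ii=\{0,1\}$, and $\Nn=\overline{-E}$ where $E$ is the exceptional curve of a point blow-up. This $\Nn$ is nef$/X$ and $\bb$-Cartier with $\Nn_X=0$, and $a(E,\Bb(s))=1-s$, so the threshold is $2>1$. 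If instead some $\Nn\neq\bm{0}$ descends to $X$, the threshold is $+\infty$. Boundedness of the finite thresholds in this regime is itself an ACC-type statement (for $t=1$ it is \cite[Theorem~2.4.4]{CHLX23}), not a consequence of the lc coefficient condition. So as written, your proof does not exclude a sequence $s_i\to+\infty$.

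The paper closes this case with a different comparison. If $s_i\to+\infty$, then $\mu_i=0$ for $i\gg 0$. The subcase $t_i=1$ is handled by \cite[Theorem~2.4.4]{CHLX23}. For $t_i<1$ one gets $D_i=0$, and one compares $\Bb_i(s_i)$ not with a limit but with $\Bb_i(s_i+1)$. These structures have data in a DCC set $\Ii_2$, because the coefficients $(s_i+1)\nu_k$ lie in a product of DCC sets. Since $s_i/(s_i+1)\to 1$, one has $\Bb_i(s_i+1)\geq\Bb_i(s_i)\geq\Bb_i(s_i+1)^{1-\tau}$ for $i\gg0$, with $\tau=\tau_{\ilct}(d,\Ii_2)$. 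Theorem~\ref{thm: acc lct general version} then makes $\Bb_i(s_i+1)$ lc, contradicting the definition of $s_i$. Adding this step repairs your proof.

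Separately, the parameter constraint you single out as the main obstacle is harmless. Pass to a subsequence with $t_i$ and $\mu_i$ non-decreasing. For $j\geq i$ one has $t_i+s_j\mu_i\leq t_j+s_j\mu_j\leq 1$, and letting $j\to\infty$ gives $t_i+s_\infty\mu_i\leq 1$. Your proposed rerun with $\min\{s_\infty,(1-t_i)/\mu_i\}$ is unnecessary, and as stated it is shaky. Those values can decrease along the sequence, so the resulting coefficients need not lie in a DCC set.
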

\begin{proof}
    Suppose that the theorem does not hold. Then for any $i\in\mathbb N^+$, there exists an algebraically integrable normalized foliated structure 
$$\Bb_i/U_i:=(X_i,\Ff_i,B_i,\Mm_i)(t_i)/U_i$$
of dimension $d$ such that $\Bb_i/U_i\in\Ii$, an $\mathbb R$-divisor $D_i\in\Ii$ on $X_i$, $\Nn_i\in\bNef(X_i/U_i,\Ii)$, $\mu_i\in\Ii$, 
$$\Bb_i(s):=(X_i,\Ff_i,B_i+sD_i,\Mm_i+s\Nn_i)(t_i+s\mu_i)$$
for any $s\in [0,+\infty)$, and
$$s_i:=\sup\{s\geq 0\mid \Bb_i(s)\text{ is lc}\},$$
such that $t_i+s_i\mu_i\in [0,1]$, $\Bb_i(s)$ is $\mathbb R$-Cartier for any $s\in [0,+\infty)$, and $s_i$ is strictly increasing. Let $r:=\lim_{i\rightarrow+\infty}s_i$. 

If $r<+\infty$, then there exists a DCC set $\Ii_1\subset [0,+\infty)$ such that $\Bb_i(r)/U_i\in\Ii_1$ for any $i$. Possibly passing to a subsequence, we may assume that
$$\Bb_i(r)/U_i\geq\Bb_i(s_i)/U_i\geq\Bb_i(r)^{1-\tau_1}/U_i$$
for any $i$, where $\tau_1=\tau_{\ilct}(d,\Ii_1)$ is the value defined as in Theorem~\ref{thm: acc lct general version} in dimension $d$. By Theorem~\ref{thm: acc lct general version} in dimension $d$, $\Bb_i(r)$ is lc, a contradiction. 

If $r=+\infty$, then possibly passing to a subsequence, we may assume that $\mu_i=0$ for each $i$. If $t_i=1$ for infinitely many $i$, then we are done by \cite[Theorem~2.4.4]{CHLX23}, so possibly passing to a subsequence, we may assume that $t_i<1$ for each $i$. Thus we may assume that $D_i=0$ for each $i$. We have that $\Nn_{i,X_i}$ is $\mathbb R$-Cartier for any $i$. There exists a DCC set $\Ii_2\subset [0,+\infty)$ such that 
$$\Bb_i(s_i+1)/U_i\in\Ii_2$$
for any $i$. Possibly passing to a subsequence, we may assume that
$$\Bb_i(s_i+1)/U_i\geq\Bb_i(s_i)/U_i\geq\Bb_i(s_i+1)^{1-\tau_2}/U_i$$
for any $i$, where $\tau_2=\tau_{\ilct}(d,\Ii_2)$ is the value defined as in Theorem~\ref{thm: acc lct general version} in dimension $d$. By Theorem~\ref{thm: acc lct general version} in dimension $d$, $\Bb_i(s_i+1)$ is lc, a contradiction.
\end{proof}

\section{Local ACC to global ACC}\label{sec: local to global}

The goal of this section is to prove the global ACC (Theorem~\ref{thm: global acc general}) assuming the local ACC (Theorem~\ref{thm: acc lct general version}) of the same dimension.

\subsection{The special cases}

In this subsection, we prove some special cases of Theorem~\ref{thm: global acc general} unconditionally.

\begin{thm}\label{thm: global acc t=1}
    Theorem~\ref{thm: global acc general} holds when $t=1$.
\end{thm}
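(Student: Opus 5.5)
The plan is to reduce the case $t=1$ to the global ACC for lc algebraically integrable generalized foliated quadruples, which is available unconditionally from \cite{CHLX23}, the companion of the local ACC \cite[Theorem~2.4.4]{CHLX23} used above. When $t=1$ we have
$$\log\Bb=(X,\Ff,B^{\ninv},\Mm,1)$$
and $K_{\Bb}=K_{\Ff}+B^{\ninv}+\Mm_X$. The invariant part of $B$ does not contribute at all. So statement (1) only asks that $B^{\ninv}|_F\in\Ii_0$, since the clause "$B|_F\in\Ii_0$ if $t<1$" is vacuous here. Statement (3) is immediate once we put $1\in\Ii_0$: then $t=1\in\Ii_0$ always holds. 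The content therefore lies in (1) for $B^{\ninv}$ and in (2).

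First, I restrict to a general fiber $F$ of the contraction $\pi\colon X\to U$. Since $F$ is general, $F$ is normal, and $\Ff_F:=\Ff|_F$ is an algebraically integrable foliation on $F$: it is induced by restricting the rational map defining $\Ff$, and its tangent sheaf is $T_{\Ff}\cap T_F$ generically. Next, $K_{\Ff}|_F=K_{\Ff_F}$ up to invariant divisors that do not appear generically, exactly as in the standard restriction to general fibers in \cite{CHLX23,Cas+24}. Third, $B^{\ninv}|_F$ is the non-invariant part of $B|_F$. Finally, $\Mm|_F=\sum\mu_j\Mm_j|_F$ with each $\Mm_j|_F$ nef $\bb$-Cartier, in the sense of \cite[Remark~4.2(2)]{BZ16}.

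Log canonicity is preserved on a general fiber: take a foliated log resolution and restrict it to a general fiber, which by genericity is again a foliated log resolution of the restricted structure. Hence $(F,\Ff_F,B^{\ninv}|_F,\Mm|_F)$ is a projective lc algebraically integrable generalized foliated quadruple. Its canonical class is numerically trivial, because $K_{\Bb}|_F\equiv0$, and its coefficients lie in the DCC set $\Ii$.

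Now I apply the global ACC for generalized foliated quadruples from \cite{CHLX23} on $F$. This gives a finite set $\Ii_0'\subset\Ii$, depending only on $d$ and $\Ii$, such that the coefficients of $B^{\ninv}|_F$ lie in $\Ii_0'$. It also gives that the coefficient of every nef $\bb$-Cartier summand that is not numerically trivial lies in $\Ii_0'$. Since each $\Mm_j|_F$ is nef and $\bb$-Cartier, if $\Mm_j|_F\not\equiv\bm{0}$ then $\mu_j\in\Ii_0'$. I set $\Ii_0:=\Ii_0'\cup\{1\}$, which yields (1)–(3).

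I expect the main obstacle to be the bookkeeping in the restriction step. One must check that the components of $B^{\ninv}|_F$ are exactly the non-$\Ff_F$-invariant ones with the same coefficients; invariant components of $B$ restrict to invariant divisors and simply drop out. One must also match the precise formulation of the nef part in \cite{CHLX23}, possibly after merging the $\Mm_j|_F$ with equal coefficients, and handle the degenerate case $\dim F=0$, where (1) and (2) are vacuous.
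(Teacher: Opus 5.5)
There is a genuine gap in the restriction step. Nothing in Theorem~\ref{thm: global acc general} makes $\Ff$ tangent to the fibers of $\pi$, and in the applications (e.g.\ $E_i\to T_i$ in the proof of Theorem~\ref{thm: global to local}) there is no reason for it to be. When $\Ff$ is not tangent to the fibers, a general fiber does not inherit the structure. Bertini-type statements fail for foliations, and this is exactly why the paper has to work with a \emph{relative} global ACC at all. Three examples show that each of your claims about $F$ fails:
\begin{itemize}
\item On $\mathbb A^3_{x,y,u}$, with $\pi$ the projection to the $u$-line, take the foliation induced by the smooth morphism $x^2+y^3+u$. It is regular, hence lc. Its restriction to every fiber $\{u=u_0\}$ is the foliation induced by the cusp $x^2+y^3$. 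That foliation is not lc: the last exceptional curve of the minimal log resolution of the cusp is invariant with discrepancy $-1$.
\item For the foliation induced by $x+u$, the non-invariant divisor $\{x=0\}$ restricts to a leaf of $\Ff_F$. A coefficient that counts in $K_{\Bb}|_F$ therefore lands on an invariant divisor of $F$.
\item On $\mathbb P^1_s\times\mathbb A^2_{u,v}$, let $\pi$ be the second projection and take the foliation spanned by $\partial_u+s\partial_v$. Then $\deg K_{\Ff}|_F=1$ while $\Ff_F=0$.
\end{itemize}
So neither $K_{\Ff}|_F=K_{\Ff_F}$ nor log canonicity of $(F,\Ff_F,B^{\ninv}|_F,\Mm|_F)$ is available. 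A foliated log resolution also does not restrict to one on $F$: the first example is already regular. Hence, even granting a projective global ACC for lc algebraically integrable generalized foliated quadruples, you cannot invoke it on $F$.

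The missing idea is to produce fibers tangent to the foliation, on which the structure becomes an honest generalized pair. The paper does this as follows.
\begin{itemize}
\item Take a $\mathbb Q$-factorial ACSS modification $h\colon X'\to X$ with associated contraction $f\colon X'\to Z$.
\item Shrink $U$ so that every component of $B'$ dominates $U$ and every $\Mm_j|_F\not\equiv\bm{0}$.
\item For each component $D$ of $B'$ (resp.\ each $\Mm_{j,X'}$), run a $(K_{\Aa'}-\epsilon D)$-MMP$/U$. Since $K_{\Aa'}\sim_{\mathbb R,U}0$, this MMP is $K_{\Aa'}$-trivial. By \cite[Lemma~9.1.4, Theorem~9.3.4]{CHLX23} it is also an MMP$/Z$, and it ends with a Mori fiber space $g\colon X''\to T$ over $Z$. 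On it the image of $D$ dominates $T$ (resp.\ $\Mm_{j,X''}$ is ample$/T$).
\item A general fiber $L$ of $g$ lies in a fiber of $X''\to Z$. Hence $K_{\Ff''}|_L=K_L=K_{X''}|_L$, and $(L,B''|_L,\sum\mu_j\Mm_j|_L)$ is an lc generalized pair with numerically trivial canonical class in which the relevant coefficient appears.
\item \cite[Theorem~1.6]{BZ16} then places that coefficient in a finite set.
\end{itemize}
Your treatment of (3) and of $B^{\inv}$ is fine, but without this step the argument does not go through.
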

\begin{proof}
We may assume that $1\in\Ii_0$ and Theorem~\ref{thm: global acc general}~(3) automatically holds in this case, so we only need to prove Theorem~\ref{thm: global acc general}~(1-2). Since $t=1$, we may assume that $B^{\inv}=0$. Let $\Aa:=(X,\Ff,B,\Mm)$.
    
    Let $h\colon X'\rightarrow X$ be a $\mathbb Q$-factorial ACSS modification of $\Aa$ associated with the contraction $f: X'\rightarrow Z$ and let $\Aa':=h^*\Aa=:(X',\Ff',B',\Mm)$. Possibly shrinking $U$, we may assume that each irreducible component of $B'$ dominates $U$, and $\Mm_j|_{F}\not\equiv\bm{0}$ for each $j$, where $F$ is a general fiber of $\pi\circ h$. By Lemma~\ref{lem: non-trivial of b-divisors}, $\Mm_{j,X'}|_{F}\not\equiv 0$ for each $j$. Let $D$ be either an irreducible component of $B'$ or $\Mm_{j,X'}$ for some $j$. Pick $0<\epsilon\ll 1$. Then
$$K_{\Aa'}-\epsilon D\equiv_U-\epsilon D$$
is not pseudo-effective$/U$.
By \cite[Lemma~9.1.4, Theorem~9.3.4]{CHLX23}, we may run a $(K_{\Aa'}-\epsilon D)$-MMP$/U$ with scaling of an ample divisor which terminates with a Mori fiber space$/U$ $g: X''\rightarrow T$ associated with a birational map $\phi\colon X\dashrightarrow X''$, such that this MMP is also an MMP$/Z$. In particular, $g$ is a contraction$/Z$. Let $\Aa'':=\phi_*\Aa':=(X'',\Ff'',B'',\Mm)$ and $D'':=\phi_*D$. Since $K_{\Aa'}\sim_{\mathbb R,U}0$, this MMP is $K_{\Aa'}$-trivial, and $D''$ dominates $T$. Let $L$ be a general fiber of $g$, $B_L:=B''|_L$, $\Mm_j^L:=\Mm_j|_L$, and $D_L:=D''|_L$. Then $K_{\Ff''}|_L=K_L=K_{X''}|_L$, so we have
$$K_L+B_L+\sum\mu_j\Mm_{j,L}^L\equiv 0,$$
$D_L$ is either a component of $B_L$ or $D_L=\Mm_{j,L}^L$ is ample for some $j$, and $(L,B_L,\sum\mu_j\Mm_j^L)$ is lc. 

If $D$ is a component of $B'$, then by \cite[Theorem~1.6]{BZ16}, $\mult_DB'=\mult_{D_L}B_L$ belongs to a finite set depending only on $d$ and $\Ii$. If $D=\Mm_{j,X'}$ for some $j$, then since $\Mm_{j,L}^L$ is ample, $\Mm_j^L\not\equiv\bm{0}$. By \cite[Theorem~1.6]{BZ16}, $\mu_j$ belongs to a finite set depending only on $d$ and $\Ii$. This implies Theorem~\ref{thm: global acc general} when $t=1$.
\end{proof}

\begin{thm}\label{thm: relative global ACC gpair}
      Theorem~\ref{thm: global acc general} holds when $t\in\{0,1\}$.
\end{thm}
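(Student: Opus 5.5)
The case $t=1$ is Theorem~\ref{thm: global acc t=1}, so only $t=0$ remains. When $t=0$ we have $\log\Bb=(X,B,\Mm)$, a generalized pair, since the foliation disappears from $K_{\Bb}=K_X+B+\Mm_X$. Moreover $B^{\ninv}+(1-0)B^{\inv}=B$, so the normalized and the adjoint coefficients agree. The plan is to reduce each of (1)--(3) to the relative global ACC for generalized pairs, i.e.\ \cite[Theorem~1.6]{BZ16}, applied on a general fiber.

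For (1) and (2) I would first shrink $U$ around its generic point. Then we may assume that every component of $B$ dominates $U$ and that $\Mm_j|_F\not\equiv\bm{0}$ for each $j$ under consideration. Let $F$ be a general fiber of $\pi$. By adjunction to a general fiber, $(F,B|_F,\sum\mu_j\Mm_j|_F)$ is an lc projective generalized pair with $K_F+B|_F+\sum\mu_j\Mm_{j,F}|_F\equiv0$. Its coefficients lie in $\Ii$, and the $\Mm_j|_F$ are nef $\bb$-Cartier. By \cite[Theorem~1.6]{BZ16}, the coefficients of $B|_F$ and the $\mu_j$ with $\Mm_j|_F\not\equiv\bm{0}$ lie in a finite set depending only on $d$ and $\Ii$. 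The components of $B|_F$ are restrictions of horizontal components of $B$, so their coefficients are coefficients of $B$. Alternatively, one can run the MMP argument of Theorem~\ref{thm: global acc t=1} verbatim with $K_X$ in place of $K_{\Ff}$, without the ACSS modification: take a dlt modification, run a $(K-\epsilon D)$-MMP$/U$ to a Mori fiber space, and apply \cite[Theorem~1.6]{BZ16} on its general fiber. Either route gives (1) and (2), with $B|_F\in\Ii_0$ because $t=0<1$.

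For (3), $t=0$, so I would simply put $0$ into $\Ii_0$; then $t\in\Ii_0$ and the dichotomy holds trivially. Similarly, $1$ is put into $\Ii_0$ for the $t=1$ case. Taking $\Ii_0$ to be the union of the finite sets from the two cases finishes the proof.

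The only point that needs care is the restriction of $\bb$-divisors to $F$. We need $\Mm_j|_F$ to be nef and $\bb$-Cartier, and $\Mm_j|_F\not\equiv\bm{0}$ to be detected on $F$ (Lemma~\ref{lem: non-trivial of b-divisors}). These follow because $F$ is general, using \cite[Remark~4.2(2)]{BZ16}. I do not expect a real obstacle here.
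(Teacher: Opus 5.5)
Your proposal is correct. For $t=0$ it takes a more direct route than the paper.

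The paper disposes of $t=0$ in one line. It views the generalized pair $(X,B,\Mm)$ as the structure $(X,T_X,B,\Mm)(1)$ and invokes Theorem~\ref{thm: global acc t=1}, with (3) automatic. This works because $T_X$ is algebraically integrable (induced by the map to a point), no divisor is $T_X$-invariant so $B^{\ninv}=B$, $K_{T_X}=K_X$, and lc for this structure is just generalized lc.

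You instead restrict to the general fiber $F$ of $\pi$ and apply \cite[Theorem~1.6]{BZ16} there. Both routes rest on the same input. Yours shows that the ACSS modification and the $(K_{\Aa'}-\epsilon D)$-MMP to a Mori fiber space in the proof of Theorem~\ref{thm: global acc t=1} are needed only because $K_{\Ff}|_F\neq K_F$ for a genuine foliation. For $t=0$ one has $K_X|_F=K_F$, so this case is literally the classical relative global ACC and needs no foliated machinery. The paper's route is shorter on the page because the $t=1$ case has just been established; your ``alternative route'' is essentially the paper's argument unwound.

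One small technical point in your direct route. To satisfy the hypothesis ``$\mu_j=0$ whenever $M_j\equiv 0$'' of \cite[Theorem~1.6]{BZ16}, you discard the $\Mm_j$ with $\Mm_j|_F\equiv\bm{0}$. On a non-$\mathbb Q$-factorial $F$ this can destroy $\mathbb R$-Cartierness of the log canonical divisor. So do it after passing to a $\mathbb Q$-factorial generalized dlt model of $(F,B|_F,\Mm|_F)$. There, by the negativity lemma, such an $\Mm_j|_F$ descends to a numerically trivial $\mathbb R$-Cartier divisor. Discarding it then changes neither the discrepancies nor the numerical triviality, and the new boundary coefficients are $1$.
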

\begin{proof}
By applying Theorem~\ref{thm: global acc t=1} to $(X,T_X,B,\Mm)(1)$, we obtain Theorem~\ref{thm: global acc general}~(1-2) when $t=0$. Theorem~\ref{thm: global acc general}~(3) is automatic.
\end{proof}

\begin{lem}\label{lem: no part 2 case}
Theorem~\ref{thm: global acc general} holds if $K_{\Bb(s)}$ is $\mathbb R$-Cartier and $K_{\Bb(s)}|_F\equiv 0$ for some $s\neq t$.
\end{lem}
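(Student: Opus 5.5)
The plan is to use that the canonical divisor of $\Bb(u)$ is an affine function of $u$, and then reduce to the generalized pair case $u=0$, where Theorem~\ref{thm: relative global ACC gpair} applies unconditionally.

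\emph{Linearity and transfer of numerical triviality.} By definition,
$$K_{\Bb(u)}=uK_{\Ff}+(1-u)K_X+B^{\ninv}+(1-u)B^{\inv}+\Mm_X,$$
which is an affine function of $u\in[0,1]$. By hypothesis, both $K_{\Bb(t)}=K_{\Bb}$ and $K_{\Bb(s)}$ are $\mathbb R$-Cartier, with $s\neq t$. Every $K_{\Bb(u)}$ is an affine combination of these two, so $K_{\Bb(u)}$ is $\mathbb R$-Cartier for all $u\in[0,1]$. Likewise, $K_{\Bb(t)}|_F\equiv 0$ and $K_{\Bb(s)}|_F\equiv 0$, so $K_{\Bb(u)}|_F\equiv 0$ for every $u$. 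In particular, the second alternative of Theorem~\ref{thm: global acc general}~(3) holds, so (3) is automatic.

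\emph{Parts (1) and (2).} If $t=1$, these follow directly from Theorem~\ref{thm: global acc t=1}, so assume $t<1$. Enlarging $\Ii$ to $\Ii\cup\{0\}$, which is still DCC, we may assume $0\in\Ii$. Consider the generalized pair
$$\Bb(0)=(X,\Ff,B,\Mm)(0),$$
that is, $(X,B,\Mm)$ with the same $B$ and $\Mm$. We have $\Bb/U\geq\Bb(0)/U$, and $\Bb$ is lc with parameter $t<1$. Hence Lemma~\ref{lem: lc preserved under inequality}~(1) shows that $\Bb(0)$ is lc.

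Moreover $\Bb(0)/U\in\Ii$, and $K_{\Bb(0)}|_F\equiv 0$ by the first step. Theorem~\ref{thm: relative global ACC gpair} (the case of parameter $0$) therefore gives a finite set $\Ii_0=\Ii_0(d,\Ii)$ with $B|_F\in\Ii_0$, hence also $B^{\ninv}|_F\in\Ii_0$, and with $\{\mu_j\mid\Mm_j|_F\not\equiv\bm{0}\}\subset\Ii_0$. Taking the union with the finite set coming from the $t=1$ case gives (1) and (2).

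The only point needing care is that the lc property passes from $\Bb$ to $\Bb(0)$. This is exactly why the case $t=1$ is separated out: there the invariant coefficients of $B$ are invisible in $K_{\Bb}$, and the monotonicity lemma does not apply. The $t=1$ case is instead covered by Theorem~\ref{thm: global acc t=1}.
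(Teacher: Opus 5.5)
Your proof is correct and follows essentially the paper's argument. The parameter dependence is affine, so (3) is automatic; for $t<1$ you pass to the lc generalized pair $\Bb(0)$ and apply Theorem~\ref{thm: relative global ACC gpair}. The only cosmetic differences are that you justify lc-ness of $\Bb(0)$ via Lemma~\ref{lem: lc preserved under inequality}~(1) where the paper cites \cite[Proposition~3.3]{Cas+24}, and that you spell out why every $K_{\Bb(u)}$ is $\mathbb R$-Cartier.
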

\begin{proof}
We have that $K_{\Bb(s)}$ is $\mathbb R$-Cartier and $K_{\Bb(s)}|_F\equiv 0$ for all $s\in [0,1]$. Thus we only need to prove Theorem~\ref{thm: global acc general}~(1-2). By Theorem~\ref{thm: relative global ACC gpair}, we may assume that $t\in (0,1)$. By \cite[Proposition~3.3]{Cas+24}, $\Bb(0)$ is lc. Since $K_{\Bb(0)}|_F\equiv 0$, Theorem~\ref{thm: global acc general}~(1-2) follows from Theorem~\ref{thm: relative global ACC gpair} applied to $\Bb(0)$.
\end{proof}

\subsection{Reduction to relative Picard number one}

In this subsection, we reduce Theorem~\ref{thm: global acc general} to the case when $\rho(X/U)=1$ and $X$ is of Fano type over $U$. We first consider the following set-up which will be used consistently in the rest of the proof of Theorem~\ref{thm: global acc general}.

\begin{setup}\label{setup: global acc}
We set
$d,\Ii,\pi,X,U,F,d_F,\Bb,\Ff,B,\Mm,t,\Bb(\cdot),n,\{\mu_j\}_{j=1}^n,\{\Mm_j\}_{j=1}^n$ satisfying the following conditions.
    \begin{enumerate}
    \item $d$ is a positive integer and $\Ii\subset [0,+\infty)$ is a set.
    \item $\pi: X\rightarrow U$ is a contraction between normal quasi-projective varieties, $\dim X=d>\dim U$, $X$ is of Fano type over $U$, and $\rho(X/U)=1$.
    \item $F$ is a general fiber of $\pi$ and $d_F:=\dim F$.
    \item $\Bb/U=(X,\Ff,B,\Mm)(t)/U$ is a $\mathbb Q$-factorial lc algebraically integrable adjoint foliated structure such that $X$ is klt and $t\in\Ii\cap (0,1)$.
    \item $\Bb(s):=(X,\Ff,B,\Mm)(s)$.
    \item $B\in\Ii$ and any irreducible component of $B$ is horizontal$/U$.
    \item $\Mm=\sum_{j=1}^n\mu_j\Mm_j$ where each $\mu_j\in\Ii$, $\Mm_j$ is nef$/U$ $\bb$-Cartier, and $\Mm_{j,X}$ is ample$/U$.
    \item $K_{\Bb(t)}\sim_{\mathbb R,U}0$.
    \end{enumerate}
\end{setup}

\begin{thm}\label{thm: picard number one case}
Let $d$ be a positive integer and $\Ii\subset [0,+\infty)$ a DCC set. Then there exists a finite set $\Ii_0\subset\Ii$ depending only on $d$ and $\Ii$ satisfying the following.

Notation and conditions as in Set-up~\ref{setup: global acc}. Then:
\begin{enumerate}
    \item $B,\mu_1,\dots,\mu_n\in\Ii_0$.
    \item Assume that $K_{\Bb(s)}\not\sim_{\mathbb R,U}0$ for some $s\neq t$. Then $t\in\Ii_0$.
\end{enumerate}
\end{thm}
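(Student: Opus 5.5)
We argue by contradiction, in the spirit of M\textsuperscript{c}Kernan--Prokhorov: we deduce finiteness of the coefficients from boundedness (BAB) combined with the local ACC. Throughout, we assume Theorem~\ref{thm: acc lct general version} in dimension $d$ and Theorem~\ref{thm: global acc general} in dimension $\le d-1$, as in the induction framework of this section.

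Suppose there is a sequence $\Bb_i/U_i$ as in Set-up~\ref{setup: global acc} whose coefficients (some component of $B_i$, some $\mu_{j,i}$, or, in case (2), $t_i$) form an infinite set. Passing to a subsequence, these coefficients are strictly increasing, since $\Ii$ is DCC. Restricting to a general fiber $F_i$ of $\pi_i$, we may assume that $U_i$ is a point, at the cost of working with the relative statement. Because $\rho(X_i/U_i)=1$, every component of $B_i$ and every $\Mm_{j,X_i}$ is ample over $U_i$. Hence lowering a single coefficient makes $K_{\Bb_i}$ anti-ample over $U_i$. In case (2), we instead move $s$ slightly toward the side on which $K_{\Bb_i(s)}$ becomes anti-ample, which is possible because $K_{\Bb_i(s)}$ is affine in $s$ and not identically trivial.

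\textbf{Step 1 (uniform singularities).} We apply the local ACC (Theorem~\ref{thm: acc lct general version}) in dimension $d$ to produce a uniform $\tau>0$ with the following effect. After replacing each increasing coefficient by its limit, the resulting $\Bb_i^\infty$ dominates $\Bb_i$ and, for $i\gg0$, satisfies $\Bb_i^\infty\ge\Bb_i\ge(\Bb_i^\infty)^{1-\tau}$. Therefore $\Bb_i^\infty$ is lc, while $K_{\Bb_i^\infty}$ is ample over $U_i$. Interpolating, we obtain $\Bb_i':=(\Bb_i^\infty)^{1-\tau/2}$ such that $-K_{\Bb_i'}$ is ample over $U_i$. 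By Lemma~\ref{lem: lc preserved under inequality} (lc places of the smaller structure are lc places of the larger) together with the klt-ness of $X_i$, we can also arrange that $\Bb_i'$ is klt and, after a further small perturbation, $\epsilon$-lc for a uniform $\epsilon$. Here the parameter $t_i$ may tend to $1$. Following the sketch in Section~\ref{sec: sketch of proof}, we therefore replace the parameter by $\min\{t_i,1-\tau\}$, using the ACC at parameter $1-\tau$ to retain lc-ness.

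\textbf{Step 2 (boundedness).} The structures $\Bb_i'$ are relatively Fano, $\epsilon$-lc, with parameter $\le 1-\tau$, so the relative BAB for algebraically integrable adjoint foliated structures \cite{Cas+25a} makes the general fibers $F_i$, together with $B_i|_{F_i}$ and $\Mm_i|_{F_i}$, log bounded. On a bounded family, $K_{\Bb_i}|_{F_i}\equiv0$ becomes a linear relation among the coefficients with intersection numbers taken from a finite set. A strictly increasing sequence of coefficients cannot satisfy such a relation, because the remaining coefficients lie in a DCC set and all the relevant intersection numbers are positive by ampleness. This gives (1), and in case (2), where $K_{\Bb(s)}$ is nonconstant in $s$, the same relation forces $t_i$ into a finite set.

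\textbf{Main obstacle.} The hardest step is Step 1, specifically the control of invariant components of $B_i$ and of the limit $t_i\to1$. There the order $\ge$ on normalized foliated structures behaves non-linearly (Lemma~\ref{lem: geometric average of nfs}), and the BAB input needs $t$ bounded away from $1$. If the direct perturbation fails, we will first try a relative two-ray game: extract an lc place computed by the limit structure, then run an MMP over $U_i$ to reach a Mori fiber space of lower relative dimension. There we would conclude by induction on $d_F$, using Theorem~\ref{thm: dcc adjunct to dcc} together with Theorem~\ref{thm: finite inversion of adjunction to finite 2} to transfer finiteness of the coefficients back to $X_i$.
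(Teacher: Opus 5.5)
There is a genuine gap, and it sits exactly where the difficulty of the theorem lies. It is the assertion in Step~1 that, after a small perturbation, $\Bb_i'$ is $\epsilon$-lc for a uniform $\epsilon$. Nothing supports this.

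The local ACC (Theorem~\ref{thm: acc lct general version}) only transfers log canonicity from a slightly smaller structure to a larger one. It gives no lower bound on total minimal log discrepancies. Scaling by $1-\tau/2$ can turn lc into klt (cf.\ Lemma~\ref{lem: klt bs}). But adding boundary only lowers log discrepancies, so $\tmld(\Bb_i')$ is at most that of the boundary-free structure $(X_i,\Ff_i,0,\bm{0})$ at the same parameter. That quantity may tend to $0$ over the generic point of $U_i$.

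This already happens for pairs ($\Ff=T_X$, $U$ a point). Take $X_n=\mathbb P(1,1,n)$ with its toric boundary $B_n$; this satisfies Set-up~\ref{setup: global acc}. Yet $(X_n,(1-\tau/2)B_n)$ has a divisor over the vertex with log discrepancy $\tau/n$, and $X_n$ alone has mld $2/n$.

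Your intersection-number argument in Step~2 is essentially Proposition~\ref{prop: when fiber bounded}, and capping the parameter at $1-\tau$ matches the paper. But bounded fibers are already handled that way, so a counterexample sequence must have unbounded fibers with total mld tending to $0$. That is precisely the regime Step~1 assumes away, so as written you only prove the easy case. Separately, you cannot reduce to $U_i$ a point by restricting to a general fiber. Adjunction/Bertini to general fibers fails for foliations, which is why everything must be done relatively over $U_i$.

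What is missing is the dichotomy and reduction in Steps~2--4 of the proof of Theorem~\ref{thm: acc to global acc}. Set $t_i'=\min\{t_i,1-\tau\}$, where $\tau$ is below three constants: the local-ACC constant, the constant of Lemma~\ref{lem: one lc place one close to one}, and the inductive gap for $t$ in smaller fiber dimension $d_F$. Let $a_i$ be the infimum of total log discrepancies of $(X_i,\Ff_i,0,\bm{0})(t_i')$ over divisors whose centers dominate $U_i$.

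\emph{If $a_i$ stays bounded below,} then \cite[Proposition~9.1]{Cas+25a} and \cite[Theorem~1.1]{Bir21a} bound $F_i$, and Proposition~\ref{prop: when fiber bounded} applies.

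\emph{If $a_i\to 0$,} one extracts a divisor $E_i$ computing $a_i$. It is not an lc place of anything, but its coefficient $e_i$ in the pullback of $\Bb_i(t_i')$ tends to $1$. One checks that $X_i'$ is still of Fano type over $U_i$ (Lemma~\ref{lem: fano type after extract divisor}). Then one runs a $(-E_i)$-MMP over $U_i$ to a Mori fiber space $X_i''\to T_i$ with $E_i''$ horizontal. This reduces everything to a coefficient strictly increasing to $1$, which is the real core:
\begin{enumerate}
\item The relative two-ray game (Lemma~\ref{lem: relative two-ray}) shows the relevant component is not contracted.
\item If it becomes vertical over $T_i$, the relative dimension drops and induction on $d_F$ applies.
\item Otherwise the relative dimension does \emph{not} drop. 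One raises coefficients to $1$ (still lc by the local ACC) and uses $\rho(X_i''/T_i)=1$. This produces a structure with an lc place in $\lfloor B\rfloor$, $\mathbb R$-trivial over $T_i$, whose parameter or remaining free coefficient lies in the interval forbidden by Lemma~\ref{lem: one lc place one close to one}.
\end{enumerate}
That lemma in turn rests on the non-klt case (Lemmas~\ref{lem: when exist lc place} and~\ref{lem: when exist lc center}). This means adjunction to an lc place via Theorems~\ref{thm: dcc adjunct to dcc} and~\ref{thm: finite inversion of adjunction to finite 2}, plus the global ACC in dimension $d-1$.

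Your fallback points in this direction but has two problems. First, it extracts the wrong object: the limit structure may well be klt, so there need be no lc place to extract; what one extracts is a divisor of small positive total log discrepancy. Second, it relies on the MMP lowering the relative dimension, which is exactly what fails in the hard case.
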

We will later show that Theorem~\ref{thm: global acc general} in dimension $\leq d-1$ and Theorem~\ref{thm: acc lct general version} in dimension $\leq d$ imply Theorem~\ref{thm: picard number one case}. See Theorem~\ref{thm: acc to global acc} for details. Here, we first show that Theorem~\ref{thm: picard number one case} in dimension $d$ implies Theorem~\ref{thm: global acc general} in dimension $d$:

\begin{thm}\label{thm: picard one imply general global acc}
Theorem~\ref{thm: picard number one case} in dimension $d$ implies Theorem~\ref{thm: global acc general} in dimension $d$.
\end{thm}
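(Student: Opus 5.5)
We reduce an arbitrary instance of Theorem~\ref{thm: global acc general} in dimension $d$ to Set-up~\ref{setup: global acc} through a sequence of MMPs, modelled on the proof of Theorem~\ref{thm: global acc t=1}.

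\textbf{Preliminary reductions.} Let $\Bb/U=(X,\Ff,B,\Mm)(t)/U$ satisfy the hypotheses.
\begin{itemize}
\item By Theorem~\ref{thm: relative global ACC gpair} we may assume $t\in(0,1)$.
\item By Lemma~\ref{lem: no part 2 case}, for part (3) we may assume $K_{\Bb(s)}|_F\not\equiv 0$ for every $s\neq t$ for which $K_{\Bb(s)}$ is $\mathbb R$-Cartier.
\item Since the statement only concerns the general fiber, we shrink $U$. We may then assume $K_{\Bb}\sim_{\mathbb R,U}0$ (as $K_{\Bb}|_F\equiv0$ and $F$ is general), that every component of $B$ is horizontal$/U$, and that $\Mm_j|_F\not\equiv\bm 0$ for every $j$.
\item Replace $\Bb$ by its pullback to a $\mathbb Q$-factorial qdlt modification (which exists by \cite[Theorem~1.9]{Cas+24} since $t<1$). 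This is crepant, keeps $K\sim_{\mathbb R,U}0$, and keeps coefficients in $\Ii\cup\{1\}$. The new exceptional divisors have coefficient $1$, which can be absorbed into $\Ii_0$. For the $s$-dependence in (3), observe that the identity $K_{h^*\Bb(s)}=h^*K_{\Bb(s)}$ holds only where both sides are defined; the new lc places are lc places for every $s$ by Lemma~\ref{lem: lc preserved under inequality}, exactly as in the proof of Lemma~\ref{lem: acc reduce to klt}.
\item Finally, by \cite[Proposition~3.3]{Cas+24} we may perturb so that $X$ is $\mathbb Q$-factorial klt, i.e.\ $\Bb(0)$ is lc.
\end{itemize}

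\textbf{Running the MMP.} Fix $D$, which is either a component of $B$ or one of the $\Mm_{j,X}$; for part (3) we will use the divisor $K_{\Bb(0)}$ instead. For $0<\epsilon\ll1$ we have $K_{\Bb}-\epsilon D\equiv_U-\epsilon D$, which is not pseudo-effective$/U$ because $D$ is horizontal (respectively $\Mm_j|_F\not\equiv\bm0$, using Lemma~\ref{lem: non-trivial of b-divisors}). Using the MMP for algebraically integrable adjoint foliated structures \cite{Cas+24,Cas+25b}, we run a $(K_{\Bb}-\epsilon D)$-MMP$/U$ with scaling. Since it is $K_{\Bb}$-trivial, it preserves lc-ness, the $\mathbb Q$-factoriality and klt property of the ambient variety, and the coefficients. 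It terminates in a Mori fiber space $X''\to T$ over $U$ on which $D''$ is ample$/T$. Replacing $U$ by $T$ and $F$ by a general fiber of $X''\to T$ gives $\rho(X''/T)=1$. Moreover $X''$ is of Fano type$/T$, since $-K_{X''}$ equals a positive multiple of $-(K_{\Bb}-\epsilon D)$ plus boundary terms; more precisely, Fano type follows because the klt pair $(X'',\Bb''(0)$-boundary$)$ has anti-ample log canonical class over $T$ up to a small perturbation. By construction $D''$ is horizontal, respectively ample$/T$.

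\textbf{Reaching Set-up~\ref{setup: global acc}.} On $X''$ we drop the vertical components of $B''$, and the $\Mm_j$ whose restriction to the new general fiber is numerically trivial; for the $\Mm_j$ that remain, $\Mm_{j,X''}$ is ample$/T$ because $\rho=1$. These are exactly Conditions (6)–(7). Theorem~\ref{thm: picard number one case}(1) then shows that $\mult_DB$, respectively $\mu_j$, lies in a finite set. Doing this for every $D$ gives parts (1) and (2).

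\textbf{Part (3).} We run the MMP with $D=K_{\Bb(0)}$, or with $-K_{\Bb(0)}$, according to which sign makes $K_{\Bb}-\epsilon D$ non-pseudo-effective$/U$; here $K_{\Bb(0)}|_F\not\equiv0$, so one of the two signs works. On the resulting Mori fiber space, $K_{\Bb''(0)}\not\sim_{\mathbb R,T}0$, so Theorem~\ref{thm: picard number one case}(2) gives $t\in\Ii_0$.

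\textbf{Main obstacle.} The hardest step is to guarantee that the chosen $D$ survives the MMP and remains nontrivial on the general fiber of the Mori fiber space. This is needed both for $\mathbb R$-Cartierness of $K_{\Bb(s)}$ after the qdlt modification, and for the case of a $\bb$-divisor $D$. The plan is to use that the contraction is $(-D)$-positive, hence $D$-negative, so $D''$ is ample$/T$ and in particular nonzero on $L$. This works exactly as in the proof of Theorem~\ref{thm: global acc t=1}.
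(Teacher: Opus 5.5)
Your strategy is the paper's. You pass to a $\mathbb Q$-factorial qdlt model. Then, for each component $D$ of $B$, each $\Mm_j$, and (for (3)) the parameter, you run a $K_{\Bb}$-trivial MMP$/U$ to a Mori fiber space, drop vertical components and fiberwise-trivial nef parts to land in Set-up~\ref{setup: global acc}, and apply Theorem~\ref{thm: picard number one case}. Your choice $D=-K_{\Bb(0)}$ in (3) is, up to scaling, the paper's $K_{\Bb'(t-\epsilon)}$-MMP. You are also right that (1)--(2) need no case split, since Theorem~\ref{thm: picard number one case}(1) puts no hypothesis on $s\mapsto K_{\Bb(s)}$.

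There is, however, a false step in the qdlt bullet. You claim the divisors extracted by $h$ are lc places of $\Bb(s)$ for every $s$, ``exactly as in the proof of Lemma~\ref{lem: acc reduce to klt}'', so that $(h^{-1}_*\Bb(s),\Exc(h))=h^*\Bb(s)$. Lemma~\ref{lem: lc preserved under inequality}(2) goes the other way: lc places of the smaller structure are lc places of the larger one. That is exactly why Lemma~\ref{lem: acc reduce to klt} modifies the \emph{smallest} structure $\Bb''$. Here $h$ extracts lc places of the largest lc member, $\Bb(t)$. For such $E$, the affine function $s\mapsto a(E,\Bb(s))+s\epsilon_{\Ff}(E)+(1-s)$ is $\geq 0$ on $[0,t]$ and vanishes at $t$, but it is in general positive for $s<t$.

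So your argument does not give $K_{\Bb'(0)}|_{F'}\not\equiv 0$ on $X'$. That is precisely what part (3) needs in order to make $K_{\Bb'(t)}+\epsilon K_{\Bb'(0)}$ non-pseudo-effective$/U$. The correct argument, which is the paper's, goes by pushforward. One has $(h|_{F'})_*(K_{\Bb'(s)}|_{F'})=K_{\Bb(s)}|_F$. Suppose $K_{\Bb'(s)}|_{F'}\equiv 0$. Since $K_{\Bb(s)}$ is $\mathbb R$-Cartier under the hypothesis of (3), the negativity lemma gives $K_{\Bb'(s)}|_{F'}=(h|_{F'})^*(K_{\Bb(s)}|_F)$. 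Hence $K_{\Bb(s)}|_F\equiv 0$, contradicting your reduction via Lemma~\ref{lem: no part 2 case}.

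A smaller point concerns your Fano-type argument. In the runs for $D$ or $\Mm_j$ one may have $K_{\Bb''(1)}\equiv_T 0$, hence $K_{\Bb''(0)}\equiv_T 0$. Also, $(X'',B'',\Mm)$ may fail to be klt because of the coefficient-one divisors coming from $\Exc(h)$. The anti-ampleness must then come from subtracting a small multiple of $D''$ (resp.\ $\Mm_j$) before scaling the boundary down. Alternatively, invoke \cite[Theorem~9.2]{Cas+25a} directly, as the paper does.
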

\begin{proof}
Notation and conditions as in Theorem~\ref{thm: global acc general}. By Theorem~\ref{thm: relative global ACC gpair} we may assume that $t\in (0,1)$. Possibly replacing $\Ii$ by $\Ii\cup\{1\}$, we may assume that $1\in\Ii$. By Lemma~\ref{lem: no part 2 case}, we may assume that one of the following cases holds:
\begin{itemize}
    \item[(\textbf{Case 1})] $K_{\Bb(s)}$ is not $\mathbb R$-Cartier for any $s\neq t$. In this case we only need to prove (1-2).
    \item[(\textbf{Case 2})] $K_{\Bb(s)}$ is $\mathbb R$-Cartier for any $s\in [0,1]$, and $K_{\Bb(s)}|_F\not\equiv 0$ for any $s\neq t$.
\end{itemize}
We let $h\colon X'\rightarrow X$ be a $\mathbb Q$-factorial qdlt modification of $\Bb=\Bb(t)$, and we let $\Bb'(s):=(h^{-1}_*\Bb(s),\Exc(h))$ for any $s\in [0,1]$. Then $\Bb'(t)=h^*\Bb(t)$, and we may write 
$$\Bb'(s)=(X',\Ff':=h^{-1}\Ff,B':=h^{-1}_*B+\Exc(h),\Mm)(s)$$
for any $s\in [0,1]$. Let $\pi':=\pi\circ h$ and $F'$ a fiber of $\pi'$ such that $h(F')=F$. Then $F'$ is a general fiber of $\pi'$ and $\Bb'(t)$ is $\mathbb Q$-factorial qdlt. Moreover, if we are in \textbf{Case 2}, then
$$(h|_{F'})_*(K_{\Bb'(s)}|_{F'})=K_{\Bb(s)}|_F\not\equiv 0,$$
hence $K_{\Bb'(s)}|_{F'}\not\equiv 0$.

We first prove the theorem when $K_{\Bb'(s)}|_{F'}\equiv 0$ for some $s\neq t$. Under this assumption, we are in \textbf{Case 1}, so we only need to prove (1-2). By Lemma~\ref{lem: no part 2 case} applied to $\Bb'(t)$, there exists a finite set $\Ii_0$ depending only on $d$ and $\Ii$ such that 
$$B'|_{F'}\in\Ii_0\quad \text{and}\quad \{\mu_j\mid \Mm_j|_{F'}\not\equiv\bm{0}\}\subset\Ii_0.$$
Since 
$$B|_F=(h|_{F'})_*B'|_{F'}\quad \text{and}\quad \Mm_j|_F=\Mm_j|_{F'},$$
we obtain (1-2).

In the following, we may assume that $K_{\Bb'(s)}|_{F'}\not\equiv 0$ for any $s\neq t$. By \cite[Theorem~5.1]{Cas+25a}, $K_{\Bb'(1)}$ is pseudo-effective$/U$ but $K_{\Bb'(1)}|_{F'}\not\equiv 0$. Therefore, $K_{\Bb'(s)}|_{F'}$ is not pseudo-effective for any $s<t$, so $K_{\Bb'(s)}$ is not pseudo-effective$/U$ for any $s<t$. Since $X'$ is $\mathbb Q$-factorial, possibly removing the $\Mm_j$ such that $\Mm_j|_{F'}\equiv\bm{0}$, we may assume that $\Mm_j|_{F'}\not\equiv\bm{0}$ for any $j$. Fix $0<\epsilon\ll 1$. We define
$$\Cc_D:=(\Bb'(t),-\epsilon D),\quad \Cc_j:=(\Bb'(t),-\epsilon\Mm_j),\quad \text{ and }\quad \Cc':=\Bb'(t-\epsilon)$$
for any irreducible component $D$ of $B'$ and $1\leq j\leq n$. Then
$$K_{\Cc_D}|_{F'}\equiv -\epsilon(D^{\ninv}+(1-t)D^{\inv})|_{F'},\quad K_{\Cc_j}|_{F'}\equiv -\epsilon(\Mm_j|_F)_{F'},\quad \text{and}\quad K_{\Cc'}|_{F'}$$
are not pseudo-effective. By \cite[Theorem~2.1.5]{Cas+25a}, we may run a $K_{\Cc_D}$-MMP$/U$ (resp. $K_{\Cc_j}$-MMP$/U$, $K_{\Cc'}$-MMP$/U$) with scaling of an ample divisor $\phi_D: X'\dashrightarrow X_D$ (resp. $\phi_j: X'\dashrightarrow X_j$, $\phi\colon X'\dashrightarrow Y$) which terminates with a Mori fiber space $g_D: X_D\rightarrow T_D$ (resp. $g_j: X_j\rightarrow T_j$, $g: Y\rightarrow T$). Let $\Bb_D(s):=\phi_{D,*}\Bb'(s)$ (resp. $\Bb_j(s):=\phi_{j,*}\Bb'(s)$, $\Bb_Y(s):=\phi_{*}\Bb'(s)$) for any $s\in [0,1]$ and let $\Cc_D':=\phi_{D,*}\Cc_D$ (resp. $\Cc_j':=\phi_{j,*}\Cc_j$, $\Cc_Y':=\phi_{*}\Cc'$). Since $X$ is $\mathbb Q$-factorial klt, $X_D,X_j,Y$ are $\mathbb Q$-factorial klt, and $\rho(X_D/T_D)=\rho(X_j/T_j)=\rho(Y/T)=1$. Since $K_{\Cc_D'}$ (resp. $K_{\Cc_j'}$, $K_{\Cc_Y'}$) is anti-ample$/T_D$ (resp. anti-ample$/T_j$, anti-ample$/T$), by \cite[Theorem~9.2]{Cas+25a}, $X_D$ (resp. $X_j,Y$) is of Fano type over $T_D$ (resp. $T_j,T$). 

We let $B_D$ (resp. $B_j,B_Y$) be the horizontal$/T_D$ (resp. horizontal$/T_j$, horizontal$/T$) part of $\phi_{D,*}B'$ (resp. $\phi_{j,*}B'$, $\phi_{*}B'$), $\Lambda_D$ (resp. $\Lambda_j,\Lambda$) the subset of $\{1,2,\dots,n\}$ such that $k\in\Lambda_D$ (resp. $\Lambda_j,\Lambda$) if and only if $\Mm_{k,X_D}$ (resp. $\Mm_{k,X_j}$, $\Mm_{k,Y}$) is ample$/T_D$ (resp. ample$/T_j$, ample$/T$), and let
$$\Nn^D\text{(resp. }\Nn^j,\Nn^Y\text{)}:=\sum_{k\mid k\in\Lambda_D\text{(resp. }k\in\Lambda_j,k\in\Lambda\text{)}}\mu_k\Mm_k,$$
and let
$$\widetilde{\Bb_D}(s):=\left(X_D,\Ff_D:=\phi_{D,*}\Ff',B_D,\Nn^D\right)(s),\quad \widetilde{\Bb_j}(s):=\left(X_j,\Ff_j:=\phi_{j,*}\Ff',B_j,\Nn^j\right)(s),$$
and 
$$\widetilde{\Bb_Y}(s):=\left(Y,\Ff_Y:=\phi_{*}\Ff',B_Y,\Nn^Y\right)(s)$$
for any $s\in [0,1]$. Since $\phi_D$ (resp. $\phi_j$, $\phi$) is $K_{\Bb'(t)}$-trivial, $\Bb_D(t)$ (resp. $\Bb_j(t)$, $\Bb_Y(t)$) is lc. Since $\Bb_D(t)/U\geq \widetilde{\Bb_D}(t)/U$ (resp. $\Bb_j(t)/U\geq \widetilde{\Bb_j}(t)/U$, $\Bb_Y(t)/U\geq \widetilde{\Bb_Y}(t)/U$) and $t<1$, by Lemma~\ref{lem: lc preserved under inequality}, $\widetilde{\Bb_D}(t)$ (resp. $\widetilde{\Bb_j}(t)$, $\widetilde{\Bb_Y}(t)$) is lc. Since $\rho(X_D/T_D)=\rho(X_j/T_j)=\rho(Y/T)=1$, we have that
$$K_{\widetilde{\Bb_D}(t)}\sim_{\mathbb R,T_D}K_{\Bb_D(t)}\sim_{\mathbb R,T_D}0,\quad K_{\widetilde{\Bb_j}(t)}\sim_{\mathbb R,T_j}K_{\Bb_j(t)}\sim_{\mathbb R,T_j}0,\quad K_{\widetilde{\Bb_Y}(t)}\sim_{\mathbb R,T}K_{\Bb_Y(t)}\sim_{\mathbb R,T}0.$$
Moreover, since $g_D$ (resp. $g_j,g$) is a $(-\phi_{D,*}D)$-Mori fiber space (resp. $(-\Mm_{j,X_j})$-Mori fiber space, $K_{\Bb_Y(0)}$-Mori fiber space), $\phi_{D,*}D$ is horizontal$/T_D$ (resp. $\Mm_{j,X_j}$ is ample$/T_j$, $K_{\Bb_Y(s)}\not\sim_{\mathbb R,T}0$ for any $s\neq t$). By Theorem~\ref{thm: picard number one case}, there exists a finite set depending only on $d$ and $\Ii$, such that $\mult_{\phi_{D,*}D}B_D\in\Ii_0$ (resp. $\mu_j\in\Ii_0$, $t\in\Ii_0$). The theorem follows.
\end{proof}

\subsection{The bounded fiber case}

The previous subsection reduced Theorem~\ref{thm: global acc general} to Theorem~\ref{thm: picard number one case}. In this subsection, we prove Theorem~\ref{thm: picard number one case} under the extra assumption that $F$ belongs to a bounded family.

\begin{prop}\label{prop: when fiber bounded}
Let $d$ be a positive integer, $\Ii\subset [0,+\infty)$ a DCC set, and $\mathcal{P}$ a bounded family of varieties. Then there exists a finite set $\Ii_0\subset\Ii$ depending only on $d$, $\Ii$, and $\mathcal{P}$ satisfying the following.

Notation and conditions as in Set-up~\ref{setup: global acc}. Assume that $F\in\mathcal{P}$ or $\dim F=1$. Then:
\begin{enumerate}
    \item $B,\mu_1,\dots,\mu_n\in\Ii_0$.
    \item Assume that $K_{\Bb(s)}\not\sim_{\mathbb R,U}0$ for some $s\neq t$. Then $t\in\Ii_0$.
\end{enumerate}
\end{prop}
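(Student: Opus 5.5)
We restrict everything to the general fiber $F$ and turn the statement into a numerical identity on a bounded family. Since $\rho(X/U)=1$, every Weil divisor on $X$ is $\mathbb Q$-Cartier and numerically proportional over $U$ to a fixed ample$/U$ class. So all the relevant classes restricted to $F$ are multiples of one ample class $H_F$. Concretely, $K_X|_F=K_F$, $K_{\Ff}|_F$, each irreducible component $B_i|_F$ of $B$ (horizontal by Set-up~\ref{setup: global acc}(6)), and each $\Mm_{j,X}|_F$ (ample by (7)) are all multiples of $H_F$. Condition (8), $K_{\Bb(t)}\sim_{\mathbb R,U}0$, then reads
$$t\,k_{\Ff}+(1-t)k_X+\sum_i \beta_i(t) b_i+\sum_j\mu_j m_j=0,$$
where $k_{\Ff}=K_{\Ff}\cdot H_F^{d_F-1}$, $k_X=K_F\cdot H_F^{d_F-1}$, $b_i=B_i|_F\cdot H_F^{d_F-1}>0$ and $m_j=\Mm_{j,X}|_F\cdot H_F^{d_F-1}>0$. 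Here $\beta_i(t)$ is the coefficient of $B_i$ in $B^{\ninv}+(1-t)B^{\inv}$, so it equals $\mathrm{mult}_{B_i}B$ or $(1-t)\mathrm{mult}_{B_i}B$.

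The first main step is to bound all these intersection numbers. When $F\in\mathcal P$ we fix a very ample $H_F$ of bounded degree on the bounded family. Then $k_X$ lies in a finite set and each $b_i$, $m_j$ is a positive number. We would like $b_i,m_j$ to be bounded below and to lie in a discrete set, e.g. integers after multiplying by the Cartier index of $H_F$ in the bounded family; note that $\mathbb Q$-factoriality plus boundedness gives bounded Cartier indices of Weil divisors on the fibers, using that $F$ is klt. They are also bounded above: lc-ness of $\Bb(t)$ forces bounded coefficients, and $t\in(0,1)$ together with the identity forces $\sum\beta_i b_i+\sum\mu_j m_j=-(tk_{\Ff}+(1-t)k_X)$. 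For $k_{\Ff}$ we need finiteness as well. The foliation restricted to $F$ is algebraically integrable, so $K_{\Ff}|_F$ equals $K_{\Ff_F}$ plus a correction supported on a bounded number of invariant divisors, and its degree is bounded in terms of the bounded family. I would get this by the Riemann--Hurwitz / tangency comparison applied to the family of leaves, bounding $-K_{\Ff}\cdot H^{d_F-1}$ above by the degree of the relative tangent sheaf. Once all of $k_{\Ff},k_X,b_i,m_j$ range in a finite (or discrete, bounded) set, the equation is a linear relation with DCC coefficients $\mathrm{mult}B,\mu_j\in\Ii$. The standard argument then applies: a DCC sequence of solutions of finitely many fixed linear equations with positive coefficients must stabilize. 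Hence $B,\mu_j$ lie in a finite set $\Ii_0$, which proves (1).

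For (2), assume $K_{\Bb(s)}\not\sim_{\mathbb R,U}0$ for some $s\neq t$. The left side above is then a nonconstant affine function of $t$, with slope $k_{\Ff}-k_X-\sum_{i\,\mathrm{inv}}\mathrm{mult}_{B_i}B\cdot b_i$. This slope is nonzero and ranges in a finite set, because of (1) and the finiteness of the intersection numbers. So $t$ is the unique root of finitely many possible affine equations and lies in a finite set.

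The case $\dim F=1$ is handled separately and directly. Here $F\cong\mathbb P^1$, since $X$ is of Fano type over $U$. Either $\Ff$ restricted to $F$ is the whole tangent sheaf, with $K_{\Ff}|_F=K_F$ of degree $-2$, or $F$ is tangent to nothing, with $K_{\Ff}|_F$ equal to $0$ plus tangency points. Degrees are integers, and $b_i,m_j$ are positive integers after clearing a bounded index. Indeed, the Cartier index of $\mathbb Q$-factorial klt surface-germ-like points along a general fiber curve is irrelevant, because a general fiber avoids codimension-two singularities. The same linear-relation argument with degree $-2\le$ bounded quantities then applies.

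The main obstacle I expect is the finiteness of $k_{\Ff}=K_{\Ff}\cdot H_F^{d_F-1}$, i.e. controlling the foliation's canonical class on a fiber merely from boundedness of $F$ as a variety. If the direct degree bound fails, I would instead use lc-ness: the lc condition on $\Bb(t)$ at $t>0$ bounds the non-invariant tangency contributions in $K_{\Ff}|_F-K_{\Ff_F}$. Combined with $0\le$ coefficients and the linear identity, this would give two-sided bounds on $k_{\Ff}$. Integrality of $k_{\Ff}$ up to a bounded index then follows because $K_{\Ff}$ is a Weil divisor on a $\mathbb Q$-factorial variety with bounded index along $F$.
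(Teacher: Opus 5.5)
Your reduction to a numerical identity on $F$ is the right framework, but the argument has a genuine gap. You never determine the sign of $K_{\Bb(1)}|_F\cdot H^{d_F-1}$, and without it neither the finiteness of the intersection numbers nor the DCC step goes through.

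Write $a=-K_F\cdot H^{d_F-1}$ and $e=K_{\Ff}|_F\cdot H^{d_F-1}$. Let $P$, $Q$, $M$ be the contributions of the non-invariant components, the invariant components, and the nef part. Your identity then reads $te-(1-t)a+P+(1-t)Q+M=0$.

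First, the invariant components enter with weight $(1-t)c_k$, and $t\in\Ii\cap(0,1)$ may tend to $1$. So an upper bound on $\sum_i\beta_i(t)b_i$, even granting a lower bound on $e$, only gives $q_k=O(1/(1-t))$ for their intersection numbers $q_k$. These are therefore not shown to lie in a finite set.

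Second, even with all integer data fixed, this is not a positive linear relation among DCC unknowns. The parameter $t$ multiplies $Q$ and also appears with a coefficient of undetermined sign. If $e+P+M<0$ were allowed, then already with $P=M=0$ the relation $qc=a+\frac{t}{1-t}|e|$ has infinitely many solutions with $t$ and $c$ both strictly increasing. So no purely numerical ``standard argument'' can give finiteness.

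The missing input is \cite[Theorem~5.1]{Cas+25a}: $K_{\Bb(1)}=K_{\Ff}+B^{\ninv}+\Mm_X$ is pseudo-effective over $U$. First reduce via Lemma~\ref{lem: no part 2 case} to the case $K_{\Bb(s)}\not\sim_{\mathbb R,U}0$ for all $s\neq t$. Then $\rho(X/U)=1$ makes $K_{\Bb(1)}$ ample over $U$, so $e+P+M>0$. Dividing your identity by $1-t$ gives
\[
a=P+Q+M+\frac{t}{1-t}\,(e+P+M),
\]
in which every term is nonnegative and non-decreasing in the coefficients and in $t$. This one identity does all the work, and it is the paper's proof:
\begin{itemize}
\item it bounds $P+Q+M$ by $a$;
\item it bounds $e$ from below by $-(P+M)\geq -a$;
\item it bounds $e$ from above, using $t\geq\min(\Ii\setminus\{0\})$;
\item hence all the integers $p_i,q_k,\lambda_j,e$ lie in a finite set;
\item monotonicity then gives finiteness of $B$, the $\mu_j$, and $t$ simultaneously.
\end{itemize}

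In particular, the obstacle you anticipated, a lower bound on $k_{\Ff}$ via Riemann--Hurwitz or slopes, is unnecessary. As stated it is also imprecise: the degree of a subsheaf such as $T_{\Ff}|_F\subset T_X|_F$ is controlled by its rank times the maximal slope, not by the degree of the ambient sheaf.

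Likewise no Cartier-index discussion is needed, since intersection numbers of Weil divisors with a very ample Cartier divisor are integers.

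Your argument for (2) is fine once (1) and the finiteness of the integer data are established.
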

\begin{proof}
We may assume that $1\in\Ii$ and $\mathbb P^1\in\mathcal{P}$. If $\dim F=1$, then since $X$ is of Fano type over $U$, we have $F=\mathbb P^1\in\mathcal{P}$. Thus we may assume that $F\in\mathcal{P}$. By Lemma~\ref{lem: no part 2 case}, we may assume that $K_{\Bb(s)}\not\sim_{\mathbb R,U}0$ for any $s\neq t$.

Let $d_F:=\dim F>0$. Since $F\in\mathcal{P}$, there exists a positive integer $r$ depending only on $\mathcal{P}$ and a very ample Cartier divisor $H$ on $F$ such that $H^{d_F}\leq r$ and $K_F\cdot H^{d_F-1}\geq -r$. By \cite[Theorem~5.1]{Cas+25a}, $K_{\Bb(1)}$ is pseudo-effective$/U$. Since $t<1$, $K_{\Bb(1)}|_F\not\sim_{\mathbb R}0$, so $K_{\Bb(1)}$ is ample$/U$ and $K_{\Bb(1)}|_F\cdot H^{d_F-1}>0$.

We write $B=\sum_{i=1}^l b_iB_i+\sum_{k=1}^m c_kC_k$ where $B_i,C_k$ are the irreducible components of $B$, each $B_i$ is not $\Ff$-invariant and each $C_k$ is $\Ff$-invariant. We let $\Mm_j^F:=\Mm_j|_F$ for any $j$. We denote by
$$p_i:=B_i|_F\cdot H^{d_F-1},\quad q_k:=C_k|_F\cdot H^{d_F-1},\quad \text{and}\quad  \lambda_j:=\Mm_{j,F}^F\cdot H^{d_F-1}$$
for any $i,k,j$, $a:=-K_F\cdot H^{d_F-1}$, and $e:=K_{\Ff}|_F\cdot H^{d_F-1}$. Then $p_i,q_k,\lambda_j$ are positive integers and $a$ is an integer in $[0,r]$. We let $\mu$ be the unique real number such that $K_X+\mu K_{\Bb(1)}\sim_{\mathbb R,U}0$. Then $K_X+\mu K_{\Bb(1)}$ is pseudo-effective$/U$, so by \cite[Theorem~5.1]{Cas+25a}, $K_{\Ff}+\mu K_{\Bb(1)}$ is pseudo-effective$/U$. Thus
$$e=K_{\Ff}|_F\cdot H^{d_F-1}=(K_{\Ff}+\mu K_{\Bb(1)})|_F\cdot H^{d_F-1}-(K_{X}+\mu K_{\Bb(1)})|_F\cdot H^{d_F-1}+K_F\cdot H^{d_F-1}\geq -a.$$
Therefore, $e$ is an integer in $[-a,+\infty)\subset [-r,+\infty)$. Since $K_{\Bb(t)}|_F\cdot H^{d_F-1}=0$, we have
$$a=\sum_{i=1}^lp_ib_i+\sum_{k=1}^mq_kc_k+\sum_{j=1}^n\lambda_j\mu_j+\frac{t}{1-t}\left(e+\sum_{i=1}^lp_ib_i+\sum_{j=1}^n\lambda_j\mu_j\right).$$
Moreover,
$$e+\sum_{i=1}^lp_ib_i+\sum_{j=1}^n\lambda_j\mu_j=K_{\Bb(1)}|_F\cdot H^{d_F-1}>0.$$
Since $\Ii$ is a DCC set, the proposition follows.
\end{proof}

\subsection{The non-klt case}

In this subsection, we prove Theorem~\ref{thm: picard number one case} under the extra assumption that $\Bb$ is not klt over the generic point of $U$. We first prove the case when there exists an lc place on $X$ (Lemma~\ref{lem: when exist lc place}) and then prove the general case (Lemma~\ref{lem: when exist lc center}).

\begin{lem}\label{lem: when exist lc place}
Let $d$ be a positive integer and $\Ii\subset [0,+\infty)$ a DCC set. Assume that Theorem~\ref{thm: global acc general} holds in dimension $\leq d-1$. Then there exists a finite set $\Ii_0\subset\Ii$ depending only on $d$ and $\Ii$ satisfying the following.

Notation and conditions as in Set-up~\ref{setup: global acc} and assume that $\lfloor B\rfloor\neq0$. Then:
\begin{enumerate}
    \item $B,\mu_1,\dots,\mu_n\in\Ii_0$.
    \item Assume that $K_{\Bb(s)}\not\sim_{\mathbb R,U}0$ for some $s\neq t$. Then $t\in\Ii_0$.
\end{enumerate}
\end{lem}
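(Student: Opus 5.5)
Pick an irreducible component $S$ of $\lfloor B\rfloor$; it is horizontal$/U$ by Set-up~\ref{setup: global acc}(6). The plan is to apply adjunction to $S$, use the global ACC on $S$ (dimension $d-1$), and then lift finiteness back to $X$ with Theorem~\ref{thm: finite inversion of adjunction to finite 2}.

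\textbf{Step 1: a plt model.} Since $X$ is klt and $\mathbb Q$-factorial, and $\Bb$ is lc with $t<1$, I first pass to a model on which $S$ is normal and $(X,S)$ is plt. Concretely, take a $\mathbb Q$-factorial qdlt modification $h\colon X'\to X$ of $\Bb$ (\cite[Theorem~1.9]{Cas+24}); the strict transform $S'$ is then normal and $(X',S')$ is plt near the generic point of $S'$. Only the generic fiber over $U$ matters, and $K_{h^*\Bb}\sim_{\mathbb R,U}0$, so I may work on $X'$ and shrink $U$ whenever needed.

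\textbf{Step 2: adjunction on $S$.} Let $\Bb_S=(S,\Ff_S,B_S,\Mm^S)(t)$ be the structure induced by $K_{\Bb_S}=K_{\Bb}|_S$. It is lc, and by Theorem~\ref{thm: dcc adjunct to dcc} we have $B_S\in\Ii'$ for a DCC set $\Ii'$ depending only on $\Ii$. Take the Stein factorization $S\to Z\to U$ and a general fiber $F_S$ of $S\to Z$. Since $K_{\Bb}\sim_{\mathbb R,U}0$, we get $K_{\Bb_S}|_{F_S}\equiv0$. The relative global ACC, Theorem~\ref{thm: global acc general} in dimension $d-1$, then gives finite sets containing $B_S|_{F_S}$ and $\{\mu_j\mid \Mm_j^S|_{F_S}\not\equiv\bm 0\}$. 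Part (3) of that theorem, combined with Theorem~\ref{thm: dcc precise adjunction} applied at the horizontal divisors, gives the dichotomy: either $t$ lies in a finite set, or $s\mapsto K_{\Bb(s)}|_S$ restricted to $F_S$ is numerically trivial for all $s$.

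\textbf{Step 3: lift to $X$.} Apply Theorem~\ref{thm: finite inversion of adjunction to finite 2}, taking $D$ to be $B-S$ (other components of $B$) and $\Nn=\Mm$. Hypothesis (9) holds because $\rho(X/U)=1$: every component $L$ of $B$ is horizontal and relatively ample$/U$, so $L|_S\neq0$ over the generic point of $Z$. Likewise each $\Mm_{j,X}$ is ample$/U$, so $(\Mm_{j,X}|_S)|_{F_S}\not\equiv0$. The theorem then puts the other coefficients of $B$ and every $\mu_j$ in a finite set $\Ii_0'$. The coefficient of $S$ itself is $1$, so (1) follows.

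For (2), suppose $K_{\Bb(s)}\not\sim_{\mathbb R,U}0$ for some $s\neq t$. Because $\rho(X/U)=1$, $K_{\Bb(s)}$ is then relatively ample or anti-ample$/U$ for $s\ne t$. Its restriction to $S$ is nonzero on $F_S$ whenever $S\to Z$ has positive-dimensional fibers, which rules out the second alternative of Step 2 and forces $t$ into a finite set.

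\textbf{Main obstacle.} The degenerate case where $S\to U$ is generically finite, so that $F_S$ is a point. Here Step 2 gives no information on $t$, and finiteness has to come directly from Theorem~\ref{thm: dcc precise adjunction} at points. I would handle this by restricting to a general fiber $F$ of $\pi$, on which $S|_F$ is a finite set of points, and using the explicit coefficient formula (Proposition~\ref{prop: explicit formula}) together with $\rho=1$. The harder technical point is checking that passing to the plt model in Step 1 preserves hypothesis (9) and the ampleness used in Step 3.
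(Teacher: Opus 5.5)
Your outline follows the paper's strategy: adjunction to a component of $\lfloor B\rfloor$, Theorem~\ref{thm: dcc adjunct to dcc}, the global ACC in dimension $d-1$ on $S'\to Z$, Theorem~\ref{thm: finite inversion of adjunction to finite 2}, and the dichotomy from Theorem~\ref{thm: dcc precise adjunction} plus part~(3). There is a genuine gap, though, in the step you defer as a ``technical point'': the choice of model.

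You verify hypothesis~(9), and the non-triviality of $\Mm_{j,X}|_S$, on $X$ using $\rho(X/U)=1$. The theorem, however, is applied on $X'$, where $\rho(X'/U)>1$ and strict transforms are no longer ample. A qdlt modification of $\Bb$ itself may extract lc places over $\widetilde S\cap L$, and then (9) really does fail. For example, take $U$ a point, $X=\mathbb P^2$, $\Ff=T_X$, and $B=\widetilde S+bL+(1-b)L_2+L_3$, where $\widetilde S,L,L_2$ are lines through a point $p$ and $L_3$ is a general line. Then $\Bb$ is lc with $K_{\Bb}\equiv 0$, and the blow-up of $p$ is a $\mathbb Q$-factorial qdlt modification. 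On $\mathbb F_1$ the strict transforms of $\widetilde S,L,L_2$ are disjoint fibres, so $h^{-1}_*L|_{S'}=0$ and Theorem~\ref{thm: finite inversion of adjunction to finite 2} says nothing about $b$.

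There is a second problem with this model. An lc place $E$ of $\Bb=\Bb(t)$ need not be an lc place of $\Bb(s)$ for $s\neq t$: the affine function $s\mapsto a(E,\Bb(s))+s\epsilon_{\Ff}(E)+(1-s)$ is only known to vanish at $t$. So in general $(h^{-1}_*\Bb(s),\Exc(h))\neq h^*\Bb(s)$. In fact $K_{\Bb'(1)}=h^*K_{\Bb(1)}-(\text{effective exceptional})$, so the relative ampleness of $K_{\Bb(1)}$ need not survive on a general fibre of $S'\to Z$, and your argument for~(2) needs exactly that.

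The missing idea is to extract only lc places of $\Cc(s):=(X,\Ff,\widetilde S,\bm 0)(s)$ at an intermediate parameter. Let $h$ be a $\mathbb Q$-factorial qdlt modification of $\Cc(t/2)$. By Lemma~\ref{lem: lc preserved under inequality}, every extracted divisor is an lc place of both $\Bb(s)$ and $\Cc(s)$ at $s=t/2$ and at $s=t$. By affineness it is then an lc place for all $s$, so $\Bb'(s)=h^*\Bb(s)$ for every $s$.

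These divisors are also lc places of $\Cc(0)=(X,\widetilde S)$. Since $\Bb(0)=(X,B,\Mm)$ is lc, their centres avoid $\Supp(B-\widetilde S)$, and $\Mm_j$ does not drop along them. Hence $h^{-1}_*L|_{S'}=h_S^*(L|_S)$ and $\Mm_{j,X'}|_{S'}=h_S^*(\Mm_{j,X}|_S)$. In the situation of~(2), after Lemma~\ref{lem: no part 2 case} makes $K_{\Bb(1)}$ ample$/U$, also $K_{\Bb'(1)}|_{S'}=h_S^*(K_{\Bb(1)}|_S)$. All of these are big and nef$/U$.

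Your ``main obstacle'' needs no explicit-formula argument. The case $d_F=1$ is exactly Proposition~\ref{prop: when fiber bounded}, since then $F\cong\mathbb P^1$. Once $d_F\geq 2$ we have $\dim S'>\dim U$, so these big and nef$/U$ divisors are non-trivial on a general fibre of $S'\to Z$. That gives (9), the non-triviality of the nef parts required in Theorem~\ref{thm: finite inversion of adjunction to finite 2}, and the exclusion of the second alternative in~(2).
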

\begin{proof}
By Proposition~\ref{prop: when fiber bounded} we may assume that $\dim F \geq 2$. By Lemma~\ref{lem: no part 2 case}, we may assume that $K_{\Bb(s)}\not\sim_{\mathbb R,U}0$ for any $s\neq t$.

Let $\widetilde S$ be an irreducible component of $\lfloor B\rfloor$. Since all the irreducible components of $B$ are horizontal$/U$ and ample$/U$, $\widetilde S$ is horizontal$/U$ and ample$/U$. Let $S$ be the normalization of $\widetilde S$, $\Mm_j^S:=\Mm_j|_S$ for any $j$, and $\Mm^S:=\Mm|_S$. Since $d_F\geq 2$, we have the following.
\begin{itemize}
 \item $\Mm_{j,X}|_S$ is ample$/U$, hence $\Mm_{j,X}|_S\not\equiv_U0$ for any $j$.
 \item For any irreducible component $L\neq S$ of $B$, $L|_S\geq 0$ is ample$/U$, hence $L|_S\neq0$ over the generic point of $U$.
\end{itemize}
Let $\Cc(s):=\left(X,\Ff,\widetilde{S},\bm{0}\right)(s)$ for any $s\in [0,1]$. Then $\Bb(s)/U\geq\Cc(s)/U$ for any $s\in [0,1]$. Since $\Bb(t)$ is lc, by Lemma~\ref{lem: lc preserved under inequality}, $\Bb(s)$ and $\Cc(s)$ are lc for any $s\in [0,t]$. We let $h\colon X'\rightarrow X$ be a $\mathbb Q$-factorial qdlt modification of $\Cc\left(\frac{t}{2}\right)$ and let $$\Bb'(s):=(h^{-1}_*\Bb(s),\Exc(h)),\quad \Cc'(s):=(h^{-1}_*\Cc(s),\Exc(h))$$ for any $s\in [0,1]$. By Lemma~\ref{lem: lc preserved under inequality} and linearity of discrepancies, any prime divisor extracted by $h$ is an lc place of $\Cc(s)$ and $\Bb(s)$ for any $s\in [0,t]$. In particular, $\Bb'(s)=h^*\Bb(s)$ and $\Cc'(s)=h^*\Cc(s)$ for any $s\in [0,1]$. Let $S':=h^{-1}_*\widetilde{S}$. Since $\Cc\left(\frac{t}{2}\right)$ is $\mathbb Q$-factorial qdlt, $(X',S'+\Exc(h))$ is $\mathbb Q$-factorial qdlt. In particular, $S'$ is normal. 

Let $h_S: S'\rightarrow S$ be the induced birational morphism. Write $\Bb'(s):=(X',\Ff',B',\Mm)(s)$ where $B'=h^{-1}_*B+\Exc(h)$. Then for any irreducible component $L'\neq S'$ of $B'$ that is not an irreducible component of $\Exc(h)$, $h_*L'$ is an irreducible component of $B$. Since $(X,B,\Mm)$ is lc, $h_*L'$ does not contain any lc center of $(X,S)$. Since $h$ only extracts lc places of $(X,S)$,
$$L'|_{S'}=h_S^*\left((h_*L')|_{S}\right)$$
is big$/U$ and nef$/U$. Moreover, for any $j$,
$$\Mm_{j,X'}|_{S'}=h_S^*\left(\Mm_{j,X}|_S\right)=h_S^*h_{S,*}\left(\Mm_{j,X'}|_{S'}\right)$$
is big$/U$ and nef$/U$. Since $d_F\geq 2$, $\dim S'>\dim U$. Therefore, at least one component of $L'|_{S'}$ is horizontal$/U$, and $\Mm_{j,X'}|_{S'}\not\equiv 0$ over the generic point of $U$.

Let $\Aa'(s):=\log\Bb'(s)$ for any $s\in [0,1]$. Let $\Aa_{S'}(s)$ be the adjoint foliated structure induced by adjunction
$$K_{\Aa_{S'}(s)}:=K_{\Aa'(s)}|_{S'}$$
for any $s\in [0,1]$. Then $\Aa_{S'}(t)$ is lc and $K_{\Aa_{S'}(t)}\sim_{\mathbb R,U}0$. Since $\rho(X/U)=1$ and $K_{\Bb(1)}\not\sim_{\mathbb R,U}0$, by \cite[Theorem~5.1]{Cas+25a}, $K_{\Bb(1)}$ is ample$/U$. Thus
$$K_{\Aa_{S'}(1)}=\left(h^*K_{\Bb_S(1)}\right)|_{S'}=h_S^*(K_{\Bb_S(1)}|_S)$$ is big$/U$ and nef$/U$. Since $\dim S'>\dim U$, $K_{\Aa_{S'}(1)}\not\equiv 0$ over the generic point of $U$.

Let $S'\xrightarrow{\beta} Z\rightarrow U$ be the Stein factorization of $S'\rightarrow U$. Write
$$\Aa_{S'}(s):=\left(S',\Ff_{S'},B_{S'}(s)^{\ninv}+(1-s)B_{S'}(s)^{\inv},\sum \mu_j\Mm_j|_{S'},s\right)$$
for any $s\in [0,1]$. By Theorem~\ref{thm: dcc adjunct to dcc}, there exists a DCC set $\Ii_1$ depending only on $\Ii$ such that $B_{S'}(t)\in\Ii_1$. Let
$$\Bb_{S'}:=\left(S',\Ff_{S'},B_{S'}(t),\sum \mu_j\Mm_j|_{S'}\right)(t).$$
Then
\begin{itemize}
    \item $K_{\Bb_{S'}}=K_{\Aa_{S'}(t)}\sim_{\mathbb R,Z}0$,
    \item $B_{S'}(t)\in\Ii_1$, $t\in\Ii\cap (0,1)$, each $\mu_j\in\Ii$, and each $\Mm_j|_{S'}$ is nef$/U$ $\bb$-Cartier.
\end{itemize}
By Theorem~\ref{thm: global acc general}~(1-2) in dimension $d-1$, there exists a finite set $\Ii_2$ depending only on $d$ and $\Ii$, such that for any general fiber $G$ of $\beta$, we have 
$$B_S(t)|_G\in\Ii_2\quad \text{and}\quad \{\mu_j\mid (\Mm_j|_{S'})|_G\not\equiv\bm{0}\}\subset\Ii_2.$$ Since $L'|_{S'}\neq0$ over the generic point of $Z$ for any irreducible component $L'$ of $B'$ and $(\Mm_{j,X}|_{S'})|_G\not\equiv 0$ for any $j$, by Theorem~\ref{thm: finite inversion of adjunction to finite 2}, there exists a finite set $\Ii_0$ depending only on $d$ and $\Ii$ such that $B',\mu_1,\dots,\mu_n\in\Ii_0$. In particular, $B=h_*B'\in\Ii_0$. By Theorem~\ref{thm: dcc precise adjunction}, either $t$ belongs to a finite set depending only on $d$ and $\Ii$, or $B_{S'}(t)=B_{S'}(s)$ for any $s\in [0,1)$. In the latter case, we let 
$$\Bb_{S'}(s):=\left(S',\Ff_{S'},B_{S'}(t),\sum \mu_j\Mm_j|_{S'}\right)(s)$$
for any $s\in [0,1]$. Then
$$K_{\Bb_{S'}(t)}\sim_{\mathbb R,Z}0\quad \text{ and }\quad K_{\Bb_{S'}(1)}=K_{\Aa_{S'}(1)}\not\equiv 0\text{ over the generic point of }Z.$$
By Theorem~\ref{thm: global acc general}~(3) in dimension $d-1$, $t$ belongs to a finite set depending only on $d$ and $\Ii$. The lemma follows.
\end{proof}

\begin{lem}\label{lem: when exist lc center}
Let $d$ be a positive integer and $\Ii\subset [0,+\infty)$ a DCC set. Assume that Theorem~\ref{thm: global acc general} holds in dimension $\leq d-1$ and Theorem~\ref{thm: acc lct general version} holds in dimension $\leq d$. Then there exists a finite set $\Ii_0\subset\Ii$ depending only on $d$ and $\Ii$ satisfying the following.

Notation and conditions as in Set-up~\ref{setup: global acc} and assume that $\Bb$ is not klt over the generic point of $U$. Then:
\begin{enumerate}
    \item $B,\mu_1,\dots,\mu_n\in\Ii_0$.
    \item Assume that $K_{\Bb(s)}\not\sim_{\mathbb R,U}0$ for some $s\neq t$. Then $t\in\Ii_0$.
\end{enumerate}
\end{lem}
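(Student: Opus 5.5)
We reduce to Lemma~\ref{lem: when exist lc place} by extracting an lc place of $\Bb$ that dominates $U$, and then transporting the conclusion back along a two-ray game over $U$.

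\textbf{Step 1: reductions and extraction.} By Proposition~\ref{prop: when fiber bounded} and Lemma~\ref{lem: no part 2 case}, we may assume $\dim F\geq 2$ and $K_{\Bb(s)}\not\sim_{\mathbb R,U}0$ for all $s\neq t$. By Lemma~\ref{lem: when exist lc place}, we may also assume $\lfloor B\rfloor=0$. Since $\Bb$ is not klt over the generic point of $U$, it has an lc place $E$ whose center on $X$ dominates $U$, and $E$ is exceptional over $X$. Because $X$ is $\mathbb Q$-factorial klt, we mimic Lemma~\ref{lem: extract unique lc place}: take a foliated log resolution, run a $K_{\Bb_W}$-MMP$/X$ using \cite[Theorem~3.6]{Cas+25b}, and perturb the parameter slightly. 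This produces a divisorial contraction $g\colon Y\to X$ with $\rho(Y/X)=1$, $Y$ $\mathbb Q$-factorial, exceptional divisor $E$, and
$$\Bb_Y:=(g^{-1}_*\Bb,E)=g^*\Bb \quad\text{lc}.$$

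\textbf{Step 2: two-ray game over $U$.} Since $\rho(Y/U)=2$ and $Y$ is of Fano type over $U$, we run the two-ray game over $U$ on the other side of $g$. This is a $(-E)$-MMP$/U$, which is $K_{\Bb_Y}$-trivial. It ends either with a Mori fiber space $Y'\to T'$ on which the image $E'$ of $E$ is horizontal, or with a divisorial contraction of $E$ that we avoid by choosing the direction appropriately. If $E$ were contracted, $Y'$ would be isomorphic to $X$ in codimension one over $U$, which is impossible. We use the relative two-ray game, Lemma~\ref{lem: relative two-ray}, to get this. On the resulting model $Y'$:
\begin{itemize}
\item $E'$ is a component of $\lfloor B_{Y'}\rfloor$,
\item $K_{\Bb_{Y'}}\sim_{\mathbb R,T'}0$, and
\item the other components of $B$ and the $\Mm_j$ remain horizontal or ample over $T'$ after discarding vertical pieces, as in the proof of Theorem~\ref{thm: picard one imply general global acc}.
\end{itemize}
Lemma~\ref{lem: when exist lc place}, applied after again discarding vertical data over $T'$, then shows that the coefficients of $B$ and the $\mu_j$ that survive horizontally lie in a finite set. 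For $t$, we use part~(2) of that lemma, which requires checking that $K_{\Bb_{Y'}(s)}\not\sim_{\mathbb R,T'}0$ for $s\neq t$. This follows from the corresponding property of $K_{\Bb(1)}$, which is ample over $U$ by \cite[Theorem~5.1]{Cas+25a}.

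\textbf{Step 3: main obstacle.} The hard part is controlling data that become vertical over $T'$: components of $B$ and nef parts $\Mm_j$ that do not dominate $T'$ give no information through the fiber of $Y'\to T'$. For these we use the local ACC (Theorem~\ref{thm: acc lct general version}) in dimension $\leq d$, in the spirit of M\textsuperscript{c}Kernan--Prokhorov.
\begin{itemize}
\item Suppose, for contradiction, that a sequence of coefficients $b_i$ (or $\mu_i$, or $t_i$) increases strictly.
\item The finiteness already established on $E'$ and on the horizontal data shows that the structure obtained by raising $b_i$ to its limit $\bar b$ is still numerically trivial on the fibers and that $E$ remains an lc place.
\item Applying Theorem~\ref{thm: acc lct general version} (in the form of Lemma~\ref{lem: acc implies precise acc}), the structure with coefficient $\bar b$ is lc for $i\gg 0$.
\end{itemize}
This contradicts the fact that $K_{\Bb}\equiv_U0$ is strictly changed on the ample$/U$ directions, since $\rho(X/U)=1$ and every component of $B$ is ample$/U$. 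The same argument handles the $\mu_j$ and $t$. I expect this step, together with keeping the lc place horizontal through the two-ray game, to be the main technical obstacle.
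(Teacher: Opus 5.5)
Your architecture matches the paper's: extract a horizontal lc place $E$, run the $K_{\Bb_Y}$-trivial $(-E)$-MMP$/U$ to a Mori fibre space $Y'\to T'$ with $E'$ ample$/T'$, discard vertical data, and apply Lemma~\ref{lem: when exist lc place}. However, the two steps that carry the real content do not work as written.

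\textbf{The parameter $t$.} You claim that $K_{\Bb_{Y'}(s)}\not\sim_{\mathbb R,T'}0$ for $s\neq t$ follows from ampleness of $K_{\Bb(1)}$ over $U$; this is unjustified. Log discrepancies are affine in $s$, so $K_{\Bb_Y(s)}=g^*K_{\Bb(s)}+c(s-t)E$ for a constant $c$. Moreover $c\leq 0$, because $\Bb(s)$ is lc for $s<t$ by Lemma~\ref{lem: lc preserved under inequality}. Thus $E$ is an lc place of $\Bb(s)$ for $s\neq t$ only when $c=0$. When $c<0$, the class $g^*K_{\Bb(1)}+c(1-t)E$ need not be big$/U$, and $K_{\Bb_{Y'}(s)}$ may be $\equiv_{T'}0$ for every $s$.

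\textbf{The vertical data.} Your Step~3 has the logic backwards. If $V:=\Center_XE\subset\Supp D$, raising the coefficient of $D$ strictly lowers the log discrepancy of $E$. So $E$ does \emph{not} remain an lc place, and the raised structure is not lc. In any case, an lc structure with canonical class ample$/U$ contradicts nothing. You also never link ``vertical over $T'$'' to ``$V\subset\Supp D$''.

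\textbf{Minor.} Mimicking Lemma~\ref{lem: extract unique lc place} need not extract your chosen dominating $E$. If the extracted divisor does not dominate $U$, then $-E$ is pseudo-effective$/U$ and no Mori fibre space is reached. Use \cite[Theorem~2.2.3]{Cas+25a} instead.

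\textbf{The missing idea.} Apply the local ACC \emph{before} the MMP, as a dichotomy for each datum separately. Take $\Ii':=\lct_{\aif}(d,\Ii\cup\mathbb N)$ from Theorem~\ref{thm: ACC precise}, and consider the threshold for increasing a single coefficient $b_D$ of $B$ (resp. a single $\mu_j$, resp. the parameter starting from $\Bb(0)$).
\begin{itemize}
\item Since $E$ is an lc place and $t<1$, this threshold equals $b_D$ if $V\subset\Supp D$ (resp. $\mu_j$ if $g^*\Mm_{j,X}\neq\Mm_{j,Y}$, resp. $t$ if $c<0$). In that case the datum lies in the finite set $\Ii_1:=\Ii\cap\Ii'$.
\item Otherwise $g^*D=g^{-1}_*D$ (resp. $\Mm_{j,Y}=g^*\Mm_{j,X}$) is big$/U$, resp. $c=0$ and $K_{\Bb_Y(s)}=g^*K_{\Bb(s)}$ is big or anti-big$/U$ for $s\neq t$.
\item Bigness$/U$ survives the $(-E)$-MMP. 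Since $\rho(Y'/T')=1$, it forces ampleness (resp. non-triviality) over $T'$.
\end{itemize}
Hence every datum outside $\Ii_1$ is horizontal over $T'$. It is then controlled by Lemma~\ref{lem: when exist lc place} applied to the truncated structure $\Cc(t)$ obtained by discarding vertical data. This $\Cc(t)$ is lc by Lemma~\ref{lem: lc preserved under inequality}, and $K_{\Cc(t)}\sim_{\mathbb R,T'}0$. If instead $K_{\Cc(s)}\sim_{\mathbb R,T'}0$ for all $s$, then $t\in\Ii_1$ and Lemma~\ref{lem: no part 2 case} concludes. No limiting argument is needed.
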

\begin{proof}
We may assume that $1\in\Ii$. By Lemma~\ref{lem: no part 2 case}, we may assume that $K_{\Bb(s)}\not\sim_{\mathbb R,U}0$ for any $s\neq t$. By condition (4) of Set-up~\ref{setup: global acc}, $t<1$.

Let $E$ be an lc place of $\Bb$ such that $V:=\Center_XE$ is horizontal$/U$. 
By Lemma~\ref{lem: when exist lc place}, we may assume that $E$ is exceptional$/X$. Since $X$ is klt, by \cite[Theorem~2.2.3]{Cas+25a}, there exists a projective birational morphism $h\colon X'\rightarrow X$ such that $X'$ is $\mathbb Q$-factorial and $\Exc(h)=E$. We write
$$\Bb'(s):=(h^{-1}_*\Bb(s),E):=(X',\Ff',B',\Mm)(s)$$
for any $s\in [0,1]$ and let $\Bb':=\Bb'(t)$.

By Lemma~\ref{lem: acc implies precise acc} and Theorem~\ref{thm: acc lct general version}, Theorem~\ref{thm: ACC precise} holds in dimension $d$. Let $\Ii':=\lct_{\aif}(d,\Ii\cup\mathbb N)$ be the ACC set as in Theorem~\ref{thm: acc lct general version} and let $\Ii_1:=\Ii\cap\Ii'$. Then $\Ii_1$ is a finite set. By Theorem~\ref{thm: ACC precise} in dimension $d$, we have the following.
\begin{itemize}
    \item For any irreducible component $D$ of $B$, either $\mult_DB\in\Ii_1$ or $V\not\subset\Supp D$. In the latter case, $h^*D=h^{-1}_*D$ is big$/U$.
    \item For any $j$, either $\mu_j\in\Ii_1$, or $h^*\Mm_{j,X}=\Mm_{j,X'}$. In the latter case, $\Mm_{j,X'}$ is big$/U$.
    \item Either $t\in\Ii_1$, or $\Bb(s)$ is lc for some $s>t$. In the latter case, by Lemma~\ref{lem: lc preserved under inequality} and linearity of discrepancies, we have that $E$ is an lc place of $\Bb(s)$ for any $s\in [0,1]$, and $\Bb'(s)=h^*\Bb(s)$ for any $s\in [0,1]$, hence $K_{\Bb'(s)}$ is big$/U$ or anti-big$/U$ for any $s\in [0,1]$ such that $s\neq t$.
\end{itemize}
We run a $(K_{\Bb'}-\delta E)$-MMP$/U$ with scaling of an ample divisor for some $0<\delta\ll 1$. Since $K_{\Bb'}=h^*K_{\Bb}\sim_{\mathbb R,U}0$, this is a $(-E)$-MMP$/U$ so $E$ is not contracted by this MMP. By \cite[Theorem~2.1.5]{Cas+25a}, the MMP terminates with a Mori fiber space$/U$ $g: X''\rightarrow T$ associated with a birational map$/U$ $\phi\colon X'\dashrightarrow X''$. Let $E'':=\phi_*E$, then $E''$ is ample$/T$. Let $\Bb''(s):=\phi_*\Bb'(s)$ for any $s\in [0,1]$. Since $\phi$ is $K_{\Bb'}$-trivial, $\Bb''(t)$ is lc. Since $X'$ is $\mathbb Q$-factorial klt, $X''$ is $\mathbb Q$-factorial klt.

Let $B''$ be the horizontal$/T$ part of $\phi_*B'$ and let $$\Nn:=\sum_{j\mid \Mm_{j,X''}\not\sim_{\mathbb R,T}0}\mu_j\Mm_j.$$ 
Let
$$\Cc(s):=(X'',\Ff'':=\phi_*\Ff',B'',\Nn)(s)$$
for any $s\in [0,1]$. Then $\Bb''(s)\geq\Cc(s)$ for any $s\in [0,1]$. Since $\Bb''(t)$ is lc, by Lemma~\ref{lem: lc preserved under inequality}, $\Cc(t)$ is lc. Since $\rho(X''/T)=1$,
$$K_{\Cc(s)}\sim_{\mathbb R,T}K_{\Bb''(s)}$$
for any $s\in [0,1]$. In particular, 
$$K_{\Cc(t)}\sim_{\mathbb R,T}K_{\Bb''(t)}\sim_{\mathbb R,T}0.$$

For any irreducible component $D$ of $B$ (resp. any $j$, $t$), either $\mult_DB\in\Ii_1$ (resp. $\mu_j\in\Ii_1$, $t\in\Ii_1$), or $h^{-1}_*D$ is big$/U$ (resp. $\Mm_{j,X'}$ is big$/U$, $K_{\Bb'(s)}$ is big$/U$ or anti-big$/U$ for any $s\in [0,1]$ such that $s\neq t$). In the latter case, $\phi_*h^{-1}_*D$ is big$/U$ (resp. $\Mm_{j,X''}$ is big$/U$, $K_{\Bb''(s)}$ is big$/U$ or anti-big$/U$ for any $s\in [0,1]$ such that $s\neq t$), hence $\phi_*h^{-1}_*D$ is ample$/T$ (resp.  $\Mm_{j,X''}$ is ample$/T$, $K_{\Bb''(s)}$ is ample$/T$ or anti-ample$/T$ for any $s\in [0,1]$ such that $t\neq s$), so $\phi_*h^{-1}_*D$ is an irreducible component of $B''$ (resp. $\Mm_{j,X''}\not\sim_{\mathbb R,T}0$, $K_{\Bb''(s)}\not\sim_{\mathbb R,T}0$ for any $s\in [0,1]$ such that $t\neq s$). There are two cases.

\medskip

\noindent\textbf{Case 1.} We have $K_{\Cc(s)}\not\sim_{\mathbb R,T}0$ for some $s\neq t$. Then, since $\rho(X''/T)=1$, we have that $K_{\Cc(s)}\not\sim_{\mathbb R,T}0$ for any $s\neq t$. By Lemma~\ref{lem: when exist lc place} applied to $\Cc(t)$ and $g: X''\rightarrow T$, there exists a finite set $\Ii_2$ depending only on $d$ and $\Ii$ such that $B'',t\in\Ii_2$ and $\mu_j\in\Ii_2$ for any $j$ such that $\Mm_{j,X''}\not\sim_{\mathbb R,T}0$. Thus $B,t,\mu_1,\dots,\mu_n\in\Ii_1\cup\Ii_2$ and we are done.

\medskip

\noindent\textbf{Case 2.} We have $K_{\Cc(s)}\sim_{\mathbb R,T}0$ for some $s\neq t$, hence $K_{\Cc(s)}\sim_{\mathbb R,T}0$ for any $s\neq t$. Thus $t\in\Ii_1$. By Lemma~\ref{lem: no part 2 case}, there exists a finite set $\Ii_2$ depending only on $d$ and $\Ii$ such that $B'',t\in\Ii_2$ and $\mu_j\in\Ii_2$ for any $j$ such that $\Mm_{j,X''}\not\sim_{\mathbb R,T}0$. Thus $B,t,\mu_1,\dots,\mu_n\in\Ii_1\cup\Ii_2$ and we are done.
\end{proof}

\subsection{Constructing gaps}

In the statement of the global ACC (Theorem~\ref{thm: global acc general}), we need to exclude the possibility that $t$ approaches $1$. Yet in this case, the foliated version of BAB \cite[Theorem~B]{Cas+25a} cannot be applied to the proof. To resolve this issue, we need the following technical lemma which essentially allows us to replace $t$ by a number that is bounded away from $1$.

\begin{lem}\label{lem: one lc place one close to one}
Let $d$ be a positive integer and $\Ii_1\subset [0,+\infty)$ a DCC set. Assume that Theorem~\ref{thm: global acc general} holds in dimension $\leq d-1$ and Theorem~\ref{thm: acc lct general version} holds in dimension $\leq d$. Then there exists a real number $\tau=\tau_{\aux}(d,\Ii_1)\in (0,1)$ depending only on $d$ and $\Ii_1$ satisfying the following.

Let $\Ii:=\Ii_1\cup [1-\tau,1]$. Notation and conditions as in Set-up~\ref{setup: global acc}. Further assume that \begin{enumerate}
\item $K_{\Bb(s)}\not\sim_{\mathbb R,U}0$ for some $s\neq t$, 
    \item $\mu_j\in\Ii_1$ for any $j$,
    \item $S$ is an irreducible component of $\lfloor B\rfloor$, and
    \item for any irreducible component $D\neq S$
    of $B$, $\mult_DB\in\Ii_1$.
\end{enumerate}
Then $\mult_LB\not\in (1-\tau,1)$ and $t\not\in (1-\tau,1)$.
\end{lem}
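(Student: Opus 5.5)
We argue by contradiction, using a sequence of counterexamples, and reduce to Lemma~\ref{lem: when exist lc place}. The point is that $\Ii=\Ii_1\cup[1-\tau,1]$ is not a DCC set, so the earlier results cannot be applied to it directly. However, after passing to a subsequence, the coefficients that actually occur in a sequence of counterexamples do lie in a DCC set.

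First we note what the hypotheses already give about the coefficients. By (3) and (4), every coefficient of $B$ is either $1$ (namely $\mult_S B$) or lies in $\Ii_1$. By (2), every $\mu_j$ lies in $\Ii_1$. So the only parameter that can lie in $\Ii\setminus(\Ii_1\cup\{1\})$ is $t$. (The coefficient of the component $L$ is either $1$ or belongs to $\Ii_1$; the assertion about $\mult_LB$ should then follow from the same argument, after enlarging the finite set below by $\Ii_1\cap(1-\tau,1)$, or from the choice of $\tau$ against $\overline{\Ii_1}$, which I will check in the write-up.)

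Now suppose no $\tau$ works. Then for each $i$ there is a structure $\Bb_i/U_i$ as in Set-up~\ref{setup: global acc}, satisfying (1)--(4) with respect to $\Ii_i:=\Ii_1\cup[1-\frac1i,1]$, such that $t_i\in(1-\frac1i,1)$ or some coefficient of $B_i$ lies in $(1-\frac1i,1)$. The relevant numbers then tend to $1$ from below, so after passing to a subsequence we may assume they are strictly increasing. Set
$$\Ii^\sharp:=\Ii_1\cup\{1\}\cup\{t_i\}_i\cup\{\text{the bad coefficients}\}_i.$$
This is a DCC set, being a union of a DCC set with an increasing sequence. Every $\Bb_i$ satisfies Set-up~\ref{setup: global acc} with $\Ii^\sharp$ in place of $\Ii$. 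By (3) we have $\lfloor B_i\rfloor\neq 0$, and by (1) we have $K_{\Bb_i(s)}\not\sim_{\mathbb R,U_i}0$ for some $s\neq t_i$. Since Theorem~\ref{thm: global acc general} holds in dimension $\le d-1$, Lemma~\ref{lem: when exist lc place} applies with the DCC set $\Ii^\sharp$. It yields a finite set $\Ii_0$, depending only on $d$ and $\Ii^\sharp$, such that $B_i\in\Ii_0$ and $t_i\in\Ii_0$ for all $i$. This contradicts the fact that the $t_i$ (respectively, the bad coefficients) form a strictly increasing infinite sequence. Hence some $\tau=\tau_{\aux}(d,\Ii_1)$ works.

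It remains to make sure that $\tau$ depends only on $d$ and $\Ii_1$. This follows from the contradiction argument itself: if no such uniform $\tau$ existed, we could build exactly the sequence above. The main point to verify carefully is that all hypotheses of Lemma~\ref{lem: when exist lc place} survive the replacement of $\Ii$ by $\Ii^\sharp$. This includes the requirement that every component of $B_i$ is horizontal over $U_i$, that each $\Mm_{j,X}$ is ample over $U_i$, and that $t_i\in(0,1)$. Each of these is part of Set-up~\ref{setup: global acc} and does not involve $\Ii$, so they are unaffected.

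If one wants to avoid invoking Lemma~\ref{lem: when exist lc place} as a black box, the same contradiction can be run directly. We would do adjunction to the normalization of $S$ (after a $\mathbb Q$-factorial qdlt modification), use Theorem~\ref{thm: dcc adjunct to dcc} and the global ACC in dimension $d-1$ on a general fiber of the Stein factorization of $S\to U$, and then use Theorem~\ref{thm: dcc precise adjunction} and Theorem~\ref{thm: finite inversion of adjunction to finite 2}. These force the bad parameter $t_i$ into a finite set. The case $\dim F=1$ is handled separately by Proposition~\ref{prop: when fiber bounded}.
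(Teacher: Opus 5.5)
Your proof is correct and is essentially the paper's argument: assume no $\tau$ works, note that $\Ii_1\cup\{1\}\cup\{t_i\}_i\cup\{\mult_{L_i}B_i\}_i$ is a DCC set, and use an earlier finiteness lemma to force the numbers tending to $1$ into a finite set. Put all the $\mult_{L_i}B_i$ into this set, not only those in $(1-\frac1i,1)$: in the paper's proof the distinguished component in (4) is $L$, so $\mult_{L_i}B_i$ can be any element of $\Ii_1\cup[1-\frac1i,1]$, and with this inclusion your parenthetical workaround via $\overline{\Ii_1}$ is unnecessary.

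The only real difference is the lemma you invoke: the paper uses Lemma~\ref{lem: when exist lc center}, while you use Lemma~\ref{lem: when exist lc place}. Your choice is legitimate because hypothesis (3) gives $\lfloor B\rfloor\neq 0$, and it has the small advantage of not needing the local ACC hypothesis.
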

\begin{proof}
Assume that the lemma does not hold. Then for any positive integer $i$, there exists a $\mathbb Q$-factorial lc algebraically integrable normalized foliated structure 
$$\Bb_i/U_i:=\left(X_i,\Ff_i,B_i,\Mm^i\right)(t_i)$$
satisfying the following:
\begin{enumerate}
\item $\pi_i: X_i\rightarrow U_i$ is a contraction between normal quasi-projective varieties, $\dim X_i=d>\dim U_i$, $X_i$ is of Fano type over $U_i$, and $\rho(X_i/U_i)=1$. 
\item $X_i$ is klt and $t_i\in (0,1)$.
\item $\Bb_i(s):=\left(X_i,\Ff_i,B_i,\Mm^i\right)(s)$ for any $s\in [0,1]$.
\item Any irreducible component of $B_i$ is horizontal$/U_i$, $L_i$ is an irreducible component of $B_i$, and for any irreducible component $D\neq L_i$ of $B_i$, $\mult_DB_i\in\Ii_1$.   
\item $S_i$ is an irreducible component of $\lfloor B_i\rfloor$.
\item $\Mm^i=\sum_{j=1}^{n_i}\mu_{i,j}\Mm^i_j$ where each $\mu_{i,j}\in\Ii_1$, $\Mm^i_j$ is nef$/U_i$ $\bb$-Cartier, and $\Mm^i_{j,X_i}$ is ample$/U_i$.
\item $K_{\Bb_i(t_i)}\sim_{\mathbb R,U_i}0$ and $K_{\Bb_i(s)}\not\sim_{\mathbb R,U_i}0$ for any $s\neq t_i$,
\item $r_i:=\mult_{L_i}B_i\in\Ii_1\cup [1-1/i,1]$ and $t_i\in\Ii_1\cup [1-1/i,1]$, and
\item either $r_i\in (1-1/i,1)$ or $t_i\in (1-1/i,1)$.
\end{enumerate}
Possibly passing to a subsequence, we may assume that $r_i\in (1-1/i,1)$ for any $i$ or $r_i\in\Ii_1$ for any $i$, and $t_i\in (1-1/i,1)$ for any $i$ or $t_i\in\Ii_1$ for any $i$. Thus $\Ii_2:=\Ii_1\cup\{r_i\}_{i=1}^{+\infty}\cup\{t_i\}_{i=1}^{+\infty}$ is a DCC set. By applying Lemma~\ref{lem: when exist lc center} to the DCC set $\Ii_2$, we deduce that there exists a finite set $\Ii_0$ depending only on $d,\Ii$, and $\{r_i\}$ such that $r_i\in\Ii_0,t_i\in\Ii_0$ for any $i$. This is not possible as either $r_i\in (1-1/i,1)$ or $t_i\in (1-1/i,1)$ for any $i$.
\end{proof}

\subsection{Relative two-ray game and inductive proof of Theorem~\ref{thm: global acc general}}

We need the following two-ray game lemma which can be seen as a relative version of \cite[Lemma~9.1]{MP04}. With this and all the preparations above, we are able to prove Theorem~\ref{thm: global acc general} by induction on the dimension (Theorem~\ref{thm: acc to global acc}).

\begin{lem}\label{lem: relative two-ray}
    Let $\pi: X\rightarrow U$ be a contraction between normal quasi-projective varieties such that $X$ is $\mathbb Q$-factorial, $\rho(X/U)=2$, and $\dim X>\dim U$. Let $D$ be a big$/U$ $\mathbb R$-divisor on $X$. Let $\phi_i: X_i\xrightarrow{\psi_i} Z_i\xleftarrow{\psi_i^+} X_{i+1}$, $0\leq i\leq n-1$, $X_0:=X$ be a sequence of $G_i$-flips$/U$ for some $\mathbb R$-Cartier $\mathbb R$-divisors $G_i$ on $X_i$ 
    such that each $X_i$ is $\mathbb Q$-factorial, and let $D_i$ be the image of $D$ on $X_i$ for each $i$. Assume that for each $0\leq i\leq n$, $\rho(X_i/U)$ is spanned by extremal rays $R_i,S_i$ satisfying the following:
    \begin{enumerate}
        \item $D\cdot R_0\leq 0$.
        \item For any $0\leq i\leq n-1$, $S_i$ is contracted by $\psi_i$ and $R_{i+1}$ is contracted by $\psi_i^+$.
    \end{enumerate}
    Then for any $0\leq i\leq n$, $D_i\cdot S_i>0$, and for any $1\leq i\leq n$, $D_i\cdot R_i<0$.
\end{lem}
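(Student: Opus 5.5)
We argue by induction on $i$, using only the two-dimensional structure of $N_1(X_i/U)$ and the way a flip identifies these spaces. Since $\rho(X_i/U)=2$, the relative cone of curves $\overline{NE}(X_i/U)$ is a two-dimensional cone spanned by $R_i$ and $S_i$. Moreover, since $\dim X>\dim U$ and $D$ is big$/U$, the divisor $D_i$ cannot be nonpositive on both rays. Indeed, if $D_i\cdot R_i\leq 0$ and $D_i\cdot S_i\leq 0$, then $-D_i$ would be nef$/U$. A divisor that is both big$/U$ and anti-nef$/U$ is impossible when the general fiber has positive dimension: restrict to a general fiber $F$ and intersect with a movable class, or note that $D_i|_F$ big with $-D_i|_F$ nef forces $D_i|_F\equiv 0$, which contradicts bigness. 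Bigness is preserved under the flips since they are isomorphisms in codimension one. This settles the base case: by (1), $D\cdot R_0\leq 0$, hence $D_0\cdot S_0>0$.

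For the inductive step, assume $D_i\cdot S_i>0$. The flip $\psi_i\colon X_i\to Z_i$ contracts $S_i$, so $\rho(Z_i/U)=1$. Because $X_i\dashrightarrow X_{i+1}$ is an isomorphism in codimension one and both varieties are $\mathbb Q$-factorial, $D_i$ and $D_{i+1}$ are strict transforms of each other. We pass to a common resolution $p\colon W\to X_i$, $q\colon W\to X_{i+1}$ over $Z_i$, and write
\[
p^*D_i = q^*D_{i+1} + E .
\]
Here $E$ is exceptional over $Z_i$. Since $D_i$ is $\psi_i$-positive, $D_{i+1}$ is $\psi_i^+$-negative. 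This is the standard sign-flip across a flip: as $\rho(X_i/Z_i)=\rho(X_{i+1}/Z_i)=1$, the class $D_i$ is, relative to $Z_i$, a positive multiple of a divisor that is $\psi_i$-ample. Its strict transform is then a positive multiple of the flipped class, which is $\psi_i^+$-anti-ample by the definition of the flip as the ample model of the negative of the original divisor relative to $Z_i$. To make this precise, we write $D_i\equiv_{Z_i} \lambda G_i$ with $\lambda$ of fixed sign. Since $G_i\cdot S_i<0$ (the flipping contraction is $G_i$-negative) and $D_i\cdot S_i>0$, we get $\lambda<0$. Since $G_{i+1}$, the strict transform of $G_i$, is $\psi_i^+$-ample and $D_{i+1}\equiv_{Z_i}\lambda G_{i+1}$, it follows that $D_{i+1}\cdot R_{i+1}<0$, because $R_{i+1}$ is contracted by $\psi_i^+$ by (2). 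Then the opening observation applied on $X_{i+1}$, now with $D_{i+1}\cdot R_{i+1}<0$, gives $D_{i+1}\cdot S_{i+1}>0$. This completes the induction.

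The only delicate point is the relative-numerical identification $D_i\equiv_{Z_i}\lambda G_i$ and its transfer to $X_{i+1}$. This requires that $N^1(X_i/Z_i)$ is one-dimensional, which follows from $\rho(X_i/U)=2$, $\rho(Z_i/U)=1$ and $\mathbb Q$-factoriality. It also requires that relative numerical equivalence over $Z_i$ is preserved under strict transform for small modifications over $Z_i$. For the latter we would argue via the pullback to $Z_i$: a divisor on $X_i$ that is $\equiv_{Z_i}0$ is the pullback of an $\mathbb R$-Cartier divisor on $Z_i$ (by the cone theorem for the flipping contraction, or directly by $\rho=1$ together with $\mathbb Q$-factoriality on $X_{i+1}$). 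Its strict transform is then also a pullback, hence $\equiv_{Z_i}0$. We expect this step to be the main obstacle in writing the proof carefully, while the rest is linear algebra in a rank-two space.
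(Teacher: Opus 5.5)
Your proposal is correct and follows the paper's route: induction on $i$, using bigness$/U$ and $\dim X>\dim U$ to rule out $D_i$ being nonpositive on both $R_i$ and $S_i$, and then the sign change of $D$ across each flip. The paper simply asserts that sign change, and your justification via $D_i\equiv_{Z_i}\lambda G_i$ works; however, the transfer to $X_{i+1}$ is best done by the negativity lemma on your common resolution (for $M:=D_i-\lambda G_i$ it gives $p^*M=q^*M^+$), because the claim that an $\equiv_{Z_i}0$ divisor is pulled back from $Z_i$ needs a contraction theorem that is not available for arbitrary $G_i$-flips (also, calling the strict transform of $G_i$ ``$G_{i+1}$'' clashes with the statement's notation).
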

\begin{proof}
Assume that $D_i\cdot R_i\leq 0$. If $D_i\cdot S_i\leq 0$, then $D_i$ is anti-nef$/U$ by Kleiman's criterion. Since $D$ is big$/U$, $D_i$ is big$/U$, so $0=D_i+(-D_i)$ is big$/U$. This is not possible as $\dim X>\dim U$. Thus $D_i\cdot S_i>0$, hence $D_{i+1}\cdot R_{i+1}<0$. We are done by induction on $i$ as we already have $D_0\cdot R_0\leq 0$.
\end{proof}

\begin{lem}\label{lem: fano type after extract divisor}
    Let $\Bb/U=(X,\Ff,B,\Mm)(t)$ be a klt algebraically integrable normalized foliated structure. Assume that $X$ is of Fano type over $U$ and $-K_{\Bb}$ is nef$/U$. Let $h\colon X'\rightarrow X$ be a projective birational morphism such that $a(E,\Bb)<0$ for any $h$-exceptional prime divisor $E$. Then $X'$ is of Fano type over $U$.
\end{lem}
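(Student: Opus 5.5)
The plan is to reduce to the classical statement for pairs: when $X$ is of Fano type and $X'$ extracts only divisors of negative discrepancy with respect to a klt pair $(X,\Delta)$ with $-(K_X+\Delta)$ nef, then $X'$ is of Fano type. So the first step is to produce, from the klt structure $\Bb$ with $-K_{\Bb}$ nef$/U$, a klt pair $(X,\Delta)$ such that $-(K_X+\Delta)$ is nef$/U$ and $a(E,X,\Delta)\le a(E,\Bb)<0$ for every $h$-exceptional $E$. The natural tool is the comparison between $K_{\Ff}$ and $K_X$ for algebraically integrable foliations, in the form used earlier: the interpolation results \cite[Proposition~3.3]{Cas+24} (lc/klt of $\Bb(t)$ implies that of $\Bb(0)$) and the existence of a $\mathbb Q$-factorial ACSS modification with its associated contraction.

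Concretely, let $\Aa=\log\Bb=(X,\Ff,\Delta_0,\Mm,t)$. I will take a $\mathbb Q$-factorial ACSS modification $g\colon Y\to X$ of $\Bb$ with contraction $f\colon Y\to Z$ and divisor $G$. On $Y$, $K_{\Ff_Y}+G^{\inv}$ (plus the relevant boundary) is $\sim_{\mathbb R}$ to $K_Y+G+$ a nef$/Z$-type moduli term, via the canonical bundle formula comparison built into the ACSS property. This lets me rewrite $K_{\Aa}$, after pullback, as $K_Y+\Gamma_Y+\Nn_Y$ for a generalized pair $(Y,\Gamma_Y,\Nn)$ whose nef part absorbs $t(K_{\Ff_Y}-K_Y-\dots)$. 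Pushing forward gives a generalized pair $(X,\Gamma,\Nn)$ with $K_X+\Gamma+\Nn_X\sim_{\mathbb R,U}K_{\Bb}$. Its discrepancies satisfy $a(E,X,\Gamma,\Nn)\le a(E,\Bb)$ up to the term $t\epsilon_{\Ff}(E)$, and are compatible with klt after a small perturbation. The strict negativity $a(E,\Bb)<0$ is preserved because $a(E,X,\Gamma,\Nn)$ and $a(E,\Bb)$ agree on the ACSS model and the extra nef part only decreases discrepancies (Lemma~\ref{lem: neg lem for generalized boundary}).

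Once I have a klt generalized pair $(X,\Gamma,\Nn)$ with $-(K_X+\Gamma+\Nn_X)$ nef$/U$ and negative discrepancy along all $h$-exceptional divisors, I use $X$ of Fano type$/U$ to choose a klt $(X,\Delta_1)$ with $-(K_X+\Delta_1)$ ample$/U$. For $0<\epsilon\ll1$, the convex combination $(1-\epsilon)(\Gamma+\Nn_X)+\epsilon\Delta_1$ has anti-canonical class that is ample$/U$, is still klt, and still has negative discrepancies at the finitely many $h$-exceptional divisors. Since $X$ is of Fano type, the nef part can be replaced by an effective $\mathbb R$-divisor, giving a klt pair $(X,\Delta)$ with $-(K_X+\Delta)$ ample$/U$ and $a(E,X,\Delta)<0$ for all $h$-exceptional $E$. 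I then set $\Delta':=h^{-1}_*\Delta+\sum(-a(E,X,\Delta))E\ge0$, so that $K_{X'}+\Delta'=h^*(K_X+\Delta)$. The pair $(X',\Delta')$ is klt and $-(K_{X'}+\Delta')$ is big and nef$/U$, and perturbing by a small exceptional divisor that is anti-ample$/X$ shows that $X'$ is of Fano type over $U$.

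The main obstacle is the first step. I need to convert the foliated klt structure into a classical generalized pair with comparable discrepancies, specifically $a(E,\text{pair})\le a(E,\Bb)$ on the relevant divisors, while keeping nefness of the anti-canonical class. If the ACSS comparison turns out to be too delicate, the fallback is to use \cite[Theorem~9.2]{Cas+25a}, which the paper already applies to deduce Fano type from anti-ampleness of $K$ of an adjoint foliated structure. The route there is to make $-K_{\Bb'}$ big and nef on $X'$ for a klt structure $\Bb'$ crepant to a perturbation of $\Bb$, namely $\Bb'=(h^{-1}_*\Bb,\sum(-a(E,\Bb))E)$ adjusted by $\epsilon$ times the boundary coming from $X$ being of Fano type, and then to pass to an ample model.
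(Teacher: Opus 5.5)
\textbf{Gap in the main route.} The first step, which you flag yourself, does not work: nothing in the ACSS formalism turns $K_{\Bb}$ into $K_X+\Gamma+\Nn_X$ for a generalized pair over $U$. On an ACSS model $f\colon Y\to Z$, the foliated and classical canonical classes are identified only modulo pullbacks from $Z$. For $Z$ smooth and $\Sigma_Z:=f(G)$ the relation is $K_{\Ff_Y}=K_Y+G-f^*(K_Z+\Sigma_Z)$. So the term $t(K_{\Ff_Y}-K_Y)$ that you want to absorb into a nef part equals $tG-tf^*(K_Z+\Sigma_Z)$. The second summand is numerically trivial over $Z$, but nothing makes it nef over $U$; take $U$ a point and $K_Z+\Sigma_Z$ ample.

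Turning it into a nef moduli part would need a canonical bundle formula. That requires the relevant log canonical class to be trivial over $Z$, and no such formula is available for adjoint foliated structures; the paper says so explicitly. The discrepancy comparison that follows is also unsupported. A linear equivalence $K_X+\Gamma+\Nn_X\sim_{\mathbb R,U}K_{\Bb}$ does not control discrepancies, and the klt thresholds differ ($-1$ for the pair versus $-t\epsilon_{\Ff}(E)-(1-t)$ for $\Bb$). Moreover, an inequality $a(E,X,\Gamma,\Nn)\le a(E,\Bb)$ would give the negativity you want but not klt.

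\textbf{The fallback is the paper's proof.} The fallback in your last paragraph is the correct argument and coincides with the paper's proof. Stated precisely, it runs as follows.
\begin{enumerate}
\item Take a klt pair $(X,\Delta)$ with $-(K_X+\Delta)$ ample$/U$. Set $\Bb':=(X,\Ff,\Delta,\bm{0})(0)$ and $\Cc:=\Bb'^{\epsilon}\cdot\Bb^{1-\epsilon}$ for $0<\epsilon\ll 1$. This interpolation also lowers the parameter to $(1-\epsilon)t$; merely adding $\epsilon\Delta$ to the boundary would not make the anti-canonical class ample.
\item Then $-K_{\Cc}=\epsilon(-(K_X+\Delta))+(1-\epsilon)(-K_{\Bb})$ is ample$/U$.
\item We have $a(E,\Cc)=\epsilon\, a(E,X,\Delta)+(1-\epsilon)a(E,\Bb)$, and the lc thresholds interpolate in the same way. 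Hence $\tmld(\Cc)\ge\epsilon\,\tmld(X,\Delta)+(1-\epsilon)\tmld(\Bb)>0$, so $\Cc$ is klt.
\item There are only finitely many $h$-exceptional divisors, so $a(E,\Cc)<0$ for all of them once $\epsilon$ is small.
\item Therefore $\Cc':=h^*\Cc$ has effective boundary, is klt, and $-K_{\Cc'}=h^*(-K_{\Cc})$ is big and nef$/U$. Here the normalized coefficient of an exceptional $E$ is $-a(E,\Cc)$, divided by $1-(1-\epsilon)t$ when $E$ is $\Ff$-invariant. Your $\sum(-a(E,\Bb))E$ is therefore off for invariant $E$.
\item Now \cite[Theorem~9.2]{Cas+25a}, applied directly on $X'$, shows that $X'$ is of Fano type over $U$. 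No passage to an ample model is needed, since that theorem only requires $-K_{\Cc'}$ to be big and nef$/U$.
\end{enumerate}
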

\begin{proof}
    There exists a klt pair $(X,\Delta)$ with $-(K_X+\Delta)$ ample$/U$. Let $\Bb':=(X,\Ff,\Delta,\bm{0})(0)$. Then for any $0<\epsilon\ll 1$, the structure $\Cc:=\Bb'^{\epsilon}\cdot\Bb^{1-\epsilon}$ is klt, $-K_{\Cc}$ is ample$/U$, and $a(E,\Cc)<0$ for any $h$-exceptional prime divisor $E$. Let $\Cc':=h^*\Cc$. Then $\Cc'$ is a klt algebraically integrable normalized foliated structure, and $-K_{\Cc'}$ is nef$/U$ and big$/U$. By \cite[Theorem~9.2]{Cas+25a}, $X'$ is of Fano type over $U$.
\end{proof}

\begin{thm}\label{thm: acc to global acc}
Assume that Theorem~\ref{thm: global acc general} holds in dimension $\leq d-1$ and Theorem~\ref{thm: acc lct general version} holds in dimension $\leq d$. Then Theorems~\ref{thm: global acc general} and~\ref{thm: picard number one case} hold in dimension $d$.
\end{thm}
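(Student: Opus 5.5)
We argue by contradiction and first reduce to Theorem~\ref{thm: picard number one case} in dimension $d$: by Theorem~\ref{thm: picard one imply general global acc} this already gives Theorem~\ref{thm: global acc general} in dimension $d$. So suppose there is a sequence $\Bb_i/U_i=(X_i,\Ff_i,B_i,\Mm^i)(t_i)$ as in Set-up~\ref{setup: global acc}, with $\Bb_i/U_i\in\Ii$, such that either some coefficient of $B_i$, some $\mu_{i,j}$, or $t_i$ (in the case $K_{\Bb_i(s)}\not\sim_{\mathbb R,U_i}0$ for $s\neq t_i$) is strictly increasing. By Lemma~\ref{lem: no part 2 case} we may assume $K_{\Bb_i(s)}\not\sim_{\mathbb R,U_i}0$ for every $s\neq t_i$. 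Since $\rho(X_i/U_i)=1$, \cite[Theorem~5.1]{Cas+25a} then makes $K_{\Bb_i(1)}$ ample$/U_i$, and hence $K_{\Bb_i(s)}$ anti-ample$/U_i$ for $s<t_i$. By Lemma~\ref{lem: when exist lc center} we may assume $\Bb_i$ is klt over the generic point of $U_i$. By Proposition~\ref{prop: when fiber bounded}, it is enough to show that the general fibers $F_i$ lie in a bounded family. We run an induction on the relative dimension $d'=\dim F_i$, which handles the vertical divisors produced below; the case $d'=1$ is Proposition~\ref{prop: when fiber bounded}.

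\textbf{Case $t_i$ bounded away from $1$ and $\tmld$ bounded away from $0$.} Fix $\tau=\tau_{\ilct}(d,\Ii)$ from Theorem~\ref{thm: acc lct general version}, together with $\tau_{\aux}$ from Lemma~\ref{lem: one lc place one close to one}. Suppose $t_i\le 1-\tau$ and the structures $\Bb_i^{1-\tau}$ (or $\Bb_i(t_i-\delta)$ for a fixed small $\delta$) restricted to $F_i$ are $\epsilon$-lc Fano adjoint foliated structures for a uniform $\epsilon>0$. Then the foliated BAB \cite[Theorem~B]{Cas+25a} bounds the $F_i$, and we are done. When $t_i\to1$, we use local ACC to pass to the parameter $1-\tau$: Theorem~\ref{thm: acc lct general version} shows that lc-ness of a structure at a parameter close to $t_i$ is controlled, so we may replace $t_i$ by $\min\{t_i,1-\tau\}$ at the cost of a finite ambiguity. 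Lemma~\ref{lem: one lc place one close to one} is designed to rule out coefficients or $t_i$ lying in $(1-\tau,1)$ once an lc place is present.

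\textbf{Case $\tmld(\Bb_i|_{F_i})\to 0$.} This follows the strategy of \cite[Lemma~9.1]{MP04}. Let $E_i$ be a divisor computing the tmld, with horizontal center, and extract it by $h_i\colon X_i'\to X_i$ using \cite[Theorem~2.2.3]{Cas+25a}, so that $\rho(X_i'/U_i)=2$. Lemma~\ref{lem: fano type after extract divisor} shows $X_i'$ is of Fano type$/U_i$. Next, put $E_i$ into the boundary with coefficient $1$ and lower one of the increasing data (the coefficient of some component, some $\mu_{i,j}$, or $t$ toward a fixed value) so that $K$ stays numerically trivial over $U_i$. Since the tmld tends to $0$, this perturbation tends to $0$. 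We then run the relative two-ray game on $X_i'/U_i$, using Lemma~\ref{lem: relative two-ray} to control the signs of the relevant divisor on the two rays through the flips. The game ends in a Mori fiber space $X_i''\to T_i$ on which $E_i$ survives, is ample$/T_i$, and is a component of $\lfloor B\rfloor$. The new structure again satisfies Set-up~\ref{setup: global acc}, now with an lc place on the ambient variety. Lemma~\ref{lem: when exist lc place} (when the relative dimension drops, via the inductive hypothesis on $d'$) then forces the perturbed data into a finite set. In particular, a coefficient, some $\mu_{i,j}$, or $t_i$ must lie in a finite set up to a perturbation tending to $0$, contradicting strict monotonicity. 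Lemma~\ref{lem: one lc place one close to one} covers the sub-case where the modified coefficient lies in $(1-\tau,1)$.

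\textbf{Main obstacle.} The hard step is the second case. We must choose the modification so that it is $\mathbb R$-trivial over $U_i$ while keeping $\Bb$ lc, and arrange the two-ray game so that the target Mori fiber space really is of the form of Set-up~\ref{setup: global acc}. Concretely, this means all boundary components must be horizontal; vertical components have to be discarded using Lemma~\ref{lem: lc preserved under inequality}. We also need $E_i$ not to be contracted. If vertical divisors over $T_i$ carry the increasing coefficient, we will instead apply the inductive hypothesis in lower relative dimension $d'$ to $X_i''\to T_i$. Getting this bookkeeping right, and combining it with the $1-\tau$ gap when $t_i\to 1$, is where I expect most of the work to be.
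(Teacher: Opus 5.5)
There is a genuine gap, both in how you produce the divisor $E_i$ and in how you extract the contradiction from it. First, your dichotomy is on $\tmld(\Bb_i|_{F_i})$, with the boundary included. If the increasing datum is a coefficient $b_{i,1}\to 1$, the tmld tends to $0$ because of $B_{i,1}$ itself, which already lives on $X_i$. In that case there is nothing to extract, $\rho$ stays $1$, and your second case has no mechanism. This coefficient-tending-to-$1$ case is exactly the one the paper isolates and proves first (Step 3), and every other case is reduced to it. The paper instead measures the boundary-free structure $\Bb_i^o=(X_i,\Ff_i,0,\bm{0})(t_i')$, with $t_i'=\min\{t_i,1-\tau\}$, over divisors dominating $U_i$. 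If these total log discrepancies stay $\geq a>0$, then $X_i$ is $\frac a2$-lc over the generic point of $U_i$ by \cite[Proposition~9.1]{Cas+25a}; the $F_i$ are then bounded by Birkar's BAB, and Proposition~\ref{prop: when fiber bounded} applies. Otherwise the minimizing $E_i$ has negative discrepancy against a zero boundary, so it is automatically exceptional, and its coefficient $e_i$ in $h_i^*\Bb_i(t_i')$ strictly increases to $1$.

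Second, the compensation step fails as stated. On $X_i'$ we have $\rho(X_i'/U_i)=2$, so you cannot raise the coefficient of $E_i$ to $1$ and lower one datum while keeping $K\equiv_{U_i}0$. Pushing forward to $X_i$ forces the lowering to vanish, since the affected component or nef part is ample over $U_i$; then $(1-e_i)E_i\equiv_{U_i}0$ would be needed, which is absurd. More seriously, reading off finiteness of the lowered original datum from Lemma~\ref{lem: when exist lc place} on $X_i''\to T_i$ breaks down in several situations: the divisor carrying the datum is contracted or becomes vertical over $T_i$, or $\Mm_{1,X_i''}\equiv_{T_i}0$, or $K_{\Bb_i''(s)}\equiv_{T_i}0$ for all $s$. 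That lemma and the induction on $d'$ only see horizontal components, nef parts ample over $T_i$, and non-degenerate $t$. So applying the induction to $X_i''\to T_i$ says nothing about a vertical component.

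The missing idea is to transfer the monotonicity to $e_i$. Run a $(-E_i)$-MMP$/U_i$; then $E_i''$ is horizontal over the Mori fiber space $X_i''\to T_i$ for free. The horizontal structure $\Dd_i$ on $X_i''/T_i$ is then a new counterexample of coefficient-tending-to-$1$ type, after interpolating to some $e_i'\in(e_i,1)$ or $t_i''$ when $t_i>1-\tau$. That case is handled in three moves:
\begin{enumerate}
\item Show $B_{i,1}'$ is not contracted, by Lemma~\ref{lem: relative two-ray} applied to the big divisor $K_{\Cc_i'(t_i)}$ in which both coefficients are set to $1$ (this structure is lc by local ACC).
\item Show $B_{i,1}''$ is not vertical over $T_i$, by the induction on $d'$ applied to $e_i$.
\item Interpolate to a structure with $K\equiv_{T_i}0$ in which $B_{i,1}''$ or $E_i''$ has coefficient $1$, and conclude via Lemma~\ref{lem: one lc place one close to one} together with $\tau<\tau_{\aux}$: a quantity forced into $[1-\tau,1)$ would have to be $\leq 1-\tau_{\aux}<1-\tau$.
\end{enumerate}
The contradiction therefore does not come from a finiteness statement about the original increasing datum.
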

\begin{proof}
By Theorem~\ref{thm: picard one imply general global acc}, we only need to prove that Theorem~\ref{thm: picard number one case} holds in dimension $d$. We argue by induction on $d_F = \dim F$. When $\dim F=1$, Theorem~\ref{thm: picard number one case} follows from Proposition~\ref{prop: when fiber bounded}. Fix $d'>1$ and assume that the theorem holds for $d_F<d'$. In what follows, we argue by contradiction.

\medskip

\noindent\textbf{Step 1.} In this step, we construct a sequence of normalized foliated structures and auxiliary parameters $t_i'$.

Suppose that the theorem does not hold. By Theorem~\ref{thm: picard one imply general global acc}, there exists a DCC set $\Ii\subset [0,+\infty)$ and the following data for any positive integer $i$:
\begin{enumerate}
\item $\pi_i: X_i\rightarrow U_i$ is a contraction between normal quasi-projective varieties, $\dim X_i=d>\dim U_i$, $X_i$ is of Fano type over $U_i$, and $\rho(X_i/U_i)=1$. 
\item $F_i$ is a general fiber of $\pi_i$ and $d_F=\dim F_i$.
\item $\dim X_i=d$ and $\dim X_i-\dim U_i=d'$.
\item $\Bb_i/U_i:=(X_i,\Ff_i,B_i,\Mm^i)(t_i)/U_i$ is a $\mathbb Q$-factorial lc algebraically integrable normalized foliated structure such that $X_i$ is klt and $t_i\in\Ii\cap (0,1)$.
\item $\Bb_i(s):=(X_i,\Ff_i,B_i,\Mm^i)(s)$ for any $s\in [0,1]$.
\item $B_i=\sum_{j=1}^{m_i}b_{i,j}B_{i,j}$, where $B_{i,j}$ are the irreducible components of $B_i$ and $b_{i,j}\in\Ii$ for any $i,j$, and each $B_{i,j}$ is horizontal$/U_i$,
\item $\Mm^i=\sum_{j=1}^{n_i}\mu_{i,j}\Mm^i_j$, where each $\mu_{i,j}\in\Ii$, $\Mm^i_j$ is nef$/U_i$ $\bb$-Cartier, and $\Mm^i_{j,X_i}$ is ample$/U_i$.
\item $K_{\Bb_i(t_i)}\sim_{\mathbb R,U_i}0$ for any $i$.
\item One of the following conditions holds:
    \begin{enumerate}
        \item $b_{i,1}$ is strictly increasing and $\lim_{i\rightarrow+\infty}b_{i,1}=1$.
        \item $b_{i,1}$ is strictly increasing and $\lim_{i\rightarrow+\infty}b_{i,1}<1$.
        \item $\mu_{i,1}$ is strictly increasing, and $\{b_{i,j}\}$ is a finite set.
        \item $t_i$ is strictly increasing, and $\{b_{i,j},\mu_{i,j}\}$ is a finite set.
    \end{enumerate}
\end{enumerate}
By \cite[Theorem~5.1]{Cas+25a}, $K_{\Bb_i(s)}$ is ample$/U_i$ for any $s>t_i$ and $K_{\Bb_i(s)}$ is anti-ample$/U_i$ for any $s<t_i$. Possibly replacing $\Ii$, we may assume that $\mathbb N\subset\Ii$ and $\Ii=\overline{\Ii}$. Possibly passing to a subsequence, we may assume that $t_i$ is increasing. By Lemma~\ref{lem: no part 2 case}, possibly passing to a subsequence, we may assume that $K_{\Bb_i(s)}\not\sim_{\mathbb R,U_i}0$ for any $s\neq t_i$.

By the inductive hypothesis on $d_F$, we may assume that there exists a positive real number $\tau(d,\Ii,d')$ depending only on $d$, $\Ii$, and $d'$ satisfying the following: for any data as in Set-up~\ref{setup: global acc}, if $d_F<d'$, then $t<1-\tau(d,\Ii,d')$. We define
$$\tau:=\frac{1}{2}\min\{\tau_{\ilct}(d,\Ii),\tau_{\aux}(d,\Ii),\tau(d,\Ii,d')\}$$
where $\tau_{\ilct}(d,\Ii)$ is the positive real number as in Theorem~\ref{thm: acc lct general version} in dimension $d$ and $\tau_{\aux}(d,\Ii)$ is the positive real number as in Lemma~\ref{lem: one lc place one close to one}. We define
$$t_i':=\min\{t_i,1-\tau\}$$
for any $i$. Possibly passing to a subsequence, one of the following cases holds:
\begin{itemize}
    \item (\textbf{Case 1}) $t_i'=1-\tau<t_i$ for any $i$.
    \item (\textbf{Case 2}) $t_i'=t_i\leq 1-\tau$ for any $i$.
\end{itemize}
By Lemma~\ref{lem: when exist lc center}, we may assume that $\Bb_i$ is klt over the generic point of $U_i$ for each $i$. Possibly shrinking $U_i$, we may assume that $\Bb_i$ is klt for each $i$.

\medskip

\noindent\textbf{Step 2.} In this step, we construct a divisor $E_i$ with small discrepancy with respect to $\Bb_i$, and construct two birational models $X_i'$ and $X_i''$ of $X_i$. We let 
$$\Bb_i^o:=(X_i,\Ff_i,0,\bm{0})(t_i')$$
for each $i$ and define
$$a_i:=\inf\{a(E,\Bb_i^o)+t_i'\epsilon_{\Ff_i}(E)+(1-t_i')\mid\Center_{X_i}E\text{ dominates }U_i\}.$$
Possibly passing to a subsequence, we may assume that $a_i$ is either increasing or decreasing, and we may let $a:=\lim_{i\rightarrow+\infty}a_i$. Possibly passing to a subsequence, we may assume that $a_i\geq\frac{a}{2}$ for any $i$. 

Suppose that $a>0$. Then $\Bb_i^o$ is $\frac{a}{2}$-lc for any $i$. By \cite[Proposition~9.1]{Cas+25a}, $X_i$ is $\frac{a}{2}$-lc over the generic point of $U_i$. Since $\rho(X_i/U_i)=1$ and $X_i$ is of Fano type over $U_i$, $F_i$ is an $\frac{a}{2}$-lc Fano variety. By \cite[Theorem~1.1]{Bir21a}, $F_i$ belongs to a bounded family, contradicting Proposition~\ref{prop: when fiber bounded}. Therefore, $a=0$. Possibly passing to a subsequence, we may assume that $a_i<\tau$ for any $i$. 

Since $\Bb_i$ is klt, by Lemma~\ref{lem: lc preserved under inequality}, $\Bb_i(t_i')$ is klt, hence $\Bb_i^o$ is klt. By \cite[Lemma~2.13]{CLW26}, there are only finitely many prime divisors $E$ over $X_i$ such that $a(E,\Bb_i^o)<0$. Since $t_i'\leq 1-\tau$, there are only finitely many prime divisors $E$ over $X_i$ such that
$$a(E,\Bb_i^o)+t_i'\epsilon_{\Ff_i}(E)+(1-t_i')<\tau,$$
so
$$a_i=\min\{a(E,\Bb_i^o)+t_i'\epsilon_{\Ff_i}(E)+(1-t_i')\mid\Center_{X_i}E\text{ dominates }U_i\}$$ 
for each $i$. In particular, we may let $E_i$ be a prime divisor over $X_i$ such that $\Center_{X_i}E_i$ dominates $U_i$ and 
$$a(E_i,\Bb_i^o)+t_i'\epsilon_{\Ff_i}(E_i)+(1-t_i')=a_i.$$
Then $a(E_i,\Bb_i^o)<0$. Since the boundary of $\Bb_i^o$ is $0$, $E_i$ is exceptional$/X_i$. By \cite[Theorem~2.2.3]{Cas+25a}, there exists a projective birational morphism $h_i: X_i'\rightarrow X_i$ such that $X_i'$ is $\mathbb Q$-factorial and $\Exc(h_i)=E_i$. We write
$$\Bb_i':=h_i^*\Bb_i(t_i'):=(X_i',\Ff_i',B_i'+e_iE_i,\Mm^i)(t_i')$$
and let 
$$\Bb_i'(s):=(X_i',\Ff_i',B_i'+e_iE_i,\Mm^i)(s)$$
for any $s\in [0,1]$, where $B_i':=h^{-1}_{i,*}B_i$. Then we have
$$e_i=-\frac{a(E_i,\Bb_i(t_i'))}{t_i'\epsilon_{\Ff_i}(E_i)+(1-t_i')}\geq -\frac{a(E_i,\Bb_i^o)}{t_i'\epsilon_{\Ff_i}(E_i)+(1-t_i')}=1-\frac{a_i}{t_i'\epsilon_{\Ff_i}(E_i)+(1-t_i')}.$$
Since $t_i'\leq 1-\tau$ and $a=0$, possibly passing to a subsequence, we may assume that $e_i$ is strictly increasing and $\lim_{i\rightarrow+\infty}e_i=1$.

Since $t_i'\leq t_i$, $-K_{\Bb_i(t_i')}$ is nef$/U_i$. By Lemma~\ref{lem: fano type after extract divisor}, $X_i'$ is of Fano type over $U_i$. Since $E_i$ dominates $U_i$ and $\dim X_i>\dim U_i$, $-E_i$ is not pseudo-effective$/U_i$. Thus we may run a $(-E_i)$-MMP$/U_i$ with scaling of an ample divisor which terminates with a Mori fiber space $g_i: X_i''\rightarrow T_i$. We let $\phi_i: X_i'\dashrightarrow X_i''$ be the induced birational map and let
$$E_i'':=\phi_{i,*}E_i\quad \text{and}\quad \Bb_i''(s):=\phi_{i,*}\Bb_i'(s):=(X_i'',\Ff_i'',B_i''+e_iE_i'',\Mm^i)(s)$$
for any $s\in [0,1]$. Then $g_i$ is a $(-E_i'')$-Mori fiber space$/U_i$, so $E_i''$ dominates $T_i$. Since $\Bb_i'(t_i')$ is klt and $-K_{\Bb_i'(t_i')}$ is semi-ample$/U_i$, $\Bb_i''(t_i')$ is klt. Moreover:
\begin{itemize}
    \item If we are in \textbf{Case 1}, then $K_{\Bb_i'(t_i')}$ is anti-big$/U_i$, hence $K_{\Bb_i''(t_i')}$ is anti-big$/U_i$, and so $K_{\Bb_i''(t_i')}$ is anti-ample$/T_i$.
    \item If we are in \textbf{Case 2}, then $K_{\Bb_i'(t_i)}\sim_{\mathbb R,U_i}0$, so $K_{\Bb_i''(t_i)}\sim_{\mathbb R,T_i}0.$
\end{itemize}
We let $B_i''^o$ be the horizontal$/T_i$ part of $B_i''$,
$$\Nn^i:=\sum_{j\mid\Mm^i_{j,X''}\not\sim_{\mathbb R,T_i}0}\mu_{i,j}\Mm^i_{j},$$
and let
$$\Dd_i(\mu,s):=(X_i'',\Ff_i'',B_i''^o+\mu E_i'',\Nn^i)(s)$$
for any $\mu,s\in [0,1]$. Then we have
$$K_{\Dd_i(e_i,s)}\sim_{\mathbb R,T_i}K_{\Bb_i''(s)}$$
for any $s\in [0,1]$. Since $\Bb_i''(t_i')$ is klt, $\Dd_i(e_i,t_i')$ is klt. Since $\tau<\tau_{\ilct}(d,\Ii)$ and $\lim_{i\rightarrow+\infty}e_i=1$, possibly passing to a subsequence and applying Lemma~\ref{lem: lc preserved under inequality}, we may assume that for any $\mu\in [0,1]$ and any $s\in [0,t_i]$, $\Dd_i(\mu,s)$ is lc. Moreover, if we are in \textbf{Case 1}, then $\Dd_i(\mu,s)$ is lc for any $\mu,s\in [0,1]$.

\medskip

\noindent\textbf{Step 3.} We prove the theorem when $b_{i,1}$ is strictly increasing and $\lim_{i\rightarrow+\infty}b_{i,1}=1$. In the rest of this step, we assume that $\lim_{i\rightarrow+\infty}b_{i,1}=1$.

We let $B'_{i,1}:=h_{i,*}^{-1}B_{i,1}$, $B''_{i,1}:=\phi_{i,*}B'_{i,1}$, $\Delta_i:=B_i-b_{i,1}B_{i,1}$, $\Delta_i':=B_i'-b_{i,1}B'_{i,1}$, and $$\Delta_i'':=\phi_{i,*}\Delta_i'=B_i''-b_{i,1}B''_{i,1}.$$
Then we have
\begin{align*}
\Bb_i'(s) &= (X_i',\Ff_i',b_{i,1}B'_{i,1}+\Delta_i'+e_iE_i',\Mm^i)(s),\\
\Bb_i''(s) &= (X_i'',\Ff_i'',b_{i,1}B''_{i,1}+\Delta_i''+e_iE_i'',\Mm^i)(s)
\end{align*}
for any $s\in [0,1]$. We let
$$\Cc_i(s):=(X_i,\Ff_i,B_{i,1}+\Delta_i,\Mm^i)(s),\quad \Cc_i'(s):=(X_i',\Ff_i',B_{i,1}'+\Delta_i'+E_i,\Mm^i)(s),$$
and
$$\Cc_i''(s):=(X_i'',\Ff_i'',B_{i,1}''+\Delta_i''+E_i'',\Mm^i)(s)$$
for any $s\in [0,1]$. Since $$\lim_{i\rightarrow+\infty}b_{i,1}=\lim_{i\rightarrow+\infty}e_i=1$$ 
and $\tau<\tau_{\ilct}(d,\Ii)$, and $\Bb_i(t_i'),\Bb_i'(t_i'),\Bb_i''(t_i')$ are klt, by definition of $\tau$ and by Theorem~\ref{thm: acc lct general version}, possibly passing to a subsequence, we have that
\begin{itemize}
    \item $\Cc_i(s),\Cc_i'(s),\Cc_i''(s)$ are lc for any $s\in [1-\tau,1]=[1-t_i',1]$ if we are in \textbf{Case 1}, and
    \item $\Cc_i(t_i'),\Cc_i'(t_i'),\Cc_i''(t_i')$ are lc if we are in \textbf{Case 2}. 
\end{itemize}
In particular, $\Cc_i(t_i),\Cc_i'(t_i),\Cc_i''(t_i)$ are lc. Since $K_{\Bb_i(t_i)}\sim_{\mathbb R,U_i}0$, $B_{i,1}$ is ample$/U_i$, and $\Bb_i(s)=(\Cc_i(s),(b_{i,1}-1)B_{i,1})$ for any $s\in [0,1]$, $K_{\Cc_i(t_i)}$ is ample$/U_i$. Since $h_i$ contracts $E_i$ and $\Cc_i(t_i)$ is lc, we have that
$$K_{\Cc_i'(t_i)}\geq h_i^*K_{\Cc_i(t_i)}.$$
Therefore, $K_{\Cc_i'(t_i)}$ is big$/U_i$. Thus $K_{\Cc_i''(t_i)}$ is big$/U_i$. Since $\rho(X_i''/T_i)=1$, $K_{\Cc_i''(t_i)}$ is ample$/T_i$. 
\begin{claim}\label{claim: no divisorial contraction}
    Possibly passing to a subsequence, we have that $B_{i,1}'$ is not contracted by $\phi_i$ for any $i$.
\end{claim}
\begin{proof}
    Suppose the claim does not hold. Since $\rho(X_i'/U_i)=2$, we may write $\phi_i=\psi_i\circ\varphi_i$ where $\varphi_i: X_i'\dashrightarrow Y_i$ is a sequence of flips and $\psi_i: Y_i\rightarrow X_i''$ is the divisorial contraction of $B_{Y_i,1}:=(\varphi_i)_*B_{i,1}'$. Let 
    $$\Bb_{Y_i}(s):=\varphi_{i,*}\Bb_i'(s) \quad \text{and}\quad \Cc_{Y_i}(s):=\varphi_{i,*}\Cc_i'(s)$$
    for any $s\in [0,1]$. Since $\Bb_i'(t_i')$ is klt and $-K_{\Bb_i'(t_i')}$ is semi-ample$/U_i$, $\Bb_{Y_i}(t_i')$ is klt. Since $\tau<\tau_{\ilct}(d,\Ii)$, by Theorem~\ref{thm: acc lct general version}, possibly passing to a subsequence, we have that $\Cc_{Y_i}(s)$ is lc for any $t_i'\leq s\leq t_i$. Since $K_{\Cc_i'(t_i)}$ is big$/U_i$, $K_{\Cc_{Y_i}(t_i)}$ is big$/U_i$. Since $\psi_i$ contracts $B_{Y_i,1}$ and $\Cc_i''(t_i)$ is lc, we have
    $$K_{\Cc_{Y_i}(t_i)}\geq \psi_i^*K_{\Cc_i''(t_i)}.$$
    Let $R_i$ be the unique extremal ray of $\overline{NE}(X_i'/X_i)$ and let $S_i$ be the unique extremal ray of $\overline{NE}(Y_i/X_i'')$. By the negativity lemma, $K_{\Cc_i'(t_i)}\cdot R_i\leq 0$ and $K_{\Cc_{Y_i}(t_i)}\cdot S_i\leq 0$. This contradicts Lemma~\ref{lem: relative two-ray}.
\end{proof}

\begin{claim}\label{claim: no trivial case}
    Possibly passing to a subsequence, we have that $B_{i,1}''$ is ample$/T_i$.
\end{claim}
\begin{proof}
Suppose the claim does not hold. By Claim~\ref{claim: no divisorial contraction}, possibly passing to a subsequence, we may assume that $B_{i,1}''\neq0$ but $B_{i,1}''$ is vertical$/T_i$ for any $i$. Since $B_{i,1}$ dominates $U_i$, $B_{i,1}''$ dominates $U_i$. Thus $\dim T_i>\dim U_i$, so $$\dim X_i-\dim T_i<\dim X_i-\dim U_i=d'.$$
We have
$$K_{\Dd_i(1,s)}\sim_{\mathbb R,T_i}K_{\Cc_i''(s)}$$
for any $s\in [0,1]$. Let $\Ii':=\Ii\cup\{e_i\}_{i=1}^{+\infty}$, then $\Ii'$ is a DCC set. 

By \textbf{Step 2}, if we are in \textbf{Case 2}, then $K_{\Dd_i(t_i,s)}\sim_{\mathbb R,T_i}K_{\Bb_i''(t_i)}\sim_{\mathbb R,T_i}0$. By the inductive hypothesis on $d_F$, $\{e_i\}$ is a finite set, a contradiction. Therefore, we are in \textbf{Case 1}, and we have $t_i'=1-\tau$ for each $i$. By \textbf{Step 2}, $K_{\Dd_i(e_i,1-\tau)}\sim_{\mathbb R,T_i}K_{\Bb_i''(t_i')}$ is anti-ample$/T_i$.

If $K_{\Dd_i(1,1-\tau)}$ is ample$/T_i$, then there exists $e_i'\in (e_i,1)$ such that $K_{\Dd_i(e_i',1-\tau)}\sim_{\mathbb R,T_i}0$. By the inductive hypothesis on $d_F$, $\{e_i'\}$ belongs to a finite set, a contradiction. Thus $K_{\Dd_i(1,1-\tau)}$ is not ample$/T_i$. Since $K_{\Dd_i(1,t_i)}\sim_{\mathbb R,T_i}K_{\Cc_i''(t_i)}$ is ample$/T_i$, there exists $t_i''\in [1-\tau,t_i)$ such that $K_{\Dd_i(1,t_i'')}\sim_{\mathbb R,T_i}0$. Since $E_i''$ is an irreducible component of $\lfloor B_i''^o+E_i''\rfloor$ and $\tau<\tau_{\aux}(d,\Ii)$, by Lemma~\ref{lem: one lc place one close to one}, $t_i''\not\in (1-\tau_{\aux}(d,\Ii),1)$. Thus
$$1-\tau\leq t_i'\leq 1-\tau_{\aux}(d,\Ii)<1-\tau,$$
a contradiction.
\end{proof}
\noindent\textit{Proof of Theorem~\ref{thm: acc to global acc} continued.} By Claims~\ref{claim: no divisorial contraction} and~\ref{claim: no trivial case}, we may assume that $B_{i,1}''$ is ample$/T_i$ for each $i$. We let $\Delta_i''^o$ be the horizontal$/T_i$ part of $\Delta_i''$ and let
$$\Dd_i(\lambda,\mu,s):=\left(X_i'',\Ff_i'',\Delta_i''^o+\lambda B_{i,1}''+\mu E_i'',\Nn^i\right)(s)$$
for any $\lambda,\mu,s\in [0,1]$. Then we have
$$K_{\Dd_i(b_{i,1},e_i,s)}\sim_{\mathbb R,T_i}K_{\Bb_i''(s)}\text{ and }K_{\Dd_i(1,1,s)}\sim_{\mathbb R,T_i}K_{\Cc_i''(s)}$$
for any $s\in [0,1]$. By \textbf{Step 2}, if we are in \textbf{Case 1}, then $K_{\Dd_i(b_{i,1},e_i,t_i')}$ is anti-ample$/T_i$, and if we are in \textbf{Case 2}, then $K_{\Dd_i(b_{i,1},e_i,t_i')}\sim_{\mathbb R,T_i}0$. Since $\tau<\tau_{\ilct}(d,\Ii)$, possibly passing to a subsequence and applying Lemma~\ref{lem: lc preserved under inequality}, we may assume that for any $\lambda\in [0,1],\mu\in [0,1]$, and $s\in [0,t_i]$, $\Dd_i(\lambda,\mu,s)$ is lc. Moreover, if we are in \textbf{Case 1}, then $\Dd_i(\lambda,\mu,s)$ is lc for any $\lambda,\mu,s\in [0,1]$.

Since $\rho(X_i''/T_i)=1$, we have that
$$B_{i,1}''^{\ninv}+(1-t_i')B_{i,1}''^{\inv}\sim_{\mathbb R,T_i}c_i(E_i''^{\ninv}+(1-t_i')E_i''^{\inv})$$
for some positive real number $c_i$. Possibly passing to a subsequence, we may assume that $c_i\leq 1$ for each $i$ or $c_i>1$ for each $i$. 

If $c_i\leq 1$ (resp. $c_i>1$), then we let $e_i':=e_i-\lambda_i(1-b_{i,1})$ (resp. $b_i':=b_i-\frac{1}{\lambda_i}(1-e_i)$). Possibly passing to a subsequence, we may assume that $e_i'\in (0,1)$ (resp. $b_i'\in (0,1)$) is strictly increasing and $\lim_{i\rightarrow+\infty}e_i'=1$ (resp. $\lim_{i\rightarrow+\infty}b_i'=1$). 

We have
$$K_{\Dd_i(1,e_i',t_i')}\sim_{\mathbb R,T_i}K_{\Dd_i(b_i,e_i,t_i')}\sim_{\mathbb R,T_i}K_{\Bb_i''(t_i')}$$
$$\text{(resp. }K_{\Dd_i(b_i',1,t_i')}\sim_{\mathbb R,T_i}K_{\Dd_i(b_i,e_i,t_i')}\sim_{\mathbb R,T_i}K_{\Bb_i''(t_i')}\text{)}.$$

If we are in \textbf{Case 2}, then $K_{\Dd_i(1,e_i',t_i')}\sim_{\mathbb R,T_i}0$ (resp. $K_{\Dd_i(b_i',1,t_i')}\sim_{\mathbb R,T_i}0$). If $K_{\Dd_i(1,e_i',s)}\sim_{\mathbb R,T_i}0$ (resp. $K_{\Dd_i(b_i',1,s)}\sim_{\mathbb R,T_i}0$) for any $s\in [0,1]$, then by Lemma~\ref{lem: no part 2 case}, $\{e_i'\}$ (resp. $\{b_i'\}$) is a finite set, a contradiction. Thus $K_{\Dd_i(1,e_i',s)}\not\sim_{\mathbb R,T_i}0$ (resp. $K_{\Dd_i(b_i',1,s)}\not\sim_{\mathbb R,T_i}0$) for any $s\in [0,1]$ such that $s\neq t$. By Lemma~\ref{lem: one lc place one close to one}, $e_i'<1-\tau_{\aux}(d,\Ii)$ (resp. $b_i'<1-\tau_{\aux}(d,\Ii)$) for any $i$. This is not possible as $\lim_{i\rightarrow+\infty}e_i'=1$ (resp. $\lim_{i\rightarrow+\infty}b_i'=1$). Therefore, we are in \textbf{Case 1}, and we have $t_i'=1-\tau$ and $K_{\Dd_i(1,e_i',1-\tau)}$ (resp. $K_{\Dd_i(b_i',1,1-\tau)}$) is anti-ample$/T_i$. 

If $K_{\Dd_i(1,1,1-\tau)}$ is ample$/T_i$ for infinitely many $i$, then possibly passing to a subsequence, we may assume that $K_{\Dd_i(1,1,1-\tau)}$ is ample$/T_i$ for each $i$. Then there exists $e_i''\in (e_i',1)$ (resp. $b_i''\in (b_i',1)$) such that $K_{\Dd_i(1,e_i'',1-\tau)}\sim_{\mathbb R,T_i}0$ (resp. $K_{\Dd_i(b_i'',1,1-\tau)}\sim_{\mathbb R,T_i}0$). Possibly passing to a subsequence, we may assume that $e_i''$ (resp. $b_i''$) is strictly increasing. If $K_{\Dd_i(1,e_i'',s)}\sim_{\mathbb R,T_i}0$ (resp. $K_{\Dd_i(b_i'',1,s)}\sim_{\mathbb R,T_i}0$) for any $s\in [0,1]$ for $i\in\Lambda$ for an infinite set $\Lambda\subset\mathbb N^+$, then we get a contradiction to Lemma~\ref{lem: no part 2 case} as $\{e_i''\}_{i\in\Lambda}$ (resp.  $\{b_i''\}_{i\in\Lambda}$) is not a finite set. Thus possibly passing to a subsequence, we may assume that $K_{\Dd_i(1,e_i'',s)}\not\sim_{\mathbb R,T_i}0$ (resp. $K_{\Dd_i(b_i'',1,s)}\not\sim_{\mathbb R,T_i}0$) for any $s\in [0,1]$ such that $s\neq1-\tau$. By Lemma~\ref{lem: one lc place one close to one}, $e_i''<1-\tau_{\aux}(d,\Ii\cup\{1-\tau\})$ (resp. $b_i''<1-\tau_{\aux}(d,\Ii\cup\{1-\tau\})$) for any $i$. This is not possible as $\lim_{i\rightarrow+\infty}e_i''=1$ (resp. $\lim_{i\rightarrow+\infty}b_i''=1$). 

Therefore, possibly passing to a subsequence, we may assume that $K_{\Dd_i(1,1,1-\tau)}$ is not ample$/T_i$ for any $i$. Since $K_{\Dd_i(1,1,t_i)}\sim_{\mathbb R,T_i}K_{\Cc_i''(t_i)}$ is ample$/T_i$, there exists $t_i''\in [1-\tau,t_i)$ such that $K_{\Dd_i(1,1,t_i'')}\sim_{\mathbb R,T_i}0$. By Lemma~\ref{lem: one lc place one close to one}, $t_i''\not\in (1-\tau_{\aux}(d,\Ii),1)$. Thus
$$1-\tau\leq t_i'\leq 1-\tau_{\aux}(d,\Ii)<1-\tau,$$
a contradiction.

\medskip

\noindent\textbf{Step 4.} We conclude the proof in this step. 

Suppose that we are in \textbf{Case 2}. Then $t_i=t_i'$ and $$K_{\Dd_i(e_i,t_i)}\sim_{\mathbb R,T_i}K_{\Bb_i''(t_i)}\sim_{\mathbb R,T_i}0.$$ 
We get a contradiction to \textbf{Step 3} as the coefficients of $B_i''^o+e_iE_i''$ 
belong to the DCC set $\Ii\cup\{e_i\}_{i=1}^{+\infty}$, $e_i$ is strictly increasing, and $\lim_{i\rightarrow+\infty}e_i=1$. Therefore, we are in \textbf{Case 1}, and $t_i'=1-\tau$ for any $i$. 

Possibly passing to a subsequence, we may assume that $K_{\Dd_i(1,1-\tau)}$ is ample$/T_i$ for each $i$ or $K_{\Dd_i(1,1-\tau)}$ is not ample$/T_i$ for each $i$. If $K_{\Dd_i(1,1-\tau)}$ is ample$/T_i$ for each $i$, then there exists $e_i'\in (e_i,1)$ such that $K_{\Dd_i(e_i',1-\tau)}\sim_{\mathbb R,T_i}0$. Possibly passing to a subsequence, we may assume that $e_i'$ is strictly increasing. Then $K_{\Dd_i(e_i',1-\tau)}\sim_{\mathbb R,T_i}0$. Since the coefficients of $B_i''^o+e_i'E_i''$ and $1-\tau$ belong to the DCC set $\Ii\cup\{e_i'\}_{i=1}^{+\infty}\cup\{1-\tau\}$, $e_i'$ is strictly increasing and $\lim_{i\rightarrow+\infty}e_i'=1$, we get a contradiction to \textbf{Step 3}. Therefore, $K_{\Dd_i(1,1-\tau)}$ is not ample$/T_i$ for each $i$. 

Since $t_i>1-\tau$, $\tau<\tau_{\ilct}(d,\Ii)$, and $\Bb_i(t_i)$ is lc, $\Bb_i(1)$ is lc. Let
$$\Cc_i':=(X_i',\Ff_i',B_i'+E_i,\Mm^i)(1),$$
then $K_{\Cc_i'}\geq h^*K_{\Bb_i(1)}$. Since $K_{\Bb_i(1)}$ is ample$/U_i$, $K_{\Cc_i'}$ is big$/U_i$. Thus $\phi_{i,*}K_{\Cc_i'}$ is big$/U_i$, hence
$$K_{\Dd_i(1,1)}\sim_{\mathbb R,T_i}\phi_{i,*}K_{\Cc_i'}$$
is ample$/T_i$. Therefore, there exists $t_i''\in [1-\tau,1)$ such that $K_{\Dd_i(1,t_i'')}\sim_{\mathbb R,T_i}0$. By Lemma~\ref{lem: one lc place one close to one}, $t_i''\not\in (1-\tau_{\aux}(d,\Ii),1)$, so
$$1-\tau\leq t_i''\leq 1-\tau_{\aux}(d,\Ii)<1-\tau,$$
a contradiction.
\end{proof}

\section{Proof of the local ACC and the global ACC}\label{sec: proof of acc and global acc}

In this section, we prove the ACC (Theorem~\ref{thm: main ACC}) and the global ACC (Theorem~\ref{thm: global ACC main}).

\begin{lem}\label{lem: acc dim 1}
     Theorem~\ref{thm: acc lct general version} holds when $d=1$.
\end{lem}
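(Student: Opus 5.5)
The plan is to reduce, via the lemmas already proved, to a one-dimensional computation on a curve germ. By Lemma~\ref{lem: reduce to one parameter acc} (whose reductions, Lemmas~\ref{lem: acc reduce to klt} and~\ref{lem: lc preserved under inequality}, are dimension-free), it suffices to treat the case where $X$ is a smooth curve ($\mathbb Q$-factorial klt is automatic in dimension $1$). In that case $B'=(1-\tau)B$, $\Mm'=(1-\tau)\Mm$, and the parameters are $t$ and $(1-\tau)t$. Since $X$ is a curve, every $\bb$-divisor descends to $X$. So the nef part contributes nothing to discrepancies, and log canonicity is checked only at the prime divisors of $X$, namely closed points $P$. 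On a curve the foliation is either $T_X$ or the trivial (zero) foliation.

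We treat the two cases separately, reading off the coefficient condition at each point $P$.

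\textbf{Case $\Ff=T_X$.} No point is invariant, so $\log\Bb=(X,B,\Mm)$ up to the $tK_{\Ff}+(1-t)K_X=K_X$ identification. Log canonicity means $\mult_PB\le 1$ for all $P$. The hypothesis gives $(1-\tau)\mult_PB\le 1$ with $\mult_PB\in\Ii$. Choose $\tau$ so that $(1-\tau)^{-1}<\min(\Ii\cap(1,+\infty))$; this minimum exists because $\Ii$ is DCC. Then $\mult_PB\le1$.

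\textbf{Case $\Ff$ the zero foliation.} Every point is invariant and $K_{\Ff}=K_X$. The boundary of $\log\Bb$ is $(1-t)B$, and the lc condition at $P$ reads $a(P)=-(1-t)b\ge -(1-t)$, i.e.\ $b:=\mult_PB\le 1$ whenever $t<1$; if $t=1$ the condition is vacuous. For $\Bb'$ the condition is $b(1-\tau)\le1$ (or vacuous if $t'=1$, which forces $t=1$). The same choice of $\tau$ as before concludes.

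The main point to check is that the normalization in $\log$ exactly cancels the invariant weight $1-t$, so that in both cases lc reduces to $\mult_P B\le1$ independently of $t$. Once this is verified, the result is the trivial DCC gap argument. In particular, $\tau$ depends only on $\Ii$.
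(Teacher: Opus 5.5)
Your proposal is correct and follows the paper's proof. Both split into $\Ff=T_X$ and $\Ff=0$, note that the nef part plays no role on a curve, and reduce each case to $(1-\tau)\mult_PB\le 1\Rightarrow \mult_PB\le 1$. There are two cosmetic differences: the paper cites \cite[Theorem~1.1]{HMX14} where you use the elementary DCC gap, and it works directly with $\Bb'$ rather than first passing through Lemma~\ref{lem: reduce to one parameter acc}, which is unnecessary in dimension one but harmless.

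There is one small slip. For the zero foliation $\det(T_{\Ff})=\mathcal O_X$, so $K_{\Ff}=0$, not $K_X$. This does not affect the argument, since the lc condition at $P$ only involves the boundary coefficient $(1-t)\mult_PB$.
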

\begin{proof}
Since $\dim X=1$, $\Ff=T_X$ or $\Ff=0$, and we may assume that $\Mm=\Mm'=\bm{0}$.

If $\Ff=T_X$, then $(X,B')$ is lc, hence $(X,(1-\tau)B)$ is lc. By \cite[Theorem~1.1]{HMX14}, we may choose $\tau$ depending only on $\Ii$ such that $(X,B)$ is lc, hence $\Bb$ is lc. If $\Ff=0$, then $B^{\ninv}=0$. In this case, if $t=1$, then $B^{\ninv}+(1-t)B^{\inv}=0$. Since $\Ff$ is lc, $\Bb$ is lc. If $\Ff=0$ and $t<1$, then $(X,B')$ is lc hence $(X,(1-\tau)B)$ is lc. By \cite[Theorem~1.1]{HMX14}, we may choose $\tau$ depending only on $\Ii$ such that $(X,B)$ is lc, hence $\Bb$ is lc.
\end{proof}

\begin{proof}[Proofs of Theorems~\ref{thm: acc lct general version} and~\ref{thm: global acc general}]
They follow from Lemma~\ref{lem: acc dim 1}, Theorem~\ref{thm: global to local}, and Theorem~\ref{thm: acc to global acc}.
\end{proof}

\begin{proof}[Proof of Theorem~\ref{thm: ACC precise}]
It follows from Theorem~\ref{thm: acc lct general version} and Lemma~\ref{lem: acc implies precise acc}.
\end{proof}

\begin{proof}[Proof of Theorem~\ref{thm: main ACC}]
It is a special case of Theorem~\ref{thm: ACC precise} by considering $\Ii=\{0,1\}$, $B=D=0$, $\Mm=\Nn=\bm{0}$, $t=0$, and $\mu=1$.
\end{proof}

\begin{proof}[Proof of Theorem~\ref{thm: global ACC main}]
Suppose that the theorem does not hold. Then there exists a strictly increasing sequence of real numbers $t_i\in (0,1)$ and projective lc algebraically integrable normalized foliated structures $\Bb_i:=(X_i,\Ff_i)(t_i)$ of dimension $d$ such that $K_{\Bb_i}\equiv 0$. We let $\Bb_i(s):=(X_i,\Ff_i)(s)$ for any $s\in [0,1]$. We let $h_i: X_i'\rightarrow X_i$ be a $\mathbb Q$-factorial qdlt modification of $\Bb_i$ and let $\Bb_i'(s):=(h_{i,*}^{-1}\Bb_i(s),\Exc(h_i))$ for any $s\in [0,1]$. By Theorem~\ref{thm: global acc general}, possibly passing to a subsequence, we have that $K_{\Bb_i'(s)}\equiv 0$ for any $s\in [0,1]$. In particular, $K_{\Bb_i'(0)}=K_{X_i'}+\Exc(h_i)\equiv 0$. Since $t_i\in (0,1)$, by Lemma~\ref{lem: lc preserved under inequality}, $\Bb_i'(0)$ is lc. By \cite[Theorem~1.4]{Gon11}, $K_{\Bb_i'(0)}\sim_{\mathbb R}0$. Thus $K_{\Bb_i(0)}=K_{X_i}$ is $\mathbb R$-Cartier and $K_{X_i}\sim_{\mathbb R}0$. Thus $K_{X_i}\equiv K_{\Ff_i}\equiv 0$, a contradiction.
\end{proof}

\begin{rem}
Under the setting of Theorem~\ref{thm: global ACC main}, we actually have that $tK_{\Ff}+(1-t)K_X\sim_{\mathbb R}0$, i.e.\ abundance holds for numerical dimension zero lc algebraically integrable adjoint foliated structures. The proof of this result requires the canonical bundle formula for adjoint foliated structures, and we leave it to a future work.
\end{rem}

\section{ACC type results}\label{sec: acc type results}

In this section, we prove several variations of the ACC for lc thresholds and of the global ACC, namely Theorems~\ref{thm: pet main}, \ref{thm: pet gap main}, \ref{thm: r-complementary main}, and \ref{thm: fano spectrum main}.

\subsection{ACC for the Fano spectrum}
In this subsection, we prove the ACC for the Fano spectrum (Theorem~\ref{thm: fano spectrum main}). It follows from the following more general theorem.

\begin{thm}\label{thm: fano spectrum general}
    Let $d$ be a positive integer and $\Ii\subset [0,+\infty)$ a DCC set. Then there exists an ACC set $\Ii'\subset [0,+\infty)$ depending only on $d$ and $\Ii$ 
  satisfying the following. 
    
    Assume that $\Bb$ is a projective lc algebraically integrable normalized foliated structure of dimension $d$ such that $\Bb\in\Ii$ and $-K_{\Bb}$ is nef but not numerically trivial. Let $H$ be a Cartier divisor and $r$ a positive real number such that $-K_{\Bb}\sim_{\mathbb R}rH$. Then $r\in\Ii'$.
\end{thm}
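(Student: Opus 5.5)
We argue by contradiction, following the template of \cite[Corollary~1.10]{HMX14} and the sketch in Section~\ref{sec: sketch of proof}: we absorb $rH$ into the nef part and apply the generalized global ACC (Theorem~\ref{thm: global acc general}). Suppose there are projective lc algebraically integrable normalized foliated structures $\Bb_i=(X_i,\Ff_i,B_i,\Mm_i)(t_i)\in\Ii$ of dimension $d$, Cartier divisors $H_i$ with $-K_{\Bb_i}\sim_{\mathbb R}r_iH_i$, and $r_i$ strictly increasing. Since $-K_{\Bb_i}$ is nef but not numerically trivial, $H_i$ is nef and $H_i\not\equiv0$. Hence $\overline{H_i}$ is a nef $\bb$-Cartier $\bb$-divisor, and we set
\[
\Bb_i^+:=(\Bb_i,\bm{0},r_i\overline{H_i})=(X_i,\Ff_i,B_i,\Mm_i+r_i\overline{H_i})(t_i).
\]
Then $K_{\Bb_i^+}=K_{\Bb_i}+r_iH_i\sim_{\mathbb R}0$. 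Adding a nef $\bb$-divisor that descends to $X_i$ does not change discrepancies, so $\Bb_i^+$ is still lc.

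Next we need the coefficients to lie in a DCC set. We may assume the $r_i$ are bounded, since an unbounded increasing sequence is excluded by the global ACC argument below applied to a subsequence. Then $\Ii\cup\{r_i\}_i$ is DCC, because the $r_i$ are increasing. So $\Bb_i^+$ has coefficients in the DCC set $\Ii^+:=\Ii\cup\{r_i\}$, and the nef part $\Mm_i+r_i\overline{H_i}$ is written as $\sum\mu_j\Mm_{i,j}+r_i\overline{H_i}$ with each summand nef $\bb$-Cartier. We apply Theorem~\ref{thm: global acc general} with $U=\{pt\}$, so $F=X_i$, and with the DCC set $\Ii^+$ (truncated to $[0,1]$ if needed, or rescaled as below). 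Part (2) says that every $\mu$ whose $\bb$-divisor is not numerically trivial lies in a finite set $\Ii_0$. Since $\overline{H_i}|_{X_i}=\overline{H_i}\not\equiv\bm{0}$, this gives $r_i\in\Ii_0$, contradicting strict monotonicity.

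\textbf{Main obstacle.} Theorem~\ref{thm: global acc general} as stated requires $\Ii\subset[0,1]$, while $r_i$ may exceed $1$. We fix this by splitting $r_iH_i=\sum_{k=1}^{N}\frac{r_i}{N}H_i$ into $N$ copies of the same nef $\bb$-Cartier divisor. Here $N:=\lceil\sup r_i\rceil$ is fixed once we know the $r_i$ are bounded. The coefficients $\frac{r_i}{N}$ lie in $[0,1]$, form a DCC set, and are strictly increasing, so the same contradiction goes through.

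To get boundedness of $r_i$, we intersect with a curve. If some $H_i\cdot C=1$ this is immediate. In general we would show it using Theorem~\ref{thm: global acc general} itself, applied to $\Bb_i^+$ with a fixed large $N$ after truncation: if $r_i\to\infty$, rescale by choosing $N_i$ with $r_i/N_i\in[\frac12,1]$. Then all coefficients of the nef part lie in $\Ii\cup[\frac12,1]$, which is not DCC. So this rescaling trick does not work, and the fallback is the following.

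By the cone theorem for lc algebraically integrable adjoint foliated structures, since $-K_{\Bb_i}$ is nef and not numerically trivial, and (after a qdlt modification) is not pseudo-effectively trivial, there is a $K_{\Bb_i}$-negative extremal ray spanned by a rational curve $C$ with $-K_{\Bb_i}\cdot C\le 2d$. Then $r_i\le r_iH_i\cdot C\le 2d$, because $H_i$ is Cartier and $H_i\cdot C\ge1$ when positive; we choose $C$ with $H_i\cdot C>0$, which is possible since $H_i$ is nef and not numerically trivial, after replacing by a curve in an extremal face. Verifying this length bound in the foliated setting is the delicate step.
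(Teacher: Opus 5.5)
Your core argument (add $r\overline{H}$ to the nef part so that $(\Bb,r\overline{H})$ is lc with numerically trivial canonical class, then apply Theorem~\ref{thm: global acc general}(2) with the DCC set $\Ii\cup\{r_i\}$ to force the strictly increasing $r_i$ into a finite set) is exactly the paper's proof. Your ``main obstacle'' is not real: Theorem~\ref{thm: global acc general} is stated with $\Ii\subset[0,1]$, but its proof (through Theorem~\ref{thm: picard number one case} and \cite[Theorem~1.6]{BZ16}) works verbatim for DCC subsets of $[0,+\infty)$, and the paper applies it in that form here, so you need neither a bound on $r_i$ nor any splitting; your fallback $r_i\le 2d$ via the length of extremal rays \cite[Theorem~1.3(2)]{Cas+24} is valid but superfluous, your earlier claim that boundedness follows from ``the argument below'' is circular, and under a literal $[0,1]$ reading you would also have to split any original $\mu_j>1$, not just $r_iH_i$.
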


\begin{proof}
We consider $\Bb':=(\Bb,r\overline{H})$. Then $\Bb'$ is lc and $K_{\Bb'}\equiv 0$. By Theorem~\ref{thm: global acc general}, if $r$ belongs to a DCC set $\Ii''$, then $r$ belongs to a finite set depending only on $d$, $\Ii$, and $\Ii''$. Thus $r$ belongs to an ACC set. 
\end{proof}

\begin{proof}[Proof of Theorem~\ref{thm: fano spectrum main}]
    It is a special case of Theorem~\ref{thm: fano spectrum general}.
\end{proof}

\subsection{ACC for pseudo-effective thresholds}\label{subsec: acc pet}
In this subsection, we prove Theorems~\ref{thm: pet main} and~\ref{thm: pet gap main}. They follow from the following more general theorems (Theorems~\ref{thm: pet general} and~\ref{thm: big gap general}).

\begin{prop}\label{prop: global acc in bb}
    Let $d$ be a positive integer and $\Ii\subset [0,+\infty)$ a DCC set. Then there exists a positive real number $\tau=\tau_{\iglct}(d,\Ii)$ depending only on $d$ and $\Ii$ satisfying the following. Let $\Bb/U$ and $\Bb'/U$ be two algebraically integrable normalized foliated structures of dimension $d$. Assume that
    \begin{enumerate}
    \item $X$ is the ambient variety of $\Bb$ and $\Bb'$ and the associated morphism $X\rightarrow U$ is a contraction, $X$ is $\mathbb Q$-factorial of Fano type over $U$, and $\rho(X/U)=1$,
        \item $\Bb/U\in\Ii$,
        \item $\Bb/U\geq\Bb'/U\geq\Bb^{1-\tau}/U$, and
        \item $\Bb'$ is lc and $K_{\Bb'}\sim_{\mathbb R,U}0$.
    \end{enumerate}
    Then $K_{\Bb''}\sim_{\mathbb R,U}0$ for any $\Bb/U\geq\Bb''/U\geq\Bb'/U$.
\end{prop}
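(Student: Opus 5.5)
We argue by contradiction, using the global ACC (Theorem~\ref{thm: global acc general}) and the local ACC (Theorem~\ref{thm: acc lct general version}) in dimension $d$. The key point is that $\rho(X/U)=1$ makes relative triviality detectable on a general fiber. Write $\Bb=(X,\Ff,B,\Mm)(t)$ and $\Bb'=(X,\Ff,B',\Mm')(t')$.

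\textbf{Setup of the contradiction.} Suppose no such $\tau$ exists. Then there are sequences $\tau_i\to 0$, structures $\Bb_i/U_i\in\Ii$, and structures $\Bb_i\geq\Bb_i''\geq\Bb_i'\geq\Bb_i^{1-\tau_i}$ such that $\Bb_i'$ is lc, $K_{\Bb_i'}\sim_{\mathbb R,U_i}0$, but $K_{\Bb_i''}\not\sim_{\mathbb R,U_i}0$. Since $\rho(X_i/U_i)=1$ and $X_i$ is of Fano type over $U_i$, $\mathbb R$-linear triviality over $U_i$ is equivalent to numerical triviality on the general fiber $F_i$. So $K_{\Bb_i''}|_{F_i}\not\equiv 0$, and in particular $\Bb_i\neq\Bb_i'$ in a way visible on $F_i$.

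\textbf{Step 1: use the local ACC.} Once $\tau_i<\tau_{\ilct}(d,\Ii)$, Theorem~\ref{thm: acc lct general version} shows $\Bb_i$ is lc. Lemma~\ref{lem: lc preserved under inequality} then shows every structure between $\Bb_i'$ and $\Bb_i$ is lc; this needs $t<1$ or $t=t'=1$, and the case $t=1>t'$ is handled as in Lemma~\ref{lem: reduce to one parameter acc}.

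\textbf{Step 2: interpolate to a threshold.} Consider the segment $\Bb_i(\lambda):=\Bb_i^{\lambda}\cdot\Bb_i'^{1-\lambda}$ for $\lambda\in[0,1]$, using Definition-Lemma~\ref{deflem: nfs geometric average}; it is monotone with respect to $\geq$, and its canonical divisor is affine in $\lambda$. Every effective difference in the boundary, nef part, or parameter, when restricted to $F_i$, has sign controlled by the Fano-type, Picard-number-one structure. I would expect nonnegative differences to be relatively nef, and ample when nonzero on $F_i$; making this precise is a technical point, since the nef part need not be ample.

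Assuming this, $K_{\Bb_i(\lambda)}$ is $\lambda$ times a fixed class that is nonzero on $F_i$. Moving from $\Bb_i'$ upward along the segment, the coefficients lie in a DCC set: they are combinations of elements of $\Ii$ with weights tending to $1$. We choose the segment so that $K$ becomes zero exactly at $\Bb_i'$. Reparametrize by scaling the increments that are nonzero on $F_i$: set
\[
\Cc_i:=\Bb_i'+c_i(\Bb_i-\Bb_i'),
\]
taken componentwise on coefficients. Here $c_i$ is chosen so that the coefficients of $\Cc_i$ that are nonzero on $F_i$ approach the corresponding coefficients of $\Bb_i$ from below, giving a strictly increasing sequence. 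We also need $K_{\Cc_i}|_{F_i}\equiv 0$; this is obtained by balancing the ample increments, possibly one coefficient at a time, as in the relative-triviality tricks used in Section~\ref{sec: local to global}.

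\textbf{Step 3: conclude with the global ACC.} With $\Cc_i$ lc (by Step 1), $K_{\Cc_i}|_{F_i}\equiv 0$, and coefficients in a DCC set, Theorem~\ref{thm: global acc general}(1)--(3) forces the varying coefficients into a finite set $\Ii_0$. They must then stabilize, so $\Bb_i''$ and $\Bb_i'$ agree on $F_i$ for $i\gg 0$. This contradicts $K_{\Bb_i''}|_{F_i}\not\equiv 0$.

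\textbf{Main obstacle.} The hard step is Step 2: producing a sequence that is relatively trivial and has DCC, strictly increasing coefficients when several kinds of data vary at once (boundary, nef part, parameter). The parameter causes particular trouble, because the invariant coefficients are rescaled by $1-t$. I would first try to handle one coefficient at a time: move the single component that varies with an ample restriction and keep the others fixed, using $\rho=1$ to compensate. For the parameter direction, I would use Theorem~\ref{thm: global acc general}(3) directly.
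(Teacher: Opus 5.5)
There is a genuine gap. Your Step~2 does not work as stated, and it is aimed at the wrong difficulty.

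Along your segment,
$K_{\Bb_i^{\lambda}\cdot\Bb_i'^{1-\lambda}}=\lambda K_{\Bb_i}+(1-\lambda)K_{\Bb_i'}\sim_{\mathbb R,U_i}\lambda K_{\Bb_i}$.
So relative triviality holds either only at $\lambda=0$ or for every $\lambda$. You implicitly assume $K_{\Bb_i}|_{F_i}\not\equiv 0$, but the hypothesis only concerns $\Bb_i''$, which need not lie on the segment. The case $K_{\Bb_i}\sim_{\mathbb R,U_i}0$ is fully compatible with the contradiction setup, and then the segment says nothing about $\Bb_i''$. The componentwise $\Cc_i$ with ``balancing'' is not pinned down by anything. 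Also, Theorem~\ref{thm: global acc general}(3) gives a dichotomy ($t\in\Ii_0$, or $K_{\Bb(s)}|_F\equiv 0$ for all $s$), not finiteness. Hence Step~3's conclusion that $\Bb_i''$ and $\Bb_i'$ agree on $F_i$ is unsupported.

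DCC is not the real issue for the boundary and the parameter. Since $(1-\tau_i)\gamma\le\gamma'\le\gamma$ with $\gamma\in\Ii$ and $\tau_i\to 0$, the boundary coefficients and parameters of the whole sequence $\{\Bb_i'\}_i$ already form a DCC set. The genuine obstruction is the nef part, which you flag and leave open. $\Mm_i'$ is merely some nef $\bb$-divisor with $\Mm_i-\Mm_i'$ and $\Mm_i'-(1-\tau_i)\Mm_i$ nef over $U_i$. It is not of the form $\sum\mu_j\Mm_j$ with $\mu_j$ in a DCC set and $\Mm_j$ $\bb$-Cartier, so Theorem~\ref{thm: global acc general} cannot be applied to $\Bb_i'$ or to anything built from it.

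The missing idea is to use $\rho(X_i/U_i)=1$ not just to detect triviality, but to replace $\Bb_i'$ and $\Bb_i''$, up to $\sim_{\mathbb R,U_i}$, by scalar rescalings of the DCC data of $\Bb_i$.
\begin{itemize}
\item Since all classes over $U_i$ are proportional, there are $a_i,b_i,c_i,d_i\in[1-\tau_i,1]$ with $B_i'^{\ninv}+\Mm'_{i,X_i}\sim_{\mathbb R,U_i}a_i(B_i^{\ninv}+\Mm_{i,X_i})$ and $B_i'^{\inv}\sim_{\mathbb R,U_i}b_iB_i^{\inv}$. Likewise $c_i,d_i$ work for $\Bb_i''$.
\item Set $\Bb_i(a,b,s):=(X_i,\Ff_i,aB_i^{\ninv}+bB_i^{\inv},a\Mm_i)(s)$. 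Then $K_{\Bb_i(a_i,b_i,t_i')}\sim_{\mathbb R,U_i}K_{\Bb_i'}\sim_{\mathbb R,U_i}0$ and $K_{\Bb_i(c_i,d_i,t_i'')}\sim_{\mathbb R,U_i}K_{\Bb_i''}\not\sim_{\mathbb R,U_i}0$.
\item These structures lie below $\Bb_i$, so they are lc by your Step~1. After passing to a subsequence, their coefficients $a_i\gamma,b_i\gamma$ ($\gamma\in\Ii$) and nef parts $a_i\Mm_i$ are in the form the global ACC requires.
\item Parts (1)--(2) of Theorem~\ref{thm: global acc general} then give $a_iB_i^{\ninv}\sim_{\mathbb R,U_i}B_i^{\ninv}$, $a_i\Mm_{i,X_i}\sim_{\mathbb R,U_i}\Mm_{i,X_i}$, and $(1-t_i')b_iB_i^{\inv}\sim_{\mathbb R,U_i}(1-t_i')B_i^{\inv}$.
\item So both $\Bb_i'$ and $\Bb_i''$ reduce to the single family $\Bb_i(s):=(X_i,\Ff_i,B_i,\Mm_i)(s)$, evaluated at $s=t_i'$ and $s=t_i''$ respectively.
\item Part (3) now puts $t_i'$ in a finite set; the other alternative directly contradicts $K_{\Bb_i(t_i'')}\not\sim_{\mathbb R,U_i}0$.
\item The squeeze $(1-\tau_i)t_i\le t_i'\le t_i''\le t_i$ with $t_i\in\Ii$ forces $t_i'=t_i''=t_i$ on a subsequence, which is a contradiction.
\end{itemize}
Separating $B^{\inv}$ from $B^{\ninv}+\Mm_X$ is exactly what absorbs the $(1-t)$-rescaling you were worried about. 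The parameter is handled only at the end, via (3).
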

\begin{proof}
Suppose that the proposition does not hold. Then there exist a strictly decreasing sequence of real numbers $\tau_i$ such that $\tau_i\in (0,1)$ for each $i$, $\lim_{i\rightarrow+\infty}\tau_i=0$, and for each $i$, we have two algebraically integrable normalized foliated structures 
       $$\Bb_i/U_i:=(X_i,\Ff_i,B_i,\Mm_i)(t_i)/U_i,\quad \Bb_i'/U_i:=(X_i,\Ff_i,B_i',\Mm_i')(t_i')/U_i$$
      of dimension $d$, such that
       \begin{itemize}
       \item $X_i$ is $\mathbb Q$-factorial of dimension $d$, the associated morphism $X_i\rightarrow U_i$ is a contraction, $X_i$ is of Fano type over $U_i$, and $\rho(X_i/U_i)=1$,
        \item $\Bb_i/U_i\in\Ii$,
        \item $\Bb_i/U_i\geq\Bb_i'/U_i\geq\Bb_i^{1-\tau_i}/U_i$, 
        \item $\Bb_i'$ is lc and $K_{\Bb_i'}\sim_{\mathbb R,U_i}0$, and
        \item $K_{\Bb_i''}\not\sim_{\mathbb R,U_i}0$ for some $\Bb_i\geq\Bb_i''\geq\Bb_i'$.
       \end{itemize}
Possibly passing to a subsequence, we may assume that $\tau_i<\tau_{\ilct}(d,\Ii)$ for each $i$, where $\tau_{\ilct}(d,\Ii)$ is the value defined as in Theorem~\ref{thm: acc lct general version}. Write 
$$\Bb_i'':=(X_i,\Ff_i,B_i'',\Mm_i'')(t_i'').$$
Since $X_i$ is $\mathbb Q$-factorial, $\rho(X_i/U_i)=1$, and $\Bb_i/U_i\geq\Bb_i'/U_i\geq\Bb_i^{1-\tau_i}/U_i$, there exist real numbers $a_i,b_i,c_i,d_i\in [1-\tau_i,1]$ such that $B_i'^{\ninv}+\Mm'_{i,X_i}\sim_{\mathbb R,U_i}a_i(B_i^{\ninv}+\Mm_{i,X_i})$, $B_i'^{\inv}\sim_{\mathbb R,U_i}b_iB_i^{\inv}$, $B_i''^{\ninv}+\Mm''_{i,X_i}\sim_{\mathbb R,U_i}c_i(B_i^{\ninv}+\Mm_{i,X_i})$, and $B_i''^{\inv}\sim_{\mathbb R,U_i}d_iB_i^{\inv}$. Let
$$\Bb_i(a,b,s):=(X_i,\Ff_i,aB_i^{\ninv}+bB_i^{\inv},a\Mm_i)(s)$$
for any $a,b,s\in [0,1]$. Then
$$K_{\Bb_i(a_i,b_i,t_i')}\sim_{\mathbb R,U_i}K_{\Bb_i'}\sim_{\mathbb R,U_i}0\quad \text{and}\quad K_{\Bb_i(c_i,d_i,t_i'')}\sim_{\mathbb R,U_i}K_{\Bb_i''}\not\sim_{\mathbb R,U_i}0.$$
For any $a,b\in [1-\tau_i,1]$ and $s\in [(1-\tau_i)t_i,t_i]$, since $\Bb_i'$ is lc and $$\Bb_i(a,b,s)/U_i\geq\Bb_i'/U_i\geq\Bb_i(a,b,s)^{1-\tau_i}/U_i$$
 by Theorem~\ref{thm: acc lct general version}, $\Bb_i(a,b,s)$ is lc. By Theorem~\ref{thm: global acc general}~(1-2), possibly passing to a subsequence, we have the following for all $i$:
\begin{itemize}
    \item Either $a_i=1$ or $B_i^{\ninv}\sim_{\mathbb R,U_i}0$. Thus $a_iB_i^{\ninv}\sim_{\mathbb R,U_i}B_i^{\ninv}$.
    \item Either $a_i=1$ or $\Mm_i\equiv_{U_i}\bm{0}$. Thus $a_i\Mm_{i,X_i}\sim_{\mathbb R,U_i}\Mm_{i,X_i}$.
    \item We have $b_i=1$, or $t_i=t_i'=1$, or  $B_i^{\inv}\sim_{\mathbb R,U_i}0$. Thus $$(1-t_i')b_iB_i^{\inv}\sim_{\mathbb R,U_i}(1-t_i')B_i^{\inv}.$$ 

\end{itemize}
Let
$$\Bb_i(s):=\Bb_i(1,1,s)=(X_i,\Ff_i,B_i,\Mm_i)(s)$$
for any $s\in [0,1]$. Then 
$$K_{\Bb_i(t_i')}\sim_{\mathbb R,U_i}K_{\Bb_i(a_i,b_i,t_i')}\sim_{\mathbb R,U_i}0\quad \text{and}\quad K_{\Bb_i(t_i'')}\sim_{\mathbb R,U_i}K_{\Bb_i(c_i,d_i,t_i'')}\not\sim_{\mathbb R,U_i}0.$$
By Theorem~\ref{thm: global acc general}~(3), $t_i'$ belongs to a finite set. Since
$$t_i\geq t_i''\geq t_i'\geq (1-\tau_i)t_i$$
and $t_i\in\Ii$, possibly passing to a subsequence, we have that $t_i'=t_i''=t_i$. However,
$$K_{\Bb_i(t_i')}\sim_{\mathbb R,U_i}0\not\sim_{\mathbb R,U_i}K_{\Bb_i(t_i'')},$$
which is not possible.
\end{proof}

\begin{lem}\label{lem: pe preserved under order}
    Let $\Bb/U$ and $\Bb'/U$ be two $\mathbb Q$-factorial normalized foliated structures such that $\Bb/U\geq\Bb'/U$. Assume that $\Bb'$ is lc and $K_{\Bb'}$ is pseudo-effective$/U$. Then $K_{\Bb}$ is pseudo-effective$/U$.
\end{lem}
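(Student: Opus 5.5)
We compare $K_{\Bb}$ with $K_{\Bb'}$ directly and show that the difference is a sum of pseudo-effective pieces plus something that is pseudo-effective because $\Bb'$ is lc. Write $\Bb=(X,\Ff,B,\Mm)(t)$ and $\Bb'=(X,\Ff,B',\Mm')(t')$ with $t\geq t'$, $B\geq B'$, and $\Mm-\Mm'$ nef$/U$. Since $X$ is $\mathbb Q$-factorial, every divisor appearing below is $\mathbb R$-Cartier. Expanding,
$$K_{\Bb}-K_{\Bb'}=(t-t')(K_{\Ff}-K_X)+(B-B')^{\ninv}+(1-t)B^{\inv}-(1-t')B'^{\inv}+(\Mm_X-\Mm'_X).$$

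\textbf{The easy terms.} The divisor $(B-B')^{\ninv}$ is effective. The term $\Mm_X-\Mm'_X$ is the pushforward of a nef$/U$ divisor from a higher model, so it is pseudo-effective$/U$ (a limit of pushforwards of ample divisors). The delicate terms are the invariant part and $(t-t')K_{\Ff}$.

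\textbf{Reduction to a statement about $\Bb'(1)$.} If $t=t'$, the invariant part is $(1-t)(B-B')^{\inv}\geq 0$ and we are done. Assume now $t>t'$, so $t'<1$. Set
$$\Bb'(s):=(X,\Ff,B',\Mm')(s).$$
Then we can write
$$K_{\Bb}=K_{\Bb'(t)}+(B-B')^{\ninv}+(1-t)(B-B')^{\inv}+(\Mm_X-\Mm'_X).$$
The added terms are pseudo-effective by the previous paragraph, so it suffices to show that $K_{\Bb'(t)}$ is pseudo-effective$/U$. Since $K_{\Bb'(s)}$ is affine in $s$, we have
$$K_{\Bb'(t)}=\lambda K_{\Bb'(t')}+(1-\lambda)K_{\Bb'(1)}\quad\text{with }\lambda=\frac{1-t}{1-t'}\in[0,1).$$
The first summand $K_{\Bb'(t')}=K_{\Bb'}$ is pseudo-effective$/U$ by hypothesis. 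Hence it suffices to show that $K_{\Bb'(1)}$ is pseudo-effective$/U$.

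\textbf{Key step.} Because $\Bb'$ is lc with $t'<1$, the argument of Lemma~\ref{lem: lc preserved under inequality} shows that the structures $\Bb'(s)$ are lc for $s\leq t'$. We then apply the result \cite[Theorem~5.1]{Cas+25a}, used repeatedly above: for an lc algebraically integrable structure with $K$ pseudo-effective, $K_{\Bb'(1)}$ is pseudo-effective$/U$. Concretely, if $K_{\Bb'(1)}$ were not pseudo-effective$/U$, we would run a $K_{\Bb'(1)}$-MMP$/U$ (or an ACSS-type modification) to reach a Mori fibre space whose general fibre is covered by $\Ff$-tangent rational curves. On such curves $K_{\Bb'(s)}$ is negative for all $s\leq 1$ as well, since $K_X$ is also negative there. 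This would contradict the pseudo-effectivity of $K_{\Bb'(t')}$.

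I expect this key step to be the main obstacle: verifying that the hypotheses of \cite[Theorem~5.1]{Cas+25a} hold (lc-ness, $\mathbb R$-Cartierness via $\mathbb Q$-factoriality, and the nef part). Once it is in place, the convex-combination argument above concludes the proof.
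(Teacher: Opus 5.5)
Your proof is correct and takes the same approach as the paper. The paper sets $\Bb'':=(X,\Ff,B',\Mm')(t)$ and gets $K_{\Bb''}$ pseudo-effective$/U$ directly from \cite[Theorem~5.1]{Cas+25a}. It then observes that $K_{\Bb}-K_{\Bb''}=(B-B')^{\ninv}+(1-t)(B-B')^{\inv}+(\Mm_X-\Mm'_X)$ is pseudo-effective$/U$.

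Your convex-combination step only makes explicit the passage from the endpoint $s=1$ to $s=t$. The ``concretely'' sketch is not needed, and it would need care as written, since $\Bb'(1)$ need not be lc. As in the paper, your argument rests on the cited theorem, which is fine.
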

\begin{proof}
    Write $\Bb=(X,\Ff,B,\Mm)(t)$ and $\Bb'=(X,\Ff,B',\Mm')(t')$.
    Let $\Bb'':=(X,\Ff,B',\Mm')(t)$. Since $t\geq t'$, by \cite[Theorem~5.1]{Cas+25a}, $K_{\Bb''}$ is pseudo-effective$/U$. Since $B\geq B'$ and $\Mm-\Mm'$ is nef$/U$, $K_{\Bb}-K_{\Bb''}$ is pseudo-effective$/U$. Thus  $K_{\Bb}$ is pseudo-effective$/U$.
\end{proof}

\begin{thm}\label{thm: pet general}
    Let $d$ be a positive integer and $\Ii\subset [0,+\infty)$ a DCC set. Then there exists a positive real number $\tau=\tau_{\pet}(d,\Ii)$ depending only on $d$ and $\Ii$ satisfying the following. 

    Let $\Bb/U$ and $\Bb'/U$ be two algebraically integrable normalized foliated structures of dimension $d$. Assume that
    \begin{enumerate}
        \item $\Bb/U\in\Ii$,
        \item $\Bb/U\geq\Bb'/U\geq\Bb^{1-\tau}/U$,
        \item $\Bb'$ is lc and $K_{\Bb'}$ is not pseudo-effective$/U$, and
        \item $\Bb''/U$ is a normalized foliated structure such that $$\Bb^{\epsilon}\cdot\Bb'^{1-\epsilon}/U\geq\Bb''/U\geq\Bb'/U$$
        for some $\epsilon\in (0,1)$.
    \end{enumerate}
    Then $K_{\Bb''}$ is not pseudo-effective$/U$.
\end{thm}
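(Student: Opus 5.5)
We argue by contradiction. Suppose no such $\tau$ exists. Then there are $\tau_i\to 0$ and data $\Bb_i/U_i\geq\Bb_i'/U_i\geq\Bb_i^{1-\tau_i}/U_i$, $\Bb_i\in\Ii$, with $\Bb_i'$ lc and $K_{\Bb_i'}$ not pseudo-effective$/U_i$, together with $\Bb_i''$ satisfying $\Bb_i^{\epsilon_i}\cdot\Bb_i'^{1-\epsilon_i}\geq\Bb_i''\geq\Bb_i'$ and $K_{\Bb_i''}$ pseudo-effective$/U_i$. We may assume $\tau_i<\tau_{\ilct}(d,\Ii)$. By Theorem~\ref{thm: acc lct general version}, $\Bb_i$ is lc. Since $\Bb_i\geq\Bb_i^{\epsilon_i}\cdot\Bb_i'^{1-\epsilon_i}\geq\Bb_i''\geq\Bb_i'$, Lemma~\ref{lem: lc preserved under inequality} shows that $\Bb_i''$ and every structure on the segment between $\Bb_i'$ and $\Bb_i''$ are lc. After passing to a $\mathbb Q$-factorial qdlt modification of $\Bb_i$ (pulling back all structures as in the proof of Lemma~\ref{lem: acc reduce to klt}), we may assume $X_i$ is $\mathbb Q$-factorial. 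Pseudo-effectivity is unaffected, because pushforward preserves it and the added exceptional divisors are lc places of every structure on the segment.

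Next consider the path $\Bb_i(s):=\Bb_i''^{s}\cdot\Bb_i'^{1-s}$ for $s\in[0,1]$. Every point on it is lc and lies between $\Bb_i'$ and $\Bb_i^{1-\tau_i}$-bounds. Let $s_i$ be the pseudo-effective threshold along the path; $K_{\Bb_i(s)}$ is pseudo-effective for $s\geq s_i$ by Lemma~\ref{lem: pe preserved under order}. Set $\Cc_i:=\Bb_i(s_i)$; it is lc and pseudo-effective but not big. Perturb slightly below $s_i$ and run a $K_{\Bb_i(s_i-\delta)}$-MMP$/U_i$ with scaling, using \cite[Theorem~2.1.5]{Cas+25a}. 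For small $\delta$, this MMP is $K_{\Cc_i}$-trivial and ends in a Mori fiber space $g_i:Y_i\to T_i$ with $\rho(Y_i/T_i)=1$ and $K_{\Cc_{i,Y}}\sim_{\mathbb R,T_i}0$. Its ambient variety $Y_i$ is of Fano type over $T_i$ by \cite[Theorem~9.2]{Cas+25a}, since the perturbed canonical divisor is anti-ample$/T_i$.

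On $Y_i/T_i$ we have $\Bb_{i,Y}\geq\Cc_{i,Y}\geq\Bb_{i,Y}^{1-\tau_i}$, $\Bb_{i,Y}\in\Ii$, $\Cc_{i,Y}$ lc, and $K_{\Cc_{i,Y}}\sim_{\mathbb R,T_i}0$. Proposition~\ref{prop: global acc in bb} then gives $K_{\Bb''}\sim_{\mathbb R,T_i}0$ for every $\Bb''$ between $\Bb_{i,Y}$ and $\Cc_{i,Y}$, in particular for $\Bb_i(s)_Y$ with $s$ slightly above $s_i$. Now $\Bb_i(s)$ is affine in $s$ on the relevant range, as in Lemma~\ref{lem: geometric average of nfs}. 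Its canonical class is $\equiv_{T_i}0$ at $s_i$ and at some larger $s$, so it is $\equiv_{T_i}0$ for all $s$, including $s=0$. But at $s=0$ it is $K_{\Bb'_{i,Y}}$, which is anti-ample$/T_i$ because the chosen MMP was a $K_{\Bb_i(s_i-\delta)}$-MMP and this divisor lies on the same line. This contradicts $\rho(Y_i/T_i)=1$. There is also a degenerate case in which the path is constant in the relevant direction; there the inequality $\Bb_i''\leq\Bb_i^{\epsilon_i}\cdot\Bb_i'^{1-\epsilon_i}$ with $\epsilon_i>0$ is used to produce a strict comparison.

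The main obstacle is the middle step: arranging that the MMP is trivial for $K_{\Cc_i}$, and that the pushed-forward structures keep the ordering $\Bb_{i,Y}\geq\Cc_{i,Y}\geq\Bb_{i,Y}^{1-\tau_i}$ in the form Proposition~\ref{prop: global acc in bb} requires. One may need to discard vertical components and non-ample nef parts, as in the proof of Theorem~\ref{thm: picard one imply general global acc}, and to check that this does not break the ordering. Lemma~\ref{lem: lc preserved under inequality} handles the lc property after that reduction.
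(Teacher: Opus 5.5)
The load-bearing step fails. You claim that, for small $\delta$, the $K_{\Bb_i(s_i-\delta)}$-MMP$/U_i$ is $K_{\Cc_i}$-trivial, and both $K_{\Cc_{i,Y}}\sim_{\mathbb R,T_i}0$ and the lc property of $\Cc_{i,Y}$ rest on it. As stated, this is false whenever $K_{\Cc_i}$ is pseudo-effective but not nef$/U_i$. In that case the nef threshold of $K_{\Bb_i(s_i-\delta)}$ with respect to the scaling ample divisor stays bounded away from $0$ as $\delta\to 0$. So the first step of the MMP with scaling contracts a $K_{\Cc_i}$-negative extremal ray.

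Making $K_{\Cc_i}$ nef beforehand would require minimal models for a pseudo-effective, non-big lc adjoint foliated structure, which are not available. After the MMP, all you actually know is this: $K_{\Cc_{i,Y}}$ is a pushforward of a pseudo-effective class, so it is pseudo-effective$/U_i$, hence nef$/T_i$ since $\rho(Y_i/T_i)=1$. It may well be ample$/T_i$. Likewise, once the MMP is not $K_{\Cc_i}$-crepant, lc-ness on $Y_i$ does not follow from Lemma~\ref{lem: lc preserved under inequality} alone. You need Theorem~\ref{thm: acc lct general version} on $Y_i$ to know that the top structure $\Bb_{i,Y}$ is lc.

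The missing idea is to locate the numerically trivial structure after the MMP rather than before it, which is what the paper does. Use Lemma~\ref{lem: pe preserved under order} to replace $\Bb_i''$ by $\Bb_i(\epsilon_i)$, where $\Bb_i(s):=\Bb_i^s\cdot\Bb_i'^{1-s}$. In the $\mathbb Q$-factorial klt case, run a $K_{\Bb_i'}$-MMP$/U_i$ to a Mori fiber space $f_i\colon Y_i\to Z_i$.
\begin{itemize}
\item Since $\Bb'_{Y_i}$ is lc, Theorem~\ref{thm: acc lct general version} makes every $\Bb_{Y_i}(s)$ lc.
\item $K_{\Bb_{Y_i}(\epsilon_i)}$ is pseudo-effective$/U_i$, hence nef$/Z_i$, while $K_{\Bb'_{Y_i}}$ is anti-ample$/Z_i$. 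So some $\eta_i\in(0,\epsilon_i]$ has $K_{\Bb_{Y_i}(\eta_i)}\sim_{\mathbb R,Z_i}0$.
\item Proposition~\ref{prop: global acc in bb} then gives triviality on $[\eta_i,1]$.
\item Since $\eta_i\le\epsilon_i<1$ and $s\mapsto K_{\Bb_{Y_i}(s)}$ is affine, also $K_{\Bb'_{Y_i}}\sim_{\mathbb R,Z_i}0$. This contradicts $f_i$ being a $K_{\Bb'_{Y_i}}$-Mori fiber space.
\end{itemize}
Your endgame (affineness in $s$ plus two trivial points) is exactly this. This is also where $\epsilon_i<1$, not $\epsilon_i>0$, enters; in your parametrization it is what handles $s_i=1$, where no point of your path lies above $s_i$.

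Finally, your reduction step has a wrong justification. Exceptional divisors of a qdlt modification of $\Bb_i$ need not be lc places of $\Bb_i'$ or $\Bb_i''$; Lemma~\ref{lem: lc preserved under inequality}(2) goes the other way. The two transfers you need still hold for other reasons:
\begin{itemize}
\item non-pseudo-effectivity of $K_{\Bb_i'}$ transfers by pushforward;
\item pseudo-effectivity of $K_{\Bb_i''}$ transfers because $K_{(h^{-1}_*\Bb_i'',\Exc(h))}\ge h^*K_{\Bb_i''}$ by lc-ness.
\end{itemize}
The paper instead does the klt case first and then takes a qdlt modification of $\Bb_i'$. Its lc places are lc places of $\Bb_i(\epsilon_i)$, so $K_{\Bb_{Y_i}(\epsilon_i)}=h_i^*K_{\Bb_i(\epsilon_i)}$.
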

\begin{proof}
Assume that the theorem does not hold. Then there exist a strictly decreasing sequence of real numbers $\tau_i$ such that $\tau_i\in (0,1)$ for each $i$, $\lim_{i\rightarrow+\infty}\tau_i=0$, and for each $i$, we have two algebraically integrable normalized foliated structures 
       $$\Bb_i/U_i:=(X_i,\Ff_i,B_i,\Mm_i)(t_i)/U_i,\quad \Bb_i'/U_i:=(X_i,\Ff_i,B_i',\Mm_i')(t_i')/U_i$$
      of dimension $d$, such that
       \begin{itemize}
           \item $\Bb_i/U_i\in\Ii$,
           \item        $\Bb_i/U_i\geq\Bb_i'/U_i\geq\Bb_i^{1-\tau_i}/U_i$, 
           \item $\Bb_i'$ is lc and $K_{\Bb_i'}$ is not pseudo-effective$/U_i$, and
           \item $K_{\Bb_i''}$ is pseudo-effective$/U_i$ for some
           $\Bb_i''=(X_i,\Ff_i,B_i'',\Mm_i'')(t_i'')$
           such that
           $$\Bb_i^{\epsilon_i}\cdot\Bb_i'^{1-\epsilon_i}/U_i\geq\Bb_i''/U_i\geq\Bb_i'/U_i$$
           for some $\epsilon_i\in (0,1)$.
       \end{itemize}
       We denote by $\Bb_i(s):=\Bb_i^{s}\cdot\Bb_i'^{1-s}$ for any $s\in [0,1]$. By Lemma~\ref{lem: pe preserved under order}, possibly replacing $\Bb_i''$ with $\Bb_i(\epsilon_i)$, we may assume that $\Bb_i''=\Bb_i(\epsilon_i)$.
    
    We first prove the case when $X_i$ is $\mathbb Q$-factorial klt for all $i$. By \cite[Theorem~2.1.5]{Cas+25a}, we may run a $K_{\Bb_i'}$-MMP$/U_i$ with scaling of an ample divisor which terminates with a Mori fiber space$/U_i$ $f_i: Y_i\rightarrow Z_i$ of $\Bb_i'/U_i$ associated with a birational map $\phi_i: X_i\dashrightarrow Y_i$. Let $\Bb_{Y_i},\Bb'_{Y_i},\Bb_{Y_i}(s)$ be the images of $\Bb_i,\Bb_i',\Bb_i(s)$ on $Y_i$ respectively for any $s\in [0,1]$. Then $\Bb'_{Y_i}$ is lc. By Theorem~\ref{thm: acc lct general version}, possibly passing to a subsequence, we have that $\Bb_{Y_i}(s)$ is lc for any $s\in [0,1]$. Since $K_{\Bb_i(\epsilon_i)}$ is pseudo-effective$/U_i$, $K_{\Bb_{Y_i}(\epsilon_i)}$ is pseudo-effective$/U_i$, hence $K_{\Bb_{Y_i}(\epsilon_i)}$ is nef$/Z_i$. Since $\rho(Y_i/Z_i)=1$ and $K_{\Bb_{Y_i}'}$ is anti-ample$/Z_i$, there exists $0<\eta_i\leq\epsilon_i<1$ such that $K_{\Bb_{Y_i}(\eta_i)}\sim_{\mathbb R,Z_i}0$. By Proposition~\ref{prop: global acc in bb}, $K_{\Bb_{Y_i}(s)}\sim_{\mathbb R,Z_i}0$ for any $s\in [\eta_i,1]$. We may write 
    $$K_{\Bb_{Y_i}(s)}=\sum_{j=1}^{m_i}\mu_{i,j}(s)D_{i,j}$$
    where $D_{i,j}$ are integral divisors and $\mu_{i,j}$ are quadratic functions. Therefore, $K_{\Bb_{Y_i}(s)}\sim_{\mathbb R,Z_i}0$ for any $s\in [0,1]$. In particular, 
    $$K_{\Bb'_{Y_i}}=K_{\Bb_{Y_i}(0)}\sim_{\mathbb R,Z_i}0.$$
    This is not possible as $f_i$ is a $K_{\Bb'_{Y_i}}$-Mori fiber space.
    
    Now we prove the general case. Let $h_i: \Bb'_{Y_i}\rightarrow\Bb_i$ be a $\mathbb Q$-factorial qdlt modification (resp. ACSS modification) of $\Bb_i'$ if $t_i'<1$ (resp. $t_i'=1$) for each $i$, $\Bb_{Y_i}:=(h_{i,*}^{-1}\Bb_i,\Exc(h_i))$, and $\Bb_{Y_i}(s):=(h_{i,*}^{-1}\Bb_i(s),\Exc(h_i))$ for any $s\in [0,1]$. By the case when the ambient variety is klt, possibly passing to a subsequence, we have that $K_{\Bb_{Y_i}(s)}$ is not pseudo-effective$/U_i$ for any $s\in (0,1)$. In particular, $K_{\Bb_{Y_i}(\epsilon_i)}$ is not pseudo-effective$/U_i$. By Theorem~\ref{thm: acc lct general version}, possibly passing to a subsequence, we have that $\Bb_i(\epsilon_i)$ is lc. By Lemma~\ref{lem: lc preserved under inequality}, any lc place of $\Bb_i'$ is an lc place of $\Bb_i(\epsilon_i)$, hence we have $K_{\Bb_{Y_i}(\epsilon_i)}=h_i^*K_{\Bb_i(\epsilon_i)}$. Thus $K_{\Bb_i(\epsilon_i)}=K_{\Bb_i''}$ is not pseudo-effective$/U_i$, a contradiction.   
\end{proof}

\begin{proof}[Proof of Theorem~\ref{thm: pet main}]
Suppose that the theorem does not hold. Then there exists a sequence of lc algebraically integrable foliations $\Ff_i$ on lc varieties $X_i$ of dimension $d$ such that
$$t_i:=\pet(X_i;\Ff_i)$$
is strictly increasing. Let $t:=\lim_{i\rightarrow+\infty}t_i$. Let $\tau:=\tau_{\pet}(d,\{t\})$ be as in Theorem~\ref{thm: pet general}. Possibly passing to a subsequence, we may assume that $t_i\geq\left(1-\frac{1}{2}\tau\right)t$ for any $i$. Let $\Bb_i:=(X_i,\Ff_i)(t)$, $\Bb_i':=(X_i,\Ff_i)(2t_i-t)$, and $\Bb_i'':=(X_i,\Ff_i)(t_i)$. By Theorem~\ref{thm: pet general}, $K_{\Bb_i''}$ is not pseudo-effective, a contradiction.
\end{proof}

\begin{thm}\label{thm: big gap general}
    Let $d$ be a positive integer and $\Ii\subset [0,+\infty)$ a DCC set. Then there exists a positive real number $\tau=\tau_{\gt}(d,\Ii)$ depending only on $d$ and $\Ii$ satisfying the following.

    Let $\Bb/U$ and $\Bb'/U$ be two algebraically integrable normalized foliated structures of dimension $d$. Assume that
    \begin{enumerate}
        \item $\Bb/U\in\Ii$ and $K_{\Bb}$ is big$/U$,
        \item $\Bb/U\geq\Bb'/U\geq\Bb^{1-\tau}/U$, and
        \item $\Bb'$ is lc.
    \end{enumerate}
    Then $K_{\Bb'}$ is big$/U$.
\end{thm}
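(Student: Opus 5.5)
We argue by contradiction, following the pattern of the proof of Theorem~\ref{thm: pet general}. Suppose there are $\tau_i\searrow 0$ and algebraically integrable normalized foliated structures $\Bb_i/U_i\in\Ii$ and $\Bb_i'/U_i$ of dimension $d$ with $\Bb_i/U_i\geq\Bb_i'/U_i\geq\Bb_i^{1-\tau_i}/U_i$, $\Bb_i'$ lc, $K_{\Bb_i}$ big$/U_i$, but $K_{\Bb_i'}$ not big$/U_i$. Passing to a subsequence, we have $\tau_i<\tau_{\ilct}(d,\Ii)$. Theorem~\ref{thm: acc lct general version} then shows that $\Bb_i$ is lc, and by Lemma~\ref{lem: lc preserved under inequality} and Definition-Lemma~\ref{deflem: nfs geometric average}, $\Bb_i(s):=\Bb_i^s\cdot\Bb_i'^{1-s}$ is lc for all $s\in[0,1]$. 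Since bigness is open, there is a threshold $\sigma_i\in(0,1]$ such that $K_{\Bb_i(s)}$ is big$/U_i$ exactly for $s>\sigma_i$ (using the pseudo-effectivity monotonicity of Lemma~\ref{lem: pe preserved under order}), and $K_{\Bb_i(\sigma_i)}$ is pseudo-effective but not big$/U_i$. After replacing $\Bb_i'$ by $\Bb_i(\sigma_i)$, we may assume $K_{\Bb_i'}$ is pseudo-effective and not big. Passing to a $\mathbb Q$-factorial qdlt (resp.\ ACSS) modification of $\Bb_i'$, exactly as in the final paragraph of the proof of Theorem~\ref{thm: pet general} (any lc place of $\Bb_i'$ is an lc place of all $\Bb_i(s)$), we may assume that $X_i$ is $\mathbb Q$-factorial klt.

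Next we perturb to turn bigness into pseudo-effectivity. Choose an ample$/U_i$ $\bb$-Cartier $\Nn_i$ of small coefficient, or better a divisor $A_i$ with $K_{\Bb_i}\sim_{\mathbb R,U_i}A_i+E_i$, with $A_i$ ample and $E_i\ge 0$. The aim is an lc $\Cc_i'$ lying between $\Bb_i^{1-\tau_i'}$ and $\Bb_i$ with $K_{\Cc_i'}$ \emph{not} pseudo-effective, which would contradict Theorem~\ref{thm: pet general}. The cleaner route is instead to run a $K_{\Bb_i'}$-MMP$/U_i$ with scaling (\cite[Theorem~2.1.5]{Cas+25a}, \cite[Theorem~3.6]{Cas+25b}). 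Since $K_{\Bb_i'}$ is pseudo-effective but not big, we expect this to end in a minimal model whose canonical class is not big. One then wants either an Iitaka-type fibration $Y_i\to Z_i$ with $\dim Z_i<\dim Y_i$ and $K_{\Bb'_{Y_i}}|_{F}\equiv 0$ on a general fiber, or, failing abundance, a perturbation argument. In the latter, for $0<\delta\ll 1$ we replace $\Bb_i'$ by $\Bb_i'^{1-\delta}$-type structures that are not pseudo-effective and run an MMP to a Mori fiber space $Y_i\to Z_i$ on which the intermediate structure becomes $\equiv 0$ over $Z_i$. This is the same mechanism as in the klt case of Theorem~\ref{thm: pet general}: there, since $\rho(Y_i/Z_i)=1$, one finds $\eta_i$ with $K_{\Bb_{Y_i}(\eta_i)}\sim_{\mathbb R,Z_i}0$.

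Having arranged $K_{\Bb_{Y_i}(\eta_i)}\sim_{\mathbb R,Z_i}0$ with $\eta_i$ slightly below the bigness threshold, and with $\Bb_{Y_i}(s)$ lc for all $s$ by Theorem~\ref{thm: acc lct general version}, Proposition~\ref{prop: global acc in bb} gives $K_{\Bb_{Y_i}(s)}\sim_{\mathbb R,Z_i}0$ for every $s\in[\eta_i,1]$. By the quadratic dependence of the coefficients on $s$, this then holds for all $s$. In particular $K_{\Bb_{Y_i}}\sim_{\mathbb R,Z_i}0$ with $\dim Z_i<\dim Y_i$, so $K_{\Bb_{Y_i}}$ is not big$/U_i$. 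Since the MMP steps are $K_{\Bb_i(s)}$-negative for $s$ near the threshold, bigness would be preserved on $Y_i$, so $K_{\Bb_i}$ is not big, a contradiction.

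The main obstacle is the middle step: producing a Mori fiber space, or a fibration, on which a structure close to $\Bb_i$ is relatively numerically trivial, without abundance, from the mere failure of bigness. I would first try reducing to Theorem~\ref{thm: pet general} applied to $\Bb_i$ and $(\Bb_i',-\delta A_i)$ twisted by a small ample. If coefficient control fails there, I would fall back on the MMP with scaling, using that the scaling numbers tend to zero at the threshold to locate $\eta_i$.
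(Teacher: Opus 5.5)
Your proposal has a genuine gap, and it sits at the step you yourself call the main obstacle. You correctly state what is needed: an lc structure $\Cc'$ with $\Bb\geq\Cc'\geq\Bb^{1-\tau'}$ and $K_{\Cc'}$ not pseudo-effective$/U$, which would contradict Theorem~\ref{thm: pet general}. But you never construct one, and none of your candidate routes can supply it.

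\begin{itemize}
\item \emph{Twisting by a small negative ample.} Both $(\Bb_i',-\delta A_i)$ and $\Mm'-\delta\overline{A}$ violate the order hypothesis of Theorem~\ref{thm: pet general}. The boundary must stay $\geq(1-\tau')B\geq 0$. The nef part must stay nef, with $\Mm'-(1-\tau')\Mm$ nef$/U$. Both conditions fail as soon as, say, $B=0$ and $\Mm=\bm{0}$.
\item \emph{The $K_{\Bb_i'}$-MMP branch.} Once $K_{\Bb_i'}$ is pseudo-effective, this MMP can never end in a Mori fiber space.
\item \emph{The Iitaka-fibration branch.} This needs abundance, or at least good minimal models, for lc structures not of general type. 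That is not available here.
\item \emph{The ``perturbation'' branch.} This needs exactly the missing non-pseudo-effective structure. Nothing you have shown forces $K_{\Bb_i'^{1-\delta}}$ to be non-pseudo-effective: rescaling moves toward $K_X$, not away from the big class $K_{\Bb_i}$.
\end{itemize}

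Consequently, everything from Proposition~\ref{prop: global acc in bb} onward has no input.

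The missing idea is to go \emph{past} $\Bb'$ along the segment from $\Bb$, rather than below it by an ample.

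\begin{enumerate}
\item Take $\tau=\frac12\tau_{\pet}(d,\Ii)$.
\item By Lemma~\ref{lem: construct smaller bb} (with $s=1-\tau$, $r=1-2\tau$), write $\Bb'=\Bb^{a}\cdot\Bb''^{1-a}$ with $a\in(0,1)$ and $\Bb''\geq\Bb^{1-2\tau}$.
\item The explicit formulas give $\Bb''\leq\Bb'$, so $\Bb''$ is lc by Lemma~\ref{lem: lc preserved under inequality}.
\item The identity $K_{\Bb'}=aK_{\Bb}+(1-a)K_{\Bb''}$ is the whole point. A positive combination of a big class and a pseudo-effective class is big. Since $K_{\Bb}$ is big$/U$ and $K_{\Bb'}$ is not, $K_{\Bb''}$ cannot be pseudo-effective$/U$.
\item Apply Theorem~\ref{thm: pet general} to $\Bb\geq\Bb''\geq\Bb^{1-2\tau}$. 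It shows that $K_{\Bb^s\cdot\Bb''^{1-s}}=sK_{\Bb}+(1-s)K_{\Bb''}$ is not pseudo-effective$/U$ for every $s\in[0,1)$.
\item This contradicts the openness of bigness at $s=1$.
\end{enumerate}

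That is the paper's entire proof. It needs no threshold $\sigma_i$ (which, incidentally, lies in $[0,1)$, not $(0,1]$), no reduction to the $\mathbb Q$-factorial klt case, and no MMP at this level. All of that is already inside the proof of Theorem~\ref{thm: pet general}.
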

\begin{proof}
We show that we may take $\tau=\frac{1}{2}\tau_{\pet}(d,\Ii)$ where $\tau_{\pet}(d,\Ii)$ is the value as in Theorem~\ref{thm: pet general}. Suppose that there exist $\Bb/U$ and $\Bb'/U$ so that (1-3) hold but $K_{\Bb'}$ is not big$/U$. By Lemma~\ref{lem: construct smaller bb}, there exists $\Bb''/U$ such that
$$\Bb''/U\geq\Bb^{1-2\tau}\quad \text{and}\quad  \Bb'=\Bb^{a}\cdot\Bb''^{1-a}$$
for some $a\in (0,1)$. We have
$$K_{\Bb'}=aK_{\Bb}+(1-a)K_{\Bb''}.$$
Since $K_{\Bb'}$ is not big$/U$ and $K_{\Bb}$ is big$/U$, $K_{\Bb''}$ is not pseudo-effective$/U$. Let $\Bb(s):=\Bb^s\cdot\Bb''^{1-s}$ for any $s\in [0,1]$. By Theorem~\ref{thm: pet general} and our construction of $\tau$, $K_{\Bb(s)}$ is not pseudo-effective for any $s\in [0,1)$. This is not possible as big$/U$ is an open condition and $K_{\Bb(1)}=K_{\Bb}$ is big$/U$.
\end{proof}

\begin{proof}[Proof of Theorem~\ref{thm: pet gap main}]
Let $\tau=\tau_{\gt}(d,\Ii)$. If $t<1$, then $X$ is lc by Lemma~\ref{lem: lc preserved under inequality}. Thus for any $s\in [(1-\tau)t,t]$, $(X,\Ff,s)$ is lc. The theorem follows from Theorem~\ref{thm: big gap general}.
\end{proof}

\subsection{ACC for \texorpdfstring{$\mathbb R$}{}-complementary thresholds}\label{subsec: acc rct}

In this subsection, we prove the ACC for $\mathbb R$-complementary thresholds (Theorem~\ref{thm: r-complementary main}), which follows from the following more general theorem.

\begin{thm}\label{thm: rct general}
Let $d$ be a positive integer and $\Ii\subset [0,+\infty)$ a DCC set. Then there exists a positive real number $\tau=\tau_{\Rct}(d,\Ii)$ depending only on $d$ and $\Ii$ satisfying the following. 

Assume that $\Bb/U$ and $\Bb'/U$ are two algebraically integrable normalized foliated structures of dimension $d$. Assume that
\begin{enumerate}
    \item $\Bb/U\in\Ii$,
    \item $\Bb/U\geq\Bb'/U\geq\Bb^{1-\tau}/U$, 
    \item $\Bb'/U$ is $\mathbb R$-complementary,
    \item the ambient variety of $\Bb,\Bb'$ is of Fano type over $U$, and
    \item the parameter $t$ of $\Bb$ is $<1$.
\end{enumerate}
Then $\Bb/U$ is $\mathbb R$-complementary.
\end{thm}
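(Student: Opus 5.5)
We argue by contradiction, following the pattern of Theorem~\ref{thm: pet general}. Suppose there are sequences $\tau_i\to 0$ and pairs $\Bb_i/U_i\geq\Bb_i'/U_i\geq\Bb_i^{1-\tau_i}/U_i$ with $\Bb_i\in\Ii$, ambient variety $X_i$ of Fano type over $U_i$, parameter $t_i<1$, $\Bb_i'$ $\mathbb R$-complementary, but $\Bb_i$ not $\mathbb R$-complementary. Set $\Bb_i(s):=\Bb_i^s\cdot\Bb_i'^{1-s}$.

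\textbf{Step 1: lc of the whole segment.} Since $\Bb_i'$ is $\mathbb R$-complementary, it is lc. By Theorem~\ref{thm: acc lct general version} (for $i\gg0$, since $\tau_i<\tau_{\ilct}(d,\Ii)$), $\Bb_i$ is lc, hence $\Bb_i(s)$ is lc for all $s\in[0,1]$ by Lemma~\ref{lem: lc preserved under inequality} and Definition-Lemma~\ref{deflem: nfs geometric average}. Because $t_i<1$, we may pass to a $\mathbb Q$-factorial qdlt modification of $\Bb_i'$ which is crepant for all $\Bb_i(s)$ (as in Lemma~\ref{lem: acc reduce to klt}); Fano type is preserved by Lemma~\ref{lem: fano type after extract divisor}-type arguments, so we may assume $X_i$ is $\mathbb Q$-factorial klt and of Fano type over $U_i$.

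\textbf{Step 2: reduction to a Mori fiber space.} On a Fano type variety, an lc structure is $\mathbb R$-complementary iff $-K$ is pseudo-effective over $U$ and the lc property survives on the output of a $(-K)$-MMP; more concretely, we run a $(-K_{\Bb_i})$-MMP$/U_i$ with scaling (possible since $X_i$ is of Fano type, using \cite[Theorem~2.1.5]{Cas+25a}). If $-K_{\Bb_i}$ is pseudo-effective$/U_i$, the MMP ends with a model $X_i^m$ on which $-K$ is semi-ample; pulling back a general member gives a complement provided the image $\Bb_{i}^m$ is lc. Lc of $\Bb_i^m$ follows because the image of $\Bb_i'$ with its complement stays lc (complement is crepant-invariant) and Theorem~\ref{thm: acc lct general version} upgrades to $\Bb_i^m$. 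Hence the failure must come from $-K_{\Bb_i}$ not being pseudo-effective$/U_i$. Then the MMP terminates with a Mori fiber space $Y_i\to Z_i$ on which $K_{\Bb_{Y_i}}$ is ample$/Z_i$, while $K_{\Bb'_{Y_i}}$ is anti-nef (in fact $\sim_{\mathbb R}$ minus an effective divisor, since $\Bb'_i$ is complementary and the complement pushes forward). As $\rho(Y_i/Z_i)=1$, there is $\eta_i\in[0,1)$ with $K_{\Bb_{Y_i}(\eta_i)}\sim_{\mathbb R,Z_i}0$, and $\Bb_{Y_i}(\eta_i)$ lc by Step 1 applied on $Y_i$ (images of the complement of $\Bb_i'$ give lc of $\Bb'_{Y_i}$).

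\textbf{Step 3: contradiction via Proposition~\ref{prop: global acc in bb}.} Applying Proposition~\ref{prop: global acc in bb} to $\Bb_{Y_i}\geq\Bb_{Y_i}(\eta_i)\geq\Bb_{Y_i}^{1-\tau_i}$ over $Z_i$ (with $\tau_i<\tau_{\iglct}$), we get $K_{\Bb_{Y_i}}\sim_{\mathbb R,Z_i}0$, contradicting ampleness over $Z_i$. The main obstacle I expect is Step 2: verifying that $\mathbb R$-complementarity of $\Bb_i'$ transports along the $(-K_{\Bb_i})$-MMP (which is not a $(-K_{\Bb_i'})$-MMP) so that $\Bb'_{Y_i}$ remains lc and $-K_{\Bb'_{Y_i}}$ pseudo-effective; I would handle this by choosing the complement $D_i'$ of $\Bb_i'$, noting lc is preserved for $(\Bb_i',D_i')$ under any birational contraction since $K+D_i'\sim_{\mathbb R}0$ is crepant, and deducing lc of $\Bb'_{Y_i}$ from $(\Bb'_{Y_i},D'_{Y_i})\geq\Bb'_{Y_i}$.
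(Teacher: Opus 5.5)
Your proposal is correct and is essentially the paper's proof. Both argue by contradiction and run a $(-K_{\Bb_i})$-MMP$/U_i$ on the Fano type variety. Both carry $\mathbb R$-complementarity of $\Bb_i'$ along it by pushing forward its complement, and get log canonicity of the image of $\Bb_i$ from Theorem~\ref{thm: acc lct general version}. Both then finish either by building a complement via \cite[Theorem~3.28]{Cas+25a} (using $t_i<1$) in the semi-ample case, or by contradicting Proposition~\ref{prop: global acc in bb} in the Mori fiber space case.

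The qdlt modification in your Step~1 is unnecessary: a Fano type variety is already klt, so the paper just passes to a small $\mathbb Q$-factorialization. Also, the $(-K_{\Bb_i})$-MMP is the usual MMP on a Fano type variety, not an instance of \cite[Theorem~2.1.5]{Cas+25a}.
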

\begin{proof}
    Suppose that the theorem does not hold. Then there exists a strictly decreasing sequence of real numbers $\tau_i$ such that $\tau_i\in (0,1)$ for each $i$, $\lim_{i\rightarrow+\infty}\tau_i=0$, and for each $i$, we have two algebraically integrable normalized foliated structures 
       $$\Bb_i/U_i:=(X_i,\Ff_i,B_i,\Mm_i)(t_i)/U_i,\quad \Bb_i'/U_i:=(X_i,\Ff_i,B_i',\Mm_i')(t_i')/U_i$$
      of dimension $d$, such that
       \begin{itemize}
       \item $X_i$ is of Fano type over $U_i$,
           \item $\Bb_i/U_i\in\Ii$ and $t_i<1$,
           \item        $\Bb_i/U_i\geq\Bb_i'/U_i\geq\Bb_i^{1-\tau_i}/U_i$, 
           \item $\Bb_i'/U_i$ is $\mathbb R$-complementary, and
           \item $\Bb_i/U_i$ is not $\mathbb R$-complementary.
       \end{itemize}
       Possibly replacing $X_i$ with a small $\mathbb Q$-factorialization and replacing $\Bb_i,\Bb_i'$ with their pullbacks, we may assume that $X_i$ is $\mathbb Q$-factorial for each $i$. We run a $\left(-K_{\Bb_i}\right)$-MMP$/U_i$ with scaling of an ample divisor which terminates with either a good minimal model$/U_i$ $Y_i$, or a Mori fiber space$/U_i$ $f_i: Y_i\rightarrow Z_i$. Let $\Bb_{Y_i}$ and $\Bb_{Y_i'}$ be the images of $\Bb_i,\Bb_i'$ on $Y_i$ for each $i$. Then $\Bb_{Y_i}'/U_i$ is $\mathbb R$-complementary. In particular, $\Bb_{Y_i}'$ is lc. By Theorem~\ref{thm: acc lct general version}, possibly passing to a subsequence, we have that $\Bb_{Y_i}$ is lc. There are two cases.

       \medskip

       \noindent\textbf{Case 1.} There exists a $(-K_{\Bb_{Y_i}})$-Mori fiber space$/U_i$ $f_i: Y_i\rightarrow Z_i$. Let $\Bb_{Y_i}(s):=\Bb_{Y_i}^{s}\cdot\Bb_{Y_i}'^{1-s}$ for any $s\in [0,1]$. Since $-K_{\Bb'_{i}}$ is pseudo-effective$/U_i$, $-K_{\Bb'_{Y_i}}=-K_{\Bb_{Y_i}(0)}$ is pseudo-effective$/U_i$. Since $\rho(Y_i/Z_i)=1$ and $-K_{\Bb_{Y_i}}=-K_{\Bb_{Y_i}(1)}$ is anti-ample$/Z_i$, there exists $\epsilon_i\in [0,1)$ such that $K_{\Bb_{Y_i}(\epsilon_i)}\sim_{\mathbb R,Z_i}0$. Since $\lim_{i\rightarrow+\infty}\tau_i=0$ and
       $$\Bb_{Y_i}(\epsilon_i)/U_i\geq\Bb_{Y_i}'/U_i\geq\Bb_{Y_i}^{1-\tau_i}/U_i,$$
       by Proposition~\ref{prop: global acc in bb}, possibly passing to a subsequence, we have that $K_{\Bb_{Y_i}(s)}\sim_{\mathbb R,Z_i}0$ for any $s\in [\epsilon_i,1]$. In particular, $K_{\Bb_{Y_i}}=K_{\Bb_{Y_i}(1)}\sim_{\mathbb R,Z_i}0$. This is not possible as $f_i$ is a $(-K_{\Bb_{Y_i}})$-Mori fiber space.

       \medskip

       \noindent\textbf{Case 2.} $-K_{\Bb_{Y_i}}$ is semi-ample$/U_i$. In this case, since $t_i<1$, by \cite[Theorem~3.28]{Cas+25a}, we may pick $0\leq D_{Y_i}\sim_{\mathbb R,U_i}-K_{\Bb_{Y_i}}$ such that $(\Bb_{Y_i},D_{Y_i})$ is lc. Let $p_i: W_i\rightarrow X_i$ and $q_i: W_i\rightarrow Y_i$ be a resolution of indeterminacies of the induced birational map $\phi_i: X_i\dashrightarrow Y_i$, and let 
       $$K_{\Bb_i}+D_i:=p_{i,*}q_i^*(K_{\Bb_{Y_i}}+D_{Y_i}).$$
       Since $K_{\Bb_{Y_i}}+D_{Y_i}\sim_{\mathbb R,U_i}0$, $K_{\Bb_i}+D_i\sim_{\mathbb R,U_i}0$, and
       $$p_i^*(K_{\Bb_i}+D_i)=q_i^*(K_{\Bb_{Y_i}}+D_{Y_i}).$$
       Since $\phi_i$ is $\left(-K_{\Bb_i}\right)$-negative, we have
       $$p_i^*K_{\Bb_i}\leq q_i^*K_{\Bb_{Y_i}},$$
       hence
       $$p_i^*D_i\geq q_i^*D_{Y_i}\geq 0.$$
       Therefore, $D_i\geq 0$. Since $(\Bb_{Y_i},D_{Y_i})$ is lc, $(\Bb_i,D_i)$ is lc. Thus $(\Bb_i,D_i)/U_i$ is an $\mathbb R$-complement of $\Bb_i/U_i$, a contradiction.
\end{proof}

The following proposition is also useful.

\begin{prop}\label{prop: r complementary preserved under inequality}
Let $\Bb/U=(X,\Ff,B,\Mm)(t)/U$ be an algebraically integrable normalized adjoint foliated structure such that $X$ is of Fano type over $U$, $t<1$, and $\Bb/U$ is $\mathbb R$-complementary. Let $\Bb'/U$ be an algebraically integrable normalized adjoint foliated structure such that $\Bb/U\geq\Bb'/U$. Then $\Bb'/U$ is $\mathbb R$-complementary.
\end{prop}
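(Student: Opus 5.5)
The plan is to reuse the argument of Case~2 in the proof of Theorem~\ref{thm: rct general}, after first showing that $-K_{\Bb'}$ becomes semi-ample on a suitable model. First, replacing $X$ by a small $\mathbb Q$-factorialization and $\Bb,\Bb'$ by their pullbacks, we may assume that $X$ is $\mathbb Q$-factorial. This does not affect $\mathbb R$-complementarity, since small morphisms are crepant and complements can be pushed forward and pulled back.

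Let $(\Bb,D)/U$ be an $\mathbb R$-complement of $\Bb$. Since $t<1$ and $t'\le t$, Lemma~\ref{lem: lc preserved under inequality} applies to $(\Bb,D)/U\geq(\Bb',D)/U$. Hence $(\Bb',D)$ is lc, and in particular $\Bb'$ is lc.

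Next we show that $-K_{\Bb'}$ is pseudo-effective$/U$. We have
$$-K_{\Bb'}=-K_{\Bb}+(K_{\Bb}-K_{\Bb'})\sim_{\mathbb R,U}D+(K_{\Bb}-K_{\Bb'}).$$
The difference $K_{\Bb}-K_{\Bb'}$ splits into three pieces.
\begin{itemize}
\item $t(K_{\Ff}-K_X)-t'(K_{\Ff}-K_X)=(t-t')(K_{\Ff}-K_X)$.
\item $(B-B')^{\ninv}+(1-t)B^{\inv}-(1-t')B'^{\inv}$.
\item $\Mm_X-\Mm'_X$.
\end{itemize}
The second piece equals $(B-B')^{\ninv}+(1-t)(B-B')^{\inv}-(t-t')B'^{\inv}$. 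The third piece is pseudo-effective$/U$, being the trace of a nef$/U$ $\bb$-divisor.

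The remaining term $(t-t')(K_{\Ff}-K_X-B'^{\inv})$ has no evident sign, so a direct comparison does not obviously suffice. Instead I would argue through the MMP. Run a $(-K_{\Bb'})$-MMP$/U$ with scaling, which is possible because $X$ is of Fano type over $U$. If it ends with a $(-K_{\Bb'})$-Mori fiber space $Y\to Z$, consider the family $\Bb_Y(s):=\Bb_Y^{s}\cdot\Bb_Y'^{1-s}$. Here $-K_{\Bb_Y(1)}$ is pseudo-effective and $-K_{\Bb_Y(0)}$ is anti-ample$/Z$, so for some $s_0\in(0,1]$ we get $K_{\Bb_Y(s_0)}\equiv_Z 0$. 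Then I would apply \cite[Theorem~5.1]{Cas+25a}-type monotonicity in the parameter, together with the order $\Bb_Y\geq\Bb_Y(s)$, to derive a contradiction on the general fiber of $Y\to Z$.

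I expect this fiber argument to be the main obstacle. Over the general fiber one is reduced to comparing pseudo-effectivity of $-K$ under decreasing the structure. When $t=t'$ this is immediate because $-K_{\Bb'}\ge -K_{\Bb}$ modulo nef parts. For $t>t'$ I would interpolate as in the proof of Lemma~\ref{lem: lc preserved under inequality}: write $\Bb$ as a combination of $\Bb'$ and a structure $\Bb''$ with parameter $1$. Then Lemma~\ref{lem: pe preserved under order} together with \cite[Theorem~5.1]{Cas+25a} shows that $K_{\Bb'}$ cannot be anti-ample on the fiber while $K_{\Bb}$ is anti-pseudo-effective.

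Once we have shown that the $(-K_{\Bb'})$-MMP$/U$ ends with a minimal model $Y$ on which $-K_{\Bb_Y'}$ is semi-ample$/U$ (Fano type gives abundance), the conclusion follows as in Case~2 of Theorem~\ref{thm: rct general}. Since $\Bb_Y'$ is lc, being an MMP image of an lc structure with $-K$ pseudo-effective, and $t'<1$, \cite[Theorem~3.28]{Cas+25a} yields an lc complement $D_Y$ on $Y$. Pulling back via a common resolution, and using that the MMP is $(-K_{\Bb'})$-negative, gives an effective $D'$ with $(\Bb',D')$ lc and $K_{\Bb'}+D'\sim_{\mathbb R,U}0$.
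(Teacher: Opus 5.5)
Your outline follows the paper's proof: small $\mathbb Q$-factorialization, exclusion of a Mori fiber space outcome via \cite[Theorem~5.1]{Cas+25a}, the $(-K_{\Bb'})$-MMP to a model with $-K_{\Bb'_Y}$ semi-ample$/U$, then \cite[Theorem~3.28]{Cas+25a} and pull-back. However, your justification that $\Bb'_Y$ is lc is wrong. The steps of a $(-K_{\Bb'})$-MMP are $K_{\Bb'}$-positive, so they decrease discrepancies, and log canonicity is not preserved in general; pseudo-effectivity of $-K_{\Bb'}$ does not help.

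The correct reason uses the complement you already introduced. Since $K_{\Bb}+D\sim_{\mathbb R,U}0$, for any birational contraction $\phi\colon X\dashrightarrow Y$ over $U$ the structure $(\phi_*\Bb,\phi_*D)$ is crepant to $(\Bb,D)$. So $\Bb_Y:=\phi_*\Bb$ is lc and $\mathbb R$-complementary, and $\Bb'_Y\le\Bb_Y$ is lc by Lemma~\ref{lem: lc preserved under inequality}, since $t<1$. You need this for \cite[Theorem~3.28]{Cas+25a}, and also in the Mori fiber space step, where \cite[Theorem~5.1]{Cas+25a} must be applied on $Y$.

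That Mori fiber space step is also stated with the wrong sign and does not show where $t<1$ enters. On $Y\to Z$, $K_{\Bb'_Y}$ is ample (not anti-ample) over $Z$. Lemma~\ref{lem: pe preserved under order} alone only gives that $K_{\Bb_Y}$ is pseudo-effective$/Z$. Together with $-K_{\Bb_Y}\sim_{\mathbb R,U}\phi_*D\ge0$ and $\rho(Y/Z)=1$, this only yields $K_{\Bb_Y}\equiv_Z0$, which is not yet a contradiction.

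To conclude, write $\Bb'=(X,\Ff,B',\Mm')(t')$ and set $\Cc(s):=(Y,\phi_*\Ff,\phi_*B',\Mm')(s)$, so $\Cc(t')=\Bb'_Y$. By \cite[Theorem~5.1]{Cas+25a}, $K_{\Cc(1)}$ is pseudo-effective$/Z$. Hence
\[
K_{\Cc(t)}=\tfrac{1-t}{1-t'}K_{\Bb'_Y}+\tfrac{t-t'}{1-t'}K_{\Cc(1)}
\]
is big$/Z$, exactly because $t<1$. Moreover $K_{\Bb_Y}-K_{\Cc(t)}$ is effective plus the trace of a nef$/U$ $\bb$-divisor. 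So $K_{\Bb_Y}$ is big$/Z$, which contradicts pseudo-effectivity of $-K_{\Bb_Y}$ over $Z$ since $\dim Y>\dim Z$. The auxiliary $s_0$ is unnecessary.

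The paper avoids the sign problem you noticed in $K_{\Bb}-K_{\Bb'}$ by comparing with $\Bb(t'):=(X,\Ff,B,\Mm)(t')$ instead. The difference $K_{\Bb(t')}-K_{\Bb'}$ is pseudo-effective, so it suffices to show $-K_{\Bb(t')}$ is pseudo-effective. On a $(-K_{\Bb(t')})$-Mori fiber space, $K_{\Bb_Y(1)}$ would then fail to be pseudo-effective over $Z$, contradicting Lemma~\ref{lem: pe preserved under order}. With these two repairs your argument is complete.
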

\begin{proof}
Since $X$ is of Fano type over $U$, possibly replacing $X$ by a small $\mathbb Q$-factorialization, we may assume that $X$ is $\mathbb Q$-factorial. 

We first show that $-K_{\Bb'}$ is pseudo-effective$/U$. Write $\Bb'=(X,\Ff,B',\Mm')(t')$ and let $\Bb(s):=(X,\Ff,B,\Mm)(s)$ for any $s\in [0,1]$. Then $-K_{\Bb(t')}$ is not pseudo-effective$/U$. We run a $(-K_{\Bb(t')})$-MMP$/U$ which terminates with a Mori fiber space$/U$ $f: Y\rightarrow Z$ with induced birational map $\phi\colon X\dashrightarrow Y$. Let $\Bb_Y(s):=\phi_*\Bb(s)$ for any $s\in [0,1]$. Then $K_{\Bb_Y(t')}$ is ample$/Z$ and $-K_{\Bb_Y(t)}$ is pseudo-effective$/Z$, hence $K_{\Bb_Y(1)}$ is not pseudo-effective$/Z$ as $\dim Y>\dim Z$. This contradicts Lemma~\ref{lem: pe preserved under order}.

We run a $(-K_{\Bb'})$-MMP$/U$ $\phi\colon X\dashrightarrow Y$ which terminates with a model $Y$ such that $-K_{\Bb'_Y}$ is semi-ample$/U$, where $\Bb'_Y:=\phi_*\Bb'$. Let $\Bb_Y:=\phi_*\Bb$, then $\Bb_Y/U$ is $\mathbb R$-complementary and $\Bb_Y/U\geq\Bb'_Y/U$. By Lemma~\ref{lem: lc preserved under inequality}, $\Bb'_Y$ is lc. Since $t<1$, by \cite[Theorem~3.28]{Cas+25a}, we may pick $0\leq D_Y\sim_{\mathbb R,U}-K_{\Bb_Y'}$ such that $(\Bb_Y',D_Y)$ is lc. Let $p\colon W\rightarrow X$ and $q\colon W\rightarrow Y$ be a resolution of indeterminacies of $\phi$, and let 
       $$K_{\Bb'}+D:=p_{*}q^*(K_{\Bb_Y'}+D_Y).$$
       Since $K_{\Bb_Y'}+D_Y\sim_{\mathbb R,U}0$, $K_{\Bb'}+D\sim_{\mathbb R,U}0$, and $p^*(K_{\Bb'}+D)=q^*(K_{\Bb_Y'}+D_Y)$. Since $\phi$ is $(-K_{\Bb'})$-negative, $\phi$ is $D$-negative, hence $D\geq 0$. Thus $(\Bb',D)/U$ is an $\mathbb R$-complement of $\Bb'/U$.
\end{proof}

\begin{proof}[Proof of Theorem~\ref{thm: r-complementary main}]
    Suppose that the theorem does not hold. Then for each $i$, there exists a $\mathbb Q$-Gorenstein algebraically integrable foliation $\Ff_i$ on a Fano type variety $X_i$ of dimension $d$ such that $\Rct(X_i;\Ff_i)=t_i$ and $t_i$ is strictly increasing. Let $t:=\lim_{i\rightarrow+\infty}t_i$. We pick $t>s_i>t_i$ such that $s_i$ is strictly increasing. Then $(X_i,\Ff_i,s_i)$ does not have an $\mathbb R$-complement. Let $\Ii:=\{s_i\}_{i=1}^{+\infty}$ and let $\tau:=\tau_{\Rct}(d,\Ii)$ be the value as in Theorem~\ref{thm: rct general}. Possibly passing to a subsequence, we may assume that $t_i>(1-\tau)s_i$ for each $i$. This contradicts Theorem~\ref{thm: rct general}.
\end{proof}

\section{Effective birationality}\label{sec: eb}

The goal of this section is to prove the effective birationality theorem for pluricanonical systems of adjoint foliated structures, namely Theorem~\ref{thm: afs main}, which follows from the following more general theorem.

\begin{thm}\label{thm: afs eb}
    Let $d$ be a positive integer and $\Ii\subset [0,+\infty)$ a DCC set. Then there exist two positive integers $m_0=m_0(d,\Ii),n_0=n_0(d,\Ii)$ depending only on $d$ and $\Ii$ satisfying the following. Assume that
    $$\Bb:=\left(X,\Ff,B,\Mm=\sum\mu_j\Mm_j\right)\left(t\right)$$
    is a projective lc algebraically integrable normalized foliated structure of general type such that $\dim X=d,B,\mu_j\in\Ii,t\in\Ii$, each $\Mm_j$ is nef $\bb$-Cartier, and
    \begin{itemize}
        \item either $t<1$, or
        \item $t=1$ and $(X,B,\Mm)$ is lc.
    \end{itemize}
    Let $m$ and $n$ be two positive integers such that
    $$m\geq m_0\quad \text{and}\quad n\geq\min\left\{\left\lfloor\frac{mt}{1-t}\right\rfloor,mn_0\right\}.$$
    Here we consider $\left\lfloor\frac{mt}{1-t}\right\rfloor=+\infty$ if $t=1$. Then the linear system
    $$\left|mK_X+\lfloor mB\rfloor+\sum\lfloor m\mu_j\rfloor\Mm_{j,X}+nK_{\Ff}+\lfloor nB^{\ninv}\rfloor+\sum\lfloor n\mu_j\rfloor\Mm_{j,X}+L\right|$$
    defines a birational map for any pseudo-effective divisor $L$ on $X$. 
\end{thm}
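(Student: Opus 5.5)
The plan follows the sketch in Section~\ref{sec: sketch of proof}: reduce to an $\epsilon$-lc generalized pair whose adjoint class is big with bounded-away-from-zero parameters, and then apply Birkar's effective birationality for generalized pairs \cite[Theorem~4.2]{Bir23a}.

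\textbf{Step 1: move the parameter down.} By Theorem~\ref{thm: big gap general}, applied with $\tau=\tau_{\gt}(d,\Ii)$, the structure $\Bb':=\Bb^{1-\tau}$ still has $K_{\Bb'}$ big, and its parameter $t'=(1-\tau)t\le 1-\tau$. We need $\Bb'$ to be lc. When $t<1$ this follows from Lemma~\ref{lem: lc preserved under inequality}. When $t=1$ it follows from the hypothesis that $(X,B,\Mm)$ is lc together with \cite[Proposition~3.3]{Cas+24}, interpolating between $t=0$ and $t=1$. The point of this step is that all coefficients of $\log\Bb'$, including the invariant coefficients $(1-t')B^{\inv}$, now stay in a DCC set bounded away from the degenerate value $t=1$.

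\textbf{Step 2: reduce to the $\epsilon$-lc case.} Take a $\mathbb Q$-factorial qdlt modification of $\Bb'$. Then use the finiteness of non-terminal places for klt algebraically integrable adjoint foliated structures (\cite[Lemma~2.13]{CLW26}) after perturbing slightly towards a klt structure. The aim is to show that, since $t'\le 1-\tau$, the structure $\Aa'=\log\Bb'$ is $\epsilon$-lc away from finitely many divisors. We then extract those divisors and absorb them into the boundary with coefficients in a DCC set. Next, run a $K_{\Aa'}$-MMP using \cite[Theorem~2.1.5]{Cas+25a}, \cite{Cas+25a} and pass to the canonical model, so that we may assume $K_{\Aa'}$ is ample. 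Volumes and pluri-adjoint sections are unchanged under this MMP.

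\textbf{Step 3: convert to a generalized pair.} Using the algebraic integrability, realise $tK_{\Ff}$ on an ACSS model as $t(K_{X}+\text{(boundary)}-f^*(\cdots))$, via the ACSS structure $f\colon X'\to Z$ and \cite[Proposition~3.2]{ACSS21}. This gives a generalized pair $(X',\Delta,\Nn)$ with DCC coefficients, where $\Nn$ comes from the moduli part of $f$ and from $\Mm$, and with
$$K_{X'}+\Delta+\Nn_{X'}=K_{\Aa'}$$
up to pullback and effective exceptional terms. This pair is $\epsilon'$-lc, with $\epsilon'$ depending on $\epsilon$ and $\tau$. Birkar's theorem \cite[Theorem~4.2]{Bir23a} then gives $m_0$ such that $|m(K_{X'}+\Delta+\Nn_{X'})+L|$ is birational for all $m\ge m_0$ and all pseudo-effective $L$.

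\textbf{Step 4: translate back.} Write the system in the statement as
$$m'K_{\Aa'}+(\text{pseudo-effective remainder}),\qquad \text{where } n/(m+n)\text{ is close to } t'.$$
The remainder is pseudo-effective because $K_{\Bb(s)}$ is big, hence pseudo-effective, for $s$ between $t'$ and $t$, together with Lemma~\ref{lem: pe preserved under order}. The condition $n\ge\min\{\lfloor mt/(1-t)\rfloor, mn_0\}$ is exactly what keeps the effective parameter $n/(m+n)$ inside $[(1-\tau)t,t]$, or at least $1-\tau$ when $t=1$. The rounding-down terms are controlled because the coefficients lie in a DCC set, so they differ from the exact values by effective bounded amounts.

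\textbf{Main obstacle.} The hardest step is Step 2/3: obtaining a uniform $\epsilon$-lc generalized pair whose adjoint divisor agrees with $K_{\Aa'}$ without losing bigness. If the direct conversion fails, the first fallback is to replace the use of the non-terminal places lemma by a tmld bound derived from Theorem~\ref{thm: acc lct general version}.
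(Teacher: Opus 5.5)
Your Steps~1--2 and the final appeal to \cite[Theorem~4.2]{Bir23a} broadly follow the paper. Step~3, however, is a genuine gap, and it is exactly where the argument has to go differently.

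In general there is no generalized pair $(X',\Delta,\Nn)$ with $\Delta\geq 0$ and $\Nn$ nef whose adjoint divisor agrees with $K_{\Aa'}$ up to pullback and exceptional terms. Pushing forward to $X$ would force $K_{\Aa'}-K_X$ to be pseudo-effective, but the base term $-tf^*(\cdots)$ coming from the ACSS structure has no sign. Take $X=C_1\times C_2$ with $g(C_i)\geq 2$ and $\Ff$ the foliation by the fibres of $p_2$; this is a canonical foliation satisfying every hypothesis of the theorem. Then $tK_{\Ff}+(1-t)K_X-K_X\sim -t\,p_2^*K_{C_2}$, which is not pseudo-effective, and the same happens for any fibration over a base of general type.

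The paper never makes such a conversion. In its notation, it first reaches the $\frac{\tau'}{p}$-lc good minimal model $\Bb_Y$. It then invokes \cite[Proposition~9.1]{Cas+25a} to see that $\Bb_Y(0)$, hence the variety $Y$ itself, is $\frac{\tau'}{p}$-lc. Finally it applies \cite[Theorem~4.2]{Bir23a} directly to the nef and big \emph{integral} divisor $NpK_{\Bb_Y}$ on $Y$. The remaining hypothesis is bigness of
$$NpK_{\Bb_Y}-K_Y=B_Y+\Mm(p)_Y+(Np-1)K_{\Bb_Y\left(\frac{Npt_p}{Np-1}\right)},$$
which follows from pseudo-effectivity of $K_{\Bb_Y(1)}$ \cite[Theorem~5.1]{Cas+25a}. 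Birkar's theorem only needs an $\epsilon$-lc ambient variety, not a generalized-pair structure on $K_{\Aa'}$.

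Two further ingredients are missing.

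First, Birkar's theorem needs an integral divisor, and your scaling $\Bb\mapsto\Bb^{1-\tau}$ with ``DCC coefficients'' never produces one. The paper passes to a foliated log resolution and \emph{rounds down}. Choose $p$ with $\lfloor(p-1)\gamma\rfloor\geq(1-\tau)p\gamma$ for all $\gamma\in\Ii$, and let $\Bb_p$ have boundary $\frac1p\lfloor(p-1)B'\rfloor$, nef part $\frac1p\sum\lfloor(p-1)\mu_j\rfloor\Mm_j$ and parameter $t_p=\frac1p\lfloor(p-1)t\rfloor$. Then $\Bb'\geq\Bb_p\geq\Bb'^{1-\tau}$. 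So $\Bb_p$ is lc, by Lemma~\ref{lem: lc preserved under inequality} or, when $t=1$, by the hypothesis on $(X,B,\Mm)$. It is klt, being foliated log smooth with $t_p<1$. It is big by Theorem~\ref{thm: big gap general}, and its coefficients lie in $\frac1p\mathbb N$.

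The finitely many non-terminal places are then extracted with the \emph{fixed} coefficient $1-\frac1N$, not their crepant coefficients, which would destroy the $\epsilon$-lc property. Theorem~\ref{thm: acc lct general version} makes the structure $\Bb''_0$ with coefficient $1$ lc. Theorem~\ref{thm: big gap general} keeps $\Bb''$, with coefficient $1-\frac1N$, big. The relations $K_{\Bb''}\leq K_{\Bb''_0}$ and $K_{\Bb''_0}-g^*K_{\Bb_p}\geq 0$ exceptional then transfer birationality from $\Bb''$ to $\Bb_p$.

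Second, in Step~4 the hypothesis on $n$ is only a lower bound, so $n/(m+n)$ is not confined to $[(1-\tau)t,t]$. The paper fixes a single multiple $\frac{m_3}{1-t_p}K_{\Bb_p}$ such that $|\frac{m_3}{1-t_p}K_{\Bb_p}+L'|$ is birational for every pseudo-effective integral $L'$. This uses potential birationality, \cite[Lemma~2.3.4]{HMX13} and Lemma~\ref{lem: birational to +pe birational for afs}, and is how an arbitrary pseudo-effective $L$ gets absorbed. It then shows the remainder is pseudo-effective:
\begin{itemize}
\item the rounding errors are handled by Lemma~\ref{lem: elementary perturbation eb};
\item $K_{X'}+B'_p+\Mm(p)_{X'}+\lambda(K_{\Ff'}+B_p'^{\ninv}+\Mm(p)_{X'})$ is big for \emph{every} $\lambda\geq\frac{t_p}{1-t_p}$, again by \cite[Theorem~5.1]{Cas+25a}.
\end{itemize}
Finally it descends to $X$ via \cite[III.5.10]{Nak04}.
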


Some easy lemmas are needed before we prove Theorem~\ref{thm: afs eb}.

\begin{lem}\label{lem: potentially birational for afs}
Let $\Bb=(X,\Ff,B,\Mm)(t)$ be a $\mathbb Q$-factorial projective lc algebraically integrable $\mathbb Q$-normalized foliated structure such that $t<1$. Let $\Bb(s):=(X,\Ff,B,\Mm)(s)$ for any $s\in [0,1]$. Let $m$ be a positive integer such that $mK_{\Bb}$ is potentially birational. Let $L$ be a pseudo-effective $\mathbb Q$-divisor on $X$. Then for any rational number $s\in [t,1)$ and any rational number 
$$m_s>\frac{m(1-t)}{1-s},$$
we have that $m_sK_{\Bb(s)}+L$ is potentially birational.
\end{lem}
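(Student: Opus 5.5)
The plan is to write $m_sK_{\Bb(s)}+L$ as a positive multiple of $mK_{\Bb}$ plus a pseudo-effective $\mathbb Q$-divisor, with a coefficient strictly bigger than $1$ in front of $mK_{\Bb}$, and then use the standard facts about potential birationality. Concretely, I will use two facts. First, if $D$ is potentially birational and $P$ is pseudo-effective, then $D+P$ is potentially birational. Second, $\lambda D$ is potentially birational for any rational $\lambda\geq 1$. Both follow directly from the definition (cf.~\cite[Lemma~2.3.4]{HMX13}): pseudo-effective $\mathbb Q$-divisors are limits of effective ones, and the slack $\epsilon$ in $\Delta\sim_{\mathbb Q}(1-\epsilon)D$ absorbs small ample perturbations. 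Bigness is automatic since $mK_{\Bb}$ is potentially birational.

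The key decomposition is as follows. For $s\in[t,1)$, write $\Bb(s)$ in terms of $\Bb=\Bb(t)$ and $\Bb(1)$. Set $\lambda:=\frac{1-s}{1-t}\in(0,1]$, so that $s=\lambda t+(1-\lambda)\cdot 1$. The invariant part of the boundary enters $K_{\Bb(s)}$ with coefficient $(1-s)$, and $(1-s)=\lambda(1-t)$ is affine in $s$. Hence $K_{\Bb(s)}$ is affine in $s$, and
$$K_{\Bb(s)}=\lambda K_{\Bb}+(1-\lambda)K_{\Bb(1)},$$
where $K_{\Bb(1)}=K_{\Ff}+B^{\ninv}+\Mm_X$ is $\mathbb Q$-Cartier because $X$ is $\mathbb Q$-factorial. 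Since $\Bb$ is lc, algebraically integrable and $t<1$, \cite[Theorem~5.1]{Cas+25a} (already used repeatedly above in the form ``$K_{\Bb(s)}$ pseudo-effective implies $K_{\Bb(s')}$ pseudo-effective for $s'\geq s$'') gives that $K_{\Bb(1)}$ is pseudo-effective. Indeed, $mK_{\Bb}$ is big because it is potentially birational, so $K_{\Bb(t)}$ is pseudo-effective.

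It then follows that
$$m_sK_{\Bb(s)}+L=\frac{m_s\lambda}{m}\,(mK_{\Bb})+\big(m_s(1-\lambda)K_{\Bb(1)}+L\big).$$
The assumption $m_s>\frac{m(1-t)}{1-s}$ is exactly $\frac{m_s\lambda}{m}>1$. The first term is therefore potentially birational by the second fact, and the bracketed term is a pseudo-effective $\mathbb Q$-divisor. The first fact then yields the lemma.

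The main obstacle is verifying the application of \cite[Theorem~5.1]{Cas+25a} in this normalized setting: one needs that the monotonicity of pseudo-effectivity in the parameter holds for $\Bb$ with the given boundary and nef part. If needed, I would pass to a $\mathbb Q$-factorial qdlt modification, which preserves pseudo-effectivity of pushforwards. A secondary point is the strict inequality needed to produce the $\epsilon$ in the definition, which the hypothesis $m_s>\frac{m(1-t)}{1-s}$ supplies.
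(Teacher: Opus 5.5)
Your proposal is correct and follows essentially the same route as the paper. Both arguments use the same decomposition $K_{\Bb(s)}=\frac{1-s}{1-t}K_{\Bb}+\frac{s-t}{1-t}K_{\Bb(1)}$, get pseudo-effectivity of $K_{\Bb(1)}$ from \cite[Theorem~5.1]{Cas+25a}, and conclude by stability of potential birationality under adding a positive divisor. The only difference is cosmetic: the paper absorbs the excess $\bigl(\frac{m_s(1-s)}{1-t}-m\bigr)K_{\Bb}$ into a big divisor $D$ and cites \cite[Lemma~2.15]{HL23b} for ``potentially birational plus big is potentially birational,'' whereas you split this into your two facts.
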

\begin{proof}
We have
$$K_{\Bb(s)}=\frac{1-s}{1-t}K_{\Bb}+\frac{s-t}{1-t}K_{\Bb(1)}$$
so
$$m_sK_{\Bb(s)}+L=mK_{\Bb}+\left(\frac{m_s(1-s)}{1-t}-m\right)K_{\Bb}+\frac{s-t}{1-t}K_{\Bb(1)}+L=:mK_{\Bb}+D.$$
By \cite[Theorem~5.1]{Cas+25a}, $K_{\Bb(1)}$ is pseudo-effective. Since $m_s>\frac{m(1-t)}{1-s}$ and $K_{\Bb}$ is big, $D$ is big. The lemma follows from \cite[Lemma~2.15]{HL23b}.
\end{proof}

\begin{lem}\label{lem: birational to +pe birational for afs}
Let $\Bb:=(X,\Ff,B,\Mm)(t)$ be a $\mathbb Q$-factorial projective lc algebraically integrable $\mathbb Q$-normalized foliated structure such that $t<1$. Let $m_0$ be a positive rational number such that $m_0K_{\Bb}$ is potentially birational. Then for any integral pseudo-effective divisor $L$ and any integer $m$ such that $mK_{\Bb}$ is integral and 
    $$m>m_0+\frac{1}{1-t},$$
    we have that
    $$|mK_{\Bb}+L|$$
    defines a birational map.
\end{lem}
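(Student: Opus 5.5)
The plan is to follow the classical argument (cf.~\cite[Lemma~2.3.4]{HMX13} and \cite[Lemma~2.15]{HL23b}): first show that $(m-1)K_{\Bb}+L$ is potentially birational, and then use the standard fact that if $D'$ is potentially birational and $D'+K_X+\text{(nef/psef correction)}$ is arranged appropriately, a multiplier ideal/non-vanishing argument separates general points. Since Kawamata--Viehweg vanishing is not available for $K_{\Bb}$ directly, we will rewrite everything in terms of $K_X$. Concretely, write
$$mK_{\Bb}+L=K_X+\Big((m-1)K_{\Bb}+\big(K_{\Bb}-K_X\big)+L\Big),$$
and note $K_{\Bb}-K_X=t(K_{\Ff}-K_X)+B^{\ninv}+(1-t)B^{\inv}+\Mm_X$. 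The point is that $K_{\Bb}$ is a convex combination: $K_{\Bb}=(1-t)K_{\Bb(0)}+tK_{\Bb(1)}$ with $K_{\Bb(1)}$ pseudo-effective by \cite[Theorem~5.1]{Cas+25a}.

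Step 1: express $mK_{\Bb}+L-K_X-B^{\ninv}-B^{\inv}-\Mm_X$ as $(m-\frac{1}{1-t})K_{\Bb}+\frac{t}{1-t}K_{\Bb(1)}+L$ up to the generalized boundary, using $K_{\Bb(0)}=K_X+B+\Mm_X=\frac{1}{1-t}K_{\Bb}-\frac{t}{1-t}K_{\Bb(1)}$. Hence
$$mK_{\Bb}+L=K_X+B+\Mm_X+\Big(m-\tfrac{1}{1-t}\Big)K_{\Bb}+\tfrac{t}{1-t}K_{\Bb(1)}+L.$$
Since $m-\frac{1}{1-t}>m_0$, the divisor $\big(m-\frac1{1-t}\big)K_{\Bb}+\frac{t}{1-t}K_{\Bb(1)}+L$ equals $m_0K_{\Bb}$ plus a big $\mathbb Q$-divisor (big part of $K_{\Bb}$ plus pseudo-effective terms), so it is potentially birational by \cite[Lemma~2.15]{HL23b}.

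Step 2: the generalized pair $(X,B,\Mm)$ is lc: since $t<1$, Lemma~\ref{lem: lc preserved under inequality} gives that $\Bb(0)$ is lc. Now apply the standard criterion for generalized pairs: if $(X,B,\Mm)$ is lc, $D$ is a potentially birational $\mathbb Q$-divisor and $K_X+B+\Mm_X+D$ is (linearly equivalent to) an integral divisor $N$, then $|N|$ defines a birational map (cf.~\cite[Lemma~2.3.4]{HMX13} in the generalized pair version, e.g.\ \cite[Lemma~2.15]{HL23b} or Birkar's generalized statement). Here $N=mK_{\Bb}+L$ is integral by hypothesis. The main obstacle is this last step: the classical argument uses Nadel vanishing for $K_X+\lceil\cdot\rceil$ on a resolution, so one must check the multiplier ideal argument works with a non-reduced boundary $B$ and nef part $\Mm$; I would pass to a log resolution where $\Mm$ descends, absorb $B+\Mm$ into the psef/nef-and-big part (the coefficients of $B$ at most $1$ suffice for the rounding in Kawamata--Viehweg), and run the usual lc-center separation for two general points $x,y$ produced by potential birationality of $D$.
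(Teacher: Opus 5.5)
Your argument is correct, and Step 1 is exactly the paper's computation. The paper writes $mK_{\Bb}+L=K_X+D$ with $D:=B+\Mm_X+L+(m-1)K_{\Bb(\frac{mt}{m-1})}$. Since $(m-1)K_{\Bb(\frac{mt}{m-1})}=(m-\frac{1}{1-t})K_{\Bb}+\frac{t}{1-t}K_{\Bb(1)}$, potential birationality comes from the same convex-combination argument. The paper packages that argument as Lemma~\ref{lem: potentially birational for afs}, applied with $L$ replaced by $B+\Mm_X+L$.

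You diverge only in Step 2, and the ``main obstacle'' you identify there does not actually arise. Because $B\geq 0$ and $\Mm_X$ is the pushforward of a nef divisor, $B+\Mm_X$ is pseudo-effective. You can therefore move it into the potentially birational part instead of keeping it as the boundary of a generalized pair. Then $D=mK_{\Bb}+L-K_X$ is integral, $\mathbb{Q}$-Cartier (by $\mathbb{Q}$-factoriality) and potentially birational. So \cite[Lemma~2.3.4]{HMX13} applies verbatim and shows that $|K_X+\lceil D\rceil|=|mK_{\Bb}+L|$ defines a birational map.

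In particular, the following are all unnecessary: the lc property of $(X,B,\Mm)$ (which you derived correctly), a generalized-pair version of the HMX criterion, and the resolution/vanishing bookkeeping about coefficients of $B$. The generalized criterion you state in Step 2 is true, but its quickest proof is exactly this absorption. Your route thus proves the lemma with an extra, unneeded hypothesis and more work.
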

\begin{proof}
Let $\Bb(s):=(X,\Ff,B,\Mm)(s)$ for any $s\in [0,1]$. We have that 
$$mK_{\Bb}+L=K_{\Bb(0)}+(m-1)K_{\Bb\left(\frac{mt}{m-1}\right)}=K_X+\left(B+\Mm_X+L+(m-1)K_{\Bb\left(\frac{mt}{m-1}\right)}\right).$$
Since $B+\Mm_X+L$ is pseudo-effective and
$$m-1>\frac{m_0(1-t)}{1-\frac{mt}{m-1}},$$
by Lemma~\ref{lem: potentially birational for afs},
$$D:=B+\Mm_X+L+(m-1)K_{\Bb\left(\frac{mt}{m-1}\right)}$$
is potentially birational. Moreover, $D=mK_{\Bb}+L-K_X$ is an integral divisor. By \cite[Lemma~2.3.4]{HMX13}, 
$$|mK_{\Bb}+L|=|K_X+\lceil D\rceil|$$
defines a birational map.
\end{proof}

\begin{lem}\label{lem: elementary perturbation eb}
Let $p,n$ be two positive integers and $\Ii\subset [0,+\infty)$ a DCC set. Then there exists a positive integer $m_0:=m_0(p,n,\Ii)\geq n+2$ depending only on $p$, $n$, and $\Ii$ satisfying the following. Denote by $\gamma_p:=\frac{1}{p}\left\lfloor (p-1)\gamma\right\rfloor$ for any $\gamma\in\Ii$. Then for any $\gamma\in\Ii\backslash\{0\}$ and any real number $m\geq m_0$, we have the following.
    \begin{enumerate}
    \item $m(\gamma-\gamma_p)>1$ and $\lfloor m\gamma\rfloor>m\gamma_p$.
        \item Assume that $\gamma<1$. Then
        $$\frac{\left\lfloor\frac{m\gamma}{1-\gamma}\right\rfloor-\frac{n\gamma_p}{1-\gamma_p}}{m-n}>\frac{\gamma_p}{1-\gamma_p}.$$    \end{enumerate}
\end{lem}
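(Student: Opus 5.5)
The whole lemma is elementary and rests on one observation: $\gamma_p$ is strictly below $\gamma$ by an amount bounded below uniformly over $\gamma\in\Ii\setminus\{0\}$. Since $\gamma_p=\frac{1}{p}\lfloor (p-1)\gamma\rfloor\le\frac{p-1}{p}\gamma$, we get $\gamma-\gamma_p\ge\gamma/p$. Because $\Ii$ is DCC, $\delta:=\min(\Ii\setminus\{0\})>0$ exists, so $\gamma-\gamma_p\ge \delta/p$ for every nonzero $\gamma\in\Ii$. The plan is to choose $m_0$ larger than finitely many explicit quantities depending only on $p,n,\delta$.

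\emph{Part (1).} I would take $m_0>2p/\delta$. Then $m(\gamma-\gamma_p)\ge m\delta/p>2>1$. Moreover $\lfloor m\gamma\rfloor>m\gamma-1\ge m\gamma_p$, where the last step uses $m(\gamma-\gamma_p)>1$. Requiring in addition $m_0\ge n+2$ costs nothing.

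\emph{Part (2).} Set $x:=\frac{\gamma}{1-\gamma}$ and $x_p:=\frac{\gamma_p}{1-\gamma_p}$, and note $\gamma_p<1$ since $\gamma<1$. Multiplying out, the claimed inequality is equivalent to
\[
\lfloor mx\rfloor-nx_p>(m-n)x_p,
\]
that is, to $\lfloor mx\rfloor>mx_p$. Since $\lfloor mx\rfloor>mx-1$, it suffices to show $m(x-x_p)\ge 1$. The function $y\mapsto\frac{y}{1-y}$ has derivative $\frac{1}{(1-y)^2}\ge 1$ on $[0,1)$, so
\[
x-x_p\ge \gamma-\gamma_p\ge \delta/p.
\]
The condition $m_0>p/\delta$, already imposed in part (1), therefore gives (2). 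Note that $m-n>0$ because $m\ge n+2$, so dividing by $m-n$ does not reverse the inequality.

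I do not expect any real obstacle here. The only delicate point is making the bound uniform in $\gamma$ close to $0$, and this is handled by the minimum $\delta$ of the DCC set. For $\gamma$ close to $1$, part (2) only becomes easier, since $x-x_p$ grows. I would therefore take $m_0:=\max\{n+2,\lceil 2p/\delta\rceil+1\}$.
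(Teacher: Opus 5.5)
Your proposal is correct and follows the paper's proof essentially verbatim. The uniform gap $\gamma-\gamma_p\ge\gamma/p\ge\min(\Ii\setminus\{0\})/p$ gives (1), and (2) reduces to $\lfloor m\frac{\gamma}{1-\gamma}\rfloor>m\frac{\gamma_p}{1-\gamma_p}$; the paper obtains this from $m(\gamma-\gamma_p)>1>(1-\gamma)(1-\gamma_p)$, i.e.\ from the exact identity $\frac{\gamma}{1-\gamma}-\frac{\gamma_p}{1-\gamma_p}=\frac{\gamma-\gamma_p}{(1-\gamma)(1-\gamma_p)}$, where you use the equivalent derivative bound (and, as in the paper, the trivial case $\Ii\setminus\{0\}=\emptyset$ should be set aside before defining $\delta$).
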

\begin{proof}
We may assume that $\Ii\backslash\{0\}\neq\emptyset$ and we let $\mu:=\min\{\gamma\in\Ii\mid \gamma>0\}$.

(1) We have 
$$\gamma-\gamma_p\geq\frac{\gamma}{p}\geq\frac{\mu}{p},$$
so for any real number $m>\frac{p}{\mu}$ and $\gamma\in\Ii\backslash\{0\}$, we have that $m(\gamma-\gamma_p)>1$. We have
$$\lfloor m\gamma\rfloor>m\gamma-1>m\gamma_p.$$

(2) By (1), we have
$$m(\gamma-\gamma_p)>1>(1-\gamma)(1-\gamma_p).$$
This is equivalent to the inequality we desire.
\end{proof}

\begin{proof}[Proof of Theorem~\ref{thm: afs eb}]
We let $\tau_{\gt}(d,\Ii)$ be the positive real number as in Theorem~\ref{thm: big gap general} and let $\tau:=\frac{1}{2}\tau_{\gt}(d,\Ii)$. Let $p\geq 2$ be an integer depending only on $d$ and $\Ii$ such that 
$$\lfloor (p-1)\gamma\rfloor\geq (1-\tau)p\gamma$$ 
for any $\gamma\in\Ii$. Indeed, we may take 
$p\coloneqq \left\lceil\frac{1}{\tau\gamma_0}+\frac{1}{\tau}\right\rceil$, and we have
\begin{align*}
   \lfloor (p-1)\gamma\rfloor&=(p-1)\gamma-\{(p-1)\gamma\}>(p-1)\gamma-1\geq (p-1)\gamma-(\tau p-1)\gamma_0\\
   &\geq (p-1)\gamma-(\tau p-1)\gamma=(1-\tau)p\gamma
\end{align*}
for any $\gamma\in\Ii\backslash\{0\}$, while when $\gamma=0$ the inequality is obvious.

Let $h\colon X'\rightarrow X$ be a foliated log resolution of $\Bb$ and
$$\Bb':=(h^{-1}_*\Bb,\Exc(h)):=(X',\Ff',B',\Mm)(t).$$
We let $B'_p:=\frac{1}{p}\left\lfloor (p-1)B\right\rfloor$, $\Mm(p):=\frac{1}{p}\sum\left\lfloor (p-1)\mu_j\right\rfloor\Mm_j$, $t_p:=\frac{1}{p}\left\lfloor (p-1)t\right\rfloor$, and
$$\Bb_p:=(X',\Ff',B'_p,\Mm(p))(t_p).$$
By our construction of $p$, we have $t_p=0$ if and only if $t=0$. Since $K_{\Bb}$ is big, $K_{\Bb'}\geq h^*K_{\Bb}$ is big. We have that
$$\Bb'\geq\Bb_p\geq\Bb'^{1-\tau}$$
by our construction of $p$. By Lemma~\ref{lem: lc preserved under inequality} when $t<1$ and our assumption when $t=1$, $\Bb_p$ is lc. By Theorem~\ref{thm: big gap general}, $K_{\Bb_p}$ is big. Since $t_p<1$, $\lfloor B_p'\rfloor=0$, and $\Bb_p$ is foliated log smooth, $\Bb_p$ is klt. We let $\Ii_1:=\frac{1}{p}\mathbb N$ and let $N$ be a positive integer depending only on $d$ and $\Ii$ such that $p\mid N$ and
$$\tau':=\frac{1}{N}\leq\frac{1}{2}\min\left\{\tau_{\ilct}(d,\Ii_1),\tau_{\gt}(d,\Ii_1),\frac{1}{p^2}\right\}.$$ 
where $\tau_{\ilct}(d,\Ii_1)$ is the positive real number as in Theorem~\ref{thm: acc lct general version} and $\tau_{\gt}(d,\Ii_1)$ is the positive real number as in Theorem~\ref{thm: big gap general}. 

Since $\Bb_p$ is klt, by \cite[Lemma~2.13]{CLW26}, the set
$$\mathcal{S}:=\{E\mid E\text{ is prime}/X', a(E,\Bb_p)\leq (-1+\tau')(t_p\epsilon_{\Ff}(E)+(1-t_p))\}$$
is finite. Since $t_p\leq\frac{p-1}{p}$, $B_p'\in [0,\frac{p-1}{p}]$, and $\tau'<\frac{1}{p^2}$, any prime divisor in $\mathcal{S}$ is exceptional$/X'$. By \cite[Theorem~2.2.3]{Cas+25a}, there exists a projective birational morphism $g: X''\rightarrow X'$ such that $X''$ is $\mathbb Q$-factorial and $\Exc(g)$ contains exactly all prime divisors that are contained in $\mathcal{S}$. 

Since $\tau'<\tau_{\ilct}(d,\Ii_1)$ and $g^*\Bb_p$ is klt, by Theorem~\ref{thm: acc lct general version} and the definition of $\tau_{\ilct}(d,\Ii_1)$, we have that
$$\Bb_0'':=(g^{-1}_*\Bb_p,\Exc(g))$$
is lc. Moreover, $K_{\Bb_0''}\geq g^*K_{\Bb_p}$, so $K_{\Bb_0''}$ is big. We let
$$\Bb'':=\left(g^{-1}_*\Bb_p,(1-\tau')\Exc(g)\right).$$
Since $\tau'<\tau_{\gt}(d,\Ii_1)$ and $K_{\Bb''_0}$ is big, by Theorem~\ref{thm: big gap general} and the definition of $\tau_{\gt}(d,\Ii_1)$, $K_{\Bb''}$ is big. Moreover, we have $$K_{\Bb''}\leq g^*K_{\Bb_p}$$
by the construction of $\mathcal{S}$.

For any prime divisor $D$ over $X''$, if $D\not\in\mathcal{S}$, then
$$a(D,\Bb'')\geq a(D,\Bb_p)>(-1+\tau')(t_p\epsilon_{\Ff}(E)+(1-t_p))$$
hence
$$a(D,\Bb'')+\left(t_p\epsilon_{\Ff}(E)+(1-t_p)\right)>\tau'(t_p\epsilon_{\Ff}(E)+(1-t_p))\geq\frac{\tau'}{p}$$
as $t_p\leq\frac{p-1}{p}$. If $D\in\mathcal{S}$ and $D$ is not $\Ff$-invariant, then
$$a(D,\Bb'')+\left(t_p\epsilon_{\Ff}(E)+(1-t_p)\right)=\tau'>\frac{\tau'}{p}.$$
If $D\in\mathcal{S}$ and $D$ is $\Ff$-invariant, then
$$a(D,\Bb'')+\left(t_p\epsilon_{\Ff}(E)+(1-t_p)\right)=\tau'\left(t_p\epsilon_{\Ff}(E)+(1-t_p)\right)\geq\frac{\tau'}{p}$$
as $t_p\leq\frac{p-1}{p}$. Therefore, $\Bb''$ is $\mathbb Q$-factorial $\frac{\tau'}{p}$-lc and $K_{\Bb''}$ is big. We write
$$\Bb''=(X'',\Ff'',B'',\Mm(p))(t_p)$$
and let 
$$\Bb''(s)=(X'',\Ff'',B'',\Mm(p))(s)$$
for any $s\in [0,1]$. By \cite[Theorem~2.1.1]{Cas+25a}, we may run a $K_{\Bb''}$-MMP with scaling of an ample divisor which terminates with a good minimal model $\Bb_Y$ of $\Bb''$ with induced birational map $\phi\colon X''\dashrightarrow Y$. Let $\Bb_Y(s):=\phi_*\Bb''(s)$ for any $s\in [0,1]$. Then $\Bb_Y$ is $\mathbb Q$-factorial $\frac{\tau'}{p}$-lc. By \cite[Proposition~9.1]{Cas+25a}, $\Bb_Y(0)$ is $\frac{\tau'}{p}$-lc. In particular, $Y$ is $\frac{\tau'}{p}$-lc. Let $B_Y:=\phi_*B''$. We have the following.
\begin{enumerate}
    \item  $Y$ is $\frac{\tau'}{p}$-lc.
    \item $K_{\Bb_Y(t_p)}=K_{\Bb_Y}$ is big and nef.
    \item Since $\tau'=\frac{1}{N}$ and $p\mid N$, $NB''$ is integral. Thus $NK_{\Bb'}$ is integral, hence $$NpK_{\Bb_Y}=Np\phi_*K_{\Bb''}$$ is integral.
    \item By \cite[Theorem~5.1]{Cas+25a}, $K_{\Bb_Y\left(\frac{Npt_p}{Np-1}\right)}$ is big. 
Thus
    $$NpK_{\Bb_Y}-K_Y=B_Y+\Mm(p)_{Y}+NpK_{\Bb_Y(t_p)}-K_{\Bb_Y(0)}=B_Y+\Mm(p)_{Y}+(Np-1)K_{\Bb_Y\left(\frac{Npt_p}{Np-1}\right)}$$
is big.
\end{enumerate}
By \cite[Theorem~4.2]{Bir23a}, there exists a positive integer $m_1$ such that 
$$|mNpK_{\Bb_Y}|$$ defines a birational map for any integer $m\geq m_1$. Since $\phi$ is a sequence of steps of a $K_{\Bb''}$-MMP and $\phi_*K_{\Bb''}=K_{\Bb_Y}$,  $|mNpK_{\Bb''}|$ defines a birational map for any integer $m\geq m_1$. By \cite[Lemma~2.3.4]{HMX13}, $(2d+1)m_1NpK_{\Bb''}$ is potentially birational. Let
$$m_2:=(2d+1)m_1Np+Np,$$
then 
$$m_2>(2d+1)m_1Np+p\geq (2d+1)m_1Np+\frac{1}{1-t_p},$$
$m_2K_{\Bb_0''},m_2K_{\Bb''}$ are integral, and $m_2K_{\Bb_0''}-m_2K_{\Bb''}\geq 0$. By Lemma~\ref{lem: birational to +pe birational for afs},
$$|m_2K_{\Bb_0''}|$$
defines a birational map and $m_2K_{\Bb_0''}$ is integral. Since $K_{\Bb_0''}-g^*K_{\Bb_p}\geq 0$ and is exceptional$/X'$,
$$|m_2K_{\Bb_p}|$$
defines a birational map and $m_2K_{\Bb_p}$ is integral. Since $pt_p\in [0,p-1]\cap\mathbb N$, by Lemma~\ref{lem: birational to +pe birational for afs}, there exists a positive integer $m_3$ depending only on $d$ and $\Ii$ such that $\frac{m_3}{1-t_p}K_{\Bb_p}$ is integral, $p^2\mid\frac{m_3}{1-t_p}$, and for any integral pseudo-effective divisor $L'$ on $X'$, $$\left|\frac{m_3}{1-t_p}K_{\Bb_p}+L'\right|=\left|m_3\left(K_{X'}+B_p'+\Mm(p)_{X'}\right)+m_3\frac{t_p}{1-t_p}\left(K_{\Ff'}+B_p'^{\ninv}+\Mm(p)_{X'}\right)+L'\right|$$ defines a birational map. 

Let $m_0:=m_0(p,m_3,\Ii)$ be the positive integer as in Lemma~\ref{lem: elementary perturbation eb} and let $n_0:=p$. Now for any positive integers 
$$m\geq m_0 \quad \text{ and }\quad n\geq\min\left\{\left\lfloor\frac{mt}{1-t}\right\rfloor,n_0m=pm\right\}$$ 
and pseudo-effective integer divisor $L'$ on $X'$, we have
\begin{align*}
&D(m,n,L')
\coloneqq 
mK_{X'}+\left\lfloor mB'\right\rfloor+\sum\left\lfloor m\mu_j\right\rfloor\Mm_{j,X'}+nK_{\Ff'}+\left\lfloor nB'^{\ninv}\right\rfloor\\
+&\sum\left\lfloor n\mu_j\right\rfloor\Mm_{j,X'}+L'
=\frac{m_3}{1-t_p}K_{\Bb_p}+\left(\left\lfloor mB'\right\rfloor-mB_p'\right)\\
+&\left(\left\lfloor nB'^{\ninv}\right\rfloor-nB_p'^{\ninv}\right)
+\sum\left(\left\lfloor m\mu_j\right\rfloor-m\left\lfloor\frac{(p-1)\mu_j}{p}\right\rfloor\right)\Mm_{j,X'}\\
+&\sum\left(\left\lfloor n\mu_j\right\rfloor-n\left\lfloor\frac{(p-1)\mu_j}{p}\right\rfloor\right)\Mm_{j,X'}+L'\\
+&(m-m_3)\left(K_{X'}+B_p'+\Mm(p)_{X'}+\frac{n-\frac{m_3t_p}{1-t_p}}{m-m_3}\left(K_{\Ff'}+B_p'^{\ninv}+\Mm(p)_{X'}\right)\right).
\end{align*}
For each $j$, $\Mm_j$ is $\bb$-nef, so $\Mm_{j,X'}$ is pseudo-effective. By Lemma~\ref{lem: elementary perturbation eb}~(1), \begin{align*}
        &\left(\left\lfloor mB'\right\rfloor-mB'_p\right)+\left(\left\lfloor nB'^{\ninv}\right\rfloor-nB_p'^{\ninv}\right)+\sum\left(\lfloor m\mu_j\rfloor-m\left\lfloor\frac{(p-1)\mu_j}{p}\right\rfloor\right)\Mm_{j,X'}\\
&+\sum\left(\lfloor n\mu_j\rfloor-n\left\lfloor\frac{(p-1)\mu_j}{p}\right\rfloor\right)\Mm_{j,X'}+L'
    \end{align*}
    is pseudo-effective. By \cite[Theorem~5.1]{Cas+25a} and since $K_{\Bb_p}$ is big, we have that 
$$K_{X'}+B_p'+\Mm(p)_{X'}+\lambda(K_{\Ff'}+B_p'^{\ninv}+\Mm(p)_{X'})$$
    is big for any $\lambda\geq\frac{t_p}{1-t_p}$. Since $t_p<t$, $\left\lfloor\frac{mt_p}{1-t_p}\right\rfloor\leq\left\lfloor\frac{mt}{1-t}\right\rfloor$. Since $t_p\leq\frac{p-1}{p}$, $\left\lfloor\frac{mt_p}{1-t_p}\right\rfloor\leq m(p-1)$. Thus $n\geq\left\lfloor\frac{mt_p}{1-t_p}\right\rfloor$. By Lemma~\ref{lem: elementary perturbation eb}~(2), 
$$K_{X'}+B_p'+\Mm(p)_{X'}+\frac{n-\frac{m_3t_p}{1-t_p}}{m-m_3}\left(K_{\Ff'}+B_p'^{\ninv}+\Mm(p)_{X'}\right)$$
is big. Therefore, by our construction of $m_3$, $|D(m,n,L')|$ defines a birational map.

Now for any pseudo-effective divisor $L$ on $X$, $h_*D(m,n,0)+L$ is big and $h^{-1}_*L$ is pseudo-effective. By \cite[III. 5.10 Lemma]{Nak04}, there exists an $h$-exceptional divisor $E\geq 0$ such that
$$|h^{-1}_*(h_*D(m,n,0)+L)+F|=|h_*D(m,n,0)+L|$$
for any $h$-exceptional divisor $F\geq E$. We may find $F\geq E$ that is exceptional$/X$ and
$$h^{-1}_*(h_*D(m,n,0)+L)+F=D(m,n,h^{-1}_*L)+F'=D(m,n,h^{-1}_*L+F')$$
for some $F'\geq 0$. Since $|D(m,n,h^{-1}_*L+F')|$ defines a birational map, $|h_*D(m,n,0)+L|$ defines a birational map. The theorem follows.
\end{proof}

\begin{proof}[Proof of Theorem~\ref{thm: afs main}]
    It is the special case of Theorem~\ref{thm: afs eb} where $B=0$ and $\Mm=\bm{0}$.
\end{proof}

Note that we need to assume that $t<1$ in Theorem~\ref{thm: afs main}. When $t=1$, we have the following theorem which is also a special case of Theorem~\ref{thm: afs eb}.

\begin{thm}\label{thm: eb when t=1}
Let $d$ be a positive integer. Then there exist two positive integers $m_0,n_0$ depending only on $d$ satisfying the following.

Let $X$ be a projective lc variety and $\Ff$ an lc algebraically integrable foliation of general type on $X$. Then for any integers $m\geq m_0$ and $n\geq mn_0$, the linear system
$$|mK_X+nK_{\Ff}+L|$$
defines a birational map for any pseudo-effective integral divisor $L$.
\end{thm}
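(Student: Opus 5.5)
The plan is to derive Theorem~\ref{thm: eb when t=1} as the special case of Theorem~\ref{thm: afs eb} with $t=1$, $B=0$, $\Mm=\bm{0}$ and the DCC set $\Ii=\{0,1\}$. Take $m_0:=m_0(d,\{0,1\})$ and $n_0:=n_0(d,\{0,1\})$ as given there. These depend only on $d$, as required.

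First I check the hypotheses of Theorem~\ref{thm: afs eb} for $\Bb:=(X,\Ff,0,\bm{0})(1)$.

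\begin{itemize}
\item Its associated adjoint foliated structure is the foliated pair $(X,\Ff)$, with $K_{\Bb}=K_{\Ff}$. We need $K_{\Ff}$ to be $\mathbb R$-Cartier for $\Bb$ to be a normalized foliated structure; I read this as part of the statement's hypothesis that $\Ff$ is an lc foliation.
\item $\Bb$ is lc because $\Ff$ is lc, and it is of general type because $K_{\Ff}$ is big.
\item In the case $t=1$, Theorem~\ref{thm: afs eb} additionally requires $(X,B,\Mm)=X$ to be lc, which is assumed.
\item The coefficients satisfy $B=0\in\Ii$ and $t=1\in\Ii$, and there are no $\mu_j$.
\end{itemize}

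Next I match the numerical conditions. Since $t=1$, the convention $\lfloor mt/(1-t)\rfloor=+\infty$ gives $\min\{\lfloor mt/(1-t)\rfloor, mn_0\}=mn_0$. So the requirement on $n$ is exactly $n\geq mn_0$, together with $m\geq m_0$.

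With $B=0$ and $\Mm=\bm{0}$, the linear system in Theorem~\ref{thm: afs eb} reduces to $|mK_X+nK_{\Ff}+L|$. Theorem~\ref{thm: afs eb} allows any pseudo-effective divisor $L$, so in particular any pseudo-effective integral one. This gives the conclusion.

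There is essentially no obstacle here. The only point needing care is the Cartier condition on $K_{\Ff}$ in the first bullet, which I take to be built into the phrase ``lc foliation''.
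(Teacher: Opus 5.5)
Your proposal is correct and is exactly the paper's argument: the theorem is obtained as the special case of Theorem~\ref{thm: afs eb} with $\Ii=\{0,1\}$, $B=0$, $\Mm=\bm{0}$ and $t=1$. In that case the convention $\lfloor mt/(1-t)\rfloor=+\infty$ reduces the condition on $n$ to $n\geq mn_0$, and the extra hypothesis required when $t=1$ is supplied by the assumption that $X$ is lc.
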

\begin{proof}
It is the special case of Theorem~\ref{thm: afs eb} for $\Ii=\{0,1\}$, $B=0$, $\Mm=\bm{0}$, and $t=1$.
\end{proof}

\section{Birational boundedness}\label{sec: birational boundedness}

The goal of this section is to prove the birational boundedness of algebraically integrable foliations of general type with bounded volumes (Theorem~\ref{thm: birationally bounded main}). We prove the following theorem, which is a more general version of Theorem~\ref{thm: birationally bounded main}.

\begin{thm}\label{thm: birational boundedness}
    Let $d$ be a positive integer, $\Ii\subset [0,+\infty)$ a DCC set, and $C$ a positive real number. Assume that
    \begin{enumerate}
        \item $\Bb=(X,\Ff,B,\Mm)(t)$ is a projective lc algebraically integrable normalized foliated structure,
        \item $\dim X=d$,
        \item $\Bb\in\Ii$ and $t\in (0,1)$, and
        \item $0<\vol(K_{\Bb})\leq (1-t)^dC$.
    \end{enumerate}
   Then $\Bb$ belongs to a log birationally bounded family.
\end{thm}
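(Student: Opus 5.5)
We follow the strategy outlined in Section~\ref{sec: sketch of proof}, modelled on \cite[Proof of Theorem~6.1]{SS23}. First we normalize the volume. Put $\Aa:=\log\Bb$ and write
$$K_{\Bb}=(1-t)\Big(K_X+B^{\ninv}+B^{\inv}+\tfrac{1}{1-t}\Mm_X+\tfrac{t}{1-t}(K_{\Ff}+B^{\ninv})\Big).$$
With this scaling, assumption (4) says that the divisor $\frac{1}{1-t}K_{\Bb}$ has volume at most $C$. The statement is invariant under foliated log resolutions, so we first replace $\Bb$ by $\Bb':=(h^{-1}_*\Bb,\Exc(h))$ for a foliated log resolution $h$. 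This does not change the volume, since lc gives $K_{\Bb'}\geq h^*K_{\Bb}$ with exceptional difference. We may therefore assume that $X$ is $\mathbb Q$-factorial and foliated log smooth.

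Next we apply Theorem~\ref{thm: afs eb}. It gives $m_0,n_0$, depending only on $d$ and $\Ii$, such that
$$|mK_X+\lfloor mB\rfloor+\textstyle\sum\lfloor m\mu_j\rfloor\Mm_{j,X}+nK_{\Ff}+\lfloor nB^{\ninv}\rfloor+\sum\lfloor n\mu_j\rfloor\Mm_{j,X}|$$
is birational, where $m=m_0$ and $n=\min\{\lfloor \frac{mt}{1-t}\rfloor,mn_0\}$. Call this divisor $A$. Then $A\leq m'\cdot\frac{1}{1-t}K_{\Bb}$ up to a pseudo-effective correction, with $m'$ bounded in terms of $m_0,n_0$ and $\Ii$. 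It follows that $\vol(A)\leq m'^dC$ is bounded.

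Now take a resolution $f\colon W\to X$ with $f^*|A|=|A_W|+R_W$, where $A_W$ is base point free and defines a birational morphism $W\to Y\subset\mathbb P^N$. Then $\deg Y\leq\vol(A)$ is bounded, so $Y$ lies in a bounded family, and the images of $\Supp B$ and $\Exc$ have bounded degree. This is the standard argument of \cite[Lemma~3.2]{HMX13}, but we avoid Kawamata--Viehweg vanishing by working only with volumes. Let $\Bb_W$ be the normalized structure on $W$ with boundary $f^{-1}_*B+\Exc(f)$ together with the components of $R_W$. For this $\Bb_W$ we arrange that $K_{\Bb_W}$ is big, that $\vol(K_{\Bb_W})$ is still bounded (by comparing with $\vol(\lceil K_{\Bb}\rceil)$), and that log birational boundedness of $\Bb_W$ is equivalent to that of $\Bb$. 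This is the model of Lemma~\ref{lem: special model for eb} mentioned in the sketch.

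We then pass to a bounded model over $Y$. Run a $K_{\Bb_W}$-MMP$/Y$ with scaling; this exists by \cite{Cas+25a} in the algebraically integrable setting, and it yields $\Bb_Z$ with $K_{\Bb_Z}$ nef$/Y$. Then run a $(K_Z+\text{boundary})$-MMP$/Y$ and reach a model $X'$ that is of Fano type, or has bounded log discrepancies, over the bounded base $Y$. Relative BAB \cite{Bir24} then shows that $X'$ is bounded. To bound the support of the boundary, and with it the foliation and the nef part, intersect with a bounded very ample divisor $H$ on $X'$ pulled back from $Y$ and use that $K_{\Aa_Z}$ is nef. This gives $(K_{X'}+\Supp B'+\Mm'_{X'})\cdot H^{d-1}$ bounded. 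Here the coefficients bounded below by $\min(\Ii\setminus\{0\})$ and the factor $1-t$ are needed, as well as the bound $\frac{t}{1-t}\geq\min\Ii/(1-\min\Ii)>0$ that keeps $K_{\Ff}$ from dominating. The foliations then form a bounded family by \cite[Definition~3.31]{Cas+25a} and a Chow/Hilbert argument on the saturated subsheaves $T_{\Ff}\subset T_{X'}$.

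The main obstacle is this last step: controlling $K_{\Ff}$ and the invariant components on $X'$ without vanishing or a canonical bundle formula. I would first try to bound the foliation by bounding the degree of the general leaf closures, via the associated family of leaves (the graph of $X'\dashrightarrow\mathrm{Chow}$) and $\vol$ bounds from nefness of $K_{\Aa_Z}$.
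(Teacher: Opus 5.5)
\textbf{There is a genuine gap.} Your outline has the right architecture: effective birationality, a bounded image $Y$, a special model $W$, MMPs over $Y$, relative boundedness, and finally control of $K_{\Ff}$. The step that fails is ``Relative BAB \cite{Bir24} then shows that $X'$ is bounded'', applied to the model you build.

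\textbf{Missing $\epsilon$-lc model.} You take a log resolution with $\Exc(h)$ at coefficient $1$, and then let $\Bb_W$ have boundary $f^{-1}_*B+\Exc(f)$ plus the reduced components of $R_W$. Everything downstream is then merely lc. The boundedness inputs the paper uses, \cite[Lemma~6.6]{BH22} and \cite[Theorem~2.3]{Bir24}, need $\epsilon$-lc singularities for a fixed $\epsilon>0$. Since $X'\to Y$ is birational, ``Fano type over $Y$'' gives nothing: without a lower bound on log discrepancies, birational models of a bounded $Y$ need not be bounded. The paper supplies the missing $\epsilon$-lc model in two steps.
\begin{enumerate}
\item It first reduces to the klt case with coefficients in $\frac1l\mathbb N$. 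It truncates $B,\mu_j,t$ to $\frac1l\lfloor (l-1)\,\cdot\,\rfloor$, uses Theorem~\ref{thm: big gap general} to stay of general type, and uses Lemma~\ref{lem: volume monotonicity} to keep $\vol(K_{\Bb})\le(1-t)^dC$.
\item Lemma~\ref{lem: special model for eb} then extracts the finitely many divisors with small total log discrepancy. This finiteness is \cite[Lemma~2.13]{CLW26}, which needs klt, so it fails on your log-smooth lc model. It puts coefficient $1-\frac1q$, not $1$, on $\Exc(f)$ and adds nothing from $R_W$, so $\Bb_W$ is $\frac{1}{pq}$-lc.
\end{enumerate}
The $\epsilon$-lc property then survives to the $K_{\Bb_W}$-minimal model $\Bb_Z/Y$, and passes to $\Bb_Z(0)$ by \cite[Proposition~9.1]{Cas+25a}. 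The $K_{\Bb_Z(0)}$-MMP$/Y$ produces $X'$, which is bounded by \cite[Lemma~6.6]{BH22} and \cite[Theorem~2.3]{Bir24}. Because $W\dashrightarrow X'$ extracts no divisor, the $f$-exceptional divisors land in the boundary. Your claim that the images of $\Supp B$ and $\Exc$ in $Y$ have bounded degree ``working only with volumes'' is unjustified. Volume bounds do not linearly control degrees of negative components, and this route exists precisely to avoid needing that claim.

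\textbf{Bounding the foliation.} You do not need leaf degrees or a Chow argument here. By \cite[Proposition~3.36]{Cas+25a} (see Lemma~\ref{lem: drii0 is bounded}), $(X',\Ff')$ is bounded once $X'$ is bounded and $2A+K_{\Ff'}$ and $A-K_{\Ff'}$ are pseudo-effective for a bounded very ample $A$.
\begin{itemize}
\item The lower bound comes from \cite[Theorem~5.1]{Cas+25a} applied to $A+K_{\Bb'(0)}$.
\item For the upper bound, write $pK_{\Bb_W}\sim L_W\ge 0$ with coefficients $\ge\frac1p$. Bound $L'\cdot H_2^{d-1}$ using nefness of $K_{\Bb_Z}+3dA_Z$, where $H_2$ is very ample on $X'$ itself. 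Then use $t\ge\frac1p$ and nefness of $K_{\Bb'(0)}+3dA'$ to bound $K_{\Bb'(1)}$.
\end{itemize}
Note that intersecting with a divisor pulled back from $Y$, as you propose, cannot detect boundary components contracted by $X'\to Y$.
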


We first prove the following useful lemma.

\begin{lem}\label{lem: volume monotonicity}
    Let $\Bb\geq\Bb'$ be two $\mathbb Q$-factorial lc algebraically integrable normalized foliated structures of dimension $d$. Let $t$ and $t'$ be the parameters of $\Bb$ and $\Bb'$, respectively. Assume that $t<1$. Then 
    $$\left(\frac{1}{1-t}\right)^d\vol(K_{\Bb})\geq\left(\frac{1}{1-t'}\right)^d\vol(K_{\Bb'}).$$
\end{lem}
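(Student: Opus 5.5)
The idea is to rescale so that the $(1-t)$-dependence becomes linear in a single variable, and then compare term by term. Write $\Bb=(X,\Ff,B,\Mm)(t)$ and $\Bb'=(X,\Ff,B',\Mm')(t')$, so $t\geq t'$ and $t'<1$. Set $\lambda:=\frac{t}{1-t}$ and $\lambda':=\frac{t'}{1-t'}$, so that $\lambda\geq\lambda'$. A direct computation from the definition of $\log\Bb$ gives
$$\frac{1}{1-t}K_{\Bb}=K_X+B^{\ninv}+B^{\inv}+\Mm_X+\lambda\left(K_{\Ff}+B^{\ninv}+\Mm_X\right)+\frac{t}{1-t}\cdot 0,$$
since $\frac{1}{1-t}\bigl(B^{\ninv}+(1-t)B^{\inv}+\Mm_X\bigr)=B^{\inv}+(1+\lambda)(B^{\ninv}+\Mm_X)$. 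That is,
$$\frac{1}{1-t}K_{\Bb}=K_{\Bb(0)}+\lambda K_{\Bb(1)},$$
where $\Bb(s):=(X,\Ff,B,\Mm)(s)$. The same identity holds for $\Bb'$ with $\lambda'$, and volume is homogeneous of degree $d$. So the claim is equivalent to
$$\vol\bigl(K_{\Bb(0)}+\lambda K_{\Bb(1)}\bigr)\geq\vol\bigl(K_{\Bb'(0)}+\lambda' K_{\Bb'(1)}\bigr).$$

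I will prove this inequality in two steps.

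\textbf{Step 1: change the boundary and nef part at fixed $\lambda'$.} Since $B\geq B'$ and $\Mm-\Mm'$ is nef, the difference
$$\bigl(K_{\Bb(0)}+\lambda' K_{\Bb(1)}\bigr)-\bigl(K_{\Bb'(0)}+\lambda' K_{\Bb'(1)}\bigr)=(B-B')+\lambda'(B-B')^{\ninv}+(1+\lambda')(\Mm_X-\Mm'_X)$$
is pseudo-effective. Here $\Mm_X-\Mm'_X$ is pseudo-effective because it is the pushforward of a nef divisor. The one subtlety is that this difference must be $\mathbb R$-Cartier, and this is where $\mathbb Q$-factoriality enters. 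Volume is monotone under adding pseudo-effective $\mathbb R$-Cartier divisors (when the smaller class is big; otherwise the inequality is trivial). Therefore $\vol$ does not decrease.

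\textbf{Step 2: increase $\lambda'$ to $\lambda$ for the structure $\Bb$.} The difference is $(\lambda-\lambda')K_{\Bb(1)}$. By \cite[Theorem~5.1]{Cas+25a}, $K_{\Bb(1)}$ is pseudo-effective as soon as some $K_{\Bb(s)}$ with $s<1$ is pseudo-effective. This hypothesis is needed: if $\vol(K_{\Bb'})=0$ there is nothing to prove, and otherwise Step 1 shows that $K_{\Bb(t')}$ is big. We also need that $\Bb(t')$ is lc, which holds by Lemma~\ref{lem: lc preserved under inequality} because $\Bb\geq\Bb(t')$ and $t<1$. Hence $K_{\Bb(1)}$ is pseudo-effective, and adding $(\lambda-\lambda')K_{\Bb(1)}$ does not decrease the volume.

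Combining the two steps and multiplying through by the degree-$d$ homogeneity of $\vol$ gives the lemma. I expect the main obstacle to be confirming that \cite[Theorem~5.1]{Cas+25a} applies in this generality, that is, with boundary and nef part and with merely lc (not klt) $\Bb(t')$. If it does not apply directly, I would pass to a $\mathbb Q$-factorial qdlt modification, which preserves volumes by the pullback formula, and apply the theorem there.
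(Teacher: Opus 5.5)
Your proposal is correct and takes essentially the same route as the paper. You first compare $\Bb'$ with $\Bb(t')$ using $B\geq B'$ and the nefness of $\Mm-\Mm'$, and then invoke \cite[Theorem~5.1]{Cas+25a} to get $K_{\Bb(1)}$ pseudo-effective; your identity $\frac{1}{1-t}K_{\Bb}=\frac{1}{1-t'}K_{\Bb(t')}+(\lambda-\lambda')K_{\Bb(1)}$ is just the paper's decomposition $K_{\Bb(t)}=\frac{t-t'}{1-t'}K_{\Bb(1)}+\frac{1-t}{1-t'}K_{\Bb(t')}$ divided by $1-t$, your check that $\Bb(t')$ is lc via Lemma~\ref{lem: lc preserved under inequality} spells out a point the paper leaves implicit, and the qdlt fallback is unnecessary.
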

\begin{proof}
    Write 
    $$\Bb:=(X,\Ff,B,\Mm)(t),\quad \Bb':=(X,\Ff,B',\Mm')(t').$$
    Let $\Bb(s):=(X,\Ff,B,\Mm)(s)$ for any $s\in [0,1]$. Then
    $$\vol(K_{\Bb(t')})\geq\vol(K_{\Bb'}).$$
    We may assume that $K_{\Bb'}$ is big. Then $K_{\Bb(t')}$ is big. By \cite[Theorem~5.1]{Cas+25a}, $K_{\Bb(1)}$ is pseudo-effective. Since
    $$K_{\Bb(t)}=\frac{t-t'}{1-t'}K_{\Bb(1)}+\frac{1-t}{1-t'}K_{\Bb(t')},$$
    we have that
    $$\vol(K_{\Bb})=\vol(K_{\Bb(t)})\geq\vol\left(\frac{1-t}{1-t'}K_{\Bb(t')}\right)=\left(\frac{1-t}{1-t'}\right)^d\vol(K_{\Bb(t')})\geq\vol(K_{\Bb'}).$$
    The lemma follows.
\end{proof}

We need the following definition, which characterizes the boundedness of normalized foliated structures using numerical invariants.

\begin{defn}
Let $d,p$ be two positive integers, $r$ a positive real number, $\epsilon$ a non-negative real number, and $\Ii\subset\mathbb R$ a set. Let $\Bb=(X,\Ff,B,\Mm)(t)$ be a projective normalized foliated structure with ambient variety $X$. Let $\Bb(s):=(X,\Ff,B,\Mm)(s)$ for any $s\in [0,1]$. We say that $\Bb$ is a \emph{$(d,r,\Ii)$-normalized foliated structure} if
\begin{enumerate}
    \item $\dim X=d$, $\Bb\in\Ii$, and $t<1$, and
    \item there exists a very ample divisor $A$ on $X$ such that $$A^d\leq r \quad \text{and} \quad A-K_{\Bb(s)} \text{ is pseudo-effective for any }s\in [0,1].$$
\end{enumerate}
If $\Ii=\frac{1}{p}\mathbb N$, then we say that $\Bb$ is a \emph{$(d,r,p)$-normalized foliated structure}. 

We say that $\Bb$ is a \emph{$(d,r,\Ii,\epsilon)$} (resp. \emph{$(d,r,p,\epsilon)$})\emph{-normalized foliated structure} if $\Bb$ is a $(d,r,\Ii)$ (resp. $(d,r,p)$)-normalized foliated structure and $\Bb(0)$ is $\epsilon$-klt. We denote by 
$$\mathcal{N}_{d,r,\Ii}:=\{\Bb\mid \Bb\text{ is an algebraically integrable }(d,r,\Ii)\text{-normalized foliated structure}\},$$
$$\mathcal{N}_{d,r,\Ii,\epsilon}:=\{\Bb\mid \Bb\in\mathcal{N}_{d,r,\Ii},\Bb(0)\text{ is }\epsilon\text{-klt}\},\quad \mathcal{N}_{d,r,p}:=\mathcal{N}_{d,r,\frac{1}{p}\mathbb N},\quad \text{and}\quad \mathcal{N}_{d,r,p,\epsilon}:=\mathcal{N}_{d,r,\frac{1}{p}\mathbb N,\epsilon}.$$
\end{defn}

\begin{lem}\label{lem: drii0 is bounded}
Let $d$ be a positive integer, $r$ a positive real number, and $\Ii\subset [0,+\infty)$ a DCC set. Then $\mathcal{N}_{d,r,\Ii,0}$ is a bounded family.
\end{lem}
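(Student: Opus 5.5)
We want to show that the set $\mathcal{N}_{d,r,\Ii,0}$ is bounded in the sense of Definition~\ref{defn: bounded nfs}. That means three things: the ambient varieties $X$ must be bounded, the foliations must vary in a bounded family (Definition~\ref{defn: bounded foliated pairs}), and $(K_X+\Supp B+\Mm_X)\cdot A^{d-1}$ must be bounded above and below. Throughout, $A$ is the very ample divisor with $A^d\leq r$ and $A-K_{\Bb(s)}$ pseudo-effective for all $s\in[0,1]$.

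\textbf{Step 1: bound the ambient variety.} Since $A$ is very ample with $A^d\le r$, the embedding $X\hookrightarrow \mathbb P^N$ given by $|A|$ has bounded degree. Its dimension $N$ is also bounded: $h^0(A)\le A^d+d$ for a very ample divisor. Hence $X$ lies in a bounded family, a subfamily of a Chow or Hilbert scheme.

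\textbf{Step 2: bound the foliation and the intersection numbers.} Apply the hypothesis at $s=1$ and $s=0$. This gives that $A-K_{\Ff}-B^{\ninv}-\Mm_X$ and $A-K_X-B-\Mm_X$ are pseudo-effective. Intersect each with $A^{d-1}$.

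\begin{itemize}
\item From $s=0$: $(K_X+B+\Mm_X)\cdot A^{d-1}\le r$. Because $X$ is klt (as $\Bb(0)$ is $0$-klt) and embedded with bounded degree, $K_X\cdot A^{d-1}$ is bounded below. Indeed, $-K_X\cdot A^{d-1}$ is bounded by a projection/Castelnuovo-type bound for a bounded family. Since $\Mm_X$ is pseudo-effective, we get $B\cdot A^{d-1}\le r'$.
\item $B\in\Ii$ and $\Ii$ is DCC, so every nonzero coefficient is at least $\min(\Ii\setminus\{0\})>0$. Hence $\Supp B\cdot A^{d-1}$ is bounded. The same argument bounds $\Mm_X\cdot A^{d-1}$, using $\Mm=\sum\mu_j\Mm_j$ with $\mu_j\in\Ii$ and each $\Mm_{j,X}$ pseudo-effective. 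This gives condition (3) of Definition~\ref{defn: bounded nfs}.
\item From $s=1$: $K_{\Ff}\cdot A^{d-1}\le r+\text{const}$, because $B^{\ninv}$ and $\Mm_X$ have nonnegative degree. By the algebraically integrable structure, $K_{\Ff}\cdot A^{d-1}$ is also bounded below: use the general-fiber description $K_{\Ff}=K_{X/Z}-R$ on a resolution, or the pseudo-effectivity of $K_{\Ff}-K_X$ relative to fibers.
\end{itemize}

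\textbf{Step 3: bound the foliation in the Quot sense.} The foliation is determined by the saturated subsheaf $T_{\Ff}\subset T_X$, or dually by the quotient $\Omega_X^{[1]}\to \Omega^{[1]}_{\Ff}$. With $X$ in a bounded family and $\deg_A K_{\Ff}$ bounded, these quotients of fixed rank and bounded $A$-degree form a bounded family. This follows from Grothendieck's boundedness lemma for quotients of a fixed sheaf with bounded slope, which gives finitely many components of a relative Quot scheme. Alternatively, algebraically integrable foliations correspond to families of leaves with bounded degree, which are parametrized by a Chow variety. Stratifying the parameter space so that flatness and the conditions of Definition~\ref{defn: bounded foliated pairs} hold then gives boundedness of $(X,\Ff)$. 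This matches the approach of \cite[Definition~3.31]{Cas+25a}.

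\textbf{Main obstacle.} I expect the hardest step to be the lower bound on $K_{\Ff}\cdot A^{d-1}$, together with the passage from bounded slope to a genuinely bounded family of foliations. For the lower bound, I would first try \cite[Theorem~5.1]{Cas+25a}-style pseudo-effectivity: since $\Bb(s)$ is lc for small $s$, $K_{\Ff}+B^{\ninv}+\Mm_X-(K_X+B+\Mm_X)$ should be controlled along movable curves cut by $A$. If that fails, I would fall back on the Grothendieck lemma, which only requires an upper bound on the degree of the quotient $\Omega_X^{[1]}\to\Omega^{[1]}_{\Ff}$, i.e. on $K_{\Ff}\cdot A^{d-1}$. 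That upper bound we do have.
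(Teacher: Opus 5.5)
Your proposal is correct. For the ambient variety and for condition~(3) of Definition~\ref{defn: bounded nfs} it is essentially the paper's argument; you differ only in how you bound the foliation.

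The paper proceeds in four steps:
\begin{enumerate}
\item It enlarges $A$ by a bounded multiple so that $A+K_{\Bb(0)}$ and $A-(B+\Mm_X)$ are pseudo-effective.
\item It applies \cite[Theorem~5.1]{Cas+25a} to get that $A+K_{\Bb(1)}$ is pseudo-effective. This is the only step that uses the hypothesis that $\Bb(0)$ is klt.
\item It deduces that both $2A+K_{\Ff}$ and $A-K_{\Ff}$ are pseudo-effective.
\item It concludes with the ready-made criterion \cite[Proposition~3.36]{Cas+25a}.
\end{enumerate}

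You use only the upper bound $K_{\Ff}\cdot A^{d-1}\leq r$, which comes directly from the pseudo-effectivity of $A-K_{\Bb(1)}$. You combine it with Grothendieck's lemma for saturated subsheaves $T_{\Ff}\subset T_X$ whose slope $-K_{\Ff}\cdot A^{d-1}/\rk\Ff$ is bounded below. As your fallback correctly notes, the lower bound on $K_{\Ff}\cdot A^{d-1}$ that you call the main obstacle is then unnecessary.

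Your route is more elementary and does not need the klt hypothesis at all. The cost is that you must stratify the relative Quot scheme to produce flat families satisfying properties (1)--(2) of Definition~\ref{defn: bounded foliated pairs}. This is routine but not trivial, and it is exactly what the cited proposition packages.

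Two corrections are needed.

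First, $\Bb(0)$ being klt does not make $X$ klt, since $K_X$ need not be $\mathbb Q$-Cartier. You do not need it: take a general complete intersection curve $C$ of $|A|$, which lies in the smooth locus. Adjunction gives $K_X\cdot A^{d-1}=2g(C)-2-(d-1)A^d\geq -2-(d-1)r$.

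Second, drop the alternative via a Chow variety of leaves. A bound on $K_{\Ff}\cdot A^{d-1}$ does not bound the degrees of the leaves. For example, take the degree-one foliations on $\mathbb P^2$ with rational first integral $x^az^{b-a}/y^b$, where $0<a<b$ and $\gcd(a,b)=1$. They all have $K_{\Ff}\sim 0$, but their leaves have degree $b$. So only the Quot-scheme argument works.
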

\begin{proof}
Pick $\Bb=(X,\Ff,B,\Mm)(t)\in\mathcal{N}_{d,r,\Ii}$ and let $\Bb(s):=(X,\Ff,B,\Mm)(s)$ for any $s\in [0,1]$. Let $A$ be a very ample divisor on $X$ such that $A^d\leq r$ and $A-K_{\Bb(s)}$ is pseudo-effective for any $s\in [0,1]$. Then $X$ is bounded. Possibly replacing $A$ by a bounded multiple, we may assume that $A+K_{\Bb(0)}$ and $A-(B+\Mm_X)$ are pseudo-effective. By \cite[Theorem~5.1]{Cas+25a}, $A+K_{\Bb(1)}$ is pseudo-effective, so $2A+K_{\Ff}$ is pseudo-effective. Since $A-K_{\Bb(1)}$ is pseudo-effective, $A-K_{\Ff}$ is pseudo-effective. By \cite[Proposition~3.36]{Cas+25a}, $(X,\Ff)$ belongs to a bounded family. Since $A-K_{\Bb(0)}$ is pseudo-effective and $\Bb\in\Ii$, the lemma follows from the definition (Definition~\ref{defn: bounded nfs}).
\end{proof}

We now prove Theorem~\ref{thm: birational boundedness}. We need the following theorem, which can be considered a more precise version of Theorem~\ref{thm: birational boundedness}. The key point of the following theorem is that, assuming the existence of a nice birational morphism $f\colon W\rightarrow X$ and a normalized foliated structure $\Bb_W$ on $W$, we can obtain a bounded birational model $\Bb'$ of $\Bb_W$ with ambient variety $X'$, such that the induced birational map $W\dashrightarrow X'$ does not extract any divisor.

\begin{thm}\label{thm: precise birational boundedness}
    Let $d$ and $p$ be two positive integers, $\epsilon$ a positive real number, and $C$ a positive real number. Then there exists a positive integer $r$ depending only on $d$, $p$, and $C$ satisfying the following. Assume that
    \begin{enumerate}
        \item $\Bb=(X,\Ff,B,\Mm)(t)$ is a projective klt algebraically integrable normalized foliated structure,
        \item $\dim X=d$, $\Bb\in\frac{1}{p}\mathbb N$, and $t\in (0,1)$.
        \item $pK_{\Bb}$ is integral and $|pK_{\Bb}|$ defines a birational map.
        \item $f: W\rightarrow X$ is a projective birational morphism such that
        $$f^*(pK_{\Bb})\sim A_W+R_W$$
        where $|A_W|$ is the movable part of $f^*|pK_{\Bb}|$, $A_W$ is base-point-free and defines a birational morphism $g: W\rightarrow Y$.
        \item $\Bb_W:=\left(f^{-1}_*\Bb,\left(1-\frac{1}{p}\right)\Exc(f)\right)$ is $\epsilon$-lc.
        \item $\vol(K_{\Bb})\leq C$.
    \end{enumerate}
Then there exists a birational map$/Y$ $\gamma: W\dashrightarrow X'$ which does not extract any divisor satisfying the following. Let $\Bb':=\gamma_*\Bb_W$, then $X'$ is $\mathbb Q$-factorial and
$$\Bb'\in \mathcal{N}_{d,r,p,\epsilon}.$$
In particular, $\Bb'$ belongs to a bounded family, and $\Bb$ and $\Bb_W$ belong to a log birationally bounded family.
\end{thm}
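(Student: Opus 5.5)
We follow the strategy of \cite[Proof of Theorem~6.1]{SS23}, as outlined in the sketch in Section~\ref{sec: sketch of proof}. First we bound $Y$. Since $A_W$ is base-point-free and defines the birational morphism $g$, and $A_W\le f^*(pK_{\Bb})$, we have $A_W^d\le\vol(pK_{\Bb})\le p^dC$. Hence $Y$, being the image of $W$ under the morphism given by a linear system of degree at most $p^dC$, lies in a bounded family, with a very ample $A_Y$ satisfying $g^*A_Y=A_W$ and $A_Y^d\le p^dC$. Next we relate $K_{\Bb_W}$ to $A_W$. Since $\Bb$ is klt with coefficients in $\frac1p\mathbb N$, every $f$-exceptional divisor $E$ has $a(E,\Bb)>-(t\epsilon_{\Ff}(E)+1-t)$. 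Using $pK_{\Bb}$ integral, this should give $K_{\Bb_W}\ge f^*K_{\Bb}$, with the difference supported on $\Exc(f)$. Consequently $pK_{\Bb_W}\sim A_W+R_W+(\text{effective exceptional})$, so $K_{\Bb_W}-\frac1pA_W$ is effective and $\vol(K_{\Bb_W})=\vol(K_{\Bb})\le C$.

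We then run two MMPs over $Y$. First, $\Bb_W$ is $\epsilon$-lc, hence klt. We replace $W$ by a small $\mathbb Q$-factorialization if needed and run a $K_{\Bb_W}$-MMP$/Y$ with scaling, using \cite[Theorem~2.1.1]{Cas+25a} and \cite[Theorem~3.6]{Cas+25b}. This yields a good minimal model $\Bb_Z$ over $Y$, and the MMP contracts no divisor of $W$ that is not contracted over $Y$ in a way that would be harmful. Being a contraction sequence, it extracts nothing, and $\Bb_Z$ stays $\epsilon$-lc. By \cite[Proposition~9.1]{Cas+25a}, $Z$ itself is $\epsilon$-lc, i.e.\ $\Bb_Z(0)$ is $\epsilon$-klt (up to a harmless perturbation of $\epsilon$). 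Second, $K_{\Bb_Z}$ is nef$/Y$ and $Y$ is bounded. Because $K_{\Bb_Z}(1)$ is pseudo-effective by \cite[Theorem~5.1]{Cas+25a}, we rewrite $K_{\Bb_Z}$ as $(1-t)(K_Z+\Delta_Z)$ plus a part that is pseudo-effective over $Y$. We then run a $(K_Z+\Delta_Z)$-MMP$/Y$ for a suitable boundary $\Delta_Z$ with coefficients in $\frac1p\mathbb N$, reaching a model $X'$ on which $-(K_{X'}+\Delta')$ or $K_{X'}+\Delta'$ has bounded degree over $Y$. The resulting $\gamma\colon W\dashrightarrow X'$ is a composition of MMP steps, so it extracts no divisor and $X'$ is $\mathbb Q$-factorial. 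Since $X'$ is $\epsilon$-lc and birational over the bounded $Y$, the relative BAB / boundedness of $\epsilon$-lc models over bounded bases \cite{Bir24} (\cite[Theorem~1.4]{Bir24}-type) should give a very ample $A'$ on $X'$ with $A'^d\le r$.

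Next we bound the intersection numbers. We must show that $A'-K_{\Bb'(s)}$ is pseudo-effective for all $s\in[0,1]$. For $s=t$ this follows from $pK_{\Bb'}\ge\gamma_*A_W$ together with $\vol\le C$, via the standard argument that a divisor $D$ with $\vol(D)\le C$ and $D\ge$ (pullback of an ample from a bounded $Y$) satisfies $D\cdot A'^{d-1}$ bounded, hence $kA'-D$ is pseudo-effective after enlarging $A'$ by a bounded multiple. For $s=0$ and $s=1$ we use that $K_{X'}+\Delta'$ is controlled by the second MMP, and then write $K_{\Bb'(s)}$ as an affine combination of $K_{\Bb'(t)}$ and $K_{\Bb'(0)}$ for $s\le t$, and extrapolate to $s=1$. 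The extrapolation needs $t$ bounded away from $1$. Here we use $\frac1p$-integrality: $t\le\frac{p-1}{p}$, so the coefficient $\frac{1}{1-t}\le p$ is bounded. With these bounds, $\Bb'\in\mathcal N_{d,r,p,\epsilon}$, and Lemma~\ref{lem: drii0 is bounded} gives boundedness. Log birational boundedness of $\Bb_W$ and $\Bb$ then follows, since $\gamma^{-1}$ extracts no divisors and $\Exc(f)$ appears in the boundary of $\Bb_W$ with coefficient $1-\frac1p>0$.

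The main obstacle is the second MMP together with the intersection bound at $s=0,1$. Kawamata--Viehweg vanishing is unavailable for foliations, so we cannot bound $K_{\Bb'}$ by the classical route of \cite[Lemma~3.2]{HMX13}. We must instead separate the foliated part, bounding $K_{\Ff'}\cdot A'^{d-1}$ from both sides, via pseudo-effectivity of $K_{\Bb'(1)}$ below and the $\frac1p$-control of $t$ above, while keeping $X'$ in the range where \cite{Bir24} applies. I would first try to choose $\Delta_Z$ as the $\gamma$-pushforward of $B^{\ninv}+(1-t)B^{\inv}$ plus $(1-\frac1p)\Exc(f)$, and to run the MMP on $K_{\Bb_Z(0)}$ directly.
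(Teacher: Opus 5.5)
Your architecture is the paper's: bound $Y$, run a $K_{\Bb_W}$-MMP$/Y$, then a $K_{\Bb_Z(0)}$-MMP$/Y$, bound intersection numbers, and extrapolate in $s$. The step that bounds $X'$, however, does not work. After the second MMP, $K_{X'}+B'+\Mm_{X'}$ is nef over $Y$, so $X'\to Y$ is a minimal model over $Y$, not a Fano type fibration, and the boundedness results of \cite{Bir24} do not apply to it. Indeed, $\epsilon$-lc varieties birational to a fixed bounded $Y$ are not bounded in general (iterate blow-ups of smooth points).

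What is missing is a volume bound followed by a detour through an ample model. The paper adds $3dA'$ so that $K_{\Bb'(0)}+3dA'$ is big and nef. It then shows $\vol(K_{\Bb'(0)}+3dA')=\vol(K_{\Bb_Z(0)}+3dA_Z)\le(1-t)^{-d}\vol(K_{\Bb_W}+3dA_W)\le p^d(3dp+1)^dC$. This uses Lemma~\ref{lem: volume monotonicity} (pseudo-effectivity of $K_{\Bb(1)}$ and $t\le 1-\frac1p$) together with $\lfloor pK_{\Bb_W}\rfloor\ge\lfloor f^*(pK_{\Bb})\rfloor\sim A_W+\lfloor R_W\rfloor$. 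Next it bounds the ample model $X''$ of $K_{\Bb'(0)}+3dA'$ by \cite[Lemma~6.6]{BH22}. Only then does it apply \cite[Theorem~2.3]{Bir24} to $h\colon X'\to X''$, along which $K_{\Bb'(0)}+3dA'$ is trivial. This is also what gives the upper bound at $s=0$, namely that $H_2-(K_{\Bb'(0)}+3dA')$ is pseudo-effective, which your proposal leaves unspecified.

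You also have the roles of the two bounds on $t$ reversed. The volume comparison is what needs $t\le 1-\frac1p$. Extrapolating to $s=1$ needs $t\ge\frac1p$, via $tK_{\Bb'(1)}=K_{\Bb'(t)}-(1-t)K_{\Bb'(0)}$ and nefness of $K_{\Bb'(0)}+3dA'$.

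Your bound at $s=t$ has a second gap. A divisor of bounded volume that dominates a bounded big divisor can still have unbounded degree against $A'$, because its fixed part is not controlled. Moreover, $K_{\Bb'}$ is not nef on $X'$, since the second MMP is a $K_{\Bb_Z(0)}$-MMP, not a $K_{\Bb_Z}$-MMP. The paper instead computes $(K_{\Bb'}+3dA')\cdot H_2^{d-1}$ on a common resolution of $Z\dashrightarrow X'$, where the pullback of $K_{\Bb_Z}+3dA_Z$ is nef. It bounds this by $\vol(K_{\Bb_W}+(3d+b)A_W)$ using that $bA'-H_2$ is pseudo-effective, which again requires $X'$ to be bounded first. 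It then bounds $K_{\Bb'}$ through an effective $L'\sim pK_{\Bb'}$ with coefficients at least $\frac1p$.

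Finally, the unrounded inequality $K_{\Bb_W}\ge f^*K_{\Bb}$ is not implied by klt. Klt only gives $-a(E,\Bb)<t\epsilon_{\Ff}(E)+1-t$, not $-a(E,\Bb)\le(1-\frac1p)(t\epsilon_{\Ff}(E)+1-t)$. Only the rounded inequality and $\vol(K_{\Bb_W})\le\vol(K_{\Bb})$ hold, but these are all you need.
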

\begin{proof}
Possibly replacing $W$ by a small $\mathbb Q$-factorialization, we may assume that $W$ is $\mathbb Q$-factorial. Let $A_Y:=g_*A_W$. Then $A_Y$ is very ample and $A_Y^d\leq\vol(pK_{\Bb})\leq p^dC$. Therefore, $Y$ belongs to a bounded family. We may assume that $A_W\geq 0$.

Since $\Bb_W$ is $\mathbb Q$-factorial klt, by \cite[Theorem~2.1.1]{Cas+25a}, we may run a $(K_{\Bb_W}+3dA_W)$-MMP with scaling of an ample divisor. By the length of extremal rays, this is also a $K_{\Bb_W}$-MMP$/Y$ with scaling of an ample divisor, which terminates with a good minimal model $\Bb_Z/Y$ of $\Bb_W/Y$ associated with a birational map $\phi\colon W\dashrightarrow Z$, where $Z$ is the ambient variety of $\Bb_Z$. Let $g_Z: Z\rightarrow Y$ be the associated contraction and $A_Z:=\phi_*A_W$. Then $A_Z=g_Z^*A_Y$. 

Let $\Bb_W(s):=\left(f^{-1}_*\Bb(s),\left(1-\frac{1}{p}\right)\Exc(f)\right)$
and $\Bb_Z(s):=\phi_*\Bb_W(s)$ for any $s\in [0,1]$. Since $\Bb_W$ is $\epsilon$-lc, $\Bb_Z$ is $\epsilon$-lc. By \cite[Proposition~9.1]{Cas+25a}, $\Bb_Z(0)$ is $\epsilon$-lc. By the length of extremal rays, $K_{\Bb_Z(0)}+2dA_Z$ is pseudo-effective. Thus $K_{\Bb_Z(0)}+3dA_Z$ is big.

We run a $(K_{\Bb_{Z}(0)}+3dA_Z)$-MMP with scaling of an ample divisor. By the length of extremal rays, this MMP is a $K_{\Bb_Z(0)}$-MMP$/Y$ with scaling of an ample divisor and terminates with a good minimal model $\Bb'(0)/Y$ of $\Bb_Z(0)/Y$ associated with a birational map $\psi: Z\dashrightarrow X'$ and a contraction $g': X'\rightarrow Y$, where $\Bb':=\psi_*\Bb_Z$ and $\Bb'(s):=\psi_*\Bb_Z(s)$ for any $s\in [0,1]$. Let $A':=\psi_*A_Z$.

Since $\Bb_Z(0)$ is $\epsilon$-lc, $\Bb'(0)$ is $\epsilon$-lc. By Lemma~\ref{lem: volume monotonicity}, we have
\begin{align*}
    0&<\vol(K_{\Bb'(0)}+3dA')=\vol(K_{\Bb_Z(0)}+3dA_Z)\leq\left(\frac{1}{1-t}\right)^d\vol(K_{\Bb_Z}+3dA_Z)\\
    &=\left(\frac{1}{1-t}\right)^d\vol(K_{\Bb_W}+3dA_W)\leq l^d\vol(K_{\Bb_W}+3dA_W).
\end{align*}
Since $\Bb$ is klt and $pK_{\Bb}$ is integral, we have
$$\left\lfloor pK_{\Bb_W}\right\rfloor=pf^{-1}_*K_{\Bb}+p(\Exc(f)^{\ninv}+(1-t)\Exc(f)^{\inv})-\Exc(f)\geq \lfloor f^*(pK_{\Bb})\rfloor\sim A_W+\lfloor R_W\rfloor,$$
so 
$$\vol(K_{\Bb_W}+3dA_W)\leq (3dp+1)^d\vol(K_{\Bb_W})\leq (3dp+1)^d\vol(K_{\Bb})\leq (3dp+1)^dC.$$
Let $h\colon X'\rightarrow X''$ be the ample model of $K_{\Bb'(0)}+3dA'$. This is also the ample model$/Y$ of $K_{\Bb'(0)}$ by the length of extremal rays. Let $A'':=h_*A'$ and $\Bb''(s):=h_*\Bb'(s)$ for any $s\in [0,1]$. Then we have
$$\vol(K_{\Bb''(0)}+3dA'')=\vol(K_{\Bb'(0)}+3dA')\in\left(0,(3dp+1)^dC\right).$$
By \cite[Lemma~6.6]{BH22}, $(\Bb''(0),3d\overline{A_Y})$ is bounded. Thus there exists a positive integer $r_1$ depending only on $d$, $p$, and $\epsilon$, and a very ample divisor $H_1$ on $X''$, such that
$$H_1-2(K_{\Bb''(0)}+3dA'') \text{ is pseudo-effective}, \quad \text{and}\quad H_1^d\leq r_1.$$ We may assume that $H_1$ is general in $|H_1|$. Write
$$\Bb'(s):=(X',\Ff',B',\Mm)(s)$$
for any $s\in [0,1]$. By \cite[Theorem~2.3]{Bir24}, $(X',B'+h^*H_1)$ belongs to a bounded family. Thus there exists a positive integer $r_2$ and a very ample divisor $H_2$ on $X'$ such that $H_2^d\leq r_2$ and $H_2-h^*H_1$ is pseudo-effective. Therefore,
$$H_2-(K_{\Bb'(0)}+3dA')=H_2-h^*H_1+h^*(H_1-(K_{\Bb''(0)}+3dH_2))$$
is pseudo-effective. Therefore, $(\Bb''(0),3d\overline{A_Y})$ is bounded. Since $A_Y$ is very ample, there exists a positive real number $b$ depending only on $d$, $p$, and $\epsilon$, such that
$$bH_2-A'\quad \text{and}\quad bA'-H_2$$
are pseudo-effective. We let $\alpha: V\rightarrow Z$ and $\beta: V\rightarrow X'$ be a resolution of indeterminacies of $\psi$. Then we have
$$\beta_*\alpha^*(K_{\Bb_Z}+3dA_Z)=K_{\Bb'}+3dA'.$$
Since $K_{\Bb_Z}+3dA_Z$ and $\beta^*H_2$ are nef, we have
\begin{align*}
    (K_{\Bb'}+3dA')\cdot H_2^{d-1}=&\alpha^*(K_{\Bb_{Z}}+3dA_Z)\cdot(\beta^*H_2)^{d-1}\leq\vol(\alpha^*(K_{\Bb_{Z}}+3dA_Z)+\beta^*H_2)\\
    \leq&\vol(\alpha^*(K_{\Bb_{Z}}+3dA_Z)+b\beta^*A')=\vol(\alpha^*(K_{\Bb_{Z}}+(3d+b)A_Z))\\
    =&\vol(K_{\Bb_{Z}}+(3d+b)A_Z)=\vol(K_{\Bb_{W}}+(3d+b)A_W)\\
    \leq&(3dq+bp+1)^d\vol(K_{\Bb_W})\\
    \leq&(3dq+bp+1)^dC=:C_1.
\end{align*}
Since $p^2K_{\Bb_W}$ is an integral divisor and
$$\left\lfloor pK_{\Bb_W}\right\rfloor\geq \lfloor f^*(pK_{\Bb})\rfloor\sim A_W+\lfloor R_W\rfloor\geq 0,$$
we may write
$$pK_{\Bb_W}\sim L_W\geq 0$$
such that the coefficients of $L_W$ are $\geq\frac{1}{p}$. We let $L':=\psi_*\phi_*L_W$. Then $L'\sim pK_{\Bb'(t)}$ and $L'\cdot H_2^{d-1}\leq pC_1$. Therefore, $(X',L')$ belongs to a bounded family. Thus there exists a positive integer $c$ depending only on $d$, $p$, and $\epsilon$, such that
$$cH_2-(L'+3dA')$$
is pseudo-effective. Thus 
$$cpH_2-(K_{\Bb'(t)}+3dA')=cpH_2-t(K_{\Bb'(1)}+3dA')-(1-t)(K_{\Bb'(0)}+3dA')$$
is pseudo-effective. Since $K_{\Bb'(0)}+3dA'$ is nef and $t\geq\frac{1}{p}$,
$$cpH_2-K_{\Bb'(1)}$$
is pseudo-effective. Since $H_2-K_{\Bb'(0)}$ is pseudo-effective, 
$$cpH_2-K_{\Bb'(s)}$$
is pseudo-effective for any $s\in [0,1]$. 

Let $r:=(cp)^dr_2$ and $A:=cpH_2$. Then:
\begin{itemize}
    \item $\dim\Bb'=d,\Bb'\in\frac{1}{p}\mathbb N$, and $t<1$, and
    \item $A^d\leq r$ and $A-K_{\Bb'(s)}$ is pseudo-effective for any $s\in [0,1]$.
    \item $\Bb'(0)$ is $\epsilon$-lc.
\end{itemize}
Therefore, 
$$\Bb'\in\mathcal{N}_{d,r,p,\epsilon}.$$
By Lemma~\ref{lem: drii0 is bounded}, $\Bb'$ belongs to a bounded family.
Let $\gamma:=\psi\circ\phi$ be the induced birational map. Then $\gamma$ does not extract any divisor, hence $\Bb_W$ is log birationally bounded. By our definition of $\Bb_W$, we have that $\Supp B'$ contains all $f$-exceptional prime divisors. Thus $\Bb$ is log birationally bounded.
\end{proof}

The following lemma is complementary to Theorem~\ref{thm: precise birational boundedness}; it shows the existence of a nice birational model $W$ of $\Bb$.

\begin{lem}\label{lem: special model for eb}
Let $d$ and $p$ be positive integers. Then there exists a positive integer $q$ depending only on $d$ and $p$, and a projective birational morphism $f: W\rightarrow X$ satisfying the following. Assume that:
\begin{itemize}
    \item $\Bb=(X,\Ff,B,\Mm)(t)$ is a projective klt algebraically integrable normalized foliated structure.
    \item $\dim X=d$, $\Bb\in\frac{1}{p}\mathbb N$, and $t\in (0,1)$.
    \item $K_{\Bb}$ is big.
\end{itemize}
Then:
\begin{enumerate}
\item $p\mid q$, $qK_{\Bb}$ is integral, and $|qK_{\Bb}|$ defines a birational map.
\item We have
        $$f^*(qK_{\Bb})\sim A_W+R_W$$
    where $|A_W|$ is the movable part of $f^*|qK_{\Bb}|$, $A_W$ is base-point-free and defines a birational morphism $g: W\rightarrow Y$.
\item $\Bb_W:=\left(f^{-1}_*\Bb,\left(1-\frac{1}{q}\right)\Exc(f)\right)$ is $\frac{1}{pq}$-lc.
\end{enumerate}
\end{lem}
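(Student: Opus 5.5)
The plan is to take $q$ to be a multiple of the effective-birationality integer from Theorem~\ref{thm: afs eb}, and then to construct $f$ by resolving the base locus of $|qK_{\Bb}|$ on a sufficiently fine foliated log resolution. The discrepancy bound in (3) will then come from the rounding $1-\frac1q$ on exceptional divisors, together with the fact that all coefficients lie in $\frac1p\mathbb N$.

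\textbf{Choice of $q$.} Apply Theorem~\ref{thm: afs eb} with $\Ii=\frac1p\mathbb N\cap[0,1]$ to get $m_0$ and $n_0$ depending only on $d$ and $p$. Write $t=\frac{k}{p}$ with $1\le k\le p-1$, so $\frac{t}{1-t}=\frac{k}{p-k}$. I would take $q$ to be a multiple of $p\cdot(p-1)!$ with $q\ge p\,m_0$, and set $m:=q(1-t)$ and $n:=qt$.

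Then $m$ and $n$ are integers, $m\ge m_0$, and $n=\frac{mt}{1-t}$, so the hypothesis on $n$ in Theorem~\ref{thm: afs eb} holds. Since all coefficients of $B$ and $\Mm$ lie in $\frac1p\mathbb N$ and $p\mid m,n$, the floors in Theorem~\ref{thm: afs eb} are exact, and the divisor appearing there (with $L=0$) is exactly $qK_{\Bb}$, since
$$qK_{\Bb}=q(1-t)(K_X+B+\Mm_X)+qt(K_{\Ff}+B^{\ninv}+\Mm_X).$$
Hence $qK_{\Bb}$ is integral and $|qK_{\Bb}|$ defines a birational map, which gives (1). Alternatively one can simply choose $q$ with $p\mid q$ so that these divisibilities hold for every $k$.

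\textbf{Construction of $f$.} Take a resolution $f\colon W\to X$ that is a foliated log resolution of $\Bb$ (such resolutions exist by \cite{CHLX23}) and that also resolves the base locus of $|qK_{\Bb}|$. Then $f^*|qK_{\Bb}|=|A_W|+R_W$ with $A_W$ base-point-free, and $A_W$ defines a birational morphism $g\colon W\to Y$ because $|qK_{\Bb}|$ is birational. This gives (2).

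\textbf{Singularities of $\Bb_W$.} For (3), use that $\Bb_W$ is foliated log smooth. Then its total log discrepancies are computed on $W$, or are bounded below by those of divisors on $W$ via toroidal and ACSS-type computations. It therefore suffices to check the divisors on $W$ and the strata of $\Supp B_W$.

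\begin{itemize}
\item \emph{Strict transforms of components of $B$.} Their coefficients are at most $\frac{p-1}{p}$, since $\Bb$ is klt and coefficients lie in $\frac1p\mathbb N$. Their total log discrepancy is therefore at least $\frac1p\bigl(t\epsilon+(1-t)\bigr)\ge\frac1{p^2}$.
\item \emph{Exceptional divisors.} They have coefficient $1-\frac1q$, which gives total log discrepancy $\frac1q\bigl(t\epsilon+(1-t)\bigr)\ge\frac{1}{pq}$.
\item \emph{Higher blow-ups.} For a foliated log smooth structure, the total log discrepancy over a stratum is bounded below by the minimum of those of the divisors through it, again by a toroidal computation.
\end{itemize}

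Since $p\mid q$, this yields $\frac1{pq}$-lc. One must also check that $K_{\Bb_W}$ is $\mathbb R$-Cartier; this holds because $W$ is smooth and the foliation is toroidal.

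\textbf{Main obstacle.} The hardest step is this last verification: the lower bound on total log discrepancies for higher divisors over a foliated log smooth (toroidal) structure when $t\in(0,1)$. I would attempt it by reducing to the $\mathbb Q$-factorial ACSS description, writing the discrepancy as a convex combination of the $t=0$ and $t=1$ discrepancies as in the proof of Lemma~\ref{lem: lc preserved under inequality}, and bounding each of the two terms separately.
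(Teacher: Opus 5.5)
\textbf{Parts (1) and (2).} These are essentially what the paper does, apart from a divisibility slip. The condition $p!\mid q$ does not give $p\mid m=q(1-t)$: take $p=2$, $t=\frac12$, $q=2j$ with $j$ odd. Without $p\mid m$, the divisor $qK_{\Bb}$ need not even be integral, since the invariant part of $B$ enters with coefficient $(1-t)b$. You should take $p^2\mid q$.

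\textbf{The genuine gap is in (3).} You assert that on a foliated log smooth model, the total log discrepancy of a divisor over a stratum is bounded below by those of the divisors through it. This is false. Foliated log resolutions in the sense of \cite{CHLX23} come from toroidal reduction, and their ambient variety may have quotient toric singularities of unbounded index.

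Concretely, let $W$ be locally $\mathbb A^2/\mu_r$ with weights $(1,1)$, and let $\Ff_W$ be induced by the equidimensional toroidal map $x^{r-1}y$.
\begin{itemize}
\item Both toric curves $D_1,D_2$ are invariant.
\item The exceptional divisor $E$ of the minimal resolution of the point is invariant, with $a(E,(W,\Ff_W,0))=0$.
\item Its log discrepancy for the pair part is $\frac1r(1-b_1)+\frac1r(1-b_2)$.
\end{itemize}
If $D_1,D_2$ are $f$-exceptional, so $b_1=b_2=1-\frac1q$, the total log discrepancy of $E$ with respect to $\log\Bb_W$ is $(1-t)\frac{2}{rq}$. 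This is $<\frac1{pq}$ as soon as $r>2p$, and nothing bounds $r$ in terms of $d$ and $p$.

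Your convex-combination fallback works only if the pair part $(W,B_W,\Mm)$ is $\frac1q$-lc, i.e.\ if $W$ is smooth with $\Supp B_W$ snc. Arranging this needs two further ingredients, neither of which you supply:
\begin{itemize}
\item an additional toroidal resolution of a foliated log smooth model that already dominates the resolution of the base locus of $|qK_{\Bb}|$;
\item the fact that toroidal divisors are lc places of the foliated part with its full toroidal boundary, so that this part stays lc on the smooth model.
\end{itemize}
With both in hand, you would get $t\cdot 0+(1-t)\frac1q\ge\frac1{pq}$.

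\textbf{How the paper proceeds.} It avoids the local computation entirely. On a log resolution $W'$, the structure $\Bb_{W'}$ is klt. By \cite[Lemma~2.13]{CLW26}, only finitely many divisors $E$ exceptional over $W'$ satisfy $a(E,\Bb_{W'})\le(-1+\frac1q)(t\epsilon_{\Ff}(E)+(1-t))$. The paper extracts exactly these by $\pi\colon W\to W'$ via \cite[Theorem~2.2.3]{Cas+25a} and gives them coefficient $1-\frac1q$. Then $K_{\Bb_W}\le\pi^*K_{\Bb_{W'}}$, so every divisor exceptional over $W$ has total log discrepancy $>\frac1q(t\epsilon_{\Ff}(E)+(1-t))\ge\frac1{pq}$. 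Divisors on $W$ are handled exactly as in your first two bullets.
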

\begin{proof}
Possibly replacing $\Bb$ by a small $\mathbb Q$-factorialization, we may assume that $\Bb$ is $\mathbb Q$-factorial klt. By Theorem~\ref{thm: afs eb}, there exists a positive integer $q$ depending only on $d$ and $p$ such that $p\mid q$, $qK_{\Bb}$ is an integral divisor, and $|qK_{\Bb}|$ defines a birational map. We let $\phi_X: X\dashrightarrow \mathbb P^{n_X}$ be the birational map induced by $|qK_{\Bb}|$ and let $Y$ be the normalization of the closure of $\phi_X(X)$. Let $\phi\colon X\dashrightarrow Y$ be the induced birational map. 

We let $f': W'\rightarrow X$ and $g': W'\rightarrow Y$ be a resolution of indeterminacies of $\phi$ such that $f'$ is a log resolution of $\Bb$. We let
$$\Bb_{W'}:=\left(f'^{-1}_*\Bb,\left(1-\frac{1}{q}\right)\Exc(f')\right).$$
Since $\Bb$ is klt, $\Bb_{W'}$ is klt. We let 
$$\mathcal{S}:=\left\{E\middle| E\text{ is exceptional}/W', a(E,\Bb_{W'})\leq \left(-1+\frac{1}{q}\right)(t\epsilon_{\Ff}(E)+(1-t))\right\},$$
then $\mathcal{S}$ is a finite set by \cite[Lemma~2.13]{CLW26}. By \cite[Theorem~2.2.3]{Cas+25a}, there exists a projective birational morphism $\pi: W\rightarrow W'$ such that $\Exc(\pi)$ contains exactly all prime divisors that are contained in $\mathcal{S}$. Let
$$\Bb_W:=\left(\pi^{-1}_*\Bb_{W'},\left(1-\frac{1}{q}\right)\Exc(\pi)\right).$$
Then $\Bb_W$ is $\frac{1}{pq}$-lc. Let $f:=f'\circ\pi$ and $g:=g'\circ\pi$. By the construction of $\phi$, we have that
$$f^*(qK_{\Bb})\sim A_W+R_W$$
such that $A_W$ is base-point-free, $|A_W|$ is the movable part of $|f^*(qK_{\Bb})|=f^*|qK_{\Bb}|$, and defines $g: W\rightarrow Y$. The lemma follows.
\end{proof}

\begin{proof}[Proof of Theorem~\ref{thm: birational boundedness}] Possibly replacing $\Ii$ with $\Ii\cup\{1\}$ and replacing $\Bb$ with a $\mathbb Q$-factorial qdlt modification, we may assume that $\Bb$ is $\mathbb Q$-factorial qdlt. Write $\Mm=\sum\mu_j\Mm_j$ where each $\mu_j\in\Ii$ and each $\Mm_j$ is nef $\bb$-Cartier. By Theorem~\ref{thm: big gap general}, there exists a positive integer $l$ depending only on $d$ and $\Ii$ such that $(l-1)\gamma\geq 1$ for any $\gamma\in\Ii$ and
$$\Bb':=(X,\Ff,B(l),\Mm(l))(t(l))$$
is of general type, where 
$$B(l):=\frac{1}{l}\lfloor (l-1)B\rfloor,\Mm(l):=\frac{1}{l}\sum \lfloor (l-1)\mu_j\rfloor\Mm_j,\quad \text{and}\quad t(l):=\frac{1}{l}\lfloor (l-1)t\rfloor.$$
By Lemma~\ref{lem: volume monotonicity}, possibly replacing $\Ii$ with $\frac{1}{l}\mathbb N$ and replacing $\Bb$ with $\Bb'$, we may assume that
\begin{itemize}
    \item $\Bb$ is $\mathbb Q$-factorial klt, and
    \item $\Bb\in\frac{1}{l}\mathbb N$ and $t\in (0,1)$.
\end{itemize}
By Lemma~\ref{lem: special model for eb}, there exists a positive integer $p$ depending only on $d$ and $l$ hence depending only on $d$ and $\Ii$, and a projective birational morphism $f: W\rightarrow X$, such that
\begin{itemize}
    \item $l\mid p$, $pK_{\Bb}$ is integral, and $|pK_{\Bb}|$ defines a birational map. In particular, $\Bb\in\frac{1}{p}\mathbb N$.
    \item We have
        $$f^*(pK_{\Bb})\sim A_W+R_W$$
    where $|A_W|$ is the movable part of $f^*|pK_{\Bb}|$, $A_W$ is base-point-free and defines a birational morphism $g: W\rightarrow Y$.
\item $\Bb_W:=\left(f^{-1}_*\Bb,\left(1-\frac{1}{p}\right)\Exc(f)\right)$ is $\epsilon$-lc, where $\epsilon:=\frac{1}{lp}$.
\item $\vol(K_{\Bb})\leq (1-t)^dC<C$.
\end{itemize}
By Theorem~\ref{thm: precise birational boundedness}, $\Bb$ is log birationally bounded.
\end{proof}

\begin{proof}[Proof of Theorem~\ref{thm: birationally bounded main}] It is a special case of Theorem~\ref{thm: birational boundedness}.
\end{proof}

\begin{proof}[Proof of Theorem~\ref{thm: stable family birational boundedness}]
Let $\Ff$ be the foliation induced by $f$. 

We first show that $f$ is lc. We let $h_T: T'\rightarrow T$ be a resolution of $T$ and let $f': X'\rightarrow T'$ be the base change of $f$. Then $f': X'\rightarrow T'$ is BP stable (cf.~\cite[Definition~2.5]{ACSS21}). To see this, simply note that for any resolution $g_{T'}: T''\rightarrow T'$ and base change $f'': X''\rightarrow T''$ associated with $g: X''\rightarrow X'$ we have that $K_{X''/T''}=g^*K_{X'/T'}$. We can also simply take $T'$ to be a sufficiently high resolution and \cite[Theorem~1]{Sho23} applies.

Now let $\Ff'$ be the foliation induced by $f'$ and let $h: X'\rightarrow X$ be the associated birational morphism. Since $f$ is stable, 
$$h^*K_{X/T}=K_{X'/T'}=K_{\Ff'},$$
hence
$$K_{\Ff}=h_*K_{X'/T'}=K_{X/T}\quad \text{and}\quad h^*K_{\Ff}=K_{\Ff'}.$$
We have that $\Ff'$ satisfies Property $(*)$ and $f'$ is equidimensional, so by \cite[Proposition~3.7]{ACSS21}, $\Ff'$ is lc. By \cite[Proposition~2.15]{PX17}, $K_{\Ff'}$ is big and nef.

Since $K_{X'/T'}$ is $\mathbb Q$-Cartier and $f'^*K_{T'}$ is Cartier, it follows that $K_{X'}$ is $\mathbb Q$-Cartier. Since $\Ff'$ satisfies Property $(*)$, $X'$ is lc. By Theorem~\ref{thm: big gap general}, there exists a real number $t$ depending only on $d$ such that $K_{t}':=tK_{\Ff'}+(1-t)K_{X'}$ is big. Thus $K_t:=h_*K_t'$ is big. This implies (1). Since $h$ does not extract any divisor, under the conditions of (2),
$$0<\vol(K_t')\leq\vol(K_t)\leq C.$$
By Theorem~\ref{thm: birationally bounded main}, $(X',\Ff')$ is log birationally bounded. Thus $(X,\Ff)$ is log birationally bounded, which implies (2).
\end{proof}

\section{Boundedness of Fanos}\label{sec: application}

In this section, we prove Birkar's criterion for the boundedness of exceptional Fanos (Theorem~\ref{thm: exceptional afs main}) and Jiang's boundedness criterion (Theorem~\ref{thm: jiang afs main}) for algebraically integrable foliations. The latter gives boundedness for Fanos whose anti-canonical volume and Tian's $\alpha$-invariant are bounded away from zero.

\subsection{Boundedness of exceptional Fanos}

In this subsection, we prove Theorem~\ref{thm: exceptional afs main}.

\begin{defn}
Let $\Aa$ be a projective algebraically integrable adjoint foliated structure. We say that $\Aa$ is \emph{exceptional} if $|-K_{\Aa}|_{\mathbb R}\neq\emptyset$, and $(\Aa,D)$ is klt for any $0\leq D\sim_{\mathbb R}-K_{\Aa}$.

For any projective algebraically integrable normalized foliated structure $\Bb$, we say that $\Bb$ is \emph{exceptional} if $\log\Bb$ is exceptional.
\end{defn}

\begin{lem}\label{lem: gap exc threshold}
    Let $d$ be a positive integer and $\Ii\subset [0,+\infty)$ a DCC set of real numbers. Then $\tau=\tau_{\Rct}(d,\Ii\cup\{1\})$ as in Theorem~\ref{thm: rct general} satisfies the following. 
    
    Let $\Bb(t)=(X,\Ff,B,\Mm)(t)$ be a projective normalized foliated structure of dimension $d$ such that $X$ is of Fano type, $B\in\Ii,\Mm\in\bNef(X,\Ii)$, $t\in [1-\tau,1)$, and $\Bb(t)$ is $\mathbb R$-complementary but not exceptional. Then $\Bb(s):=(X,\Ff,B,\Mm)(s)$ is $\mathbb R$-complementary but not exceptional for any $s\in [0,1)$.
\end{lem}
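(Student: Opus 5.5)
The plan is to treat the two conclusions separately. The main tool is Theorem~\ref{thm: rct general} applied with the DCC set $\Ii\cup\{1\}$, together with Proposition~\ref{prop: r complementary preserved under inequality} for the monotone direction in the parameter.

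\emph{$\mathbb R$-complementarity.} Write $\Bb(s)=(X,\Ff,B,\Mm)(s)$.
\begin{itemize}
\item For $s\le t$ we have $\Bb(t)\geq\Bb(s)$, so Proposition~\ref{prop: r complementary preserved under inequality} (which needs $X$ of Fano type and $t<1$) gives that $\Bb(s)$ is $\mathbb R$-complementary.
\item For $s\in(t,1)$, observe that $B\ge(1-\tau)B$, $\Mm-(1-\tau)\Mm$ is nef, and $t\ge 1-\tau\ge(1-\tau)s$. Hence
$$\Bb(s)\ge\Bb(t)\ge\Bb(s)^{1-\tau}.$$
The issue is that $s$ itself need not lie in $\Ii\cup\{1\}$.
\item To handle this, I argue by contradiction as in the proof of Theorem~\ref{thm: r-complementary main}. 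Let $s_0:=\sup\{s\in[t,1)\mid \Bb(s)\text{ is }\mathbb R\text{-complementary}\}$ and suppose $s_0<1$.
\item I then need that the $\mathbb R$-complementary threshold in $s$, for fixed $(X,\Ff,B,\Mm)$ with $B,\Mm$ having coefficients in $\Ii$, lies in an ACC set that accumulates only at $1$ from above $1-\tau$. The gap statement encoded in the choice $\tau=\tau_{\Rct}(d,\Ii\cup\{1\})$ should force that threshold to be $\ge1$ once it is $\ge1-\tau$.
\item Concretely, I run the argument of Theorem~\ref{thm: rct general} directly, with the target structure $\Bb(s')$ for $s'$ slightly above $s_0$. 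Its boundary and nef part lie in $\Ii$. Only the parameter is not controlled, but the proof of Theorem~\ref{thm: rct general} uses DCC of the parameter only through Theorem~\ref{thm: acc lct general version} and Proposition~\ref{prop: global acc in bb}, where a single limit value suffices.
\end{itemize}

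\emph{Non-exceptionality.} Since $\Bb(t)$ is $\mathbb R$-complementary and not exceptional, there is $0\le G\sim_{\mathbb R}-K_{\Bb(t)}$ with $(\Bb(t),G)$ not klt. Interpolating $G$ with an honest complement, we may take $(\Bb(t),G)$ lc, not klt. Let $E$ be an lc place.
\begin{itemize}
\item If $E$ is exceptional over $X$, extract it by \cite[Theorem~2.2.3]{Cas+25a} via $h\colon Y\to X$. Otherwise set $Y=X$ and let $E$ be the corresponding component.
\item Consider $\Bb_Y(s):=(h^{-1}_*\Bb(s),E)$. Its boundary coefficients lie in $\Ii\cup\{1\}$, which is exactly why $\tau$ is taken for $\Ii\cup\{1\}$.
\item At $s=t$ it has the $\mathbb R$-complement induced by $h^*(K_{\Bb(t)}+G)$.
\item By Lemma~\ref{lem: fano type after extract divisor}, applied to a small perturbation of $(\Bb(t),G)$ that is klt with $a(E,\cdot)<0$, the variety $Y$ is of Fano type.
\item Applying the first part to $\Bb_Y(\cdot)$ gives that $\Bb_Y(s)$ is $\mathbb R$-complementary for all $s\in[0,1)$. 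Pushing such a complement forward to $X$ yields an $\mathbb R$-complement of $\Bb(s)$ having $E$ as an lc place. Therefore $\Bb(s)$ is $\mathbb R$-complementary and not exceptional.
\end{itemize}

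\emph{Main obstacle.} The hard step is the range $s>t$ in the first part: Theorem~\ref{thm: rct general} requires $\Bb\in\Ii\cup\{1\}$, including the parameter. I expect to need either to rerun its proof with a sequence $s_i\nearrow s_0$ that is DCC, or to replace $\Bb(s)$ by $\Bb(s')$ for a nearby parameter in a fixed finite set and then pass to all of $[0,1)$ using convexity of $\mathbb R$-complementarity in $s$.
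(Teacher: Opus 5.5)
Your overall route is the paper's. You take an lc but non-klt $\mathbb R$-complement $(\Bb(t),G)$ and extract an lc place $E$. You give $E$ coefficient $1$, so that all coefficients lie in $\Ii\cup\{1\}$. You then show the new family $\Bb_Y(s)$ is $\mathbb R$-complementary for every $s\in[0,1)$, and push complements forward to get non-klt complements of $\Bb(s)$.

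There is one small slip. If $E$ is already a divisor on $X$, you should not add $E$ to $B$; instead raise its coefficient to $1$, i.e.\ use $B+(\mult_E G)E$. This works because $\mult_E(B+G)=1$ for an lc place lying on $X$.

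The paper does the key step in one line, citing Theorem~\ref{thm: rct general} for all $s\in[0,1)$. You are correct that this citation literally covers only target parameters $s\in\Ii$ with $s\geq t$; the range $s\leq t$ is handled by Proposition~\ref{prop: r complementary preserved under inequality}. This suffices for the later use in Theorem~\ref{thm: exceptional bdd}, where the target parameter is $t\in\Ii$, but not for the lemma as stated.

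The gap in your proposal is that none of your suggested repairs of this step works.
\begin{itemize}
\item For one fixed structure there is no sequence on which to rerun the contradiction argument of Theorem~\ref{thm: rct general}.
\item Taking $s'$ slightly above $s_0$ produces parameters decreasing to $s_0$, which is exactly the non-DCC direction. Nothing in Theorem~\ref{thm: acc lct general version} or Proposition~\ref{prop: global acc in bb} says that ``a single limit value'' of the parameter suffices.
\item The fixed witness $\tau_{\Rct}(d,\Ii\cup\{1\})$ carries no information about parameters outside $\Ii\cup\{1\}$, so it cannot by itself force a gap below $1$.
\item A finite set of auxiliary parameters only gets you up to its maximum, not to every $s<1$.
\end{itemize}

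What works is a DCC set of auxiliary parameters accumulating at $1$. Let $\Ii'':=\Ii\cup\{1\}\cup\{1-\frac{1}{n}\mid n\geq 1\}$ and $\tau:=\tau_{\Rct}(d,\Ii'')$. Given $s\in(t,1)$, choose $n$ with $1-\frac{1}{n}\geq s$. Then $\Bb_Y(1-\frac{1}{n})\geq\Bb_Y(t)\geq\Bb_Y(1-\frac{1}{n})^{1-\tau}$, because $t\geq 1-\tau$. Moreover $\Bb_Y(1-\frac{1}{n})\in\Ii''$ has parameter $<1$, so Theorem~\ref{thm: rct general} makes it $\mathbb R$-complementary. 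Proposition~\ref{prop: r complementary preserved under inequality} then passes this down to $\Bb_Y(s)$.

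Equivalently, you can argue by contradiction over a sequence of different structures, choosing the bad parameters $s_i\nearrow 1$ strictly increasing, as in the proof of Theorem~\ref{thm: r-complementary main}. Either way the resulting $\tau$ depends only on $d$ and $\Ii$, which is all that is needed. It is not, however, literally $\tau_{\Rct}(d,\Ii\cup\{1\})$.
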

\begin{proof}
We will show that we may take $\tau=\tau_{\Rct}(d,\Ii\cup\{1\})$ where $\tau_{\Rct}(d,\Ii\cup\{1\})$ is the number as in Theorem~\ref{thm: rct general}.

We may assume that $K_{\Bb(s)}$ is $\mathbb R$-Cartier for any $s\in [0,1]$; otherwise, $K_{\Bb(s)}$ is not $\mathbb R$-Cartier for any $s\neq t$ and we are done. Possibly replacing $X$ by a small $\mathbb Q$-factorialization, we may assume that $X$ is $\mathbb Q$-factorial klt. Since $\Bb(t)$ is $\mathbb R$-complementary but not exceptional, there exists an $\mathbb R$-complement $\Cc(t):=(\Bb(t),D)$ of $\Bb(t)$ that is lc but not klt. We let $E$ be an lc place of $\Cc(t)$. 

If $E$ is on $X$, then we let 
$$\Bb'(s):=(X,\Ff,B+(\mult_ED)E,\Mm)(s)$$ 
for any $s\in [0,1]$ and let $g: X\rightarrow X$ be the identity morphism. If $E$ is not on $X$, then since $t<1$, by \cite[Theorem~2.2.3]{Cas+25a}, there exists a projective birational morphism $g: Y\rightarrow X$ which extracts exactly $E$, and we let $$\Bb'(s):=(g^{-1}_*\Bb(s),E)$$
for any $s\in [0,1]$. Moreover, $Y$ is of Fano type.

In either case, $\Bb'(t)$ is $\mathbb R$-complementary. By Theorem~\ref{thm: rct general}, $\Bb'(s)$ is $\mathbb R$-complementary for any $s\in [0,1)$. Let $(\Bb'(s),D_s)$ be an $\mathbb R$-complement of $\Bb'(s)$, then $(g_*\Bb'(s),g_*D_s)$ is an $\mathbb R$-complement of $g_*\Bb'(s)$ that is not klt, and hence an $\mathbb R$-complement of $\Bb(s)$ that is not klt. Therefore, $\Bb(s)$ is $\mathbb R$-complementary but not exceptional for any $s\in [0,1)$.
\end{proof}

\begin{thm}\label{thm: exceptional bdd}
    Let $d$ be a positive integer and $\Ii\subset [0,+\infty)$ a DCC set of real numbers. Assume that $\Bb=(X,\Ff,B,\Mm)(t)$ is an exceptional algebraically integrable adjoint foliated structure of dimension $d$ such that $\Bb\in\Ii$, $t>0$, and $X$ is of Fano type. Then $\Bb$ belongs to a bounded family.
\end{thm}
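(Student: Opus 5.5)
The plan follows the sketch given in Section~\ref{sec: sketch of proof}: first use the ACC for $\mathbb R$-complementary thresholds to push $t$ away from $1$, then use it again to obtain a uniform $\epsilon$-lc bound, and finally apply the BAB theorem \cite[Theorem~B]{Cas+25a}. We may assume $0,1\in\Ii$. Exceptionality implies that $\Bb$ is $\mathbb R$-complementary and that every $\mathbb R$-complement of $\Bb$ is klt. After a small $\mathbb Q$-factorialization (harmless, since $X$ is of Fano type) we may assume that $X$ is $\mathbb Q$-factorial klt.

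\textbf{Step 1: $t$ is bounded away from $1$.} Let $\tau:=\tau_{\Rct}(d,\Ii\cup\{1\})$ be as in Lemma~\ref{lem: gap exc threshold}. I claim that $t<1-\tau$ or $t=1$ is impossible in the bad range; more precisely, if $t\in[1-\tau,1]$, I would derive a contradiction. Set $\Bb(s):=(X,\Ff,B,\Mm)(s)$. The upper semicontinuity of exceptionality along the segment (a limit of lc-but-not-klt complements) is the expected route. If $t<1$, I would argue by contraposition, using that $\Bb(1)$ fails to be exceptional: by \cite[Theorem~5.1]{Cas+25a}, $K_{\Bb(1)}$ is pseudo-effective, so $-K_{\Bb(1)}$ is not big. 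Hence for $s$ close to $1$, $\Bb(s)$ is either not $\mathbb R$-complementary or has non-klt complements. Lemma~\ref{lem: gap exc threshold}, combined with Theorem~\ref{thm: rct general}, then propagates this non-exceptionality down to $s=t$, which is a contradiction. Making this propagation precise is the delicate part of the step. The case $t=1$ is excluded in the same way, since $-K_{\Bb}$ ample forces $K_{\Bb(1)}=K_{\Bb}$ to be anti-ample, contradicting pseudo-effectivity unless we are in the degenerate situation. In this way we reduce to $t\in\Ii\cap(0,1-\tau)$, and because $\Ii$ is DCC we also get $t\geq t_0>0$.

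\textbf{Step 2: uniform $\epsilon$-lc.} Next I would show that $\Bb$ is $\epsilon$-lc for some $\epsilon=\epsilon(d,\Ii)$. Arguing by contradiction, take a sequence $\Bb_i$ with $\tmld(\Bb_i)\to0$. Extract a divisor $E_i$ computing the tmld via \cite[Theorem~2.2.3]{Cas+25a}, obtaining $g_i:Y_i\to X_i$, and write $g_i^*\Bb_i=(g_{i,*}^{-1}\Bb_i,e_iE_i)$ with $e_i\to1$. The structure $(g_{i,*}^{-1}\Bb_i,e_iE_i)$ is $\mathbb R$-complementary, and its coefficients lie in the DCC set $\Ii\cup\{e_i\}$. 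Theorem~\ref{thm: rct general}, applied with $\Bb'=(g_{i,*}^{-1}\Bb_i,e_iE_i)$ and the upper structure $(g_{i,*}^{-1}\Bb_i,E_i)$ (whose parameter is $t_i<1$), shows that for $i\gg0$ the structure $(g_{i,*}^{-1}\Bb_i,E_i)$ is $\mathbb R$-complementary. Pushing forward gives a non-klt $\mathbb R$-complement of $\Bb_i$, contradicting exceptionality. This is the main obstacle: one must check that $Y_i$ is of Fano type, which should follow from Lemma~\ref{lem: fano type after extract divisor}, and that the order hypothesis $\Bb\geq\Bb'\geq\Bb^{1-\tau}$ holds, which requires controlling $B$ as well as $E_i$. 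I would first arrange this by scaling as in Lemma~\ref{lem: construct smaller bb}.

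\textbf{Step 3: apply BAB.} With $t\in[t_0,1-\tau]$ and $\Bb$ $\epsilon$-lc, $-K_{\Bb}$ is not yet ample, but $\Bb$ is an $\mathbb R$-complementary $\epsilon$-lc structure on a Fano type variety. I would run a $-K_{\Bb}$-MMP (or perturb with the klt pair making $X$ of Fano type) to obtain an $\epsilon/2$-lc structure with $-K$ big and nef on a model $X'$, whose ample model is bounded by \cite[Theorem~B]{Cas+25a}. Boundedness then transfers back to $X$ via \cite[Theorem~2.3]{Bir24}, because $X\dashrightarrow X'$ contracts only divisors of bounded log discrepancy. The data $(X,\Ff)$ are bounded as in Lemma~\ref{lem: drii0 is bounded}, where $t\geq t_0$ is used to bound $K_\Ff$.
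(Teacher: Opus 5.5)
Your Step~1 is the genuine gap, and it cannot be repaired, because its conclusion is false: exceptional structures with $t$ arbitrarily close to $1$ do exist. Take $X=\mathbb P^1$ and $\Ff=0$. This foliation is algebraically integrable, has $K_{\Ff}=0$, and every point is invariant. Let $B=\frac12p_1+\frac23p_2+\frac45p_3$. Then $\log\Bb(t)=(X,\Ff,(1-t)B,t)$ and $K_{\Bb(t)}=(1-t)(K_X+B)$. Moreover $(\log\Bb(t),D)$ is klt if and only if every coefficient of $B+\frac{1}{1-t}D$ is $<1$. For every $0\le D\sim_{\mathbb R}-K_{\Bb(t)}$ we have $\deg\frac{1}{1-t}D=2-\deg B=\frac1{30}$, so the largest coefficient is at most $\frac45+\frac1{30}<1$. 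Hence $\Bb(t)$ is exceptional for every $t\in(0,1)$, for instance for $t=1-\frac1n$, and these values lie in a DCC set. Using any exceptional Fano pair with $\Ff=0$ gives such examples in every dimension.

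The mechanism you propose cannot work either. Non-exceptionality of $\Bb(1)$ is automatic when $\Ff\neq T_X$, since $\Bb(1)$ is not even klt. But Lemma~\ref{lem: gap exc threshold} needs non-exceptionality at some parameter in $[1-\tau,1)$, and nothing carries it down from $s=1$. Also, \cite[Theorem~5.1]{Cas+25a} only propagates pseudo-effectivity of $K_{\Bb(s)}$ upward in $s$, so it says nothing when $-K_{\Bb}$ is big. Your treatment of $t=1$ is off too. Ampleness of $-K_{\Bb}$ is not assumed; the right observation is that exceptional implies klt, klt with $t=1$ forces $\Ff=T_X$, and that case is \cite[Theorem~1.11]{Bir19}.

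Because Step~1 fails, so does Step~2 as stated. If $\Ff\neq T_X$, an invariant prime divisor $C\subset X$ gives $\tmld(\Bb)\le a(C,\Bb)+(1-t)\le 1-t$. So $\Bb$ is not uniformly $\epsilon$-lc as $t\to 1$; in the example, $\tmld(\Bb(t))=\frac{1-t}{5}$. Likewise, for invariant $E_i$ the extracted coefficient is $e_i=1-\tmld/(1-t_i)$, which need not tend to $1$.

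The missing idea is to \emph{lower} the parameter rather than bound it. Set $\tau=\tau_{\Rct}(d,\Ii\cup\{1\})$ and $t'=\min\{t,1-\tau\}$. Then $\Bb(t')$ is $\mathbb R$-complementary by Proposition~\ref{prop: r complementary preserved under inequality}. It is still exceptional: otherwise Lemma~\ref{lem: gap exc threshold} would make $\Bb(s)$ non-exceptional for all $s\in[0,1)$, including $s=t$. Since $(X,\Ff,B,\Mm)$ is unchanged and $t'\ge\min\{\min(\Ii\setminus\{0\}),1-\tau\}>0$ by the DCC, it suffices to bound $\Bb(t')$.

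Your extraction argument then works, but it must be run on complements $\Cc$ of $\Bb(t')$, not on $\Bb$. If some $\Cc$ is not $\delta$-lc for a suitable $\delta=\delta(\tau)>0$, extract a divisor $E$ computing $\tmld(\Cc)$ and apply Theorem~\ref{thm: rct general} to $(g^{-1}_*\Bb(t'),E)$; this produces a non-klt complement, contradicting exceptionality. Otherwise $\Cc$ is uniformly lc with $K_{\Cc}\sim_{\mathbb R}0$ on a Fano type variety, and \cite[Theorem~2.4.1]{Cas+25a} bounds $(X,\Ff)$. Then $B$ and $\Mm$ are bounded by intersecting with a very ample divisor, using that $-K_{\Cc(0)}$ is pseudo-effective by \cite[Theorem~5.1]{Cas+25a}.

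This route also avoids the problems in your Step~3. First, $-K_{\Bb}$ need not be big; it may be $\equiv0$. Second, a $(-K_{\Bb})$-MMP is $K_{\Bb}$-positive, so it does not preserve $\epsilon$-lc of $\Bb$. Only a uniform bound on all complements survives such an MMP.
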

\begin{proof}
Since $\Bb$ is exceptional, $\Bb$ is klt, so we may assume that $t<1$; otherwise, $\Ff=T_X$ and we are done by \cite[Theorem~1.11]{Bir19}.

Let $h\colon X'\rightarrow X$ be a small $\mathbb Q$-factorialization of $X$ and write 
$$\Bb':=h^*\Bb:=(X',\Ff',B',\Mm)(t).$$
Then $\Bb'$ is exceptional. Let $\Bb'(s):=(X',\Ff',B',\Mm)(s)$ for any $s\in [0,1]$. Let $\tau:=\tau_{\Rct}(d,\Ii\cup\{1\})$ as in Theorem~\ref{thm: rct general}. We let $t':=\min\{t,1-\tau\}$. By Lemma~\ref{lem: gap exc threshold}, $\Bb'(t')$ is exceptional.

Pick any $0\leq D'\sim_{\mathbb R}-K_{\Bb'(t')}$ and let $\Cc':=(\Bb'(t'),D')$. Then $\Cc'$ is klt and $K_{\Cc'}\sim_{\mathbb R}0$. Let $\Cc:=h_*\Cc'$, then $K_{\Cc}\sim_{\mathbb R}0$ and $\Cc'=h^*\Cc$. If $\Cc$ is $\tau$-lc, then by \cite[Theorem~2.4.1]{Cas+25a}, $(X,\Ff)$ belongs to a bounded family. In particular, $X$ belongs to a bounded family, so there exists a positive integer $r$ depending only on $d$ and $\Ii$, and a very ample divisor $H$ on $X$, such that $H^d\leq r$ and $-r\leq K_X\cdot H^{d-1}$. Let $\Cc'(s):=(\Bb'(s),D')$ for any $s\in [0,1]$. Since $K_{\Cc'}\sim_{\mathbb R}0$ and $\Cc'$ is klt, by \cite[Theorem~5.1]{Cas+25a}, $K_{\Cc'(1)}$ is pseudo-effective, hence $-K_{\Cc'(0)}$ is pseudo-effective. Thus
$$-K_{\Cc'(0)}\cdot(h^*H)^{d-1}\geq 0.$$
Therefore,
$$-r\leq K_X\cdot H^{d-1}\leq (K_X+B+\Mm_X)\cdot H^{d-1}\leq 0.$$
Since $\Bb\in\Ii$, possibly replacing $r$ by a multiple depending only on $\Ii$, we have that
$$-r\leq K_X\cdot H^{d-1}\leq (K_X+\Supp B+\Mm_X)\cdot H^{d-1}\leq 0.$$
Thus $\Bb$ belongs to a bounded family.

Therefore, we may assume that $\Cc$ is not $\tau$-lc, so $\Cc'$ is not $\tau$-lc. Let $E$ be a prime divisor over $X'$ such that $a(E,\Cc')=\tmld(\Cc')$. Since $1-t'\geq\tau$, $a(E,\Cc')<0$. If $E$ is on $X'$, then we let $g: X'\rightarrow X'$ be the identity morphism and let
$$\Bb'':=(X',\Ff',B'+(\mult_ED')E,\Mm)(t'),$$
and if $E$ is not on $X$, then since $t'<1$, by \cite[Theorem~2.2.3]{Cas+25a}, there exists a projective birational morphism $g: Y\rightarrow X'$ which extracts exactly $E$, and we let $$\Bb'':=(g^{-1}_*\Bb'(t'),E).$$
Moreover, $Y$ is of Fano type. 

In either case, by Theorem~\ref{thm: rct general}, $\Bb''$ is $\mathbb R$-complementary. Let $(\Bb'',L)$ be an $\mathbb R$-complement of $\Bb''$, then $(g_*\Bb'',g_*L)$ is an $\mathbb R$-complement of $g_*\Bb''$ that is not klt, and hence an $\mathbb R$-complement of $\Bb'(t')$ that is not klt. Therefore, $\Bb'(t')$ is not exceptional, a contradiction.
\end{proof}

\begin{proof}[Proof of Theorem~\ref{thm: exceptional afs main}]
 The condition $\alpha(\Aa)>1$ implies that $\Aa$ is exceptional. Thus Theorem~\ref{thm: exceptional afs main} is a special case of Theorem~\ref{thm: exceptional bdd}.
\end{proof}

\begin{rem}
For any exceptional Fano algebraically integrable adjoint foliated structure $\Aa$, we expect that $\alpha(\Aa)>1$ if and only if $\Aa$ is exceptional. This is not a trivial result, as it can be viewed as a foliated analogue of a question of Tian \cite[Question~1]{Tia90}, and even the proof of the classical case is non-trivial \cite[Theorem~1.7]{Bir21a}.
\end{rem}

\subsection{Jiang's criterion}

In this subsection, we prove Jiang's boundedness criterion for algebraically integrable foliations (Theorem~\ref{thm: jiang afs main}).

We state a more general version of the theorem, Theorem~\ref{thm: jiang principle general}. We remark that in the statement of Theorem~\ref{thm: jiang principle general}, there is no moduli part $\Mm$, although we expect the analogue with the moduli part $\Mm$ to also hold. This is because the references we need, cf.~\cite{LLX20,XZ21,XZ24}, only care about pairs. The corresponding results we cite are also expected to hold for generalized pairs.

\begin{lem}\label{lem: alpha bound}
  Let $(X,B)$ be a projective klt pair of dimension $d$, $D$ a semi-ample $\mathbb R$-Cartier $\mathbb R$-divisor on $X$, and $L$ a big $\mathbb R$-divisor on $X$. Then for any $\epsilon>0$, we may pick $0\leq D'\sim_{\mathbb R}D$ such that
  $$\alpha(X,B+D';L)\geq \alpha(X,B;L)-\epsilon.$$
\end{lem}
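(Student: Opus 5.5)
We take $D'$ to be a general member of a suitable multiple of a base-point-free system. Since $D$ is semi-ample, we can write $D\sim_{\mathbb R}\sum_{i=1}^k a_iD_i$ with $a_i>0$ and each $D_i$ a base-point-free Cartier divisor; this uses that semi-ample $\mathbb R$-divisors are positive combinations of semi-ample Cartier divisors. For each $i$ pick an integer $N\gg 0$ and general members $G_{i,1},\dots,G_{i,N}\in|D_i|$, and set
\[
D':=\sum_i \frac{a_i}{N}\sum_{l=1}^N G_{i,l}\sim_{\mathbb R} D.
\]
By Bertini, $(X,B+D')$ is klt, and each $G_{i,l}$ appears in $D'$ with coefficient $a_i/N$, which is as small as we like.

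Set $\alpha:=\alpha(X,B;L)$; we may assume $\alpha-\epsilon>0$. Fix $s<\alpha-\epsilon$ and any $0\le\Delta\sim_{\mathbb R}L$. We must show that $(X,B+D'+s\Delta)$ is lc. The difficulty is that $\Delta$ ranges over an infinite family, so the general choice of $D'$ cannot be made adapted to a particular $\Delta$.

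To handle this we use convexity. The pair $(X,B+\tfrac{s}{1-\delta}\Delta)$ is lc whenever $\tfrac{s}{1-\delta}<\alpha$, and we choose $\delta>0$ small enough for this to hold. Write
\[
B+D'+s\Delta=(1-\delta)\Bigl(B+\tfrac{s}{1-\delta}\Delta\Bigr)+\delta\Bigl(B+\tfrac{1}{\delta}D'\Bigr).
\]
It then suffices that $(X,B+\tfrac1\delta D')$ is lc, because log canonicity is convex in the boundary when $K_X$ and the boundaries are $\mathbb R$-Cartier.

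Now $\tfrac1\delta D'=\sum_i\frac{a_i}{N\delta}\sum_l G_{i,l}$. Once $N\ge \max_i a_i/\delta$, each coefficient is at most $1$. The $G_{i,l}$ are general members of base-point-free systems, so $B+\sum G_{i,l}$ forms an snc-type configuration over the klt locus of $(X,B)$. By the usual Bertini argument on a log resolution, $(X,B+\sum_{i,l}c_{i,l}G_{i,l})$ is lc whenever all $c_{i,l}\le1$; in fact it is klt away from the intersections, and we need at most $d$ general divisors through any point. This argument is independent of $\Delta$, so $(X,B+D'+s\Delta)$ is lc for every such $\Delta$. Letting $s\to\alpha-\epsilon$ gives $\alpha(X,B+D';L)\ge\alpha-\epsilon$.

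The main point to verify is the generic log canonicity of $(X,B+\sum c_{i,l}G_{i,l})$ when up to $kN>d$ general members are present. On a log resolution $\mu$ of $(X,B)$, the pullbacks $\mu^*G_{i,l}$ are general in base-point-free systems, so they are smooth, meet exceptional strata transversally, and at most $d$ of them pass through any point. Discrepancies of exceptional divisors are therefore unchanged, and the pair is lc with coefficients $\le1$. This is standard, so we expect no real obstacle beyond bookkeeping with the choice $\delta\sim\epsilon/\alpha$.
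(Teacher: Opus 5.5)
Your proof is correct, but the key step takes a different route from the paper's.

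The choice of $D'$ is essentially the same in both: an average $\sum_{i,j}\frac{r_i}{l}D_{i,j}$ of many general members $D_{i,j}\in|D_i|$, with $l\gg0$. The difference is in how the small coefficients are used.

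The paper argues pointwise.
\begin{itemize}
\item At each closed point $x$, at most $d$ of the $D_{i,j}$ pass through $x$.
\item Their sum $D_x$ has class dominated by $\frac{d}{l}\sum_i D_i$. Since $L$ is big, one can find $D_x\le H_x\sim_{\mathbb R}\epsilon L$.
\item Then $\alpha(X\ni x,B+D';L)=\alpha(X\ni x,B+D_x;L)\ge\alpha(X,B+H_x;L)\ge\alpha(X,B;L)-\epsilon$, and one takes the infimum over $x$.
\end{itemize}
In other words, the paper absorbs the local contribution of $D'$ into the $L$-direction. It uses only bigness of $L$ and the dimension count for general members, and never analyses the singularities of $(X,B+D')$.

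You instead decouple $D'$ from $\Delta$ using affineness of discrepancies. You write $B+D'+s\Delta$ as the convex combination of $B+\frac{s}{1-\delta}\Delta$ and $B+\frac1\delta D'$. This requires the Bertini-type fact that $(X,B+\frac1\delta D')$ is lc once its coefficients are at most $1$. That fact holds: on a log resolution of $(X,B)$, the pullbacks of general members of base-point-free systems are smooth and contain no exceptional divisor. They also form an snc divisor together with the exceptional locus and the strict transform of $B$.

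What each route buys:
\begin{itemize}
\item Your route never uses that $L$ is big. It gives the multiplicative bound $\alpha(X,B+D';L)\ge(1-\delta)\alpha(X,B;L)$ for any prescribed $\delta$, with $\delta$ controlling how large $N$ must be.
\item The paper's route replaces the log-resolution/Bertini input with bigness of $L$. It stays entirely at the level of linear systems and local $\alpha$-invariants.
\end{itemize}
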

\begin{proof}
We may assume that $\epsilon<\alpha(X,B;L)$. Write $D=\sum_{i=1}^mr_iD_i$ where each $r_i\in (0,1)$ and each $D_i$ is base-point-free. Pick $l\gg 0$ and pick general $D_{i,j}\in |D_i|$ for each $1\leq j\leq l$ and take
$$D':=\sum_{i=1}^m\sum_{j=1}^l\frac{r_i}{l}D_{i,j}.$$
Then for any closed point $x\in X$, there exists a finite set $\Lambda=\Lambda(x)$ such that $\#\Lambda\leq d$ and $x\in\Supp D_{i,j}$ if and only if $(i,j)\in\Lambda$. Since $l\gg 0$ and $L$ is big, we have that
$$\epsilon L-\frac{d}{l}\sum_{i=1}^mD_i$$
is big, hence
$$\epsilon L-\sum_{(i,j)\in\Lambda(x)}\frac{r_i}{l}D_{i,j}$$
is big for any closed point $x\in X$. Thus we may pick
$$D_x:=\sum_{(i,j)\in\Lambda(x)}\frac{r_i}{l}D_{i,j}\leq H_x\sim_{\mathbb R}\epsilon L,$$
and we have
\begin{align*}
  \alpha(X\ni x,B+D';L)&=\alpha(X\ni x,B+D_x;L)\geq\alpha(X\ni x,B+H_x;L)\\
  &\geq \alpha(X,B+H_x;L)\geq \alpha(X,B;L)-\epsilon.
\end{align*}
Thus
$$\alpha(X,B+D';L)=\inf_{x\in X}\alpha(X\ni x,B+D';L)\geq\alpha(X,B;L)-\epsilon$$
and the lemma follows.
\end{proof}

\begin{thm}\label{thm: jiang principle general}
Let $d$ be a positive integer, $\Ii\subset [0,+\infty)$ a DCC set, and $\epsilon$ a positive real number. Then the set of projective varieties $X$ such that
\begin{enumerate}
    \item $\Bb:=(X,\Ff,B)(t)$ is a projective algebraically integrable normalized foliated structure,
    \item $\dim X=d$,
    \item $-K_{\Bb}$ is nef and $\alpha(\Bb)^d\cdot\vol(-K_{\Bb})\geq\epsilon$, and
    \item $t\leq 1-\epsilon$
\end{enumerate}
forms a bounded family. In addition, the set of $(X,\Ff)$ such that (1-4) holds and 
\begin{enumerate}
    \item[(5)] $t\geq\epsilon$
\end{enumerate}
forms a bounded family.
\end{thm}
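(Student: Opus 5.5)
The plan follows the sketch in Section~\ref{sec: sketch of proof}. First reduce to the case where $\Bb$ is klt and $X$ is $\mathbb Q$-factorial. Since $\alpha(\Bb)>0$ forces $\Bb$ to be klt, we may pass to a small $\mathbb Q$-factorialization; this is harmless once we know $X$ is of Fano type. Fano type follows from \cite[Theorem~9.2]{Cas+25a}, because $-K_{\Bb}$ is nef and big: $\vol(-K_{\Bb})\ge\epsilon\,\alpha(\Bb)^{-d}>0$, and $\alpha(\Bb)\le 1$ after an obvious adjustment (or $\alpha$ is bounded above by a constant depending only on $d$). With this in place there are two regimes, according to the size of $\tmld(\Bb)$.

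\emph{First regime: $\tmld(\Bb)\ge\delta$ for a constant $\delta=\delta(d,\epsilon)$ fixed below.} By \cite[Proposition~9.1]{Cas+25a}, $X$ is $\delta$-lc. Since $X$ is of Fano type, BAB \cite[Theorem~1.1]{Bir21a} bounds $X$; alternatively apply \cite[Theorem~B]{Cas+25a} directly to $\Bb$. When also $t\ge\epsilon$, \cite[Theorem~B]{Cas+25a} (or \cite[Theorem~2.4.1]{Cas+25a}) bounds the pair $(X,\Ff)$. In that case we control $K_{\Ff}$ on the bounded $X$ via $t\ge\epsilon$: both $-K_{\Bb}$ and $K_{\Bb(1)}$ are pseudo-effective, and $K_{\Bb}-(1-t)K_{\Bb(0)}=tK_{\Bb(1)}$, so $K_{\Ff}$ is sandwiched between two divisors that are bounded against a bounded very ample divisor. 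We then conclude by \cite[Proposition~3.36]{Cas+25a}, as in Lemma~\ref{lem: drii0 is bounded}.

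\emph{Second regime: $\tmld(\Bb)=:e$ is small.} We argue by contradiction, taking a sequence with $e_i\to 0$. Since $t\le 1-\epsilon$, there are only finitely many divisors with negative discrepancy \cite[Lemma~2.13]{CLW26}, so $\tmld$ is computed by some divisor $E$. If $E$ is exceptional, we extract it by $g\colon Y\to X$ using \cite[Theorem~2.2.3]{Cas+25a}. Its coefficient in $g^*\Bb$ is $1-e'$, where $e'\le e/(1-t)\le e/\epsilon\to 0$.

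The key estimate is as follows. Let $x$ be the generic point of $\Center_XE$, and put $B_0:=B^{\ninv}+(1-t)B^{\inv}$ together with the variety part. Using $\alpha(\Bb)^d\vol(-K_{\Bb})\ge\epsilon$ as in \cite{LLX20}, bound the local volume $\nvol(x,X)$, or rather a suitable $\nvol$ of $(X,\Delta)$ for an auxiliary klt boundary $\Delta$, from below by a positive constant $c(d,\epsilon)$. The mechanism is the usual one: a divisor $0\le D\sim_{\mathbb R}-K_{\Bb}$ with large multiplicity at $x$ exists whenever $\nvol(x)$ is small relative to $\vol(-K_{\Bb})$, and then $\alpha(\Bb)$ is small.

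To produce an honest klt pair on $X$ we use Lemma~\ref{lem: alpha bound}. By \cite[Theorem~5.1]{Cas+25a}, $-K_X-B$ dominates $-K_{\Bb}$ up to the pseudo-effective term $t K_{\Bb(1)}$, and we add a general member of a semi-ample class so that the $\alpha$-invariant drops by at most $\epsilon/2$. The local volume bound combined with \cite{XZ21,XZ24} (boundedness of the minimizer/degeneration) then gives the following: on a birational model of $(Y,(1-e')E)$ on which $E$ survives, the pair is $\epsilon'$-lc with $\epsilon'=\epsilon'(d,\epsilon)$. This forces $e'\ge\epsilon'$, contradicting $e'\to0$. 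We therefore fix $\delta$ smaller than this gap.

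\emph{Main obstacle.} The hard step is transferring $\alpha^d\vol$ of the foliated structure into a lower bound on the local volume of a classical klt pair at the center of $E$. This requires comparing the foliated discrepancy of $E$ with its discrepancy for $(X,B_0)$ via \cite[Proposition~9.1]{Cas+25a}, uniformly in $t\le1-\epsilon$. We would try first to run this through $\Bb(0)$ with the boundary $(1-t)^{-1}$-rescaled, so that $t\le 1-\epsilon$ costs only constants.
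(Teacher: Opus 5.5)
There is a genuine gap, and it is exactly the step you call the main obstacle. You never construct a classical klt pair that has both of the following properties: its boundary contains $E$ with coefficient $1-e'$, and its $\alpha$-invariant and volume are controlled by $\alpha(\Bb)$ and $\vol(-K_{\Bb})$. The tools you propose for this step do not work.

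First, \cite[Theorem~5.1]{Cas+25a} only propagates pseudo-effectivity upward in the parameter. Here $K_{\Bb(t)}$ is anti-big, so it tells you nothing about $K_{\Bb(1)}$. In this Fano setting $K_{\Bb(1)}$ is typically not pseudo-effective: for the pencil of lines on $\mathbb P^2$ one has $K_{\Ff}=\mathcal O(-1)$. So neither ``$-K_X-B$ dominates $-K_{\Bb}$'' nor your sandwich bound for $K_{\Ff}$ is available. Even a big $-(K_X+B)$ would have an $\alpha$-invariant unrelated to $\alpha(\Bb)$.

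Second, \cite[Proposition~9.1]{Cas+25a} compares total minimal log discrepancies, not $\alpha$-invariants.

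Third, your pair is in the wrong place. With $B_0=B^{\ninv}+(1-t)B^{\inv}$, an $\Ff$-invariant $E$ gets coefficient $(1-t)(1-e')\le 1-\epsilon$, so no lc-type bound on such a pair can detect $e'$. The normalized pair $(X,B)$ on $X$ fails too: crepant pullback does not commute with changing the parameter, so the log discrepancy of $E$ for $(X,B)$ need not be small when $e'$ is.

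There is also a smaller problem in your first regime. ``$X$ is $\delta$-lc of Fano type'' does not give boundedness; Hirzebruch surfaces are a counterexample. You must use \cite[Theorem~2.4.1]{Cas+25a} there, as the paper does.

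The paper supplies the missing construction directly, without a contradiction argument.
\begin{enumerate}
\item After extracting $E$ by $h\colon X'\to X$, set $\Bb'(s)=(X',\Ff',B')(s)$ with $\Bb'(t)=h^*\Bb$. Then $\mult_EB'=1-\tmld(\Bb)/(t\epsilon_{\Ff}(E)+1-t)$, and one works with the classical klt pair $(X',B')=\Bb'(0)$.
\item For $n\gg0$, run a $(-K_{\Bb'(t-1/n)})$-MMP $\phi\colon X'\dashrightarrow X''$. It is $K_{\Bb'(t)}$-trivial and ends with $-K_{\Bb''(t-1/n)}$ semi-ample. Since $K_{\Bb(s)}$ is affine in $s$,
\[K_{X''}+B''+tn\bigl(-K_{\Bb''(t-1/n)}\bigr)=(tn-1)\bigl(-K_{\Bb''(t)}\bigr),\]
and $\phi$ is also a $-(K_{X'}+B')$-MMP.
\item Lemma~\ref{lem: lc preserved under inequality} (lc at parameter $t<1$ forces lc at parameter $0$) gives $\alpha(X'',B'';-K_{\Bb''(t)})\ge\alpha(\Bb)$. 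This is the comparison you were missing.
\item Lemma~\ref{lem: alpha bound} adds a general $D\sim_{\Rr}tn(-K_{\Bb''(t-1/n)})$ at the cost of a factor $2$. Because $\alpha^d\vol$ is invariant under rescaling the polarization, $\alpha(X'',B''+D;K_{X''}+B''+D)^d\vol(K_{X''}+B''+D)\ge\epsilon/2^d$.
\item \cite[Lemma~2.13]{XZ24} and \cite[Theorem~6.13]{LLX20} then make $(X'',B'')$ $\frac{\epsilon}{(2d)^d}$-lc.
\item Since $\phi$ only lowers log discrepancies of $(X',B')$, the log discrepancy $1-\mult_EB'$ is at least $\frac{\epsilon}{(2d)^d}$, whether or not $E$ survives on $X''$.
\end{enumerate}
This gives $\tmld(\Bb)\ge\frac{\epsilon}{(2d)^d}(1-t)\ge\frac{\epsilon^2}{(2d)^d}$, and \cite[Theorem~2.4.1]{Cas+25a} concludes.
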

\begin{proof}
We may assume that $\dim X\geq 2$, otherwise $X=\mathbb P^1$ and $\Ff=0$ or $T_X$ and there is nothing left to prove.

If $\Ff=T_X$ or $t=0$, the rational coefficient case was proven in \cite[Corollary~6.14]{LLX20}. The proof of the real coefficient case is similar: By \cite[Lemma~2.13]{XZ24} and \cite[Corollary~1.4]{XZ21}, there exists a positive real number $\epsilon_1$ such that $\nvol(X\ni x,B)\geq\epsilon_1$ for any closed point $x\in X$. Here $\nvol(X\ni x,B)$ is the local volume (normalized volume) of $(X,B)$ at $x$ (cf.~\cite{Li18,LLX20}). By \cite[Theorem~6.13]{LLX20}, $(X,B)$ is $\frac{\epsilon_1}{d^d}$-lc, and we are done by \cite[Theorem~1.1]{Bir21a}. In the following, we may assume that $\Ff\neq T_X$ and $t>0$. By (3), $-K_{\Bb}$ is big and nef. Since $\alpha(\Bb)>0$, $\Bb$ is klt. Thus $t<1$.

We let $\Bb(s):=(X,\Ff,B)(s)$ for any $s\in [0,1]$.

If $\Bb$ is canonical, then $\Bb$ is $\epsilon$-lc. By \cite[Theorem~9.2]{Cas+25a} and \cite[Corollary~1.4]{Bir21a}, $X$ belongs to a bounded family, and if $t\geq\epsilon$, then by \cite[Theorem~2.4.1]{Cas+25a}, $(X,\Ff)$ belongs to a bounded family, so the theorem follows. Therefore, we may assume that $\Bb$ is not canonical. We let $E$ be a prime divisor over $X$ such that
$$a(E,\Bb)+t\epsilon_{\Ff}(E)+(1-t)=\tmld(\Bb).$$
Then $a(E,\Bb)\leq 0$. By \cite[Theorem~1.10(1)]{Cas+24}, $X$ is potentially klt. By \cite[Theorem~2.2.3]{Cas+25a}, we may let $h\colon X'\rightarrow X$ be a small $\mathbb Q$-factorialization of $X$ if $E$ is on $X$, and a birational morphism such that $\Exc(h)=E$ and $X'$ is $\mathbb Q$-factorial if $E$ is not on $X$. We let
$$\Bb'(t):=h^*\Bb(t):=(X',\Ff',B')(t)$$
and let 
$$\Bb'(s):=(X',\Ff',B')(s)$$
for any $s\in [0,1]$. Since $\Bb$ is klt, $\Bb'(t)$ is klt. Since $t<1$, by Lemma~\ref{lem: lc preserved under inequality}, $\Bb'(s)$ is klt for any $s\in [0,t]$. Since $-K_{\Bb(t)}$ is big, $-K_{\Bb'(t)}$ is big, so $-K_{\Bb'(t-\delta)}$ is big for any $0\leq\delta\ll 1$. By \cite[Theorem~9.2]{Cas+25a}, $X$ and $X'$ are of Fano type. In particular, $-K_{\Bb'(t)}$ is semi-ample. 

By the length of extremal rays \cite[Theorem~1.3(2)]{Cas+24} and the contraction theorem \cite[Theorem~2.1.3]{Cas+25a}, we may pick an integer $n\gg 0$ such that $\Bb(t-1/n)$ is klt, $-K_{\Bb'(t-1/n)}$ is big, and we may run a $\left(-K_{\Bb'(t-1/n)}\right)$-MMP which terminates with a good minimal model and the MMP is $\left(-K_{\Bb'(t)}\right)$-trivial. We let $\phi\colon X'\dashrightarrow X''$ be this MMP and let $\Bb''(s):=\phi_*\Bb'(s):=(X'',\Ff'',B'')(s)$ for any $s\in [0,1]$. Note that $\phi$ is also a sequence of steps of a $(-K_{\Bb'(0)})$-MMP. Since $K_{\Bb'(0)}=K_{X'}+B'$, $\phi$ is also a sequence of steps of a $\left(-(K_{X'}+B')\right)$-MMP. By our construction, $-K_{\Bb'(t-1/n)}$ and $-K_{\Bb'(t)}$ are big and semi-ample, and
$$K_{X''}+B''+tn\left(-K_{\Bb''(t-1/n)}\right)=K_{\Bb''(0)}+tn\left(-K_{\Bb''(t-1/n)}\right)=(tn-1)(-K_{\Bb''(t)}).$$
Since $\phi$ and $h$ are $K_{\Bb'(t)}$-trivial, $\Bb''(t)$ and $\Bb(t)$ are crepant, $\alpha(\Bb''(t))=\alpha(\Bb)$, and $\vol(-K_{\Bb''(t)})=\vol(-K_{\Bb})$. In particular, $\Bb''(t)$ is klt, so by Lemma~\ref{lem: lc preserved under inequality}, $\Bb''(s)$ is klt for any $s\in [0,t]$. In particular, $\Bb''(0)$ is klt, so $(X'',B'')$ is klt. 

Pick any $s>\alpha(X'',B'';-K_{\Bb''(t)})$. Then we may pick $L_s\in |-K_{\Bb''(t)}|_{\mathbb R}$ such that $(X'',B''+sL_s)$ is not lc. In particular,
$$\left(X'',B''+s\left(L_s^{\ninv}+\frac{1}{1-t}L_s^{\inv}\right)\right)$$
is not lc. By Lemma~\ref{lem: lc preserved under inequality}, 
$$\left(X'',\Ff'',B''+s\left(L_s^{\ninv}+\frac{1}{1-t}L_s^{\inv}\right)\right)(t)$$
is not lc, hence $\alpha(X'',B'';-K_{\Bb''(t)})<s$. Therefore, 
$$\alpha(X'',B'';-K_{\Bb''(t)})\geq\alpha(\Bb''(t))=\alpha(\Bb).$$
By Lemma~\ref{lem: alpha bound} we may choose $0\leq D\sim_{\mathbb R}-tnK_{\Bb''(t-1/n)}$ such that 
$$\alpha(X'',B''+D;-K_{\Bb''(t)})\geq\frac{1}{2}\alpha(\Bb).$$ 
Since
$$K_{X''}+B''+D\sim_{\mathbb R}(tn-1)(-K_{\Bb''(t)}),$$
we have
$$\alpha(X'',B''+D;K_{X''}+B''+D)=\frac{1}{tn-1}\alpha(X'',B''+D;-K_{\Bb''(t)})>\frac{1}{2(tn-1)}\alpha(\Bb).$$
Moreover, we have
$$\vol(K_{X''}+B''+D)=(tn-1)^d\vol(-K_{\Bb''(t)})=(tn-1)^d\vol(-K_{\Bb}).$$
Therefore,
$$\alpha(X'',B''+D;K_{X''}+B''+D)^d\cdot\vol(K_{X''}+B''+D)>\frac{1}{2^d}\alpha(\Bb)^d\cdot\vol(-K_{\Bb})\geq\frac{\epsilon}{2^d}.$$
By \cite[Lemma~2.13]{XZ24}, for any closed point $x\in X''$, $\nvol(X''\ni x,B''+D)\geq\frac{\epsilon}{2^d}$. By \cite[Theorem~6.13]{LLX20}, $(X'',B''+D)$ is $\frac{\epsilon}{(2d)^d}$-lc. Thus $(X'',B'')$ is $\frac{\epsilon}{(2d)^d}$-lc. Since $\phi$ is a sequence of steps of a $-K_{\Bb'(t-1/n)}$-MMP and is $-K_{\Bb'(t)}$-trivial, $\phi$ is a sequence of steps of a $-K_{\Bb'(0)}$-MMP, hence $-(K_{X'}+B')$-negative. Therefore,
$$\frac{\epsilon}{(2d)^d}\leq \tmld(X'',B'')\leq\tmld(X',B').$$
Since $E$ is an irreducible component of $B'$, we have
$$1-\frac{\epsilon}{(2d)^d}\geq1-\tmld(X',B')\geq 1-a(E,X',B')=\mult_EB'=1-\frac{\tmld(\Bb)}{t\epsilon_{\Ff}(E)+(1-t)}$$
which implies that
$$\tmld(\Bb)\geq\frac{\epsilon}{(2d)^d}\cdot (1-t)\geq\frac{\epsilon^2}{(2d)^d}.$$
We are done by \cite[Theorem~2.4.1]{Cas+25a}.
\end{proof}

\begin{proof}[Proof of Theorem~\ref{thm: jiang afs main}]
 It is a special case of Theorem~\ref{thm: jiang principle general}.   
\end{proof}

\begin{ex}
    Consider $X=\mathbb P^2$ and $\Ff=0$ the trivial foliation. For any $t\in (0,1)$, we have that $\Bb(t):=(X,\Ff)(t)$ is klt, $\vol(-K_{\Bb(t)})=9(1-t)^2$, and $\alpha(\Bb(t))=\frac{1}{3}$. Then $\alpha(\Bb(t))^2\cdot\vol(-K_{\Bb(t)})=(1-t)^2$. When $t$ is very close to $1$, $\alpha(\Bb(t))^2\cdot\vol(-K_{\Bb(t)})$ does not have a lower bound. However, $(X,\Ff)$ is fixed hence belongs to a bounded family.
\end{ex}

\section{Lower bound on the volume}\label{sec: volume lower bound}

In this section, we prove Theorem~\ref{thm: volume lower bound}, Corollary~\ref{cor: low bound volume}, and Theorem~\ref{thm: bir auto group afs main}. 

\begin{thm}\label{thm: low bound volume}
  Let $d$ be a positive integer and let $\Ii\subset [0,+\infty)$ be a DCC set. Then there exists a positive real number $\epsilon$ depending only on $d$ and $\Ii$ satisfying the following. Assume that $\Bb$ is a projective lc algebraically integrable normalized foliated structure of general type of dimension $d$ such that $\Bb\in\Ii$. Then 
  $$\vol(K_{\Bb})\geq (1-t)^d\epsilon$$
  where $t$ is the parameter of $\Bb$.
\end{thm}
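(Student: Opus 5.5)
We deduce the volume bound from effective birationality (Theorem~\ref{thm: afs eb}), arguing as in the classical case \cite[Theorem~1.3(2)]{HMX14}.

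\textbf{Step 1: reductions.} First treat $t=1$. Then $(1-t)^d\epsilon=0$, and $\vol(K_{\Bb})>0$ because $\Bb$ is of general type, so there is nothing to prove. Assume now that $t<1$. Replace $\Bb$ by a $\mathbb Q$-factorial qdlt modification; this keeps the coefficients in $\Ii\cup\{1\}$ and does not change the volume.

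Next, use the gap Theorem~\ref{thm: big gap general}. Choose an integer $l$, depending only on $d$ and $\Ii$, and set
\[\Bb':=(X,\Ff,\tfrac1l\lfloor (l-1)B\rfloor,\Mm(l))(t(l)),\]
where $\Mm(l)$ and $t(l)$ are truncated in the same way, exactly as in the proof of Theorem~\ref{thm: birational boundedness}. Then $\Bb\geq\Bb'\geq\Bb^{1-\tau}$, and $\Bb'$ is of general type and lc by Lemma~\ref{lem: lc preserved under inequality}. Lemma~\ref{lem: volume monotonicity} gives
\[(1-t)^{-d}\vol(K_{\Bb})\geq (1-t')^{-d}\vol(K_{\Bb'}).\]
Hence it suffices to show that $\vol(K_{\Bb'})\geq c$ for a constant $c>0$; then $\vol(K_{\Bb})\geq(1-t)^d c(1-t')^{-d}\ge (1-t)^d c$. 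After this replacement the coefficients lie in $\frac1l\mathbb N$, the parameter satisfies $t\le \frac{l-1}{l}<1$, and $\Bb$ is klt.

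\textbf{Step 2: effective birationality.} By Lemma~\ref{lem: special model for eb}, equivalently Theorem~\ref{thm: afs eb}, there is an integer $q$, depending only on $d$ and $l$, such that $qK_{\Bb}$ is integral and $|qK_{\Bb}|$ defines a birational map. Resolve its indeterminacy by $f\colon W\to X$ with $f^*(qK_{\Bb})\sim A_W+R_W$, where $A_W$ is base-point-free and defines a birational morphism onto its image $Y$. Then
\[\vol(qK_{\Bb})\geq \vol(A_W)=\deg Y\geq 1,\]
because $Y$ is a nondegenerate projective variety of dimension $d$. Therefore $\vol(K_{\Bb})\geq q^{-d}$, which is the required constant $c$.

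\textbf{Main obstacle.} The effective-birationality input was already proved in Section~\ref{sec: eb}, so the real difficulty lies in Step 1. The reduction must keep the dependence on $t$ explicit. The factor $(1-t)^d$ appears exactly through Lemma~\ref{lem: volume monotonicity}, and we must check that the truncation $\Bb'$ stays lc and of general type uniformly. This is where Theorem~\ref{thm: big gap general} is used essentially.
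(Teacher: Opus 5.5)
Your proof is correct and follows the paper's argument. It truncates the structure to coefficients in $\frac1p\mathbb N$ via Theorem~\ref{thm: big gap general} and transfers the factor $(1-t)^d$ with Lemma~\ref{lem: volume monotonicity}. It then uses Theorem~\ref{thm: afs eb} to get a uniformly bounded multiple of $K_{\Bb}$ that defines a birational map and so has volume at least $1$. The differences are cosmetic. The paper passes to a foliated log resolution and truncates by $\frac1p\lfloor p\,\cdot\rfloor$, so it never needs klt, and it applies Theorem~\ref{thm: afs eb} directly to $\frac{m_1}{1-t}K_{\Bb}$. You instead go through Lemma~\ref{lem: special model for eb} and absorb the bounded factor $(1-t')^{-d}$ into your constant.
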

\begin{proof}
We may assume that $t<1$, otherwise the theorem is trivial. Possibly replacing $\Ii$ by $\Ii\cup\{1\}$, we may assume that $1\in\Ii$. Write 
$$\Bb=\left(X,\Ff,B,\Mm=\sum\mu_j\Mm_j\right)(t)$$
where $\mu_j\in\Ii$ and each $\Mm_j$ is nef $\bb$-Cartier. Let $h\colon X'\rightarrow X$ be a foliated log resolution of $\Bb$ and let $\Bb':=(h^{-1}_*\Bb,\Exc(h))$. Possibly replacing $\Bb$ with $\Bb'$, we may assume that $\Bb$ is foliated log smooth. By Lemma~\ref{lem: lc preserved under inequality} and Theorem~\ref{thm: big gap general}, there exists a positive integer $p$ depending only on $d$ and $\Ii$ such that 
$$\Bb_p:=(X,\Ff,B_p,\Mm(p))(t_p)$$
is lc and $K_{\Bb_p}$ is big, where
$$B_p:=\frac{1}{p}\left\lfloor pB\right\rfloor,\quad  \Mm(p):=\frac{1}{p}\sum\left\lfloor p\mu_j\right\rfloor\Mm_j,\quad \text{and}\quad t_p:=\frac{1}{p}\left\lfloor tp\right\rfloor.$$
By Lemma~\ref{lem: volume monotonicity}, possibly replacing $\Ii$ by $\frac{1}{p}\mathbb N$ and replacing $\Bb$ with $\Bb_p$, we may assume that $\Ii=\frac{1}{p}\mathbb N$. Let $m$ be a positive integer depending only on $d$ and $\Ii$ such that $p\mid m$ and $p\mid \frac{ms}{1-s}$ for any $s\in\Ii\cap [0,1)$. Let $\Bb(s):=(X,\Ff,B,\Mm)(s)$ for any $s\in [0,1]$. By Theorem~\ref{thm: afs eb}, there exists a positive integer $m_1$ depending only on $d$ and $\Ii$ such that $m\mid m_1$ and
$$\left|m_1K_{\Bb(0)}+\frac{m_1t}{1-t}K_{\Bb(1)}\right|$$
defines a birational map. In particular, 
$$\left(\frac{m_1}{1-t}\right)^d\vol(K_{\Bb})=\vol\left(m_1K_{\Bb(0)}+\frac{m_1t}{1-t}K_{\Bb(1)}\right)\geq 1.$$
Therefore, $\vol(K_{\Bb})\geq\frac{(1-t)^d}{m_1^d}$ and we may take $\epsilon:=\frac{1}{m_1^d}$.
\end{proof}

\begin{proof}[Proof of Theorem~\ref{thm: volume lower bound}]
 It is the special case of Theorem~\ref{thm: low bound volume} for $\Bb=(X,\Ff,0,\bm{0})(t)$.
\end{proof}

\begin{proof}[Proof of Corollary~\ref{cor: low bound volume}]
By our assumptions, $(X,\Ff)(t)$ is lc for any $t\in [0,1]$. By Theorem~\ref{thm: big gap general}, there exists a positive real number $t$ depending only on $d$ such that $\Bb:=(X,\Ff)(t)$ is lc and $K_{\Bb}$ is big. By Theorem~\ref{thm: volume lower bound}, there exists a positive real number $\epsilon_0$ depending only on $d$ such that
$$\vol(tK_{\Ff}+(1-t)K_X)\geq (1-t)^d\epsilon_0.$$
Since $-K_X$ is pseudo-effective,
$$t^d\vol(K_{\Ff})=\vol(tK_{\Ff})\geq\vol(tK_{\Ff}+(1-t)K_X)\geq(1-t)^d\epsilon_0.$$
Therefore, we may take $\epsilon:=\left(\frac{1-t}{t}\right)^d\epsilon_0$.
\end{proof}

\begin{proof}[Proof of Theorem~\ref{thm: bir auto group afs main}]
By Theorem~\ref{thm: big gap general} there exists a real number $t<1$ depending only on $d$ such that $(X,\Ff)(t)$ is klt and $tK_{\Ff}+(1-t)K_X$ is big. By \cite[Theorem~2.1.1]{Cas+25a}, $(X,\Ff)(t)$ has a good minimal model $(X',\Ff')(t)$. By \cite[Theorem~1]{PS02}, $\#\Bir(X,\Ff)<+\infty$. For any $g\in\Bir(X,\Ff)$, we have $g\in\Aut(X',\Ff')$. Let $(X'',\Ff''):=(X'/\Aut(X',\Ff'),\Ff'/\Aut(X',\Ff'))$ and let $q\colon X'\rightarrow X''$ be the quotient map. By \cite[Lemma~3.4]{Dru21},
$$tK_{\Ff'}+(1-t)K_{X'}=q^*(tK_{\Ff''}+(1-t)K_{X''}).$$
Let $\lambda:=\frac{t}{1-t}$. Since $(X'',\Ff'',t)$ is of general type, by Theorem~\ref{thm: low bound volume}, we have
$$\vol\left(K_{X''}+\lambda K_{\Ff''}\right)\geq\epsilon_0$$
for some positive real number $\epsilon_0$ depending only on $d$. Let $c:=\frac{1}{\epsilon_0}$, then
$$\#\Bir(X,\Ff)=\#\Aut(X',\Ff')=\deg q=\frac{\vol(tK_{\Ff'}+(1-t)K_{X'})}{\vol(tK_{\Ff''}+(1-t)K_{X''})}\leq c\vol\left(K_X+\lambda K_{\Ff}\right)$$
and the theorem follows.
\end{proof}

\section{Further discussions}\label{sec: discussion}

The results proved above provide a uniform package of ACC and boundedness statements for algebraically integrable adjoint foliated structures. 
In this final section, we briefly discuss possible extensions of the theory and formulate a collection of open problems motivated by the new invariants and thresholds introduced in the paper.

\subsection{Variation of our main theorems}\label{subsec: variation}

\begin{rem}[Non-algebraically integrable case]
As in \cite{Cas+25a}, we expect that our main theorems remain valid for foliations that are not necessarily algebraically integrable. 

There are, however, several obstacles to such an extension. The first major obstacle, and the best-known one, is the existence of foliated log resolutions, which is known in dimension $\leq 3$ \cite{Sei68,Can04,MP13} but remains widely open in dimension $\geq 4$, despite recent progress (cf.~\cite{AdSTW25}). For algebraically integrable foliations, the foliated log resolution we use is essentially the toroidal reduction \cite[Proposition~4.4]{AK00} arising as a step in the weak semi-stable reduction \cite[Theorem~2.1]{AK00}. Nevertheless, we expect that a suitable form of foliated log resolution should exist for arbitrary foliations.

A second obstacle, which appears to be more serious, is the failure of the adjunction formula. Although there is a satisfactory adjunction formula for foliations along non-invariant divisors \cite[Theorem~1.1]{CS25a}, such a formula fails in general for adjunction along invariant divisors (cf.~\cite[Example~3.20]{CS25a}). For foliated triples, this failure already causes substantial difficulties and requires delicate arguments to obtain a workable adjunction formula, even in dimension $3$ (cf.~\cite[Sections~3 and~4]{Spi20}, \cite[Section~3]{CS21}, \cite[Section~3]{SS22}, \cite[Section~4]{CS25b}). In fact, the failure of adjunction is the main obstacle in the proofs of the ACC and the global ACC for foliated triples in dimension $\leq 3$ (cf.~\cite{Che22,LLM23,LMX24a,LMX24b}). For adjoint foliated structures, the failure of adjunction is likely to create further difficulties, since the geometry of singularities is harder to control, especially when the parameter $t$ is very close to $0$.
\end{rem}

\begin{rem}[Foliated flags]
    In another talk of M\textsuperscript{c}Kernan \cite[50:55]{McK23}, the following question is raised: can one obtain an ACC statement for interpolated thresholds between two foliations, instead of between one foliation and $X$? More precisely, consider
    $$\lct(\Ff;\mathcal{G}):=\sup\{t\in [0,1]\mid (\mathcal{G},\Ff,t)\text{ is lc}\},$$
    where $(\mathcal{G},\Ff,t)$ is associated with the canonical class $tK_{\mathcal{G}}+(1-t)K_{\Ff}$, and ask whether $\lct(\Ff;\mathcal{G})$ belongs to an ACC set whenever $\mathcal{G}\subset\Ff$. Note that when $\Ff=T_X$ this is exactly Conjecture~\ref{conj: acc interpolated}. We expect this question to be approachable at least when $\Ff$ and $\mathcal{G}$ are algebraically integrable. To address it, however, one first needs an existence result for the minimal model program for $(\mathcal{G},\Ff,t)$, which is still open; see \cite[Conjecture~A.10]{Cas+24} for a precise formulation.

    We sketch a possible framework. Consider a structure
    $$\Aa:=(\Ff_1\subset\Ff_2\subset\cdots\subset\Ff_r,(a_1,\dots,a_r)),$$
    where each $\Ff_i$ is an (algebraically integrable) foliation, $a_i\in (0,1)$, and $\sum_{i=1}^r a_i=1$, and associate to it the canonical class
    $$K_{\Aa}:=\sum_{i=1}^r a_iK_{\Ff_i}.$$
    We call $\Aa$ a \emph{foliated flag} (or, by abuse of terminology, still an adjoint foliated structure). When the ambient variety $X$ is $\mathbb Q$-factorial klt and all $\Ff_i$ are algebraically integrable, the existence of a minimal model for such a structure is known in the following cases:
    \begin{itemize}
        \item $r=1$ and $\Ff_r=T_X$, by \cite{BCHM10}, since a foliated flag is then a klt pair.
        \item $r=1$ and $\Ff_r\neq T_X$, by \cite{ACSS21,CHLX23,CS25c,LMX25}, where a foliated flag is an algebraically integrable foliation.
        \item $r=2$ and $\Ff_r=T_X$, by \cite{Cas+25a}, where a foliated flag is an algebraically integrable adjoint foliated structure.
    \end{itemize}
    It is then natural to expect the following pattern for the existence of the minimal model program for algebraically integrable foliated flags:
    \begin{itemize}
        \item The case $r=r_0$ and $\Ff_r=T_X$ implies the case $r=r_0$ and $\Ff_r\neq T_X$.
        \item The case $r=r_0$ and $\Ff_r\neq T_X$ implies the case $r=r_0+1$ and $\Ff_r=T_X$.
    \end{itemize}
    Since we always have $r\leq \dim X+1$, this would yield the existence of the minimal model program for any such $\Aa$ by induction on $r$.
\end{rem}

\subsection{Open questions}

Many bounds on numerical invariants of algebraically integrable foliations have been obtained in this paper. It is then natural to ask explicit birational geometry questions about these invariants, i.e.\ to study their geography. We refer the reader to \cite{SS23,LWX25,Vas25,Xu25b} for related studies on surfaces. For this purpose, we adopt the following notation. For any positive integer $d$, let $\mathcal{P}_{1,d}$ be the set of foliated pairs $(X,\Ff)$ of dimension $d$ such that $X$ is lc. Set
$$\mathcal{P}_{2,d}:=\{(X,\Ff)\in \mathcal{P}_{1,d}\mid \Ff\text{ is algebraically integrable}\},$$
$$\mathcal{P}_{3,d}:=\{(X,\Ff)\in \mathcal{P}_{1,d}\mid X\text{ is klt}\},$$
and $\mathcal{P}_{4,d}:=\mathcal{P}_{2,d}\cap\mathcal{P}_{3,d}$.

\begin{ques}[Explicit geometry questions for adjoint foliated structures]
Fix $n\in\{1,2,3,4\}$ and $d\in\{2,3\}$. 
\begin{enumerate}
    \item (Gap of lc threshold) Find the smallest positive real number $\tau=\tau_{\ilct}(n,d)$ such that for any $(X,\Ff)\in\mathcal{P}_{n,d}$, if $\Ff$ is not lc, then $(X,\Ff,t)$ is not lc for any $t>1-\tau$. Furthermore:
    \begin{enumerate}
        \item Classify all (singularities of) $(X,\Ff)\in\mathcal{P}_{n,d}$ such that $(X,\Ff,1-\tau)$ is lc but $\Ff$ is not lc.
    \end{enumerate}

    \item (Gap of global lc threshold) Find the smallest positive real number $\tau=\tau_{\iglct}(n,d)$ such that for any projective $(X,\Ff)\in\mathcal{P}_{n,d}$, if $(X,\Ff,t)$ is lc, $t>1-\tau$, and $K_{(X,\Ff,t)}\equiv 0$, then $K_{\Ff}\equiv 0$. Furthermore:
    \begin{enumerate}
      \item Classify all projective $(X,\Ff)\in\mathcal{P}_{n,d}$ such that $(X,\Ff,1-\tau)$ is lc, $K_{(X,\Ff,1-\tau)}\equiv 0$, but $K_{\Ff}\not\equiv 0$.  
    \end{enumerate}

    \item (Gap of pseudo-effective threshold) Find the smallest positive real number $\tau=\tau_{\pet}(n,d)$ such that for any projective $(X,\Ff)\in\mathcal{P}_{n,d}$, if $K_{(X,\Ff,1-\tau)}$ is not pseudo-effective, then $K_{(X,\Ff,t)}$ is not pseudo-effective for any $t<1$. Furthermore:
    \begin{enumerate}
      \item Do we have $\tau_{\pet}(n,d)=\tau_{\iglct}(n,d)$?
      \item Classify all projective $(X,\Ff)\in\mathcal{P}_{n,d}$ such that $(X,\Ff,t)$ is lc, $K_{(X,\Ff,1-\tau)}$ is pseudo-effective, but $K_{(X,\Ff,t)}$ is not pseudo-effective for any $t<1-\tau$.
    \end{enumerate}

    \item (Gap of $\mathbb R$-complementary threshold) Find the smallest positive real number $\tau=\tau_{\Rct}(n,d)$ such that for any projective $(X,\Ff)\in\mathcal{P}_{n,d}$, if $(X,\Ff,t)$ is $\mathbb R$-complementary for some $t>1-\tau$, then $(X,\Ff,t)$ is $\mathbb R$-complementary for any $t\in (1-\tau,1)$. Furthermore:
    \begin{enumerate}
        \item Classify all projective $(X,\Ff)\in\mathcal{P}_{n,d}$ such that $(X,\Ff,1-\tau)$ is $\mathbb R$-complementary but $(X,\Ff,t)$ is not $\mathbb R$-complementary for any $t>1-\tau$. 
        \item Do we have $\tau_{\iglct}(n,d)=\tau_{\Rct}(n,d)$?
        \item Ask the same questions with the additional assumption that $X$ is of Fano type.
    \end{enumerate}

    \item (Explicit effective birationality) Find a set $\mathcal{S}_{\eb}\subset\mathbb N^+\times\mathbb N^+$ such that, for any projective $(X,\Ff)\in\mathcal{P}_{n,d}$, the linear system
    $$|mK_X+nK_{\Ff}|$$
    defines a birational map for every $(m,n)\in\mathcal{S}_{\eb}$.

    \item (Exceptional Fanos) Find the smallest positive real number $\tau=\tau_{\exc}(n,d)$ such that for any $(X,\Ff)\in\mathcal{P}_{n,d}$, if $X$ is of Fano type and $(X,\Ff,t)$ is exceptional for some $t\in (1-\tau,1)$, then $(X,\Ff,1-\tau)$ is exceptional. Furthermore:
    \begin{enumerate}
      \item Classify all exceptional $(X,\Ff)\in\mathcal{P}_{n,d}$ such that $X$ is of Fano type, $(X,\Ff,1-\tau)$ is exceptional, but $(X,\Ff,t)$ is not exceptional for any $t\in (1-\tau,1)$.
      \item Construct the moduli for $(X,\Ff)$ as in (a).
    \end{enumerate}
\end{enumerate}
One can also ask the same questions for structures with a polarization and with a boundary with standard coefficients. 
\end{ques}

We remark that \cite[Proposition~1.4, Example~2.16]{Xu25b} indicates that
$$\tau_{\ilct}(n,2)=\frac{1}{6},\quad \forall\ n\in\{1,2,3,4\},$$
while \cite[Theorem~5.11]{Vas25} shows that $\tau_{\pet}(1,d)\geq \tau_0\approx (10\uparrow\uparrow5)^{-1}$. 

We also remark that a more explicit study of the geometry of adjoint foliated structures on surfaces is expected to be closely related to the Poincar\'e problem for surface foliations, namely the problem of providing algebraicity criteria. L\"u and Tan \cite{LT24} recently made significant progress on the Poincar\'e problem by introducing new birational invariants of foliated surfaces, inspired by the formulas for modular invariants of families of algebraic curves (cf.~\cite{Tan96,Tan10}), and by establishing several algebraicity criteria for surface foliations. It is therefore natural to ask whether one can obtain a more explicit classification of surface foliations by combining the birational invariants arising from the Poincar\'e-problem viewpoint with those coming from the adjoint foliated surface framework.

Finally, we formulate the following questions that are related to complements. We expect these questions to have positive answers at least for algebraically integrable foliations, and we expect them to play an important role in the study of boundedness of complements for foliations. The first question concerns the existence of uniform rational polytopes (cf.~\cite{HLS24}), which allow one to perturb real coefficients to rational coefficients in a uniform way.

\begin{ques}[Uniform rational polytopes]
    Let $d$ be a positive integer, and let $m,n$ be two non-negative integers. Let
    $$\bm{v}_0:=(b_{1,0},\dots,b_{m,0},(1-t_0)\mu_{1,0},\dots,(1-t_0)\mu_{n,0},t_0)\in\mathbb R_{\geq 0}^{m+n}\times [0,1].$$
    Let $V$ be the rational envelope of $\bm{v}_0$ in $\mathbb R_{\geq 0}^{m+n+1}$. Does there exist an open subset $V_0\subset V$ containing $\bm{v}_0$ with the following properties? Assume that $$\Bb/U=\left(X,\Ff,B=\sum_{i=1}^mb_{i,0}B_i,\sum_{j=1}^n\mu_{j,0}\Mm_j\right)(t_0)/U$$
    is an (algebraically integrable) normalized foliated structure of dimension $d$, where $B_i\geq 0$ are Weil divisors and each $\Mm_j$ is nef$/U$ and $\bb$-Cartier. For any 
    $$\bm{v}=(b_1,\dots,b_m,(1-t)\mu_1,\dots,(1-t)\mu_n,t)\in\mathbb R^{m+n}\times [0,1],$$
    set
$$\Bb(\bm{v}):=\left(X,\Ff,B=\sum_{i=1}^mb_iB_i,\sum_{j=1}^n\mu_j\Mm_j\right)(t).$$
    Then:
    \begin{enumerate}
        \item[(1)] (Uniform lc rational polytope) If $\Bb$ is lc, then $\Bb(\bm{v})$ is lc for any $\bm{v}\in V_0$.
        \item[(2)] (Uniform $\mathbb R$-complementary rational polytope) If $X$ is of Fano type$/U$ and $\Bb/U$ is $\mathbb R$-complementary, then $\Bb(\bm{v})/U$ is $\mathbb R$-complementary for any $\bm{v}\in V_0$.
        \item[(3)] (Uniform pseudo-effective rational polytope) If $\Bb$ is lc and $K_{\Bb}$ is pseudo-effective$/U$, then $\Bb(\bm{v})$ is lc and $K_{\Bb(\bm{v})}$ is pseudo-effective$/U$ for any $\bm{v}\in V_0$.
    \end{enumerate}
\end{ques}

The second question concerns the behavior of $\mathbb R$-complements when $t$ approaches $1$. 

\begin{ques}\label{ques: rcomplement when t=1}
Assume that $\Bb/U:=(X,\Ff,B,\Mm)(t)/U$ is a normalized foliated structure and let $\Bb(s):=(X,\Ff,B,\Mm)(s)$ for any $s\in [0,1]$. Assume that $\Bb(s)/U$ is $\mathbb R$-complementary for any $0<s\ll 1$. Is $\Bb(1)/U$ also $\mathbb R$-complementary? If necessary, assume that $\Ff$ is algebraically integrable and/or that $X$ is of Fano type over $U$.
\end{ques}

Indeed, even the following special case of Question~\ref{ques: rcomplement when t=1} is interesting.

\begin{ques}\label{ques: rcomplement when t=1 fano}
    Let $\Aa/U=(X,\Ff,B,\Mm)/U$ be an algebraically integrable generalized foliated quadruple such that $X$ is of Fano type over $U$ and $-K_{\Aa}$ is nef$/U$. Does $\Aa/U$ have an $\mathbb R$-complement?
\end{ques}
When $B$ is a $\mathbb Q$-divisor, $\Mm=\bm{0}$, $U$ is a closed point, and $-K_{\Aa}$ is ample, Question~\ref{ques: rcomplement when t=1 fano} has a positive answer \cite[Theorem~1.1]{CJV24} even without assuming that $X$ is of Fano type. Note that in the statement of the ACC for $\mathbb R$-complementary thresholds (Theorem~\ref{thm: rct general}), we may assume that the parameter $t$ is $<1$, so this question does not arise there. However, the question seems crucial in the study of the boundedness of complements. 

Indeed, during the 2026 Workshop on Algebraic Geometry and Complex Geometry in Osaka, Chenyang Xu suggested to the second author that, in K-stability and K-moduli theory, the boundedness of complements plays an even more crucial role than BAB. It is reasonable to expect a similar phenomenon for the K-moduli theory of algebraically integrable foliations. We conclude the paper by proposing the following conjecture on (a version of) the boundedness of complements, which we hope to address in future work.

\begin{conj}[Boundedness of complements]\label{conj: boundedness of complements}
Let $d$ be a positive integer and $\Ii\subset [0,1]$ a DCC set of rational numbers such that $\overline{\Ii}\subset\mathbb Q$. Then there exists a positive integer $N$ depending only on $d$ and $\Ii$ satisfying the following. 

Assume that $\Bb:=(X,\Ff,B)(t)$ is a projective $\mathbb R$-complementary normalized foliated structure such that $X$ is of Fano type and $t\in\Ii$. Then there exists a monotonic $N$-complement $\Bb^+$ of $\Bb$ in the following sense.
\begin{enumerate}
    \item $\Bb^+=(X,\Ff,B^+)(t^+)\geq\Bb$, i.e.\ $B^+\geq B$ and $t^+\geq t$.
    \item $Nt^+\in\mathbb N$, $NB^+$ is an integral divisor, and $NK_{\Bb^+}\sim 0$.
\end{enumerate}
\end{conj}
Note that if $t$ belongs to a finite set, then we may further take $t^+=t$ after replacing $N$ by a bounded multiple. Indeed, $(X,\Ff,B)(0)$ is $\mathbb R$-complementary whenever $t<1$ (Proposition~\ref{prop: r complementary preserved under inequality}), and so a fixed interpolation of a bounded complement of $(X,B)$ (which exists by \cite[Theorem~1.7]{Bir19}) and $\Bb^+$ gives the desired bounded complement for $\Bb$ with $t=t^+$ fixed.

\begingroup
\sloppy
\emergencystretch=3em

\endgroup

\begin{thebibliography}{99}
\bibitem[AdSTW25]{AdSTW25} D. Abramovich, A. B. da Silva, M. Temkin, and J. W\l{}odarczyk, \textit{Principalization on logarithmically foliated orbifolds}, arXiv:2503.00926.

\bibitem[AK00]{AK00} D. Abramovich and K. Karu, \textit{Weak semistable reduction in characteristic $0$}, Invent. Math. \textbf{139} (2000), no. 2, 241--273.

\bibitem[ACSS21]{ACSS21} F. Ambro, P. Cascini, V. V. Shokurov, and C. Spicer, \textit{Positivity of the moduli part}, arXiv:2111.00423.

\bibitem[AB19]{AB19} K. Ascher and D. Bejleri, \textit{Moduli of fibered surface pairs from twisted stable maps}, Math. Ann. \textbf{374} (2019), no. 1--2, 1007--1032.

\bibitem[AB21]{AB21} K. Ascher and D. Bejleri, \textit{Moduli of weighted stable elliptic surfaces and invariance of log plurigenera}, Proc. Lond. Math. Soc. (3) \textbf{122} (2021), no. 5, 617--677. With an appendix by Giovanni Inchiostro.

\bibitem[AB22]{AB22} K. Ascher and D. Bejleri, \textit{Compact moduli of degree one del Pezzo surfaces}, Math. Ann. \textbf{384} (2022), no. 1--2, 881--911.

\bibitem[AB23]{AB23} K. Ascher and D. Bejleri, \textit{Compact moduli of elliptic K3 surfaces}, Geom. Topol. \textbf{27} (2023), no. 5, 1891--1946.

\bibitem[Asc$^+$23]{Asc+23} K. Ascher, D. Bejleri, H. Blum, K. DeVleming, G. Inchiostro, Y. Liu, and X. Wang, \textit{Moduli of boundary polarized Calabi-Yau pairs}, arXiv:2307.06522.

\bibitem[BFMT25]{BFMT25} B. Bakker, S. Filipazzi, M. Mauri, and J. Tsimerman, \textit{Baily--Borel compactifications of period images and the b-semiampleness conjecture}, arXiv:2508.19215.

\bibitem[Bej$^+$24]{Bej+24} D. Bejleri, J. Foster, A. F. Herrero, G. Inchiostro, S. Makarova, and J. Zhao, \textit{Moduli of elliptic surfaces of Kodaira dimension one fibered over rational curves}, arXiv:2407.05539.

\bibitem[BPRT22]{BPRT22} F. L. Bianco, J. V. Pereira, E. Rousseau, and F. Touzet, \textit{Rational endomorphisms of codimension one holomorphic foliations}, J. Reine Angew. Math. \textbf{789} (2022), 43--101.

\bibitem[Bir12]{Bir12} C. Birkar, \textit{Existence of log canonical flips and a special LMMP}, Publ. Math. IH\'ES \textbf{115} (2012), 325--368.

\bibitem[Bir19]{Bir19} C. Birkar, \textit{Anti-pluricanonical systems on Fano varieties}, Ann. of Math. (2) \textbf{190} (2019), no. 2, 345--463.

\bibitem[Bir21a]{Bir21a} C. Birkar, \textit{Singularities of linear systems and boundedness of Fano varieties}, Ann. of Math. (2) \textbf{193} (2021), no. 2, 347--405.

\bibitem[Bir21b]{Bir21b} C. Birkar, \textit{Boundedness and volume of generalised pairs}, arXiv:2103.14935.

\bibitem[Bir22]{Bir22} C. Birkar, \textit{Moduli of algebraic varieties}, arXiv:2211.11237.

\bibitem[Bir23a]{Bir23a} C. Birkar, \textit{Geometry of polarized varieties}, Publ. Math. IH\'ES \textbf{137} (2023), 47--105.

\bibitem[Bir23b]{Bir23b} C. Birkar, \textit{Singularities on Fano fibrations and beyond}, arXiv:2305.18770.

\bibitem[Bir24]{Bir24} C. Birkar, \textit{Boundedness of Fano type fibrations}, Ann. Sci. \'Ec. Norm. Sup\'er. (4) \textbf{57} (2024), no. 3, 787--840.

\bibitem[BCHM10]{BCHM10} C. Birkar, P. Cascini, C. D. Hacon, and J. M\textsuperscript{c}Kernan, \textit{Existence of minimal models for varieties of log general type}, J. Amer. Math. Soc. \textbf{23} (2010), no. 2, 405--468.

\bibitem[BDCS24]{BDCS24} C. Birkar, G. Di Cerbo, and R. Svaldi, \textit{Boundedness of elliptic Calabi-Yau varieties with a rational section}, J. Differential Geom. \textbf{128} (2024), no. 2, 463--519.

\bibitem[BH22]{BH22} C. Birkar and C. D. Hacon, \textit{Variations of generalised pairs}, arXiv:2204.10456.

\bibitem[BQ24]{BQ24} C. Birkar and S. Qu, \textit{Irrationality of degenerations of Fano varieties}, arXiv:2401.07233.

\bibitem[BZ16]{BZ16} C. Birkar and D.-Q. Zhang, \textit{Effectivity of Iitaka fibrations and pluricanonical systems of polarized pairs}, Publ. Math. IH\'ES \textbf{123} (2016), 283--331.

\bibitem[BJ20]{BJ20} H. Blum and M. Jonsson, \textit{Thresholds, valuations, and K-stability}, Adv. Math. \textbf{365} (2020), 107062.

\bibitem[BL24]{BL24} H. Blum and Y. Liu, \textit{Good moduli spaces for boundary polarized Calabi-Yau surface pairs}, arXiv:2407.00850.

\bibitem[Can04]{Can04} F. Cano, \textit{Reduction of the singularities of codimension one singular foliations in dimension three}, Ann. of Math. (2) \textbf{160} (2004), no. 3, 907--1011.

\bibitem[Cas21]{Cas21} P. Cascini, \textit{New directions in the Minimal Model Program}, Boll. Unione Mat. Ital. \textbf{14} (2021), no. 1, 179--190.

\bibitem[Cas$^+$24]{Cas+24} P. Cascini, J. Han, J. Liu, F. Meng, C. Spicer, R. Svaldi, and L. Xie, \textit{Minimal model program for algebraically integrable adjoint foliated structures}, arXiv:2408.14258.

\bibitem[Cas$^+$25a]{Cas+25a} P. Cascini, J. Han, J. Liu, F. Meng, C. Spicer, R. Svaldi, and L. Xie, \textit{On finite generation and boundedness of adjoint foliated structures}, arXiv:2504.10737.

\bibitem[Cas$^+$25b]{Cas+25b} P. Cascini, J. Liu, F. Meng, R. Svaldi, and L. Xie, \textit{Variation of algebraically integrable adjoint foliated structures}, arXiv:2510.02498.

\bibitem[CS21]{CS21} P. Cascini and C. Spicer, \textit{MMP for co-rank one foliations on threefolds}, Invent. Math. \textbf{225} (2021), no. 2, 603--690.

\bibitem[CS25a]{CS25a} P. Cascini and C. Spicer, \textit{Foliation adjunction}, Math. Ann. \textbf{391} (2025), no. 4, 5695--5727.

\bibitem[CS25b]{CS25b} P. Cascini and C. Spicer, \textit{On the MMP for rank one foliations on threefolds}, Forum Math. Pi \textbf{13} (2025), e20, 1--38.

\bibitem[CS25c]{CS25c} P. Cascini and C. Spicer, \textit{MMP for algebraically integrable foliations}, in: \textit{Higher Dimensional Algebraic Geometry: A Volume in Honor of V. V. Shokurov}, C. D. Hacon and C. Xu (eds.), London Math. Soc. Lecture Note Ser., Cambridge Univ. Press, Cambridge (2025), 69--84.

\bibitem[CHLX23]{CHLX23} G. Chen, J. Han, J. Liu, and L. Xie, \textit{Minimal model program for algebraically integrable foliations and generalized pairs}, arXiv:2309.15823.

\bibitem[Che20]{Che20} W. Chen, \textit{Boundedness of weak Fano pairs with alpha-invariants and volumes bounded below}, Publ. Res. Inst. Math. Sci. \textbf{56} (2020), no. 3, 539--559.

\bibitem[Che21]{Che21} Y.-A. Chen, \textit{Boundedness of minimal partial du Val resolutions of canonical surface foliations}, Math. Ann. \textbf{381} (2021), 557--573.

\bibitem[Che22]{Che22} Y.-A. Chen, \textit{ACC for foliated log canonical thresholds}, arXiv:2202.11346.

\bibitem[CJV24]{CJV24} Y.-A. Chen, D. Jiao, and P. Voegtli, \textit{Existence of complements for foliations}, arXiv:2408.11738.

\bibitem[CLW26]{CLW26} Y. Chen, J. Liu, and Y. Wang, \textit{Sarkisov program for algebraically integrable and threefold foliations}, Int. Math. Res. Not. IMRN \textbf{2026} (2026), no. 6, 1--19.

\bibitem[CPT25]{CPT25} G. Codogni, Z. Patakfalvi, and L. Tasin, \textit{Effective positivity of Hodge bundles and applications}, arXiv:2506.10515.

\bibitem[CTV23]{CTV23} G. Codogni, L. Tasin, and F. Viviani, \textit{Slope inequalities for KSB-stable and K-stable families}, Proc. Lond. Math. Soc. (3) \textbf{126} (2023), no. 4, 1394--1465.

\bibitem[CH88]{CH88} M. Cornalba and J. Harris, \textit{Divisor classes associated to families of stable varieties, with applications to the moduli space of curves}, Ann. Sci. \'Ec. Norm. Sup\'er. (4) \textbf{21} (1988), no. 3, 455--475.

\bibitem[DLM23]{DLM23} O. Das, J. Liu, and R. Mascharak, \textit{ACC for lc thresholds for algebraically integrable foliations}, arXiv:2307.07157.

\bibitem[DC16]{DC16} G. Di Cerbo, \textit{On Fujita's log spectrum conjecture}, Math. Ann. \textbf{366} (2016), 447--457.

\bibitem[DLI24]{DLI24} A. Di Lorenzo and G. Inchiostro, \textit{Stable maps to quotient stacks with a properly stable point}, arXiv:2411.16141.

\bibitem[Dru21]{Dru21} S. Druel, \textit{Codimension 1 foliations with numerically trivial canonical class on singular spaces}, Duke Math. J. \textbf{170} (2021), no. 1, 95--203.

\bibitem[Eng$^+$25]{Eng+25} P. Engel, S. Filipazzi, F. Greer, M. Mauri, and R. Svaldi, \textit{Boundedness of some fibered K-trivial varieties}, arXiv:2507.00973.

\bibitem[Fan25]{Fan25} Z. Fan, \textit{Volumes of foliations birationally bounded by algebraically integrable families}, arXiv:2512.21932.

\bibitem[Fil24]{Fil24} S. Filipazzi, \textit{On the boundedness of $n$-folds with $\kappa(X)=n-1$}, Algebr. Geom. \textbf{11} (2024), no. 3, 318--345.

\bibitem[FHS25]{FHS25} S. Filipazzi, C. D. Hacon, and R. Svaldi, \textit{Boundedness of elliptic Calabi--Yau threefolds}, J. Eur. Math. Soc. \textbf{27} (2025), no. 9, 3583--3650.

\bibitem[FS25]{FS25} S. Filipazzi and C. Spicer, \textit{On semi-ampleness of the moduli part}, Moduli \textbf{2} (2025), Paper No. e12, 1--27.

\bibitem[FO18]{FO18} K. Fujita and Y. Odaka, \textit{On the K-stability of Fano varieties and anticanonical divisors}, Tohoku Math. J. \textbf{70} (2018), 511--521.

\bibitem[Gon11]{Gon11} Y. Gongyo, \textit{On the minimal model theory for dlt pairs of numerical Kodaira dimension zero}, Math. Res. Lett. \textbf{18} (2011), no. 5, 991--1000.

\bibitem[HL21]{HL21} C. D. Hacon and A. Langer, \textit{On birational boundedness of foliated surfaces}, J. Reine Angew. Math. \textbf{770} (2021), 205--229.

\bibitem[HL23a]{HL23a} C. D. Hacon and J. Liu, \textit{Existence of flips for generalized lc pairs}, Camb. J. Math. \textbf{11} (2023), no. 4, 795--828.

\bibitem[HM06]{HM06} C. D. Hacon and J. M\textsuperscript{c}Kernan, \textit{Boundedness of pluricanonical maps of varieties of general type}, Invent. Math. \textbf{166} (2006), 1--25.

\bibitem[HMX13]{HMX13} C. D. Hacon, J. M\textsuperscript{c}Kernan, and C. Xu, \textit{Birational automorphisms of varieties of general type}, Ann. of Math. (2) \textbf{177} (2013), no. 3, 1077--1111.

\bibitem[HMX14]{HMX14} C. D. Hacon, J. M\textsuperscript{c}Kernan, and C. Xu, \textit{ACC for log canonical thresholds}, Ann. of Math. (2) \textbf{180} (2014), no. 2, 523--571.

\bibitem[HMX18]{HMX18} C. D. Hacon, J. M\textsuperscript{c}Kernan, and C. Xu, \textit{Boundedness of moduli of varieties of general type}, J. Eur. Math. Soc. \textbf{20} (2018), no. 4, 865--901.

\bibitem[HJLL24]{HJLL24} J. Han, J. Jiao, M. Li, and J. Liu, \textit{Volume of algebraically integrable foliations and locally stable families}, arXiv:2406.16604.

\bibitem[HL23b]{HL23b} J. Han and J. Liu, \textit{On effective birationality for sub-pairs}, Int. J. Math. \textbf{34} (2023), no. 6, 2350029.

\bibitem[HLS24]{HLS24} J. Han, J. Liu, and V. V. Shokurov, \textit{ACC for minimal log discrepancies of exceptional singularities}, Peking Math. J. (2024), 1--33.

\bibitem[Har80]{Har80} R. Hartshorne, \textit{Stable reflexive sheaves}, Math. Ann. \textbf{254} (1980), no. 2, 121--176.

\bibitem[HH25a]{HH25a} K. Hashizume and M. Hattori, \textit{On boundedness and moduli spaces of K-stable Calabi-Yau fibrations over curves}, Geom. Topol. \textbf{29} (2025), no. 3, 1619--1691.

\bibitem[HH25b]{HH25b} K. Hashizume and M. Hattori, \textit{K-moduli of quasimaps and on quasi-projectivity of moduli of K-stable Calabi-Yau fibrations over curves}, arXiv:2504.21519.

\bibitem[Hat26]{Hat26} M. Hattori, \textit{On positivity of CM line bundles on the moduli space of klt good minimal models with $\kappa=1$}, arXiv:2601.18208.

\bibitem[Hir26]{Hir26} Y. Hiroi, \textit{Quotients by $(p-1)/p$-klt foliations on surfaces}, arXiv:2602.20703.

\bibitem[Inc20]{Inc20} G. Inchiostro, \textit{Moduli of Weierstrass fibrations with marked section}, Adv. Math. \textbf{375} (2020), 107374.

\bibitem[ISZ25]{ISZ25} G. Inchiostro, R. Svaldi, and J. Zhao, \textit{Moduli of surfaces fibered in log Calabi-Yau pairs}, arXiv:2509.14145.

\bibitem[IZ25]{IZ25} G. Inchiostro and J. Zhao, \textit{Moduli of surfaces fibered in log Calabi-Yau pairs II: elliptic surfaces}, arXiv:2512.24017.

\bibitem[Jia20]{Jia20} C. Jiang, \textit{Boundedness of $\mathbb Q$-Fano varieties with degrees and alpha-invariants bounded from below}, Ann. Sci. \'Ec. Norm. Sup\'er. (4) \textbf{53} (2020), 1235--1248.

\bibitem[Jia23]{Jia23} X. Jiang, \textit{Boundedness of klt good minimal models}, arXiv:2312.03313.

\bibitem[JJZ25]{JJZ25} X. Jiang, J. Jiao, and M. Zhu, \textit{Boundedness of polarized log Calabi-Yau fibrations with bounded bases}, arXiv:2504.05243.

\bibitem[Jia25a]{Jia25a} J. Jiao, \textit{Boundedness of polarized Calabi-Yau fibrations}, J. Differential Geom. \textbf{130} (2025), no. 3, 635--675.

\bibitem[Jia25b]{Jia25b} J. Jiao, \textit{Boundedness of slc degenerations of polarized log Calabi-Yau pairs}, Forum Math. Sigma \textbf{13} (2025), e144, 1--14.

\bibitem[Kol23]{Kol23} J. Koll\'ar, \textit{Families of varieties of general type}, Cambridge Tracts in Math. \textbf{231}, Cambridge Univ. Press, Cambridge (2023). With the collaboration of Klaus Altmann and S\'andor Kov\'acs.

\bibitem[Kol$^+$92]{Kol+92} J. Koll\'{a}r et al., \textit{Flip and abundance for algebraic threefolds}, Ast\'{e}risque \textbf{211}, Soc. Math. France, Paris (1992).

\bibitem[KM98]{KM98} J. Koll\'{a}r and S. Mori, \textit{Birational geometry of algebraic varieties}, Cambridge Tracts in Math. \textbf{134}, Cambridge Univ. Press, Cambridge (1998).

\bibitem[KX20]{KX20} J. Koll\'ar and C. Xu, \textit{Moduli of polarized Calabi-Yau pairs}, Acta Math. Sin. (Engl. Ser.) \textbf{36} (2020), no. 6, 631--637.

\bibitem[KP17]{KP17} S. J. Kov\'acs and Z. Patakfalvi, \textit{Projectivity of the moduli space of stable log varieties and subadditivity of log-Kodaira dimension}, J. Amer. Math. Soc. \textbf{30} (2017), no. 4, 959--1021.

\bibitem[Li18]{Li18} C. Li, \textit{Minimizing normalized volumes of valuations}, Math. Z. \textbf{289} (2018), no. 1--2, 491--513.

\bibitem[LLX20]{LLX20} C. Li, Y. Liu, and C. Xu, \textit{A guided tour to normalized volume}, in: \textit{Geometric analysis}, Progr. Math. \textbf{333}, Birkh\"auser/Springer, Cham (2020), 167--219.

\bibitem[LLM23]{LLM23} J. Liu, Y. Luo, and F. Meng, \textit{On global ACC for foliated threefolds}, Trans. Amer. Math. Soc. \textbf{376} (2023), no. 12, 8939--8972.

\bibitem[LMX24a]{LMX24a} J. Liu, F. Meng, and L. Xie, \textit{Uniform rational polytope of foliated threefolds and the global ACC}, J. Lond. Math. Soc. (2) \textbf{109} (2024), no. 6, e12950.

\bibitem[LMX24b]{LMX24b} J. Liu, F. Meng, and L. Xie, \textit{Complements, index theorem, and minimal log discrepancies of foliated surface singularities}, Eur. J. Math. \textbf{10} (2024), no. 1, Paper No. 6, 29 pp.

\bibitem[LMX25]{LMX25} J. Liu, F. Meng, and L. Xie, \textit{Minimal model program for algebraically integrable foliations on klt varieties}, Compos. Math. \textbf{161} (2025), no. 12, 3213--3276.

\bibitem[LX25]{LX25} J. Liu and Z. Xu, \textit{Non-algebraicity of non-abundant foliations and abundance for adjoint foliated structures}, arXiv:2510.04419.

\bibitem[LW24]{LW24} J. Lu and X. Wang, \textit{On the 1-adjoint canonical divisor of a foliation}, Manuscripta Math. \textbf{175} (2024), 739--752.

\bibitem[LWX25]{LWX25} J. Lu, X. Wu, and S. Xu, \textit{Canonical models of adjoint foliated structures on surfaces}, arXiv:2501.00470.

\bibitem[L\"u25]{Lü25} X. L\"u, \textit{Unboundedness of foliated varieties}, Int. J. Math. \textbf{36} (2025), no. 6, 2550003.

\bibitem[LT22]{LT22} X. L\"u and S. Tan, \textit{On the sharp lower bounds of modular invariants and fractional Dehn twist coefficients}, J. Reine Angew. Math. \textbf{787} (2022), 163--195.

\bibitem[LT24]{LT24} X. L\"u and S. Tan, \textit{The Poincar\'e problem for a foliated surface}, arXiv:2404.16293.

\bibitem[MV26]{MV26} R. Mascharak and S. Vassiliadis, \textit{Birational rigidity and K-stability of Fano foliations}, personal communication, 2026.

\bibitem[M\textsuperscript{c}K22]{McK22} J. M\textsuperscript{c}Kernan, \textit{Log canonical thresholds for foliations}, \url{https://www.youtube.com/watch?v=ukF-oqJuypY}. JAMI 2022: Higher Dimensional Algebraic Geometry (2022 May 3--8), Johns Hopkins University, Baltimore -- An event in honor of Prof. Shokurov's 70th Birthday.

\bibitem[M\textsuperscript{c}K23]{McK23} J. M\textsuperscript{c}Kernan, \textit{The log canonical threshold revisited}, \url{https://www.youtube.com/watch?v=1oc2QYEJldc}. Conference on 100 Years of Noetherian Rings (2023 June 21st), Institute for Advanced Study, Princeton.

\bibitem[MP04]{MP04} J. M\textsuperscript{c}Kernan and Y. Prokhorov, \textit{Threefold thresholds}, Manuscripta Math. \textbf{114} (2004), 281--304.

\bibitem[McQ08]{McQ08} M. McQuillan, \textit{Canonical models of foliations}, Pure Appl. Math. Q. \textbf{4} (2008), no. 3, Special Issue: In honor of Fedor Bogomolov, Part 2, 877--1012.

\bibitem[MP13]{MP13} M. McQuillan and D. Panazzolo, \textit{Almost \'etale resolution of foliations}, J. Differential Geom. \textbf{95} (2013), no. 2, 279--319.

\bibitem[Nak04]{Nak04} N. Nakayama, \textit{Zariski-decomposition and abundance}, MSJ Memoirs \textbf{14}, Mathematical Society of Japan, Tokyo (2004).

\bibitem[OS12]{OS12} Y. Odaka and Y. Sano, \textit{Alpha invariants and K-stability of $\mathbb Q$-Fano varieties}, Adv. Math. \textbf{229} (2012), no. 5, 2818--2834.

\bibitem[Pap26]{Pap26} T. S. Papazachariou, \textit{K-stability of adjoint Fano foliated structures}, personal communication, 2026.

\bibitem[Pas24]{Pas24} A. Passantino, \textit{Numerical conditions for the boundedness of foliated surfaces}, arXiv:2412.05986.

\bibitem[PX17]{PX17} Z. Patakfalvi and C. Xu, \textit{Ampleness of the CM line bundle on the moduli space of canonically polarized varieties}, Algebr. Geom. \textbf{4} (2017), no. 1, 29--39.

\bibitem[PS02]{PS02} J. V. Pereira and P. F. S\'anchez, \textit{Transformation groups of holomorphic foliations}, Comm. Anal. Geom. \textbf{10} (2002), no. 5, 1115--1123.

\bibitem[PS19]{PS19} J. V. Pereira and R. Svaldi, \textit{Effective algebraic integration in bounded genus}, Algebr. Geom. \textbf{6} (2019), no. 4, 454--485.

\bibitem[Sei68]{Sei68} A. Seidenberg, \textit{Reduction of singularities of the differential equation $A\,dy = B\,dx$}, Amer. J. Math. \textbf{90} (1968), 248--269.

\bibitem[Sho92]{Sho92} V. V. Shokurov, \textit{Threefold log flips}, with an appendix in English by Y. Kawamata, Izv. Ross. Akad. Nauk Ser. Mat. \textbf{56} (1992), no. 1, 105--203.

\bibitem[Sho20]{Sho20} V. V. Shokurov, \textit{Existence and boundedness of $n$-complements}, arXiv:2012.06495.

\bibitem[Sho23]{Sho23} V. V. Shokurov, \textit{Log adjunction: moduli part}, Izv. Math. \textbf{87} (2023), no. 3, 616--640.

\bibitem[Spi20]{Spi20} C. Spicer, \textit{Higher dimensional foliated Mori theory}, Compos. Math. \textbf{156} (2020), no. 1, 1--38.

\bibitem[SS22]{SS22} C. Spicer and R. Svaldi, \textit{Local and global applications of the Minimal Model Program for co-rank 1 foliations on threefolds}, J. Eur. Math. Soc. \textbf{24} (2022), no. 11, 3969--4025.

\bibitem[SS23]{SS23} C. Spicer and R. Svaldi, \textit{Effective generation for foliated surfaces: Results and applications}, J. Reine Angew. Math. \textbf{795} (2023), 45--84.

\bibitem[SSV25]{SSV25} C. Spicer, R. Svaldi, and S. Velazquez, \textit{On moduli of foliated surfaces}, arXiv:2511.13491.

\bibitem[ST26]{ST26} C. Spicer and L. Tasin, \textit{Rank one foliations on toroidal varieties}, arXiv:2604.08100.

\bibitem[Tak06]{Tak06} S. Takayama, \textit{Pluricanonical systems of projective varieties of general type}, Invent. Math. \textbf{165} (2006), 551--587.

\bibitem[Tan96]{Tan96} S. Tan, \textit{On the invariants of base changes of pencils of curves. II}, Math. Z. \textbf{222} (1996), no. 4, 655--676.

\bibitem[Tan10]{Tan10} S. Tan, \textit{Chern numbers of a singular fiber, modular invariants and isotrivial families of curves}, Acta Math. Vietnam. \textbf{35} (2010), no. 1, 159--172.

\bibitem[Tia87]{Tia87} G. Tian, \textit{On K\"ahler-Einstein metrics on certain K\"ahler manifolds with $c_1(M)>0$}, Invent. Math. \textbf{89} (1987), no. 2, 225--246.

\bibitem[Tia90]{Tia90} G. Tian, \textit{On a set of polarized K\"ahler metrics on algebraic manifolds}, J. Differential Geom. \textbf{32} (1990), no. 1, 99--130.

\bibitem[Tsu07]{Tsu07} H. Tsuji, \textit{Pluricanonical systems of projective varieties of general type II}, Osaka J. Math. \textbf{44} (2007), no. 3, 723--764.

\bibitem[Vas25]{Vas25} S. Vassiliadis, \textit{Explicit bounds on foliated surfaces and the Poincar\'e problem}, arXiv:2511.08388.

\bibitem[Xia87]{Xia87} G. Xiao, \textit{Fibered algebraic surfaces with low slope}, Math. Ann. \textbf{276} (1987), 449--466.

\bibitem[Xu25a]{Xu25a} C. Xu, \textit{K-stability of Fano varieties}, New Mathematical Monographs \textbf{50}, Cambridge Univ. Press, Cambridge (2025).

\bibitem[XZ21]{XZ21} C. Xu and Z. Zhuang, \textit{Uniqueness of the minimizer of the normalized volume function}, Camb. J. Math. \textbf{9} (2021), no. 1, 149--176.

\bibitem[XZ24]{XZ24} C. Xu and Z. Zhuang, \textit{Boundedness of log Fano cone singularities and discreteness of local volumes}, arXiv:2404.17134.

\bibitem[Xu25b]{Xu25b} S. Xu, \textit{Adjoint log canonical foliated singularities on surfaces}, arXiv:2512.20744.

\bibitem[Zhu25]{Zhu25} M. Zhu, \textit{Boundedness of stable minimal models with klt singularities}, Int. Math. Res. Not. IMRN \textbf{2025} (2025), no. 2, rnae293.
\end{thebibliography}
\end{document}